\theoremstyle{plain}
\newtheorem{theorem}{Theorem}[chapter]
\newtheorem{corollary}[theorem]{Corollary}
\newtheorem{lemma}[theorem]{Lemma}
\newtheorem{claim}[theorem]{Claim}
\theoremstyle{definition}
\newtheorem{definition}[theorem]{Definition}
\newtheorem{remark}[theorem]{Remark}
\newtheorem{question}{Question}
\newtheorem*{question*}{Question}
\newtheorem*{notation*}{Notation}
\newtheorem*{fact*}{Fact}
\newtheorem{fact}[theorem]{Fact}
\newtheorem{statement}[theorem]{Statement}
\numberwithin{section}{chapter}
\numberwithin{equation}{chapter}
\numberwithin{figure}{chapter}
\providecommand\Cc{\mathcal{C}}
\providecommand\Dc{\mathcal{D}}
\providecommand\Fc{\mathcal{F}}
\providecommand\Gb{\mathbb{G}}
\providecommand\Jb{\mathbb{J}}
\providecommand\Kb{\mathbb{K}}
\providecommand\Mc{\mathcal{M}}
\providecommand\Nb{\mathbb{N}}
\providecommand\Pc{\mathcal{P}}
\providecommand\Fb{\mathbb{F}}
\providecommand\Pb{\mathbb{P}}
\providecommand\Qb{\mathbb{Q}}
\providecommand\Sc{\mathcal{S}}
\providecommand\Uc{\mathcal{U}}
\providecommand\Xb{\mathbb{X}}
\providecommand\dom{\mathrm{dom}}
\providecommand\PA{\mathrm{PA}}
\providecommand\RCA{\mathrm{RCA}}
\providecommand\WKL{\mathrm{WKL}}
\providecommand\ACA{\mathrm{ACA}}
\providecommand\ATR{\mathrm{ATR}}
\providecommand\RT[2]{\mathrm{RT}^{#1}_{#2}}
\providecommand\COH{\mathrm{COH}}
\providecommand\Baire{{\omega^{\omega}}}
\providecommand\baire{{\omega^{<\omega}}}
\providecommand\Cantor{{2^{\omega}}}
\providecommand\cantor{{2^{<\omega}}}
\providecommand\om{\omega}
\providecommand\Si[2]{\Sigma^{#1}_{#2}}
\providecommand\forces{{\Vdash}}
\providecommand\N{\mathbb{N}}
\def\qt#1{``#1''}%
\mathchardef\mhyphen="2D 
\providecommand{\meet}{\wedge}
\providecommand{\emptystr}{\epsilon}
\providecommand{\incomp}{\bot}
\providecommand{\MT}[2]{\mathrm{MTT}^{#1}_{#2}}
\providecommand{\PMT}[2]{\mathrm{PMTT}^{#1}_{#2}}
\providecommand{\sTT}{\mathrm{sTT}}
\providecommand{\TT}{\mathrm{TT}}
\providecommand{\embfont}[1]{\mathfrak{#1}}
\providecommand{\Subtree}[2]{\mathcal{S}_{#1}({#2})}
\providecommand{\SubtreeLeaves}[2]{\mathcal{S}^{l}_{#1}({#2})}
\providecommand{\meetclosure}[1]{{#1}^{\meet}}
\providecommand{\lvlclosure}[1]{#1^{\mathrm{lvl}}}
\providecommand{\land}{\ \&\ }
\DeclareMathOperator{\leaves}{leaves}
\DeclareMathOperator{\roots}{roots}
\providecommand{\uh}{\upharpoonright}
\providecommand{\fhl}{h_{\mathrm{HL}}}
\providecommand{\fwidg}{H}
\providecommand{\lgth}[1]{|#1|}
\providecommand{\RG}{\mathrm{RG}}
\providecommand{\JRG}{\mathrm{JRG}}
\providecommand{\DT}[2]{\mathrm{DT}^{#1}_{#2}}
\providecommand{\JDT}[2]{\mathrm{JDT}^{#1}_{#2}}
\providecommand{\CHMTT}{\mathrm{CHMTT}}
\providecommand{\vec}[1]{\bm{#1}}
\providecommand{\exprodtree}[2]{\bigcup_n {#1}_0(n) \times \dots \times {#1}_{#2-1}(n)}
\providecommand{\exleavesprodtree}[3]{ {#1}_0(#3-1) \times \dots \times {#1}_{#2-1}(#3-1)}
\definecolor{lightblue}{rgb}{1,.84,0}
\DeclareRobustCommand{\pelliot}[1]{\sethlcolor{pink}\hl{#1}}
\providecommand{\cmmnt}[1]{}
\providecommand{\Epn}{E_{\mathrm{pn}}}
\providecommand{\meetlevel}[2]{\llbracket #1 , #2 \rrbracket}
\providecommand{\ltlex}{<_{\mathrm{lex}}}
\providecommand{\lelex}{\leq_{\mathrm{lex}}}
\providecommand{\gtlex}{>_{\mathrm{lex}}}
\providecommand{\Tequiv}{\equiv_{\text{\upshape T}}}
\providecommand{\Tred}{\leq_{\text{\upshape T}}}
\providecommand{\nTred}{\nleq_{\text{\upshape T}}}
\providecommand{\cred}{\leq_{\text{\upshape c}}}
\providecommand{\str}[1]{#1}
\providecommand{\seq}[1]{( #1 )}
\providecommand{\NN}{\mathbb{N}}
\providecommand\Jo[1]{(\texttt{J#1})}
\providecommand{\JRel}{\mathsf{R}}
\providecommand{\setminus}{\smallsetminus}
\renewcommand{\setminus}{\smallsetminus}
\providecommand\case[2]{\medskip\noindent \textbf{Case #1:}~\textit{#2}}
\providecommand\construction{\medskip\noindent \textbf{Construction.}~}
\providecommand\verification{\medskip\noindent \textbf{Verification.}~}
\renewcommand{\langle}{(}
\providecommand{\rangle}{)}
\begin{document}

\frontmatter

\title{Milliken's tree theorem and its applications: a computability-theoretic perspective}

\author{Paul-Elliot Angl\`{e}s d'Auriac}
\address{Institut Camille Jordan\\
Universit\'{e} Claude Bernard Lyon 1\\
43 boulevard du 11 novembre 1918, F-69622 Villeurbanne Cedex, France}
\email{peada@free.fr}

\author{Peter A. Cholak}
\address{Department of Mathematics, University of Notre Dame, 255 Hurley Building, Notre Dame, Indiana 46556, U.S.A.}
\email{cholak@nd.edu}

\author{Damir D. Dzhafarov}
\address{Department of Mathematics, University of Connecticut, 341 Mansfield Road, Storrs, Connecticut 06269, U.S.A.}
\email{damir@math.uconn.edu}

\author{Beno\^{i}t Monin}
\address{LACL, D\'{e}partement d'Informatique, Facult\'{e} des Sciences et Technologie, 61 avenue du G\'{e}n\'{e}ral de Gaulle, 94010 Cr\'{e}teil Cedex}
\email{benoit.monin@computability.fr}

\author{Ludovic Patey}
\address{Institut Camille Jordan\\
Universit\'{e} Claude Bernard Lyon 1\\
43 boulevard du 11 novembre 1918, F-69622 Villeurbanne Cedex, France}
\email{ludovic.patey@computability.fr}

\subjclass[2010]{Primary 05D10, 03D80, 03E05; Secondary 05C55, 05D05, 03E75}

\keywords{Milliken's tree theorem, Ramsey's theorem, partition theory, computable combinatorics, reverse mathematics, structural Ramsey theory}

\date{\today}

\begin{abstract}
	Milliken's tree theorem is a deep result in combinatorics that generalizes a vast number of other results in the subject, most notably Ramsey's theorem and its many variants and consequences. In this sense, Milliken's tree theorem is paradigmatic of structural Ramsey theory, which seeks to identify the common combinatorial and logical features of partition results in general. Its investigation in this area has consequently been extensive.
	
	Motivated by a question of Dobrinen, we initiate the study of Milliken's tree theorem from the point of view of computability theory. The goal is to understand how close it is to being algorithmically solvable, and how computationally complex are the constructions needed to prove it. This kind of examination enjoys a long and rich history, and continues to be a highly active endeavor. Applied to combinatorial principles, particularly Ramsey's theorem, it constitutes one of the most fruitful research programs in computability theory as a whole. The challenge to studying Milliken's tree theorem using this framework is its unusually intricate proof, and more specifically, the proof of the Halpern-La\"{u}chli theorem, which is a key ingredient.
	
	Our advance here stems from a careful analysis of the Halpern-La\"{u}chli theorem which shows that it can be carried out effectively (i.e., that it is computably true). We use this as the basis of a new inductive proof of Milliken's tree theorem that permits us to gauge its effectivity in turn. The key combinatorial tool we develop for the inductive step is a fast-growing computable function that can be used to obtain a finitary, or localized, version of Milliken's tree theorem. This enables us to build solutions to the full Milliken's tree theorem using effective forcing. The principal result of this is a full classification of the computable content of Milliken's tree theorem in terms of the jump hierarchy, stratified by the size of instance. As usual, this also translates into the parlance of reverse mathematics, yielding a complete understanding of the fragment of second-order arithmetic required to prove Milliken's tree theorem.
	
	We apply our analysis also to several well-known applications of Milliken's tree theorem, namely Devlin's theorem, a partition theorem for Rado graphs, and a generalized version of the so-called tree theorem of Chubb, Hirst, and McNicholl. These are all certain kinds of extensions of Ramsey's theorem for different structures, namely the rational numbers, the Rado graph, and perfect binary trees, respectively. We obtain a number of new results about how these principles relate to Milliken's tree theorem and to each other, in terms of both their computability-theoretic and combinatorial aspects. In particular, we establish new structural Ramsey-theoretic properties of the Rado graph theorem and the generalized Chubb-Hirst-McNicholl tree theorem using Zucker's notion of big Ramsey structure.
\end{abstract}

\maketitle



\setcounter{page}{6}

\tableofcontents


\mainmatter

\chapter*{Acknowledgments}

Cholak and Dzhafarov were partially supported by a Focused Research Group grant from the National Science Foundation of the United States, DMS-1854136 and DMS-1854355, respectively. Patey was partially supported by grant ANR ``ACTC'' \#ANR-19-CE48-0012-01.

The authors express their gratitude and appreciation to the Institut Henri Poincar\'{e} in Paris for kindly hosting them in February of 2020, during which time this project was conceived, and much of it completed.

They also thank Natasha Dobrinen, Lu Liu, and Andy Zucker, for their valuable insights, suggestions, and clarifications during the preparation of this manuscript.

Finally, the authors wish to thank the anonymous referees for their thorough reading, and for critical comments that helped improve this work.

\chapter{Introduction}
This monograph is part of the longstanding project of exploring connections between logic and combinatorics. Our focus is, more specifically, on studying the computable (or effective) content of combinatorial theorems. This has a long history, as we survey below. The interest stems from the realization that combinatorial notions tend to be computability-theoretically natural, and vice-versa. Traditionally, this has led to fine-grained analyses of different combinatorial constructions, often resulting in new, more computationally efficient proofs of various combinatorial results.

Over time, this work has made increasing use of powerful set-theoretic and combinatorial techniques, whose adaptation to the realm of computability theory has produced new insights into unsolved problems. Such will be the case for our investigation here of Milliken's tree theorem (named for its author, and originally proved in~\cite{Milliken1979RTforTrees}; cf. also~\cite{Milliken1881}). This is a deep result whose significance in Ramsey theory and related areas has made it the objective of much attention in combinatorics and set theory. This makes all the more surprising its near complete absence from the computability-theoretic literature. To our knowledge, the only published mentions are by Carlson and Simpson \cite[Section 3]{Carlson1984dual} and Chubb, Hirst, and McNicholl~\cite{Chubb2009Reverse}. The authors of the former paper introduce the so-called dual Ramsey's theorem, and give as a consequence a new proof of the Halpern-La\"{u}chli theorem, an important result for understanding Milliken's tree theorem that we investigate at length also here. The latter paper focuses on what is ultimately a kind of weak or degenerate form of Milliken's tree theorem, which has garnered a great deal of interest in its own right. See \Cref{sec:GenCHMTT}, where we give a full account of the theorem of Chubb, Hirst, and McNicholl and how it relates to Milliken's in the context of our work here. (We add that during the writing of this manuscript, we learned of a concurrent project of Chong, Li, Liu, and Yang in progress, whose focus is the Chubb, Hirst, and McNicholl theorem but which also obtains results about Milliken's tree theorem proper. Specifically, the authors obtain by independent means our \Cref{thm:milliken-aca} below.)

The problem of determining the computable content of Milliken's tree theorem was proposed by Dobrinen~\cite{Dobrinen-2018}. A related question, about the so-called Rado graph theorem, was asked also in Dorbinen, Laflamme, and Sauer~\cite[Question 6.3]{DLS-2016}. We give a complete analysis here, using the tools of computability theory and reverse mathematics. As we will show, Milliken's tree theorem turns out to be surprisingly rich and intricate in this respect, reflecting its centrality among other partition theorems, including Ramsey's theorem and its many variants.

\section{Milliken's tree theorem and Ramsey theory}

Ramsey theory is a vast area of combinatorics, broadly interested in results about when some sort of regularity is unavoidable when a large given structure is partitioned into a small number of pieces. (Here ``large'' is typically taken to mean a particular finite or infinite cardinality, and ``small'' is understood relative to this cardinality.) Canonical examples include, of course, the finite and infinite Ramsey's theorems, both due to F.~P.~Ramsey~\cite{Ramsey1929}, which we recall. Let $\NN$ denote the set of natural numbers, $\{0,1,2,\ldots\}$, and given a set $X \subseteq \NN$ and integer $n \geq 1$, let $[X]^n = \{ ( x_0,\ldots,x_{n-1} ) \in X^n: x_0 < \cdots < x_{n-1}\}$. We identify each $k \in \NN$ with the set of its predecessors, $\{0,1,\ldots,k-1\}$.

\index{Ramsey's theorem!finite}
\begin{theorem}[Finite Ramsey's theorem]
	For all $n,k, \geq 1$ and $m_1$,$\ldots$, $m_{k-1} \in \NN$ there is a number $M \in \NN$ such that for every $f: [M]^n \to k$ there is an $i < k$ and a set $H \subseteq M$ of size $m_i$ such that $f(\vec{x}) = i$ for all $\vec{x} \in [H]^n$.
\end{theorem}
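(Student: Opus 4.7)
The plan is to reduce to the standard equal-size form of the finite Ramsey theorem and then derive this from the infinite Ramsey theorem by a compactness argument using König's lemma. First, I would observe that we may assume without loss of generality that $m_0 = \cdots = m_{k-1} = m$ for a common $m \in \NN$: setting $m := \max_{i<k} m_i$, any $i$-monochromatic set of size $m$ contains an $i$-monochromatic subset of size $m_i$, so any bound $M$ that works in the equal-size case also witnesses the stated theorem.

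Next, I would prove the infinite Ramsey theorem, that every $f \colon [\NN]^n \to k$ admits an infinite $i$-monochromatic set for some $i < k$, by induction on $n$. The case $n = 1$ is the pigeonhole principle. For the inductive step, given $f \colon [\NN]^{n+1} \to k$, I would construct nested infinite sets $\NN = X_0 \supseteq X_1 \supseteq \cdots$ and elements $x_s \in X_s$ with $x_0 < x_1 < \cdots$ as follows: at stage $s$, apply the inductive hypothesis to the coloring $\vec{y} \mapsto f((x_s) \cat \vec{y})$ on $[X_s \setminus \{x_s+1\}]^n$ to obtain an infinite $X_{s+1} \subseteq X_s$ on which this coloring is constant with some value $c_s < k$, and let $x_{s+1} = \min X_{s+1}$. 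Since some color $i < k$ appears infinitely often among the $c_s$, the corresponding $x_s$ form an infinite $i$-monochromatic set.

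Finally, to extract the finite statement, I would argue by contradiction. Suppose no such $M$ exists. For each $M \in \NN$ let $\Fc_M$ be the (finite, nonempty) set of colorings $g \colon [M]^n \to k$ having no $i$-monochromatic subset of size $m_i$ for any $i < k$. Since restrictions of bad colorings are bad, the disjoint union $\bigsqcup_M \Fc_M$ forms a finitely branching tree under restriction; König's lemma yields an infinite branch, which is a coloring $f \colon [\NN]^n \to k$ with no $i$-monochromatic set of size $m_i$ for any $i$. This contradicts the infinite Ramsey theorem applied to $f$, since any infinite $i$-homogeneous set contains a subset of size $m_i$.

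The argument is entirely classical; the only mild subtlety is verifying in the compactness step that the tree $\bigsqcup_M \Fc_M$ is correctly set up so that restrictions preserve the badness property, which is immediate from the definition. Note that this approach yields no explicit bound on $M$; concrete (tower-type or primitive-recursive) bounds are available via direct Erd\H{o}s--Rado arguments, but these are not needed for the bare existence statement above and will not be pursued here.
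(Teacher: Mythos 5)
The paper does not actually prove this statement; it is recorded in the introduction as classical background, cited to Ramsey's 1929 paper. There is therefore no ``paper proof'' to compare against, and the question is only whether your argument is sound on its own.

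It is, and it is the canonical route: reduce to a common $m$, prove the infinite case by induction using nested infinite sets, and extract the finite case via K\"onig's lemma. Two small remarks. First, in the inductive step of the infinite theorem, ``$[X_s \setminus \{x_s+1\}]^n$'' is garbled notation: the set you want is $\{x \in X_s : x > x_s\}$, so that the homogeneous set $X_{s+1}$ you get from the inductive hypothesis lies entirely above $x_s$ and the later choices $x_{s+1} = \min X_{s+1}$ form an increasing sequence. As written, removing the single element $x_s+1$ does not accomplish this. Second, you silently corrected the paper's index typo ($m_1,\ldots,m_{k-1}$ paired with ``$i < k$''; it should read $m_0,\ldots,m_{k-1}$), which is the right reading. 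With the notational slip fixed, the argument is complete; the compactness step is set up correctly since restrictions of ``bad'' colorings are bad, so $\bigsqcup_M \Fc_M$ under restriction (with the empty coloring at the root) is a finitely branching infinite tree and K\"onig's lemma applies. Your closing observation that this yields no bound is accurate, but irrelevant to the paper's use of the statement.
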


\index{Ramsey's theorem!infinite}
\begin{theorem}[Infinite Ramsey's theorem]
	For all $n,k \geq 1$ and every $f: [\NN]^n \to k$ there is an $i < k$ and an infinite set $H \subseteq \NN$ such that $f(\vec{x}) = i$ for all $\vec{x} \in [X]^n$.
\end{theorem}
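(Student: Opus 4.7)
The plan is to prove the infinite Ramsey theorem by induction on the exponent $n$. For the base case $n = 1$, any coloring $f : \NN \to k$ must send infinitely many natural numbers to a common value $i < k$, since $\NN$ is infinite and $k$ is finite; the set $H = f^{-1}(i)$ is then the desired infinite monochromatic set. This is simply the infinite pigeonhole principle.

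For the inductive step, assume the theorem holds for exponent $n - 1$ and fix $f : [\NN]^n \to k$. I would build by recursion on $s \in \NN$ a strictly increasing sequence $a_0 < a_1 < \cdots$ in $\NN$, a nested sequence of infinite sets $\NN = X_0 \supseteq X_1 \supseteq \cdots$ with $a_s = \min X_s$ and $X_{s+1} \subseteq X_s \setminus \{a_s\}$, and auxiliary colors $c_0, c_1, \ldots < k$, as follows. At stage $s$, having fixed $a_s = \min X_s$, consider the coloring
$$g_s : [X_s \setminus \{a_s\}]^{n-1} \to k, \qquad g_s(y_0, \ldots, y_{n-2}) = f(a_s, y_0, \ldots, y_{n-2}).$$
Apply the inductive hypothesis to $g_s$ restricted to $X_s \setminus \{a_s\}$ to obtain an infinite set $X_{s+1} \subseteq X_s \setminus \{a_s\}$ and a color $c_s < k$ on which $g_s$ is constantly $c_s$. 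By construction, whenever $s < i_1 < \cdots < i_{n-1}$, the elements $a_{i_1}, \ldots, a_{i_{n-1}}$ all lie in $X_{s+1}$, so that $f(a_s, a_{i_1}, \ldots, a_{i_{n-1}}) = c_s$.

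With the sequences $(a_s)$ and $(c_s)$ in hand, I would apply the base case ($n = 1$) to the coloring $s \mapsto c_s$ to extract an infinite set $S \subseteq \NN$ on which this coloring is constant, with value some $i < k$. Setting $H = \{ a_s : s \in S \}$, the set $H$ is infinite, and for any $(a_{s_0}, \ldots, a_{s_{n-1}}) \in [H]^n$ with $s_0 < \cdots < s_{n-1}$ in $S$, the construction guarantees $f(a_{s_0}, \ldots, a_{s_{n-1}}) = c_{s_0} = i$. Hence $H$ is monochromatic for $f$ of color $i$, completing the induction.

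This is the classical Erd\H{o}s-style argument and presents no substantive obstacle. The only piece of bookkeeping that requires care is the interleaving of the three sequences: one must verify that $a_s \in X_s$ and $X_{s+1}$ is strictly nested so that later elements remain available, and that the inductive hypothesis is applied to each $g_s$ as a coloring of an infinite set (namely $X_s \setminus \{a_s\}$, which one then tacitly identifies with $\NN$ via its order-preserving bijection in order to invoke the hypothesis in the form stated). Both points are immediate from the construction.
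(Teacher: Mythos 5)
Your proof is correct: it is the classical Erd\H{o}s-style argument, with the base case handled by the infinite pigeonhole principle and the inductive step carried out by thinning to a nested sequence of infinite sets $X_0 \supseteq X_1 \supseteq \cdots$ and then applying pigeonhole to the sequence of limit colors $c_s$. The bookkeeping you flag (that $a_{i_1},\ldots,a_{i_{n-1}} \in X_{s+1}$ whenever $s < i_1 < \cdots < i_{n-1}$, and that the inductive hypothesis applies to colorings of any infinite subset of $\NN$ via an order isomorphism) is exactly the right thing to check, and it goes through.

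The paper, however, does not prove this theorem directly at all: it is cited as a classical result of Ramsey, and the only derivations given are as corollaries of Milliken's tree theorem. In the introduction, a coloring $f:[\NN]^n \to k$ is pulled back to a coloring $g$ of $\mathcal{S}_n(2^{<\omega})$ by sending a strong subtree $S$ to $f(\vec{x}_S)$ where $\vec{x}_S$ is its set of node lengths, and a monochromatic $T \in \mathcal{S}_\omega(2^{<\omega})$ yields the homogeneous set $\{|\sigma|: \sigma \in T\}$; later (Theorem~\ref{thm:milliken-rt}) the same reduction is run through the unary tree $1^{<\omega}$, where strong subtrees of height $n$ are just $n$-tuples of levels. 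So your route is genuinely different: it is self-contained and elementary, whereas the paper's derivation deliberately invokes a much stronger theorem to exhibit Ramsey's theorem as a special case. It is worth noting that your proof's skeleton---pigeonhole as the base case, plus an induction that raises the exponent by one---is precisely the structure the paper says is mirrored (in a more delicate form) in the proof of Milliken's tree theorem itself, with the Halpern-L\"{a}uchli theorem playing the role of the pigeonhole principle; your argument is thus a useful warm-up for the combinatorics of Chapters 3 and 4, but it is not the argument the paper gives.
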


\index{homogeneous set!for a coloring}
\noindent The sets $H$ above are called \emph{homogeneous sets} for the coloring $f$. There are also versions of Ramsey's theorem for colorings of uncountable sets, but we will restrict our attention here to the countable setting.

In broad strokes, Ramsey's theorem(s) can be seen as saying that in any configuration of integers, however complicated or random, some amount of order is necessary. Understanding this order, and how it arises, is naturally captivating, and its study has resulted in important advances across mathematics, from combinatorics to logic to number theory. These include, for example, the celebrated Szemer\'{e}di's theorem (cf.~\cite{Szemeredi1975,Szemeredi1975b}), the various proofs of which over the years, and the myriad mathematical ideas used in them, led to it be called the ``Rosetta stone'' of mathematics by Tao~\cite{Tao2007}. We will explore a number of other examples in this monograph. For a general introduction to Ramsey theory, we refer the reader to the book of Graham, Rothschild, and Spencer~\cite{GRS2013}. For more background on the kind of combinatorics most relevant to us here, we refer to Todorcevic~\cite{Todorcevic2010Ramsey}.

The main subject of the present monograph, Milliken's tree theorem, is a strong generalization of the infinite Ramsey's theorem. We state it here in a restricted form in order to be able to begin discussing it. The full statement requires more nuanced definitions that we delay until the next chapter. For now, we recall that $2^{<\omega}$ denotes the set of all finite binary strings, i.e., finite sequences of $0$s and $1$s. For $\sigma \in 2^{<\omega}$, we write $|\sigma|$ for the length of $\sigma$, i.e., the number of bits occurring in $\sigma$, and we let $2^n$ and $2^{<n}$ denote the sets of $\sigma \in 2^{<\omega}$ with $|\sigma| = n$ and $|\sigma| < n$, respectively. For $\sigma,\tau \in 2^{<\omega}$ we write $\sigma \preceq \tau$ to mean that $\sigma$ is an initial segment (not necessarily proper) of $\tau$, and $\sigma \prec \tau$ to mean $\sigma \preceq \tau$ and $\sigma \neq \tau$. We also write $\sigma \meet \tau$ for the longest common initial segment of $\sigma$ and $\tau$. The crucial notion in the statement of Milliken's tree theorem is the following: $S \subseteq 2^{<\omega}$ is a \emph{strong subtree of $2^{<\omega}$} if $S$ is closed under $\meet$, and $(S,\preceq)$ is isomorphic, as a structure, to either $(2^{<\omega},\preceq)$ or $(2^{< n},\preceq)$ for some $n$, via a map that preserves whether or not a pair of nodes has the same length. Thus, for instance, $\{ \str{01}, \str{0101}, \str{0110} \}$ is a strong subtree of $2^{<\omega}$, whereas $\{ \str{01}, \str{0100}, \str{0101} \}$ and $\{ \str{01}, \str{0101}, \str{011} \}$ are not, even though all three sets, under $\preceq$, are isomorphic to $(2^{< n},\preceq)$. (See \Cref{f:intro}.)

\begin{figure}[h]
\centering
\begin{tikzpicture}[font=\footnotesize]
	\tikzset{
		empty node/.style={circle,inner sep=0,fill=none},
		solid node/.style={circle,draw,inner sep=1.5,fill=black},
		hollow node/.style={circle,draw,inner sep=1.5,thick}
	}
	\tikzset{snake it/.style={decorate, decoration=snake, line cap=round}}
	\node(01)[solid node,label=below:{$01$}] at (0,0) {};
	\node(010)[hollow node,label=left:{$010$}] at (-0.7,1) {};
	\node(011)[hollow node,label=right:{$011$}] at (0.7,1) {};
	\node(0100)[hollow node,label=left:{$0100$}] at (-1,2) {};
	\node(0101)[solid node,label=above:{$0101$}] at (-0.4,2) {};
	\node(0110)[solid node,label=above:{$0110$}] at (0.4,2) {};
	\node(0111)[hollow node,label=right:{$0111$}] at (1,2) {};
	\draw[-,thick] (01) to (010);
	\draw[-,thick] (01) to (011);
	\draw[-,thick] (010) to (0100);
	\draw[-,thick] (010) to (0101);
	\draw[-,thick] (011) to (0110);
	\draw[-,thick] (011) to (0111);
	\node at (0,-0.75) {\normalsize $S_0$};
\end{tikzpicture}
\begin{tikzpicture}[font=\footnotesize]
	\tikzset{
		empty node/.style={circle,inner sep=0,fill=none},
		solid node/.style={circle,draw,inner sep=1.5,fill=black},
		hollow node/.style={circle,draw,inner sep=1.5,thick}
	}
	\tikzset{snake it/.style={decorate, decoration=snake, line cap=round}}
	\node(01)[solid node,label=below:{$01$}] at (0,0) {};
	\node(010)[hollow node,label=left:{$010$}] at (-0.7,1) {};
	\node(011)[hollow node,label=right:{$011$}] at (0.7,1) {};
	\node(0100)[solid node,label=left:{$0100$}] at (-1,2) {};
	\node(0101)[solid node,label=above:{$0101$}] at (-0.4,2) {};
	\node(0110)[hollow node,label=above:{$0110$}] at (0.4,2) {};
	\node(0111)[hollow node,label=right:{$0111$}] at (1,2) {};
	\draw[-,thick] (01) to (010);
	\draw[-,thick] (01) to (011);
	\draw[-,thick] (010) to (0100);
	\draw[-,thick] (010) to (0101);
	\draw[-,thick] (011) to (0110);
	\draw[-,thick] (011) to (0111);
	\node at (0,-0.75) {\normalsize  $S_1$};
\end{tikzpicture}
\begin{tikzpicture}[font=\footnotesize]
	\tikzset{
		empty node/.style={circle,inner sep=0,fill=none},
		solid node/.style={circle,draw,inner sep=1.5,fill=black},
		hollow node/.style={circle,draw,inner sep=1.5,thick}
	}
	\tikzset{snake it/.style={decorate, decoration=snake, line cap=round}}
	\node(01)[solid node,label=below:{$01$}] at (0,0) {};
	\node(010)[hollow node,label=left:{$010$}] at (-0.7,1) {};
	\node(011)[solid node,label=right:{$011$}] at (0.7,1) {};
	\node(0100)[hollow node,label=left:{$0100$}] at (-1,2) {};
	\node(0101)[solid node,label=above:{$0101$}] at (-0.4,2) {};
	\node(0110)[hollow node,label=above:{$0110$}] at (0.4,2) {};
	\node(0111)[hollow node,label=right:{$0111$}] at (1,2) {};
	\draw[-,thick] (01) to (010);
	\draw[-,thick] (01) to (011);
	\draw[-,thick] (010) to (0100);
	\draw[-,thick] (010) to (0101);
	\draw[-,thick] (011) to (0110);
	\draw[-,thick] (011) to (0111);
	\node at (0,-0.75) {\normalsize $S_2$};
\end{tikzpicture}
\caption{Three subsets, $S_0$, $S_1$, and $S_2$, of $2^{<\omega}$. Solid circles indicate strings in the set, hollow circles strings not in the set. Only $S_0$ is a strong subtree of $2^{<\omega}$.}\label{f:intro}
\end{figure}
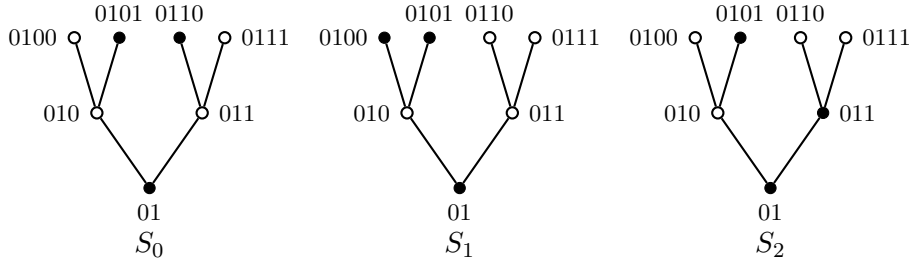

Given $T \subseteq 2^{<\omega}$, let $\mathcal{S}_\omega(T)$ denote the set of strong subtrees of $2^{<\omega}$ that are contained in $T$ and isomorphic to $(2^{<\omega},\preceq)$. For $n \geq 1$, let $\mathcal{S}_n(T)$ denote the set of strong subtrees of $2^{<\omega}$ that are contained in $T$ and isomorphic to $(2^{< n},\preceq)$.

\index{Milliken's tree theorem!for $2^{<\omega}$}
\index{tree theorem!Milliken's tree theorem}
\index{theorem!Milliken's tree theorem}
\begin{theorem}[Milliken's tree theorem for $2^{<\omega}$]
	For all $n,k \geq 1$ and all $f: \mathcal{S}_n(2^{<\omega}) \to k$ there exists $i < k$ and a $T \in \mathcal{S}_\omega(2^{<\omega})$ such that $f(S) = i$ for all $S \in \mathcal{S}_n(T)$.
\end{theorem}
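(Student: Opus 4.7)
The plan is to prove Milliken's tree theorem by induction on $n$, with the Halpern-La\"{u}chli theorem as the key combinatorial ingredient at each step.

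For the base case $n=1$, a strong subtree isomorphic to $(2^{<1},\preceq)$ is just a single node, so $f$ is effectively a $k$-coloring of the nodes of $2^{<\omega}$. I would apply the single-tree, level-set form of the Halpern-La\"{u}chli theorem to $f$ directly to obtain $T \in \mathcal{S}_\omega(2^{<\omega})$ on which $f$ is constant, yielding the base case.

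For the inductive step from $n$ to $n+1$, I would exploit the canonical decomposition of each $S \in \mathcal{S}_{n+1}(2^{<\omega})$ as $S = S' \cup L$, where $S'$ is the union of the bottom $n$ levels of $S$ (so $S' \in \mathcal{S}_n(2^{<\omega})$) and $L$ is the top level, a level set of $2^n$ successors of the leaves of $S'$. For each fixed $S'$, I would define the induced coloring $g_{S'}(L) = f(S' \cup L)$ on level sets above $S'$, and apply Halpern-La\"{u}chli to $g_{S'}$ to produce a strong subtree containing $S'$ on which $g_{S'}$ has a single value $c(S')$. A fusion over $\omega$-many stages would then produce a single $T \in \mathcal{S}_\omega(2^{<\omega})$ for which $c$ is simultaneously well-defined on all of $\mathcal{S}_n(T)$. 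Applying the induction hypothesis to the coloring $c \colon \mathcal{S}_n(T) \to k$ would yield $T^* \in \mathcal{S}_\omega(T)$ on which $c$ is constantly some $i < k$; one then checks that $f$ is constantly $i$ on $\mathcal{S}_{n+1}(T^*)$, since any $S = S' \cup L$ in $\mathcal{S}_{n+1}(T^*)$ satisfies $f(S) = g_{S'}(L) = c(S') = i$.

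The main obstacle will be executing the fusion step in the inductive argument: one needs to arrange matters so that, in the limit, all colorings $g_{S'}$ for $S' \in \mathcal{S}_n(T)$ have been simultaneously homogenized within a single ambient strong subtree $T$. This requires bookkeeping that enumerates the relevant finite data $S'$ through the construction and interleaves applications of Halpern-La\"{u}chli with successive shrinkings of $T$. From the perspective flagged in the paper's abstract, the decisive point will be to do this effectively: the fast-growing computable function alluded to there should provide a uniform finitary bound that allows a single computable construction to discharge all the Halpern-La\"{u}chli requirements at once, replacing the usual compactness-style fusion with an explicit recursive construction and, in turn, giving the quantitative control needed for the reverse-mathematical classification promised by the paper.
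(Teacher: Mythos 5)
Your proposal is correct and is essentially the paper's own argument: the tree produced by your fusion step, on which $f(S'\cup L)$ depends only on $S'$, is exactly what the paper calls a \emph{prehomogeneous} strong subtree, and the paper builds it by precisely the bookkeeping you describe, namely enumerating the finite subtrees $S'$ and interleaving applications of the (computably true, forest/product) Halpern-La\"{u}chli theorem with successive shrinkings of the reservoir, before applying the induction hypothesis to the induced coloring $c$ on $\mathcal{S}_n(T)$. The only point worth making explicit is that the colorings $g_{S'}$ live on level products over the $2^n$ subtrees above the successors of the leaves of $S'$, so the inductive step genuinely requires the multi-tree Halpern-La\"{u}chli theorem rather than just the $n=1$ case of Milliken's theorem, exactly as the paper emphasizes.
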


To begin, note that the infinite Ramsey's theorem is a straightforward consequence of (even this version of) Milliken's tree theorem. Indeed, given a coloring $f: [\NN]^n \to k$, we define $g: \mathcal{S}_n(2^{<\omega}) \to k$ as follows. For each $S \in \mathcal{S}_n(2^{<\omega})$, let $\vec{x}_S = \{ |\sigma|: \sigma \in S\}$, which is a set of size $n$ and so can be viewed as an element of $[\NN]^n$. Let $g(S) = f(\vec{x}_S)$. Now if $T \in \mathcal{S}_\omega(2^{<\omega})$ is as given by Milliken's tree theorem for this $g$, then $H = \{|\sigma|: \sigma \in T\}$ is easily seen to be an infinite homogeneous set for $f$.

Indeed, it is well-known that Milliken's tree theorem implies a great many partition theorems, including a number that are significantly more difficult to prove than Ramsey's. We will look at several of these theorems in this manuscript, and show that their implications from Milliken's tree theorem can be made constructive in the sense of computability theory and reverse mathematics, which we discuss below. Much of this will rely on carefully identifying and examining features common between strong subtrees of $2^{<\omega}$ and the combinatorial structures underlying these other principles, using a combination of ideas that have previously been successfully employed in structural Ramsey theory, along with techniques newly developed here.

We refer the reader to Todorcevic~\cite[Chapter 6]{Todorcevic2010Ramsey} for an in-depth discussion of Milliken's tree theorem, and a careful development of a proof. As with Ramsey's theorem, the proof has an inductive form based on the exponent, $n$, of the colorings being considered. Thus, we prove it first for finite colorings of $\mathcal{S}_1(2^{<\omega})$, and then assuming it holds for finite colorings of $\mathcal{S}_n(2^{<\omega})$, we prove it for finite colorings of $\mathcal{S}_{n+1}(2^{<\omega})$. The base case, $n = 1$, is actually not difficult to prove directly (though it is less trivial than the $n = 1$ case of Ramsey's theorem, i.e., the infinitary pigeonhole principle). However, unlike in standard proofs of Ramsey's theorem, where the inductive step uses just the $n = 1$ case to increase the exponent, in the case of Milliken's tree theorem a stronger result is needed. This is the so-called Halpern-La\"{u}chli theorem, due originally to Halpern and La\"{u}chli \cite{HalperbLauchli1966}, and independently Laver (unpublished) and Pincus \cite{Pincus1974} (see \cite{Pincus1981} for more on the history).

Given $d \geq 1$ and $T_0,\ldots,T_{d-1} \subseteq 2^{<\omega}$, let $\mathcal{S}_\alpha(T_0,\ldots,T_{d-1})$ for $\alpha \in \NN \cup \{\omega\}$ be the collection of all tuples $(S_0,\ldots,S_{d-1})$ such that for each $i < d$ we have $S_i \in \mathcal{S}_\alpha(T_i)$, and for all $i,j < d$ and all $\sigma \in S_i$ and $\tau \in S_j$, we have that $\sigma$ has the same number of initial segments in $S_i$ as $\tau$ does in $S_j$ if and only if $|\sigma| = |\tau|$.

\index{Halpern-La\"{u}chli theorem!for $2^{<\omega}$}
\index{theorem!Halpern-La\"{u}chli theorem}
\begin{theorem}[Halpern-La\"{u}chli theorem for $2^{<\omega}$]
	For all $d,k \geq 1$ and all $f: \bigcup_{n \in \NN} (2^n)^d \to k$ there exists $i < k$ and
	\[
		(T_0,\ldots,T_{d-1}) \in \mathcal{S}_\omega(2^{<\omega},\ldots,2^{<\omega})
	\]
	such that $f(\vec{\sigma})=i$ for all $\vec{\sigma} = (\sigma_0,\ldots,\sigma_{d-1}) \in T_0 \times \cdots \times T_{d-1}$ with $|\sigma_0| = \cdots = |\sigma_{d-1}|$.
\end{theorem}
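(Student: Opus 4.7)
The plan is to proceed by induction on $d \geq 1$. The base case $d=1$ coincides with the $n=1$ case of Milliken's tree theorem: every $k$-coloring of $2^{<\omega}$ admits a monochromatic strong subtree isomorphic to $(2^{<\omega},\preceq)$. I would handle this case by a level-by-level fusion construction of the desired strong subtree $T$: with $T$ already built up to $2^{\ell}$ nodes sitting at some common height $n_\ell$ in $2^{<\omega}$, I iterate the infinite pigeonhole over the finitely many possible "color profiles" of the $2^{\ell+1}$ prospective new leaves across candidate heights $n > n_\ell$, in order to locate a height $n_{\ell+1}$ at which each current leaf has at least two incomparable extensions all sharing a single target color $i$. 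A preliminary pigeonhole fixes $i$ once and for all; extracting a simultaneously good level for all $2^\ell$ current leaves is the combinatorial kernel here and needs a Ramsey-style refinement on the sequence of level-colorings.

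For the inductive step $d-1 \to d$, I would again build $T_0, \ldots, T_{d-1}$ by a simultaneous fusion keeping all of them at a common height. At stage $\ell$, I must extend all $d$ trees by one layer at a common new height $n_{\ell+1}$ such that every level-aligned $d$-tuple of new leaves is colored $i$. My plan is to first apply the inductive hypothesis, in a finitary form, to the $(d-1)$-dimensional coloring obtained by fixing some canonical one-layer extension of the current leaves of $T_{d-1}$: this produces level-aligned one-layer extensions of $T_0,\ldots,T_{d-2}$ whose $(d-1)$-tuples are monochromatic at some height $n'$. Holding those $d-1$ coordinates fixed, a second application of the base-case fusion on the last coordinate, pushed up above $n'$, refines the extensions of $T_{d-1}$ so that the full $d$-tuple coloring becomes monochromatic.

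The step I expect to be the main obstacle is the level-alignment bookkeeping across the entire fusion. Each appeal to the inductive hypothesis or to the base-case extension may force a jump to an arbitrarily larger height, and the heights chosen for different coordinates must ultimately all agree. To manage this, I would isolate a \emph{finitary} Halpern-La\"{u}chli lemma: for every $\ell,k,d$ there is an $N$ such that every $k$-coloring of the levels of $(2^{<N})^d$ admits, above any given level-aligned partial strong subtree tuple, a common-height one-layer extension all of whose level-matched $d$-tuples share a single color. Granting this finitary lemma---which can be extracted either by a K\"{o}nig's-lemma compactness argument from the infinite statement or, preferably for computability purposes, by a direct double induction on $\ell$ and $d$ yielding an explicit bound on $N$---iterating it inside the fusion together with a final pigeonhole on the color produces the full theorem. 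Converting this entire scheme into a procedure with effective height bounds is precisely where the "fast-growing computable function" promised in the abstract is presumably needed.
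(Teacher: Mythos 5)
Your outer framework (build all $d$ strong subtrees simultaneously by fusion, one shared level at a time, using a finitary lemma that supplies a monochromatic one-layer extension at a common height) does mirror the shape of the paper's argument: \Cref{thm:halpern-lauchli-computably-true} is exactly such a fusion, driven by \Cref{lem:HLdensity}. But there is a genuine gap in where you locate the hard work. The paper never reproves the combinatorial core: \Cref{lem:HLuneven}, the ``dense matrix'' lemma, is imported wholesale from Halpern and L\"{a}uchli's 1966 paper as a black box, and \Cref{lem:HLlevel,lem:HLdensity,thm:halpern-lauchli-computably-true} are all derived from it by comparatively soft compactness and fusion arguments. Your ``finitary Halpern-L\"{a}uchli lemma'' \emph{is} that black box, and extracting it ``by a K\"{o}nig's-lemma compactness argument from the infinite statement'' is circular (the infinite statement is what you are trying to prove), while ``a direct double induction on $\ell$ and $d$'' is precisely the nontrivial content that the 1966 proof, Harrington's forcing proof, and Todorcevic's presentation all devote the bulk of their effort to. (The fast-growing bound alluded to in the abstract is the function $\fwidg$ of \Cref{thm:main-widget-theorem}, which governs the \emph{height} induction in Milliken's theorem and already assumes the Halpern-L\"{a}uchli theorem; it does not furnish the finitary HL lemma.)

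The inductive step $d-1 \to d$ as you sketch it also has a concrete flaw. Fixing a ``canonical one-layer extension of the current leaves of $T_{d-1}$'' and applying the $(d-1)$-dimensional hypothesis gives monochromaticity only for the auxiliary coloring $g(\sigma_0,\dots,\sigma_{d-2}) = f(\sigma_0,\dots,\sigma_{d-2},\tau_0)$ with $\tau_0$ the canonical choice. When you then ``push up'' and re-choose the last coordinate by a separate $d=1$ fusion, the nodes $\tau$ you land on are not $\tau_0$, and $f(\sigma_0,\dots,\sigma_{d-2},\tau)$ need not agree with $g(\sigma_0,\dots,\sigma_{d-2})$ for any of the $(d-1)$-tuples you fixed. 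The standard fix is to color each candidate $\tau$ by the \emph{profile} of $f$-values over all level-matched $(d-1)$-tuples of the already-chosen extensions, but then the number of colors in that auxiliary coloring is $k^{(\text{number of }(d-1)\text{-tuples})}$, and controlling the resulting growth across the fusion is exactly the hard part of the original HL argument, not something that a nested appeal to the base case dispatches. Even the $d=1$ base case as written --- ``a preliminary pigeonhole fixes $i$ once and for all'' --- is not sound: the right color can change as you pass to deeper levels of the tree, and locating a node $\sigma$ and a color $i$ that remain ``dense above $\sigma$ at every level'' already requires the $d=1$ specialization of \Cref{lem:HLdensity}, which is not a pigeonhole.

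In short: your plan correctly identifies fusion as the way to go from the finitary statement to the infinite one, and this is also how the paper proceeds; but the finitary statement is the whole theorem in disguise, and the step where you propose to supply it is where both your sketch and the paper stop short --- the paper by explicit citation, your proposal by treating it as extractable.
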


\noindent Prima facie, this theorem appears as a kind of parallelized version of Milliken's tree theorem for colorings of $\mathcal{S}_1(2^{<\omega})$, and one may expect it to be not much more complicated to prove. In fact, this is misleading, and the Halpern-La\"{u}chli theorem largely encompasses the entire combinatorial core of (the full) Milliken's tree theorem. We will analyze the Halpern-La\"{u}chli theorem in detail in this monograph, and use it in a careful way to give a more effective proof of Milliken's tree theorem.

\section{Computable combinatorics}

\index{$\leq_T$}
\index{$\equiv_T$}
\index{computable}
\index{Turing!reduction}
\index{halting problem}
\index{Turing!equivalence}
The principal theme of modern computability theory is \emph{relative} computability: a set $X \subseteq \NN$ is \emph{computable from} (or \emph{Turing reducible to}) a set $Y$, written $X \Tred Y$, if there is an algorithm to decide which numbers belong to $X$ using information about which numbers belong to $Y$. If $X \Tred Y$ and $Y \Tred X$ we write $X \Tequiv Y$. This notion, along with a precise formalization of the concept of an algorithm, was a seminal achievement of Turing in the 1930s. Sets can be classified in numerous ways, such as in terms of their structural properties or by their syntactic descriptions.

As a rule, all such properties can be \emph{relativized}, leading to increasingly larger classes of sets. For example, the \emph{halting problem relative to $X$}, denoted $X'$ and also called the \emph{(Turing) jump of $X$}, refers to the set of $e \in \NN$ such that the $e$th algorithm in some fixed listing, with access to information about $X$, halts on input $e$. For every $X$ we have that $X \Tred X'$ but $X' \nTred X$, which yields in particular a canonical example of a non-computable set. The complexity of a set of natural numbers in computability theory (or by extension, of any object that can be naturally represented or encoded by such a set) is a measure of ``how far'' it is from being computable, according to various hierarchies of classes of sets obtained in this fashion. For general background in computability theory, we refer the reader to Soare~\cite{Soare2016Turing} and to Downey and Hirschfeldt~\cite{Downey2010Algorithmic}.

Computability theory lends itself to analyzing a vast collection of problems that are sometimes called \emph{instance-solution problems}\index{problem}. This refers to theorems having the form
\begin{equation}\label{eqn:Pi12}
	\forall A~[\mathrm{P}(A) \implies \exists B~\mathrm{Q}(A,B)],
\end{equation}
where $\mathrm{P}$ and $\mathrm{Q}$ are some sort of properties of $A$, and of $A$ and $B$, respectively. One can regard such a theorem as the problem, ``Given an $A$ such that $\mathrm{P}(A)$ holds, find a $B$ such that $\mathrm{Q}(A,B)$ holds''. In this context, we call such $A$ the \emph{instances} of the problem (or theorem), and all such $B$ the \emph{solutions} to $A$. This is a natural way of thinking about theorems of this shape. For example, the instances of Ramsey's theorem are all finite colorings of $[\omega]^n$ for some $n$, and the solutions to any such coloring are its infinite homogeneous sets.

One way to gauge the complexity of an instance-solution problem is by studying the relationship between the complexity of instances and solutions, when these can be presented as subsets of $\NN$, as will be the case in all the examples we consider in this manuscript. From this perspective, a problem that is \emph{computably true}\index{computably true}, i.e., one each of whose instances has at least one solution computable in that instance, is trivial from the algorithmic standpoint. By contrast, a problem that has an instance all of whose solutions compute the jump of that instance, is strictly harder, being, in a certain sense, at least as difficult as ``solving the halting problem''. In general, the further apart the instances and solutions are in this sense, the more algorithmically complex it is. We can thus directly compare different problems in terms of their complexity, yielding a notion of algorithmic or computability-theoretic strength. For a thorough introduction to this kind of analysis, which is generally called \emph{computable mathematics}, see the book of Hirschfeldt~\cite{Hirschfeldt2015Slicing}.

A complementary approach is provided by the foundational program of \emph{reverse mathematics}\index{reverse mathematics}, developed by Friedman and Simpson in the late 1970s. The setting here is second-order arithmetic, a formal system strong enough to express countable analogues of most results of classical mathematics. Its axioms include the usual ordered semi-ring axioms for the natural numbers, together with \emph{comprehension axioms} asserting that the set of all numbers $x$ satisfying a given formula (property) exists. By restricting to only certain kinds of formulas we get various \emph{subsystems} of second-order arithmetic, the most basic of which is called $\RCA_0$ and roughly corresponds to computable mathematics. The traditional approach in the subject has been to compare a given theorem with several benchmark subsystems ($\WKL_0$, $\ACA_0$, $\ATR_0$) extending $\RCA_0$, corresponding to increasing levels of non-constructibility. Isolating the weakest such system that the theorem can be proved in, and the strongest that can in turn be proved from it over the base system $\RCA_0$, yields a measure of its proof-theoretic strength. There is a fruitful and well-understood interplay between reverse mathematics and computability theory, with ideas and results from one often leading to results in the other (see Shore~\cite{Shore1975Splitting}). This has been made even more pronounced in recent years by the introduction of various Weihrauch-style reducibilities to the subject, which have come to be viewed largely as an extension and refinement of the traditional program of reverse mathematics. Computable reducibility\index{computable!reducibility}, in particular, which is a non-uniform analogue of Weihrauch reducibility originally introduced in~\cite{Dzhafarov2014Cohesive}, will figure in a number of our results here.

The standard reference on reverse mathematics is Simpson~\cite{Simpson2009Subsystems}. Weih\-rauch reducibility was introduced by Weihrauch \cite{Weihrauch1992} in the 1990s, and has since been widely deployed in computable analysis and other fields; for a recent survey, see Brattka, Gherardi, and Pauly~\cite{BrattkaSurvey}.

Of course, instance-solution problems are ubiquitous across mathematics, but problems from combinatorics have figured especially prominently in the above frameworks for many decades. The classification and differentiation of combinatorial theorems according to their computability-theoretic and proof-theoretic strength is nowadays called \emph{computable combinatorics}. Perhaps the earliest result here is the following one from the late 1960s, stating that Ramsey's theorem for pairs is not computably true.

\begin{theorem}[Specker~\cite{Specker-1971}]
	There is a computable $f: [\omega]^2 \to 2$ with no computable infinite homogeneous set.
\end{theorem}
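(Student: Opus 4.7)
The plan is to build $f$ by a finite-extension diagonalization, establishing in fact the apparently stronger statement that there is a computable $f \colon [\omega]^2 \to 2$ with no infinite \emph{c.e.}\ homogeneous set; this suffices because every infinite computable set is c.e. Enumerating the c.e.\ sets as $W_0, W_1, \ldots$, I would introduce, for each $e$, the requirement
\[
    R_e \colon W_e \text{ is not an infinite $f$-homogeneous set.}
\]

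The construction proceeds in stages, with no injuries: once a requirement is declared satisfied, it is never revisited. At each stage $s$, I maintain a finite partial coloring $f_s \supseteq f_{s-1}$ together with a ``restraint'' $m_s$ exceeding every coordinate already used. For each $e \le s$ with $R_e$ still unsatisfied, in priority order, I search the stage-$s$ approximation $W_{e,s}$ for three elements $x_1 < x_2 < x_3$ all exceeding the current restraint; if such a triple is found, I set $f(\{x_1, x_2\}) = 0$ and $f(\{x_1, x_3\}) = 1$, declare $R_e$ permanently satisfied, and raise the restraint above $x_3$. At the end of the stage, I extend $f_s$ by some default color (say $0$) on every pair of coordinates $\le m_s$ not yet defined, so that $f = \bigcup_s f_s$ will be total.

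Verification is immediate. If $W_e$ is finite, then $R_e$ is vacuously met. If $W_e$ is infinite, then once the finitely many higher-priority requirements have acted, the restraint above them stabilizes at some finite bound $M$, and the infinitude of $W_e$ forces three witnesses exceeding $M$ to appear eventually; $R_e$ then embeds a bichromatic pair into $W_e$. Since infinitely many c.e.\ sets are infinite (e.g., all cofinite sets), infinitely many requirements act in the course of the construction, so $m_s \to \infty$ and $f$ is total and computable.

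The main ``obstacle'' here is conceptual rather than technical: one must notice that it is enough to diagonalize against the c.e.\ sets $W_e$, thereby avoiding any need to enumerate the (only $\Pi^0_2$-indexable) class of total computable characteristic functions, and reducing the task to a clean no-injury construction. Everything else is routine bookkeeping --- each satisfied requirement contributes only three coordinates to the restraint and is never re-opened --- so no domination, injury, or delicate priority considerations are needed.
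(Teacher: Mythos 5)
Your overall plan---reduce to diagonalizing against the c.e.\ sets $W_e$, keep $f$ undefined above a restraint so that each requirement can wait for witnesses, and meet $R_e$ by planting two pairs $\{x_1,x_2\},\{x_1,x_3\}$ of different colors inside $W_e$---is a legitimate line of attack, and the reduction to c.e.\ sets is correct. The gap is in the verification of $R_e$. Your restraint $m_s$ is a single global quantity that is raised every time \emph{any} requirement acts, including requirements of lower priority than $R_e$, and, as you yourself observe when arguing totality, infinitely many requirements act, so $m_s\to\infty$. Hence the bound that $R_e$ must clear does \emph{not} ``stabilize at some finite bound $M$'' once the higher-priority requirements have settled: it is a moving target tending to infinity. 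Infinitude of $W_e$ gives no rate of enumeration, so there is a genuine race between the appearance of elements in $W_{e,s}$ and the growth of $m_s$, and nothing in the construction prevents $R_e$ from being starved forever (via the recursion theorem one can build an infinite $W_e$ that always has at most two enumerated elements above the current restraint: enumerate two fresh elements, use auxiliary lower-priority c.e.\ sets to force the restraint past them, and only then enumerate two more). So the key sentence of your verification is false for the construction as written. If instead you intended $R_e$ to respect only the restraint generated by \emph{higher}-priority requirements, the construction breaks differently: the chosen pairs $\{x_1,x_2\},\{x_1,x_3\}$ may already carry the default color $0$ from an earlier stage's fill-in (which covers all pairs below the global $m_s$), so setting $f(\{x_1,x_3\})=1$ would redefine a committed value.

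The repair is to redesign $R_e$ so that it needs only one ``late'' witness rather than three. A standard version: define $f$ column by column (at stage $s$ define $f(\{x,s\})$ for all $x<s$); when $R_e$ sees two elements $a<b$ of $W_{e,s}$ with $f(\{a,b\})=c$ already defined and with the row $a$ not yet claimed by another requirement, it claims row $a$ and decrees $f(\{a,d\})=1-c$ for every $d$ greater than the current stage. Any later element $d$ of $W_e$ then produces the bichromatic configuration, distinct requirements claim distinct rows so no conflicts arise, and there is no race since each requirement waits only on $W_e$ itself. Alternatively, the classical argument avoids diagonalization against indices altogether: take a bi-immune $\Delta^0_2$ set $A$ with computable approximation $(A_s)_{s\in\omega}$ and set $f(\{x,y\})=A_y(x)$ for $x<y$; every infinite homogeneous set is contained in $A$ or in $\overline{A}$, hence cannot be computable.
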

 
\noindent (In the parlance of reverse mathematics, this shows that Ramsey's theorem for colorings of pairs is not provable in the base theory, $\RCA_0$.) This result was greatly extended in the seminal 1972 paper of Jockusch~\cite{Jockusch1972Ramseys}, which set off an industry of research on Ramsey's theorem in computability theory that is still highly active today.
 
The computability-theoretic perspective offers insights that are not readily discernible in combinatorics alone. In the case of Ramsey's theorem, a well-known example is provided by the following pair of results.

\begin{theorem}[Jockusch~\cite{Jockusch1972Ramseys}, Theorem 5.7]
	For each $n \geq 3$, there is a computable $f: [\omega]^n \to 2$ each of whose infinite homogeneous sets computes $\emptyset^{(n-2)}$ (and in particular $\emptyset'$).
\end{theorem}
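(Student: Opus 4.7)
The plan is to encode a computable iterated-limit approximation to $\emptyset^{(n-2)}$ into a $2$-coloring of $[\omega]^n$, structured so that $0$-homogeneity forces the approximation to stabilize along the homogeneous set. First I would apply the limit lemma $n-2$ times to fix a computable $g\colon \omega^{n-1} \to 2$ with
\[
\emptyset^{(n-2)}(w) \;=\; \lim_{s_1} \lim_{s_2} \cdots \lim_{s_{n-2}} g(w, s_1, \ldots, s_{n-2}),
\]
and then define $f\colon [\omega]^n \to 2$ by setting $f(x_1, \ldots, x_n) = 0$ exactly when, for every $w \leq x_1$ and every two size-$(n-2)$ subsets $S, S' \subseteq \{2, \ldots, n\}$, the values $g(w, \vec{x}_S)$ and $g(w, \vec{x}_{S'})$ agree (writing $\vec{x}_S$ for the increasing list of $x_i$ with $i \in S$); this $f$ is manifestly computable. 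When $n = 3$ the definition specializes to the classical modulus-of-$\emptyset'$ coloring.

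To rule out $1$-homogeneous infinite sets, I would let $H$ be infinite and fix $x_1 \in H$. For each $w \leq x_1$ the nested limits stabilize at finite thresholds depending on the previously fixed arguments. Proceeding inductively from $x_2$ upward, and using that at each stage only finitely many thresholds must be satisfied, I can pick $x_2 < x_3 < \cdots < x_n$ in $H$ with $x_i$ past every threshold determined by any prefix from $\{x_2, \ldots, x_{i-1}\}$ and any $w \leq x_1$. For the resulting tuple every size-$(n-2)$ subtuple of $(x_2, \ldots, x_n)$ satisfies $g(w, \vec{x}_S) = \emptyset^{(n-2)}(w)$, so $f(x_1, \ldots, x_n) = 0$, contradicting $1$-homogeneity.

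For the heart of the argument, suppose $H = \{h_0 < h_1 < \cdots\}$ is infinite and $0$-homogeneous; I would show $\emptyset^{(n-2)} \Tred H$ by proving that, whenever $h_0 \geq w$, $g(w, h_1, h_2, \ldots, h_{n-2}) = \emptyset^{(n-2)}(w)$. The key step is that $0$-homogeneity makes $g(w, \vec{y})$ constant over all strictly increasing $(n-2)$-tuples $\vec{y}$ from $H$ past $h_0$. By transitivity it suffices to treat single-coordinate substitutions: given two such tuples differing only in position $\ell$, their union has $n-1$ elements which, together with $h_0$, form an $n$-tuple in $[H]^n$ whose two size-$(n-2)$ subtuples of $\{2, \ldots, n\}$ (obtained by omitting each of the two differing coordinates) realize exactly the two tuples being compared, so $0$-homogeneity forces equality of the $g$-values. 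Having established constancy, I would apply the iterated limits one at a time: the innermost limit $\lim_s g(w, h_1, \ldots, h_{n-3}, s)$ exists, and since the values $g(w, h_1, \ldots, h_{n-3}, h_j)$ for $j > n-3$ are all equal to $g(w, h_1, \ldots, h_{n-2})$, the limit is $g(w, h_1, \ldots, h_{n-2})$. Climbing the tower of limits $n-2$ levels then gives $\emptyset^{(n-2)}(w) = g(w, h_1, \ldots, h_{n-2})$, as claimed.

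The principal obstacle I expect is this combinatorial step of realizing every single-coordinate substitution as a pair of size-$(n-2)$ subtuples of a common $n$-tuple of $H$. A na\"{i}ve coloring comparing only adjacent coordinates is sufficient for the base case $n = 3$ but for larger $n$ yields stabilization of only the innermost limit, not of the full iterated limit. The fully symmetric coloring condition above is the natural fix: it provides precisely the flexibility needed to match any two differing coordinates in any $(n-2)$-tuple of $H$ through a suitable $n$-tuple in $[H]^n$, which is the only avenue through which $0$-homogeneity can be invoked.
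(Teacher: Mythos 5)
Your proof is correct. The paper cites Jockusch's theorem without reproducing a proof, so there is no in-paper argument to compare against; your route via the iterated limit lemma is the standard modern one, and your specific coloring is a clean, fully symmetric variant of it. I checked the two places where the details actually bite. First, in ruling out $1$-homogeneity: choosing $x_i$ past the stabilization thresholds of \emph{every} subsequence of $\{x_2,\ldots,x_{i-1}\}$ (together with every $w\le x_1$) is exactly what is needed, because when a size-$(n-2)$ subtuple $(y_1,\ldots,y_{n-2})$ of $(x_2,\ldots,x_n)$ omits some $x_j$, each prefix $(y_1,\ldots,y_{m-1})$ is still a subsequence of $\{x_2,\ldots,x_{i-1}\}$ where $y_m=x_i$, so each $y_m$ clears the threshold for its predecessor prefix and the nested limits collapse to $\emptyset^{(n-2)}(w)$. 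Second, in the $0$-homogeneous direction: single-coordinate substitution does connect any two increasing $(n-2)$-tuples from $H$ above $h_0$ — route both through a tuple of $n-2$ elements of $H$ larger than all entries of either, replacing coordinates from the top down — and each single substitution is realized by the two size-$(n-2)$ subsets of $\{2,\ldots,n\}$ inside the $n$-tuple formed by $h_0$ together with the $(n-1)$-element union. The limit-climbing step is sound because the limit lemma, applied at each relativized level, guarantees convergence for all argument tuples, not just those arising as iterated moduli; and since $H$ is infinite, the cofinal constancy along $H$ pins each partial limit to the common value $g(w,h_1,\ldots,h_{n-2})$.

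One small expository remark: the usual textbook coloring compares only the two ``consecutive'' windows $g(w,x_2,\ldots,x_{n-1})$ and $g(w,x_3,\ldots,x_n)$, which suffices but requires a slightly more careful chaining argument to get constancy over all tuples. Your all-pairs condition makes the $0$-homogeneous case a one-line reduction to single substitutions, at no extra cost in computability, and your closing paragraph correctly identifies this as the point of the design.
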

 
\begin{theorem}[Seetapun; see~\cite{Seetapun1995strength}]\label{thm:Seetapun}
	Every computable $f: [\omega]^2 \to 2$ has an infinite homogeneous set that does not compute $\emptyset'$.	
\end{theorem}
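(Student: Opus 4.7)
The plan is to build an infinite $f$-homogeneous set $H$ stage by stage using a variant of Mathias forcing, while simultaneously diagonalizing against every Turing functional $\Phi_e$ to guarantee $\Phi_e^H \neq \emptyset'$. A condition will be a pair $(F,X)$, where $F \subset \NN$ is finite and $f$-homogeneous, $X \subseteq \NN$ is infinite with $\max F < \min X$, every $n \in X$ is color-consistent with $F$ (i.e.\ $f(m,n)$ is constant for $m \in F$ and equals the homogeneous color of $F$), and crucially the reservoir $X$ does not itself compute $\emptyset'$. Extensions $(F',X') \leq (F,X)$ satisfy $F \subseteq F' \subseteq F \cup X$ and $X' \subseteq X$, with $F'$ still $f$-homogeneous of the same color. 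Any sufficiently generic filter produces an infinite homogeneous $H = \bigcup F$.

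The heart of the argument is an extension lemma: given a condition $(F,X)$ and a functional $\Phi_e$, one can find an extension forcing $\Phi_e^H \neq \emptyset'$. Split into two cases. First, if there exists a finite $G \subseteq X$ with $F \cup G$ still $f$-homogeneous and some $x$ for which $\Phi_e^{F \cup G}(x){\downarrow}$ disagrees with $\emptyset'(x)$, then extend to $(F \cup G, X \setminus [0, \max G])$ and we are done. Otherwise, for every $G$ and $x$, whenever $\Phi_e^{F \cup G}(x){\downarrow} = y$ we have $y = \emptyset'(x)$; so the danger is that a generic $H$ makes $\Phi_e^H$ total and equal to $\emptyset'$.

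To defeat this danger, for each $x$ and each color $i \in \{0,1\}$ consider the tree $T_{x,i}$ of finite $i$-homogeneous extensions $G \subseteq X$ of $F$ for which $\Phi_e^{F \cup G}(x){\uparrow}$. If some $T_{x,i}$ has an infinite path inside $X$, then restricting to color $i$ and thinning $X$ along a suitable member of a $\Pi^{0,X}_1$ class produces an extension on which $\Phi_e^H(x){\uparrow}$, so $\Phi_e^H \neq \emptyset'$. If instead every $T_{x,i}$ is finite (relative to $X$), then from $X$ one can compute, for each $x$ and each $i$, a bound $b(x,i)$ beyond which every $i$-homogeneous $G \subseteq X$ extending $F$ forces $\Phi_e^{F \cup G}(x){\downarrow}$; but by Case-2 hypothesis that value must equal $\emptyset'(x)$, so $X$ computes $\emptyset'$, contradicting our invariant on $X$.

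The main obstacle is precisely the subcase where some $T_{x,i}$ is infinite: we must thin $X$ to a new reservoir $X'$ that still avoids computing $\emptyset'$ and still lives inside the desired color class, then also ensure the tree selection does not itself leak $\emptyset'$. This is where one invokes the Jockusch--Soare cone-avoidance theorem for $\Pi^0_1$ classes relative to $X$: since $\emptyset' \not\leq_T X$, any $\Pi^{0,X}_1$ class containing an infinite subtree has a member $Y$ with $\emptyset' \not\leq_T X \oplus Y$, and we take $X'$ to be such a $Y$. Iterating the extension lemma over all $e$ in a fusion-style construction produces the desired infinite homogeneous $H$ with $\emptyset' \not\leq_T H$.
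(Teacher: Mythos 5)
There is a genuine gap, and it sits exactly where Seetapun's theorem is hard: your conditions commit to a single color, and neither the infiniteness of the generic nor the extension lemma survives that commitment. First, infiniteness: the sets of conditions with $|F|\geq n$ are not dense in your forcing. A condition $(F,X)$ with $F$ homogeneous for color $0$ but $X$ homogeneous for color $1$ has no extension with a larger stem at all, and even when extension is possible, adding $x\in X$ to $F$ forces you to shrink the reservoir to $\{y\in X: f(x,y)=i\}$ for the committed color $i$, which may be finite or may compute $\emptyset'$; the pigeonhole principle (strong cone avoidance of $\RT{1}{2}$, \cite[Lemma 3.2]{Dzhafarov2009Ramseys}) only guarantees that \emph{some} color has an infinite cone-avoiding part, and you do not get to choose which. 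The commitment can be fatal in a way no local repair fixes: for the Jockusch coloring $f(x,y)=1$ iff $\emptyset'_y\upharpoonright x=\emptyset'_x\upharpoonright x$, every infinite $1$-homogeneous set computes $\emptyset'$, so a construction that ever commits to color $1$ cannot succeed, and whether a color is ``bad'' is neither decidable nor even determined until the reservoir is fixed. Your extension lemma has the same flaw in disguise: you introduce trees $T_{x,i}$ ``for each color $i$'' and propose ``restricting to color $i$,'' but if $i$ is not the color of $F$ this is not an extension of $(F,X)$, so the case where divergence can only be forced on the opposite color has no landing place in your partial order.

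The repair is the actual content of Seetapun's argument (in its modern form, see Hirschfeldt~\cite{Hirschfeldt2015Slicing}, Section 6.7, which is what the present paper points to, and which is mirrored by the compound tree conditions of Section 3.3 here): conditions carry \emph{two} stems $(F_0,F_1,X)$, one per color, with a single shared cone-avoiding reservoir whose elements are color-consistent with both stems. One then forces the disjunctive requirements ``$\Gamma_{e_0}^{G_0}\neq\emptyset'$ or $\Gamma_{e_1}^{G_1}\neq\emptyset'$'': in the analogue of your Case 2 one takes the $\Pi^{0,X}_1$ class of $2$-partitions $B_0\sqcup B_1=X$ on neither side of which a finite homogeneous extension makes the functional converge to the true value, applies the cone avoidance basis theorem and then the pigeonhole to land on whichever side admits an infinite cone-avoiding subreservoir, and extends \emph{that} side. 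A final pairing argument shows one of the two generics $G_0,G_1$ diagonalizes against every functional and is infinite. Your Cases 1 and 3 (the $X$-c.e.\ set $W$ of forced values and the conclusion $\emptyset'\Tred X$ when $W$ is total and correct) are fine and are exactly the standard steps; it is the one-sided bookkeeping around them that does not work.
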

 
\noindent Thus, there is a direct computational distinction between Ramsey's theorem for colorings of pairs and Ramsey's theorem for colorings triples and larger tuples. (Formalizing these results in $\RCA_0$ yields that Ramsey's theorem for colorings of triples implies the system $\ACA_0$ over $\RCA_0$, whereas Ramsey's theorem for colorings of pairs does not.)

Such threshold phenomenon, where an increase in a parameter changes a theorem from not being able to encode specific non-computable information to being able to do so, are observed quite widely. For example, as was shown by Dzhafarov and Patey~\cite{Dzhafarov2017Coloring}, this is the case for the aforementioned theorem introduced by Chubb, Hirst, and McNicholl \cite{Chubb2009Reverse}. And more recently, Chong et al.~\cite{Chong2019Strengthc} obtained similar results for a theorem of Erd\H{o}s and Rado about colorings of pairs of rationals. We will likewise establish threshold phenomena for Milliken's tree theorem and the various consequences of it we consider.

Another computability-theoretic feature that will feature prominently in our work is \emph{cone avoidance}\index{cone avoidance}. In the subject, a \emph{cone} refers to a set of subsets of $\omega$ closed upward under $\Tred$. As a case in point, the set of all sets $X$ that compute $\emptyset'$ is a cone, and Seetapun's theorem (\Cref{thm:Seetapun} above) can be seen as saying that every computable instance of Ramsey's theorem for colorings of pairs has a solution that lies outside (or avoids) this cone. The emphasis here is on the restriction to \emph{computable instances}, however; indeed, it is easy to see that there is a (necessarily non-computable) $f: [\omega]^2 \to 2$, each of whose infinite homogeneous sets \emph{does} compute $\emptyset'$. By contrast, some instance-solution problems enjoy a stronger property called \emph{strong cone avoidance}\index{strong cone avoidance}\index{cone avoidance!strong}, whereby \emph{every} instance (computable or not) has at least one solution that avoids the cone of sets that compute $\emptyset'$. This is the case, for example, for the infinitary pigeonhole principle, as shown by Dzhafarov and Jockusch~\cite[Lemma 3.2]{Dzhafarov2009Ramseys}. We shall investigate both cone avoidance and strong cone avoidance for versions of Milliken's tree theorem, and in particular, for the Halpern-La\"{u}chli theorem. It is worth noting, too, that while every computably true problem obviously possesses cone avoidance, not every such problem satisfies strong cone avoidance. (For example, consider the identity problem, whose instances are all $X \subseteq \omega$, and the only solution of $X$ is $X$ itself.)
 
\section{Plan of the manuscript}

The manuscript is organized as follows. In \Cref{sec:defns}, we give further background and definitions to allow us to state the full versions of Milliken's tree theorem and the Halpern-La\"{u}chli theorem. In \Cref{sect:hl-theorem}, we proceed to the computability-theoretic analysis of the Halpern-Lauchli theorem, as a bootstrap to understanding the computational content of Milliken's tree theorem. In particular, we prove that the Halpern-Lauchli theorem is computably true (\Cref{thm:halpern-lauchli-computably-true}) and admits strong cone avoidance (\Cref{thm:hl-strong-cone-avoidance}). Then, in \Cref{sect:milliken-theorem}, we analyse a product version of Milliken's tree theorem. We prove that the statement is equivalent to $\ACA_0$ for strong subtrees of height at least $3$ (\Cref{thm:milliken-aca}), and that its restriction to colorings of strong subtrees of height 2 admits cone avoidance (\Cref{thm:cone-avoidance-MTT2}). Lastly, we prove that a weakening to the product version of Milliken's tree theorem for height 3, for which the solutions have now at most 2 colors instead of 1, admits cone avoidance (\Cref{thm:pmtt3k2-cone-avoidance}). We then study three applications of Milliken's tree theorem for pairs, namely: Devlin's theorem concerning colorings of tuples of rationals (\Cref{sec:devlin}); a theorem about colorings of finite subgraphs of the Rado graph (\Cref{sec:radomain}); and a generalization of the combinatorial theorem of Chubb, Hirst, and McNicholl discussed above (\Cref{sec:GenCHMTT}). Finally, in \Cref{sect:open-questions}, we state some questions that our investigation leaves open.

\chapter{Definitions}\label{sec:defns}
The aim of this chapter is to review key concepts to make the rest of this monograph more easily accessible to computability theorists, set theorists, and combinatorialists. Our terminology and notation will for the most part be standard, following, e.g., \cite{Downey2010Algorithmic} and \cite{Todorcevic2010Ramsey}. Where there is less uniformity in the literature, we highlight our particular usage in this chapter and, as the need arises, in the sequel. In \Cref{sec:bkg_strings} we set out our notation for finite strings, operations on them, and spaces of subsets of $\NN$, which are largely common across these fields. \Cref{sec:bkg_comp,sec:bkg_rm} provide an overview of some technical notions from computability theory and reverse mathematics. In \Cref{sec:bkg_trees,sec:bkg_forests}, we review combinatorial definitions relevant to stating Milliken's tree theorem and some of its corollaries, which we then present in \Cref{sec:bkg_stmts}. Finally, in \Cref{sec:bigRamsey}, we review some terminology from structural Ramsey theory that helps give a common framing for these principles.

We begin with some basics. We use $\sqcup$\index{$\sqcup$} to denote disjoint union. For every set $X$, we denote by $\Pc(X)$\index{$\Pc(X)$} the power set of $X$. And given a function $f$ on $X$, we let $f \upharpoonright Y$\index{$\upharpoonright$} denote the restriction of $f$ to $Y \subseteq X$.

Throughout, we use $(\, \cdots )$ to denote (ordered) tuples\index{ordered tuples} of objects, and given a function $f$ defined on a tuple $(a_0,\ldots,a_n)$ we write $f(a_0,\ldots,a_n)$ in place of $f((a_0,\ldots,a_n))$. In the computability-theoretic setting, we do not make a notational distinction for coded tuples (of numbers, subsets of $\NN$, or combinations thereof). Thus, we also let $(\, \cdots )$ denote a fixed computable bijection from finite ordered tuples of natural numbers to $\NN$, e.g., as in \cite[p.~xxxii]{Soare2016Turing}. For $X_0,\ldots,X_{n-1} \subseteq \NN$ we will sometimes use $(X_0,\ldots,X_{n-1})$ as an alternative notation for the join, $X_0 \oplus \cdots \oplus X_{n-1} = \{(x,i): x \in X_i, i < n\} \subseteq \NN$\index{$\oplus$}. In the case that some $X_i$ is a singleton, say containing $x$, we will write simply $(X_0,\ldots,x,\ldots,X_{n-1})$ in place of $(X_0,\ldots,\{x\},\ldots,X_{n-1})$.

Given a countable collection of sets $\{X_0,X_1,\ldots\}$ indexed by the natural numbers, we write $\bigcup_n X_n$ for $\bigcup_{n \in \NN} X_n$.

\section{Strings and subsets of $\NN$}\label{sec:bkg_strings}

The following definition is included for completeness.

\begin{definition}
\
	\begin{enumerate}
		\item $\baire$\index{$\baire$} denotes the set of all finite strings of natural numbers, i.e., functions $\sigma: n \to \omega$ for some $n \in \NN$.
		\item $\cantor$\index{$\cantor$} denotes the subset of $\baire$ of binary ($\{0,1\}$-valued) strings.
		\item The \emph{length} of $\sigma \in \baire$ is the cardinality of its domain, and is denoted by $|\sigma|$\index{$\lgth{\sigma}$}\index{$\lgth{\sigma}$}.
		\item The unique string of length $0$ is denoted by $\emptystr$\index{$\emptystr$}.
		\item For $n \in \NN$, $\omega^n$\index{$\omega^n$} and $\omega^{<n}$\index{$\omega^{<n}$} denote the sets of $\sigma \in \baire$ with $|\sigma| = n$ and $|\sigma| < n$, respectively.
		\item For $n \in \NN$, $2^n$\index{$2^n$} and $2^{<n}$\index{$2^{<n}$} denote the sets of $\sigma \in \cantor$ with $|\sigma| = n$ and $|\sigma| < n$, respectively.
	\end{enumerate}
\end{definition}

As is customary, we will alternate between the function and sequence point of view for elements of $\omega^{<\omega}$. For $\sigma \in \omega^{<\omega}$ and $i < |\sigma|$ we will thus speak of $\sigma(i)$ and the $(i+1)$st element of $\sigma$ (or $(i+1)$st \emph{bit}, if $\sigma \in \cantor$) interchangeably, or as convenient. We will sometimes specify $\sigma$ explicitly as $\seq{\sigma(0)\sigma(1)\cdots \sigma(|\sigma|-1)}$.

\begin{definition}
	Fix $\sigma,\tau \in \omega^{<\omega}$.
	\begin{enumerate}
		\item $\sigma$\index{initial segment!string} is an \emph{initial segment} of $\tau$, and $\tau$ is an \emph{extension}\index{string!extension}\index{extension!string} of $\sigma$, written $\sigma \preceq \tau$\index{$\preceq$}, if $\sigma = \tau \upharpoonright n$ for some $n \leq |\tau|$.
		\item $\sigma$ is a \emph{proper initial segment}\index{initial segment!proper}\index{proper initial segment} of $\sigma$, and $\tau$ is a \emph{proper extension}\index{proper extension}\index{extension!proper} of $\sigma$, written $\sigma \prec \tau$\index{$\prec$}, if $\sigma = \tau \upharpoonright n$ for some $n < |\tau|$, i.e., if $\sigma \preceq \tau$ and $\sigma \neq \tau$.
		\item $\sigma$ and $\tau$ are \emph{incompatible}\index{string!incompatible}\index{incompatible!string}, written $\sigma\incomp\tau$, if $\sigma \npreceq \tau$ and $\tau \npreceq \sigma$.
		\item The \emph{meet}\index{meet!string}\index{string!meet} of $\sigma$ and $\tau$, denoted by $\sigma \meet \tau$, is the longest common initial segment of $\sigma$ and $\tau$, i.e., $\sigma \meet \tau = \sigma \upharpoonright n$ for the longest $n$ such that $\sigma \upharpoonright n = \tau \upharpoonright n$.
		\item The \emph{concatenation}\index{concatenation}\index{string!concatenation} of $\sigma$ by $\tau$ is the string $\sigma\tau: |\sigma| + |\tau| \to \omega$ with $\sigma\tau(i) = \sigma(i)$ for all $i < |\sigma|$ and $\sigma\tau(i) = \tau(i - |\sigma|)$ for all $|\sigma| \leq i < |\sigma| + |\tau|$.
	\end{enumerate}
\end{definition}

So, for the sake of completeness, notice that if $\sigma \meet \tau = \sigma$ then $\sigma \preceq \tau$. Observe too that $\emptystr$ is an initial segment of every $\sigma$, and $\emptystr\sigma = \sigma\emptystr = \sigma$. Finally, if $\sigma,\tau \in \cantor$ then so is $\sigma\tau$.

\begin{definition}
	\
	\begin{enumerate}
		\item $\Baire$\index{$\Baire$} denotes the set of all functions $X: \NN \to \NN$, and $\Cantor$\index{$\Cantor$} the set of all $\{0,1\}$-valued such functions.
		\item $\sigma \in \baire$ is an \emph{initial segment}\index{initial segment!sequence} of $X \in \Baire$, and $X$ is an \emph{extension} of $\sigma$, written $\sigma \prec X$, if $\sigma(i) = X(i)$ for all $i < |\sigma|$.
	\end{enumerate}
\end{definition}

\noindent When convenient, we identify sets with their characteristic functions, which gives us the usual equivalence between elements of $\Cantor$ and elements of $\Pc(\NN)$. For this reason, we use $X \uh \ell$ for $\ell \in \NN$, which denotes the restriction of the characteristic function of $X$ to $\ell$, also as shorthand for $\{x \in X: x < \ell\}$. 

The sets $\Baire$ and $\Cantor$ each have natural topologies defined on them, respectively generated by basic open sets of the form
\index{$[\sigma]$}
\[
	[\sigma] = \{X \in \Baire: \sigma \prec X\}.
\]
for $\sigma \in \baire$, and
\[
	[\sigma] = \{X \in \Cantor: \sigma \prec X\}.
\]
for $\sigma \in \cantor$. This turns $\Baire$ into a Baire space and $\Cantor$ into a Cantor space. For our purposes here, the main relevant topological consideration will be that $\Cantor$ is compact.

\section{Computability and reverse mathematics}\label{sec:bkg_comp}

Everywhere, we adopt the Church-Turing thesis, and therefore forego any specifics of our model of computation. We take as fixed some listing $\Phi_0,\Phi_1,\ldots$\index{$\Phi_e$} of all partial computable functions such that from each $e$ we can computably determine the program of $\Phi_e$, and conversely, from each program we can computably find an $e$ such that $\Phi_e$ executes this program. Nominally, we think of $e$ as being a code for the sequence of steps in the program under a G\"{o}del coding (see, e.g., \cite{Soare2016Turing}, Definitions 1.5.1 and 1.7.2).

Recall that a set $W \subseteq \NN$ is \emph{computably enumerable}\index{computably enumerable} (\emph{c.e.})\ if it is the domain of some partial computable function, i.e., the set of inputs on which a given Turing program halts in finite time. We denote the domain of $\Phi_e$ by $W_e$.

\begin{definition}
	A \emph{Turing functional}\index{Turing!functional} is a c.e.\ set $\Gamma$ of pairs $\seq{\sigma,\tau} \in \cantor \times \cantor$ (coded as numbers) such that if $\seq{\sigma,\tau}$ and $\seq{\sigma',\tau'}$ belong to $\Phi$ and $\sigma \preceq \sigma'$ then $\tau \preceq \tau'$. In this case, for every set $X \subseteq \NN$, we also define the following.
	\begin{enumerate}
		\item $\Gamma^X = \bigcup \{ \tau \in \cantor: (\exists \sigma \prec X)[(\sigma,\tau) \in \Gamma]\}$.
		\item We write $\Gamma^X(x) = y$ or $\Gamma^X(x) \downarrow = y$ if $\tau(x) = y$ for some (and hence all) $(\sigma,\tau) \in X$ with $\sigma \prec X$ and $|\tau| > x$; we write $\Gamma^X(x) \downarrow$ if $\Gamma^X(x) = y$ for some $y$, and otherwise we write $\Gamma^X(x) \uparrow$.
		\item $\Gamma^X$ is \emph{total}\index{total functional} if $\Gamma^X(x) \downarrow$ for all $x \in \NN$.
	\end{enumerate}
\end{definition}

\noindent Note that if $\Gamma^X$ is total then it is, in fact, equal to an element of $\Cantor$. In particular, if $\Gamma^X$ is total for all $X \in \Cantor$ then $\Gamma$ is a continuous map $\Cantor \to \Cantor$. If $\Gamma = W_e$, then for all $X$ we also denote $\Gamma^X$ by $\Phi^X_e$ when convenient.

For simplicity, we abuse notation and write $\Phi_e$ instead of $\Phi_e^{\emptyset}$. (Formally, this is only incorrect up to a fixed computable permutation of $\NN$. Indeed, given any computable set $X$ there is a computable bijection $f: \NN \to \NN$ such that $\Phi^X_e = \Phi_{f(e)}$ for all $e \in \NN$.) This highlights the fact that the main role of Turing functionals is to facilitate relativization of computability-theoretic notions to arbitrary subsets of $\NN$. For example, a set $Y \subseteq \NN$ is computable \emph{relative to $X$} (or \emph{from $X$}, or is \emph{$X$-computable}), if $Y = \Gamma^X$ for some Turing functional $\Gamma$, in which case we write $Y \Tred X$; $Y$ is computably enumerable \emph{relative to $X$} (or \emph{$X$-c.e.}) if $Y$ is the domain of $\Gamma^X$ for some Turing functional $\Gamma$; etc. Recall, too, that for each set $X$, the \emph{jump}\index{Turing!jump}\index{jump!Turing} of $X$ is the $X$-c.e.\ set $X' = \{e \in \NN: \Phi_e^X(e) \downarrow \}$.

An important object in investigations like ours is the following.

\begin{definition}
	A class $\Cc \subseteq \Cantor$ is a a \emph{$\Pi^0_1$ class}\index{class!$\Pi^0_1$}\index{$\Pi^0_1$} if there is a c.e.\ set $W$, viewed as a subset of $\cantor$, such that $\Cc = \Cantor \setminus \bigcup_{\sigma \in W} [\sigma] = \{Y \in \Cantor: (\forall \sigma \in W)[\sigma \nprec Y]\}$.
\end{definition}

\noindent If we take $W$ in the definition to be $X$-c.e.\ rather than c.e.,\ we get the relativized concept of a \emph{$\Pi^{0,X}_1$ class}\index{$\Pi^{0,X}_1$}. Such classes are ubiquitous, often showing up as the collection of sets satisfying some natural computability-theoretic or combinatorial property. A prototypical example, given an infinite set $X$ and a Turing functional $\Gamma$, is the class $\Cc_{X,\Gamma}$ of all pairs of sets $(Y_0,Y_1)$ such that $Y_0 \cup Y_1 = X$ and for each $i < 2$, each $x \in \NN$, and every finite subset $F$ of $Y_i$, $\Gamma^F(x) \uparrow$. It is easy to verify that $\Cc_{X,\Gamma}$ is a $\Pi^{0,X}_1$ class.

Note that a $\Pi^0_1$ class is, in particular, a closed subset of $\Cantor$. (The additional property, worth emphasizing, is that a $\Pi^0_1$ class is one whose complement is effectively generated.) Every closed subset of $\Cantor$ is also compact, which yields the following simple but significant result.

\begin{lemma}[Compactness for $\Pi^0_1$ classes]
	If $W$ is c.e.\ and $\Cc = \Cantor \setminus \bigcup_{\sigma \in W} [\sigma] = \emptyset$, then there is an $\ell \in \NN$ such that $\sigma \in \cantor$ has an initial segment in $W$ of length at most $\ell$.
\end{lemma}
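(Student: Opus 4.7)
The plan is the classical König's-lemma argument for compactness of $\Cantor$, now specialized to the effectively presented closed class $\Cc$. First I would form the \emph{tree of failures}
\[
  T = \{\sigma \in \cantor : (\forall \tau \preceq \sigma)(\tau \notin W)\},
\]
consisting of all finite binary strings no initial segment of which belongs to $W$. By construction $T$ is closed under initial segments, hence is a subtree of $\cantor$.

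The next step is to observe the correspondence between infinite paths of $T$ and elements of $\Cc$: an $X \in \Cantor$ has every finite initial segment in $T$ exactly when no initial segment of $X$ belongs to $W$, which is precisely the condition $X \in \Cc$. Thus the hypothesis $\Cc = \emptyset$ is equivalent to $T$ having no infinite path. Since $T$ is a subtree of $\cantor$, each node has at most two immediate successors, and König's lemma then forces $T$ to be finite. One would prove this directly: if every level of $T$ were non-empty, one could recursively pick a successor with infinitely many extensions and so manufacture an infinite path through $T$, contradicting the previous step.

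Finally, taking $\ell$ to be one more than the maximum length of any string in $T$, every $\sigma \in \cantor$ with $|\sigma| \geq \ell$ fails to lie in $T$, and so admits some initial segment in $W$; the shortest such initial segment necessarily has length at most $\ell$. I do not anticipate any real obstacle: the c.e.\ hypothesis on $W$ is not used in this pure existence statement (it only becomes relevant when one asks whether $\ell$ can be computed uniformly from a c.e.\ index for $W$ together with a witness that $\Cc$ is empty). The lemma is essentially just the compactness of $\Cantor$ reflected into the setting of $\Pi^0_1$ classes, which is precisely what will make it the central tool for the subsequent constructions with $\Pi^0_1$ classes in the monograph.
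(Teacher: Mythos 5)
Your argument is correct, and it is exactly the standard unpacking of what the paper itself merely asserts: the lemma is stated there without proof as an immediate consequence of the compactness of closed subsets of $\Cantor$, which is precisely the König's-lemma argument you give. Your closing remarks (that the c.e.\ hypothesis is not needed for bare existence of $\ell$, and that the intended reading concerns strings of length at least $\ell$) are also accurate.
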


\noindent For instance, if the class $\Cc_{X,\Gamma}$ mentioned above is empty, then compactness yields an $\ell$ such that for every partition of $X$ into two sets, $Y_0$ and $Y_1$, there is an $i < 2$ and a finite subset $F$ of $Y_i \uh \ell$ with $\Gamma^F(x) \downarrow$ for some $x$. Our use of compactness will often take this form.

Equally important for us will be the case when a $\Pi^0_1$ class we are dealing with is non-empty. To study the members of such classes, we typically employ basis theorems of various kinds, a \emph{basis}\index{basis theorem} in this context  being a collection of subsets of $\NN$ that intersects every non-empty $\Pi^0_1$ class. The most celebrated example of this is the low basis theorem\index{basis theorem!low basis}\index{low!basis theorem} of Jockusch and Soare \cite[Theorem 2.1]{Jockusch197201}, which shows that the collection of low sets $Y$ with $Y' \Tred \emptyset'$ forms a basis. In this monograph, we will most often use the following \emph{cone avoidance basis theorem}\index{basis theorem!cone avoidance}\index{cone avoidance!basis theorem}.

\begin{theorem}[Jockusch and Soare \cite{Jockusch1972Degrees}, Corollary 2.11]
	Let $C \subseteq \NN$ be non-computable. Every non-empty $\Pi^0_1$ class contains a member $Y$ such that $C \nTred Y$.
\end{theorem}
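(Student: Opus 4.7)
The plan is to prove the theorem by a nested-class construction, a prototype of effective forcing with $\Pi^0_1$ classes. Starting from $\Pc_0 = \Pc$, I build a decreasing sequence $\Pc_0 \supseteq \Pc_1 \supseteq \cdots$ of non-empty $\Pi^0_1$ classes such that for every $e \in \NN$, every $Y \in \Pc_{e+1}$ satisfies $\Phi_e^Y \neq C$. Since each $\Pc_e$ is a closed subset of the compact space $\Cantor$ and the chain is decreasing, $\bigcap_e \Pc_e$ is non-empty by compactness, and any $Y$ in this intersection witnesses the theorem. The construction itself need not be effective; only the existence of the sequence matters.

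The key one-step lemma is: given a non-empty $\Pi^0_1$ class $\Pc$ and an index $e$, there is a non-empty $\Pi^0_1$ subclass $\Pc' \subseteq \Pc$ on which $\Phi_e^Y \neq C$ holds uniformly. For each $x, i \in \NN$, the set $\{Y \in \Pc: \Phi_e^Y(x) \neq i\}$, interpreted loosely so that $\Phi_e^Y(x) \uparrow$ counts as $\neq i$, is itself a $\Pi^0_1$ class, since its complement inside $\Pc$ is the $\Sigma^0_1$ set $\bigcup\{[\sigma]: \Phi_e^\sigma(x) \downarrow = i\}$. I will show that some $x$ yields a non-empty such class when $i = C(x)$, and take $\Pc'$ to be that class; any $Y \in \Pc'$ then has $\Phi_e^Y(x) \neq C(x)$, hence $\Phi_e^Y \neq C$.

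To prove the claim, I argue by contradiction: suppose that for every $x$, the class $\{Y \in \Pc: \Phi_e^Y(x) \neq C(x)\}$ is empty, so every $Y \in \Pc$ satisfies $\Phi_e^Y(x) \downarrow = C(x)$ for every $x$. I then show $C$ is computable. Given $x$, search for $\ell, j, s \in \NN$ such that for every $\sigma \in 2^\ell$, either $[\sigma] \cap \Pc = \emptyset$ is witnessed by stage $s$---a c.e.\ condition, since emptiness of a $\Pi^0_1$ class is c.e.---or $\Phi_{e,s}^\sigma(x) \downarrow = j$. By compactness of $\Pc$, the uses of the convergent computations $\Phi_e^Y(x) = C(x)$ over $Y \in \Pc$ admit a common upper bound, so such a triple with $j = C(x)$ exists and the search terminates. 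Moreover, whichever $j$ is returned must equal $C(x)$: fix any $Y \in \Pc$, apply the winning $\ell$ to $\sigma := Y \uh \ell$, and use $[\sigma] \cap \Pc \supseteq \{Y\} \neq \emptyset$ to conclude $\Phi_e^\sigma(x) = j = \Phi_e^Y(x) = C(x)$. This contradicts the non-computability of $C$.

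The main delicacy is keeping careful track of what is genuinely $\Pi^0_1$: the class $\{Y: \Phi_e^Y(x) \neq i\}$ (closed, since its complement is effectively open) versus $\{Y: \Phi_e^Y(x) \uparrow\}$ on its own (which is merely $\Pi^0_2$). Using the loose interpretation of $\neq$ preserves the $\Pi^0_1$ property and lets the chosen subclass depend only on the fixed parameters $(\Pc, e, x, i)$, with $C$ entering solely to \emph{select} the value $i = C(x)$---so $\Pc_{e+1}$ is truly $\Pi^0_1$ and not merely $\Pi^{0,C}_1$. Once this is in place, the outer iteration and the final compactness step are routine.
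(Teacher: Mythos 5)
The paper states this theorem with a citation to Jockusch and Soare and does not supply its own proof, so there is nothing in the paper to compare against. Your argument is correct and is the standard proof of the cone avoidance basis theorem: the one-step lemma, the compactness argument for why the search terminates, and the careful observation that $\Pc_{e+1}$ is a genuine $\Pi^0_1$ class (because the chosen $x$ and the bit $i = C(x)$ are fixed parameters, with $C$ used only to select them and not appearing as an oracle) are all exactly as in Jockusch and Soare's original argument.
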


\noindent Observe that to relativize the cone avoidance basis theorem to a set $X$, we need $C$ above to be not only non-computable, but non-$X$-computable. Without this additional condition the result would be false, as can be easily seen, for example, by noticing that the singleton $\{X\}$ is a $\Pi^{0,X}_1$ class. This distinction---computing a given non-computable set on the one hand, and computing it together with a given other set on the other---turns out to be an important one, and we will return to it in the next chapter.

\section{Second-order arithmetic and computable reducibility}\label{sec:bkg_rm}

As mentioned, our main focus in this manuscript is a computability-theoretic one. As such, our contributions to reverse mathematics here are largely ancillary, and except where noted otherwise, will follow by straightforward formalization of our computability results. The framework of reverse mathematics nonetheless provides a convenient way to succinctly state many relationships between the various theorems we will be considering, and also motivates many questions we look at. Indeed, many of these questions would not arise otherwise. We thus begin with a brief overview of this framework.

Let $\mathsf{L}_2$ denote the (two-sorted, first-order) language of second-order arithmetic. We use lowercase letters $x,y,\ldots$ to range over first-order variables, and uppercase letters $X,Y,\ldots$ to range over second-order variables. All formulas discussed may include both first- and second-order variables and parameters.

\begin{definition}\label{def:subsystems}
	The following axiomatic systems are defined in the language of second-order arithmetic.
	\begin{enumerate}
		\item $\PA^-$ consists of the algebraic axioms of Peano arithmetic (i.e., all axioms except for induction).
		\item $\RCA_0$\index{$\RCA_0$}\index{subsystem!$\RCA_0$} consists of the axioms of $\PA^-$, together with \emph{$\Delta^0_1$ comprehension} (i.e., the scheme
		\[
			(\forall x)[\phi(x) \iff \psi(x)] \to (\exists X)(\forall x)[x \in X \iff \phi(x)],
		\]
		where $\phi$ is a $\Sigma^0_1$ formula and $\psi$ is $\Pi^0_1$) and \emph{$\Sigma^0_1$ induction} (i.e., the scheme
		\[
			(\phi(0) \wedge (\forall x)[\phi(x) \to \phi(x+1)]) \to (\forall x)[\phi(x)]
		\]
		where $\phi$ is a $\Sigma^0_1$ formula).
		\item $\ACA_0$\index{$\ACA_0$}\index{subsystem!$\ACA_0$} consists of the axioms of $\RCA_0$, together with \emph{arithmetic comprehension} (i.e., the scheme
		\[
			(\exists X)(\forall x)[x \in X \iff \phi(x)]
		\]
		where $\phi$ is a $\Sigma^0_n$ formula for some $n \in \NN$).
	\end{enumerate}	
\end{definition}

$\RCA_0$ corresponds more or less to formalized computable mathematics, since by Post's theorem, being computable from a set is the same as being $\Delta^0_1$ definable from it. Thus, morally, all effectively true theorems ought to be provable in $\RCA_0$. The one complicating factor in this is the restriction in $\RCA_0$ to $\Sigma^0_1$ induction, as even effective arguments sometimes require induction beyond this level, and so may fail in a non-standard model of $\RCA_0$. While this can lead to interesting questions concerning the first-order content of mathematical principles, the majority of our results in this monograph can be readily formalized in $\RCA_0$. Therefore, we will follow the common practice of presenting all our arguments semantically (i.e., we will not give formal proofs in second-order arithmetic), and obtain provability results in $\RCA_0$ implicitly.

The preceding definition lists two of the so-called ``big five'' subsystems of second-order arithmetic, as these will be the only ones of interest to us. In the classical program of reverse mathematics, $\RCA_0$ serves as the base theory, over which implications between (formal versions of) various mathematical theorems are considered, giving a measure of their relative proof-theoretic and computability-theoretic strength. Implications to and from $\ACA_0$ over $\RCA_0$, in particular, constitute an important benchmark in this measurement, as we discuss further below.

We now discuss the models of $\RCA_0$ and $\ACA_0$.

\begin{definition}
	A model\index{model!second-order arithmetic} of second-order arithmetic is a pair $(N,\Sc)$, where $N$ is (the domain of) a model of first-order arithmetic and $\Sc \subseteq \Pc(N)$. If $N = \NN$, then this is an \emph{$\omega$-model}. 
\end{definition}

\noindent Thus, an $\omega$-model\index{$\omega$-model}\index{model!$\omega$-model} is specified entirely by the collection $\Sc$ of subsets of $\NN$ that it includes. The following is immediate.

\begin{lemma}
	Let $(\NN,\Sc)$ be an $\omega$-model.
	\begin{enumerate}
		\item $(\NN,\Sc) \models \RCA_0$ if and only if $\Sc$ is closed under $\oplus$ and under $\Tred$ (i.e., if $\Sc$ is a \emph{Turing ideal}\index{Turing!ideal}).
		\item $(\NN,\Sc) \models \ACA_0$ if and only if $\Sc$ is closed under $\oplus$, $\Tred$, and the map $X \mapsto X'$ (i.e., if $\Sc$ is a \emph{jump ideal}\index{jump!ideal}).
	\end{enumerate}	
\end{lemma}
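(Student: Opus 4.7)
This is a standard characterization and the plan is to verify each direction of each equivalence directly, using Post's theorem to translate between arithmetic definability and computability relative to the jumps of parameters. In all cases the algebraic axioms of $\PA^-$ and the $\Sigma^0_1$ induction scheme hold automatically in any $\omega$-model, since $\NN$ equipped with its standard arithmetic interpretation is a model of full first-order $\PA$; so all work goes into the comprehension schemes.

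For part (1), suppose first that $(\NN,\Sc)\models\RCA_0$. To see closure under $\oplus$, note that for $X,Y\in\Sc$ the set $X\oplus Y$ is defined from $X,Y$ by a formula that is simultaneously $\Sigma^0_0$ and $\Pi^0_0$ with $X,Y$ as parameters, so $\Delta^0_1$ comprehension produces it inside $\Sc$. For closure under $\Tred$, suppose $Y\Tred X$ with $X\in\Sc$, via a Turing functional $\Gamma$. Then the statements ``$\Gamma^X(y)\!\downarrow =1$'' and ``$\Gamma^X(y)\!\uparrow$ or $\Gamma^X(y)\!\downarrow=0$'' are respectively $\Sigma^0_1$ and $\Pi^0_1$ in the parameter $X$, and they equivalently define $Y$; so $\Delta^0_1$ comprehension yields $Y\in\Sc$. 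Conversely, suppose $\Sc$ is a Turing ideal and that $\phi(x)$ is $\Sigma^0_1$ and $\psi(x)$ is $\Pi^0_1$ with $(\forall x)[\phi(x)\iff\psi(x)]$, both mentioning finitely many set parameters from $\Sc$. Joining these parameters using closure under $\oplus$ produces a single $X\in\Sc$; by Post's theorem the common set $\{x:\phi(x)\}$ is then $\Delta^{0,X}_1$, hence $X$-computable, hence in $\Sc$ by closure under $\Tred$. This gives $\Delta^0_1$ comprehension.

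For part (2), the equivalence is proved analogously, with the jump replacing the role played in part (1) by the Turing reduction. For the forward direction, $\ACA_0$ extends $\RCA_0$, so closure under $\oplus$ and $\Tred$ follows from part (1); and for any $X\in\Sc$ the jump $X'=\{e:\Phi_e^X(e)\!\downarrow\}$ is defined by a $\Sigma^0_1$ formula with parameter $X$, so arithmetic comprehension places $X'$ into $\Sc$. For the reverse direction, suppose $\Sc$ is a jump ideal and let $\phi(x)$ be an arbitrary arithmetic formula with parameters from $\Sc$. Joining the parameters into a single $X\in\Sc$, the formula $\phi$ is $\Sigma^{0,X}_n$ for some standard $n$, and by Post's theorem the set $\{x:\phi(x)\}$ is computable from $X^{(n)}$. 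Since $\Sc$ is closed under the jump, iterating $n$ times gives $X^{(n)}\in\Sc$, and closure under $\Tred$ finishes the argument.

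The plan is therefore entirely routine once one recognizes that the two halves of each equivalence amount to applications of Post's theorem in opposite directions, plus the observation that set parameters may always be folded into one via $\oplus$. There is no real obstacle; the only point requiring mild care is the treatment of parameters in the comprehension schemes, since the formulas in \Cref{def:subsystems} are allowed to mention arbitrary second-order parameters, and one must make sure these get absorbed into a single $X\in\Sc$ before invoking Post's theorem.
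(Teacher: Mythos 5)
Your proof is correct and is exactly the standard verification; the paper in fact gives no proof at all, simply declaring the lemma "immediate," and your argument (Post's theorem in both directions plus folding parameters into a single join) is the routine reasoning being left implicit there.
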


All the theorems we consider, from Milliken's tree theorem onward, can be expressed by $\Pi^1_2$ formulas in the language of second-order arithmetic, and more specifically, in the form given by \Cref{eqn:Pi12} above. As discussed in the introduction, we think of these as problems, in the following sense.

\begin{definition}
	An \emph{instance-solution problem}	(or just \emph{problem}\index{problem|textbf}) is a relation $\mathsf{P} \subseteq \Cantor \times \Cantor$. For every $(X,Y) \in \mathsf{P}$, $X$ is a \emph{instance} of $\mathsf{P}$ (or \emph{$\mathsf{P}$-instance}) and $Y$ is a \emph{solution} to $X$ for the problem $\mathsf{P}$ (or \emph{$\mathsf{P}$-solution} to $X$).
\end{definition}

\noindent It should be noted that every $\Pi^1_2$ problem can be written in the syntactic form of \Cref{eqn:Pi12} in many different ways. In practice, however, there is a canonical such form one works with, and whenever we refer to a $\Pi^1_2$ statement in this monograph we will have this form in mind.

Not all instance-solution problems naturally come from $\Pi^1_2$ principles (see, e.g., \cite{GohTA,Marcone2020}), but this will be the case in all of the examples we consider. We will move freely between the two perspectives, as convenient. The main practical connection comes from the following definition and basic observation.

\begin{definition}
	Let $\mathsf{P}$ and $\mathsf{Q}$ be problems. $\mathsf{Q}$ is \emph{computably reducible}\index{computable reducibility}\index{computable!reducibility} to $\mathsf{P}$, written $\mathsf{Q} \cred \mathsf{P}$\index{$\cred$}, if every $\mathsf{Q}$-instance $X$ computes a $\mathsf{P}$-instance $\widehat{X}$ such that if $\widehat{Y}$ is any $\mathsf{P}$-solution to $\widehat{X}$ then $X \oplus \widehat{Y}$ computes a $\mathsf{Q}$-solution $Y$ to $X$.
\end{definition}

\begin{lemma}
	Let $\mathsf{P}$ and $\mathsf{Q}$ be $\Pi^1_2$ statements. If $\mathsf{Q} \cred \mathsf{P}$ as problems, then every $\omega$-model of $\RCA_0 \wedge \mathsf{P}$ is a model of $\mathsf{Q}$.
\end{lemma}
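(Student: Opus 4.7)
The plan is to prove this via a direct unraveling of the definitions, using the fact (noted just above in the paper) that an $\omega$-model $(\NN,\Sc)$ satisfies $\RCA_0$ iff $\Sc$ is a Turing ideal. Fix such a model $(\NN,\Sc)$, and assume in addition $(\NN,\Sc)\models\mathsf{P}$. Writing $\mathsf{Q}$ in the canonical $\Pi^1_2$ form $\forall A\,[\mathrm{P}_{\mathsf{Q}}(A)\to\exists B\,\mathrm{Q}_{\mathsf{Q}}(A,B)]$, to show $(\NN,\Sc)\models\mathsf{Q}$ I would take an arbitrary $X\in\Sc$ which is a $\mathsf{Q}$-instance (in the sense of the model) and produce a $\mathsf{Q}$-solution $Y\in\Sc$ to it.

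The key step is to chase the reduction $\mathsf{Q}\cred\mathsf{P}$ through $\Sc$. First, by the reduction, $X$ computes a $\mathsf{P}$-instance $\widehat{X}$; since $\Sc$ is closed under $\leq_{\text{\upshape T}}$, we have $\widehat{X}\in\Sc$. Next, because $(\NN,\Sc)\models\mathsf{P}$ and $\widehat{X}$ is a $\mathsf{P}$-instance in $\Sc$, the model provides a $\mathsf{P}$-solution $\widehat{Y}\in\Sc$ to $\widehat{X}$. Then closure of $\Sc$ under $\oplus$ gives $X\oplus\widehat{Y}\in\Sc$, and the backward direction of the reduction says that $X\oplus\widehat{Y}$ computes a $\mathsf{Q}$-solution $Y$ to $X$; closure under $\leq_{\text{\upshape T}}$ again yields $Y\in\Sc$, as desired.

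The only subtlety to address is absoluteness: the reduction is formulated in the real world, but we are invoking $\mathsf{P}$ inside the model and quantifying over instances in $\Sc$. Since $\mathsf{P}$ and $\mathsf{Q}$ are $\Pi^1_2$ with arithmetical matrix, the predicates ``$X$ is a $\mathsf{P}$-instance'' and ``$Y$ is a $\mathsf{P}$-solution to $X$'' are arithmetical and hence absolute between $(\NN,\Sc)$ and the standard universe whenever the relevant parameters lie in $\Sc$. Thus, once $X, \widehat{X}, \widehat{Y}, Y$ are all in $\Sc$, their status as instances and solutions is the same internally and externally, and the chain of implications above is valid in the model. I do not expect any genuine obstacle; the argument is really just checking that Turing ideals are exactly the right closure notion to make computable reductions transfer. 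Provability of this lemma in $\RCA_0$ itself follows by the standard semantic-to-syntactic translation remarked on earlier in the paper.
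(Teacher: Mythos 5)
Your argument is correct and is exactly the standard one the paper intends (the paper states this lemma as a "basic observation" without proof): chase the reduction through the Turing ideal $\Sc$, using closure under $\leq_{\text{\upshape T}}$ and $\oplus$, with absoluteness of the arithmetical matrix handling the internal/external distinction. Nothing is missing.
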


\noindent Computable reducibility is a convenient tool for making certain natural constructions in reverse mathematics more explicit. For example, the most common way of showing that a $\Pi^1_2$ statement $\mathsf{P}$ implies $\ACA_0$ over $\RCA_0$ is to show that for every set $A \subseteq \NN$, there is an $A$-computable $\mathsf{Q}$-instance $X$, all of whose solutions $Y$ satisfy $A' \Tred A \oplus Y$. If we let $\mathsf{Q}$ be the problem whose instances are all $X \in \Cantor$, such that the only solution to each $X$ is $X'$, then the preceding precisely says that $\mathsf{Q} \cred \mathsf{P}$.

We conclude this section with a note on non-implications.

\begin{definition}
	Let $\mathsf{P}$ be a problem.
	\begin{enumerate}
		\item\label{cone_avoidance_def} $\mathsf{P}$ admits \emph{cone avoidance}\index{cone avoidance|textbf} if for all sets $A,C \subseteq \NN$ with $C \nTred A$, every $A$-computable $\mathsf{P}$-instance $X$ has a solution $Y$ so that $C \nTred A \oplus Y$.
		\item\label{strong_cone_avoidance_def} $\mathsf{P}$ admits \emph{strong cone avoidance}\index{cone avoidance!strong|textbf} if for all sets $A,C  \subseteq \NN$ with $C \nTred A$, every $\mathsf{P}$-instance $X$ has a solution $Y$ so that $C \nTred A \oplus Y$.
	\end{enumerate}
\end{definition}

\noindent The distinction to note well is that the instance $X$ in \cref{strong_cone_avoidance_def} can be arbitrary, and in particular, need \emph{not} be $A$-computable. As pointed out in the introduction, all computably true principles satisfy cone avoidance, but not necessarily strong cone avoidance. Indeed, strong cone avoidance is a fairly special property which makes it possible to freely use a principle in a construction without increasing its overall complexity, as we will do, e.g., with the Halpern-La\"{u}chli theorem in the next chapters.

Ordinary cone avoidance suffices for the following important result, which we will make repeated use of. We include a proof for completeness.

\begin{lemma}\label{lem:cone-avoidance-not-aca}
	If $\mathsf{P}$ is a $\Pi^1_2$ statement that, as a problem, admits cone avoidance, then there is a $\omega$-model of $\RCA_0 \wedge \mathsf{P}$ in which $\ACA_0$ does not hold. In particular, $\mathsf{P}$ does not imply $\ACA_0$ over $\RCA_0$.
\end{lemma}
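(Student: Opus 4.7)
The plan is to construct a countable $\omega$-model $(\NN, \Sc)$ of $\RCA_0 \wedge \mathsf{P}$ with $\emptyset' \notin \Sc$; since $\emptyset \in \Sc$ but then $\Sc$ fails to be closed under the jump, the previous lemma gives $(\NN, \Sc) \not\models \ACA_0$. I will obtain $\Sc$ as the Turing ideal generated by an increasing sequence $\emptyset = A_0 \Tred A_1 \Tred \cdots$ with each $A_t$ satisfying $\emptyset' \nTred A_t$, together with a bookkeeping that eventually handles every $\mathsf{P}$-instance appearing in $\Sc$. To that end, fix once and for all a computable enumeration $(s_t, e_t)_{t \in \NN}$ of $\NN^2$ in which every pair appears.

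At stage $t$, assume that $A_t$ has been defined with $\emptyset' \nTred A_t$. If $\Phi^{A_{s_t}}_{e_t}$ is total and codes a $\mathsf{P}$-instance $X_t$, then $X_t$ is in particular $A_t$-computable, since $A_{s_t} \Tred A_t$. Applying cone avoidance for $\mathsf{P}$ with parameters $A := A_t$ and $C := \emptyset'$ to the instance $X_t$, I obtain a $\mathsf{P}$-solution $Y_t$ with $\emptyset' \nTred A_t \oplus Y_t$, and set $A_{t+1} := A_t \oplus Y_t$. Otherwise, I set $A_{t+1} := A_t$. By induction, $\emptyset' \nTred A_t$ for every $t$.

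Set $\Sc := \{B \subseteq \NN : (\exists t)[B \Tred A_t]\}$. This is a countable Turing ideal containing $\emptyset$, so $(\NN, \Sc) \models \RCA_0$. For any $\mathsf{P}$-instance $X \in \Sc$, pick $s, e$ with $X = \Phi^{A_s}_e$; at the stage $t$ with $(s_t, e_t) = (s, e)$, the construction supplies a $\mathsf{P}$-solution $Y_t$ to $X$, and $Y_t \Tred A_{t+1}$, so $Y_t \in \Sc$. Hence $(\NN, \Sc) \models \mathsf{P}$. Since every element of $\Sc$ is Turing-reducible to some $A_t$ and therefore fails to compute $\emptyset'$, we have $\emptyset' \notin \Sc$, whence $(\NN, \Sc) \not\models \ACA_0$. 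The ``in particular'' clause then follows by soundness. The argument is a routine diagonalization; the only point to note is that the hypothesis of cone avoidance is already stated in the form relativized to an arbitrary oracle $A$, which is precisely what lets the invariant $\emptyset' \nTred A_t$ be preserved as $A_t$ grows through the construction.
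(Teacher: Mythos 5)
Your proof is correct and takes essentially the same route as the paper: an effective transfinite (here, $\omega$-length) construction of an increasing sequence of sets $A_t$, each avoiding the cone above $\emptyset'$, whose Turing ideal satisfies $\mathsf{P}$ while omitting $\emptyset'$. The only detail worth pinning down is that the enumeration $(s_t,e_t)$ should satisfy $s_t \leq t$ (as the standard pairing does, and as the paper enforces with the requirement $t < s$ in $s = (e,t)$), so that $A_{s_t}$ is already defined when you invoke $A_{s_t} \Tred A_t$ at stage $t$; as written, if $(s,e)$ appears only once and at a stage $t < s$, that instance would never be handled.
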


\begin{proof}
	Let $C = \emptyset'$. We inductively define $A_0,A_1,\ldots \subseteq \NN$ as follows. Let $A_0 = \emptyset$, and suppose we have defined $A_s$ for some $s \in \NN$ and that $C \nTred A_s$. If $s \neq (e,t)$ for some $e \in \NN$ and some $t < s$, or if $\Phi_e^{A_t}$ is not a $\mathsf{P}$-instance, then let $A_{s+1} = A_s$. Otherwise, by cone avoidance of $\mathsf{P}$ choose a solution $Y$ to $X = \Phi_e^{A_t}$ so that $C \nTred A_s \oplus Y$, and let $A_{s+1} = A_s \oplus Y$.
	
	Let $\mathcal{S} = \{ Z: (\exists s)[Z \Tred A_s]\}$, which is a Turing ideal since $A_t \Tred A_s$ for all $t \leq s$. By construction, if $X$ is any instance of $\mathsf{P}$ in $\mathcal{S}$ then $\mathcal{S}$ contains a solution to $X$. (Indeed, if $X = \Phi_e^{A_t}$, say, let $s = (e,t)$; then a solution to $X$ is computable from $A_{s+1}$.) It follows that $(\NN,\Sc)$ is a model of $\RCA_0 \wedge \mathsf{P}$. But $\emptyset' \nTred A_s$ for all $s \in \NN$, hence $\emptyset' \notin \Sc$. This means $\Sc$ is not a jump ideal and so $(\NN,\Sc)$ is not a model of $\ACA_0$.
\end{proof}

\section{Trees and strong subtrees}\label{sec:bkg_trees}

Trees have different meanings in different areas of mathematics, and what is noteworthy for us here, is that we will \emph{not} be following the common definition used in computability theory.

\begin{definition}\label{def:trees}
	A \emph{tree}\index{tree} is a non-empty subset $T$ of $\baire$ 	satisfying the following properties:
	\begin{enumerate}
		\item there exists $\rho \in T$, called a \emph{root}\index{root!tree}\index{tree!root} of $T$, such that $\rho \preceq \sigma$ for all $\sigma \in T$;
		\item if $\sigma,\tau \in T$ then $\sigma \meet \tau \in T$;
		\item for every $\sigma \in T$ there are at most finitely many $\tau \in T$ such that $\sigma \prec \tau$ and such that there is no $\tau' \in T$ with $\sigma \prec \tau' \prec \tau$.
	\end{enumerate}
\end{definition}

\noindent Thus, in brief, our trees are rooted, meet-closed, finitely-branching subsets of $\omega^{<\omega}$. However, they need not be downward closed under the initial segment, $\preceq$, relation, as is the case with the trees commonly used in computability theory. Thus, every tree in the sense of the latter is also a tree in our sense here, but not conversely. Our trees also differ from those used in the context of the so-called Chubb-Hirsct-McNicholl tree theorem, which we will discuss shortly. There, trees are also not necessarily downward closed, but nor are they closed under meets. 

Moving forward, we will use trees exclusively in the sense of Definition \ref{def:trees}, except in \Cref{sec:GenCHMTT} where we deliberately look at the relationship of the two.

As usual, we will refer to the elements of a tree as its \emph{nodes}\index{node}\index{tree!node}.

\begin{definition}\label{def:treeconcepts}
	Let $T$ be a tree.
	\begin{enumerate}
		\item The \emph{level}\index{level}\index{tree!level}\index{node!level} of $\sigma \in T$ is $|\{ \tau \in T: \tau \prec \sigma \}|$. We say $\sigma$ is \emph{at} this level in $T$.
		\item For $n \in \NN$, $T(n)$ denotes the set of all $\sigma \in T$ at level $n$ in $T$.
		\item The \emph{height}\index{height}\index{tree!height} of $T$ is the least ordinal $\alpha$ larger than the level of every $\sigma \in T$.
		\item If $\sigma,\tau \in T$ with $\sigma \prec \tau$ and there is no $\tau' \in T$ with $\sigma \prec \tau' \prec \tau$, then $\tau$ is a \emph{direct extension}\index{direct extension} of $\sigma$ in $T$.
		\item For $k \in \omega$, a node $\sigma \in T$ is \emph{$k$-branching}\index{$k$-branching}\index{node!$k$-branching} in $T$ if it has exactly $k$ many direct extensions in $T$.
		\item A node $\sigma \in T$ is a \emph{leaf}\index{leaf}\index{node!leaf} of $T$ if it is $0$-branching in $T$. The set of leaves of $T$ is denoted by $\leaves(T)$\index{$leaves(T)$}.
		\item $T$ is \emph{$k$-branching}\index{$k$-branching!tree}\index{tree!$k$-branching} if every $\sigma \in T$ is $k$-branching in $T$ or a leaf.
	\end{enumerate}
\end{definition}

\noindent Note that all direct extensions of a given node in a tree $T$ must be pairwise incomparable. The height of a tree is always at most $\omega$, and as trees are non-empty, the height is always defined and at least $1$. Since all trees are finitely-branching by definition, a tree is of height $\omega$ if and only if it is infinite.

\begin{remark}\label{rem:pseudotrees}
	It is worth stressing that if two nodes of $T$ are at the same level, they need \emph{not} have the same length. This is because length is not a structural property of a tree as a graph, but rather of its presentation  (i.e., the labeling of its nodes). The same is true of being closed under meets. Thus, in general, any result we state or prove for trees will apply also, after appropriate relabeling, to any subset $S$ of $\baire$ such that $(S,\preceq)$ is isomorphic to $(T,\preceq)$ for some tree $T$. For any such set $S$ we can freely employ the terms in the preceding definition, since these are independent of presentation.
\end{remark}

\begin{definition}
Given $b:\om\to\om$, a tree $T$ is \emph{$b$-bounded}\index{tree!bounded}\index{bounded!tree}, or \emph{bounded by $b$}, if for every $\sigma \in T$ we have $\sigma(i) < b(i)$ for all $i < |\sigma|$. $T$ is \emph{computably bounded}\index{tree!computably bounded}\index{computably bounded!tree} if $T$ is $b$-bounded for some computable $b$.
\end{definition}

\noindent A $k$-branching tree is thus one which is bounded by precisely the functions whose ranges lie in the interval $[k,\infty)$. Clearly, every finitely branching tree is computably bounded relative to its Turing jump.

Notice, however, that because our trees are not closed downwards under $\preceq$, computably bounded trees here do not necessarily enjoy the usual effectivity properties familiar from computability theory (see, e.g., \cite{Soare2016Turing}, Chapter 3). For example, the set of infinite paths through a computable, computably bounded tree need not be a $\Pi^0_1$ class.

A subset $S$ of a tree $T$ may not itself be a tree, and even if it is, it may not preserve all the structure of $T$. For example, two nodes at the same level in $S$ may be at different levels in $T$, or a node may have fewer direct extensions in $S$ than it did in $T$. This motivates the following definition.

\begin{definition}\label{def:strong-subtree}
  A tree $S$ of height $\alpha$ is a \emph{strong subtree}\index{strong subtree}\index{subtree!strong}\index{tree!strong subtree} of a tree $T$ if it satisfies the following two properties:
  \begin{enumerate}
  \item\label{pt:preserve-levels} there exists a function $f:\alpha\to\om$, called a \emph{level function}\index{level!function}\index{strong subtree!level function}, such that for all $n < \alpha$, if $\sigma \in S(n)$ then $\sigma \in T(f(n))$;
  \item\label{pt:preserve-branching} for all $k$, a node in $S$ which is not at level $\alpha-1$ in $S$ is $k$-branching in $S$ if and only if it is $k$-branching in $T$.
  \end{enumerate}
\end{definition}

\noindent See \Cref{fig:strong-subtree} for a visual representation of a strong subtree.
Given a tree $T$ and $1 \leq \alpha \leq \omega$, we let $\Subtree{\alpha}{T}$\index{$\Subtree{\alpha}{}$!tree} be the collection of all strong subtrees of $T$ of height $\alpha$.

A strong subtree $S$ of a tree $T$ is itself a tree, and so is closed under meets. The branching in $S$ is thus completely determined by the direct extensions in $T$ of the (non-trivial) meets of nodes in $S$. The level function $f$ ensures that if $\sigma\in S\cap T(f(n))$ is not a leaf of $S$, then for every $\tau\in T(f(n)+1)$ extending $\sigma$, there exists a unique $\rho\in S\cap T(f(n+1))$ extending $\tau$. (See Figure \ref{fig:strong-subtree}.)

If the height of $T$ is $\alpha < \omega$ then $\Subtree{\beta}{T} = \emptyset$ for all $\beta > \alpha$, and it is also easy to see that the only element of $\Subtree{\alpha}{T}$ in this case is $T$ itself. Being a strong subtree of a tree is a transitive relation, so in particular, if $S \in \Subtree{\alpha}{T}$ and $U \in \Subtree{\beta}{S}$ for some $\beta \leq \alpha$ then $U \in \Subtree{\beta}{T}$.


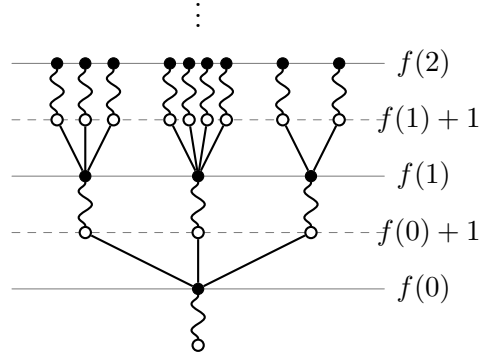
\begin{figure}[htbp]
  \centering
	\begin{tikzpicture}[scale=1.5,font=\normalsize]
		\tikzset{
		empty node/.style={circle,inner sep=0,fill=none},
		solid node/.style={circle,draw,inner sep=1.5,fill=black},
		hollow node/.style={circle,draw,thick,inner sep=1.5,fill=white}
		}
		\tikzset{snake it/.style={decorate, decoration=snake, line cap=round}}
		\tikzset{gray line/.style={line cap=round,color=gray}}
		\tikzset{thin line/.style={line cap=round}}
		\draw[color=gray,-,thin line] (-1.65,0.5) to (1.65,0.5);
		\draw[color=gray,-,thin line] (-1.65,1.5) to (1.65,1.5);
		\draw[color=gray,-,thin line] (-1.65,2.5) to (1.65,2.5);
		\draw[color=gray,-,thin line,dash pattern={on 3pt off 3pt}] (-1.65,1) to (1.65,1);
		\draw[color=gray,-,thin line,dash pattern={on 3pt off 3pt}] (-1.65,2) to (1.65,2);
		\node(a)[hollow node] at (0,0) {};
		\node(b)[solid node] at (0,0.5) {};
		\node(b0)[hollow node] at (-1,1) {};
		\node(b1)[hollow node] at (0,1) {};
		\node(b2)[hollow node] at (1,1) {};
		\node(b0')[solid node] at (-1,1.5) {};
		\node(b1')[solid node] at (0,1.5) {};
		\node(b2')[solid node] at (1,1.5) {};
		\node(b0'0)[hollow node] at (-1.25,2) {};
		\node(b0'1)[hollow node] at (-1,2) {};
		\node(b0'2)[hollow node] at (-0.75,2) {};
		\node(b1'0)[hollow node] at (-0.25,2) {};
		\node(b1'1)[hollow node] at (-0.08,2) {};
		\node(b1'2)[hollow node] at (0.08,2) {};
		\node(b1'3)[hollow node] at (0.25,2) {};
		\node(b2'0)[hollow node] at (0.75,2) {};
		\node(b2'1)[hollow node] at (1.25,2) {};
		\node(dots)[empty node] at (0,3) {$\vdots$};
		\node[empty node] at (2,0.5) {$f(0)$};
		\node[align=flush left] at (2,1) {~~~~~$f(0)+1$};
		\node[empty node] at (2,1.5) {$f(1)$};
		\node[align=flush left] at (2,2) {~~~~~$f(1)+1$};
		\node[align=flush left] at (2,2.5) {$f(2)$};

		\node(b0'0')[solid node] at (-1.25,2.5) {};
		\node(b0'1')[solid node] at (-1,2.5) {};
		\node(b0'2')[solid node] at (-0.75,2.5) {};
		\node(b1'0')[solid node] at (-0.25,2.5) {};
		\node(b1'1')[solid node] at (-0.08,2.5) {};
		\node(b1'2')[solid node] at (0.08,2.5) {};
		\node(b1'3')[solid node] at (0.25,2.5) {};
		\node(b2'0')[solid node] at (0.75,2.5) {};
		\node(b2'1')[solid node] at (1.25,2.5) {};

		\draw[thick,snake it] (a) to (b);
		\draw[thick,-] (b) to (b0);
		\draw[thick,-] (b) to (b1);
		\draw[thick,-] (b) to (b2);
		\draw[thick,snake it] (b0) to (b0');
		\draw[thick,snake it] (b1) to (b1');
		\draw[thick,snake it] (b2) to (b2');
		\draw[thick,-] (b0') to (b0'0);
		\draw[thick,-] (b0') to (b0'1);
		\draw[thick,-] (b0') to (b0'2);
		\draw[thick,-] (b1') to (b1'0);
		\draw[thick,-] (b1') to (b1'1);
		\draw[thick,-] (b1') to (b1'2);
		\draw[thick,-] (b1') to (b1'3);
		\draw[thick,-] (b2') to (b2'0);
		\draw[thick,-] (b2') to (b2'1);
		\draw[thick,snake it] (b0'0) to (b0'0');
		\draw[thick,snake it] (b0'1) to (b0'1');
		\draw[thick,snake it] (b0'2) to (b0'2');
		\draw[thick,snake it] (b1'0) to (b1'0');
		\draw[thick,snake it] (b1'1) to (b1'1');
		\draw[thick,snake it] (b1'2) to (b1'2');
		\draw[thick,snake it] (b1'3) to (b1'3');
		\draw[thick,snake it] (b2'0) to (b2'0');
		\draw[thick,snake it] (b2'1) to (b2'1');
	\end{tikzpicture}
  	\caption{A strong subtree $S$ of a tree $T$, with level function $f$. The circles represent nodes in $T$; the solid circles in $S$, the hollow circles are in $T \setminus S$. The levels of $S$ are included in the level{s} of $T$; solid gray horizontal lines represent levels in $S$, dashed gray horizontal lines levels in $T \setminus S$. A node connected to another below it by a straight black line denotes a direct extension in $T$. Wavy lines indicate omitted (skipped over) portions of $T$. Note that all branchings are preserved: a nodes in $S$ has the same number of direct extensions in $S$ as in $T$.}
  	\label{fig:strong-subtree}
\end{figure}

\section{Forests and products of trees}\label{sec:bkg_forests}

As mentioned above, in order to study the proof of Milliken's tree theorem we will need to examine the Halpern-La\"{u}chli theorem, whose statements requires us to consider multiple trees in parallel.

\begin{definition}\label{def:forest}
	A \emph{forest}\index{forest} is a non-empty subset $X$ of $\omega^{<\omega}$ such that if a pair of nodes $\sigma,\tau \in X$ has a common initial segment in $X$ then also $\sigma \meet \tau \in X$.
\end{definition}

Since every pair of nodes in a tree has at least one common initial segment (the root), it is clear that every tree is a forest. Indeed, the following is easy to see: $X \subseteq \baire$ is a forest if and only if it is a union of disjoint trees. For this reason, we refer to the elements of a forest as nodes, and lift all other terminology from trees to forests. For definiteness, we make this explicit in the following definition.

\begin{definition}
	Let $X$ be a forest.
	\begin{enumerate}
		\item A \emph{root}\index{forest!root}\index{root!forest} of $X$ is any $\rho \in T$ having no proper initial segment in $X$. The set of all roots of $X$ is denoted by $\roots(X)$\index{$\roots(X)$}.
		\item The \emph{level}\index{node!level}\index{level} of $\sigma \in X$ is $|\{ \tau \in X: \tau \prec \sigma\}|$. We say $\sigma$ is \emph{at} this level in $X$.
		\item For $n \in \NN$, $X(n)$ denotes the set of all $\sigma \in X$ at level $n$ in $X$.
		\item The \emph{height}\index{forest!height}\index{height!forest} of $X$ is the least ordinal $\alpha$ larger than the level of every $\sigma \in X$.
		\item If $\sigma,\tau \in X$ with $\sigma \prec \tau$ and there is no $\tau' \in X$ with $\sigma \prec \tau' \prec \tau$, then $\tau$ is a \emph{direct extension}\index{direct extension!forest}\index{node!direct extension} of $\sigma$ in $X$.
		\item For $k \in \omega$, a node $\sigma \in X$ is \emph{$k$-branching} in $X$ if it has exactly $k$ many direct extensions in $X$.
		\item A node $\sigma \in X$ is a \emph{leaf}\index{leaf}\index{node!leaf} of $X$ if it is $0$-branching. The set of leaves of $X$ is denoted $\leaves(X)$.
	\end{enumerate}
\end{definition}

\noindent Thus, a forest $X$ is a tree if and only if $\roots(X)$ is a singleton. The height of $X$ is the maximum of the heights of the disjoint trees that comprise it.

Given a forest $X$ and a node $\sigma \in X$, we let $X \uh \sigma = \{ \tau \in X: \tau \succeq \sigma \}$. In particular, whenever $\sigma \in X$ we have that $X \uh \sigma$ is a tree with root $\sigma$.

\begin{definition}
  A forest $Y$ of height $\alpha \leq \omega$ is a \emph{strong subforest}\index{forest!strong subforest}\index{strong subforest} of a forest $X$ if it satisfies the following two properties:
  \begin{enumerate}
  \item there exists a function $f: \alpha \to \omega$, called a \emph{level function}, such that for all $n \leq \alpha$, if $\sigma \in X(n)$ then $\sigma \in X(f(n))$;
  \item for all $k$, a node in $Y$ which is not at level $\alpha$ in $Y$ is $k$-branching in $Y$ if and only if it is $k$-branching in $X$.
  \end{enumerate}
\end{definition}

\noindent Given a forest $X$ and an $\alpha \leq \omega$, we let $\Subtree{\alpha}{X}$\index{$\Subtree{\alpha}{}$!forest} be the collection of all strong subforests of $X$ of height $\alpha$. We also add the following slightly more general definition.

\begin{definition}
	For each $d \geq 1$, if $T_0,\ldots,T_{d-1}$ are trees then
	\[
		\Subtree{\alpha}{T_0,\dots, T_{d-1}}\index{$\Subtree{\alpha}{}$!product of trees}
	\]
	for $\alpha \leq \omega$ is the collection of all tuples $(S_0,\ldots,S_{d-1})$ such that for each $i < d$ we have $S_i \in \Subtree{\alpha}{T_i}$, witnessed by one and the same level function. In addition, $\Subtree{<\alpha}{T_0,\dots, T_{d-1}}$\index{$\Subtree{<\alpha}{}$} denotes $\bigcup_{n < \alpha} \Subtree{n}{T_0,\dots, T_{d-1}}$.
\end{definition}

\noindent Thus, if $X = \bigcup_{i < d} T_i$, where $T_0,\ldots,T_{d-1}$ are disjoint trees, then $\Subtree{\alpha}{X} = \Subtree{\alpha}{T_0,\ldots,T_{d-1}}$. However, the preceding definition applies to arbitrary trees $T_0,\ldots,T_{d-1}$, disjoint or not.

We include one final definition, which is standard in other investigations of Milliken's tree theorem and will be important to us going forward.

\begin{definition}
	Fix $m \geq 1$.
	\begin{enumerate}
		\item For a forest $X$ and node $\sigma \in X$, a subset $P$ of $X$ is \emph{$m$-$\sigma$-dense}\index{$m$-$\sigma$-dense}\index{dense!subset}\index{dense!$m$-$\sigma$-dense} if every $\tau \in X(m)$ that extends $\sigma$ has an extension in $P$.
		\item For forests $X_0,\ldots,X_{d-1}$ and tuple $\pi = (\sigma_0,\ldots,\sigma_{d-1}) \in \bigcup_{n} X_0(n) \times \cdots \times X_{d-1}(n)$, a subset $P$ of $X_0 \times \cdots \times X_{d-1}$ is an \emph{$m$-$\pi$-dense matrix}\index{dense!matrix}\index{$m$-$\pi$-dense matrix} if $P = P_0 \times \cdots \times P_{d-1}$ where $P_i$ is an $m$-$\sigma_i$-dense subset of $X_i$, for each $i < d$.
	\end{enumerate}
\end{definition}

\noindent We will of course only be interested in the case where $m$ is larger than the level of $\sigma$ in $X$, respectively, of the (common) level in $X_i$ of each of the entries $\sigma_i$ of $\pi$. In the latter case, we will call this common level the \emph{level of $\pi$} in $X$.

The main point in item 2 above is that if $P$ is an $m$-$\pi$-dense matrix then for every $\tau_i \in X_i(m)$ that extends $\sigma_i$ we can find a $\rho_i$ such that $(\rho_0,\ldots,\rho_{d-1}) \in P$, and the latter is true for every choice of possible $\rho_i$. Note that the $\rho_i$ do not have to be at the same level in their respective forests. Also, notice that the $X_i$ need not be disjoint, and so their union need not be itself a forest.

\section{Statements of theorems}\label{sec:bkg_stmts}

In this section, we can finally define Milliken's tree theorem and its combinatorial variants that we will investigate in \Cref{sect:hl-theorem,sect:milliken-theorem}, as well as the various application of Milliken's tree theorem that we will discuss in \Cref{sec:devlin,sec:radomain,sec:GenCHMTT}.

\begin{theorem}[Milliken's tree theorem]\label{th:milliken-theorem}\index{tree theorem!Milliken's tree theorem|textbf}\index{theorem!Milliken's tree theorem|textbf}\index{Milliken's tree theorem|textbf}
	Let $T$ be an infinite tree with no leaves. For all $n,k \geq 1$ and all $f:\Subtree{n}{T} \to k$ there is an $S \in \Subtree{\omega}{T}$ such that $f$ is constant on $\Subtree{n}{S}$.
\end{theorem}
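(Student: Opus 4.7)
The plan is to prove the theorem by induction on $n$, in the classical style presented in Todorcevic's book, using the Halpern--Lauchli theorem as the combinatorial engine for the inductive step. Without loss of generality we may normalize $T$ to be (isomorphic to) a subtree of $\omega^{<\omega}$ with no leaves, so the terminology from \Cref{sec:bkg_trees,sec:bkg_forests} applies directly.

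For the base case $n = 1$, a strong subtree of height $1$ is a single node, so a coloring $f \colon \Subtree{1}{T} \to k$ is just a $k$-coloring of the nodes of $T$. The desired conclusion — existence of an $S \in \Subtree{\omega}{T}$ whose nodes are all the same color — is precisely the Halpern--Lauchli theorem with $d=1$ (applied to $T$ rather than $2^{<\omega}$, but this generalization is standard, and in any case is the form we have already committed to analyzing in \Cref{sect:hl-theorem}). So the base case is absorbed into the prior analysis of Halpern--Lauchli.

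For the inductive step, assume the result for some $n \geq 1$ and consider $f \colon \Subtree{n+1}{T} \to k$. Every $S' \in \Subtree{n+1}{T}$ decomposes uniquely as a pair $(S, \vec{\tau})$, where $S \in \Subtree{n}{T}$ and $\vec{\tau} = (\tau_\sigma)_{\sigma \in \leaves(S)}$ is a level-coherent choice of direct extensions for the leaves of $S$ inside $T$. Fixing $S$, the restriction of $f$ thus becomes a $k$-coloring of the level-aligned tuples in the product $\prod_{\sigma \in \leaves(S)} (T \uh \sigma)$. Applying the Halpern--Lauchli theorem to this product yields strong subtrees $T'_\sigma \in \Subtree{\omega}{T \uh \sigma}$ on which the induced coloring is constant, and assembling them above $S$ gives a strong subtree of $T$ in which every height-$(n+1)$ extension of $S$ receives one and the same $f$-color.

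To perform this stabilization for \emph{all} $S \in \Subtree{n}{T}$ at once, I would run a fusion/bookkeeping argument: construct an increasing sequence of finite strong subtrees $V_0 \subseteq V_1 \subseteq \cdots$ of $T$ together with a shrinking sequence of ``future'' tree-environments so that by stage $t$ every $S \in \Subtree{n}{V_t}$ has had its extension-coloring stabilized by a suitable application of Halpern--Lauchli inside the current environment. Crucially, because enlarging $V_t$ by one more level introduces only finitely many new $S$'s to worry about, each stage requires only finitely many applications of Halpern--Lauchli, and these can be arranged to be consistent with all previous stabilizations. In the limit one obtains $V \in \Subtree{\omega}{T}$ such that for every $S \in \Subtree{n}{V}$, the $f$-color of $(S,\vec{\tau})$ depends only on $S$. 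This yields a well-defined coloring $g \colon \Subtree{n}{V} \to k$, and invoking the induction hypothesis on $g$ produces $S^\dagger \in \Subtree{\omega}{V} \subseteq \Subtree{\omega}{T}$ on which $g$ — and hence $f$ on $\Subtree{n+1}{S^\dagger}$ — is constant.

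The main obstacle is the fusion step in the inductive argument: the stabilizations obtained by Halpern--Lauchli must be threaded together across infinitely many choices of $S \in \Subtree{n}{V_t}$ without ever spoiling earlier commitments, and one must verify that the level function is preserved throughout. From the computability-theoretic perspective of this monograph, this is precisely where the authors' promised \emph{finitary} (localized) version of Milliken's tree theorem, driven by a fast-growing computable function, replaces the usual compactness step, allowing the fusion to be carried out effectively rather than by an abstract König's-lemma argument.
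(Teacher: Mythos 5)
Your proof is correct and follows essentially the same route as the paper: induction on $n$ with the Halpern--L\"{a}uchli theorem as the base case, and an inductive step that builds what the paper calls a \emph{prehomogeneous} strong subtree (one on which the color of a height-$(n+1)$ subtree depends only on its height-$n$ restriction) by a fusion of finite strong subtrees against shrinking reservoirs, each stage requiring only finitely many applications of Halpern--L\"{a}uchli, followed by the induction hypothesis applied to the induced coloring $g$ on the prehomogeneous tree (\Cref{thm:milliken-prehomogeneous-one-step}, \Cref{thm:milliken-prehomogeneous}, \Cref{thm:milliken-arithmetic}). One minor correction to your closing remark: the basic proof's fusion is carried out using only the computable truth of Halpern--L\"{a}uchli together with a $\emptyset''$-computable search, whereas the finitary, fast-growing-function version (\Cref{thm:main-widget-theorem}) is deployed later, in the cone-avoidance analysis of $\PMT{2}{}$, not in this argument.
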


By analogy with Ramsey's theorem, we will break this statement up into sub-statements, in this case according to the height of the subtrees being colored. Thus, we define the following:

\begin{statement}\index{statement!$\MT n{}$|textbf}\index{Milliken's tree theorem!$\MT n{}$|textbf}
For all $n \geq 1$, $\MT n{}$ is the restriction of Milliken's tree theorem to colorings to strong subtrees of height~$n$.
\end{statement}

\noindent We will sometimes also refer to $\MT n{}$ as \emph{Milliken's tree theorem for height $n$} in the sequel. From the computability-theoretic point of view, we will regard an instance of $\MT n{}$ as being a tuple $\seq{T,b,f,k}$, where $T$ is an infinite $b$-bounded tree with no leaves, and $f$ is a map $\Subtree{n}{T} \to k$. In effect, this means all computable instances of $\MT n {}$ are computably bounded.

As discussed in the introduction, the next theorem is the analogue of the pigeonhole principle in the proof of Milliken's tree theorem.

\begin{theorem}[Halpern-La\"{u}chli theorem]\label{th:strong-hl}\index{tree theorem!Halpern-La\"{u}chli theorem|textbf}\index{theorem!Halpern-La\"{u}chli theorem|textbf}\index{Halpern-La\"{u}chli theorem|textbf}
	Let $T_0,\ldots,T_{d-1}$ be infinite trees with no leaves. For all $k \geq 1$ and all $f: \bigcup_{n} T_0(n) \times \cdots \times T_{d-1}(n) \to k$ there exists $(S_0,\ldots,S_{d-1}) \in \Subtree{\omega}{T_0,\ldots,T_{d-1}}$ such that $f$ is constant on $\bigcup_{n} S_0(n) \times \cdots \times S_{d-1}(n)$.
\end{theorem}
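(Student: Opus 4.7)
The plan is to prove the theorem via the standard density-matrix approach, arranged so that the construction is effective and every step reduces to finite combinatorics on the trees $T_0,\ldots,T_{d-1}$. The argument splits into two parts: (i) establishing a combinatorial density lemma that governs how colors propagate through $m$-$\vec{\sigma}$-dense matrices, and (ii) using that lemma in a level-by-level recursion to construct the required strong subtrees together with a common level function.

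Call a color $i < k$ \emph{dense at a tuple} $\vec{\sigma} = (\sigma_0,\ldots,\sigma_{d-1})$ (where each $\sigma_l$ lies at one and the same level in $T_l$) if, for every $m$ larger than that level, there exists an $m$-$\vec{\sigma}$-dense matrix $P_0 \times \cdots \times P_{d-1}$ such that $f(\tau_0,\ldots,\tau_{d-1}) = i$ for every $(\tau_0,\ldots,\tau_{d-1}) \in P_0 \times \cdots \times P_{d-1}$ lying on the diagonal, i.e., with $|\tau_0| = \cdots = |\tau_{d-1}| = m$. The density lemma I aim for states: for every such $\vec{\sigma}$, some $i < k$ is dense at $\vec{\sigma}$. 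This is the combinatorial heart of Halpern-La\"{u}chli and the step I expect to be the main obstacle. The strategy is by contradiction and a pigeonhole on finite configurations: if no color were dense at $\vec{\sigma}$, each $i < k$ would come with a threshold $m_i$ past which no $m_i$-$\vec{\sigma}$-dense matrix is monochromatic of color $i$; collecting these thresholds and descending to a large enough level $M$, the finitely many candidate matrices at level $M$ would force, by a Ramsey-style counting on the products $\prod_l |T_l(M)|$, a monochromatic dense matrix in some color, contradicting the choice of the $m_i$.

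Given the density lemma, the strong subtrees $(S_0,\ldots,S_{d-1})$ are built by a level-by-level recursion that I would organize as follows. Fix roots $\sigma_l^{(0)} \in T_l$ and apply the density lemma to $\vec{\sigma}^{(0)} = (\sigma_0^{(0)},\ldots,\sigma_{d-1}^{(0)})$ to obtain a color $i$ dense there. Set $S_l(0) = \{\sigma_l^{(0)}\}$ and choose a level $m_0$ large enough that there is an $m_0$-$\vec{\sigma}^{(0)}$-dense matrix monochromatic in $i$; within this matrix, pick for each direct extension of $\sigma_l^{(0)}$ in $T_l$ exactly one extension at level $m_0$, and let these be $S_l(1)$. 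Iterate: at stage $n$, apply the density lemma tuple-by-tuple to every leaf tuple of the current approximation, always reusing the same color $i$ by invoking the density lemma below the relevant node, and select a common next level $m_n$ witnessing dense matrices for all leaf tuples simultaneously. This yields one common level function and strong subtrees $S_l \in \mathcal{S}_\omega(T_l)$ with $f$ constantly equal to $i$ on $\bigcup_n S_0(n) \times \cdots \times S_{d-1}(n)$. Because the trees are finitely branching and every choice in the recursion is dictated by a finite search among finite dense matrices, the construction can be carried out computably in $(T_0,\ldots,T_{d-1},f)$, which is what is ultimately needed downstream in the monograph.
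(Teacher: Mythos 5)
Your density lemma is stated in the wrong quantifier form, and that form is false. You claim that for \emph{every} tuple $\vec{\sigma}$ some color is dense at $\vec{\sigma}$. A counterexample with $d=1$, $T_0 = 2^{<\omega}$, $k=2$: let $f(\sigma)=\sigma(0)$ for $|\sigma|\geq 1$. At $\sigma=\emptystr$, any $m$-$\emptystr$-dense $P$ (taken, as the construction requires, with its members at a single level $\geq m$) must contain extensions of $0^m$ and of $1^m$, and hence takes both colors; so neither color is dense at $\emptystr$. What is actually true, and what the construction needs, is that there \emph{exists} a tuple $\pi$ at which some color is dense; one then builds $(S_0,\ldots,S_{d-1})$ inside $T_0\upharpoonright\sigma_0 \times \cdots \times T_{d-1}\upharpoonright\sigma_{d-1}$, not from the roots. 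Starting at the roots as you propose cannot get off the ground, since the roots may fail to have any dense color, exactly as in the example. A related imprecision: your definition of ``dense at $\vec{\sigma}$'' only constrains $f$ on the length-$m$ diagonal slice of the matrix, which is vacuous whenever $P_i$ avoids length $m$; the dense matrices of interest live inside the level product $\bigcup_n T_0(n)\times\cdots\times T_{d-1}(n)$, so their entries sit at a common level $\geq m$, which is what makes the monochromaticity condition non-trivial.

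The more serious gap is the proposed proof of the density lemma by ``pigeonhole on finite configurations'' and ``Ramsey-style counting on the products $\prod_l |T_l(M)|$.'' Even in the corrected form, this density statement is precisely the combinatorial content of the Halpern--L\"auchli theorem, and it is not accessible by counting: having, for each color $i$, a threshold past which no dense matrix is monochromatic of color $i$ does not produce a contradiction by enumerating matrices at a single deep level, because the number of candidate dense matrices grows without a usable pigeonhole structure. In the monograph this step is not proved from scratch at all: the unleveled Halpern--L\"auchli theorem (where one colors the full product $T_0 \times \cdots \times T_{d-1}$, not just the level product) is cited from the literature, a level version is derived from it by compactness of the coloring space, and the density lemma (``there exists a tuple $\pi$ such that for every $m$ above its level there is a monochromatic $m$-$\pi$-dense matrix in the level product'') is then obtained by a thinning argument applied to a hypothetical bad instance. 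The recursion that follows is otherwise close to what you wrote, with one repair: you do not need to re-invoke a density claim at each leaf tuple and hope for the same color; once a good $\pi$ and color $j$ are fixed, every $m$-$\pi$-dense monochromatic matrix restricts, above any tuple extending $\pi$, to a dense matrix of the same color, which is what drives the level-by-level selection.
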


\noindent Again, one would naturally expect $\MT 1{}$ to play this role, so the need for the Halpern-La\"{u}chli theorem is not a priori obvious. In fact, the original paper~\cite{Milliken1979RTforTrees} that introduced what we now call Milliken's tree theorem actually proved a version for products that looks much more like the ``general case'' of the Halpern-La\"{u}chli theorem. In many ways, this is really the more natural result, and Milliken's tree theorem is merely a restriction that suffices for most applications.


\begin{theorem}[Product version of Milliken's tree theorem]\label{th:product-MTT}\index{tree theorem!Milliken's tree theorem for product|textbf}\index{theorem!Milliken's tree theorem for product|textbf}\index{Milliken's tree theorem!product version|textbf}
	Fix infinite trees $T_0,\ldots,T_{d-1}$ with no leaves. For all $n,k \geq 1$ and all colorings $f: \Subtree{n}{T_0,\ldots,T_{d-1}} \to k$ there exists $(S_0,\ldots,S_{d-1}) \in \Subtree{\omega}{T_0,\ldots,T_{d-1}}$ such that $f$ is constant on $\Subtree{n}{S_0,\ldots,S_{d-1}}$.
\end{theorem}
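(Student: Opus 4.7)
My plan is to prove the theorem by induction on the height $n$. For the base case $n = 1$, a strong subtree of height $1$ consists of a single node, and so $\Subtree{1}{T_0,\ldots,T_{d-1}}$ is in natural bijection with $\bigcup_m T_0(m) \times \cdots \times T_{d-1}(m)$; the $n=1$ case therefore coincides with the Halpern-La\"uchli theorem (\Cref{th:strong-hl}).

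For the inductive step, I would fix a coloring $f : \Subtree{n+1}{T_0,\ldots,T_{d-1}} \to k$ and proceed in two steps. First, I would construct an infinite strong subforest $(U_0,\ldots,U_{d-1}) \in \Subtree{\omega}{T_0,\ldots,T_{d-1}}$ with the following stabilization property: for every $(V_0,\ldots,V_{d-1}) \in \Subtree{n}{U_0,\ldots,U_{d-1}}$, all $(W_0,\ldots,W_{d-1}) \in \Subtree{n+1}{U_0,\ldots,U_{d-1}}$ whose first $n$ levels form $V$ receive a common $f$-color, which I would denote $g(V_0,\ldots,V_{d-1})$. Second, I would apply the inductive hypothesis to the coloring $g : \Subtree{n}{U_0,\ldots,U_{d-1}} \to k$ to obtain $(S_0,\ldots,S_{d-1}) \in \Subtree{\omega}{U_0,\ldots,U_{d-1}}$ on which $g$ is constant. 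Since every element of $\Subtree{n+1}{S_0,\ldots,S_{d-1}}$ has its first $n$ levels in $\Subtree{n}{S_0,\ldots,S_{d-1}}$ and is also an element of $\Subtree{n+1}{U_0,\ldots,U_{d-1}}$ by transitivity of strong subforest passage, the stabilization property would imply that $f$ is constant on $\Subtree{n+1}{S_0,\ldots,S_{d-1}}$, completing the induction.

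The main obstacle is the construction of $(U_0,\ldots,U_{d-1})$, which I would carry out by a level-by-level fusion. After committing the first $s$ levels of each $U_i$, only finitely many $V = (V_0,\ldots,V_{d-1}) \in \Subtree{n}{U_0,\ldots,U_{d-1}}$ have their top level inside the committed portion, and I would process them in turn. For a given such $V$ with top at level $\ell$ in the $T_i$, a one-level extension of $V$ inside the still-open portion of $(T_0,\ldots,T_{d-1})$ is parameterized by simultaneous level-$\ell'$ choices, for a varying $\ell' > \ell$, in the subtrees $T_i \uh \tau$, where $\tau$ ranges over the direct extensions in $T_i$ of the leaves of $V_i$. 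Applying the Halpern-La\"uchli theorem to these subtrees with the coloring induced by $f$ yields an infinite strong subforest of them on which all one-level extensions of $V$ receive a single color; after restricting the still-open portion of $(T_0,\ldots,T_{d-1})$ accordingly and iterating over the remaining $V$'s, I would commit one more level of $U$ and pass to the next stage.

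The delicate point will be the bookkeeping that keeps each appeal to Halpern-La\"uchli inside the still-infinite strong subforest inherited from the previous stage, and that ensures further commitments do not spoil the uniformity already established. Both go through because passage to strong subforests is transitive and because the color stabilization afforded by Halpern-La\"uchli persists under further restriction to a strong subforest. After all stages, $U_i$ is the union of its committed levels, and every $V \in \Subtree{n}{U_0,\ldots,U_{d-1}}$ gets processed at some stage, so the required $g$ is well-defined on all of $\Subtree{n}{U_0,\ldots,U_{d-1}}$.
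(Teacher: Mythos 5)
Your proposal is correct and takes essentially the same route as the paper: induct on $n$ with base case the Halpern-L\"auchli theorem, build a ``prehomogeneous'' infinite strong subtree by a level-by-level fusion that repeatedly applies Halpern-L\"auchli to stabilize the color of each height-$n$ subtree when its top level is committed, and then apply the inductive hypothesis to the induced height-$n$ coloring. The paper packages this fusion via product tree conditions (\Cref{def:product-tree-condition} and \Cref{thm:milliken-prehomogeneous-one-step}) in order to extract the $\Delta^0_{2n-1}$ complexity bound of \Cref{thm:milliken-arithmetic}, but the combinatorial core is the same as yours.
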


\begin{statement}\index{statement!$\PMT n{}$|textbf}\index{Milliken's tree theorem!$\PMT n{}$|textbf}
  For all $n \geq 1$, $\PMT n{}$ is the restriction of the product version of Milliken's tree theorem for height $n$.
\end{statement}

\noindent The Halpern-La\"{u}chli theorem is exactly $\PMT 1{}$, since for all $T_0,\ldots,T_{d-1}$ we have
\[
	\Subtree{1}{T_0,\dots, T_{d-1}}=\bigcup_{n}T_0(n)\times\dots\times T_{d-1}(n).
\]
In our analysis, we will regard an instance of $\PMT n{}$ as a tuple
\[
	\seq{d,T_0,\ldots,T_{d-1},b,f,k},
\]
where the $T_i$ are infinite $b$-bounded trees with no leaves, and $f$ is a map $\Subtree{n}{T_0,\ldots,T_{d-1}} \to k$.

We now state some further applications of Milliken's tree theorem, which concern various structures besides trees. Each of these structures will be countable and, unless otherwise stated, infinite, and will have a countable, relational underlying language. For a finite substructure $\mathcal{A}$ of a structure $\mathcal{B}$, let 
$\mathcal{B} \choose \mathcal{A}$ \index{$\mathcal{B} \choose \mathcal{A}$}
denote the set of (isomorphic) copies of $\mathcal{A}$ contained in $\mathcal{B}$. Recall also that if $X$ is a set and $n$ is a positive integer then $[X]^n$ \index{$[X]^n$} denotes the set of $n$-element subsets of $X$. In particular, if $B$ is the domain of $\mathcal{B}$, then each element of $[B]^n$ may be regarded as a substructure of $\mathcal{B}$ by restriction since the language of $\mathcal{B}$ is relational. (In general, however, $[B]^n$ need not equal $\mathcal{B} \choose \mathcal{A}$ for any one $\mathcal{A}$.) When convenient, we may also write $[\mathcal{B}]^n$ for $[B]^n$.

The first application of Milliken's tree theorem we consider is \emph{Devlin's theorem}, also called \emph{Devlin's second theorem}, e.g., in \cite{Todorcevic2010Ramsey}, Chapter 6.
\begin{theorem}[Devlin's theorem]\index{theorem!Devlin's theorem|textbf}\index{Devlin's theorem|textbf}
	For every $n \geq 1$ there exists $\ell \geq 1$ such that for every $k \geq 1$ and every $f: [\mathbb{Q}]^n \to k$ there is a dense suborder $S$ of $\mathbb{Q}$ with no endpoints satisfying $|f ([S]^n)| \leq \ell$.
\end{theorem}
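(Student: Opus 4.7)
The plan is to derive Devlin's theorem from Milliken's tree theorem (\Cref{th:milliken-theorem}) applied to $T = 2^{<\omega}$, by identifying $\mathbb{Q}$ with a suitably ordered copy of $2^{<\omega}$ and classifying finite subsets of $\mathbb{Q}$ by their ``embedding type'' in the tree.

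First I would define a linear order $<_\mathbb{Q}$ on $2^{<\omega}$ making $(2^{<\omega}, <_\mathbb{Q})$ order-isomorphic to $\mathbb{Q}$: for $\sigma, \tau \in 2^{<\omega}$ with meet $\mu = \sigma \meet \tau$, declare $\sigma <_\mathbb{Q} \tau$ if either $\sigma$ and $\tau$ are incomparable with $\sigma(|\mu|) = 0$; or $\sigma = \mu \prec \tau$ with $\tau(|\sigma|) = 1$; or $\tau = \mu \prec \sigma$ with $\sigma(|\tau|) = 0$. A direct check shows $(2^{<\omega}, <_\mathbb{Q})$ is a dense linear order without endpoints. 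Crucially, every $S \in \Subtree{\omega}{2^{<\omega}}$ is $\preceq$-isomorphic to $2^{<\omega}$ via a branching-preserving bijection that also respects the ``left-versus-right'' bit at each splitting node (since every element of $S$ is itself a binary string and $S$ is meet-closed, so any two direct extensions in $S$ of a node $\sigma$ in $S$ already differ at position $|\sigma|$). Hence $(S, <_\mathbb{Q})$ is again isomorphic to $\mathbb{Q}$, i.e., a dense suborder of $\mathbb{Q}$ without endpoints. I henceforth identify $\mathbb{Q}$ with $(2^{<\omega}, <_\mathbb{Q})$.

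Next I would define the \emph{embedding type} of $A \in [\mathbb{Q}]^n$ as the isomorphism class of the meet-closure $\meetclosure{A}$, equipped with the ``left/right'' bit at each splitting node, the marker distinguishing actual $A$-elements from pure meets, and the relative level structure induced by the ambient tree. Since $|\meetclosure{A}| \leq 2n - 1$ and each piece of data is bounded, only finitely many embedding types arise; let $\ell$ denote this number, which will serve as the bound in the theorem. For each type $\tau$ I fix a minimal height $h_\tau$ and a canonical realization $A_\tau \subseteq 2^{<h_\tau}$ with the following surjective parametrization: for every $S \in \Subtree{\omega}{2^{<\omega}}$, every $n$-subset of $S$ of type $\tau$ arises as the image of $A_\tau$ under the canonical branching-preserving isomorphism $2^{<h_\tau} \to U$ for \emph{some} $U \in \Subtree{h_\tau}{S}$. (In general $U$ is not unique, because $A$ may leave some nodes of the height-$h_\tau$ skeleton free to be chosen; surjectivity is what matters below.)

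Given $f: [\mathbb{Q}]^n \to k$, the argument then enumerates the types $\tau_1, \ldots, \tau_\ell$ and iterates Milliken's tree theorem. Starting with $T_0 = 2^{<\omega}$, at stage $j$ I define $g_j: \Subtree{h_{\tau_j}}{T_{j-1}} \to k$ by $g_j(U) = f(A_U)$, where $A_U$ is the image of $A_{\tau_j}$ in $U$; \Cref{th:milliken-theorem} then yields $T_j \in \Subtree{\omega}{T_{j-1}}$ on which $g_j$ is constantly equal to some $c_j$. After $\ell$ stages, $T_\ell \in \Subtree{\omega}{2^{<\omega}}$ is a strong subtree in which, by surjectivity of the parametrization, every $n$-subset of type $\tau_j$ receives color $c_j$; hence $|f([T_\ell]^n)| \leq \ell$, and $S = T_\ell$ viewed as a suborder of $\mathbb{Q}$ via $<_\mathbb{Q}$ is the required dense suborder without endpoints. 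The main technical obstacle will be pinning down the notion of embedding type and the associated data $(h_\tau, A_\tau)$ precisely enough that the parametrization $U \mapsto A_U$ really is surjective onto all $n$-subsets of the given type; this is essentially a careful bookkeeping argument based on the observation that any $n$-subset of $S$ can be completed (possibly non-uniquely) to a strong subtree of $S$ of the appropriate height. Pinpointing the exact value of $\ell$ (known to equal the $n$th odd tangent number) is a secondary question irrelevant to the stated form of the theorem, for which only finiteness matters.
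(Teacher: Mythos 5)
Your overall strategy — identify $\mathbb{Q}$ with $(2^{<\omega},<_\mathbb{Q})$, classify $n$-subsets by finitely many ``embedding types,'' and iterate Milliken's tree theorem once per type — is sound and genuinely more direct than the paper's. The paper passes through a substantial layer of machinery (Joyce orders, coded Joyce orders, Joyce structures, the embedding theorem asserting that any Joyce structure embeds in any DLO Joyce structure, and Joyce order diagonalizations) whose primary payoff is the sharp bound $\ell = t_{\DT{}{}}(n)$, the $n$th odd tangent number, and the identification of a big Ramsey structure in Zucker's sense. For the statement as you have phrased it, where only the \emph{existence} of some finite $\ell$ is required, that overhead is avoidable and a direct type-counting iteration is the natural shortcut. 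The observation that a strong subtree $S$ of $2^{<\omega}$ is again order-isomorphic to $(2^{<\omega},<_\mathbb{Q})$ via the branching-preserving bijection is correct and does the work that, in the paper, is done by \Cref{thm:suborder-to-subjoyce}.

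The one genuine gap is in the definition of the embedding type, and it is more than the ``careful bookkeeping'' you defer. Recording the isomorphism class of $\meetclosure{A}$ together with the left/right bits at splitting nodes and the relative ordering of lengths is \emph{not} enough to make the parametrization $U \mapsto A_U$ surjective onto the $n$-subsets of that type. Consider $A = \{0,100\}$ and $A' = \{0,110\}$: both have meet-closure $\{\emptystr, 0, \cdot\}$ with the unique splitting at $\emptystr$ sending $0$ left and the long string right, both have length pattern $(0,1,3)$, and both mark $\emptystr$ as a pure meet — so under your definition they share a type. But if $A_\tau = \{0,10\} \subseteq 2^{<3}$, then for every $U \in \Subtree{3}{S}$ with level function $(0,1,3)$ the image of $10$ is the unique level-$2$ node of $U$ extending $10$, i.e.\ a string of the form $10x$; it can never be $110$. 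So $A'$ is never hit, and after applying Milliken's theorem to $g_\tau$ you only force monochromaticity on $A$-like sets while $A'$-like sets remain uncontrolled. The missing data is the ``intermediate'' bit $\sigma(\ell)$ of each $\sigma \in \meetclosure{A}$ at every length $\ell$ occurring in $\meetclosure{A}$ with $\ell < |\sigma|$, not just at the lengths of meets. Equivalently, the type should be the strong isomorphism class of the \emph{full} closure $\strcl{A} = \lvlclosure{(\meetclosure{A})}$ with left/right data at every node, which is precisely what the paper uses in \Cref{sec:GenCHMTT}; the paper's Devlin proof sidesteps the issue by normalizing to \emph{coded} Joyce orders, in which these intermediate bits are all forced to be $0$. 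Once the type is defined via the full closure the parametrization does become surjective, $h_\tau \leq 2n-1$ still holds (level closure introduces no new lengths), and your iteration goes through as written.
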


\noindent The key here is that the bound $\ell$ does not depend on $k$ or the particular coloring, but only on $n$. As an instance-solution problem, we will study Devlin's theorem in the following form:

\begin{statement}\label{stmt:DT}\index{statement!$\DT n{k,\ell}$|textbf}
  For all $n, k, \ell \geq 1$, $\DT{n}{k,\ell}$ is the assertion that for every $f: [\mathbb Q]^n \to k$ there is a dense suborder $S$ of $\mathbb{Q}$ with no endpoints satisfying $|f ([S]^n)| \leq \ell$.
\end{statement}

\noindent Note that $\DT{n}{k,\ell}$ is merely a formal statement, not a necessarily a true theorem for all possible $n$, $k$, and $\ell$. For example, it is easy to see that $\DT{1}{k,1}$ is true for all $k$. However, $\DT{2}{2,1}$ is false. 
To see this, let $(q_n)_{n\in\Nb}$ be an enumeration of the rationals, and define $f: [\mathbb Q]^2\to 2$ by letting $f(q_n, q_m) = 0$ if $q_n<q_m\iff n<m$, and $f(q_n, q_m) = 1$ otherwise. Then it is readily seen that every subset $S\subseteq\mathbb Q$ of order-type $\mathbb Q$ (or even $\mathbb Z$) must contain pairs of both colors under $f$. For $n = 2$, this situation turns out to be as bad as it can be, as $\DT{2}{k,2}$ is true for all $k$. This fact was originally observed by Galvin (unpublished). For general $n$, the corresponding $\ell$ values were obtained by Devlin \cite[Chapter 4]{Devlin1980}.

The second application we consider concerns graph colorings. We use $\mathcal{G}$ as generic notation for a graph, and unless otherwise specified, assume the set of vertices of $\mathcal{G}$ is $G$, and the set of edges, $E$. For $x,y \in G$, we write $xEy$ if $(x,y) \in E$ and $\lnot x E y$ if $(x,y) \notin E$. The graph $\mathcal{G}$ is a \emph{Rado graph} (or \emph{random graph}) \index{graph!Rado} \index{graph!random|see{Rado}} \index{Rado Graph} if for every two disjoint finite sets of vertices $F_0,F_1 \subseteq G$ there exists $x \in G$ such that $xEy$ for all $y \in F_0$ and $\lnot x E y$ for all $y \in F_1$. Such a graph is, in particular, universal, containing every finite graph as an induced subgraph. All Rado graphs are isomorphic by the standard back and forth construction, so we usually speak just of \emph{the} Rado graph, and assume we have fixed a canonical computable representative of it, denoted by $\mathcal{R}$. The principle of interest to us is following, which we will call the \emph{Rado graph theorem} here for definiteness.
\begin{theorem}[Rado graph theorem]\index{theorem!Rado graph theorem}\index{Rado graph theorem}
	For every finite graph $\mathcal{G}$ there exists $\ell \geq 1$ such that for every $k \geq 1$ and every $f: {\mathcal{R} \choose \mathcal{G}} \to k$ there is an isomorphic subgraph $\mathcal{R}'$ of $\mathcal{R}$ satisfying $|f'' {\mathcal{R}' \choose \mathcal{G}}| \leq \ell$.
\end{theorem}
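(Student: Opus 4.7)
The plan is to reduce to Milliken's tree theorem (\Cref{th:milliken-theorem}) via the standard binary coding of $\mathcal{R}$ in $2^{<\omega}$. I would fix an enumeration $v_0, v_1, \ldots$ of the vertices of $\mathcal{R}$ and attach to each $v_n$ the code $\sigma_n \in 2^n$ defined by $\sigma_n(i) = 1 \iff v_i E v_n$. The set $V = \{\sigma_n : n \in \omega\}$ is an antichain of $2^{<\omega}$ with exactly one node per level, and the Rado extension property of $\mathcal{R}$ lets us, by a back-and-forth construction, extract from any $T \in \Subtree{\omega}{2^{<\omega}}$ an antichain $\{\tau_n : n \in \omega\}$, one node per level of $T$, whose bits at the prescribed positions code a copy of $\mathcal{R}$ (call it $\mathcal{R}'$).

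Given a finite graph $\mathcal{G}$ with $m$ vertices, I would then classify the copies $\mathcal{G}' \in {\mathcal{R} \choose \mathcal{G}}$ by their \emph{similarity type}. For $\mathcal{G}'$ arising from codes $\{\sigma_{n_0}, \ldots, \sigma_{n_{m-1}}\}$, form the meet-closure of these codes in $2^{<\omega}$ and record both its isomorphism type as a rooted finite tree and the positions of the $\sigma_{n_j}$ together with the passing bits $\sigma_{n_j}(n_i)$ for $i < j$; these data completely determine the induced subgraph. Since the meet-closure has at most $2m-1$ nodes, only finitely many similarity types give rise to a copy of $\mathcal{G}$. Let $\ell = \ell(\mathcal{G})$ denote the total number of such types and enumerate them $t_0, \ldots, t_{\ell-1}$; each $t_i$ is represented by a strong subtree of $2^{<\omega}$ of some height $n_{t_i} \leq m$ together with a prescribed assignment of passing bits.

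Given now $f \colon {\mathcal{R} \choose \mathcal{G}} \to k$, for each type $t_i$ I would define a coloring $\hat{f}_i \colon \Subtree{n_{t_i}}{2^{<\omega}} \to k+1$ by setting $\hat{f}_i(S) = f(\mathcal{G}')$ when $S$ carries the combinatorial data of type $t_i$ and its canonically associated subgraph is a copy $\mathcal{G}'$ of $\mathcal{G}$, and by sending the remaining $S$ to a dummy color. Applying $\MT{n_{t_0}}{}$ to $\hat{f}_0$ yields some $T_0 \in \Subtree{\omega}{2^{<\omega}}$ on which $\hat{f}_0$ is constant; applying $\MT{n_{t_1}}{}$ to $\hat{f}_1 \upharpoonright \Subtree{n_{t_1}}{T_0}$ yields $T_1 \in \Subtree{\omega}{T_0}$ monochromatic for $\hat{f}_1$; iterating $\ell$ times produces $T = T_{\ell-1}$ on which every $\hat{f}_i$ is constant. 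Extracting a copy $\mathcal{R}'$ of $\mathcal{R}$ from $T$ as above, every $\mathcal{G}' \in {\mathcal{R}' \choose \mathcal{G}}$ realizes some type $t_i$, so $f(\mathcal{G}')$ equals the constant value of $\hat{f}_i$ on $\Subtree{n_{t_i}}{T}$, and $|f'' {\mathcal{R}' \choose \mathcal{G}}| \leq \ell$.

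The hard part, in my view, is the combinatorial bookkeeping of the second step: showing that similarity types form a finite complete invariant for copies of $\mathcal{G}$, and that every infinite strong subtree of $2^{<\omega}$ realizes every type in a way compatible with further restriction. In particular, one must verify that the passing bits associated to a type descend correctly through strong subtree restriction, so that $\hat{f}_i$ is genuinely a well-defined coloring of strong subtrees and so that $T$ still supports copies of $\mathcal{G}$ of each type realized within $\mathcal{R}'$. Once this is established, the iterated application of Milliken's tree theorem is routine.
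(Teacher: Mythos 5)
Your proposal is correct in outline and follows essentially the same route as the paper: classify copies of $\mathcal{G}$ by the strong-isomorphism type of the (full) closure of their codes together with the passing bits, apply Milliken's tree theorem once per type, and then extract a copy of $\mathcal{R}$ from the resulting strong subtree. The paper packages your ``similarity types'' as Joyce graph structures and your final extraction as the existence of Joyce blossom graphs and Joyce graph diagonalizations, but the mathematical content is the same.

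Two caveats. First, a slip in the height bound: you correctly note that the meet-closure of $m$ codes has at most $2m-1$ nodes, but then assert each type is represented by a strong subtree of height $n_{t_i}\leq m$. Since the levels of the $m$ codes and of their up to $m-1$ meets are in general all distinct, the correct bound is $n_{t_i}\leq 2m-1$ (the paper applies Milliken's tree theorem for height exactly $2n-1$ for $n$-element configurations, via \Cref{lem:coded-joyce-order-to-strong-subtree-Rado}); with $\MT{m}{}$ the closure of a copy of $\mathcal{G}$ would not fit inside the colored subtrees. Second, you identify the type bookkeeping as the hard part and dismiss the extraction of $\mathcal{R}'$ from $T$ as routine back-and-forth. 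It does go through for the standard coding, because a strong subtree preserves branching and hence gives control of the bit at coordinate $|\tau_i|$ whenever $\tau_i$ is a node of $T$ (so choosing the antichain one node per $T$-level works), but this is exactly the point one must check; it is the analogue of condition (3) in the paper's definition of a blossom tree, and the paper's \Cref{thm:blossom-to-joyce-rado} shows that the corresponding statement for an \emph{arbitrary} antichain presentation of $\mathcal{R}$ requires a genuinely nontrivial largeness argument.
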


Again, the bound $\ell$ does not depend on $k$, but only, in this case, on the particular subgraph $\mathcal{G}$. The precise bounds here were obtained by Sauer \cite{Sauer2006} and Laflamme, Sauer, and Vuksanovic \cite{LSV2006}. The result shares much in common with Devlin's theorem, as we will see further below. Both results are well-known consequences of Milliken's theorem. (See, e.g., Todorcevic \cite{Todorcevic2010Ramsey}, Theorems 6.23 and 6.25 for direct proofs.) We give a more effective proof of the Rado graph theorem from Milliken's tree theorem in \Cref{subsect:rado-from-mtt}.

We will investigate the Rado graph theorem in the following two forms.

\begin{statement}\index{statement!$\RG^\mathcal{G}_{k,\ell}$}
For all finite graphs $\mathcal{G}$ and all $k,\ell \geq 1$, $\RG^\mathcal{G}_{k,\ell}$ is the assertion that for every coloring $f: {\mathcal{R} \choose \mathcal{G}} \to k$, there is an isomorphic subgraph $\mathcal{R}'$ of $\mathcal{R}$ satisfying $|f''{\mathcal{R}' \choose \mathcal{G}}| \leq \ell$.
\end{statement}

\begin{statement}\index{statement!$\RG^n_{k,\ell}$}
For all $n,k,\ell \geq 1$, $\RG^n_{k,\ell}$ is the assertion that for every coloring $f: [\mathcal{R}]^n \to k$, there is an isomorphic subgraph $\mathcal{R}'$ of $\mathcal{R}$ satisfying $|f''[\mathcal{R}']^n| \leq \ell$.
\end{statement}

\noindent Since there are, up to isomorphism, only finitely many graphs $G$ of a given finite size, we immediately get the implication
\[
	(\forall \mathcal{G})(\exists \ell)(\forall k)[\RG^\mathcal{G}_{k,\ell}] \to (\forall n)(\exists \ell)(\forall k)[\RG^n_{k,\ell}].
\]

The final application we look at, unlike the previous two, is not a familiar one in set theory. However, it has been studied extensively in computable combinatorics and reverse mathematics (see, e.g., \cite{Chong2019Strengtha, Chong2019Strengthb, Chong2019Strengthc, Dzhafarov2017Coloring, Patey2016strength} for some very recent papers). This is the tree theorem of Chubb, Hirst, and McNicholl \cite{Chubb2009Reverse}, which we will refer to as the \emph{Chubb-Hirst-McNicholl (CHM) tree theorem} in this monograph, to avoid confusion with Milliken's tree theorem. The CHM tree theorem concerns a weaker structure of tree than in Definition \ref{def:trees}, where we do not insist on being closed under meets. A tree is thus any subset of $2^{<\omega}$ with a root. The theorem asserts the existence, for every finite coloring of the $n$-tuples of \emph{comparable} nodes of $\cantor$, of an infinite monochromatic perfect subtree in this weaker sense. The restriction to comparable nodes comes from wanting to extend Ramsey's theorem to these ``weak'' trees. And indeed, as in Devlin's theorem, it is easy to devise a coloring of arbitrary tuples of nodes here where no monochromatic solution exists (e.g., consider coloring all comparable pairs of strings $0$, and all incomparable pairs of strings $1$). As it turns out, this restriction loses a great deal of combinatorial structure, which becomes apparent if we look not for monochromatic solutions, but merely for bounds on the numbers of colors used in a solution. It is this generalization of the CHM tree theorem that we investigate.

\begin{theorem}[Generalized CHM tree theorem]\index{theorem!generalized CHM tree theorem|textbf}\index{tree theorem!generalized CHM tree theorem|textbf}\index{theorem!Chubb, Hirst, McNicholl|see{generalized CHM tree theorem}}\index{generalized CHM tree theorem|textbf}
	For every $n \geq 1$ there exists $\ell \geq 1$ such that for every $k \geq 1$ and every $f: [2^{<\omega}]^n \to k$ there is an $S \subseteq 2^{<\omega}$ such that $(S,\preceq)$ is isomorphic to $(2^{<\omega},\preceq)$ and $|f ([S]^n)| \leq \ell$.
\end{theorem}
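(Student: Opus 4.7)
The plan is to reduce the generalized CHM tree theorem to Milliken's tree theorem for $2^{<\omega}$ (\Cref{th:milliken-theorem}) via a standard type-based encoding. For $A \in [2^{<\omega}]^n$, define its \emph{type} $\tp(A)$ to be the isomorphism class of the meet closure $\bar A = A \cup \{\sigma \meet \tau : \sigma,\tau \in A\}$ viewed as a finite structure with the partial order $\preceq$, the ``same length'' equivalence relation, and a distinguished subset marking the nodes of $A$. Because $|\bar A| \leq 2n-1$, the set $\mathcal{T}_n$ of types is finite, with cardinality depending only on $n$. For each $\tau \in \mathcal{T}_n$, let $h_\tau$ be the number of distinct lengths occurring in a representative of $\tau$.

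The crucial combinatorial lemma to establish is: for every $T \in \Subtree{\omega}{2^{<\omega}}$ and every finite meet-closed $\bar A \subseteq T$ whose nodes use $h$ distinct $T$-lengths, there is some $S \in \Subtree{h}{T}$ containing $\bar A$, with level set equal to the set of $T$-lengths of $\bar A$ and root equal to the root of $\bar A$. I would prove this by induction on $h$: at each level transition, because $2^{<\omega}$ is $2$-branching and $\bar A$ is meet-closed, each internal node $\sigma \in \bar A$ has at most one $\bar A$-descendant extending $\sigma\str{0}$ and at most one extending $\sigma\str{1}$ at the next $\bar A$-level, so a compatible pair of direct extensions in $S$ can always be chosen (and the remaining branches filled in freely). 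A direct consequence is that for every $S \in \Subtree{h_\tau}{2^{<\omega}}$ the set $[S]^n_\tau := \{A \in [S]^n : \tp(A) = \tau\}$ has a cardinality $N_\tau$ depending only on $\tau$, via the level-preserving isomorphism $S \cong 2^{<h_\tau}$ intrinsic to any strong subtree.

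Now, given $f \colon [2^{<\omega}]^n \to k$, enumerate $\mathcal{T}_n = \{\tau_1, \ldots, \tau_s\}$ and, for each $i$, define a coloring $F_i \colon \Subtree{h_{\tau_i}}{2^{<\omega}} \to k^{N_{\tau_i}}$ by letting $F_i(S)$ record the values of $f$ on $[S]^n_{\tau_i}$ in a canonical order. Apply Milliken's tree theorem iteratively: set $T_0 = 2^{<\omega}$, and for each $i = 1, \ldots, s$, pass from $T_{i-1}$ to some $T_i \in \Subtree{\omega}{T_{i-1}}$ on which $F_i$ is constant; this is legitimate because each $T_{i-1} \in \Subtree{\omega}{2^{<\omega}}$ is an infinite $2$-branching tree with no leaves. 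The final $T := T_s$ lies in $\Subtree{\omega}{2^{<\omega}}$ and is in particular isomorphic to $(2^{<\omega}, \preceq)$ as a partial order. For any $A \in [T]^n$ of type $\tau_i$, the embedding lemma applied inside $T$ places $A$ in some $S \in \Subtree{h_{\tau_i}}{T}$, and the constancy of $F_i$ there forces $f(A)$ to lie in a fixed set of at most $N_{\tau_i}$ colors. Therefore $|f([T]^n)| \leq \ell := \sum_{\tau \in \mathcal{T}_n} N_\tau$, which depends only on $n$, as required. The main obstacle is the embedding lemma: once it is in hand, the iterative Milliken argument is entirely routine bookkeeping, but verifying that an arbitrary meet-closed set in $2^{<\omega}$ can always be completed to a strong subtree with the prescribed level set requires careful tracking of how the branches of $\bar A$ distribute among the two children of each ambient node.
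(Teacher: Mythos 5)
Your proof is correct and is essentially the paper's proof of its Strong generalized CHM tree theorem (Theorem~7.6), which already implies the stated result since the strong subtree $T_s$ you produce is order-isomorphic to $(2^{<\omega},\preceq)$: your types, based on the meet closure equipped with $\preceq$, the same-length equivalence and the marked subset, together with the $N_\tau$-valued Milliken coloring, are an interchangeable repackaging of the paper's tuple types (built from the full closure and ``strong isomorphism'', which carry the left/right branching data you instead read off as the $N_\tau$ positions inside a strong subtree of height $h_\tau$), and the paper's extra step in Sections~7.3--7.4 --- shrinking to a non-strong subtree that syntactically minimizes the number of tuple types --- is only needed to obtain the optimal $\ell = t_{\TT}(n)$, which the stated theorem does not ask for. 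One detail worth spelling out in the embedding lemma: an internal $\sigma \in \bar A$ may have $\bar A$-descendants above $\sigma 0$ none of which lies at the immediately next $\bar A$-level, and you then need that they all pass through a common ancestor of $T$ at that level so that the left direct extension of $\sigma$ in $S$ can be chosen coherently; this does follow from meet-closedness (the meet of any two such descendants lies in $\bar A$ and hence at or above that level), but your sketch only addresses the case where a descendant sits exactly at the next level.
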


\begin{statement}\index{statement!$\CHMTT^n_{k,\ell}$}
	For all $n,k,\ell \geq 1$,	$\CHMTT^n_{k,\ell}$ is the assertion that for every $f: [2^{<\omega}]^n \to k$ there is an $S \subseteq 2^{<\omega}$ such that $(S,\preceq)$ is isomorphic to $(2^{<\omega},\preceq)$ and $|f ([S]^n)| \leq \ell$.
\end{statement}

\noindent As with the previous two principles, the CHM tree theorem is a consequence of Milliken's tree theorem. We include a proof in Theorem \ref{th:strong_gen_treeth} below.

\section{Big Ramsey degrees and structures}\label{sec:bigRamsey}

Though we will study each of Devlin's theorem, the Rado graph theorem, and the CHM tree theorem separately and in its own right, we mention a common framework within which all three can be presented, and which better highlights some of the main similarities between the three. Some of the terminology here will also be convenient in our discussions later on.

All three principles can be stated more succinctly using the concept of big Ramsey degrees, which we now review. Recall that if $\mathcal{B}$ is an infinite structure and $\mathcal{A}$ is a finite substructure of $\mathcal{B}$, then for positive numbers $\ell \leq k$ the notation
\[
\mathcal{B} \to (\mathcal{B})^{\mathcal{A}}_{k,\ell}
\]
means that for every coloring $f: {\mathcal{B} \choose \mathcal{A}} \to k$ there exists an isomorphic substructure $\mathcal{B}'$ of $\mathcal{B}$ such that $|f '' {\mathcal{B}' \choose \mathcal{A}}| \leq \ell$. The following terminology is standard in structural Ramsey theory.

\begin{definition}\label{D:bigRamsey}
	Let $\mathcal{B}$ be a structure.
	\begin{itemize}
		\item For a finite substructure $\mathcal{A}$ of $\mathcal{B}$, the \emph{big Ramsey degree of $\mathcal{A}$ in $\mathcal{B}$} \index{big Ramsey degree} \index{Ramsey!big degree} is the least number $\ell \in \omega$, if it exists, such that $\mathcal{B} \to (\mathcal{B})^\mathcal{A}_{k,\ell}$ for all $k \in \omega$, in which case we say that the big Ramsey degree of $\mathcal{A}$ is \emph{finite}.
		\item We say that a structure \emph{$\mathcal{B}$ has finite big Ramsey degrees} \index{big Ramsey degree!finite} if, for every finite substructure $\mathcal{A}$ of $\mathcal{B}$ has finite big Ramsey degree.
	\end{itemize}
\end{definition}

\noindent In the parlance of this definition, then, the Rado graph theorem is simply the assertion that the Rado graph has finite big Ramsey degrees. Similarly, Devlin's theorem is the assertion that $(\mathbb{Q},<)$ has finite big Ramsey degrees, since up to isomorphism $(\mathbb{Q},<)$ has exactly one finite substructure $\mathcal{A}$ of each size $n \geq 1$, and so ${(\mathbb{Q},<) \choose \mathcal{A}} = [\mathbb{Q}]^n$. For the generalized CHM tree theorem the situation is slightly different. While $(2^{<\omega},\preceq)$ can have more than one non-isomorphic substructure of a given finite size, it still has only finitely many. Thus, the generalized CHM tree theorem is equivalent to the statement that $(2^{<\omega},\preceq)$ has finite big Ramsey degrees.

The bounds $\ell$ in each of Devlin's theorem, the Rado graph theorem, and the generalized CHM tree theorem are not determined purely by properties of the underlying structures. For example, even though $\mathbb{Q}$ has only one substructure of size $2$ up to isomorphism, we saw that we could differentiate two \emph{types} of substructure of size $2$ by enriching the structure by an enumeration of the domain. Enrichments of this kind play an important role in these computations, since they can be taken into account in designing colorings with a certain number of unavoidable colors.

A precise formalization of the concept of ``enrichment'' is given by Zucker \cite{Zucker-2019}.

\begin{definition}[Zucker \cite{Zucker-2019}, Definition 1.3]\label{def:big-ramsey-structure}\index{big Ramsey structure}\index{Ramsey!big structure}
	Let $\mathcal{B}$ be a structure in a language $\mathscr{L}$. A \emph{big Ramsey structure for $\mathcal{B}$} is a structure $\hat{\mathcal{B}}$ in a language $\hat{\mathscr{L}}$ satisfying the following properties:
	\begin{enumerate}
		\item $\mathscr{L} \subseteq \hat{\mathscr{L}}$;
		\item the restriction of $\hat{\mathcal{B}}$ to $\mathscr{L}$ is $\mathcal{B}$;
		\item for every finite substructure $\mathcal{A}$ of $\mathcal{B}$ there is a number $t_{\hat{\mathcal{B}}}(\mathcal{A})$ such that, up to isomorphism, there are exactly $t_{\hat{\mathcal{B}}}(\mathcal{A})$ many different substructures $\hat{\mathcal{A}}$ of $\hat{\mathcal{B}}$ whose restriction to $\mathscr{L}$ is a copy of $\mathcal{A}$;
		\item every finite substructure $\mathcal{A}$ of $\mathcal{B}$ has big Ramsey degree equal to $t_{\hat{\mathcal{B}}}(\mathcal{A})$;
		\item for every finite substructure $\mathcal{A}$ of $\mathcal{B}$ and choice $\hat{\mathcal{A}}_0,\ldots,\hat{\mathcal{A}}_{t_{\hat{\mathcal{B}}}(\mathcal{A})-1}$ of substructures of $\hat{\mathcal{B}}$ as in property 3, the coloring $f: {\mathcal{B} \choose \mathcal{A}} \to t_{\hat{\mathcal{B}}}(\mathcal{A})$ mapping each copy of $\mathcal{A}'$ of $\mathcal{A}$ in $\mathcal{B}$ to the unique $i < t_{\hat{\mathcal{B}}}(\mathcal{A})$ such that $\mathcal{A}'$, viewed as a substructure of $\hat{\mathcal{B}}$ by restriction, is isomorphic to $\hat{\mathcal{A}}_i$ witnesses that the big Ramsey degree of $\mathcal{A}$ in $\mathcal{B}$ is at least $t_{\hat{\mathcal{B}}}(\mathcal{A})$.
	\end{enumerate}
\end{definition}

\noindent The idea here is that for every finite substructure $\mathcal{A}$ of $\mathcal{B}$, the substructures $\hat{\mathcal{A}}_0,\ldots,\hat{\mathcal{A}}_{t_{\hat{\mathcal{B}}}(\mathcal{A})}$ of $\hat{\mathcal{B}}$ satisfying property 3 represent all recognizable or describable types of the copies of $\mathcal{A}$ in $\mathcal{B}$, and the additional structure of $\hat{\mathcal{B}}$ facilitates these descriptions. In the literature, these instances are called more specifically \emph{embedding types} or \emph{Devlin types} based on the specific structure $\mathcal{B}$.

Zucker \cite[Theorem 7.1]{Zucker-2019} provides some sufficient (and somewhat technical) conditions for a structure to admit a big Ramsey structure. For our purposes here, it is enough to know that each of $(\mathbb{Q},\leq)$, the Rado graph, and $(2^{<\omega},\preceq)$ does. We will study the big Ramsey structure of the Rado graph in detail (see also \cite{Zucker-2019}, Section 6.3), and we will carefully develop the appropriate notion of type in the sense of the big Ramsey structure for the generalized CHM tree theorem. For an account of a big Ramsey structure for $(\mathbb{Q},\leq)$, see \cite[Section 6.2]{Zucker-2019}.


\chapter{The Halpern-La\"{u}chli theorem}\label{sect:hl-theorem}
We begin our analysis of Milliken's tree theorem by studying the computable content of the Halpern-La\"{u}chli theorem (\Cref{th:strong-hl}). The two main theorems of this chapter are \Cref{thm:halpern-lauchli-computably-true}, that the Halpern-La\"{u}chli theorem is computably true, and \Cref{thm:hl-strong-cone-avoidance}, that it admits strong cone avoidance. The first result will be used in the proof that the product version of Milliken's tree theorem admits arithmetical solutions. The second result will be used to prove that the product version of Milliken's tree theorem for colorings of strong subtrees of height 2 admits cone avoidance, in the same way that strong cone avoidance of the pigeonhole principle can be used to prove cone avoidance of Ramsey's theorem for pairs (see, e.g., Hirschfeldt~\cite{Hirschfeldt2015Slicing}, Section 6.7).
\index{Halpern-La\"{u}chli theorem}
\section{An effective proof of the Halpern-La\"{u}chli theorem}

Our effectivization of the the Halpern-La\"{u}chli theorem is based on the proof of that theorem given in Todorcevic \cite{Todorcevic2010Ramsey}, where it appears as Theorem 3.2. We include that proof here largely in full, emphasizing the effective analysis when it shows up, with the exception of one technical lemma that we present first. For trees $T_0,\ldots,T_{d-1}$ and a tuple $\pi \in T_0(n) \times \cdots \times T_{d-1}(n)$ for some $n \in \NN$, we call $n$ the \emph{level} of $\pi$.




\begin{lemma}[Halpern and La\"{u}chli \cite{HalperbLauchli1966}, Theorem 1]\label{lem:HLuneven}
	Let $T_0,\ldots,T_{d-1}$ be infinite tree with no leaves. For all $k \geq 1$ and all $g:T_0 \times \cdots \times T_{d-1} \to k$ there is a $\pi \in \bigcup_n T_0(n) \times \cdots \times T_{d-1}(n)$, an $m$ larger than the level of $\pi$, and an $m$-$\pi$-dense matrix $P$ for $T_0,\ldots,T_{d-1}$ on which $g$ is constant.
\end{lemma}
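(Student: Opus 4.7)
The plan is to prove the lemma by induction on the arity $d$, following the classical argument of Halpern and La\"{u}chli as adapted in \cite{Todorcevic2010Ramsey} and keeping careful track of effectivity at every step. For the base case $d = 1$, I would call a node $\sigma \in T_0$ \emph{$i$-saturated} if there exists $m$ larger than the level of $\sigma$ such that every $\tau \in T_0(m)$ extending $\sigma$ has an extension of color $i$; whenever such a $\sigma$ exists, the set $P$ of all $i$-colored nodes of $T_0$ extending members of $\{\tau \in T_0(m): \tau \succeq \sigma\}$ is automatically $m$-$\sigma$-dense and $g$-monochromatic, as required. If no node were $i$-saturated for any $i$, then starting from the root $\rho$ I could inductively build $\rho = \tau_0 \preceq \tau_1 \preceq \cdots \preceq \tau_{k-1}$, where at each stage $\tau_{j+1}$ is chosen above $\tau_j$ to witness that $\tau_j$ fails to be $j$-saturated; then $\tau_{k-1}$ would have no extension of any color in $k$, contradicting the assumption that $T_0$ has no leaves.

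For the inductive step, the plan is to collapse the last coordinate. Given a coloring $g : T_0 \times \cdots \times T_d \to k$, I would define an auxiliary coloring on $T_0 \times \cdots \times T_{d-1}$ whose palette encodes, in a bounded finitary way, the behaviour of $g$ restricted to the section $\{\vec{\sigma}\} \times T_d$ for each $\vec{\sigma} = (\sigma_0, \ldots, \sigma_{d-1})$. The natural encoding is to run the base case inside $T_d$ on this section and to record the resulting color together with some normalized positional data of the dense set it produces. The inductive hypothesis applied to this auxiliary coloring then yields a tuple $\pi' = (\sigma_0, \ldots, \sigma_{d-1})$, a level $m'$, and a dense matrix $P_0 \times \cdots \times P_{d-1}$ on which the auxiliary color is constant. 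The constancy across the matrix is precisely the uniformity needed to pull back a single dense $P_d \subseteq T_d$, after which a common refinement of the levels of $P_0, \ldots, P_d$ yields the required $m$-$\pi$-dense monochromatic matrix for $g$.

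Throughout, each step is computable uniformly in $g$ and in the trees $T_0, \ldots, T_{d-1}$: the saturation test is $\Sigma^0_1$, the descent in the base case is a bounded search, and the auxiliary coloring is defined by a finitary recipe from $g$ and a base-case oracle that is itself effective by the previous paragraph. The main obstacle I anticipate is keeping the palette of the auxiliary coloring bounded independently of the depth parameter chosen inside $T_d$: a naive encoding of the behaviour of $g$ on a section has a priori unbounded size, so the construction requires a careful truncation that nonetheless retains enough information to reconstruct the last factor $P_d$ after the induction closes. This is precisely the delicate combinatorial point at the heart of the classical Halpern-La\"{u}chli argument, and ensuring it remains effective is what will convert the classical existence proof into a computable procedure suitable for the effective analysis of the remainder of the chapter.
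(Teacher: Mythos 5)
Your base case $d=1$ is correct: the saturation dichotomy you describe is exactly the right argument, and the chain $\tau_0 \preceq \cdots$ either terminates in a saturated node or produces a node with no extension of any color, which is impossible in a leafless tree (there is a harmless off-by-one in your indexing, since $\tau_0,\ldots,\tau_{k-1}$ only rules out colors $0,\ldots,k-2$, but one more step fixes this). Note, however, that the paper does not prove this lemma at all: it is imported verbatim as Theorem~1 of Halpern--L\"{a}uchli \cite{HalperbLauchli1966}, and the authors explicitly flag it as the one ingredient they take as a black box. The effectivity that matters for the paper is obtained downstream (in \Cref{thm:halpern-lauchli-computably-true}) by fixing $\pi$ non-effectively and then running a computable search, not by effectivizing the proof of this lemma, so your effectivity bookkeeping here is not what the paper needs in any case.

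The inductive step, which is the entire content of the theorem for $d \geq 2$, has a fatal gap that you have correctly located but not closed. Already for $d=2$: applying the base case to each section $g_\sigma = g(\sigma,\cdot)$ produces, for each $\sigma \in T_0$, a node $\pi_\sigma \in T_1$, a level $m_\sigma$, and an $m_\sigma$-$\pi_\sigma$-dense set $P_\sigma \subseteq T_1$ of some color $c_\sigma$. Recording only $c_\sigma$ gives a bounded palette, and the inductive hypothesis then yields an $m$-$\sigma_0$-dense $P_0 \subseteq T_0$ on which $c_\sigma$ is constant. But the witnessing sets $P_\sigma$ for the (infinitely many) $\sigma \in P_0$ live at different locations $\pi_\sigma$ and different depths $m_\sigma$ in $T_1$, and a product matrix requires a \emph{single} $m$-$\pi_1$-dense $P_1 \subseteq T_1$ working for all of them simultaneously. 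An infinite family of dense sets has no dense common refinement in general, so the "common refinement of the levels" you invoke at the end of the inductive step does not exist. Recording "normalized positional data" of $P_\sigma$ in the auxiliary palette cannot repair this, because that data is genuinely unbounded; truncating it discards exactly the information needed to reconstruct $P_1$. This obstruction is why none of the known proofs of the Halpern--L\"{a}uchli theorem (the original tower argument, Harrington's forcing proof, the ultrafilter proofs) proceed by a dimension induction of this shape. Your own closing paragraph concedes that the required truncation is "the delicate combinatorial point at the heart of the classical argument"; that point is the theorem, and a proposal that defers it is not a proof.
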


\noindent Nota bene that the coloring $g$ above is defined on the full product $T_0 \times \cdots \times T_{d-1}$, rather than the level product $\bigcup_{n} T_0(n) \times \cdots T_{d-1}(n)$. However, we can obtain a level version, as follows.

\begin{lemma}\label{lem:HLlevel}
	Let $T_0,\ldots,T_{d-1}$ be infinite trees with no leaves. For all $k \geq 1$ and all $f: \bigcup_{n} T_0(n) \times \cdots \times T_{d-1}(n) \to k$ there is a $\pi \in \bigcup_n T_0(n) \times \cdots \times T_{d-1}(n)$, an $m$ larger than the level of $\pi$, and an $m$-$\pi$-dense matrix $P \subseteq \bigcup_{n} T_0(n) \times \cdots \times T_{d-1}(n)$ on which $f$ is constant.
\end{lemma}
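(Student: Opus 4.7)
The plan is to reduce \Cref{lem:HLlevel} to \Cref{lem:HLuneven} by extending the coloring $f$, which is defined only on level tuples, to a coloring $g$ on the full product $T_0 \times \cdots \times T_{d-1}$ that is ``downward determined'' by its values on a fixed level, applying \Cref{lem:HLuneven} to $g$, and then reading off a level-dense matrix from the resulting dense matrix.

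Concretely, write $\tau^{(l)}$ for the unique ancestor of a node $\tau$ at level $l$ in its ambient tree (defined whenever $l$ does not exceed the level of $\tau$). For a parameter $m_0 \geq 1$, define $g_{m_0} \colon T_0 \times \cdots \times T_{d-1} \to k$ by setting
\[
  g_{m_0}(\vec{\tau}) := f\bigl(\tau_0^{(m_0)}, \ldots, \tau_{d-1}^{(m_0)}\bigr)
\]
whenever every coordinate $\tau_i$ has level at least $m_0$, and $g_{m_0}(\vec{\tau}) := 0$ otherwise. The defining feature of $g_{m_0}$ is that, on its ``interesting'' domain, its value depends only on the level-$m_0$ ancestors, so any monochromaticity of $g_{m_0}$ automatically propagates along descendants. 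Moreover $g_{m_0}$ agrees with $f$ on level-$m_0$ tuples.

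Applying \Cref{lem:HLuneven} to $g_{m_0}$ would yield a level tuple $\pi = (\sigma_0, \ldots, \sigma_{d-1})$, some $m$ larger than the level of $\pi$, and an $m$-$\pi$-dense matrix $P = P_0 \times \cdots \times P_{d-1}$ on which $g_{m_0}$ is constantly equal to some $c < k$; after a standard trim, each $P_i$ consists of nodes extending $\sigma_i$ at level at least $m$. Provided $m \leq m_0$, I would set $Q_i := \{\rho^{(m_0)} : \rho \in P_i\} \subseteq T_i(m_0)$. Since $P_i$ is $m$-$\sigma_i$-dense and $m_0 \geq m$, for each $\tau \in T_i(m)$ extending $\sigma_i$ the level-$m_0$ ancestor of any extension of $\tau$ lying in $P_i$ still extends $\tau$, so $Q_i$ is $m$-$\sigma_i$-dense. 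For any $\vec{q} \in Q := Q_0 \times \cdots \times Q_{d-1}$, selecting any preimage $\vec{\rho} \in P$ with $\vec{\rho}^{(m_0)} = \vec{q}$ gives $f(\vec{q}) = g_{m_0}(\vec{\rho}) = c$, so $f$ is constant on the level matrix $Q \subseteq \bigcup_{n} T_0(n) \times \cdots \times T_{d-1}(n)$, as required.

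The hard part will be arranging that the output $m$ of \Cref{lem:HLuneven} satisfies $m \leq m_0$, since $m$ is not bounded a priori by $m_0$. My plan is to handle this either by iterating the construction with a growing sequence of parameters $m_0^{(0)} < m_0^{(1)} < \cdots$ and arguing that some iteration must succeed, or by combining \Cref{lem:HLuneven} with a compactness step that couples $m_0$ to the output $m$: one clean route is to apply \Cref{lem:HLuneven} to the single $k^N$-valued coloring that packages the $g_{m_0}$ for all $m_0 \leq N$, and then extract the required synchronization from the joint monochromatic matrix, using the finite-branching of the $T_i$ to bound $N$.
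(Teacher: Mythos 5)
Your overall reduction---transfer $f$ from level tuples to a colouring of the full product via a single distinguished level, invoke \Cref{lem:HLuneven}, and read a level matrix back off the resulting dense matrix---is exactly the shape of the paper's proof. The gap is the synchronization step you flag yourself, and neither of your proposed repairs closes it. The trouble with projecting \emph{down} to level $m_0$ is twofold: you need every node of the dense matrix to sit at level $\geq m_0$ (otherwise the projection is undefined, or worse, the constant colour of $g_{m_0}$ on $P$ is the default value $0$ and says nothing about $f$), and you need $m \leq m_0$. \Cref{lem:HLuneven} gives no control of either kind: the density witnesses are only guaranteed to lie at levels $\geq m$, so for any fixed $m_0$ the monochromatic matrix may live entirely in the dead zone below level $m_0$, monochromatic only because $g_{m_0}$ is constantly $0$ there. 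Iterating over $m_0^{(0)} < m_0^{(1)} < \cdots$ does not help, since every single iteration can fail in this way. The packaged colouring runs into a circularity: with roughly $(k+1)^{N+1}$ colours the compactness bound on where the matrix can be found is $n_{(k+1)^{N+1}}$, and you would need an $N$ with $N \geq n_{(k+1)^{N+1}}$, which there is no reason to expect to exist.

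The fix---and the paper's proof---runs the transfer in the opposite direction. By compactness of the space of $k$-colourings of $T_0 \times \cdots \times T_{d-1}$ (using that the trees are finitely branching, so a witness confined to the first $N$ levels is a finite object determined by finitely many values of the colouring), there is a single bound $n_k$ such that \emph{every} $k$-colouring of the full product admits a witness $(\pi, m, P)$ from \Cref{lem:HLuneven} with each $P_i \subseteq \bigcup_{n < n_k} T_i(n)$; in particular $m < n_k$, so no self-referential choice of parameter is needed. One then fixes, for each node $\sigma$ of $T_i$ below level $n_k$, an extension $\hat\sigma \in T_i(n_k)$, and defines $g(\sigma_0,\ldots,\sigma_{d-1}) = f(\hat\sigma_0, \ldots, \hat\sigma_{d-1})$ on tuples whose coordinates all lie below level $n_k$ (and $0$ elsewhere). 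The monochromatic dense matrix $Q$ for $g$ lives below level $n_k$ by choice of $n_k$, and its lift $P_i = \{\hat\rho : \rho \in Q_i\} \subseteq T_i(n_k)$ is still $m$-$\pi$-dense because $\hat\rho \succeq \rho \succeq \tau$: upward extensions of density witnesses are always density witnesses, so the synchronization problem between $m$, the levels of the matrix, and the distinguished level never arises.
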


\begin{proof}[Proof (from Theorem 3.2 in \cite{Todorcevic2010Ramsey})]
	Fix $T_0,\ldots,T_{d-1}$. By compactness, for every $k \geq 1$ there is an $n_k \geq 1$ such that for every coloring $g:T_0 \times \cdots \times T_{d-1} \to k$ we can find a $\pi$, an $m$, and an $m$-$\pi$-dense matrix $P = P_0 \times \cdots \times P_{d-1}$ as in \Cref{lem:HLuneven} with $P_i \subseteq \bigcup_{n < n_k} T_i(n)$ for all $i < d$.

	Consider now $f: \bigcup_{n} T_0(n) \times \cdots \times T_{d-1}(n) \to k$. We define $g: T_0 \times \cdots \times T_{d-1} \to k$ as follows. First, for each $\sigma \in \bigcup_{n < n_k} T_i(n)$ fix an extension $\hat{\sigma} \in T_i(n_k)$. Now for all $(\sigma_0,\ldots,\sigma_{d-1}) \in T_0 \times \cdots \times T_{d-1}$, set
	\[
		g(\sigma_0,\ldots,\sigma_{d-1}) =
		\begin{cases}
			f(\hat{\sigma}_0,\ldots,\hat{\sigma}_{d-1}) & \text{if } \sigma_i \in \bigcup_{n < n_k} T_i(n) \text{ for all } i < d,\\
			0 & \text{otherwise.}
		\end{cases}
	\]
	By choice of $n_k$ there is a $\pi \in \bigcup_n T_0(n) \times \cdots \times T_{d-1}(n)$, an $m$ larger than the level of $\pi$, and an $m$-$\pi$-dense matrix $Q = Q_0 \times \cdots \times Q_{d-1}$ such that $Q_i \subseteq \bigcup_{n < n_k} T_i(n)$ for all $i < d$ and $g$ is constant on $Q$. For each $i < d$, let $P_i = \{\hat{\rho}: \rho \in Q\}$, so that now $P_i \subseteq T_i(n_k)$. By definition of $g$, we have that $f$ is constant on $P = P_0 \times \cdots \times P_{d-1}$. Thus, $\pi$, $m$, and $P$ are as desired.
\end{proof}

One final critical lemma for us is the following, which is a consequence of the previous one. We include the proof for completeness.

\begin{lemma}\label{lem:HLdensity}
	Let $T_0,\ldots,T_{d-1}$ be infinite trees with no leaves. For all $k \geq 1$ and all $f: \bigcup_{n} T_0(n) \times \cdots \times T_{d-1}(n) \to k$ there is a tuple $\pi \in \bigcup_{n} T_0(n) \times \cdots \times T_{d-1}(n)$ such that for every $m$ larger than the level of $\pi$ there is an $m$-$\pi$-dense matrix $P \subseteq \bigcup_{n} T_0(n) \times \cdots \times T_{d-1}(n)$ on which $f$ is constant.
\end{lemma}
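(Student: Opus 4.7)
My plan is to prove this by contradiction, after first establishing a helpful monotonicity property. The monotonicity to record is that if $P$ is a monochromatic $m$-$\pi$-dense matrix for $f$, then $P$ is also a monochromatic $m'$-$\pi$-dense matrix for every $m'$ with $\text{level}(\pi) < m' \leq m$. To see this, given such $m'$ and any $\tau' \in T_i(m')$ extending $\pi_i$, the no-leaves hypothesis provides an extension $\tau \in T_i(m)$ of $\tau'$, and the $m$-density of $P_i$ then yields an extension of $\tau$ in $P_i$, which is ipso facto an extension of $\tau'$. Consequently, the set $G(\pi)$ of ``good $m$'' (those admitting a monochromatic $m$-$\pi$-dense matrix for $f$) is downward closed within $(\text{level}(\pi), \infty)$, and the conclusion we must prove is equivalent to asserting that some $\pi$ has $G(\pi)$ cofinal in $\omega$.

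Suppose for contradiction that no such $\pi$ exists, so that every $\pi$ has a least bad level $m^*(\pi) > \text{level}(\pi)$ lying outside $G(\pi)$; by monotonicity all of $[m^*(\pi),\infty)$ is then outside $G(\pi)$. I would now apply Lemma~\ref{lem:HLlevel} to a carefully chosen finite-valued auxiliary coloring $g$ on $\bigcup_n T_0(n)\times\cdots\times T_{d-1}(n)$. A reasonable design is to have $g(\rho)$ record, for each tuple $\pi$ componentwise below $\rho$ at some fixed low cutoff level, which colors $c \in \{0,\ldots,k-1\}$ are witnessed by monochromatic $\text{level}(\rho)$-$\pi$-dense matrices for $f$ contained below $\rho$ (or the null value if none exists). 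Finite branching of the $T_i$ keeps the index set of relevant $\pi$'s finite, so $g$ takes only finitely many values. Applying Lemma~\ref{lem:HLlevel} to $g$ yields a tuple $\pi^\dagger$, a level $m^\dagger$, and a $g$-monochromatic $m^\dagger$-$\pi^\dagger$-dense matrix $Q$. The $g$-constancy on $Q$ would force, for every $\rho \in Q$, the same pattern of available $f$-colors; combined with the $m^\dagger$-$\pi^\dagger$-density of $Q$, I would assemble from the individual $f$-monochromatic matrices encoded in $g$ a single monochromatic $f$-matrix that is $m$-$\pi^\dagger$-dense for $m$ arbitrarily larger than $m^*(\pi^\dagger)$, producing the desired contradiction.

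The principal obstacle is the assembly step: distinct $\tau$'s at level $m^\dagger$ extending $\pi^\dagger$ might a priori require monochromatic matrices of different colors, and the challenge is to encode enough information in $g$ so that its constancy on $Q$ forces a common color usable across all such $\tau$. This is essentially the Halpern--La\"uchli parallelization trick already at work in Lemma~\ref{lem:HLlevel}, now applied one level higher: careful bookkeeping of which colors witness density at which levels, together with possibly iterating the argument over finitely many pigeonhole stages indexed by $k$, should suffice to push the contradiction through. If a direct auxiliary-coloring construction proves too awkward, an alternative route is to extract, by K\"onig's lemma on the finitely-branching product trees together with iterated applications of Lemma~\ref{lem:HLlevel} inside subtrees $T_i\uh\rho_i$, a single low-level $\pi$ whose $G(\pi)$ must be cofinal on pain of contradicting the uniformity guaranteed by compactness.
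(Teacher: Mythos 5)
Your monotonicity observation is correct and is indeed the (implicit) final step of the actual argument: a monochromatic $m$-$\pi$-dense matrix is also $m'$-$\pi$-dense for $\operatorname{level}(\pi) < m' \leq m$, since the trees have no leaves. The problem is what comes after. You set up the contradiction correctly (every $\pi$ has a least bad level $m_\pi$), but the central step --- actually deriving a contradiction from Lemma~\ref{lem:HLlevel} --- is not carried out. Your proposed auxiliary coloring $g$, recording which colors admit dense matrices below each node, is exactly the ``assembly step'' you flag as the principal obstacle, and you do not resolve it: nothing in your sketch forces the finitely many $f$-monochromatic matrices encoded in a $g$-constant $Q$ to share a color or to glue into a single dense matrix at a level beyond $m_{\pi^\dagger}$. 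As written, the proof does not go through.

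The missing idea is a level-thinning diagonalization, after which no auxiliary coloring is needed at all. Choose $m_0 < m_1 < \cdots$ with $m_0 = 0$ and $m_{s+1} > m_\pi$ for \emph{every} tuple $\pi$ at level at most $m_s$ (there are finitely many such $\pi$ since the trees are finitely branching, so such a choice is possible). Set $S_i = \bigcup_s T_i(m_s)$; each $(S_i,\preceq)$ is isomorphic to a tree, so Lemma~\ref{lem:HLlevel} applies to the restriction of the \emph{original} coloring $f$ to $\bigcup_n S_0(n)\times\cdots\times S_{d-1}(n)$. It returns a tuple $\pi$ at some $S$-level $s$ (hence $T$-level $m_s$) and a monochromatic matrix that is dense at $S$-level $t > s$, i.e.\ at $T$-level $m_t \geq m_{s+1} > m_\pi$. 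Your monotonicity lemma then downgrades this to a monochromatic $m_\pi$-$\pi$-dense matrix, contradicting the definition of $m_\pi$. Your closing ``alternative route'' via K\"onig's lemma gestures in this direction but does not identify the crucial point, namely that the thinning must outpace the function $\pi \mapsto m_\pi$ on each initial segment of levels; that is what guarantees the output of Lemma~\ref{lem:HLlevel} lands in the forbidden range.
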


\begin{proof}[Proof (following Theorem 3.2 in \cite{Todorcevic2010Ramsey})]
	Suppose otherwise. Then for every $\seq{\sigma_0,\ldots,\sigma_{d-1}} \in \bigcup_n T_0(n) \times \cdots \times T_{d-1}(n)$ there is an $m \geq 1$ such that $f$ is not constant on any $m$-$\pi$-dense matrix $P \subseteq \bigcup_{n} T_0(n) \times \cdots \times T_{d-1}(n)$. Let $m_\pi$ be the least such $m$. Then choose $m_0 < m_1 < \cdots$ so that $m_0 = 0$ and for all $s \geq 0$, $m_\pi < m_{s+1}$ for all tuples $\pi \in \bigcup_{n \leq m_s} T_0(n) \times \cdots \times T_{d-1}(n)$. For each $i < d$, define $S_i = \bigcup_s T_i(m_s)$, and note that the structure $(S_i,\preceq)$ is isomorphic to a tree, so $S_0 \times \cdots \times S_{d-1}$ can be regarded as a product of trees. Using Remark \ref{rem:pseudotrees}, apply Lemma \ref{lem:HLlevel} to the restriction of $f$ to $S_0 \times \cdots \times S_{d-1}$ to get a tuple $\pi \in \bigcup_n S_0(n) \times \cdots \times S_{d-1}(n)$, an $m$ larger than the level of $\pi$ in this product, and an $m$-$\pi$-dense matrix $P \subseteq \bigcup_n S_0(n) \times \cdots \times S_{d-1}(n)$ on which $f$ is constant. But by construction, the level of $\pi$ must be equal to $m_s$ for some $s$, and $m$ must be equal to $m_t$ for some $t > s$. So $f$ cannot, in fact, be constant on $P$, which is a contradiction.
\end{proof}

We now come to proving our first main theorem of this chapter.

\begin{theorem}\label{thm:halpern-lauchli-computably-true}
 The Halpern-La\"{u}chli theorem is computably true (i.e., every instance computes a solution for itself).
\end{theorem}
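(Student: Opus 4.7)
The plan is to construct $(S_0,\ldots,S_{d-1})$ computably by iterating dense matrices of a fixed color, using Lemma~\ref{lem:HLdensity} to supply the existence of such matrices.

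First, I would observe that, since each $T_i$ is $b$-bounded with $b$ computable and $f$ is computable, for any tuple $\pi$, color $c$, and levels $m<n$ the question whether some $m$-$\pi$-dense matrix $P=P_0\times\cdots\times P_{d-1}$ with $P_i\subseteq T_i(n)$ on which $f$ is constantly $c$ exists is decidable uniformly in $\pi,c,m,n$: one enumerates the finitely many candidate subsets of the at most $\prod_i b(n)$ nodes at level $n$ and verifies the defining conditions. A short monotonicity argument then shows that the same matrix is also $m'$-$\pi$-dense for every intermediate $m'$ strictly between the level of $\pi$ and $m$, because any $\tau\in T_i(m')$ above $\pi_i$ has an extension in $T_i(m)$, which is then extended in $P_i$. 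Consequently, if some color $c^*$ witnesses monochromatic $m$-$\pi$-density at arbitrarily large $m$, it does so at every $m$ larger than the level of $\pi$.

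Second, I would invoke Lemma~\ref{lem:HLdensity} combined with finite pigeonhole on the $k$ colors together with this monotonicity to secure a pair $(\pi^*,c^*)$ such that for every $m$ larger than the level of $\pi^*$ there exists a monochromatic $m$-$\pi^*$-dense matrix of color $c^*$. Granted this pair, the construction is a direct induction. Set $S^0_i=\{\pi^*_i\}$ and let the level function $g$ begin with $g(0)$ equal to the level of $\pi^*$. At stage $s+1$, apply the decision procedure of the previous paragraph to find the least $n>g(s)$ at which some monochromatic $(g(s)+1)$-$\pi^*$-dense matrix $P=P_0\times\cdots\times P_{d-1}$ of color $c^*$ exists; set $g(s+1)=n$, and for each direct extension $\tau$ in $T_i$ of a leaf of $S^s_i(s)$ pick any $\rho\in P_i$ extending $\tau$ and add it to $S^{s+1}_i(s+1)$. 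The new level-product lies in $P$ and is hence colored $c^*$; branching is preserved by construction; and the procedure never stalls, because suitable matrices exist at every level beyond that of $\pi^*$.

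The principal obstacle is the effective selection of the pair $(\pi^*,c^*)$, since ``some color witnesses density at arbitrarily large levels'' is a priori a $\Pi^0_2$ property. My plan is to effectivize the proof of Lemma~\ref{lem:HLdensity} itself. In that proof, the map $\pi\mapsto m_\pi$ sending a tuple to the least level admitting no monochromatic dense matrix (of any color) is partial computable thanks to the decidability noted above; hence the thinned product of trees constructed there is computable. Applying Lemma~\ref{lem:HLlevel} inside it is itself effective, because the compactness constant $n_k$ used in its proof is computable from $k$ and $b$ when the trees are computably bounded. The concrete witness produced supplies a specific $\pi^*$ and an initial color $c^*$, and a short verification via the monotonicity above confirms that $c^*$ continues to witness density at every subsequent level, completing the effective construction.
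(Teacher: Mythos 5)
Your first two paragraphs reproduce the paper's argument and are correct, but your third paragraph misdiagnoses an obstacle that isn't there and then proposes a fix that would not work. The theorem asserts only that the \emph{solution} is computable from the instance; it does not assert the existence of a uniform algorithm, and in particular it does not require an effective search for $(\pi^*, c^*)$. Since $\pi^*$ is a finite tuple of strings and $c^*$ is a number, once \Cref{lem:HLdensity} and pigeonhole guarantee that some such pair exists, it can simply be fixed as finite data (the paper does exactly this: ``Fix, non-effectively, a $\pi$ $\ldots$''), and the construction you describe in your second paragraph then produces a solution computable from $f \oplus T_0 \oplus \cdots \oplus T_{d-1}$. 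So the proof was already complete at the end of your second paragraph; the ``$\Pi^0_2$ selection problem'' is not a gap that needs filling.

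The plan in your third paragraph would also not succeed on its own terms. First, the map $\pi \mapsto m_\pi$ is not partial computable. The decidability you correctly note is for a fixed triple $(\pi, m, n)$: one can check whether a monochromatic $m$-$\pi$-dense matrix exists \emph{at level $n$}. But $m_\pi$ is the least $m$ such that no such matrix exists at \emph{any} level $n$, which for each $m$ is a $\Pi^0_1$ condition; it cannot be confirmed by a finite search, so computing the least such $m$ is not effective. Second, and more fundamentally, the proof of \Cref{lem:HLdensity} is by contradiction: it assumes $m_\pi$ is defined for every $\pi$, thins the trees to $S_0,\ldots,S_{d-1}$, applies \Cref{lem:HLlevel}, and derives an impossibility. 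The conclusion is that the global assumption was false; no particular $\pi^*$ witnessing its failure is exhibited, and in particular the ``concrete witness produced'' you refer to does not exist. Even with the compactness constant $n_k$ computed effectively, unwinding a proof by contradiction of this shape does not hand you a $\pi^*$.
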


\begin{proof}
	Fix an instance of the Halpern-La\"{u}chli theorem, which is to say, infinite trees $T_0,\ldots,T_{d-1}$ with no leaves (and which, recall, we take to be presented with an explicit bound) and a coloring $f: \bigcup_n T_0(n) \times \cdots \times T_{d-1}(n) \to k$ for some $k \geq 1$. We exhibit an $(f \oplus T_0 \oplus \cdots \oplus T_{d-1})$-computable solution, i.e., $(S_0,\ldots,S_{d-1}) \in \Subtree{\omega}{T_0,\ldots,T_{d-1}}$ such that $f$ is constant on $\bigcup_n S_0(n) \times \cdots \times S_{d-1}(n)$.

	Fix, non-effectively, a $\pi = (\sigma_0,\ldots,\sigma_{d-1}) \in \bigcup_n T_0(n) \times \cdots \times T_{d-1}(n)$ as in Lemma \ref{lem:HLdensity}, and say $m_0$ is the level of $\pi$. By the pigeonhole principle, we can also fix a $j < k$ such that for every $m > m_0$ there is an $m$-$\pi$-dense matrix $P \subseteq \bigcup_{n} T_0(n) \times \cdots \times T_{d-1}(n)$ such that $f(\tau_0,\ldots,\tau_{d-1}) = j$ for all $(\tau_0,\ldots,\tau_{d-1}) \in P$. Call such a $P$ \emph{good above $m$}.

	Notice that given $m  > m_0$ and a set $P \subseteq \bigcup_{n} T_0(n) \times \cdots \times T_{d-1}(n)$, it is computable in $f$ and the $T_i$ whether or not $P$ is good above $n$. Hence, we can $(f \oplus T_0 \oplus \cdots \oplus T_{d-1})$-computably define sequences of numbers $m_1 < m_2 < \cdots$ and sets $P_1,P_2,\ldots$ such that $m_0 < m_1$ and each $P_s$ is good above $m_s$. Now, for each $i < d$, define $S_i \subseteq T_i$ inductively as follows: add $\sigma_i$ to $S_i$, and having added $\tau \succeq \sigma_i$ choose the least $s$ such that $P_s$ contains an extension of each direct extension of $\tau$ in $T_i$, and add these extensions to $S_i$. Then $(S_0,\ldots,S_{d-1}) \in \Subtree{\omega}{T_0,\ldots,T_{d-1}}$, and $f(\tau_0,\ldots,\tau_{d-1}) = j$ for all $(\tau_0,\ldots,\tau_{d-1}) \in \bigcup_n S_0(n) \times \cdots \times S_{d-1}(n)$. Clearly, $(S_0,\ldots,S_{d-1})$ is computable from the $T_i$ and the sequences of $m_s$ and $P_s$, hence from $f$ and the $T_i$, as desired.
\end{proof}

In the next section, we will design a good notion of forcing for building infinite strong subtrees, and use this to give more effective proofs of Milliken's tree theorem and its product version. We will need a forest version of the Halpern-La\"{u}chli theorem.

\begin{theorem}[Halpern-La\"{u}chli theorem for forests]\label{lem:hl-forest}
  Let $T_0,\dots, T_{d-1}$ be infinite trees with no leaves, and $X_0,\dots, X_{d-1} \subseteq \baire$ be forests such that for each $i<d$, $X_i$ is a strong subforest of $T_i$ of height $\omega$, with common level function. For all $k \geq 1$ and all $f:\bigcup_n T_0(n)\times\dots\times T_{d-1}(n)\to k$ there exist strong subforests $Y_0,\ldots,Y_{d-1}$ of $X_0,\ldots,X_{d-1}$, respectively, with common level function, such that:
  \begin{enumerate}
  	\item for each $i < d$, every root of $X_i$ is extended by some root of $Y_i$;
  	\item for each $(\sigma_0,\ldots,\sigma_{d-1}) \in \roots(X_0) \times \cdots \times \roots(X_{d-1})$, $f$ is constant on $\bigcup_n~(Y_0 \uh \sigma_0)(n) \times \cdots \times (Y_{d-1} \uh \sigma_{d-1})(n)$.
  \end{enumerate}
\end{theorem}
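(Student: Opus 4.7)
The plan is to iterate the tree version of the Halpern-La\"{u}chli theorem (\Cref{th:strong-hl}) once per root tuple of $\prod_{i<d} \roots(X_i)$. Since each $T_i$ is finitely branching and $X_i$ is a strong subforest of $T_i$, $\roots(X_i)$ is finite; let $N = \prod_{i<d} |\roots(X_i)|$, and enumerate the root tuples as $\vec\rho^{(0)}, \dots, \vec\rho^{(N-1)}$.

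I would construct a decreasing chain $X_i = X_i^{(0)} \supseteq X_i^{(1)} \supseteq \cdots \supseteq X_i^{(N)}$ of strong subforests of $X_i$, each with a common level function across $i$, and each maintaining exactly one root extending each root of $X_i$. Writing $\sigma_i^{(s,t)}$ for the root of $X_i^{(s)}$ extending $\rho_i^{(t)}$, the invariant to maintain is that for every $t < s$, the coloring $f$ is constant on $\bigcup_n (X_0^{(s)} \uh \sigma_0^{(s,t)})(n) \times \cdots \times (X_{d-1}^{(s)} \uh \sigma_{d-1}^{(s,t)})(n)$. At stage $s$, I would apply \Cref{th:strong-hl} to the trees $(X_i^{(s)} \uh \sigma_i^{(s,s)})_{i<d}$ with the coloring $f$, obtaining strong subtrees $W_i$ sharing a common level function $h$ on which $f$ is constant. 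To define $X_i^{(s+1)}$, I would set the subtree under $\sigma_i^{(s,s)}$ equal to $W_i$, and for each other root $\sigma$ of $X_i^{(s)}$, select a strong subtree of $X_i^{(s)} \uh \sigma$ with the same level function $h$ (always possible: choose the unique root at level $h(0)$, and then, by induction on $n$, extend each node at level $n$ to a node at level $h(n+1)$ through each of its direct extensions in $X_i^{(s)}$, thereby preserving all branchings). The invariant at $t = s$ is then established by the Halpern-La\"{u}chli application, and invariants at $t < s$ are preserved because each new level product is a subset of the old one, so constancy persists. Setting $Y_i := X_i^{(N)}$ yields the desired strong subforests.

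The main technical challenge is the bookkeeping of level functions: at each stage one must ensure that the strong subtrees chosen under the different roots of $X_i^{(s)}$ share a common level function, and that this refines the previous common level function of the whole forest. This works cleanly because once \Cref{th:strong-hl} provides a level function $h$ relative to the subtree at one root, the same $h$ can be used at every other root by the level-by-level selection above, and the disjoint union of these strong subtrees is automatically a strong subforest with level function $h$. Since the entire construction invokes \Cref{th:strong-hl} only $N$ times and each application is effective by \Cref{thm:halpern-lauchli-computably-true}, this forest version is also computably true in the inputs, which will be useful for the subsequent computability analysis of Milliken's tree theorem in \Cref{sect:milliken-theorem}.
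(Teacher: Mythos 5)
Your proof is correct, but it takes a genuinely different route from the paper's. The paper's proof (of \Cref{lem:hl-forest-computably-true}) invokes the Halpern--La\"{u}chli theorem \emph{once}, applied in parallel to the full collection of $\sum_{i<d}|\roots(X_i)|$ trees $X_i\uh\sigma$ (for $i<d$, $\sigma\in\roots(X_i)$), using an aggregated coloring $h$ whose value at a level tuple $\pi=(\tau_i^\sigma)_{i,\sigma}$ records $f(\tau_0^{\sigma_0},\dots,\tau_{d-1}^{\sigma_{d-1}})$ for every root tuple $(\sigma_0,\dots,\sigma_{d-1})$ simultaneously; homogeneity of $h$ on the resulting level product then yields homogeneity of $f$ for every root tuple at once. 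You instead iterate the $d$-tree Halpern--La\"{u}chli theorem $N=\prod_{i<d}|\roots(X_i)|$ times, once per root tuple, thinning the subtrees under the untouched roots at each stage so that a single level function is maintained across the whole forest. Both arguments are valid and both are effective. The paper's one-shot aggregation is shorter and makes effectivity immediate from a single invocation of \Cref{thm:halpern-lauchli-computably-true}; your iterated version avoids the coding trick and keeps each application to only $d$ trees and $k$ colors, at the cost of $N$ applications and the level-synchronization bookkeeping you describe. One small slip: after an application returns the common level function $h$, the node at level $h(0)$ of $X_i^{(s)}\uh\sigma$ for an untouched root $\sigma$ is generally not unique (unless $h(0)=0$); you should simply choose one such node as the new root, after which your level-by-level extension argument goes through unchanged.
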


\noindent In other words, the lemma asserts that no part of any of the forests $X_i$ above any given root is wholly omitted in passing to the subforest $Y_i$, and the color under $f$ of a tuple in $\bigcup_n Y_0(n)\times\dots\times Y_{d-1}(n)$ depends only on which roots of $X_0,\dots, X_{d-1}$ the elements of the tuple extend.

As with the ordinary Halpern-La\"{u}chli theorem, our interest will be more in an effective version, which we now prove using \Cref{thm:halpern-lauchli-computably-true} above.

\begin{theorem}\label{lem:hl-forest-computably-true}
  \Cref{lem:hl-forest} is computably true.
\end{theorem}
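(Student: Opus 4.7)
The plan is to reduce the forest version to the ordinary Halpern-La\"{u}chli theorem, which is already computably true by \Cref{thm:halpern-lauchli-computably-true}. Since each $T_i$ is finitely branching and $X_i$ is a strong subforest of $T_i$ with common level function, all roots of $X_i$ lie at one and the same level of $T_i$, so $\roots(X_i)$ is finite. Write $M_i = \lvert\roots(X_i)\rvert$ and $N = M_0 M_1 \cdots M_{d-1}$.

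First I would collect, for each $i<d$ and $\sigma\in\roots(X_i)$, the subtree $X_i\upharpoonright\sigma$, which is an infinite tree with root $\sigma$ and no leaves; the ``no leaves'' property is inherited from $T_i$ via the strong-subforest branching condition, which for an $\omega$-height subforest applies at every level. This gives a family of $\sum_i M_i$ trees indexed by pairs $(i,\sigma)$, presented computably from the input and all sharing the common level function of the $X_i$'s, so their level-product is well-defined in the sense appropriate for Halpern-La\"{u}chli.

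Next I would define an auxiliary coloring $g$ into $k^N$ colors on this level-product. A tuple $(\tau_{i,\sigma})$ at common level $n$ determines, for each $\vec{\sigma}=(\sigma_0,\ldots,\sigma_{d-1})\in\roots(X_0)\times\cdots\times\roots(X_{d-1})$, a sub-$d$-tuple $(\tau_{0,\sigma_0},\ldots,\tau_{d-1,\sigma_{d-1}})$ whose entries lie at a common level of the respective $T_i$'s, so $f$ is defined on it. Let $g$ output the concatenation of the $N$ values of $f$ so obtained as $\vec{\sigma}$ varies; then $g$ is computable from $f$ and the $X_i$. Applying \Cref{thm:halpern-lauchli-computably-true} to $g$ yields, computably in the input, strong subtrees $W_{i,\sigma}\in\Subtree{\omega}{X_i\upharpoonright\sigma}$ sharing a common level function and on whose level-product $g$ is constant.

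Finally I would set $Y_i=\bigcup_{\sigma\in\roots(X_i)}W_{i,\sigma}$. The subtrees $X_i\upharpoonright\sigma$ are pairwise disjoint as $\sigma$ ranges over $\roots(X_i)$, so $Y_i$ is a strong subforest of $X_i$ with the same common level function (hence the $Y_i$'s have a common level function across $i$ too), and $Y_i\upharpoonright\sigma=W_{i,\sigma}$. Each root of $X_i$ is then extended by the root of the corresponding $W_{i,\sigma}$ in $Y_i$, and the coordinatewise constancy of $g$ on the level-product yields the required constancy of $f$ on each $\bigcup_n(Y_0\upharpoonright\sigma_0)(n)\times\cdots\times(Y_{d-1}\upharpoonright\sigma_{d-1})(n)$. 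The only real subtlety is to avoid conflicting choices above shared roots: by indexing the product by the pairs $(i,\sigma)$ rather than handling the $N$ root-combinations one at a time, a single application of Halpern-La\"{u}chli supplies mutually compatible monochromatic subtrees under one common level function, which is exactly what is needed to glue the $W_{i,\sigma}$'s into a strong subforest $Y_i$ of $X_i$.
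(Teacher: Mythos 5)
Your proof is correct and takes essentially the same route as the paper: both collect the trees $X_i\upharpoonright\sigma$ for all $(i,\sigma)$ with $\sigma\in\roots(X_i)$, form a single auxiliary coloring valued in $k^N$ that records the $f$-value for every choice of a root tuple $\vec\sigma$, make one application of \Cref{thm:halpern-lauchli-computably-true} to get compatible monochromatic subtrees with a common level function, and glue them back into the forests $Y_i$. Your closing remark about why one joint application (rather than $N$ sequential ones) is needed to get a common level function is exactly the subtlety the paper's construction is designed around.
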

\begin{proof}
  Fix a collection of trees $T_0,\dots,T_{d-1}$ along with strong subforests $X_0,\dots, X_{d-1}$ with a common level function, and a finite coloring $f:\bigcup_n T_0(n)\times\dots\times T_{d-1}(n)\to k$. For every $i<d$ and $\sigma\in\roots(X_i)$, the set
  $T_i^\sigma = X_i \uh \sigma$ is a tree. The result will come from an application of \Cref{thm:halpern-lauchli-computably-true} to the collection of $T_i^\sigma$ for $i<d$ and $\sigma\in\roots(X_i)$. Define a coloring  \[h:
      \bigcup_n \prod_{\substack{i<d, \\ \sigma\in \roots(X_i)}}T_i^\sigma(n)\to k^{|\roots(T_0)|\times\dots\times|\roots(T_{d-1})|}
    \]
	such that to a tuple $\pi=(\tau_i^{\sigma} \in T^\sigma_i: i \leq d, \sigma\in \roots(X_i))$, $h$ associates the tuple of all values that $f$ can take on the elements of $\pi$. That is,
    \[
      h(\pi) = \seq{f(\tau_0^{\sigma_0},\dots,\tau_{d-1}^{\sigma_{d-1}}):  (\sigma_0, \dots, \sigma_{d-1}) \in \roots(X_0) \times \dots \times \roots(X_{d-1})}.
    \]
	Note that $h$ is computable from $f$ and the $T_i^\sigma$, hence from $f$, the $T_i$, and the $X_i$.

    Apply \Cref{thm:halpern-lauchli-computably-true} to define a sequence of strong subtrees $S_i^\sigma$ of $T_i^\sigma$, for $i<d$ and $\sigma\in \roots(X_i)$, with a common level function, and computable from $h$ and the $T_i^\sigma$. For $i<d$, define $Y_i=\bigcup_{\sigma\in\roots(X_i)}S_i^\sigma$, so that $S^\sigma_i = Y_i \uh \sigma$. It is clear that each $Y_i$ is a strong subforest of $X_i$, and that every root of $X_i$ has an extension in $Y_i$. Moreover,
    if $(\tau_0,\ldots,\tau_{d-1})$ and $(\tau'_0,\ldots,\tau'_{d-1})$ both belong to $\bigcup_n S_0^{\sigma_0}(n) \times \cdots \times \ S_{d-1}^{\sigma_{d-1}}(n)$ for some $(\sigma_0,\ldots,\sigma_{d-1}) \in \roots(X_0) \times \cdots \times \roots(X_{d-1})$, then we must have $f(\tau_0,\ldots,\tau_{d-1}) = f(\tau'_0,\ldots,\tau'_{d-1})$ since $h$ is monochromatic on $\bigcup_n \prod_{\substack{i<d, \sigma\in \roots(X_i)}}S_i^\sigma(n)$.
\end{proof}

\section{Product tree forcing}\label{subsect:product-tree-forcing}

We now design the main notion of forcing for building strong subtrees. Variants of this notion of forcing will be used throughout the manuscript.
Fix a collection of finitely branching trees with no leaves $T_0, \dots,\allowbreak T_{d-1}$.

\begin{definition}\label{def:product-tree-condition}\index{product tree condition}\index{condition!product tree}\index{forcing!product tree}
	A \emph{product tree condition} is a tuple
	\[
		(F_0, \dots, F_{d-1}, X_0, \dots,X_{d-1})
	\]
	as follows:
	\begin{enumerate}
		\item $(F_0, \dots, F_{d-1}) \in \Subtree{n}{T_0, \dots, T_{d-1}}$, for some $n \in \NN$;
		\item  $X_0, \dots, X_{d-1}$ are infinite strong subforests of $T_0, \dots, T_{d-1}$, respectively, with a common level function;
		\item for every $j < d$ and every leaf $\sigma$ of $F_j$, say at level $k$ in $T_j$, $\roots(X_j)$ is $(k+1)$-$\sigma$-dense in $T_j$.
	\end{enumerate}
\end{definition}

\noindent Thus, the last condition asserts that every node $\tau \in T_j(k+1)$ extending $\sigma$ has an extension in $\roots(X_j)$.

For instance, let $d = 1$ and $T_0 = \cantor$,
with $F_0 = \{01, 01001, 01100\}$ and $X_0$ any strong subforest of $\cantor$
with $\roots(X_0) = \{ 0100100110, 0100110101,\allowbreak 0110000010, 0110011100 \}$.
Then $(F_0, X_0)$ is a product tree condition. The leaves of $F_0$ are $01001$ and $01100$
and are at level 5 in $\cantor$. The roots $0100100110$ and $0100110101$ of $X_0$ witness $(5+1)$-$\sigma$-density of $\roots(X_0)$ for $\sigma = 01001$, since the extensions of $\sigma$ at level 6 in $\cantor$ are $010010$ and $010011$.

\begin{definition}\label{def:product-tree-extension}\index{product tree condition!extension}\index{extension!product tree condition}
A product tree condition \[d = (\hat{F}_0, \dots \hat{F}_{d-1}, \hat{X}_0, \dots, \hat{X}_{d-1})\] \emph{extends} $c = (F_0, \dots, F_{d-1}, X_0, \dots, X_{d-1})$, written $d \leq c$,  if for every $j < d$, $F_j \subseteq \hat{F}_j$, $\hat{X}_j \subseteq X_j$ and $\hat{F}_j \setminus F_j \subseteq X_j$.
\end{definition}

\begin{remark}\label{remark:product-tree-condition-roots}
Given a product tree condition
\[
	c = (F_0, \dots, F_{d-1}, X_0, \dots,X_{d-1}),
\]
it is not necessarily the case that $F_0 \cup X_0, \dots, F_{d-1} \cup X_{d-1}$ are strong subtrees of $T_0, \dots, T_{d-1}$, respectively, as witnessed by the same level function.
Indeed, the forests may have extra roots unrelated to the finite trees.
However, by removing some roots of the forests, one can always obtain an extension
$d = (F_0, \dots, F_{d-1}, Y_0, \dots, Y_{d-1})$ for which it is the case. We can therefore assume this when convenient. However, in the proof of strong cone avoidance of the Halpern-Lauchli theorem (\Cref{thm:hl-strong-cone-avoidance}), we will use the degree of freedom of being able to have extra roots for {the} construction {of} multiple product tree conditions all sharing the same forests.
\end{remark}

We now define a forcing relation for product tree conditions. We follow a standard approach to forcing in arithmetic, using strong forcing; see, e.g., Shore \cite[Chapter 3]{Shore-2016} for a complete introduction. We use $\Vdash$ (``forces'') for the forcing relation irrespective of the underlying forcing notion, as no confusion will arise in our treatment. As is usual, we write $\cdots \not\Vdash \cdots$ (``$\cdots$ does not force $\cdots$'') as an abbreviation $\neg ( \cdots \forces \cdots)$. Throughout, we work in the language of second-order arithmetic. We follow the usual convention that for a $\Delta^{0,Z}_0$ formula $\varphi(G)$ with a free set parameter $G$, if $\varphi(F)$ holds for a finite set $F$, then so does $\varphi(F \cup E)$ for every finite set $E$ such that $\min E \setminus F > \max F$. We also assume our pairing function is such that if $\sigma,\tau \in \baire$ and $|\tau| > |\sigma|$ then the code for $\tau$ is larger than the code for $\sigma$. So for example, if $F$ is viewed as a subset of Baire and the length of $\tau \in \baire$ is larger than the length of every string  in $F$, then the code of $\tau$ is larger than $\max F$. In particular, if every string in $E \setminus F$ is longer than every string in $F$ and $\varphi(F)$ holds then so does $\varphi(F \cup E)$.

\begin{definition}\label{def:product-tree-forcing-relation}\index{$\Vdash$!product tree condition}
Let $c = (F_0, \dots, F_{d-1}, X_0, \dots, X_{d-1})$ be a product tree condition, $Z \subseteq \NN$ a set, and $\varphi(G_0, \dots, G_{d-1}, x)$ a $\Delta^{0,Z}_0$ formula with a free set parameters $G_0, \dots, G_{d-1}$ and a free integer parameter $x$.
\begin{enumerate}
	\item\label{force:exists} $c \Vdash (\exists x)\varphi(G_0, \dots, G_{d-1}, x)$ if $\varphi(F_0, \dots, F_{d-1}, x)$ holds for some $x \in \NN$.
	\item\label{force:forall} $c \Vdash (\forall x)\varphi(G_0, \dots, G_{d-1}, x)$ if $\varphi(F_0 \cup E_0, \dots, F_{d-1} \cup E_{d-1}, x)$ holds for all $x \in \NN$ and all finite subsets $E_0,\ldots,E_{d-1}$ of $X_0,\ldots,X_{d-1}$, respectively, such that $F_0 \cup E_0, \dots, F_{d-1} \cup E_{d-1}$ are finite strong subtrees of $T_0, \dots, T_{d-1}$, respectively, with a common level function.
\end{enumerate}
\end{definition}

\noindent Of course, \Cref{force:forall} should abstractly be defined as there being no $d$ extending $c$ such that  $d \Vdash (\exists x) \varphi(G_0,\ldots,G_{d-1},x)$, but this is easily seen to be equivalent to the given formulation. We give it explicitly in the definition since we will make frequent use of it.

Every filter $\Uc$ on the set of product tree conditions
induce{s} a $d$-tuple of (finite or infinite) strong subtrees $G^\Uc_0, \dots, G^\Uc_{d-1}$ of $T_0, \dots, T_{d-1}$, respectively, with common level function. Moreover, if $c \Vdash (\exists x)\varphi(G_0, \dots, G_{d-1}, x)$ or $c \Vdash (\forall x)\varphi(G_0, \dots, G_{d-1}, x)$
for some condition $c \in \Uc$ and $\Delta^{0,Z}_0$ formula $\varphi$, then $(\exists x)\varphi(G_0^{\Uc}, \dots, \allowbreak G_{d-1}^\Uc, x)$ holds or $(\forall x)\varphi(G_0^{\Uc}, \dots, G_{d-1}^\Uc, x)$ holds, respectively.

Given a Turing functional $\Gamma$, sets $C,Z \subseteq \NN$, and a condition $c$, we write $c \Vdash \Gamma^{G_0 \oplus \dots \oplus G_{d-1} \oplus Z} \neq C$
if there is an $x \in \NN$ such that either $c \Vdash \Gamma^{G_0 \oplus \dots \oplus G_{d-1} \oplus Z}(x)\uparrow$ or $c \Vdash \Gamma^{G_0 \oplus \dots \oplus G_{d-1} \oplus Z}(x)\downarrow \neq C(x)$. (Note that $C(x)$ is a definite value, so $C$ is not a parameter in the latter formula.) The following lemma states that given a filter $\Uc$ on the set of product tree conditions,
if for every Turing functional $\Gamma$ there is a condition $c \in \Uc$
such that $c \Vdash \Gamma^{G_0 \oplus \dots \oplus G_{d-1} \oplus Z} \neq C$, then $G^\Uc_0, \dots, G^\Uc_{d-1}$ are all infinite.

\begin{lemma}\label{lem:product-tree-genericity-implies-infinity}
For every $n \in \NN$, and all sets $C,Z \subseteq \NN$, there is a Turing functional $\Gamma$ such that
if $c = (F_0, \dots, F_{d-1}, X_0, \dots, X_{d-1})$ is any product tree condition satisfying
$$
c \Vdash \Gamma^{G_0 \oplus \dots \oplus G_{d-1} \oplus Z} \neq C
$$
then $F_0, \dots, F_{d-1}$ all have height at least $n$.
\end{lemma}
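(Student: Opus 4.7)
The plan is to define $\Gamma$ so that its halting behavior directly reflects the heights of the components of its oracle. Specifically, I would let $\Gamma^{G_0 \oplus \cdots \oplus G_{d-1} \oplus Z}(x)$ halt with output $0$ (the specific value is immaterial, and indeed the behavior does not depend on $x$ or $Z$) exactly when the finite portion of the oracle seen so far already reveals, for each $j < d$, a $\prec$-chain of length $n$ in $G_j$ regarded as a subset of $\baire$; otherwise $\Gamma$ diverges. This $\Gamma$ depends only on $n$.

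The main claim is then the contrapositive: if $c = (F_0, \dots, F_{d-1}, X_0, \dots, X_{d-1})$ is a condition in which some $F_{j_0}$ has height strictly less than $n$, then $c \not\Vdash \Gamma^{G_0 \oplus \cdots \oplus G_{d-1} \oplus Z} \neq C$. By the definition of the forcing relation, this amounts to showing, for every $x$, that $c \not\Vdash \Gamma(x) \uparrow$ and $c \not\Vdash \Gamma(x) \downarrow \neq C(x)$. The second is immediate: since $F_{j_0}$ contains no $\prec$-chain of length $n$, $\Gamma^{F_0 \oplus \cdots \oplus F_{d-1} \oplus Z}(x)$ diverges and so cannot halt with any value, let alone with a value different from $C(x)$. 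For the first, I must exhibit finite subsets $E_j \subseteq X_j$ (with $F_0 \cup E_0, \dots, F_{d-1} \cup E_{d-1}$ forming a tuple of finite strong subtrees of the $T_j$ with a common level function) on which $\Gamma$ halts.

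The crucial step is therefore a density lemma: every product tree condition can be extended so that each of its finite strong subtrees has height at least $n$. Starting from $c$, I would extend one level at a time. The leaves of the $F_j$ all lie at a single common level $k$ in the $T_j$ by the common level function of $(F_0, \dots, F_{d-1})$; by the density clause (item 3) of \Cref{def:product-tree-condition}, every level-$(k+1)$ extension in $T_j$ of such a leaf has an extension in $\roots(X_j)$. Using the common level function of $(X_0, \dots, X_{d-1})$ to align the choices across $j < d$, one obtains a one-level extension of each $F_j$ into $\roots(X_j)$, yielding a new product tree condition. Iterating this inside the remaining strong subforest structure of the $X_j$ produces an extension of any desired height, and in particular of height at least $n$, on which $\Gamma$ halts by construction.

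The principal obstacle is the book-keeping required to ensure that the extended tuple $(F_0 \cup E_0, \dots, F_{d-1} \cup E_{d-1})$ shares a \emph{common} level function across $j < d$, but this is precisely what the density clause of the product tree condition and the common level function of the $X_j$ are designed to provide. With this density lemma in hand, both clauses above fail for $c \Vdash \Gamma \neq C$, completing the contrapositive and establishing the lemma.
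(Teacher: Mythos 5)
Your proof is correct and follows essentially the same route as the paper's: the functional $\Gamma$ that halts precisely when the oracle exhibits height at least $n$ in each component, the observation that forced convergence must already be witnessed by the stems $F_0,\dots,F_{d-1}$, and the extension of the stems into the reservoirs (via the density clause of \Cref{def:product-tree-condition}) to rule out forced divergence. The only cosmetic difference is that you argue the contrapositive, whereas the paper does a direct case split on the two disjuncts of $c \Vdash \Gamma^{G_0 \oplus \cdots \oplus G_{d-1} \oplus Z} \neq C$.
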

\begin{proof}
Let $\Gamma$ be the Turing functional such that for all sets $F_0,\ldots,F_{d-1}$ coding strong subtrees, if the height of each $F_j$ is not at least $n$ then $\Gamma^{F_0 \oplus \dots \oplus F_{d-1} \oplus Z}(x) \uparrow$ for all $x \in \NN$ and $Z \subseteq \NN$, and otherwise
\[
	c \Vdash \Gamma^{G_0 \oplus \dots \oplus G_{d-1} \oplus Z}(x) \downarrow = 0
\]
for all $x$ and $Z$.

Now suppose $c \Vdash \Gamma^{G_0 \oplus \dots \oplus G_{d-1} \oplus Z} \neq C$. If $c \Vdash \Gamma^{G_0 \oplus \dots \oplus G_{d-1} \oplus Z}(x)\downarrow \neq C(x)$ for some $x \in \NN$, then by \Cref{def:product-tree-forcing-relation}(1), $\Gamma^{F_0 \oplus \dots \oplus F_{d-1} \oplus Z}(x)\downarrow \neq C(x)$, so by choice of $\Gamma$, $F_0, \dots, F_{d-1}$ must all have height at least $n$.

If $c \Vdash \Gamma^{G_0 \oplus \dots \oplus G_{d-1} \oplus Z}(x)\uparrow$ for some $x \in \NN$, then by \Cref{def:product-tree-forcing-relation}(2),
\[
	\Gamma^{(F_0 \cup E_0) \oplus \dots \oplus (F_{d-1} \cup E_{d-1}) \oplus Z}(x)\uparrow
\]
for all finite subsets $E_0,\ldots,E_{d-1}$ of $X_0,\ldots,X_{d-1}$, respectively, such that $F_0 \cup E_0, \dots, F_{d-1} \cup E_{d-1}$ are finite strong subtrees of $T_0, \dots, T_{d-1}$ with a common level function. But since $X_0, \dots, X_{d-1}$ are infinite, we can find some such $E_0, \dots, E_{d-1}$ with $F_0 \cup E_0, \dots, F_{d-1} \cup E_{d-1}$ all of height at least $n$, contradicting the definition of $\Gamma$.
\end{proof}

\begin{remark}
	The definition of product tree condition is made with respect to our particular choice of trees $T_0,\ldots,T_{d-1}$. We will always work with a single such choice at any given time, and so do not decorate our conditions by these trees explicitly. In particular, when a product tree condition is mentioned it should be understood as being with respect to whichever trees $T_0,\ldots,T_{d-1}$ are currently under discussion.
\end{remark}

\section{Strong cone avoidance of $\MT{1}{}$}\label{sec:sca_mt1}

Before proving strong cone avoidance of the product version of Milliken's tree theorem,
we prove a similar result for its non-product version. The proof is simpler and is actually sufficient to prove cone avoidance of the non-product version of Milliken's tree theorem for height 2. The techniques involved are a variation of the notion of \emph{$k$-hierarchy} of Chong et al~\cite[Section 4]{Chong2019Strengthb}. The theorem proven in this section will not be used in the remainder of the monograph, but can be seen as an instructive warm-up to the proof of \Cref{thm:hl-strong-cone-avoidance}.

\begin{theorem}\label{thm:mtt1-strong-cone-avoidance}
  $\MT{1}{}$ admits strong cone avoidance.
\end{theorem}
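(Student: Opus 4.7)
The plan is to use the product tree forcing from \Cref{subsect:product-tree-forcing} specialized to $d = 1$, and show that a sufficiently generic filter yields the desired $f$-monochromatic infinite strong subtree $S$ of $T$ with $C \nTred A \oplus S$. The argument naturally splits into two parts: a combinatorial lemma producing an initial reservoir of nodes all carrying a single color, and a standard density argument diagonalizing against Turing functionals.

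First I would establish the combinatorial lemma: for any instance $(T, f \colon T \to k)$ of $\MT{1}{}$ and any sets $A, C$ with $C \nTred A$, there exist a color $i^* < k$ and an infinite strong subforest $X_0$ of $T$ whose every node has $f$-color $i^*$, such that $C \nTred A \oplus X_0$. The proof is by induction on $k$, in the spirit of the $k$-hierarchy technique of Chong et al.~\cite{Chong2019Strengthb}. The base case $k = 1$ reduces to exhibiting any infinite strong subforest $X$ of $T$ with $C \nTred A \oplus X$, which can be obtained via a relativized cone avoidance basis theorem applied to an appropriate $\Pi^{0, A}_1$-class of codes of infinite strong subforests of $T$ (the finite branching of $T$ gives this class the requisite form). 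For $k > 1$, assuming no such reservoir exists, one refines $T$ along color classes using strong cone avoidance of the infinitary pigeonhole principle (Dzhafarov--Jockusch~\cite[Lemma~3.2]{Dzhafarov2009Ramseys}) applied to successive levels of $T$, and ultimately extracts from the failure a computation of $C$ from $A$, contradicting the hypothesis.

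Given a reservoir $(X_0, i^*)$, I restrict the product tree forcing to conditions $c = (F, X)$ with $X \subseteq X_0$; automatically every node of $F$ then carries color $i^*$, and this is preserved under extension since $\hat F \setminus F \subseteq X \subseteq X_0$. The key density lemma asserts that for every Turing functional $\Gamma$, every condition $c = (F, X)$ has an extension $c' \leq c$ with $c' \Vdash \Gamma^{G \oplus A} \neq C$. To prove it, consider the two $(A \oplus X)$-computable sets
\[
  Y_b = \{ x \in \NN : (\exists \text{ finite } E \subseteq X)[\, F \cup E \text{ is a strong subtree of } T \text{ and } \Gamma^{(F \cup E) \oplus A}(x) \downarrow = b\,] \}
\]
for $b \in \{0, 1\}$. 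If some $x$ and $b$ satisfy $x \in Y_b$ and $b \neq C(x)$, pick a witnessing $E$ and take the extension whose finite part contains $F \cup E$; it forces disagreement with $C$ at $x$. Otherwise $Y_1 \subseteq C$ and $Y_0 \cap C = \emptyset$: if some $x$ with $C(x) = 1$ has $x \notin Y_0 \cup Y_1$, then every permissible extension of $F$ within $X$ leaves $\Gamma^{G \oplus A}(x)$ undefined, so the current condition already forces partiality (hence $\Gamma^{G \oplus A} \neq C$); and if no such $x$ exists then $Y_1 = C$, which gives $C \Tred A \oplus X \Tred A \oplus X_0$, contradicting the choice of $X_0$. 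Combining this with density of conditions whose finite part has arbitrary height (a direct consequence of \Cref{lem:product-tree-genericity-implies-infinity}), a sufficiently generic filter yields $S = \bigcup_n F_n$ with the required properties.

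The principal obstacle is the combinatorial lemma. In the analogous pigeonhole setting of Dzhafarov--Jockusch one may freely pick any infinite subset of a color class, and the argument reduces to a clever pigeonhole. Here, by contrast, one must produce a monochromatic \emph{strong} subforest, which must faithfully replicate the branching of $T$ at each retained level; this rigidity rules out a direct appeal to subset-level pigeonhole. Instead, one must refine $T$ simultaneously in two directions --- structurally, fixing a full set of successors at each level for each retained node, and colorwise, selecting which color to target --- all while preserving cone avoidance. The Chong et al.\ $k$-hierarchy, suitably recast in terms of strong subforests, is what makes this simultaneous refinement tractable.
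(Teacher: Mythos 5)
Your proposed decomposition is circular. The ``combinatorial lemma'' you put first --- a color $i^*$ and an infinite strong subforest $X_0$ of $T$ all of whose nodes have color $i^*$, with $C \nTred A \oplus X_0$ --- is not a preliminary reservoir: it \emph{is} the theorem. An infinite strong subforest with a level function is a disjoint union of infinite strong subtrees of $T$, so any single component of $X_0$ is already a cone-avoiding monochromatic solution, and the forcing you run afterwards over conditions $(F,X)$ with $X \subseteq X_0$ adds nothing. All of the difficulty has been pushed into the lemma, and the sketch you give for it does not go through: $f$ colors \emph{nodes}, not levels, so strong cone avoidance of $\RT{1}{2}$ ``applied to successive levels of $T$'' has nothing to act on --- a single level of $T$ generally meets both color classes, and there is no pigeonhole-style refinement of $T$ along color classes that preserves the strong-subtree branching requirement. (The paper does invoke strong cone avoidance of $\RT{1}{2}$, but only inside \Cref{lem:mtt1-sca-existence-colored-cover}, where the objects being colored are the finitely many \emph{roots} of a reservoir, indexed by levels at which all their extensions lie in $A_0$; that is a genuinely different, and much weaker, use.)

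What is actually needed, and what the paper does, is a dichotomy in which the forcing itself produces the monochromatic object rather than consuming one. One first tries to force a solution inside $A_0$ with ordinary tree conditions; if every such attempt succeeds densely, the generic is the solution. Otherwise some cone-avoiding condition $(F,X)$ with $F \subseteq A_0$ and some functional $\Gamma_F$ admit no cone-avoiding diagonalizing extension inside $A_0$, and this stuck condition is used to seed a \emph{compound tree condition} $(F,\Fc,X)$: a finite family of candidate stems inside $A_1$ whose roots cover $F$ (\Cref{lem:mtt1-sca-existence-colored-cover} guarantees such a cover exists at some level unless we are already done). The key step, \Cref{lem:mtt1-sca-forcing-requirement}, is an either/or: given functionals for $F$ and for each $E \in \Fc$, one can either diagonalize on the $A_0$ side or on one of the $A_1$ candidates --- proved by a compactness argument over all $2$-partitions of the reservoir (necessary because the true coloring is not $Z$-computable) together with the cone avoidance basis theorem and the computably-true Halpern--La\"uchli theorem. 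Since the $A_0$ side is permanently blocked, every application lands on the $A_1$ side, and a pairing argument shows some single candidate $\rho$ absorbs all requirements, yielding the monochromatic cone-avoiding strong subtree inside $A_1$. None of this dichotomy, the compound-condition bookkeeping, or the pairing argument appears in your proposal, and without them the claimed lemma has no proof.
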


In what follows, fix two sets $C,Z \subseteq \NN$ such that $C \nTred Z$. Also fix an infinite $Z$-computable $Z$-computably bounded tree $T$ with no leaves and an arbitrary 2-partition $A_0 \sqcup A_1 = T$ representing an instance of $\MT{1}{2}$. Our task is to exhibit an $\MT{1}{}$-solution to whose join with $Z$ still does not compute $C$.


Given a finite strong subtree $F$ of
$T$,
a \emph{cover} of $F$ is a set $E \subseteq T$ such that
for every leaf $\sigma$ of $F$, every immediate extension of $\sigma$ in $T$
has an extension in~$E$.

\begin{definition}\index{condition!tree}\index{tree condition}\index{tree!condition}\index{forcing!tree}
\
\begin{enumerate}
	\item A \emph{tree condition} is a pair $(F, X)$ such that $F$ is a finite strong subtree of $T$, $X$ is an infinite strong subforest of $T$, and $\roots(X)$ is a cover of $F$.
	\item A tree condition $(\hat{F}, \hat{X})$ \emph{extends} $(F, X)$, written $(\hat{F}, \hat{X}) \leq (F, X)$, if $F \subseteq \hat{F}$, $\hat{X} \subseteq X$ and $\hat{F} \setminus F \subseteq X$.
	\item $(F,X)$ is \emph{cone avoiding} if $C \nTred X \oplus Z$.
\end{enumerate}
\end{definition}

\noindent Note that a tree condition is nothing but a product tree condition (\Cref{def:product-tree-condition}) relative to the $1$-tuple $T$.

A tree condition inherits the forcing relation
from the one for product tree conditions (\Cref{def:product-tree-forcing-relation}).

\begin{definition}\label{def:mtt1-sca-forcing-relation}\index{$\Vdash$!tree condition}
Let $(F, X)$ be a tree condition and $\varphi(G, x)$ a $\Delta^{0,Z}_0$ formula with a free set parameter $G$ and a free integer parameter $x$.
\begin{itemize}
	\item[1.] $(F, X) \Vdash (\exists x)\varphi(G, x)$ if $\varphi(F, x)$ holds for some $x \in \NN$.
	\item[2.] $(F, X) \Vdash (\forall x)\varphi(G, x)$ if $\varphi(F \cup E, x)$ holds for all $x \in \NN$ and all finite $E \subseteq X$ such that $F \cup E$ is a finite strong subtree of $T$.
\end{itemize}
\end{definition}

Every filter $\Uc$ on the set of tree conditions
induces a (finite or infinite) strong subtree $G_\Uc$ of $T$.
Moreover, if $(F, X) \Vdash (\exists x)\varphi(G, x)$ or $(F, X) \Vdash (\forall x)\varphi(G, x)$
for some tree condition $(F, X) \in \Uc$ and $\Delta^{0,Z}_0$ formula $\varphi$, then $(\exists x)\varphi(G_\Uc, x)$ or $(\forall x)\varphi(G_\Uc, x)$ holds, respectively.

Given a Turing functional $\Gamma$, we write $(F, X) \Vdash \Gamma^{G \oplus Z} \neq C$
if there is an $x \in \NN$ such that either $(F, X) \Vdash \Gamma^{G \oplus Z}(x)\uparrow$ or $(F, X) \Vdash \Gamma^{G \oplus Z}(x)\downarrow \neq C(x)$. We have the following analogue of Lemma \ref{lem:product-tree-genericity-implies-infinity}, which is proved in the same way.

\begin{lemma}\label{lem:mtt1-sca-genericity-implies-infinity}
For every $n \in \NN$, there is a Turing functional $\Gamma$ such that
for every tree condition $(F, X)$, if $(F, X) \Vdash \Gamma^{G \oplus Z} \neq C$
then $F$ has height at least $n$.
\end{lemma}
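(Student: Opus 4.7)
The plan is to imitate directly the proof of Lemma \ref{lem:product-tree-genericity-implies-infinity}, which is the product-tree analogue. Concretely, given $n \in \NN$, I define $\Gamma$ so that whether it converges depends only on the ``shape'' of its oracle: for any finite set $F \subseteq \baire$ coding a strong subtree of $T$, if the height of $F$ is less than $n$ then $\Gamma^{F \oplus Z}(x)\uparrow$ for all $x$ and all $Z$, while if the height of $F$ is at least $n$ then $\Gamma^{F \oplus Z}(x) \downarrow = 0$ for all $x$ and all $Z$. This is clearly a legitimate Turing functional, computable uniformly in $n$.

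Now suppose $(F, X) \Vdash \Gamma^{G \oplus Z} \neq C$. By definition this means there is some $x \in \NN$ such that either $(F,X) \Vdash \Gamma^{G \oplus Z}(x) \downarrow \neq C(x)$ or $(F,X) \Vdash \Gamma^{G \oplus Z}(x) \uparrow$. In the first case, unpacking clause (1) of \Cref{def:mtt1-sca-forcing-relation} gives $\Gamma^{F \oplus Z}(x) \downarrow \neq C(x)$; by the design of $\Gamma$, this convergence already forces $F$ itself to have height at least $n$, as required.

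In the second, divergence case, clause (2) of \Cref{def:mtt1-sca-forcing-relation} tells us that $\Gamma^{(F \cup E) \oplus Z}(x) \uparrow$ for every finite $E \subseteq X$ such that $F \cup E$ is a finite strong subtree of $T$. The main thing to check, and the only real content of the argument, is that such an $E$ exists with $F \cup E$ of height at least $n$; this will contradict the defining property of $\Gamma$ and thereby rule out the divergence case, forcing us back into the first case (and hence the conclusion $|F| \geq n$). The existence of such an $E$ follows from the cover property in the definition of a tree condition: because $\roots(X)$ covers $F$, each leaf of $F$ has immediate extensions in $T$ that are all initial segments of some root of $X$, so one can attach a finite strong subforest of $X$ on top of $F$ that extends $F$ into a single strong subtree of $T$. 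Since $X$ is infinite and finitely branching with no leaves (as a strong subforest of $T$), one may iterate this $n$ times to reach height $n$, producing the required $E$ and completing the proof.
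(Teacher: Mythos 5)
Your proof is correct and is essentially the paper's: the paper gives no separate argument for this lemma, stating only that it "is proved in the same way" as Lemma \ref{lem:product-tree-genericity-implies-infinity}, and your functional, case split on the two forcing clauses, and use of the cover property plus the infinitude of $X$ to extend $F$ to height $n$ reproduce that proof in the $d=1$ setting.
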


\begin{definition}\index{condition!compound tree condition}\index{compound tree condition}\index{compound tree condition!cone avoiding}
A \emph{compound tree condition} is a tuple $(F, \Fc, X)$ such that $(F, X)$ is a tree condition with $F \subseteq A_0$, and $\Fc$ is a finite collection of finite sets as follows:
\begin{enumerate}
	\item for every $E \in \Fc$, $(E, X)$ is a tree condition with $E \subseteq A_1$;
	\item $\bigcup_{E \in \Fc} \roots(E)$ is a cover of $F$.
\end{enumerate}
A compound tree condition $(F, \Fc, X)$ is \emph{cone avoiding} if $C \nTred X \oplus Z$.
\end{definition}

\noindent Equivalently, $(F, \Fc, X)$ is cone avoiding if $(F, X)$ is cone avoiding as a tree condition, and so is $(E, X)$ for every $E \in \Fc$. Note that we do not require the finite strong subtrees in $\Fc$ to be witnessed by the same level function.

\begin{lemma}\label{lem:mtt1-sca-compound-creator}\
\begin{itemize}
	\item[1.] For every tree condition $(F, X)$ with $F \subseteq A_0$, and every level $\ell \in \NN$ such that $X(\ell) \cap A_1$ is a cover of $F$, $(F, \Fc, Y)$ is a compound tree condition, where $\Fc = \{ \{ \rho \}: \rho \in X(\ell) \cap A_1 \}$ and $Y = X \setminus \bigcup_{s \leq \ell} X(s)$.
	\item[2.] For every compound tree condition $(F, \Fc, X)$, every  $E \in \Fc$, and every extension $(\hat{E}, \hat{X}) \leq (E, X)$ with
	$\hat{E} \setminus E \subseteq A_1$ and such that every root of $X$ is extended by a root of $\hat{X}$, $(F, \hat{\Fc}, \hat{X})$ is a compound tree condition, where $\hat{\Fc} = \{\hat{E}\} \cup (\Fc \setminus \{E\})$.
\end{itemize}
\end{lemma}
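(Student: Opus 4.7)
The plan is to verify, for each part, the three defining clauses of a compound tree condition: (i) that $(F, \cdot)$ is a tree condition with $F \subseteq A_0$; (ii) that each pair $(E, \cdot)$ built from the collection $\Fc$ is a tree condition with $E \subseteq A_1$; and (iii) that the union of the roots of the trees in $\Fc$ covers $F$. In both parts, $F$ and its containment in $A_0$ are unchanged, so (i) reduces to checking that the forest component still covers $F$, and the genuine work is confined to verifying the cover conditions after the forest or the finite subtrees are modified.

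For part 1, the key observation is that $Y$ is again an infinite strong subforest of $T$ with $\roots(Y) = X(\ell+1)$: deleting the initial levels $X(0), \dots, X(\ell)$ only shifts the level function, and all interior branchings are inherited from $X$. To see $(F, Y)$ is a tree condition I would argue that since $X(\ell) \cap A_1$ covers $F$ and each node in $X(\ell)$ has at least one successor in $X(\ell+1)$ (strong subforests of $T$ preserve branching and $T$ has no leaves), $X(\ell+1)$ also covers $F$. For $E = \{\rho\}$ with $\rho \in X(\ell) \cap A_1$, the pair $(\{\rho\}, Y)$ is a tree condition because the bijection between immediate $T$-extensions of $\rho$ and immediate $X$-extensions of $\rho$ (the latter lying in $X(\ell+1) = \roots(Y)$) gives the cover; the inclusion $\{\rho\} \subseteq A_1$ is immediate. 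Finally, $\bigcup_{E \in \Fc} \roots(E) = X(\ell) \cap A_1$ is a cover of $F$ by hypothesis.

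For part 2, the crucial point is that although $\hat E$ properly extends $E$, its root coincides with that of $E$: any node in $\hat E \setminus E$ lies in $X$, whose roots are strictly above the leaves of $E$, so the unique root of the strong subtree $\hat E$ is forced to equal that of $E$. Hence $\bigcup_{E' \in \hat\Fc} \roots(E') = \bigcup_{E' \in \Fc} \roots(E')$, and this already covers $F$. The remaining cover conditions---that $\roots(\hat X)$ covers $F$ and each $E' \in \Fc \setminus \{E\}$---follow from the assumption that every root of $X$ is extended by a root of $\hat X$: given a leaf $\sigma$ of the finite subtree in question and an immediate $T$-extension $\tau$, pick $\rho \in \roots(X)$ extending $\tau$ (available since $\roots(X)$ covers both $F$ and each $E' \in \Fc$), and then pick $\hat\rho \in \roots(\hat X)$ extending $\rho$. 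For the pair $(\hat E, \hat X)$ the cover is direct, since by hypothesis $(\hat E, \hat X) \leq (E, X)$ is already a tree condition; and the containment $\hat E \subseteq A_1$ follows from $E \subseteq A_1$ together with $\hat E \setminus E \subseteq A_1$. The only real subtlety I anticipate is bookkeeping around the fact that roots of $\hat X$ need not lie at the same level of $T$ as roots of $X$; but this is precisely what the hypothesis on extension of roots is designed to handle, so no further combinatorial argument is required.
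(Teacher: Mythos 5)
Your verification is correct, and it is exactly the argument the paper has in mind — the paper's own proof of this lemma is simply ``Immediate from the definitions,'' and you have correctly unpacked the definitional checks, including the two points that actually require a remark: that branching preservation in strong subforests propagates covers up one level (part 1) and that $\roots(\hat{E}) = \roots(E)$ because $\hat{E}\setminus E \subseteq X$ lies strictly above the leaves of $E$ (part 2).
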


\begin{proof}
	Immediate from the definitions.	
\end{proof}

\begin{lemma}\label{lem:mtt1-sca-existence-colored-cover}
Suppose there is no infinite strong perfect subtree $S \subseteq T$ such that $S \subseteq A_0$ and $C \nTred S \oplus Z$.
Then for every cone avoiding tree condition $(F, X)$, there is a level $\ell \in \NN$ such that $X(\ell) \cap A_1$ is a cover of $F$.
\end{lemma}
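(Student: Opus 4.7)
I would argue the contrapositive: assume that at every level $\ell \in \NN$ the set $X(\ell) \cap A_1$ fails to cover $F$, and produce an infinite strong subtree $S$ of $T$ with $S \subseteq A_0$ and $C \nTred S \oplus Z$, contradicting the hypothesis. By assumption, for each $\ell$ there is a leaf $\sigma_\ell$ of $F$ and an immediate extension $\tau_\ell$ of $\sigma_\ell$ in $T$ such that no extension of $\tau_\ell$ lies in $X(\ell) \cap A_1$. Since $F$ has only finitely many leaves, each with finitely many immediate extensions in $T$, the finite pigeonhole principle yields a fixed immediate extension $\tau^*$ of a fixed leaf of $F$ such that
\[
	L := \{\ell \in \NN : X(\ell) \cap A_1 \text{ contains no extension of } \tau^*\}
\]
is infinite. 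Because $\roots(X)$ covers $F$ and $X$ has no leaves, each $X(\ell)$ contains at least one extension of $\tau^*$; for $\ell \in L$, every such extension must lie in $A_0$.

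The key difficulty is that $L$ is defined in terms of the arbitrary sets $A_0$ and $A_1$, which may be very complex, so $L$ cannot itself be used as an oracle without potentially computing $C$. To bypass this, I invoke the standard cone avoidance for infinite subsets of infinite sets: since $(F,X)$ is cone avoiding, $C \nTred X \oplus Z$, and a routine Mathias-forcing argument with reservoir $L$ produces an infinite $L' \subseteq L$ with $C \nTred L' \oplus X \oplus Z$. Using $X \oplus L'$ as oracle (with the finite parameter $\tau^*$), I construct $S$ as follows. Enumerate $L' = \{\ell_0 < \ell_1 < \cdots\}$, pick the lexicographically least $\rho^* \in X(\ell_0)$ extending $\tau^*$, and set $S(0) := \{\rho^*\}$. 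Given $S(k)$, for each $\sigma \in S(k)$ and each immediate extension $\mu$ of $\sigma$ in $T$, place into $S(k+1)$ the lexicographically least extension of $\mu$ in $X(\ell_{k+1})$; such extensions exist because $X$ is a strong subforest of $T$ of height $\omega$.

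It remains to verify that $S$ has the required properties. By construction $S$ is meet-closed and its branching at each non-leaf node equals the $T$-branching there, so $S$ is an infinite strong subtree of $T$ with level function $k \mapsto f_X(\ell_k)$, where $f_X$ is the level function of $X$ in $T$. Each $\sigma \in S$ lies in $X(\ell_k)$ for some $k$ and extends $\tau^*$, and $\ell_k \in L' \subseteq L$ forces $\sigma \in A_0$; thus $S \subseteq A_0$. Finally $S \Tred X \oplus L'$ yields $S \oplus Z \Tred X \oplus L' \oplus Z$, so $C \nTred S \oplus Z$, contradicting the hypothesis. The main obstacle, as highlighted, is eliminating any dependence on the potentially very complex $A_0$ when building $S$; this is precisely what the passage to the cone-avoiding level skeleton $L' \subseteq L$ accomplishes.
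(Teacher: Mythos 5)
Your overall decomposition matches the paper's: argue the contrapositive, use failure of coverage at every level together with a pigeonhole to fix a node all of whose level-set descendants in $X$ lie in $A_0$, and from those levels build a cone-avoiding infinite strong subtree of $T$ contained in $A_0$. The construction of $S$ from a cone-avoiding set of levels is fine. The gap is in the middle step: you first apply a pigeonhole nonuniformly to fix $\tau^*$ and obtain the infinite set $L$ of levels, and only then try to pass to a cone-avoiding infinite subset $L' \subseteq L$. The fact you invoke---that every infinite set has an infinite subset $L'$ with $C \nTred L' \oplus X \oplus Z$---is false. For a counterexample, take $L$ to be the set of codes of the finite initial segments of $C$: $L$ is infinite, yet every infinite $L' \subseteq L$ computes $C$, since the union of the prefixes coded in $L'$ is $C$. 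In the present setting $A_1$ is completely arbitrary and may encode $C$, and your $L$ is determined by $A_1$, so it is subject to exactly this obstruction; the intended Mathias forcing does not go through because the reservoir $L$ and all of its infinite subsets may already compute $C$.

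The fix is the one the paper uses: do not fix the color first. Apply strong cone avoidance of the pigeonhole principle (cited as $\RT{1}{2}$, Dzhafarov--Jockusch, Lemma 3.2) directly to a total, finitely-valued coloring of the levels, which produces the homogeneous color and a cone-avoiding infinite homogeneous set $H$ simultaneously. Concretely, let $f(\ell)$ be a pair $(\sigma, \tau)$ witnessing that $X(\ell) \cap A_1$ fails to cover $F$; this is a total map into a finite set, and strong cone avoidance of pigeonhole yields a color $\tau^*$ together with an infinite $f$-homogeneous $H$ satisfying $C \nTred H \oplus X \oplus Z$ at once. (The paper colors each $\ell$ instead by a root of $X$ all of whose level-$\ell$ descendants lie in $A_0$; this is essentially the same since $\roots(X)$ refines the set of immediate $T$-extensions of leaves of $F$.) The remainder of your construction then goes through with $H$ in place of $L'$. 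One minor point: the pigeonhole used to fix $\tau^*$ is the infinite, not the finite, pigeonhole principle.
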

\begin{proof}
Suppose first there is some level $\ell$ such that every root $\rho$ of $X$ has an extension $\sigma \in X(\ell) \cap A_1$. Since $\roots(X)$ is a cover of $F$, then so is $X(\ell) \cap A_1$.

So now suppose that for every level $\ell$, there is some root $\rho$ of $X$ all of whose extensions $\sigma \in X(\ell)$ belong to $A_0$. We claim there is an infinite strong  subtree $S \subseteq T$ such that $S \subseteq A_0$ and $C \nTred S \oplus Z$, contrary to the hypothesis of the lemma. Let $f: \NN \to \roots(X)$ be the function which to $\ell$ associates such a root $\rho$. By strong cone avoidance of $\RT{1}{2}$ (\cite{Dzhafarov2009Ramseys}, Lemma 3.2), there is an infinite set of levels $H$ which is $f$-homogeneous for some root $\rho$ of $X$ and such that $C \nTred H \oplus X \oplus Z$. In particular, for every level $\ell \in H$ and every node $\sigma$ at level $\ell$ in $X$ extending $\rho$ we have $\sigma \in A_0$. But now we can $H \oplus X$-computably build an infinite strong subtree $S \subseteq T$ among these $\sigma$. Then $S \subseteq A_0$, and since $S \Tred H \oplus X$ we also have $C \nTred S \oplus Z$.
\end{proof}

\begin{lemma}\label{lem:mtt1-sca-forcing-requirement}
For every cone avoiding compound tree condition $(F, \Fc, X)$ and every tuple of Turing functionals $\langle \Gamma_F, \Gamma_E: E \in \Fc \rangle$, one of the following holds:
\begin{itemize}
	\item[1.] There is a cone avoiding extension $(\hat{F}, \hat{X}) \leq (F, X)$
	such that $(\hat{F}, \hat{X}) \Vdash \Gamma^{G \oplus Z}_F \neq C$ and $\hat{F} \setminus F \subseteq A_0$ ;
	\item[2.] There is a cone avoiding extension $(\hat{E}, \hat{X}) \leq (E, X)$ for some $E \in \Fc$
	such that $(\hat{E}, \hat{X}) \Vdash \Gamma^{G \oplus Z}_E \neq C$ and $\hat{E} \setminus E \subseteq A_1$ and every root of $X$ is extended by a root of $\hat{X}$.
\end{itemize}
\end{lemma}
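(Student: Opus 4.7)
The plan is to prove the dichotomy by contradiction: suppose that neither option~1 nor option~2 holds, and derive $C \Tred X \oplus Z$, contradicting the cone avoidance of $(F, \Fc, X)$.

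First, I unpack the two failure hypotheses. The failure of option~1 says that for every cone-avoiding extension $(\hat F, \hat X) \leq (F, X)$ with $\hat F \setminus F \subseteq A_0$, one has $(\hat F, \hat X) \not\Vdash \Gamma_F^{G \oplus Z} \neq C$; by \Cref{def:mtt1-sca-forcing-relation} this means that for every $x \in \NN$, $\Gamma_F^{\hat F \oplus Z}(x)$ is undefined or equals $C(x)$, and some finite $E \subseteq \hat X$ making $\hat F \cup E$ a strong subtree causes $\Gamma_F^{(\hat F \cup E) \oplus Z}(x)$ to converge. The failure of option~2 yields the analogous statement for each $E \in \Fc$ with $\Gamma_E$ in place of $\Gamma_F$, subject to the root-preservation clause. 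Note that since any extension $(\hat F, \hat X)$ has $\hat X \subseteq X$ and $C \nTred X \oplus Z$, we automatically get $C \nTred \hat X \oplus Z$, so the cone-avoidance condition on extensions imposes no extra restriction beyond $\hat X \subseteq X$.

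Second, I describe an $(X \oplus Z)$-computable algorithm for $C$. On input $x$, the algorithm searches in parallel, level by level, for either (a) a finite strong subtree $\hat F \supseteq F$ with $\hat F \setminus F \subseteq X \cap A_0$ such that $\Gamma_F^{\hat F \oplus Z}(x) \downarrow$, or (b), for some $E \in \Fc$, a finite strong subtree $\hat E \supseteq E$ with $\hat E \setminus E \subseteq X \cap A_1$ such that $\Gamma_E^{\hat E \oplus Z}(x) \downarrow$, outputting the converged value $v$. Correctness is direct: if the output comes from case~(a) with witness $\hat F$, taking $\hat X$ to be the portion of $X$ above the leaves of $\hat F$ produces a tree-condition extension $(\hat F, \hat X) \leq (F, X)$ with $\hat F \setminus F \subseteq A_0$ satisfying $(\hat F, \hat X) \Vdash \Gamma_F^{G \oplus Z}(x) \downarrow = v$, so if $v \neq C(x)$ we would have $(\hat F, \hat X) \Vdash \Gamma_F^{G \oplus Z} \neq C$, contradicting failure of option~1. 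The symmetric argument in case~(b) invokes failure of option~2, where the root-preservation clause is handled by including, in the lifted $\hat X$, roots extending every root of $X$.

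The main obstacle is termination: the unpacked form of option~1 failure only guarantees a converging finite extension $E \subseteq X$, not a color-pure one, so it is not immediate that a witness of type (a) or (b) exists for every $x$. The key workaround uses the compound structure and a level-by-level argument in the spirit of \Cref{lem:mtt1-sca-existence-colored-cover}: either one can iteratively extend $F$ inside $X \cap A_0$ until $\Gamma_F^{G \oplus Z}(x)$ converges, yielding case~(a), or at some level every $A_0$-direction above $F$ inside $X$ is blocked, in which case the cover property $\bigcup_{E \in \Fc} \roots(E)$ over $F$ forces every blocked direction to lie above some $E \in \Fc$, and an analogous iterative extension of that $E$ inside $X \cap A_1$ using the option~2 failure provides a witness for case~(b). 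Either way the algorithm terminates, and running it for all $x$ yields an $(X \oplus Z)$-computation of $C$, the desired contradiction.
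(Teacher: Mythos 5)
There is a genuine gap, and it is twofold. First, your proposed reduction is not actually an $(X \oplus Z)$-computation of $C$: the search in steps (a) and (b) asks for finite extensions lying inside $X \cap A_0$ or $X \cap A_1$, but in this lemma we are proving \emph{strong} cone avoidance, so the partition $A_0 \sqcup A_1 = T$ is an arbitrary instance and is not computable from $X \oplus Z$. Your algorithm therefore only shows $C \Tred X \oplus Z \oplus A_0$, which contradicts nothing. The paper sidesteps this by defining the set $W$ of pairs $(x,v)$ for which \emph{every} 2-partition $B_0 \sqcup B_1 = X$ admits a color-pure converging witness; the universal quantifier over partitions, together with compactness, is exactly what makes $W$ an $X \oplus Z$-c.e.\ set without any reference to $A_0, A_1$, and the actual partition is only instantiated afterwards (Case 1).

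Second, even granting oracle access to $A_0$, your termination argument does not go through. The failure of options 1 and 2 only tells you that on each condition the functional is not \emph{forced} to diverge, i.e., that \emph{some} finite subset of the reservoir produces convergence --- but that subset can mix colors freely, and nothing in \Cref{lem:mtt1-sca-existence-colored-cover} or in a ``level-by-level'' extension scheme converts it into a color-pure one. Bridging that gap is the entire content of the paper's Case 2: when $(x, C(x)) \notin W$, the partitions with no color-pure witness form a non-empty $\Pi^{0, X \oplus Z}_1$ class, one selects a cone-avoiding member $B_0 \oplus B_1$ by the cone avoidance basis theorem, applies the computably-true Halpern--L\"{a}uchli theorem (\Cref{thm:halpern-lauchli-computably-true}) to the trees $X \uh \rho$ for $\rho \in \roots(X)$ to obtain $B$-monochromatic subforests, and then uses the cover structure of the compound condition to manufacture an extension that genuinely forces $\Gamma^{G \oplus Z} \neq C$ --- contradicting the assumed failure. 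Your proof omits this machinery entirely, and without it the ``either we converge in $A_0$ or we converge in $A_1$'' dichotomy is unsupported.
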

\begin{proof}
Let $W$ be the set of pairs $(x, v) \in \NN \times \{0,1\}$ such that for every 2-partition $B_0 \sqcup B_1 = X$ one of the following holds:
\begin{itemize}
	\item[(a)] there is a finite set $H \subseteq X \cap B_0$ such that $F \cup H$ is a finite strong subtree of $T$ and $\Gamma_F^{(F \cup H) \oplus Z}(x)\downarrow = v$;
	\item[(b)] there is some $E \in \Fc$ and a finite set $H_E \subseteq X \cap B_1$ such that $E \cup H_E$ is a finite strong subtree of $T$ and $\Gamma_E^{(E \cup H_E) \oplus Z}(x)\downarrow = v$.
\end{itemize}
By compactness, the set $W$ is $X \oplus Z$-c.e.\ There are three cases:

\case{1}{$(x, 1-C(x)) \in W$ for some $x \in \NN$.} Let $B_0 = X \cap A_0$ and $B_1 = X \cap A_1$. If (a) holds with witness $H$, then let $\ell$ be the level of the leaves of $F \cup H$ in $X$, and $\hat{X} = X \setminus \bigcup_{s \leq \ell} X(s)$. Now $(F \cup H, \hat{X})$ is a tree condition satisfying item 1 of the lemma.
If (b) holds for some $E \in \Fc$ with witness $H_E$, then let $\ell$ be the level of the leaves of $E \cup H_E$ in $X$, and $\hat{X} = X \setminus \bigcup_{s \leq \ell} X(s)$. Now $(E \cup H_E, \hat{X})$ is a tree condition satisfying item 2 of the lemma.

\case{2}{$(x, C(x)) \not\in W$ for some $x \in \NN$.} Let $\Cc$ be the $\Pi^{0,X \oplus Z}_1$ class of all sets $B_0 \oplus B_1$ such that $B_0 \sqcup B_1 = X$ and neither (a) nor (b) holds for the pair $(x, C(x))$. By assumption, $\Cc \neq \emptyset$.
	By the cone avoidance basis theorem (\cite{Jockusch1972Degrees}, Corollary 2.11), there is a $B_0 \oplus B_1 \in \Cc$ such that $C \nTred B_0 \oplus B_1 \oplus X \oplus Z$. For $\sigma \in X$, write $B(\sigma)$ for the unique $i < 2$ such that $\sigma \in B_i$.
	Let $I = \roots(X)$.
	By \Cref{thm:halpern-lauchli-computably-true} applied to the finite $I$-tuple of infinite trees $\langle X \uh \rho: \rho \in I \rangle$ and the coloring $g$ defined on $\bigcup_n \prod_{\rho \in I} (X \uh \rho)(n)$ by $g(\sigma_\rho: \rho \in I) = (B(\sigma_\rho): \rho \in I)$, 
	there is a $B_0 \oplus B_1 \oplus X$-computable finite tuple of infinite strong subtrees $( Y_\rho: \rho \in I )$ of $( X \uh \rho: \rho \in I )$ with common level function, together with a tuple of colors $( i_\rho  \in \{0,1\}: \rho \in I )$ such that $Y_\rho \subseteq B_{i_\rho}$ for every $\rho \in I$. For every $E \in \Fc$, let $I_E$ be the set of nodes in $I$ extending the root of $E$. By assumption, $I_E$ is a cover of $E$.

	If $I_E \subseteq \{ \rho \in I: i_\rho = 1 \}$ for some $E \in \Fc$, then $(E, \bigcup_{\rho \in I} Y_\rho)$ is a cone avoiding extension of $(E, X)$ such that every root of $X$ is extended by a root of $\bigcup_{\rho \in I} Y_\rho$, and
	forcing $\Gamma^{G \oplus Z}_E(x) \uparrow$ or $\Gamma^{G \oplus Z}_E(x) \downarrow \neq C(x)$.

	If $I_E \cap \{ \rho \in I: i_\rho = 0 \} \neq \emptyset$ for every $E \in \Fc$, then in particular, every root of every $E \in \Fc$ has an extension in $\{ \rho \in I: i_\rho = 0 \}$. Since the set of roots of the trees in $E$ form a cover of $F$, then $\{ \rho \in I: i_\rho = 0 \}$ is a cover of $F$. Thus, $(F, \bigcup_{i_\rho = 0} Y_\rho)$ is a cone avoiding extension of $(F, X)$
	forcing $\Gamma^{G \oplus Z}_F(x) \uparrow$ or $\Gamma^{G \oplus Z}_F(x) \downarrow \neq C(x)$.

\case{3}{otherwise.} Then for $x,y \in \NN$ we have $(x,y) \in W$ if and only if $y = C(x)$. But as $W$ is $X \oplus Z$-c.e., this implies that $C \Tred X \oplus Z$, which is a contradiction.
\end{proof}

%

We are now ready to prove strong cone avoidance of Milliken's tree theorem for height 1.

\begin{proof}[Proof of \Cref{thm:mtt1-strong-cone-avoidance}]
Suppose first there is a filter $\Uc$ of cone avoiding tree conditions such that
$F \subseteq A_0$ for every $(F, X) \in \Uc$, and such that
for every Turing functional $\Gamma$ there is a tree condition $(F, X) \in \Uc$
with $(F, X) \Vdash \Gamma^{G \oplus Z} \neq C$.
Then by definition of a tree condition, $G_\Uc$ is a strong subtree of $T$.
Moreover, by assumption, $G_\Uc \subseteq A_0$ and $C \nTred G_\Uc \oplus Z$.
Last, by \Cref{lem:mtt1-sca-genericity-implies-infinity}, $G_\Uc$ is infinite, thus $G_\Uc$ satisfies the statement of the theorem.

Suppose now there is no such filter. Then there is a cone avoiding tree condition $(F, X)$
such that $F \subseteq A_0$ and a Turing functional $\Gamma_F$ such that
for every cone avoiding extension $(\hat{F}, \hat{X})$ with $\hat{F} \setminus F \subseteq A_0$ we have $(\hat{F}, \hat{X}) \nVdash \Gamma_F^{G \oplus Z} \neq C$.

Assume there is no infinite strong subtree $S \subseteq T$ such that $S \subseteq A_0$ and $C \nTred S \oplus Z$, otherwise we are done. By \Cref{lem:mtt1-sca-existence-colored-cover}, there is a level $\ell \in \NN$
such that $X(\ell) \cap A_1$ is a cover of $F$. Let $I = X(\ell) \cap A_1$.

We claim there exists an infinite sequence of cone avoiding compound tree conditions
\[
	(F, \Fc_0, X_0), (F, \Fc_1, X_1), \dots
\]
such that for every $s \in \NN$, letting $s = \langle \Gamma_\rho: \rho \in I \rangle$, the following holds:
\begin{enumerate}
	\item $\Fc_s  = \{ E_{s, \rho}: \rho \in I \}$;
	\item $X_s \subseteq X$;
	\item $\Fc_{s+1} \setminus \{E_{s+1,\rho}\} = \Fc_s \setminus \{E_{s, \rho}\}$
	for some $\rho \in I$ with $(E_{s+1, \rho}, X_{s+1}) \leq (E_{s, \rho}, X_s)$
	and  $(E_{s+1, \rho}, X_{s+1}) \Vdash \Gamma_\rho^{G \oplus Z} \neq C$.
\end{enumerate}

By \Cref{lem:mtt1-sca-compound-creator}(1), letting $\Fc_0 = \{ \{\rho \}: \rho \in I\}$ and $X_0 = X \setminus \bigcup_{t \leq \ell} X(t)$, the tuple $(F, \Fc_0, X_0)$ is a cone avoiding compound tree condition. Given a compound tree condition $(F, \Fc_s, X_s)$ and letting $s = \langle \Gamma_\rho: \rho \in I \rangle$, by \Cref{lem:mtt1-sca-forcing-requirement},
either there is a cone avoiding extension $(\hat{F}, \hat{X}) \leq (F, X)$
	such that $(\hat{F}, \hat{X}) \Vdash \Gamma^{G \oplus Z}_F \neq C$ and $\hat{F} \setminus F \subseteq A_0$, or there some $\rho \in I$ and a cone avoiding extension $(E_{s+1,\rho}, X_{s+1}) \leq (E_{s,\rho}, X_s)$
	such that $(E_{s+1, \rho}, X_{s+1}) \Vdash \Gamma_\rho^{G \oplus Z} \neq C$ and $E_{s+1,\rho}  \setminus E_{s,\rho} \subseteq A_1$ and every root of $X_s$ extends in a root of $X_{s+1}$. The former case cannot happen, so the latter case holds, and we can define $(F, \Fc_{s+1}, X_{s+1})$ accordingly by \Cref{lem:mtt1-sca-compound-creator}(2). This proves our claim.

By a pairing argument, there is a $\rho \in I$ such that for every Turing functional $\Gamma$ there is an $s \in \NN$ such that $(E_{s,\rho}, X_s) \Vdash \Gamma^{G \oplus Z} \neq C$. By construction, the conditions $(E_{s,\rho}, X_s)$ for this fixed $\rho$ are compatible for all $s$. Thus, we can fix a filter $\Uc$ containing all of them.
Again, by definition of a tree condition, $G_\Uc$ is a strong subtree of $T$. By assumption, $G_\Uc \subseteq A_1$ and $C \nTred G_\Uc \oplus Z$.
Last, by \Cref{lem:mtt1-sca-genericity-implies-infinity}, $G_\Uc$ is infinite, thus $G_\Uc$ satisfies the statement of the theorem. This completes the proof of \Cref{thm:mtt1-strong-cone-avoidance}.
\end{proof}

\section{Strong cone avoidance of the Halpern-La\"{u}chli theorem}\label{sec:sca_hl}

We now prove that the Halpern-La\"{u}chli theorem admits strong cone avoidance. This will be used in multiple parts of the rest of the manuscript, to prove that the product version of Milliken's tree theorem for height 2 admits cone avoidance (\Cref{thm:hl-strong-cone-avoidance}) and hence does not imply $\ACA_0$ over $\RCA_0$, and also to prove the same for the product version of Milliken's tree theorem for height 3, but where at most 2 colors are allowed in the solution (\Cref{thm:pmtt3k2-cone-avoidance}).

\begin{theorem}\label{thm:hl-strong-cone-avoidance}
The Halpern-La\"{u}chli theorem admits strong cone avoidance.
\end{theorem}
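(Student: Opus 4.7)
The plan is to follow the compound forcing approach used to prove strong cone avoidance of $\MT{1}{}$ in \Cref{sec:sca_mt1}, suitably extended to the product setting. Fix sets $C, Z \subseteq \NN$ with $C \nTred Z$, a $d$-tuple of infinite trees $T_0, \ldots, T_{d-1}$ with no leaves (not required to be $Z$-computable), and a coloring $f: \bigcup_n T_0(n) \times \cdots \times T_{d-1}(n) \to k$. The aim is to produce $(S_0, \ldots, S_{d-1}) \in \Subtree{\omega}{T_0, \ldots, T_{d-1}}$ on which $f$ is constant and such that $C \nTred S_0 \oplus \cdots \oplus S_{d-1} \oplus Z$, via forcing with product tree conditions as in \Cref{def:product-tree-condition}.

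To handle all $k$ possible colors simultaneously, I would introduce \emph{compound product tree conditions} of the form $(\mathcal{F}_0, \ldots, \mathcal{F}_{k-1}, X_0, \ldots, X_{d-1})$, where each $\mathcal{F}_i$ is a finite collection of $d$-tuples of finite strong subtrees (each tuple having its own common level function), each tuple together with the $X_j$'s forming a product tree condition, and such that the union of the roots over all tuples in all $\mathcal{F}_i$ provides a cover at the appropriate level. Cone avoidance means $C \nTred X_0 \oplus \cdots \oplus X_{d-1} \oplus Z$. This generalizes the dichotomy between $F$ and $\mathcal{F}$ used in \Cref{sec:sca_mt1}, where two colors were distinguished, to the $k$-color case, while leveraging the parallel structure already built into \Cref{def:product-tree-condition}.

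The core lemma to prove is a product analogue of \Cref{lem:mtt1-sca-forcing-requirement}: given a cone-avoiding compound product tree condition and an assignment of a Turing functional to each tuple in each $\mathcal{F}_i$, there exist $i < k$, a tuple in $\mathcal{F}_i$, and a cone-avoiding extension of it forcing the corresponding functional $\neq C$ while keeping the newly added product nodes $f$-monochromatic of value $i$. Mimicking the proof of \Cref{lem:mtt1-sca-forcing-requirement}, define the $X_0 \oplus \cdots \oplus X_{d-1} \oplus Z$-c.e.\ set $W$ of pairs $(x, v)$ such that for every $k$-partition of the level-product $\bigcup_n X_0(n) \times \cdots \times X_{d-1}(n)$, some candidate tuple can be extended inside the corresponding piece so as to compute the value $v$ on input $x$. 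If $W$ contains some $(x, 1 - C(x))$, extend directly. If $W$ omits some $(x, C(x))$, the corresponding $\Pi^{0,X_0 \oplus \cdots \oplus X_{d-1} \oplus Z}_1$ class of witnessing partitions is nonempty, so the cone avoidance basis theorem produces a cone-avoiding partition; apply the forest Halpern-La\"{u}chli theorem (\Cref{lem:hl-forest-computably-true}) to this partition to thin each $X_j$ into strong subforests monochromatic on each root-block, and a covering pigeonhole over $\roots(X_0) \times \cdots \times \roots(X_{d-1})$ selects a color $i$ and subforests compatible with every covering requirement inherited from the $\mathcal{F}_i$'s. If neither case occurs, $C$ is $X_0 \oplus \cdots \oplus X_{d-1} \oplus Z$-computable via $W$, contradicting cone avoidance.

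Equipped with this lemma, iteratively construct an $\omega$-sequence of cone-avoiding compound product tree conditions diagonalizing against all tuples of Turing functionals, starting from an initial compound condition obtained from any cone-avoiding product tree condition. By pigeonhole over the $k$ colors together with a pairing argument (as in the conclusion of \Cref{thm:mtt1-strong-cone-avoidance}), some fixed color $i^*$ and a compatible subsequence of tuples in the evolving $\mathcal{F}_{i^*}$ yield a generic $(S_0, \ldots, S_{d-1}) \in \Subtree{\omega}{T_0, \ldots, T_{d-1}}$ on which $f$ is identically $i^*$ and such that $C \nTred S_0 \oplus \cdots \oplus S_{d-1} \oplus Z$; \Cref{lem:product-tree-genericity-implies-infinity} guarantees each $S_j$ is infinite. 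The \textbf{hardest step} I anticipate is the combinatorial bookkeeping inside the key lemma: one must simultaneously preserve the common level function across all $d$ coordinates while thinning via \Cref{lem:hl-forest-computably-true}, and arrange that the new roots of each $X_j$ still cover the leaves of every candidate tuple in every $\mathcal{F}_i$ that we do not extend, so that the compound-condition structure is inherited by the successor. Managing this interplay between the multi-color compound structure and the coordinate-wise covering clause of \Cref{def:product-tree-condition}(3) is the essential novelty beyond the $\MT{1}{}$ argument.
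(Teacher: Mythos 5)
Your proposal follows the compound-forcing strategy of \Cref{sec:sca_mt1}, but the paper's actual proof of \Cref{thm:hl-strong-cone-avoidance} takes a genuinely different route: it works with \emph{level-homogeneous} product tree conditions (\Cref{def:levhom3}, in which the color is constant on each level but may vary from level to level), proves a density-below-some-condition lemma (\Cref{lem:hl-sca-density-below-a-cone}) by contradiction using a finitary combinatorial Halpern-L\"auchli theorem (\Cref{thm:combinatorial-finite-hapern-lauchli}) to bound the search, and only at the very end applies strong cone avoidance of $\RT{1}{k}$ to the induced level-coloring to pass from level-homogeneous to homogeneous. The paper explicitly describes \Cref{sec:sca_mt1} as a warm-up that is ``not used in the remainder of the monograph,'' and this is not incidental: the compound-condition machinery there is tailored to two colors and one tree.

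I see a concrete gap in your generalization of \Cref{lem:mtt1-sca-forcing-requirement} to the $k$-color compound setting. The dichotomy in that lemma depends essentially on having exactly two colors: in Case 2, after applying \Cref{thm:halpern-lauchli-computably-true} and getting a color $i_\rho \in \{0,1\}$ for each root $\rho$, either some $E \in \Fc$ has $I_E \subseteq \{\rho: i_\rho=1\}$ (win for $E$), or \emph{every} $E$ has $I_E \cap \{\rho: i_\rho=0\} \neq \emptyset$, and since $\bigcup_E \roots(E)$ is a cover of $F$, the $0$-colored roots then cover $F$ (win for $F$). This complementarity fails with $k$ colors: if for every $i$ and every tuple in $\mathcal{F}_i$ there is some covering root \emph{not} colored $i$, that root could be any of the other $k-1$ colors, and there is no single ``fallback'' $\mathcal{F}_j$ to which the covering structure transfers. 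Your phrase ``a covering pigeonhole over $\roots(X_0)\times\cdots\times\roots(X_{d-1})$ selects a color $i$ and subforests compatible with every covering requirement'' is exactly where this would need to be spelled out, and I do not see how to do it: the color selected by a pigeonhole on root-tuples need not be compatible with the cover requirements of \emph{any} of the $\mathcal{F}_i$. You have also not specified what the $\mathcal{F}_i$ are supposed to cover (in the two-color case, $\Fc$ covers $F$; there is no canonical ``root'' object once you have $k$ symmetric pieces), nor how the compound structure and the strong extension relation (\Cref{def:strong-product-tree-extension}) survive thinning in all $d$ coordinates at once. A natural repair would be to carry out your argument for $k=2$ only and then iterate, merging colors $\{1,\dots,k-1\}$ and invoking strong cone avoidance of the $2$-color HL recursively; but even in the $2$-color, $d>1$ case you would have to rework \Cref{lem:mtt1-sca-existence-colored-cover} and the cover bookkeeping in a product setting, and the paper sidesteps all of this by deferring the choice of color to the final $\RT{1}{k}$ application rather than tracking it inside the forcing.

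Finally, note that your plan proves an analogue of \Cref{lem:mtt1-sca-forcing-requirement}, which is density of the diagonalizing sets below \emph{every} condition, whereas the paper's \Cref{lem:hl-sca-density-below-a-cone} only asserts density below \emph{some} condition, proved indirectly via the finitary bound $\fhl$ from \Cref{thm:combinatorial-finite-hapern-lauchli}. The level-homogeneity constraint is strong enough that global density is not obviously available, and the finitary HL theorem is the missing technical tool that makes the indirect argument close; nothing in your proposal plays that role.
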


The meta-analysis of a theorem sometimes requires the use of the classical version of the theorem itself. In order to prove \Cref{thm:hl-strong-cone-avoidance}, we need the following version of the Halpern-La\"{u}chli theorem:

\begin{theorem}\label{thm:combinatorial-finite-hapern-lauchli}
	Let $T_0,\ldots,T_{d-1}$ be infinite trees with no leaves. For all $k \geq 1$, there is an $N \in \NN$ such that for every $f: T_0(N) \times \cdots \times T_{d-1}(N) \to k$ there is an $\ell < N$, a $\pi \in T_0(\ell) \times \cdots T_{d-1}(\ell)$, and an $(\ell+1)$-$\pi$-dense matrix $P \subseteq T_0(N) \times \cdots \times T_{d-1}(N)$ on which $f$ is constant.
\end{theorem}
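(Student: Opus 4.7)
The plan is a standard K\"onig's lemma / compactness argument that derives the theorem from the infinite density version of the Halpern-La\"{u}chli theorem, \Cref{lem:HLdensity}. Suppose for contradiction that the theorem fails. Then for every $N \in \NN$ there is a ``bad'' coloring $f_N : T_0(N) \times \cdots \times T_{d-1}(N) \to k$ admitting no monochromatic $(\ell+1)$-$\pi$-dense matrix $P \subseteq T_0(N) \times \cdots \times T_{d-1}(N)$, for any $\ell < N$ and any $\pi \in T_0(\ell) \times \cdots \times T_{d-1}(\ell)$.

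To pass to a limit, fix, for each $N$, each $j < d$, and each $\tau$ at a level $n \leq N$ in $T_j$, a specific extension $\widehat{\tau}^N \in T_j(N)$; this is possible since the $T_j$ have no leaves. Define $g_N : \bigcup_n T_0(n) \times \cdots \times T_{d-1}(n) \to k$ by setting, at each common level $n \leq N$,
\[
g_N(\tau_0,\ldots,\tau_{d-1}) \;=\; f_N(\widehat{\tau}_0^{\,N},\ldots,\widehat{\tau}_{d-1}^{\,N}),
\]
and declaring $g_N$ arbitrarily on levels $n > N$. Since each $T_j$ is finitely branching, every level product $T_0(n) \times \cdots \times T_{d-1}(n)$ is finite, so the space of all $k$-colorings of $\bigcup_n T_0(n) \times \cdots \times T_{d-1}(n)$ is compact in the product topology; equivalently, by K\"onig's lemma, we may extract a subsequence $N_0 < N_1 < \cdots$ along which the $g_{N_i}$ converge pointwise to some coloring $g$.

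Now apply \Cref{lem:HLdensity} to $g$ to obtain a tuple $\pi = (\sigma_0,\ldots,\sigma_{d-1})$ at some level $\ell$ with the density property, and specialize the quantifier to $m = \ell+1$ to obtain an $(\ell+1)$-$\pi$-dense matrix $Q = Q_0 \times \cdots \times Q_{d-1}$ on which $g$ is constant. Note that the constraint $Q \subseteq \bigcup_n T_0(n) \times \cdots \times T_{d-1}(n)$, combined with $Q$ being a Cartesian product, forces all $Q_j$ to lie at one common level $N'$; and we may further prune each $Q_j$ to a single extension per level-$(\ell+1)$ descendant of $\sigma_j$ so that $Q$ becomes finite. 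Pick $i$ with $N_i > N'$ and $g_{N_i}$ agreeing with $g$ on $Q$, and set $\widetilde Q_j = \{\widehat{\rho}^{\,N_i} : \rho \in Q_j\} \subseteq T_j(N_i)$. Then $\widetilde Q := \widetilde Q_0 \times \cdots \times \widetilde Q_{d-1}$ is an $(\ell+1)$-$\pi$-dense matrix at level $N_i$, because any level-$(\ell+1)$ extension $\tau$ of $\sigma_j$ already had an extension $\rho \in Q_j$, whence $\widehat{\rho}^{\,N_i} \in \widetilde Q_j$ still extends $\tau$; and $f_{N_i}$ is constant on $\widetilde Q$ by construction of $g_{N_i}$. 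This contradicts the badness of $f_{N_i}$. The only nontrivial step is this final lift-and-verify; the K\"onig's lemma extraction itself is routine, and the key combinatorial point is simply that density is preserved under taking deeper extensions.
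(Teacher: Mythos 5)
Your proof is correct and is essentially the paper's argument in dual form: both reduce the finite statement to the infinite Halpern--La\"{u}chli theorem by a compactness argument combined with the same device of lifting nodes to compatible representatives at level $N$ to pass between colorings of a single level and colorings of the full level product. The paper applies compactness to the space of all $k$-colorings to extract a single $N$ that works uniformly and then invokes \Cref{th:strong-hl} directly, whereas you negate the statement, take a pointwise limit of bad colorings, and apply \Cref{lem:HLdensity} to the limit; these are the two standard faces of the same reduction, and your final lift-and-verify step is sound.
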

\begin{proof}
  Fix $k$. The Halpern-La\"{u}chli thoerem (\Cref{th:strong-hl}) implies that for every $k$-coloring of $\bigcup_n T_0(n)\times\dots\times T_{d-1}(n)$ there exists an $N \in \NN$ and a tuple of strong subtrees $(S_0,\ldots,S_{d-1}) \in \Subtree{2}{T_0,\ldots,T_{d-1}}$ with level function bounded by $N$ such that $f$ is constant on $\bigcup_{n<2}S_0(n)\times\dots\times S_{d-1}(n)$. By compactness of the space of $k$-colorings of $\bigcup_n T_0(n)\times\dots\times T_{d-1}(n)$, we can choose a single such $N$ that works for all $k$-colorings: that is, for every $k$-coloring, the trees $S_0,\dots, S_{d-1}$ can be taken as strong subtrees of $\bigcup_{n<N} T_0(n),\dots,\bigcup_{n<N} T_{d-1}(n)$, respectively. The claim is that this $N$ also witnesses the theorem.

  Let $f: T_0(N) \times \dots \times T_{d-1}(N) \to k$ be a coloring. For any $i<d$, $n\in\NN$ and $\sigma\in T_i(n)$, let $e(\sigma)$ be any element of $T_i(N)$ compatible with $\sigma$: either an extension, or a prefix. Define the coloring $g: \bigcup_{n} T_0(n) \times \dots \times T_{d-1}(n) \to k$ by $g(\sigma_0,\ldots,\sigma_{d-1}) = f(e(\sigma_0),\ldots,e(\sigma_{d-1}))$ for all $(\sigma_0,\ldots,\sigma_{d-1})$.
  By choice of $N$, we can find $(S_0,\ldots,S_{d-1}) \in \Subtree{2}{T_0,\ldots,T_{d-1}}$ with level function bounded by $N$ so that $g$ is constant on $S_0(1)\times\dots\times S_{d-1}(1)$. Thus, $f$ is constant on $P = e(S_0(1))\times\dots\times e(S_{d-1}(1))$. Moreover, if we let $\ell$ be the (common) first level of the $S_i$ in $T_i$, and let $\pi$ be the unique element of $S_0(0) \times \cdots \times S_{d-1}(0)$, then $P$ is an $(\ell+1)$-$\pi$-dense matrix.
\end{proof}


In what follows, fix two sets $C$ and $Z$ such that $C \nTred Z$.
Also fix a tuple of infinite $Z$-computable $Z$-computably bounded trees with no leaves $T_0, \dots, T_{d-1}$ and an arbitrary $k$-partition $A_0 \sqcup \dots \sqcup A_{k-1} = \exprodtree{T}{d}$ representing an instance of the Halpern-La\"{u}chli theorem (for $k$-colorings).

For this section, we will need to strengthen the extension relation for product tree conditions (relative to these $T_i$).

\begin{definition}\label{def:strong-product-tree-extension}\index{product tree condition!strong extension}
Let $T_0,\ldots,T_{d-1}$ be infinite trees with no leaves. A product tree condition $d = (\hat{F}_0, \dots \hat{F}_{d-1}, \hat{X}_0, \dots, \hat{X}_{d-1})$ (relative to these $T_i$) \emph{extends} $c = (F_0, \dots, F_{d-1}, X_0, \dots, X_{d-1})$, written $d \leq c$,  if for every $j < d$, $F_j \subseteq \hat{F}_j$, $\hat{X}_j \subseteq X_j$ and $\hat{F}_j \setminus F_j \subseteq X_j$, and moreover, every root of $X_j$ is extended by a root of $\hat{X}_j$.
\end{definition}

\begin{definition}\label{def:levhom3}\index{product tree condition!cone avoiding}\index{product tree condition!level homogeneous}
A product tree condition $(F_0, \dots, F_{d-1}, X_0, \dots, X_{d-1})$ is \emph{cone avoiding} if $C \nTred X_0 \oplus \dots \oplus X_{d-1} \oplus Z$. It is \emph{level-homogeneous} if for every $n$, there is some color $i < k$ such that $F_0(n) \times \dots \times F_{d-1}(n) \subseteq A_i$.
\end{definition}

\noindent In particular, if $d$ extends $c$ in the sense of \Cref{def:strong-product-tree-extension},
then $d$ extends $c$ in the sense of \Cref{def:product-tree-extension}.

Any product tree condition of the form
$$
(\{\rho_0\}, \dots, \{\rho_{d-1}\}, X_0,  \dots, X_{d-1})
$$
is level-homogeneous. Let $\Pb$ be the set of cone avoiding level-homogeneous product tree conditions, ordered by the stronger relation of \Cref{def:strong-product-tree-extension}.
The following lemma is the core of the argument. The proof of \Cref{lem:hl-sca-density-below-a-cone} shows that the witnessed condition $c$ can actually be chosen so that its stems are singletons.

\begin{lemma}\label{lem:hl-sca-density-below-a-cone}
There is a condition $c \in \Pb$
such that for every Turing functional $\Gamma$, the set of conditions $c' \in \Pb$
satisfying $c' \Vdash \Gamma^{G_0 \oplus \dots \oplus G_{d-1} \oplus Z} \neq C$
is $\Pb$-dense below $c$.
\end{lemma}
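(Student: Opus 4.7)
The plan is to argue by contradiction. Assume the conclusion fails: then for every $c \in \Pb$ there is a Turing functional $\Gamma$ such that the set of $\Pb$-extensions of $c$ forcing $\Gamma^{G_0 \oplus \cdots \oplus G_{d-1} \oplus Z} \neq C$ is not $\Pb$-dense below $c$; equivalently, some $c' \leq c$ in $\Pb$ admits no $\Pb$-extension forcing $\Gamma \neq C$. Applied to any initial condition, this yields a cone-avoiding level-homogeneous condition
\[
  c^* = (F_0^*, \ldots, F_{d-1}^*, X_0^*, \ldots, X_{d-1}^*) \in \Pb
\]
together with a functional $\Gamma^*$ such that no $\Pb$-extension of $c^*$ forces $\Gamma^{*G_0 \oplus \cdots \oplus G_{d-1} \oplus Z} \neq C$. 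The contradiction to be derived is $C \Tred X_0^* \oplus \cdots \oplus X_{d-1}^* \oplus Z$, violating cone-avoidance of $c^*$.

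Define the set $W$ of pairs $(x, v) \in \NN \times \{0, 1\}$ such that for every $k$-partition $B_0 \sqcup \cdots \sqcup B_{k-1}$ of the level products $\bigcup_n X_0^*(n) \times \cdots \times X_{d-1}^*(n)$, there exists a finite extension $F_j' \supseteq F_j^*$ with $F_j' \setminus F_j^* \subseteq X_j^*$ forming a strong subtree extension of $F^*$, that is $B$-level-homogeneous (each new level-product contained in a single $B_{i(n)}$), and satisfying $\Gamma^{*F_0' \oplus \cdots \oplus F_{d-1}' \oplus Z}(x) \downarrow = v$. By compactness---the finite Halpern-La\"{u}chli theorem (\Cref{thm:combinatorial-finite-hapern-lauchli}) provides a uniform bound on the required depth of $F'$ in terms of the partition---the set $W$ is $(X_0^* \oplus \cdots \oplus X_{d-1}^* \oplus Z)$-c.e.

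I would then distinguish three cases. If $(x, 1-C(x)) \in W$ for some $x$, applying the defining property of $W$ to the actual partition $A_0, \ldots, A_{k-1}$ yields an $A$-level-homogeneous extension $F'$ (hence a $\Pb$-legitimate extension of the stem) with $\Gamma^{*F'}(x) \downarrow = 1 - C(x) \neq C(x)$, giving a $\Pb$-extension of $c^*$ forcing $\Gamma^*(x) \neq C(x)$ and contradicting the choice of $c^*$. If $(x, C(x)) \notin W$ for some $x$, the collection of $k$-partitions witnessing this is a nonempty $\Pi^{0, X^* \oplus Z}_1$ class, so the cone avoidance basis theorem provides a partition $B_0, \ldots, B_{k-1}$ from it with $C \nTred B_0 \oplus \cdots \oplus B_{k-1} \oplus X^* \oplus Z$. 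Applying the Halpern-La\"{u}chli theorem for forests (\Cref{lem:hl-forest-computably-true}) to the refined $k^2$-coloring $\langle A, B \rangle$ yields strong subforests $Y_0, \ldots, Y_{d-1}$ of $X_0^*, \ldots, X_{d-1}^*$ (preserving all roots) on which both $A$ and $B$ are constant on each level-product above any fixed tuple of roots, and satisfying $C \nTred Y_0 \oplus \cdots \oplus Y_{d-1} \oplus Z$. Inside these $Y_j$, iterated applications of the finite Halpern-La\"{u}chli theorem (\Cref{thm:combinatorial-finite-hapern-lauchli}) produce $A$-level-homogeneous finite extensions of the stem that are also $B$-level-homogeneous, allowing one to build a $\Pb$-extension of $c^*$ below which $\Gamma^*(x)$ can never equal $C(x)$; the resulting condition forces $\Gamma^*(x) \neq C(x)$, again contradicting the choice of $c^*$. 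Finally, if neither of the preceding two cases obtains, then $W$ decides $C$ from $X^* \oplus Z$, yielding the desired contradiction.

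The main obstacle will be the second case, where the bad partition $B$ supplied by the cone avoidance basis theorem is a priori unrelated to the ambient partition $A$, yet $\Pb$-conditions demand level-homogeneity with respect to $A$ while $W$ is defined in terms of level-homogeneity with respect to $B$. The key technical device is to pass, via the Halpern-La\"{u}chli theorem for forests, to subforests on which the joint $A \times B$-coloring is constant above each branch, and then to use iterated applications of the finite combinatorial Halpern-La\"{u}chli theorem to build extensions that are simultaneously $A$- and $B$-level-homogeneous. Navigating the combinatorics of multiple-root density so that simultaneous level-homogeneity can be maintained throughout, while also preserving the $(k+1)$-$\sigma$-density of roots required by \Cref{def:product-tree-condition}, constitutes the technical heart of the argument.
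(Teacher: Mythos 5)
Your overall three‑case structure (set up a c.e.\ set $W$ of decisions, apply the actual partition in one case, the cone avoidance basis theorem in another, and conclude $C\leq_T X^*\oplus Z$ in the third) is the same skeleton as the paper's, and Cases~1 and~3 are essentially correct. The gap is in your setup and, consequently, in Case~2.

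You fix a single pair $(c^*,\Gamma^*)$ from the failure of density and try to derive a contradiction from it alone. The paper instead builds an entire infinite structure: a tree $S_0,\dots,S_{d-1}$ together with functions $\operatorname{stems}$ and $\operatorname{req}$ that assign, to \emph{each} tuple $\pi\in S_0(\ell)\times\dots\times S_{d-1}(\ell)$ at each level $\ell$, its own stem and its own Turing functional such that no $\Pb$-extension of $(\operatorname{stems}(\pi),\operatorname{sets}(\ell))$ forces $\Phi_{\operatorname{req}(\pi)}\neq C$. Accordingly its set $W$ quantifies over all $\ell<N$ and all $\pi$, not just over one fixed stem. This is not optional decoration. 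The finitary Halpern--L\"auchli theorem (\Cref{thm:combinatorial-finite-hapern-lauchli}) hands you an $(\ell+1)$-$\pi$-dense product matrix at \emph{some} level $\ell<N$ that you do not control, and the contradiction you must derive is that the ``bad'' condition \emph{sitting above that particular $\pi$} in fact has an extension forcing its functional to differ from $C$. With a single fixed $F^*$ the $\pi$ returned by the finitary theorem will generally be incomparable with the root of $F^*$, and you have no stem and no functional attached to it.

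This problem is hidden behind the sentence ``iterated applications of the finite Halpern--L\"auchli theorem produce $A$-level-homogeneous finite extensions of the stem that are also $B$-level-homogeneous,'' which papers over the real difficulty. After the forest Halpern--L\"auchli step, the joint $A\times B$ coloring is constant \emph{above each fixed tuple of roots}, but a level product of a strong-subtree extension of $F^*$ necessarily mixes nodes sitting above different roots (one for each direct extension of each leaf of $F^*$), and the roots in question are determined by $F^*$, not freely choosable. To make such a level product $B$-monochromatic you need a \emph{dense product} matrix of roots that is monochromatic for the induced root coloring $h$, which is precisely what \Cref{thm:combinatorial-finite-hapern-lauchli} delivers — but its $\pi$ lands wherever it lands, which is why the paper needs $\operatorname{stems}(\pi)$ and $\operatorname{req}(\pi)$ precomputed for all possible $\pi$ up to level $N$. (There is also a minor point: the c.e.-ness of $W$ is plain $\Pi^0_1$-class compactness, not a consequence of the finitary Halpern--L\"auchli bound, which plays a different role.)
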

\begin{proof}
Assume for the sake of contradiction that for every condition $c \in \Pb$,
there is a Turing functional $\Gamma$ and a $\Pb$-extension, every further $\Pb$-extension $c'$ of which satisfies $c' \not\Vdash \Gamma^{G_0 \oplus \dots \oplus G_{d-1} \oplus Z} \neq C$.

We build non-effectively a $d$-tuple of subsets $S_0, \dots, S_{d-1}$ of $T_0, \dots, T_{d-1}$, respectively. Formally, these sets will not be trees, as specified in \Cref{def:trees}, since they will not be closed under $\meet$. However, the prefix relation induces a tree structure, and seen as such, the $S_j$ will be finitely branching trees with no leaves. (In fact, the $S_j$ will have a common level function.) We may thus use Remark \ref{rem:pseudotrees} to think of the $S_j$ as trees, and in particular, we may apply \Cref{thm:combinatorial-finite-hapern-lauchli} to them.

Along with $S_0, \dots, S_{d-1}$, we define the following functions:
\begin{enumerate}
	\item $\operatorname{sets}: \NN \to \Pc(\baire) \times \dots \times \Pc(\baire)$ which to a level $\ell \in \NN$
	associates a $d$-tuple $X_0, \dots, X_{d-1}$ of infinite strong subforests of $T_0, \dots, T_{d-1}$, respectively, with a common level function, such that $C \nTred X_0 \oplus \dots \oplus X_{d-1} \oplus Z$ and for every $j < d$, $S_j(\ell+1) = \roots(X_j)$;
	\item $\operatorname{stems}: \exprodtree{S}{d} \to \Subtree{<\omega}{T_0, \dots, T_{d-1}}$, which to a $\pi \in S_0(\ell) \times \dots \times S_{d-1}(\ell)$ associates a tuple $(F_0, \dots, F_{d-1})$ whose roots pointwise extend $\pi$, and such that $(F_0, \dots, F_{d-1}, \operatorname{sets}(\ell))$ is a $\Pb$-condition;
	\item $\operatorname{req}: \exprodtree{S}{d} \to \NN$, which to a $\pi \in S_0(\ell) \times \dots \times S_{d-1}(\ell)$ associates the index $e \in \NN$ of a Turing functional $\Phi_e$
	such that for every $\Pb$-extension $c'$ of the condition $(\operatorname{stems}(\pi), \operatorname{sets}(\ell))$,
	$c' \nVdash \Gamma_e^{G_0 \oplus \dots \oplus G_{d-1} \oplus Z} \neq C$.
\end{enumerate}
Moreover, we ensure that for every level $\ell \in \NN$,
$\operatorname{sets}(\ell+1)$ is a tuple of strong subforests of $\operatorname{sets}(\ell)$ with common level function.
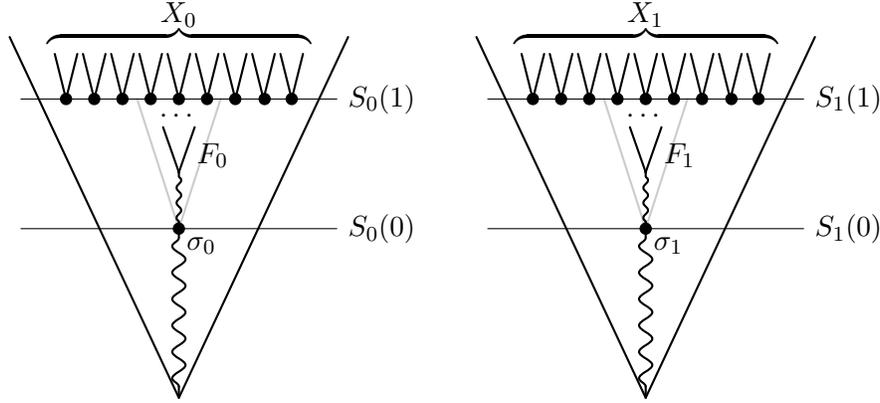
\begin{figure}[h!]
  \begin{center}
    \begin{tikzpicture}[scale=1.5]
		\tikzset{
			empty node/.style={circle,inner sep=0,outer sep=0,fill=none},
			solid node/.style={circle,draw,inner sep=1.5,fill=black},
			hollow node/.style={circle,draw,inner sep=1.5,fill=white},
			gray node/.style={circle,draw={rgb:black,1;white,4},inner sep=1,fill={rgb:black,1;white,4}}
		}
		\tikzset{snake it/.style={decorate, decoration=snake, line cap=round}}
		\tikzset{gray line/.style={line cap=round,thick,color={rgb:black,1;white,4}}}
		\tikzset{thick line/.style={line cap=round,rounded corners=0.1mm,thick}}
		\tikzset{thin line/.style={line cap=round,rounded corners=0.1mm}}
		\node (a)[empty node] at (0,-0.5) {};
		\node (a')[empty node] at (0,-0.4) {};
		\node (sigma0)[solid node] at (0,1) {};
		\node (b)[empty node] at (0,1.5) {};
		\node ()[solid node] at (-0.25,2.15) {};
		\node ()[solid node] at (-0.5,2.15) {};
		\node ()[solid node] at (-0.75,2.15) {};
		\node ()[solid node] at (-1,2.15) {};
		\node ()[solid node] at (0,2.15) {};
		\node ()[solid node] at (0.25,2.15) {};
		\node ()[solid node] at (0.5,2.15) {};
		\node ()[solid node] at (0.76,2.15) {};
		\node ()[solid node] at (1,2.15) {};
		\draw[thick line,decorate,decoration={snake,amplitude=.3mm,segment length=2mm,post length=0.01mm}] (sigma0) to (b.center);
		\begin{pgfonlayer}{background}
		\draw[thick line] (a) to (a'.center);
		\draw[thick,snake it] (a'.center) to (sigma0);
		\draw[thick line] (a.center) to (1.5,2.7);
		\draw[thick line] (a.center) to (-1.5,2.7);
		\draw[thick line] (b.center) to (0.14,1.9);
		\draw[thick line] (b.center) to (-0.14,1.9);
		\draw[gray line] (sigma0.center) to (0.37,2.15);
		\draw[gray line] (sigma0.center) to (-0.37,2.15);
		\draw[thin line,thick] (0,2.15) to (-0.1,2.55);
		\draw[thin line,thick] (0,2.15) to (0.1,2.55);
		\draw[thin line,thick] (0.25,2.15) to (-0.1+.25,2.55);
		\draw[thin line,thick] (0.25,2.15) to (0.1+.25,2.55);
		\draw[thin line,thick] (0.5,2.15) to (-0.1+.5,2.55);
		\draw[thin line,thick] (0.5,2.15) to (0.1+.5,2.55);
		\draw[thin line,thick] (0.75,2.15) to (-.1+.75,2.55);
		\draw[thin line,thick] (0.75,2.15) to (.1+0.75,2.55);
		\draw[thin line,thick] (1,2.15) to (-0.1+1,2.55);
		\draw[thin line,thick] (1,2.15) to (0.1+1,2.55);
		\draw[thin line,thick] (-0.25,2.15) to (-0.1-.25,2.55);
		\draw[thin line,thick] (-0.25,2.15) to (0.1-.25,2.55);
		\draw[thin line,thick] (-0.5,2.15) to (-0.1-.5,2.55);
		\draw[thin line,thick] (-0.5,2.15) to (0.1-.5,2.55);
		\draw[thin line,thick] (-0.75,2.15) to (-.1-.75,2.55);
		\draw[thin line,thick] (-0.75,2.15) to (.1-0.75,2.55);
		\draw[thin line,thick] (-1,2.15) to (-0.1-1,2.55);
		\draw[thin line,thick] (-1,2.15) to (0.1-1,2.55);
		\draw[thin line] (-1.4,1) to (1.4,1);
		\draw[thin line] (-1.4,2.15) to (1.4,2.15);
		\node(dots)[empty node] at (0,2) {$\cdots$};
		\node(F0)[empty node] at (0.3,1.65) {$F_0$};
		\node(sigma0label)[empty node] at (0.2,0.85) {$\sigma_0$};
		\node(X0)[empty node] at (0,2.9) {$X_0$};
		\node(S0)[empty node] at (1.8,2.15) {$S_0(1)$};
		\node(S0)[empty node] at (1.8,1) {$S_0(0)$};
		\node(brace)[empty node] at (0,2.7) {$\overbrace{\hspace{35mm}}$};
		\end{pgfonlayer}
	\end{tikzpicture}
	\hspace{5mm}
	\begin{tikzpicture}[scale=1.5]
		\tikzset{
			empty node/.style={circle,inner sep=0,outer sep=0,fill=none},
			solid node/.style={circle,draw,inner sep=1.5,fill=black},
			hollow node/.style={circle,draw,inner sep=1.5,fill=white},
			gray node/.style={circle,draw={rgb:black,1;white,4},inner sep=1,fill={rgb:black,1;white,4}}
		}
		\tikzset{snake it/.style={decorate, decoration=snake, line cap=round}}
		\tikzset{gray line/.style={line cap=round,thick,color={rgb:black,1;white,4}}}
		\tikzset{thick line/.style={line cap=round,rounded corners=0.1mm,thick}}
		\tikzset{thin line/.style={line cap=round,rounded corners=0.1mm}}
		\node (a)[empty node] at (0,-0.5) {};
		\node (a')[empty node] at (0,-0.4) {};
		\node (sigma0)[solid node] at (0,1) {};
		\node (b)[empty node] at (0,1.5) {};
		\node ()[solid node] at (-0.25,2.15) {};
		\node ()[solid node] at (-0.5,2.15) {};
		\node ()[solid node] at (-0.75,2.15) {};
		\node ()[solid node] at (-1,2.15) {};
		\node ()[solid node] at (0,2.15) {};
		\node ()[solid node] at (0.25,2.15) {};
		\node ()[solid node] at (0.5,2.15) {};
		\node ()[solid node] at (0.76,2.15) {};
		\node ()[solid node] at (1,2.15) {};
		\draw[thick line,decorate,decoration={snake,amplitude=.3mm,segment length=2mm,post length=0.01mm}] (sigma0) to (b.center);
		\begin{pgfonlayer}{background}
		\draw[thick line] (a) to (a'.center);
		\draw[thick,snake it] (a'.center) to (sigma0);
		\draw[thick line] (a.center) to (1.5,2.7);
		\draw[thick line] (a.center) to (-1.5,2.7);
		\draw[thick line] (b.center) to (0.14,1.9);
		\draw[thick line] (b.center) to (-0.14,1.9);
		\draw[gray line] (sigma0.center) to (0.37,2.15);
		\draw[gray line] (sigma0.center) to (-0.37,2.15);
		\draw[thin line,thick] (0,2.15) to (-0.1,2.55);
		\draw[thin line,thick] (0,2.15) to (0.1,2.55);
		\draw[thin line,thick] (0.25,2.15) to (-0.1+.25,2.55);
		\draw[thin line,thick] (0.25,2.15) to (0.1+.25,2.55);
		\draw[thin line,thick] (0.5,2.15) to (-0.1+.5,2.55);
		\draw[thin line,thick] (0.5,2.15) to (0.1+.5,2.55);
		\draw[thin line,thick] (0.75,2.15) to (-.1+.75,2.55);
		\draw[thin line,thick] (0.75,2.15) to (.1+0.75,2.55);
		\draw[thin line,thick] (1,2.15) to (-0.1+1,2.55);
		\draw[thin line,thick] (1,2.15) to (0.1+1,2.55);
		\draw[thin line,thick] (-0.25,2.15) to (-0.1-.25,2.55);
		\draw[thin line,thick] (-0.25,2.15) to (0.1-.25,2.55);
		\draw[thin line,thick] (-0.5,2.15) to (-0.1-.5,2.55);
		\draw[thin line,thick] (-0.5,2.15) to (0.1-.5,2.55);
		\draw[thin line,thick] (-0.75,2.15) to (-.1-.75,2.55);
		\draw[thin line,thick] (-0.75,2.15) to (.1-0.75,2.55);
		\draw[thin line,thick] (-1,2.15) to (-0.1-1,2.55);
		\draw[thin line,thick] (-1,2.15) to (0.1-1,2.55);
		\draw[thin line] (-1.4,1) to (1.4,1);
		\draw[thin line] (-1.4,2.15) to (1.4,2.15);
		\node(dots)[empty node] at (0,2) {$\cdots$};
		\node(F0)[empty node] at (0.3,1.65) {$F_1$};
		\node(sigma0label)[empty node] at (0.2,0.85) {$\sigma_1$};
		\node(X0)[empty node] at (0,2.9) {$X_1$};
		\node(S0)[empty node] at (1.8,2.15) {$S_1(1)$};
		\node(S0)[empty node] at (1.8,1) {$S_1(0)$};
		\node(brace)[empty node] at (0,2.7) {$\overbrace{\hspace{35mm}}$};
		\end{pgfonlayer}
	\end{tikzpicture}
    \caption{A representation of the construction of $S_0$, $S_1$, and the functions $\operatorname{sets}$ and $\operatorname{stems}$. If $\pi=(\sigma_0,\sigma_1)$, then $\operatorname{stems}(\pi)=(F_0, F_1)$ and $\operatorname{sets}(1)=(X_0, X_1)$.}\label{fig:S3Constr}
    \label{fig:sca-HL}
\end{center}
\end{figure}

\construction We define $S_0, \dots, S_{d-1}$ and the functions $\operatorname{sets}$, $\operatorname{stems}$ and  $\operatorname{req}$ level by level. For convenience of notation, let $\operatorname{sets}(-1) = (T_0, \dots, T_{d-1})$.
At level $\ell \geq 0$, assume $\operatorname{sets}(\ell-1)$ is already defined.
Say $(Y_0, \dots, Y_{d-1}) = \operatorname{sets}(\ell-1)$.
For every $j < d$, let $S_j(\ell) = \roots(Y_j)$.
Now let $\pi_0, \dots, \pi_{r-1}$ be a finite listing of all the elements in  $S_0(\ell) \times \dots \times S_{d-1}(\ell)$. We define  $\operatorname{stems}(\pi_s)$ and $\operatorname{req}(\pi_s)$ successively for each $s < r$, together with a decreasing sequence of $d$-tuples of cone avoiding strong subforests
\[
	(X^0_0, \dots, X^0_{d-1}), \dots, (X^r_0, \dots, X^r_{d-1}).
\]
Then $\operatorname{sets}(\ell) = (X^r_0, \dots, X^r_{d-1})$.
Initially, let $(X^0_0, \dots, X^0_{d-1})$ be the tuple $(Y_0 \setminus Y_0(0), \dots, Y_{d-1} \setminus Y_{d-1}(0))$.
At stage $s < r$, assume $(X^s_0, \dots, X^s_{d-1})$ is defined.
Say $\pi_s = (\rho_0, \dots, \rho_{d-1})$.
In particular,
$$
(\{\rho_0\}, \dots, \{\rho_{d-1}\}, X^s_0, \dots, X^s_{d-1})
$$
is a $\Pb$-condition. By assumption, this has a $\Pb$-extension $(F_0, \dots, F_{d-1}, X^{s+1}_0, \allowbreak \dots, X^{s+1}_{d-1})$ for which there is a Turing functional $\Phi_e$
such that every further $\Pb$-extension $c'$ satisfies $c' \not\Vdash \Gamma^{G_0 \oplus \dots \oplus G_{d-1} \oplus Z} \neq C$. So we have $(X^{s+1}_0, \allowbreak \dots, X^{s+1}_{d-1})$, and we set $\operatorname{stems}(\pi_s) = (F_0, \dots, F_{d-1})$ and $\operatorname{req}(\pi_s) = e$. Now if $s < r-1$, proceed to $s+1$.
This finishes the construction. (See \Cref{fig:S3Constr}.)


\verification

\begin{claim}\label{fact:hl-sca-density-below-a-cone-condition-extension}
For every $\ell_0 < \ell_1$ and every $\pi \in S_0(\ell_0) \times \dots \times S_{d-1}(\ell_0)$,
the tuple $(\operatorname{stems}(\pi), \operatorname{sets}(\ell_1))$ is a $\Pb$-extension of $(\operatorname{stems}(\pi), \operatorname{sets}(\ell_0))$.
\end{claim}
\begin{proof}
Say $\operatorname{sets}(\ell_0) = (X_0, \dots, X_{d-1})$ and $\operatorname{sets}(\ell_1) = (Y_0, \dots, Y_{d-1})$. By an immediate induction, $\operatorname{sets}(\ell_1)$ is a tuple of strong subforests of $\operatorname{sets}(\ell_0)$ with common level function.
By construction, for all $j < d$ we have $S_j(\ell_0+1) = \roots(X_j)$ and  $S_j(\ell_1+1) = \roots(Y_j)$, and since we are dealing with extension in $\Pb$ here, this implies that every root of $X_j$ is extended by a root of $Y_j$.
It follows that $d = (\operatorname{stems}(\pi), \operatorname{sets}(\ell_1))$ is a $\Pb$-extension of $(\operatorname{stems}(\pi), \operatorname{sets}(\ell_0))$.
\end{proof}

From the preceding fact, it follows that the $S_j$ are as claimed. The rest of Properties 1--3 above are evident from the construction.

By \Cref{thm:combinatorial-finite-hapern-lauchli}, there is a level $N \in \NN$
such that for every coloring $h: S_0(N) \times \dots \times S_{d-1}(N) \to k$,
there is some $\ell < N$, some $\pi \in S_0(\ell) \times \dots \times S_{d-1}(\ell)$
and some $(\ell+1)$-$\pi$-dense matrix $M \subseteq S_0(N) \times \dots \times S_{d-1}(N)$
on which $h$ is constant.
Fix such an $N$. Let $(X_0, \dots, X_{d-1}) = \operatorname{sets}(N-1)$. In particular, for every $j < d$, $S_j(N) = \roots(X_j)$.

Let $W$ be the set of pairs $(x, v) \in \NN \times \{0,1\}$ such that for every $k$-partition $B_0 \sqcup \dots \sqcup B_{k-1} = \bigcup_n X_0(n) \times \dots \times X_{k-1}(n)$, there is some $\ell < N$, some $\pi \in S_0(\ell) \times \dots \times S_{d-1}(\ell)$, and for every $j < d$, a finite set $H_j \subseteq X_j$ such that if $ (F_0, \dots, F_{d-1}) = \operatorname{stems}(\pi)$ then the following hold:
\begin{itemize}
	\item[(a)] $(F_0 \cup H_0, \dots, F_{d-1} \cup H_{d-1}) \in \Subtree{<\omega}{T_0, \dots, T_{d-1}}$;
	\item[(b)] $\bigcup_n H_0(n) \times \dots \times H_{d-1}(n) \subseteq B_i$ for some $i < k$;
	\item[(c)] $\Phi_e^{(F_0 \cup H_0) \oplus \dots \oplus (F_{d-1} \cup H_{d-1}) \oplus Z}(x)\downarrow = v$, where $e = \operatorname{req}(\pi)$.
\end{itemize}
Note that although the trees $S_0, \dots, S_{d-1}$ and the functions $\operatorname{sets}$, $\operatorname{stems}$ and $\operatorname{req}$ are built non-effectively, only their restrictions to the height $N$ are used. Therefore, since every finite object is computable, they do not add to the complexity of the set $W$. By compactness, the set $W$ is $X_0 \oplus \dots \oplus X_{d-1} \oplus Z$-c.e. We break into three cases.

\case{1}{$(x, 1-C(x)) \in W$ for some $x \in \NN$.} For $i < k$, let $B_i = A_i \cap \bigcup_n X_0(n) \times \dots \times X_{d-1}(n)$. Let $\ell<N$, $\pi = (F_0, \dots, F_{d-1})$ and $H_0, \dots, H_{d-1}$ witness that $(x, 1-C(x)) \in W$ for the partition    $B_0, \dots, B_{k-1}$.
	Let $\ell_1$ be the common level of the leaves of $F_j \cup H_j$ in $X_j$, and $\hat{X}_j = X_j \setminus \bigcup_{\ell_0 \leq \ell_1} X_j(\ell_0)$. Then $c' = (F_0 \cup H_0, \dots, F_{d-1} \cup H_{d-1}, \hat{X}_0, \dots, \hat{X}_{d-1})$ is a $\Pb$-extension of the condition $(F_0, \dots, F_{d-1}, X_0, \dots, X_{d-1})$ which, by Fact \ref{fact:hl-sca-density-below-a-cone-condition-extension},
		is a $\Pb$-extension of $(\operatorname{stems}(\pi), \operatorname{sets}(\ell))$ since $\ell_1 \geq \ell$.
		Moreover
		$$
			c' \Vdash \Phi_e^{G_0 \oplus \dots \oplus G_{d-1} \oplus Z} \neq C
		$$
		where $e = \operatorname{req}(\pi)$. This contradicts Property 3 above,
		according to which $(\operatorname{stems}(\pi), \operatorname{sets}(\ell))$ has no such $\Pb$-extension.

\case{2}{$(x, C(x)) \not\in W$ for some $x \in \NN$.} Let $\Cc$ be the $\Pi^{0,X_0 \oplus \dots \oplus X_{d-1} \oplus Z}_1$ class of all sets $B_0 \oplus \dots \oplus B_{k-1}$ such that $B_0 \sqcup \dots \sqcup B_{k-1} = \bigcup_n X_0(n) \times \dots \times X_{d-1}(n)$ and such that for every $\ell < N$, every $\pi \in S_0(\ell) \times \dots \times S_{d-1}(\ell)$ and every $H_0 \subseteq X_0, \dots, H_{d-1} \subseteq X_{d-1}$, one of (a), (b) or (c) in the definition of $W$ fails for the pair $(x, C(x))$.
	By assumption, $\Cc \neq \emptyset$.

	By the cone avoidance basis theorem, there is some $B_0 \oplus \dots \oplus B_{k-1} \in \Cc$ such that $C \nTred B_0 \oplus \dots \oplus B_{k-1} \oplus X_0 \oplus \dots \oplus X_{d-1} \oplus Z$. For $\pi \in \bigcup_n X_0(n) \times \dots \times X_{d-1}(n)$, write $B(\pi)$ for the unique $i < k$ such that $\pi \in B_i$. Recall that for every $j < d$, $S_j(N) = \roots(X_j)$.
	We define a finite coloring $g$ on $\bigcup_n X_0(n) \times \dots \times X_{d-1}(n)$ by
	by
	\[
		g(\sigma_0,\ldots,\sigma_{d-1}) = B(\sigma_0, \dots,\sigma_{d-1}).
	\]
	By \Cref{lem:hl-forest-computably-true} applied to $g$,
	there is a $B_0 \oplus \dots B_{k-1} \oplus X_0 \oplus \dots \oplus X_{d-1}$-computable tuple of infinite strong subtrees $( Y_{j,\rho}: j < d, \rho \in S_j(N))$ of $( X_j \uh \rho: j < d, \rho \in S_j(N))$ with common level function, together with a coloring $h: S_0(N) \times \dots \times S_{d-1}(N) \to k-1$,
		such that
		$$
			\bigcup_n Y_{0,\rho_0}(n) \times \dots \times Y_{d-1,\rho_{d-1}}(n)  \subseteq B_{h(\pi), }
		$$
		for every $\pi = (\rho_0, \dots, \rho_{d-1}) \in S_0(N) \times \dots \times S_{d-1}(N)$.

	By choice of $N$, there is some $\ell < N$, some $\pi = (\nu_0, \dots, \nu_{d-1}) \in S_0(\ell) \times \dots \times S_{d-1}(\ell)$ and some $(\ell+1)$-$\pi$-dense matrix $M \subseteq S_0(N) \times \dots \times S_{d-1}(N)$ on which $h$ is constant. Say $M = M_0 \times \dots \times M_{d-1}$ and let $i < k$ be the color of $h$ on this matrix.
	For every $j < d$, let $P_j$ be the set of nodes in $S_j(N)$ which are not extensions of $\nu_j$. For every $j < k$, let $\hat{Y}_j = \bigcup_{\rho \in M_j \cup P_j} Y_{j,\rho}$.

	\begin{claim}\label{fact:hl-sca-case2-exts}
	$(\operatorname{stems}(\pi), \hat{Y}_0, \dots, \hat{Y}_{d-1})$
	$\Pb$-extends $(\operatorname{stems}(\pi), \operatorname{sets}(\ell))$.
	\end{claim}
	\begin{proof}
	Let $(\hat{X}_0, \dots, \hat{X}_{d-1}) = \operatorname{sets}(\ell)$. Since $\ell < N$ and since $\operatorname{sets}(N-1) = (X_0,\ldots,X_{d-1})$, it follows by Fact \ref{fact:hl-sca-density-below-a-cone-condition-extension} that the $X_j$ are strong subtrees of the $\hat{X}_j$ with common level function. Hence, so are the $Y_j$. Furthermore, by Property 1, for every $j < k$ we have that $\roots(\hat{X}_j) = S_j(\ell+1)$. So every root of $\hat{X}_j$ is extended by a root of $\hat{Y}_j$.
	\end{proof}

	It follows by
	Property 3 that $(\operatorname{stems}(\pi), \hat{Y}_0, \dots, \hat{Y}_{d-1}) \nVdash \Phi_e^{G_0 \oplus \dots \oplus G_{d-1} \oplus Z} \neq C$ where $e = \operatorname{req}(\pi)$. Now, since the forcing relation depends only on part of the reservoirs extending the roots of the stems, the following fact holds. However, we have the following contradictory fact:

	\begin{claim}\label{fact:hl-sca-case2-force-diag}
	$(\operatorname{stems}(\pi), \hat{Y}_0, \dots, \hat{Y}_{d-1}) \Vdash \Phi_e^{G_0 \oplus \dots \oplus G_{d-1} \oplus Z} \neq C$, where $e = \operatorname{req}(\pi)$.
	\end{claim}
	\begin{proof}
	For every $j < d$, let $H_j \subseteq \hat{Y}_j$ be such that
	$F_0 \cup H_0, \dots, F_{d-1} \cup H_{d-1}$ are finite strong subtrees of $T_0, \dots, T_{d-1}$, respectively, with common level function. Since the rots of the $F_j$ pointwise extend $\pi$, so do the nodes of each of the $H_j$. In particular, for every $j < d$, $H_j \subseteq  \bigcup_{\rho \in M_j} Y_{j,\rho}$. It follows that $\bigcup_n H_0(n) \times \dots \times H_{d-1}(n) \subseteq B_i$. But $B_0 \oplus  \dots \oplus B_{k-1} \in \Cc$, so $\Phi_e^{(F_0 \cup H_0) \oplus \dots \oplus (F_{d-1} \cup H_{d-1}) \oplus Z}(x)$ either diverges or is different from $C(x)$.
	Since the $H_j$ were arbitrary, the claim is proved.
	\end{proof}

	The contradiction completes Case 2.

\case{3}{otherwise.} Then $(x,y) \in W$ if and only if $y = C(x)$, which, since $W$ is $X_0 \oplus \dots \oplus X_{d-1} \oplus Z$-c.e., implies $C \Tred X_0 \oplus \dots \oplus X_{d-1} \oplus Z$, a contradiction.
\end{proof}

We are now ready to prove strong cone avoidance of the Halpern-La\"{u}chli theorem.

\begin{proof}[Proof of \Cref{thm:hl-strong-cone-avoidance}]
Fix two sets $C$ and $Z$ such that $C \nTred Z$.
Also fix a tuple of infinite $Z$-computable $Z$-computably bounded trees with no leaves $T_0, \dots, T_{d-1} \subseteq \baire$ and an arbitrary $k$-partition $A_0 \sqcup \dots \sqcup A_{k-1} = \exprodtree{T}{d}$. Let $\Pb$ be the set of cone avoiding level-homogeneous product tree conditions (relative to these givens).

By \Cref{lem:hl-sca-density-below-a-cone}, there is some $c \in \Pb$ below which, for every Turing functional $\Gamma$, the set
\[
D_\Gamma = \{ c' \in \Pb: c' \Vdash \Gamma^{G_0 \oplus \dots \oplus G_{d-1} \oplus Z} \neq C \}
\]
is $\Pb$-dense.
Let $\Uc$ be a $\Pb$-filter which intersects every set $D_\Gamma$.
Then by definition of a product tree condition, $G^\Uc_0, \dots, G^\Uc_{d-1}$ are strong subtrees of $T_0, \dots, T_{d-1}$. Moreover, since all conditions in $\Pb$ are level-homogeneous, so are $G^\Uc_0, \dots, G^\Uc_{d-1}$. Since  $\Uc$ intersects every set $D_\Gamma$, then $C \nTred G^\Uc_0 \oplus \dots \oplus G^\Uc_{d-1} \oplus Z$.
Last, by \Cref{lem:product-tree-genericity-implies-infinity}, $G^\Uc_0, \dots, G^\Uc_{d-1}$ are all infinite.

Let $f: \NN \to k$ be the function which on level $\ell$ associates the color $i < k$
such that $G^\Uc_0(\ell) \times \dots \times G^\Uc_{d-1}(\ell) \subseteq A_i$.
By strong cone avoidance of $\RT{1}{k}$, there is an infinite set of levels $H \subseteq \NN$ on which $f$ is constant and
$C \nTred H \oplus G^\Uc_0 \oplus \dots \oplus G^\Uc_{d-1} \oplus Z$. Say $f$ takes the color $i < k$ on $H$. In particular, for every $\ell \in H$, $G^\Uc_0(\ell) \times \dots \times G^\Uc_{d-1}(\ell) \subseteq A_i$, so we can $H \oplus G^\Uc_0 \oplus \dots \oplus G^\Uc_{d-1} \oplus Z$-computably thin out to infinite strong subtrees $S_0, \dots, S_{d-1}$
of  $G^\Uc_0, \dots, G^\Uc_{d-1}$ with common level function,
and such that $\bigcup_n S_0(n) \times \dots \times S_{d-1}(n) \subseteq A_i$.
In particular, $C \nTred S_0 \oplus \dots \oplus S_{d-1} \oplus Z$.
This completes the proof of \Cref{thm:hl-strong-cone-avoidance}.
\end{proof}

\chapter{Milliken's tree theorem}\label{sect:milliken-theorem}

We now turn to the computability-theoretic analysis of the product and non-product versions Milliken's tree theorem, the base cases of which we already studied through the Halpern-Lauchli theorem in the previous chapter. As the product version obviously implies the non-product, we formulate our upper bounds in terms of the former and our lower bounds in terms of the latter. More specifically, we obtain the following.
In \Cref{subsect:proof-pmt-aca}, we provide an inductive proof of the product version of Milliken's tree theorem in $\ACA_0$, using the notion of prehomogeneous tree. Using standard methods, it is easy to obtain a reversal for (even the non-product version of) Milliken's tree theorem for height at least $3$. For height $1$, we already saw in the previous chapter that the product version of Milliken's tree theorem for height 1 is computably true, and hence does not imply $\ACA_0$. This leaves the situation for trees of height $2$, which we address in \Cref{subsect:cone-avoidance-pmt2}. Since Milliken's tree theorem for height two implies Ramsey's theorem for pairs, it is not computably true, but we show that the product version admits cone avoidance, and so is strictly weaker than $\ACA_0$. Finally, in \Cref{subsect:thin-milliken}, we study a weakening of Milliken's tree theorem that allows more than one color in the solutions. We prove that the product version of Milliken's tree theorem for height 3, but where up to two colors are allowed in the solution, admits cone avoidance, and hence does not imply $\ACA_0$. We will make use of this result in our discussion of Devlin's theorem in \Cref{sec:devlin}.

\section{A proof of $\PMT{n}{}$ in $\ACA_0$}\label{subsect:proof-pmt-aca}

Given a tree $F$ of height $\alpha \leq \omega$ and an a number $n < \alpha$,  we write $F \uh n$ \index{$\uh$} for the subtree of $F$ of height $n$.

\begin{definition}\index{prehomogeneous!product tree condition}\index{prehomogeneous!tuple}\index{product tree condition!prehomogeneous}
	Fix $n\in\NN$,  a collection of trees $T_0, \dots, T_{d-1}$ with no leaves and a coloring $f: \Subtree{n+1}{T_0, \dots, T_{d-1}} \to k$.
	\begin{enumerate}
		\item A tuple $(S_0, \dots, S_{d-1}) \in \Subtree{\omega}{T_0, \dots, T_{d-1}}$ is \emph{prehomogeneous} for $f$ if the color of every $(E_0, \dots, E_{d-1}) \in \Subtree{n+1}{S_0, \dots, S_{d-1}}$ depends only on $(E_0 \uh n, \dots, E_{d-1} \uh n)$.
		\item A product tree condition $(F_0, \dots, F_{d-1}, X_0, \dots,X_{d-1})$ is \emph{prehomogeneous} for $f$  if the color of every
		\[
		(E_0, \dots, E_{d-1}) \in \Subtree{n+1}{F_0 \cup X_0, \dots, F_{d-1} \cup X_{d-1}}
		\]
		depends only on $( E_0 \uh n, \dots, E_{d-1} \uh n )$ whenever $E_j \uh n \subseteq F_j$ for every $j < d$.
	\end{enumerate}
\end{definition}


\noindent In particular, note that the product tree condition $(\emptyset, \dots, \emptyset, T_0, \dots,T_{d-1})$ is prehomogeneous for a given $f$ as above.

We add several other useful definitions.

\begin{definition}
	Fix a collection of infinite trees $T_0, \dots, T_{d-1}$ with no leaves. A product tree condition $c = (F_0, \dots, F_{d-1}, X_0, \dots,X_{d-1})$ is \emph{computable} if $X_0, \dots, X_{d-1}$ are all computable and computably bounded. An \emph{index} of $c$ is a finite tuple  $(F_0, \dots, F_{d-1}, e_0, \dots,e_{d-1})$ such that $\Phi_{e_j} = X_j$ for every $j < d$.\index{product tree condition!index}\index{index!product tree condition}\index{product tree condition!computable}
\end{definition}



\begin{definition}\label{def:subtreeleaves}\index{$\SubtreeLeaves{n}{}$!tree}\index{$\SubtreeLeaves{n}{}$!product}
  If $n\geq 1$ and $T$ is a finite tree, then
   \[
     \SubtreeLeaves{n}{T} = \{S\in\Subtree{n}{T}: \leaves(S)\subseteq \leaves(T)\}.
   \]
   More generally, if {$T_0, \dots, T_{d-1}$} are finite trees, then $\SubtreeLeaves{n}{T_0,\dots,T_{d-1}}$ equals
  \[
    \{(S_0,\dots, S_{d-1})\in\Subtree{n}{T_0,\dots,T_{d-1}}: (\forall i<d)[S_i\in \SubtreeLeaves{n}{T_i}]\}.
  \]
\end{definition}

The main combinatorial result of this section is the following density lemma.

\begin{lemma}\label{thm:milliken-prehomogeneous-one-step}
Fix $n \in \NN$, a collection of computable, computably bounded trees with no leaves $T_0, \dots, T_{d-1}$, and a computable coloring
\[
	f: \Subtree{n+1}{T_0, \dots, T_{d-1}} \to k.
\]
For every computable product tree condition $c = (F_0, \dots, F_{d-1}, X_0, \dots,X_{d-1})$ which is prehomogeneous (for $f$), there is a computable prehomogeneous product tree condition
$\hat{c} = (\hat{F}_0, \dots, \hat{F}_{d-1}, \hat{X}_0, \dots, \hat{X}_{d-1})$ extending $c$
such that $F_j \subsetneq \hat{F}_j$ for every $j < d$. Moreover, an index of $d$ can be found uniformly $\emptyset''$-computably from an index of $c$.
\end{lemma}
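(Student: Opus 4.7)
The plan is to extend each stem $F_j$ by exactly one more level and then shrink the reservoirs $X_j$ using the Halpern-La\"uchli theorem so that prehomogeneity is preserved. For the extension, let $\mu$ denote the common level function of the $X_j$'s. For each leaf $\nu$ of $F_j$ and each direct $T_j$-extension $\tau$ of $\nu$, the density clause in the definition of product tree condition supplies some $\rho \in \roots(X_j)$ with $\rho \succeq \tau$; computably choose such a $\rho$. The selected $\rho$'s all lie in $X_j(0)$, hence at $T_j$-level $\mu(0)$, so writing $L_j$ for the set of these choices and $\hat{F}_j = F_j \cup L_j$, the tuple $(\hat{F}_0,\dots,\hat{F}_{d-1})$ belongs to $\Subtree{n+1}{T_0,\dots,T_{d-1}}$.

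Now analyse which $(E_0,\dots,E_{d-1}) \in \Subtree{n+1}{\hat{F}_0 \cup \hat{X}_0,\dots,\hat{F}_{d-1} \cup \hat{X}_{d-1}}$ with $E_j \uh n \subseteq \hat{F}_j$ need to be controlled (with $\hat{X}_j$ still to be defined). The common level function of such a tuple, together with the common level function of $(\hat{F}_0,\dots,\hat{F}_{d-1})$, forces the leaves of $E_j \uh n$ either to all lie in $F_j$ (in which case the prehomogeneity of $c$, combined with the inclusion $\hat{F}_j \cup \hat{X}_j \subseteq F_j \cup X_j$, already controls the color) or to all lie in the new top level $L_j$. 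For the latter case, for each $j<d$, each $\lambda \in L_j$, and each direct $T_j$-extension $\tau$ of $\lambda$, let $\lambda^+_{j,\tau}$ denote the unique direct extension of $\lambda$ in $X_j$ that extends $\tau$; this exists and is unique because $X_j$ is a strong subforest. Apply the Halpern-La\"uchli theorem (\Cref{th:strong-hl}) to the finite family $\{X_j \uh \lambda^+_{j,\tau} : j<d,\ \lambda \in L_j,\ \tau \text{ a direct } T_j\text{-extension of } \lambda\}$, a collection of infinite trees with no leaves sharing a common level function inherited from $\mu$, with the finite coloring $g$ that sends a tuple $(\sigma_{j,\lambda,\tau})$ at a common level to the tuple $(f(E_0^{(D)},\dots,E_{d-1}^{(D)}))_D$, where $D = (D_0,\dots,D_{d-1})$ ranges over the finitely many height-$n$ strong subtuples of $(\hat{F}_0,\dots,\hat{F}_{d-1})$ with leaves in the $L_j$, and $E_j^{(D)} = D_j \cup \{\sigma_{j,\lambda,\tau} : \lambda \in \leaves(D_j),\ \tau\text{ a direct } T_j\text{-extension of }\lambda\}$. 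This yields strong subtrees $Y_{j,\lambda,\tau} \subseteq X_j \uh \lambda^+_{j,\tau}$ with a common level function on which $g$ is constant. Setting $\hat{X}_j = \bigcup_{\lambda,\tau} Y_{j,\lambda,\tau}$, the roots of $\hat{X}_j$ include one extension of each $\lambda^+_{j,\tau}$, which satisfies the density requirement for $(\hat{F}_j,\hat{X}_j)$ to be a product tree condition, while the constancy of $g$ delivers prehomogeneity for the remaining configurations.

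Every step above except for the Halpern-La\"uchli application is uniformly computable from an index of $c$. By the proof of \Cref{thm:halpern-lauchli-computably-true}, the HL solution is itself computable from the coloring $g$ once a correct ``dense color'' for $g$ has been fixed, and identifying such a color is a $\Pi^0_2$ question in $g$; since $g$ is computable from an index of $c$, $\emptyset''$ can resolve this uniformly, after which a computable index for each $\hat{X}_j$, and hence for $\hat{c}$, is constructed uniformly. The main technical difficulty is coordinating the choice of $L_j$, the family of trees fed to HL, and the coloring $g$ so that the resulting $\hat{X}_j$ simultaneously satisfies the density requirement of a product tree condition and settles the color of every newly introduced $(n+1)$-subtuple; the key trick is to apply HL below the direct $X_j$-successors of $L_j$ rather than at $L_j$ itself, bundling all the relevant configurations into a single finite coloring $g$.
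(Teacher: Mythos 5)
Your proof is correct and follows essentially the same route as the paper's: extend each stem by one level using the roots of $X_j$, then use the computably true Halpern-La\"uchli theorem below the next level of the reservoirs to stabilize the colors of all height-$(n+1)$ extensions of the new height-$n$ configurations, the old configurations being already handled by prehomogeneity of $c$. The only (cosmetic) differences are that you bundle the finitely many configurations into a single tuple-valued coloring where the paper iterates Halpern-La\"uchli once per configuration, and you obtain the $\emptyset''$-index by locating the correct $\Pi^0_2$ data for the Halpern-La\"uchli instance rather than by the paper's exhaustive $\emptyset''$-verified search over candidate indices.
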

\begin{proof}
By definition of a product tree condition (\Cref{def:product-tree-condition}), for every $j < d$ and every leaf $\sigma$ of $F_j$,
$\roots(X_j)$ is $(t+1)$-$\sigma$-dense with respect to $T_j$, where $t$ is the level of the leaves of $F_j$ within $T_j$.
For every $j < k$, let $\hat{F}_j$ be $F_j$ augmented by the roots of $X_j$ extending the leaves of $F_j$. By \Cref{remark:product-tree-condition-roots}, we can assume that $(\hat{F}_0, \dots, \hat{F}_{d-1}) \in \Subtree{<\omega}{T_0, \dots, T_{d-1}}$.  Let 
\[
( E^0_0, \dots, E^0_{d-1} ), \dots, ( E^{p-1}_0, \dots, E^{p-1}_{d-1} )
\]
be the (finite) enumeration of all the tuples in $\SubtreeLeaves{n}{\hat{F}_0, \dots, \hat{F}_{d-1}}$, meaning tuples of strong subtrees $( E_0, \dots, E_{d-1} )$ such that the leaves of $E_j$ are among the leaves of $\hat{F}_j$, i.e., belong to $X_j(0)$.

We inductively define a finite sequence of $d$-tuples of computable forests
$$
( Y^0_0, \dots, Y^0_{d-1} ), \dots, ( Y^p_0, \dots, Y^p_{d-1} )
$$
such that for every $s < p$:
\begin{enumerate}
	\item $Y^{s+1}_0, \dots, Y^{s+1}_{d-1}$ are infinite strong subforests of $Y^s_0, \dots, Y^s_{d-1}$, respectively, with common level function;
	\item $(\hat{F}_0 \cup Y^{s+1}_0, \dots, \hat{F}_{d-1} \cup Y^{s+1}_{d-1}) \in \Subtree{\omega}{T_0, \dots, T_{d-1}}$;
	\item there is some color $i < k$ such that for every level $\ell \in \NN$, every $j < d$, and every $H_j \subseteq Y^{s+1}_j(\ell)$ for which $( E^s_0 \cup H_0, \dots, E^s_{d-1} \cup H_{d-1} ) \in \Subtree{n+1}{T_0, \dots, T_{d-1}}$,
		$f( E^s_0 \cup H_0, \dots, E^s_{d-1} \cup H_{d-1} ) = i$.
\end{enumerate}
Let $Y^0_0, \dots, Y^0_{d-1}$ be $X_0, \dots, X_{d-1}$, respectively, trimmed by their first levels.
Assume $Y^s_0, \dots, Y^s_{d-1}$ is defined for $s < p$. Let $m$ be the common level of the leaves of $\hat{F}_0, \dots, \hat{F}_{d-1}$ in $T_0, \dots, T_{d-1}$, respectively. For every $j < d$, let $R_j = \roots(Y^s_j)$, and for every $\rho \in R_j$, let $Y_{j,\rho} = Y^s_j \uh \rho$.
We can see $Y^s_0, \dots, Y^s_{d-1}$ as a tuple $( Y_{j,\rho}: j < d, \rho \in R_j )$ of trees.

Define a coloring $g$ of
$$
		\bigcup_m  \left( \prod_{\rho \in R_0} Y_{0, \rho}(m) \right) \times \dots \times \left( \prod_{\rho \in R_{d-1}} Y_{d-1,\rho}(m) \right)
$$
as follows. For every $j < d$, let $U_j = \{\rho \in R_j: (\exists \mu \in \leaves(E^s_j)) [\rho \succeq \mu ] \}$, and note that $( E^s_0 \cup U_0, \dots, E^s_{d-1} \cup U_{d-1} ) \in \Subtree{n+1}{T_0, \dots, T_{d-1}}$. Now, given $\pi = \{\sigma_{j,\rho} \in Y_{j,\rho}: j < d, \rho \in R_j\}$ in the domain of $g$, let
\[
	G_j = E^s_j \cup \{\sigma_{j,\rho}: \rho \in U_j\}.
\]
for each $j$. So $( G_0, \dots G_{d-1} ) \in \Subtree{n+1}{T_0, \dots, T_{d-1}}$. Set
\[
	g(\pi) = f(G_0,\ldots,G_{d-1}).
\]

Since the Halpern-La\"{u}chli theorem is computably true
, there is a computable tuple $( Z_{j,\rho}: j < d, \rho \in R_j )$ of strong subtrees of $( Y_{j,\rho}: j < d, \rho \in R_j )$, respectively, with common level function, together with a color $i < k$
such that for every $\ell \in \NN$, every $j < k$, if $H_j \subseteq \prod_{\rho \in R_j} Z_{j,\rho}(\ell)$ is such that $E^s_j \cup H_j \in \mathcal{S}_{n+1}(T_0, \dots, T_{d-1})$ then $f( E^s_0 \cup H_0, \dots, E^s_{d-1} \cup H_{d-1} ) = i$.
For every $j < k$, let $Y^{s+1}_j = \bigcup_{\rho \in R_j} Z_{j,\rho}(\ell)$. This completes the construction of the sequence.

Let $\hat{c} = (\hat{F}_0, \dots, \hat{F}_{d-1}, Y^p_0, \dots, Y^p_{d-1})$. By items 1 and 2, 
$\hat{c}$ is a computable product tree condition extending $c$.
Moreover, by item 3 and the fact that $c$ is prehomogeneous for $f$, so is $\hat{c}$. 

One can $\emptyset''$-computably search for a finite tuple
\[
	(E_0, \dots, E_{d-1}, e_0, \dots, e_{d-1})
\]
such that for every $j < d$, $\Phi_{e_j}$ is total, and  \[(E_0, \dots, E_{d-1}, \Phi_{e_0}, \dots, \Phi_{e_{d-1}})\] is a product tree condition extending $c$ and prehomogeneous for $f$. Indeed, being a strong subforest of $T_j$ is $\Pi^0_2$ since $T_j$ is computable and computably bounded. Thus, being a product tree condition is $\emptyset''$-decidable. Moreover, being prehomogeneous is $\Pi^0_1$ since $f$ is computable, and being an extension of a product tree condition is also $\Pi^0_2$. Since we prove the existence of such an extension, an exhaustive search will always terminate, and the procedure is $\emptyset''$-computable, uniformly in an index of $c$.
This completes the proof of \Cref{thm:milliken-prehomogeneous-one-step}.
\end{proof}

\begin{lemma}\label{thm:milliken-prehomogeneous}
Fix $n \in \NN$, a collection of computable, computably bounded trees with no leaves $T_0, \dots, T_{d-1}$, and a computable coloring
\[
	f: \Subtree{n+1}{T_0, \dots, T_{d-1}} \to k.
\]
There is a $\Delta^0_3$ sequence $S_0, \dots, S_{d-1}$ of strong subtrees of $T_0, \dots,\allowbreak T_{d-1}$, respectively, with common level function, such that the tuple $( S_0, \dots, S_{d-1} )$ is prehomogeneous for~$f$.
\end{lemma}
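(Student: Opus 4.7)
The plan is to iterate \Cref{thm:milliken-prehomogeneous-one-step} starting from the trivial condition, using $\emptyset''$ to drive the construction. Note that $c_0 = (\emptyset, \dots, \emptyset, T_0, \dots, T_{d-1})$ is a computable product tree condition (since the $T_j$ are computable and computably bounded) and is vacuously prehomogeneous for $f$; moreover, a canonical index for $c_0$ can be written down effectively from indices for the $T_j$.

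First, I would apply \Cref{thm:milliken-prehomogeneous-one-step} recursively to produce a sequence $(c_s)_{s \in \NN}$ of computable prehomogeneous product tree conditions
\[
c_s = (F_0^s, \dots, F_{d-1}^s, X_0^s, \dots, X_{d-1}^s),
\]
with $c_{s+1}$ extending $c_s$ and $F_j^s \subsetneq F_j^{s+1}$ for every $j < d$. Since the lemma gives an index of $c_{s+1}$ uniformly $\emptyset''$-computably from an index of $c_s$, the sequence of indices (and hence of stems $(F_0^s, \dots, F_{d-1}^s)$) is $\emptyset''$-computable, i.e., $\Delta^0_3$. For each $j < d$, define
\[
S_j = \bigcup_{s \in \NN} F_j^s,
\]
which is then also $\Delta^0_3$.

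Next, I would verify the required properties. Because product-tree extension satisfies $F_j^s \subseteq F_j^{s+1}$ and $F_j^{s+1} \setminus F_j^s \subseteq X_j^s$, together with the fact that the $F_j^s$ form strong subtrees with common level function and strictly grow, the union $S_j$ is a strong subtree of $T_j$; and the common level function for $(S_0, \dots, S_{d-1})$ is obtained as the union of the common level functions at each finite stage. Each $S_j$ is infinite because each extension step strictly extends the heights of all stems simultaneously. Prehomogeneity of $(S_0, \dots, S_{d-1})$ for $f$ follows from the fact that every $(E_0, \dots, E_{d-1}) \in \Subtree{n+1}{S_0, \dots, S_{d-1}}$ has all its nodes inside $(F_0^s, \dots, F_{d-1}^s)$ for some sufficiently large $s$, and prehomogeneity of $c_s$ at that stage guarantees that the color of $(E_0, \dots, E_{d-1})$ depends only on $(E_0 \uh n, \dots, E_{d-1} \uh n)$.

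I do not expect a serious obstacle here: once \Cref{thm:milliken-prehomogeneous-one-step} is in hand, this is essentially a bookkeeping argument tracking indices through a $\emptyset''$-effective iteration. The only point that requires some care is ensuring that the conditions $c_s$ and their indices can be uniformly recovered at the limit so that the resulting $S_j$ are genuinely $\Delta^0_3$; this is handled by the uniformity clause in the statement of \Cref{thm:milliken-prehomogeneous-one-step}, which lets us run the iteration via a single $\emptyset''$-computable recursion on $s$.
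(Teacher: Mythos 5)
Your proposal is correct and follows essentially the same route as the paper: iterate \Cref{thm:milliken-prehomogeneous-one-step} from the trivial (vacuously prehomogeneous) condition, use the $\emptyset''$-uniformity of the index computation to get a $\Delta^0_3$ descending sequence of conditions with strictly growing stems, and take $S_j = \bigcup_s F_j^s$. The verification details you supply match what the paper leaves implicit.
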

\begin{proof}
By iterating \Cref{thm:milliken-prehomogeneous-one-step},
build a $\Delta^0_3$ descending sequence of computable prehomogeneous product tree conditions
$c_0 \geq c_1 \geq \dots$ where 
$$
c_s = (F^s_0, \dots, F^s_{d-1}, X^s_0, \dots, X^s_{d-1})
$$
and such that  $F^s_j \subsetneq F^{s+1}_j$ for every $j < d$ and $s \in \NN$.
For every $j < d$, let $S_j = \bigcup_s F^s_j$. Since the $F^s_j$ are strictly increasing in $s$, it follows by definition of a product tree condition that $S_0, \dots, S_{d-1}$ are strong subtrees of $T_0, \dots,\allowbreak T_{d-1}$, respectively, with common level function.
Moreover, $S_0, \dots, S_{d-1}$ are $\Delta^0_3$, and by definition of a prehomogeneous condition, $( S_0, \dots, S_{d-1})$ is prehomogeneous for~$f$.
\end{proof}

\begin{theorem}\label{thm:milliken-arithmetic}
For every $n \geq 1$ and every set $X$, every $X$-computable instance of the product version of Milliken's tree theorem for height $n$ admits a $\Delta^{0,X}_{2n-1}$ solution.
\end{theorem}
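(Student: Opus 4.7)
The plan is to prove the theorem by induction on $n$, with the base case $n=1$ being exactly the Halpern-La\"{u}chli theorem (\Cref{thm:halpern-lauchli-computably-true}), which is computably true, so every $X$-computable instance has an $X$-computable, hence $\Delta^{0,X}_1$, solution. For the inductive step, I would combine \Cref{thm:milliken-prehomogeneous} (prehomogeneity is $\Delta^0_3$) with the inductive hypothesis applied to a derived coloring of strong subtrees of height $n$. Relativizing to $X$ at the end handles the arbitrary oracle.

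In detail, suppose the theorem holds for some $n \geq 1$, and let $f: \Subtree{n+1}{T_0,\ldots,T_{d-1}} \to k$ be an $X$-computable instance of $\PMT{n+1}{}$. By \Cref{thm:milliken-prehomogeneous} relativized to $X$, there is a tuple $(S_0,\ldots,S_{d-1}) \in \Subtree{\omega}{T_0,\ldots,T_{d-1}}$ that is $\Delta^{0,X}_3$ and prehomogeneous for $f$, meaning that the color $f(E_0,\ldots,E_{d-1})$ of any $(E_0,\ldots,E_{d-1}) \in \Subtree{n+1}{S_0,\ldots,S_{d-1}}$ depends only on $(E_0 \uh n,\ldots,E_{d-1} \uh n)$. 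Since each $S_j$ has no leaves, every $(D_0,\ldots,D_{d-1}) \in \Subtree{n}{S_0,\ldots,S_{d-1}}$ extends to some element of $\Subtree{n+1}{S_0,\ldots,S_{d-1}}$, so we can define an induced coloring $g: \Subtree{n}{S_0,\ldots,S_{d-1}} \to k$ by letting $g(D_0,\ldots,D_{d-1})$ be the common $f$-color of any such extension. The coloring $g$ is computable from $X \oplus S_0 \oplus \cdots \oplus S_{d-1}$, and hence $g$ together with the trees $S_0,\ldots,S_{d-1}$ forms a $\Delta^{0,X}_3$-computable (equivalently, $X''$-computable) instance of $\PMT{n}{}$.

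Applying the inductive hypothesis relativized to $X''$ yields a solution $(\hat{S}_0,\ldots,\hat{S}_{d-1}) \in \Subtree{\omega}{S_0,\ldots,S_{d-1}}$ that is $\Delta^{0,X''}_{2n-1}$, i.e., $\Delta^{0,X}_{2n+1} = \Delta^{0,X}_{2(n+1)-1}$. Because being a strong subtree is transitive, $(\hat{S}_0,\ldots,\hat{S}_{d-1}) \in \Subtree{\omega}{T_0,\ldots,T_{d-1}}$. Any $(E_0,\ldots,E_{d-1}) \in \Subtree{n+1}{\hat{S}_0,\ldots,\hat{S}_{d-1}}$ satisfies $(E_0 \uh n,\ldots,E_{d-1}\uh n) \in \Subtree{n}{\hat{S}_0,\ldots,\hat{S}_{d-1}}$, so by prehomogeneity of $(S_0,\ldots,S_{d-1})$ and monochromaticity of $g$ on $\Subtree{n}{\hat{S}_0,\ldots,\hat{S}_{d-1}}$, $f$ is constant on $\Subtree{n+1}{\hat{S}_0,\ldots,\hat{S}_{d-1}}$, completing the induction.

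The main conceptual step is the passage from a coloring of strong subtrees of height $n+1$ to one of strong subtrees of height $n$ via prehomogeneity; the price for this reduction is exactly two quantifier jumps ($\Delta^0_3$ versus $\Delta^0_1$ for a prehomogeneous tuple), which is what yields the clean arithmetic $2n-1$ bound. The only technical point to verify carefully is that \Cref{thm:milliken-prehomogeneous} relativizes uniformly to any oracle $X$ and that the induced coloring $g$ is well-defined regardless of which extension is chosen, both of which follow immediately from the definitions.
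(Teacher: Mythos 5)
Your proof is correct and follows essentially the same route as the paper's: induction on $n$ with Halpern-La\"{u}chli as the base case, the $\Delta^{0,X}_3$ prehomogeneous tuple from \Cref{thm:milliken-prehomogeneous} for the inductive step, the induced coloring $g$ of height-$n$ subtrees, and the jump arithmetic $\Delta^{0,X''}_{2n-1} = \Delta^{0,X}_{2(n+1)-1}$. No gaps.
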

\begin{proof}
By induction on $n$. For $n = 1$, the product version of Milliken's tree theorem for height 1 is the Halpern-La\"{u}chli theorem, which is computably true by \Cref{thm:halpern-lauchli-computably-true}.

Suppose the property holds for $n$, and fix a set $X$,
and an $X$-computable sequence of $X$-computably bounded trees with no leaves $T_0, \dots, T_{d-1} \subseteq \baire$. 
Let $f: \Subtree{n+1}{T_0, \dots, T_{d-1}} \to k$ be an $X$-computable coloring.
By \Cref{thm:milliken-prehomogeneous}, relativized to $X$, there is a $\Delta^{0,X}_3$ tuple
\[
	(S_0, \dots, S_{d-1}) \in \Subtree{\omega}{T_0, \dots,\allowbreak T_{d-1}}
\]
prehomogeneous for~$f$.
Let $g: \Subtree{n}{S_0, \dots, S_{d-1}} \to k$ be defined by
\[
	g( E_0, \dots, E_{d-1} ) = f( E_0 \cup H_0, \dots, E_{d-1} \cup H_{d-1} )
\]
for any $H_0 \subseteq S_0(n), \dots, H_{d-1} \subseteq S_{d-1}(n)$ such that $( E_0 \cup H_0, \dots, E_{d-1} \cup H_{d-1} ) \in \Subtree{n+1}{S_0, \dots, S_{d-1}}$. Such a coloring is well defined by prehomogenenity.
The coloring $g$ can be seen as a $\Delta^{0,X''}_1$ instance of the product version of Milliken's tree theorem for height $n$.
By induction hypothesis, there is a $\Delta^{0, X''}_{2n-1}$ (hence $\Delta^{0,X}_{2(n+1)-1}$) solution to $g$, which is by prehomogeneity also a solution to $f$. This completes the proof of \Cref{thm:milliken-arithmetic}.
\end{proof}

\begin{corollary}\label{thm:milliken-aca}
  For every $n \geq 1$, the product version of Milliken's tree theorem for height $n$ is provable in
  $\ACA_0$, and the product version of Milliken's tree theorem itself is provable in $\ACA'_0$.
\end{corollary}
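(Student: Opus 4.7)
The plan is to derive the corollary directly from Theorem~\ref{thm:milliken-arithmetic} by formalizing its proof in the appropriate subsystem. For the first assertion, fix a standard $n \geq 1$. Given any $X$-computable instance of $\PMT{n}{}$, Theorem~\ref{thm:milliken-arithmetic} produces a $\Delta^{0,X}_{2n-1}$ solution. Since $\ACA_0$ proves, for each standard $m$ and every set $X$, the existence of $X^{(m)}$, in particular $X^{(2n-2)}$ exists, and $\Delta^{0,X}_{2n-1}$-comprehension (equivalently, $\Delta^{0,X^{(2n-2)}}_1$-comprehension) is available. So the solution can be formed inside any $\omega$-model, and more generally inside any model of $\ACA_0$, yielding provability of $\PMT{n}{}$ in $\ACA_0$.

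For the second assertion, recall that $\ACA'_0$ is $\ACA_0$ together with the axiom asserting that for every set $X$ and every $n \in \NN$, the iterated Turing jump $X^{(n)}$ exists. The full product version of Milliken's tree theorem is a single $\Pi^1_2$ statement whose instances carry the height parameter $n$ internally, and the complexity bound $2n-1$ in Theorem~\ref{thm:milliken-arithmetic} grows with $n$. To uniformize the argument above over all $n$ simultaneously, we need access to arbitrary finite iterates of the jump, which is exactly what $\ACA'_0$ provides; from an instance at height $n$ computable in $X$, we form $X^{(2n-2)}$ and read off a solution.

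The main step to verify is that the proofs of Lemma~\ref{thm:milliken-prehomogeneous-one-step}, Lemma~\ref{thm:milliken-prehomogeneous} and Theorem~\ref{thm:milliken-arithmetic} genuinely formalize inside $\ACA_0$ for each fixed $n$ (and uniformly inside $\ACA'_0$). Lemma~\ref{thm:milliken-prehomogeneous-one-step} invokes the Halpern-La\"{u}chli theorem, which by Theorem~\ref{thm:halpern-lauchli-computably-true} is computably true and hence available in $\RCA_0$, and then performs a $\emptyset''$-computable exhaustive search for an extension, a $\Sigma^0_3$-definable operation that is legitimate under arithmetic comprehension. Lemma~\ref{thm:milliken-prehomogeneous} iterates this countably many times to build a $\Delta^0_3$ prehomogeneous tuple, which requires forming a $\Delta^0_3$ sequence of indices. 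The inductive step in Theorem~\ref{thm:milliken-arithmetic} then reduces a height-$(n+1)$ instance over $X$ to a height-$n$ instance over $X''$, costing two jumps per step, so exactly $2n-1$ jumps suffice. The only mild obstacle is confirming that the coding of product tree conditions, their indices, and the search procedure respect the restricted induction of $\ACA_0$; since all quantifiers involved are bounded by standard arithmetic complexity, this is routine, and no use of $\Sigma^1_1$-principles or iterated jumps beyond those guaranteed by $\ACA'_0$ is needed.
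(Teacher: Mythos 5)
Your proposal is correct and follows essentially the same route as the paper, which simply observes that the proof of Theorem~\ref{thm:milliken-arithmetic} formalizes in $\ACA_0$ for each fixed $n$ and that the induction on $n$ (requiring arbitrary finite jump iterates) is carried out in $\ACA'_0$. Your additional verification of the subsidiary lemmas is a sound elaboration of exactly that argument.
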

\begin{proof}
The proof of \Cref{thm:milliken-arithmetic} is formalizable in $\ACA_0$.
The induction on $n$ can then be carried out in $\ACA'_0$.
\end{proof}


\begin{theorem}\label{thm:milliken-rt}
  Milliken's tree theorem for height $n$ implies $\RT{n}{}$.
\end{theorem}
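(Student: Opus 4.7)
The plan is to mimic the reduction already sketched in the Introduction (where Ramsey's theorem was shown to follow from Milliken's tree theorem for $2^{<\omega}$), but phrased as a computable reduction so that it also yields the implication at the level of $\RCA_0$. Instantiate $\MT n{}$ with the tree $T = 2^{<\omega}$, which is infinite, binary-branching, has no leaves, and is computable. For $T = 2^{<\omega}$ the level and length functions coincide, so every $S \in \Subtree{n}{2^{<\omega}}$ has nodes distributed over exactly $n$ distinct lengths, and we may set
\[
  \vec{x}_S \;=\; \{\,|\sigma| : \sigma \in S\,\} \;\in\; [\NN]^n.
\]
Given an instance $f:[\NN]^n \to k$ of $\RT n{}$, define $g:\Subtree{n}{2^{<\omega}} \to k$ by $g(S) = f(\vec{x}_S)$. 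The map $S \mapsto \vec{x}_S$ is computable, so $g$ is computable from $f$.

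Next, apply $\MT n{}$ to $g$ to obtain a strong subtree $U \in \Subtree{\omega}{2^{<\omega}}$ on which $g$ is constant, with constant value $i < k$. Let $\ell_U:\omega\to\omega$ be the level function of $U$, and set
\[
  H \;=\; \{\,|\sigma| : \sigma \in U\,\} \;=\; \Rg(\ell_U),
\]
which is an infinite subset of $\NN$ computable from $U$. I claim that $H$ is homogeneous for $f$ with color $i$, which completes the reduction and proves that $\RT n{} \leq_c \MT n{}$; in particular, it gives the desired implication over $\RCA_0$.

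The verification amounts to checking that every $n$-element subset of $H$ arises as $\vec{x}_S$ for some $S \in \Subtree{n}{U}$. So fix levels $m_0 < m_1 < \cdots < m_{n-1}$ of $U$, with corresponding lengths $\ell_j = \ell_U(m_j) \in H$. I build $S \in \Subtree{n}{U}$ level by level using the fact that $U$, being a strong subtree of $2^{<\omega}$, is internally $2$-branching: pick any node $\sigma_0 \in U(m_0)$ to form $S(0)$; having built $S(j)$ with $j < n-1$, for each $\sigma \in S(j)$ and each of the two direct extensions $\tau$ of $\sigma$ in $U$, choose an arbitrary extension of $\tau$ in $U(m_{j+1})$ and add it to $S(j+1)$. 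By construction, the branching of $S$ matches that of $U$ at each non-terminal level, so $S \in \Subtree{n}{U}$ and $\vec{x}_S = \{\ell_0,\dots,\ell_{n-1}\}$. Hence $f(\ell_0,\dots,\ell_{n-1}) = g(S) = i$, so $f$ is constant with value $i$ on $[H]^n$.

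There is really no combinatorial obstacle here; the only point that requires a moment's thought is this last constructive step, ensuring that any choice of $n$ levels of $U$ is realized by some strong subtree of $U$ of height $n$. This is immediate from the fact that strong subtrees of $2^{<\omega}$ remain binary-branching and from the freedom, afforded by \Cref{def:strong-subtree}, to pick a single representative extension inside each branch when passing from one chosen level to the next.
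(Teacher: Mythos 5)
Your proof is correct, and the reduction is the same in spirit as the paper's: code a coloring of $[\NN]^n$ as a coloring of strong subtrees of height $n$ via lengths, apply $\MT n{}$, and read off the homogeneous set from the levels of the resulting infinite strong subtree. The one genuine difference is the choice of ambient tree. You instantiate with $T = 2^{<\omega}$, following the sketch in the introduction; the paper's proof instead takes the unary tree $T = 1^{<\omega} = \{\emptystr, 0, 00, \dots\}$. That choice is not cosmetic: with a $1$-branching tree, a strong subtree of height $n$ is just a choice of one node from each of $n$ levels, and there is exactly one node per level, so $\Subtree{n}{S}$ is in canonical bijection with $[H]^n$ and the verification collapses to nothing. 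Your route with $2^{<\omega}$ forces you to check that every $n$-element set of levels of $U$ is realized by some $S \in \Subtree{n}{U}$, which is exactly the level-by-level construction you supply; this is correct (the key point, which you implicitly use, is that the two direct extensions of a node $\sigma$ in $U$ already disagree at position $|\sigma|$, so any further extensions of them meet exactly at $\sigma$, making your $S$ meet-closed with the right branching). So both proofs work; the paper's instantiation buys a one-line verification, while yours matches the introduction's picture and costs an extra, but entirely routine, construction.
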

\begin{proof}
Let $f: [\NN]^n \to k$ be an instance of $\RT{n}{}$.
Let $T = 1^{<\omega} = \{\epsilon, 0, 00, \dots \}$ be the unary finitely branching tree with no leaves. Define $g: \Subtree{n}{T} \to k$ by $g(\sigma_0, \dots, \sigma_{n-1}) = f(|\sigma_0|, \dots, |\sigma_{d-1}|)$. 
Now if $S$ is a strong subtree of $T$ such that $\Subtree{n}{S}$
is monochromatic for~$g$ then $H = \{|\sigma|: \sigma \in S \}$ is homogeneous for~$f$.
\end{proof}

\begin{corollary}
  For every $n\geq 3$, $\PMT n{}$ and $\MT n{}$ are equivalent to $\ACA_0$ over
  $\RCA_0$. Moreover the product version of Milliken's tree theorem and Milliken's tree theorem are equivalent to $\ACA'_0$.
\end{corollary}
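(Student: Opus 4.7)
The plan is to combine the upper bound from \Cref{thm:milliken-aca} with the lower bound from \Cref{thm:milliken-rt}, and then chain with the known reverse-mathematical classification of Ramsey's theorem.

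For each fixed $n \geq 3$, I would first note that the upward direction is immediate: \Cref{thm:milliken-aca} gives $\PMT n{}$ in $\ACA_0$, and since any instance of $\MT n{}$ can be regarded as an instance of $\PMT n{}$ with $d=1$ (taking the single tree $T_0 = T$), also $\MT n{}$ is provable in $\ACA_0$. For the reversal, \Cref{thm:milliken-rt} states $\MT n{} \to \RT n{}$ over $\RCA_0$, and by Jockusch's classical result \cite{Jockusch1972Ramseys}, $\RT n{}$ implies $\ACA_0$ over $\RCA_0$ whenever $n \geq 3$. Together with the trivial implication $\PMT n{} \to \MT n{}$, this closes the equivalence $\PMT n{} \equiv \MT n{} \equiv \ACA_0$ over $\RCA_0$.

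For the non-parameterized (full) versions, I would apply the same two ingredients but uniformly in $n$. The upper direction is again directly from \Cref{thm:milliken-aca}, which asserts that the product version of Milliken's tree theorem is provable in $\ACA'_0$, whence so is the non-product version. For the reversal, \Cref{thm:milliken-rt} should be read as a uniform reduction: from any $f \colon [\NN]^n \to k$ the coloring $g$ on the unary tree $1^{<\omega}$ is built effectively and uniformly in $n$ and $f$. Hence in $\RCA_0$, full Milliken's tree theorem yields $\RT n{}$ for every $n$, i.e., the scheme $\RT$, which is equivalent to $\ACA'_0$ over $\RCA_0$ (McAloon; cf.\ Simpson~\cite{Simpson2009Subsystems}). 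Combined with the trivial product-to-non-product implication this gives the second equivalence.

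The only real subtlety to keep an eye on is the uniformity of \Cref{thm:milliken-rt}: the reversal for the full versions would fail as stated if the implication $\MT n{} \to \RT n{}$ were not uniform in $n$, since we need to derive the $\RT$ scheme (not just its instances for each standard $n$) inside $\RCA_0 + \MT{}{}$. Inspection of that proof shows the reduction is uniform — the tree $1^{<\omega}$ is fixed and the coloring $g$ is computable from $(n,f)$ — so the derivation of $\RT$ goes through in $\RCA_0$, and no further combinatorial work is needed beyond invoking the two earlier results and the Jockusch/McAloon classification of Ramsey's theorem.
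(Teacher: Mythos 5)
Your proof is correct and follows essentially the same route as the paper: upper bounds from \Cref{thm:milliken-aca}, reversal via \Cref{thm:milliken-rt} composed with Jockusch's result that $\RT{n}{}$ implies $\ACA_0$ for $n\geq 3$, and for the full versions the uniform derivation of $(\forall n)\RT{n}{}$, which is equivalent to $\ACA'_0$. Your remark on the uniformity of the reduction in \Cref{thm:milliken-rt} is exactly the point that makes the second equivalence go through, and it holds for the reason you state.
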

\begin{proof}
For every $n \geq 3$, by \Cref{thm:milliken-aca}, $\ACA_0$ implies $\PMT n{}$,
which generalizes $\MT n{}$. By \Cref{thm:milliken-rt}, $\MT n{}$ imples $\RT{n}{}$,
and by formalization of a result of Jockusch~\cite[Theorem 5.7]{Jockusch1972Ramseys} (as formalized e.g. in~\cite{Simpson2009Subsystems}, Lemma III.7.5), $\RT{n}{}$ implies $\ACA_0$.
Moreover, by \Cref{thm:milliken-aca}, $\ACA'_0$ implies $(\forall n)\PMT n{}$
which generalizes $(\forall n)\MT n{}$. By \Cref{thm:milliken-rt}, $(\forall n)\MT n{}$ implies $(\forall n)\RT{n}{}$,
which is itself known to imply $\ACA'_0$ (for a proof, see~Hirschfeldt~\cite{Hirschfeldt2015Slicing}, Theorem 6.27).
\end{proof}

\section{Cone avoidance of $\PMT{2}{}$}\label{subsect:cone-avoidance-pmt2}

This section is devoted to the proof of cone avoidance of the product version of Milliken's tree theorem for height 2. As in the proof of cone avoidance for Ramsey's theorem for pairs (see Cholak, Jockusch and Slaman~\cite{Cholak2001strength}, Sections 3 and 4) the proof of \Cref{thm:cone-avoidance-MTT2} will be decomposed into two steps, using the notion of stability.

\begin{definition}\index{stable!coloring}\index{coloring!stable}
Fix $n \geq 1$ and a collection of trees $T_0, \dots, T_{d-1}$ with no leaves. A coloring $f: \Subtree{n+1}{T_0, \dots, T_{d-1}} \to k$ is \emph{stable} if
for every $( F_0, \dots, F_{d-1} ) \in \Subtree{n}{T_0, \dots, T_{d-1}}$, there is a threshold $t \in \NN$ and a color $i < k$ such that for every level $\ell \geq t$ and all $E_0 \subseteq T_0(\ell), \dots, E_{d-1} \subseteq T_{d-1}(\ell)$
for which $( F_0 \cup E_0, \dots, F_{d-1} \cup E_{d-1} ) \in \Subtree{n+1}{T_0, \dots, T_{d-1}}$, $f( F_0 \cup E_0, \dots, F_{d-1} \cup E_{d-1} ) = i$.
\end{definition}

We refer to the $i < k$ above as the \emph{limit color} of the tuple $( F_0, \dots, F_{d-1} )$. Any stable coloring $f: \Subtree{n+1}{T_0, \dots, T_{d-1}} \to k$ induce{s}
a coloring 
$$
g: \Subtree{n}{T_0, \dots, T_{d-1}} \to k
$$
which to $( F_0, \dots, F_{d-1} ) \in \Subtree{n}{T_0, \dots, T_{d-1}}$ associates its limit color $i < k$.
We shall call $g$ the \emph{limit coloring} of $f$\index{coloring!limit}. Note that $g$ is $\Delta^0_2$ in $f$ and the sequence $T_0, \dots, T_{d-1}$. The notion of stability is therefore as bridge between computable instances of $\PMT{n+1}{}$ and arbitrary instances of $\PMT{n}{}$. This gives rise to a two step proof of cone avoidance of $\PMT{2}{}$.

The first step consists of proving that for every instance of the product version of Milliken's tree theorem for height 2 there exist cone avoiding strong subtrees on which the coloring is stable. We will actually prove a more general theorem for products of trees, and subtrees of arbitrary height.

The second step consists of applying \emph{strong} cone avoidance of the product version of Milliken's tree theorem for height 1, which is just a particular case of the Halpern-La\"{u}chli theorem, and then computably thinning out the result to obtain a solution to the original instance of the product version of Milliken's tree theorem of height 2.


We begin with the first step.

\begin{theorem}\label{thm:cmtt-admits-strong-cone-avoidance}
Fix sets $C,Z \subseteq \NN$ with $C \nTred Z$, an $n \geq 1$, a 
$Z$-computable collection of $Z$-computably bounded trees $T_0, \dots, T_{d-1}$ with no leaves, and a coloring $f: \Subtree{n+1}{T_0, \dots, T_{d-1}} \to k$. 
There exists $(S_0, \dots,\allowbreak S_{d-1}) \in\Subtree{\om}{T_0, \dots, T_{d-1}}$ such that $f$ is stable on $\Subtree{n+1}{S_0, \dots, S_{d-1}}$ and such that $C \nTred S_0 \oplus \dots \oplus S_{d-1} \oplus Z$.
\end{theorem}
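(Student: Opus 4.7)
The plan is to use the product tree forcing from Section \ref{subsect:product-tree-forcing} to build the strong subtrees $(S_0, \ldots, S_{d-1})$ as a sufficiently generic filter of cone-avoiding conditions. Fix cone-avoiding to mean, as usual, that a product tree condition $(F_0, \ldots, F_{d-1}, X_0, \ldots, X_{d-1})$ satisfies $C \nTred X_0 \oplus \cdots \oplus X_{d-1} \oplus Z$. There are three families of requirements to meet: $(R_1)$ for each Turing functional $\Gamma$, force $\Gamma^{G_0 \oplus \cdots \oplus G_{d-1} \oplus Z} \neq C$; $(R_2)$ force each $G_j$ to be infinite (automatically handled by $(R_1)$ and \Cref{lem:product-tree-genericity-implies-infinity}); and $(R_3)$ for each $\vec{E} = (E_0,\dots,E_{d-1}) \in \Subtree{n}{F_0,\dots,F_{d-1}}$ appearing in some stem of the filter, fix a color $i_{\vec{E}} < k$ such that every valid extension of $\vec{E}$ to an element of $\Subtree{n+1}{T_0,\dots,T_{d-1}}$ through the reservoir receives color $i_{\vec{E}}$. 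Requirement $(R_1)$ is established essentially as in the proofs of \Cref{thm:mtt1-strong-cone-avoidance} and \Cref{thm:hl-strong-cone-avoidance}: given a condition, either some extension forces a disagreement with $C$, or a $\Pi^{0,X_0\oplus\cdots\oplus X_{d-1}\oplus Z}_1$ class of witnesses is non-empty and the cone avoidance basis theorem combined with (effective) Halpern--La\"{u}chli yields a cone-avoiding extension diagonalizing the functional.

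The heart of the argument is the density of $(R_3)$, for which I would use the strong cone avoidance of the Halpern-La\"{u}chli theorem (\Cref{thm:hl-strong-cone-avoidance}). Fix a cone-avoiding condition $c = (F_0, \ldots, F_{d-1}, X_0, \ldots, X_{d-1})$ and an $\vec{E} \in \Subtree{n}{F_0,\dots,F_{d-1}}$. By clause (3) of \Cref{def:product-tree-condition}, for every $j < d$ and every direct extension $\tau$ in $T_j$ of a leaf of $E_j$, the forest $X_j \uh \tau$ is non-empty and inherits a strong-subforest structure from $X_j$. Without loss of generality (by thinning $X_j$ and replacing by an extension) we may assume that for every such $\tau$, exactly one root of $X_j$ extends $\tau$, so that each $X_j \uh \tau$ is a tree. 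Let $\{U_{j,\tau} : j<d,\ \tau \in \mathcal{D}_j\}$ enumerate these trees, where $\mathcal{D}_j$ is the set of direct extensions in $T_j$ of leaves of $E_j$. Define a $k$-coloring on $\bigcup_{m}\prod_{j,\tau} U_{j,\tau}(m)$ by sending a level tuple $(\rho_{j,\tau})$ to $f(\vec{E} \cup \vec{H})$, where $\vec{H} = (H_0,\dots,H_{d-1})$ with $H_j = \{\rho_{j,\tau} : \tau \in \mathcal{D}_j\}$; this is well defined since all $\rho_{j,\tau}$ lie at a common level. Applying \Cref{thm:hl-strong-cone-avoidance} to this coloring (with parameter set $X_0 \oplus \cdots \oplus X_{d-1} \oplus Z$ and cone $C$) yields strong subtrees $U'_{j,\tau}$ with common level function on which the coloring is constant of some value $i_{\vec{E}}$, and for which $C \nTred \bigoplus_{j,\tau} U'_{j,\tau} \oplus Z$. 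Reassembling $X'_j := \bigcup_{\tau \in \mathcal{D}_j} U'_{j,\tau}$ (augmented by any further components of $X_j$ not above $\mathcal{D}_j$, to preserve the density condition for leaves of $F_j$ not in $\vec{E}$) gives a cone-avoiding extension $(F_0,\dots,F_{d-1},X'_0,\dots,X'_{d-1}) \leq c$ in which the color of $f(\vec{E} \cup \vec{H})$ is fixed for every valid $\vec{H}$ drawn from the new reservoirs.

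One then iterates: extensions that grow the stems are also dense (extend a leaf of each $F_j$ by one level using roots of $X_j$, as in \Cref{thm:milliken-prehomogeneous-one-step}), and after meeting a countable list of $(R_1)$, $(R_3)$, and stem-growth requirements, the resulting generic $(S_0, \ldots, S_{d-1}) \in \Subtree{\omega}{T_0,\dots,T_{d-1}}$ satisfies $C \nTred S_0 \oplus \cdots \oplus S_{d-1} \oplus Z$ by $(R_1)$, is a tuple of infinite strong subtrees by stem growth, and is stable for $f$: for any $\vec{F} \in \Subtree{n}{S_0,\dots,S_{d-1}}$, $\vec{F}$ appears in the stem of some condition in the filter, at which stage $(R_3)$ has fixed a color $i_{\vec{F}}$; since every $(n+1)$-tuple $\vec{F} \cup \vec{H}$ in $\Subtree{n+1}{S_0,\dots,S_{d-1}}$ with $\vec{H}$ above the level where $\vec{F}$ was resolved lies within the reservoirs at that stage, $f(\vec{F} \cup \vec{H}) = i_{\vec{F}}$.

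The main obstacle will be the matching between the level-structure of strong $(n+1)$-subtree extensions of $\vec{E}$ and the level tuples to which Halpern--La\"{u}chli applies. The structure of a valid $\vec{H}$ is not a single tuple but a matrix indexed by $(j,\tau)$ for $\tau \in \mathcal{D}_j$, whose entries must share a common level across all $j$ and $\tau$. The thinning step making each relevant $\tau$ have a unique root-extension in $X_j$ converts this into a genuine product indexed by the pairs $(j,\tau)$, after which strong cone avoidance of Halpern--La\"{u}chli applies directly. A secondary technical point is ensuring that when we pass to $X'_j$, the density condition of \Cref{def:product-tree-condition}(3) is preserved for all leaves of $F_j$, not merely those involved in $\vec{E}$; this is handled by leaving alone the parts of $X_j$ not lying above $\mathcal{D}_j$.
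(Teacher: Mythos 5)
Your high-level architecture (product tree forcing with cone-avoiding conditions, a standard diagonalization requirement $(R_1)$, and a stabilization requirement $(R_3)$ met via strong cone avoidance of Halpern--La\"uchli) matches the spirit of the paper's proof, and you correctly flag the matching between $(n+1)$-subtree extensions of $\vec{E}$ and Halpern--La\"uchli level tuples as the crux. But your resolution of that crux---thinning the reservoir so that each direct extension $\tau$ of a leaf of $E_j$ lies below a unique root---does not go through, and the gap is exactly where the paper's main technical tool (\Cref{thm:main-widget-theorem}) enters. The thinning is only legitimate when the leaves of $E_j$ sit at the very top level of the current stem $F_j$: if a leaf of $E_j$ lies at an intermediate level, then above its direct extension $\tau$ there is further branching \emph{inside the stem}, every branch of which survives into $S_j$ (since $S_j$ must be a strong subtree), so keeping one root above $\tau$ both violates clause (3) of \Cref{def:product-tree-condition} for the leaves of $F_j$ above $\tau$ and fails to describe the eventual structure of $S_j \uh \tau$. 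For such an $\vec{E}$, a valid top level $\vec{H}$ at a high level of $S$ selects one node from each \emph{forest} $X_j \uh \tau$, and these forests have many roots (one above each level-$(k{+}1)$ extension of each leaf of $F_j$ above $\tau$). Strong cone avoidance of the Halpern--La\"uchli theorem, even in its forest form, only yields that $f(\vec{E} \cup \vec{H})$ depends on \emph{which roots} the entries of $\vec{H}$ extend; it cannot yield a single color across root choices (consider the coloring that records the roots themselves). Since $S_j$ genuinely passes through all of these roots, stability at $\vec{E}$ requires constancy across root choices, which your density argument does not deliver.

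You cannot dodge this by scheduling $(R_3)$ for each $\vec{E}$ at the stage when its top level first coincides with the top of the stem: the diagonalization step $(R_1)$ must add a finite $\vec{H}$ of possibly several levels to the stem in one shot (the levels are dictated by the convergence of $\Gamma$), and this instantly creates tuples $\vec{E}$ with leaves at intermediate new levels whose stabilization is then of the problematic kind. Collapsing the root-dependence is a finitary Milliken-type homogenization problem over $\SubtreeLeaves{n+1}{\cdot}$, and the paper resolves it by requiring conditions to be \emph{stable} and by proving \Cref{thm:main-widget-theorem}: the stem extension in the diagonalization is chosen inside the finite trees $V_j$ produced by that theorem, so that the limit colors (obtained from iterated strong cone avoidance of Halpern--La\"uchli, as in your $(R_3)$) depend only on the first $n$ levels and not on the choice of leaves/roots. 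The computable bound $\fwidg$ from that theorem is also what makes the c.e.\ set $W$ and the $\Pi^0_1$ class in the diagonalization definable, so it cannot be excised from the argument. Your proposal is missing this ingredient entirely.
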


The proof of \Cref{thm:cmtt-admits-strong-cone-avoidance} will employ a refinement of the forcing with product tree conditions. We will require some definitions and preliminary lemmas.


\begin{definition}\index{product tree condition!cone avoiding}\index{product tree condition!stable} \index{stable!product tree condition}
	Fix sets $C,Z \subseteq \NN$ with $C \nTred Z$, an $n \geq 1$, a 
	$Z$-computable collection of $Z$-computably bounded trees $T_0, \dots, T_{d-1}$ with no leaves, and a coloring $f: \Subtree{n+1}{T_0, \dots, T_{d-1}} \to k$. Let
	\[
		c = (F_0, \dots, F_{d-1}, X_0, \dots, X_{d-1})
	\]
	be a product tree condition (with respect to the $T_i$).
	\begin{enumerate}
		\item $c$ is \emph{cone avoiding} if $C \nTred X_0 \oplus \dots \oplus X_{d-1} \oplus Z$.
		\item $c$ is \emph{stable} for $f$ if for every tuple $( E_0, \dots, E_{d-1} ) \in \Subtree{n}{F_0, \dots, F_{d-1}}$, there is a color $i < k$ such that for every level $\ell \in \NN$ and every $H_0 \subseteq X_0(\ell), \dots, H_{d-1} \subseteq X_{d-1}(\ell)$ for which $( E_0 \cup H_0, \dots, E_{d-1} \cup H_{d-1} ) \in \Subtree{n+1}{T_0, \dots, T_{d-1}}$, $f( E_0 \cup H_0, \dots, E_{d-1} \cup H_{d-1} ) =~i$.
	\end{enumerate}
\end{definition}



Making progress in satisfying the cone avoidance requirements will demand the use of a computable function dominating the levels of a tuple of strong subtrees with certain nice combinatorial properties. 


For now, we will take for granted the following technical result, which is a finite version of Milliken's tree theorem where all subtrees are assumed to keep the leaves and the level function is bounded. For a given tree $T$, recall the notation $\SubtreeLeaves{n}{T}$ from \Cref{def:subtreeleaves} which denotes the collection of strong subtrees of $T$ of height $n$ whose leaves are among those of $T$.


\begin{restatable}{theorem}{widget}
  \label{thm:main-widget-theorem}
	Fix a level $\ell \in \NN$, a height $n \geq 1$, a number of colors $k \in \NN$, an arity $d \geq 1$, and a function $b:\om\to\om$. There exists a function $N \mapsto \fwidg(N,\ell,n+1,k,d,b)$, uniformly $b$-computable in $\ell$, $n$, $k$, and $d$, as follows. If $U_0,\dots, U_{d-1}$ is a sequence of finite $b$-bounded trees of height $h = \fwidg(N,\ell,n+1,k,d,b)$ for some fixed $N \in \NN$, and
  \[\chi:\SubtreeLeaves{n+1}{U_0,\dots,U_{d-1}}\to k\]
  is any coloring where $\chi(F_0,\dots, F_{d-1})$ depends only on $(F_0\uh n,\dots, F_{d-1}\uh n)$ whenever $F_{i}\uh n\subseteq U_i\uh \ell$ for every $i$,  then there exists $(V_0, \dots, V_{d-1}) \in\SubtreeLeaves{\ell+N+1}{U_0,\dots, U_{d-1}}$ such that:
  \begin{enumerate}
  \item\label{item:stems-widget} $V_i\uh \ell = U_i\uh \ell$ for each $i < d$;
  \item\label{item:hbound-widget} for any $i<d$, the level function of $V_i$ as a subset of $U_i$ is bounded by the function defined by $x\mapsto\fwidg(x,\ell, n+1,k,d, b)$ if $x>\ell$, and $x\mapsto x$ if $x\leq\ell$;
  \item\label{item:monochr-widget}
    the color of $(F_0,\dots, F_{d-1})\in \SubtreeLeaves{n+1}{V_0,\dots,V_{d-1}}$ depends only on $(F_0\uh n,\dots, F_{d-1}\uh n)\in \Subtree{n}{V_0,\dots,V_{d-1}}$.
  \end{enumerate}
\end{restatable}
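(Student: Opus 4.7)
The plan is to derive this finitary theorem by compactness from a strengthening of the infinitary prehomogenization result \Cref{thm:milliken-prehomogeneous-one-step}. First I would establish the following infinitary version: given infinite $b$-bounded trees $T_0, \dots, T_{d-1}$ with no leaves and a coloring $\chi: \Subtree{n+1}{T_0,\dots,T_{d-1}} \to k$ whose value on $(F_0, \dots, F_{d-1})$ depends only on $(F_0 \uh n, \dots, F_{d-1} \uh n)$ whenever each $F_i \uh n \subseteq T_i \uh \ell$, there exist strong subtrees $(V_0, \dots, V_{d-1}) \in \Subtree{\omega}{T_0, \dots, T_{d-1}}$ with $V_i \uh \ell = T_i \uh \ell$ on which $\chi$ is fully prehomogeneous. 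This is obtained by the same iterative Halpern-L\"{a}uchli argument used in the proof of prehomogenization above, except that the construction is initialized with the stems $V_i \uh \ell = T_i \uh \ell$ rather than empty stems, and the one-step extensions are performed above the leaves of $T_i \uh \ell$ in the forest sense of \Cref{lem:hl-forest-computably-true}.

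For the finitary statement, I would apply a standard compactness argument. Suppose for contradiction that no height $h$ works for the given parameters. For each $h$, pick finite $b$-bounded counterexample trees $U_0^h, \dots, U_{d-1}^h$ of height $h$ together with colorings $\chi^h$ admitting no valid extension. Since $b$-bounded trees of bounded height form a finite set and $k$ is finite, K\"{o}nig's lemma applied to the tree of partial counterexamples (indexed by restriction to initial levels) extracts a limit point: infinite $b$-bounded trees $T_0, \dots, T_{d-1}$ and a coloring $\chi$ satisfying the depends-only-on-top hypothesis but admitting no prehomogeneous strong subtree sequence preserving $T_i \uh \ell$, contradicting the infinitary statement. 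Uniform $b$-computability of $\fwidg$ in the other parameters follows because the least valid $h$ is obtained by bounded search over a decidable predicate on finite data.

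To get the precise level-function bound simultaneously with existence, I would prove and iterate a one-step finitary prehomogenization lemma, obtained by compactness directly from the Halpern-L\"{a}uchli theorem in the same way the above argument handles the full extension. Each one-step application transforms a prehomogeneous configuration at height $\ell+m$ into one at height $\ell+m+1$ at the cost of a computable increase in required ambient height. Defining $\fwidg$ recursively so that $\fwidg(m+1, \ell, n+1, k, d, b)$ is the ambient height needed to perform the one-step extension starting from height $\fwidg(m, \ell, n+1, k, d, b)$, the level function of the resulting $V_i$ at position $x = \ell+m+1$ is automatically bounded by $\fwidg(x, \ell, n+1, k, d, b)$. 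The main obstacle will be making this recursion self-consistent: since the bound on the level function at position $x$ is required to be $\fwidg(x, \ell, n+1, k, d, b)$ and also the total height for $N$ steps is required to be $\fwidg(N, \ell, n+1, k, d, b)$, I must set up the indexing of the recursion so that both roles of $\fwidg$ are satisfied by a single well-defined function, which requires carefully tracking the shift by $\ell$ between ``absolute level'' in $U_i$ and ``relative level above the fixed stem'' at each recursive step.
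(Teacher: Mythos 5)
Your third paragraph is, in essence, the paper's actual proof: the theorem is established by a direct induction on $N$, where the base case $N=0$ is immediate from the hypothesis on $\chi$, and the inductive step applies a finitary Halpern--L\"{a}uchli lemma for leaves (obtained by compactness from the infinitary theorem, with an explicit bound $\fhl$) to the forest of subtrees of the $U_i$ sitting above level $H_N$, coloring tuples of leaves by the function that records every value $\chi$ takes on a body below level $H_N$ together with those leaves. This collapses $\chi$ to a coloring $\chi_N$ of the trees truncated at height $H_N+1$ that still satisfies the hypothesis, the induction hypothesis is applied to $\chi_N$, and the resulting trees are glued to the height-$2$ trees produced by the Halpern--L\"{a}uchli step. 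The recursion $\fwidg(N+1,\dots)=\fwidg(N,\dots)+\fhl(2,K,D,B)$ then plays exactly the dual role you describe (total height and level-function bound), so your identification of that bookkeeping issue is on target. The difference is that this iteration must carry the entire proof, not merely supply the quantitative bound on top of a separately established existence claim.

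The genuine gap is in your first two paragraphs. The compactness argument does not go through, because the leaf-preservation built into the statement has no infinitary analogue and destroys the K\"{o}nig's-lemma structure you need. Concretely: the coloring $\chi^h$ in a putative height-$h$ counterexample is defined on $\SubtreeLeaves{n+1}{U_0^h,\dots,U_{d-1}^h}$, whose elements all have their leaves at level $h-1$; restricting the trees to their first $m$ levels leaves \emph{no} induced coloring (none of the colored objects survive truncation, and the color genuinely depends on the leaf choice once the body lies above level $\ell$), so there is no tree of partial counterexamples for K\"{o}nig's lemma to act on. Moreover, even granting your infinitary prehomogenization theorem, an infinite prehomogeneous tuple $(V_i)\in\Subtree{\omega}{T_0,\dots,T_{d-1}}$ would not yield the required finite solution: the solution must lie in $\SubtreeLeaves{\ell+N+1}{U_0,\dots,U_{d-1}}$, i.e.\ its top level must consist of leaves of $U_i$, and prehomogeneity of an infinite subtree only constrains height-$(n+1)$ subtrees whose levels are levels of $V_i$ --- it says nothing about subtrees whose last level is forced to the leaf level of $U_i$, which need not be a level of $V_i$ at all. (This is also why the paper's recursion works top-down, peeling off the leaf level first, rather than bottom-up from the stem.) So the infinitary detour should be discarded and the finitary one-step iteration promoted to the whole argument; as written, the proposal's existence claim rests on a step that fails.
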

 
To help understand the statement of \Cref{thm:main-widget-theorem},
suppose $S_0, \dots, S_{d-1}$ are infinite, computable and computably bounded trees with no leaves. Also fix a coloring $g: \Subtree{n+1}{S_0, \dots, S_{d-1}} \to k$.
Consider a product tree condition $(E_0, \dots, E_{d-1}, X_0, \dots, X_{d-1})$ for these $S_i$ which is stable for $g$. Say the $E_i$ are of height $\ell$. One would like to extend the stems with $N$ new levels in one step, so that the resulting stems are of height $\ell+N$, while keeping the resulting product tree condition stable for $g$. 
\Cref{thm:main-widget-theorem} provides a sufficient bound $h = \fwidg(N,\ell,n+1,k,d,b)$ depending on the number $N$ of new levels we would like to add, on the height $\ell$ of the stems, the parameters $n+1$ and $k$ of the coloring $g$,
on the number $d$ of trees in the product tree condition, and on the computable bound $b$ over the trees $E_0 \cup X_0, \dots, E_{d-1} \cup X_{d-1}$,
 	so that one can always find such an extension of the stems where the new elements are taken among the first $h$ first levels of  $E_0 \cup X_0, \dots, E_{d-1} \cup X_{d-1}$.
 	
 In the statement of \Cref{thm:main-widget-theorem}, the finite trees $U_0, \dots, U_{d-1}$
 correspond to the trees $E_0 \cup X_0, \dots, E_{d-1} \cup X_{d-1}$ up to level $h$, respectively. 
  Let $Y_0, \dots, Y_{d-1}$ be the forests obtained from the trees $E_0 \cup X_0, \dots, E_{d-1} \cup X_{d-1}$ by removing their first $h-1$ many levels.
 For each $j < d$, the tree $U_j$ therefore has three parts. First, we have the first $\ell$ levels, which correspond to to the stem $E_j$. Second, we have the levels up to the one before the leaves, which will serve to extend the stem $E_j$. Very few of these levels will be kept, but $h$ is chosen large enough so that we can always extend with $N$ new levels. Last, the leaves of $U_j$ correspond to the roots of the forest $Y_j$.
 
Fixing strong subtrees $(F_0, \dots, F_{d-1}) \in \SubtreeLeaves{n+1}{U_0, \dots, U_{d-1}}$ of height $n+1$ should actually be understood as fixing strong subtrees $(F_0 \uh n, \dots, F_{d-1} \uh n)$ of height $n$ from the trees $U_0, \dots, U_{d-1}$ trimmed from their leaves,
and then picking a set of roots from $Y_0, \dots, Y_{d-1}$ (or equivalently picking a set of leaves from $U_0, \dots, U_{d-1}$). This induces a product coloring of the nodes in $Y_0, \dots, Y_{d-1}$ pointwise extending the product of the roots chosen, by considering which color the function $g$ assigns to the strong subtrees $(F_0 \uh n, \dots, F_{d-1} \uh n)$ augmented by these nodes. Multiple applications of the Halpern-Lauchli theorem yield subforests $Z_0, \dots, Z_{d-1}$ of $Y_0, \dots, Y_{d-1}$ with the same set of roots such that the induced coloring has a limit color on products of nodes from the $Z_i$.
This limit color therefore depends only on the choice of element from $\SubtreeLeaves{n+1}{U_0, \dots, U_{d-1}}$. This is how we define the limit function $\chi: \SubtreeLeaves{n+1}{U_0, \dots, U_{d-1}}$.


%
%
%
%

The proof of \Cref{thm:main-widget-theorem} requires some rather heavy combinatorial development, and so we postpone it to the next section. Instead, we first show how to use the theorem to obtain \Cref{thm:cmtt-admits-strong-cone-avoidance}.

\begin{lemma}\label{thm:cmtt-admits-strong-cone-avoidance-req}
Fix sets $C,Z \subseteq \NN$ with $C \nTred Z$, an $n \geq 1$, a 
	$Z$-computable collection of $Z$-computably bounded trees with no leaves $T_0, \dots, T_{d-1}$, and a coloring $f: \Subtree{n+1}{T_0, \dots, T_{d-1}} \to k$. Let $\Pb$ be the partial order of all stable cone avoiding product tree conditions (with respect to the givens). For every $\Pb$-condition $c$ and every Turing functional $\Gamma$, there is a $\Pb$-condition $c'$ extending $c$ such that $c' \Vdash \Gamma^{G_0 \oplus \dots \oplus G_{d-1} \oplus Z} \neq C$.
\end{lemma}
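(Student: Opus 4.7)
The plan is to run a Seetapun-style trichotomy argument, using compactness to define a relevant c.e.\ set and the tools developed in \Cref{sect:hl-theorem} (the computable truth of the Halpern-La\"{u}chli theorem and its strong cone avoidance) to simultaneously preserve cone avoidance and stability of the product tree condition under extensions.

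Fix $c = (F_0, \dots, F_{d-1}, X_0, \dots, X_{d-1})$ a cone-avoiding stable $\Pb$-condition and a Turing functional $\Gamma$. Let $W \subseteq \NN \times \{0,1\}$ be the set of pairs $(x, v)$ such that for every $2$-partition $B_0 \sqcup B_1 = \bigcup_\ell X_0(\ell) \times \dots \times X_{d-1}(\ell)$ there is an $i < 2$ and finite sets $H_0 \subseteq X_0, \dots, H_{d-1} \subseteq X_{d-1}$ with $(F_0 \cup H_0, \dots, F_{d-1} \cup H_{d-1})$ a valid finite product of strong subtrees with common level function, $\bigcup_\ell H_0(\ell) \times \dots \times H_{d-1}(\ell) \subseteq B_i$, and $\Gamma^{(F_0 \cup H_0) \oplus \dots \oplus (F_{d-1} \cup H_{d-1}) \oplus Z}(x) \downarrow = v$. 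By compactness, $W$ is $(X_0 \oplus \dots \oplus X_{d-1} \oplus Z)$-c.e.

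If $(x,y) \in W \iff y = C(x)$ for all $x$, then $C \Tred X_0 \oplus \dots \oplus X_{d-1} \oplus Z$, contradicting cone avoidance of $c$. Thus either (1) some $(x, 1-C(x)) \in W$ or (2) some $(x, C(x)) \notin W$. In (1), applying the defining property of $W$ to the trivial partition $B_0 = \bigcup_\ell X_0(\ell) \times \dots \times X_{d-1}(\ell)$, $B_1 = \emptyset$ produces finite $H_0, \dots, H_{d-1}$ such that $(F_0 \cup H_0, \dots, F_{d-1} \cup H_{d-1}, Y_0, \dots, Y_{d-1})$ forces $\Gamma^{G_0 \oplus \dots \oplus G_{d-1} \oplus Z}(x) \downarrow \neq C(x)$, where the $Y_j$ are the $X_j$ trimmed above the common level of the leaves of $F_j \cup H_j$. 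In (2), the $\Pi_1^{0,X_0 \oplus \dots \oplus X_{d-1} \oplus Z}$ class of $2$-partitions avoiding the defining property of $W$ at $(x, C(x))$ is non-empty; apply the cone avoidance basis theorem to extract such a partition $B_0 \sqcup B_1$ with $C \nTred B_0 \oplus B_1 \oplus X_0 \oplus \dots \oplus X_{d-1} \oplus Z$, then apply strong cone avoidance of the Halpern-La\"{u}chli theorem (\Cref{thm:hl-strong-cone-avoidance}) to this partition to obtain infinite strong subforests $Y_0, \dots, Y_{d-1}$ of $X_0, \dots, X_{d-1}$ with common level function, level products monochromatic for one class of the partition, and $C \nTred Y_0 \oplus \dots \oplus Y_{d-1} \oplus Z$. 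By choice of the partition, no finite extension of the stems inside $Y_0 \times \dots \times Y_{d-1}$ makes $\Gamma^{G \oplus Z}(x)$ converge to $C(x)$, so $(F_0, \dots, F_{d-1}, Y_0, \dots, Y_{d-1}) \Vdash \Gamma^{G_0 \oplus \dots \oplus G_{d-1} \oplus Z} \neq C$.

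In both cases, to ensure the resulting condition is stable (and not just cone-avoiding) we further refine the reservoirs using the forest version of the computably true Halpern-La\"{u}chli theorem (\Cref{lem:hl-forest-computably-true}). Enumerate the finitely many $n$-substructures $(E_0, \dots, E_{d-1}) \in \Subtree{n}{F_0', \dots, F_{d-1}'}$ of the new stems (in case (1), $F_j' = F_j \cup H_j$; in case (2), $F_j' = F_j$), and process them one at a time: for each, apply \Cref{lem:hl-forest-computably-true} to the coloring of the appropriate level product of the current reservoirs that records the $f$-value of $(E_0 \cup L_0, \dots, E_{d-1} \cup L_{d-1})$ when valid, obtaining sub-reservoirs with common level function on which the induced coloring is constant for each choice of roots. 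Iterating over all new $n$-substructures (finitely many) yields final sub-reservoirs, computable from the previous ones, on which the extended condition is stable; cone avoidance is preserved since each refinement is computable in its predecessor. The main obstacle is precisely this double bookkeeping: forcing $\Gamma \neq C$ while maintaining stability requires a clean interleaving of the cone-avoidance step (via the basis theorem and strong cone avoidance of Halpern-La\"{u}chli) with iterated stability-enforcing refinements (via the computably true forest Halpern-La\"{u}chli theorem).
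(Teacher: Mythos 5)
Your overall trichotomy architecture (a c.e.\ set $W$, three cases, compactness, the cone avoidance basis theorem) is the right framework, and your Case~3 is fine. But there are two linked problems, one with the definition of $W$ and a fatal one with your Case~1.

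The critical gap is in Case~1. After you commit to the new stems $F_j \cup H_j$ (produced by the trivial partition), you try to recover stability by iterating the forest Halpern-L\"auchli theorem over the finitely many $(E_0, \dots, E_{d-1}) \in \Subtree{n}{F_0 \cup H_0, \dots, F_{d-1} \cup H_{d-1}}$. As you say yourself, what this produces is constancy of the induced coloring \emph{for each choice of roots} of the reservoirs. But stability of a $\Pb$-condition demands constancy of $f$ over \emph{all} extensions of a fixed $(E_0, \dots, E_{d-1})$, including extensions passing through \emph{different} roots. When a leaf $\mu$ of $E_j$ lies strictly inside the stem $F_j \cup H_j$ (i.e., is not one of its leaves), each direct extension of $\mu$ in $T_j$ has exponentially many reservoir roots above it, and the per-root colors have no reason to agree. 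No finite amount of forest-HL thinning can make them agree: forest HL only ever refines within each tree $Y_j \uh \rho$, never forces agreement across distinct $\rho$. The gap is thus a genuine combinatorial one, not a bookkeeping issue.

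What closes it in the paper is \Cref{thm:main-widget-theorem}, the finitary ``widget'' Milliken theorem for $\SubtreeLeaves{n+1}{\cdot}$ with the explicit bound $\fwidg$. The paper's $W$ is defined not over $2$-partitions but over subforests of $X_0, \dots, X_{d-1}$ whose level function is dominated by $N \mapsto \fwidg(N, \ell, n+1, k, d, b)$; it is precisely this bound that makes the space of candidate subforests compact, so that in Case~1 one gets a \emph{finite} height $N_0$ and hence finite trees $U_j$. Strong cone avoidance of HL then produces a per-leaf limit coloring $\chi: \SubtreeLeaves{n+1}{U_0, \dots, U_{d-1}} \to k$, and \Cref{thm:main-widget-theorem} is invoked to extract finite strong subtrees $V_j \subseteq U_j$ on which $\chi$ is prehomogeneous (depends only on the first $n$ levels). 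Only \emph{then} are the new stems $H_j$ chosen, \emph{inside} $V_j$, using the compactness bound $N_0$. Your order of operations is reversed, and once the stems are committed it is too late: prehomogeneity must be arranged \emph{before} choosing $H_j$, not after. Relatedly, your $2$-partition version of $W$ is also not quite what is needed in Case~2 --- forest HL gives you per-root-tuple monochromaticity, so the level products of an arbitrary $H_j \subseteq Y_j$ spanning several roots need not lie in a single piece $B_{i^*}$, and the $\Pi^0_1$-class argument from $(x,C(x)) \notin W$ does not directly deliver the forcing; the paper sidesteps this by making the $\Pi^0_1$ class live directly on the $\fwidg$-bounded subforests, not on partitions.
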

\begin{proof}
Fix $c = (F_0, \dots, F_{d-1}, X_0, \dots, X_{d-1})$. By \Cref{remark:product-tree-condition-roots}, we can assume that $(F_0 \cup X_0, \dots, F_{d-1} \cup X_{d-1}) \in \Subtree{\omega}{T_0, \dots, T_{d-1}}$. Let $b: \NN \to \NN$ be a $Z$-computable function bounding the trees $F_0 \cup X_0, \dots, F_{d-1} \cup X_{d-1}$,
	and let $\ell$ be the height of $F_0, \dots, F_{d-1}$.

Let $W$ be the set of all pairs $(x, v) \in \NN \times \{0,1\}$ such that for every $d$-tuple of strong subforests $Y_0, \dots, Y_{d-1}$ of $X_0, \dots, X_{d-1}$, respectively, with common level function dominated by $N \mapsto \fwidg(N,\ell,n+1,k,d, b)$, and such that for every $j < d$, every root of $X_j$ is extended by a root of $Y_j$, there is some $d$-tuple $H_0 \subseteq Y_0, \dots, H_{d-1} \subseteq Y_{d-1}$
	with $(F_0 \cup H_0, \dots, F_{d-1} \cup H_{d-1}) \in \Subtree{<\omega}{T_0, \dots, T_{d-1}}$ and
$$
\Gamma^{(F_0 \cup H_0) \oplus \dots \oplus (F_{d-1} \cup H_{d-1}) \oplus Z}(x)\downarrow = v.
$$
By compactness, the set $W$ is $X_0 \oplus \dots \oplus X_{d-1} \oplus Z$-c.e. We have three cases.

\case{1}{$(x, 1-C(x)) \in W$ for some $x \in \NN$.} By compactness, there is some height $N_0 \in \NN$ such that the property holds for every $d$-tuple
of strong subforests of $X_0, \dots, X_{d-1}$, respectively, of height $N_0$ with common level function dominated by $N \mapsto \fwidg(N,\ell,n+1,k,d, b)$.
Let $U_0, \dots, U_{d-1}$ be the finite trees obtained by restricting $F_0 \cup X_0, \dots, F_{d-1} \cup X_{d-1}$, respectively, to their first $\fwidg(N_0,\ell,n+1,k,d, b)$ many levels.
In particular, $U_0, \dots, U_{d-1}$ are $b$-bounded trees of height $\fwidg(N_0,\ell,n+1,k,d, b)$.

Fixing a tuple $(E_0, \dots, E_{d-1}) \in \SubtreeLeaves{n+1}{U_0, \dots, U_{d-1}}$,
the coloring $f$ induces a function 
$$
g: \bigcup_m \prod_{j < d} \prod_{\rho \in \leaves(E_j)} (X_j \uh \rho)(m) \to k
$$
define for all tuples $\pi = (\sigma^\rho_j \in (X_j \uh \rho)(m): j < d, \rho \in \leaves(E_j))$ by
\[
	g(\pi) = f(\{ (E_j \uh n) \cup \{\sigma^\rho_j\:\  \rho \in \leaves(E_j)\}:  j < d\}).
\]
Thus, by iteratively applying strong cone avoidance of the Halpern-La\"{u}chli theorem (\Cref{thm:hl-strong-cone-avoidance}), there is a $d$-tuple of strong subforests $Y_0, \dots, Y_{d-1}$ of $X_0, \dots, X_{d-1}$, respectively, with common level function, such that:
\begin{itemize}
	\item[(a)] for every $j < d$, every leaf of $U_j$ is extended by exactly one root of $Y_j$;
	\item[(b)] for every $(E_0, \dots, E_{d-1}) \in \SubtreeLeaves{n+1}{U_0, \dots, U_{d-1}}$, there is a color $i < k$ such that for every
	\[
		( \sigma^\rho_j: j < d, \rho \in \leaves(E_j) ) \in \bigcup_m \prod_{\rho \in \leaves(E_j)} (Y_j \uh \rho)(m),
	\]
	we have $f(\{ (E_j \uh n) \cup \{\sigma^\rho_j\:\  \rho \in \leaves(E_j)\}:  j < d\}) = i$;
	\item[(c)] $C \nTred Y_0 \oplus \dots \oplus Y_{d-1} \oplus Z$.
\end{itemize}
Item (b) induces a coloring $\chi: \SubtreeLeaves{n+1}{U_0,\dots,U_{d-1}}\to k$
which associates to $(E_0, \dots, E_{d-1})$ the unique color $i < k$ as specified there.
By \Cref{thm:main-widget-theorem}, there are finite strong subtrees $V_0, \dots, V_{d-1}$ of $U_0, \dots, U_{d-1}$, respectively, of height $N_0 + \ell$ with common level function, such that for every $j < d$, $V_j \uh \ell = F_j$, the level function of $V_j$ is bounded by $N \mapsto \fwidg(N,\ell,n+1, k,d, b)$ if $N > \ell$, and the color of $(E_0, \dots, E_{d-1}) \in \SubtreeLeaves{n+1}{V_0,\dots,V_{d-1}}$ with respect to $\chi$ depends only on $(E_0 \uh n, \dots, E_{d-1} \uh n)$. By choice of $N_0$, there are some $H_0 \subseteq V_0, \dots, H_{d-1} \subseteq V_{d-1}$
	such that $F_0 \cup H_0, \dots, F_{d-1} \cup H_{d-1}$ are finite strong subtrees of $T_0, \dots, T_{d-1}$, respectively, with common level function, and such that
$$
\Gamma^{(F_0 \cup H_0) \oplus \dots \oplus (F_{d-1} \cup H_{d-1}) \oplus Z}(x)\downarrow = v.
$$
The tuple $c' = (F_0 \cup H_0, \dots, F_{d-1} \cup H_{d-1}, Y_0, \dots, Y_{d-1})$
	is therefore a cone avoiding stable product tree condition extending $c$ that satisfies
$$
c' \Vdash \Gamma^{G_0 \oplus \dots \oplus G_{d-1} \oplus Z} \neq C.
$$

\case{2}{$(x, C(x)) \not \in W$ for some $x \in \NN$.} Let $\Cc$ be the class of all strong subforests $Y_0, \dots, Y_{d-1}$ of $X_0, \dots, X_{d-1}$, respectively, with common level function dominated by $N \mapsto \fwidg(N,\ell,n+1,k,d, b)$ such that for every $j < d$, every root of $X_j$ is extended in a root of $Y_j$, and for every $d$-tuple $H_0 \subseteq Y_0, \dots, H_{d-1} \subseteq Y_{d-1}$
	for which $F_0 \cup H_0, \dots, F_{d-1} \cup H_{d-1}$ are finite strong subtrees of $T_0, \dots, T_{d-1}$, respectively, again with common level function, we have
$$
\Gamma^{(F_0 \cup H_0) \oplus \dots \oplus (F_{d-1} \cup H_{d-1}) \oplus Z}(x) \uparrow \text{ or } \Gamma^{(F_0 \cup H_0) \oplus \dots \oplus (F_{d-1} \cup H_{d-1}) \oplus Z}(x) \downarrow \neq v.
$$
Since the trees $T_0, \dots, T_{d-1}$ are $Z$-computably bounded and the level function of $Y_0, \dots, Y_{d-1}$ is dominated by the $Z$-computable function $\fwidg$, it follows that $\Cc$ is  a $\Pi^0_1$ class relative to $X_0 \oplus \dots \oplus X_{d-1} \oplus Z$. Moreover, by assumption, $\Cc$ is non-empty.

By the cone avoidance basis theorem, there is some $( Y_0, \dots, Y_{d-1}) \in \Cc$ such that $C \nTred Y_0 \oplus \dots \oplus Y_{d-1} \oplus Z$. The tuple \[c' = (F_0, \dots, F_{d-1}, Y_0, \dots, Y_{d-1})\] is then a $\Pb$-condition extending $c$ such that
$$
c' \Vdash \Gamma^{G_0 \oplus \dots \oplus G_{d-1} \oplus Z} \neq C.
$$

\case{3}{otherwise.} Then we have that $(x,y) \in W$ if and only if $C(x) = y$, so $C \Tred X_0 \oplus \cdots \oplus X_{d-1} \oplus Z$.
\end{proof}

\begin{proof}[Proof of \Cref{thm:cmtt-admits-strong-cone-avoidance}]
Fix two sets $C$ and $Z$ such that $C \nTred Z$.
Also fix a $Z$-computable collection of $Z$-computably bounded trees with no leaves $T_0, \dots,\allowbreak T_{d-1} \subseteq \baire$.
 Let $n\geq 1$ and $f: \Subtree{n+1}{T_0, \dots, T_{d-1}} \to k$ be a coloring. Let $\Pb$ be the partial order of all cone avoiding product tree conditions which are stable for $f$, and let $\Uc$ be a sufficiently generic $\Pb$-filter.
Let $G^\Uc_0, \dots, G^\Uc_{d-1}$ be the strong subtrees of $T_0, \dots, T_{d-1}$ induced by $\Uc$. By \Cref{thm:cmtt-admits-strong-cone-avoidance-req},
for every Turing functional $\Gamma$, there is some $\Pb$-condition $c \in \Uc$
such that $c \Vdash \Gamma^{G_0 \oplus \dots \oplus G_{d-1} \oplus Z} \neq C$.
Hence, $C \nTred G^\Uc_0 \oplus \dots \oplus G^\Uc_{d-1} \oplus Z$.
Moreover, by \Cref{lem:product-tree-genericity-implies-infinity}, $G^\Uc_0, \dots, G^\Uc_{d-1}$ are all infinite. And finally, since $\Uc$ contains only stable conditions, $f$ is stable on $\Subtree{n}{G_0, \dots, G_{d-1}}$.
This completes the proof of \Cref{thm:cmtt-admits-strong-cone-avoidance}.
\end{proof}

We are ready to prove cone avoidance of $\PMT{2}{}$.

\begin{theorem}\label{thm:cone-avoidance-MTT2}
  The product version of Milliken's tree theorem for height 2 admits cone avoidance.
\end{theorem}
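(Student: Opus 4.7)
The plan is to mimic the Cholak--Jockusch--Slaman decomposition of $\RT{2}{2}$ into a stable version plus a pigeonhole, using the two principal results of this chapter: stabilization with cone avoidance (\Cref{thm:cmtt-admits-strong-cone-avoidance}) and strong cone avoidance of the Halpern-La\"{u}chli theorem (\Cref{thm:hl-strong-cone-avoidance}). Fix sets $C, Z$ with $C \nTred Z$, and consider an arbitrary $Z$-computable instance of $\PMT{2}{}$: $Z$-computably bounded infinite trees $T_0, \dots, T_{d-1}$ with no leaves, together with a $Z$-computable coloring $f: \Subtree{2}{T_0, \dots, T_{d-1}} \to k$. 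The goal is to produce $(U_0, \dots, U_{d-1}) \in \Subtree{\omega}{T_0, \dots, T_{d-1}}$ on which $f$ is monochromatic, with $C \nTred U_0 \oplus \cdots \oplus U_{d-1} \oplus Z$.

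First, I would apply \Cref{thm:cmtt-admits-strong-cone-avoidance} directly to this instance to obtain a tuple $(S_0, \dots, S_{d-1}) \in \Subtree{\omega}{T_0, \dots, T_{d-1}}$ on which $f$ is stable and with $C \nTred S_0 \oplus \cdots \oplus S_{d-1} \oplus Z$. Stability yields a limit coloring $g: \Subtree{1}{S_0, \dots, S_{d-1}} \to k$ assigning each tuple its limit $f$-color. Even though $g$ is only $\Delta^0_2$ in $f$ and the $S_j$, the statement of \Cref{thm:hl-strong-cone-avoidance} grants \emph{strong} cone avoidance, so it applies to $g$ irrespective of its effective complexity. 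I would therefore invoke it relative to the parameter set $S_0 \oplus \cdots \oplus S_{d-1} \oplus Z$ to produce $(S'_0, \dots, S'_{d-1}) \in \Subtree{\omega}{S_0, \dots, S_{d-1}}$ on which $g$ takes some constant value $i < k$, with $C \nTred S'_0 \oplus \cdots \oplus S'_{d-1} \oplus S_0 \oplus \cdots \oplus S_{d-1} \oplus Z$.

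Finally, I would carry out a level-by-level thinning inside $(S'_0, \dots, S'_{d-1})$, computable from $f$, the $S_j$'s and the $S'_j$'s, to build $(U_0, \dots, U_{d-1}) \in \Subtree{\omega}{S'_0, \dots, S'_{d-1}}$ with $f$ constantly equal to $i$ on $\Subtree{2}{U_0, \dots, U_{d-1}}$. At each stage $h$, having already chosen stems $(F_0^h, \dots, F_{d-1}^h)$ of height $h$ with common level function, I would enumerate the finitely many root tuples $(\tau_0, \dots, \tau_{d-1}) \in F_0^h \times \cdots \times F_{d-1}^h$ sitting at a common level. Since $g$ assigns each such tuple the limit color $i$, stability furnishes a threshold for each; using $f$, one can then computably search for a level $\ell^*$ of $S'$ past the maximum of these finitely many thresholds. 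I would extend each $F_j^h$ by adjoining, for every direct extension in $S'_j$ of every leaf of $F_j^h$, one descendant at $S'_j$-level $\ell^*$, preserving the branching. By construction, every $(E_0, \dots, E_{d-1}) \in \Subtree{2}{U_0, \dots, U_{d-1}}$ has its root tuple in some $F^h$ and its leaves at a level past the relevant stability thresholds, so $f(E_0, \dots, E_{d-1}) = i$ as required. The technical difficulty of the proof is thus entirely absorbed by the two reference results; the remaining level-by-level construction amounts to bookkeeping, and the only point to verify carefully is that the oracles involved at each step keep the final $U_j$'s on the cone-avoiding side of $C$, which follows from the chain $C \nTred U_0 \oplus \cdots \oplus U_{d-1} \oplus Z \leq_T f \oplus S_0 \oplus \cdots \oplus S_{d-1} \oplus S'_0 \oplus \cdots \oplus S'_{d-1} \oplus Z$.
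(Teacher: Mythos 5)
Your proof is correct and follows essentially the same approach as the paper: apply \Cref{thm:cmtt-admits-strong-cone-avoidance} to stabilize with cone avoidance, apply strong cone avoidance of the Halpern--La\"{u}chli theorem (\Cref{thm:hl-strong-cone-avoidance}) to the induced limit coloring, and then computably thin to a monochromatic solution. The only difference is that you spell out the final thinning step in more detail than the paper, which simply asserts that one can "$U_0 \oplus \dots \oplus U_{d-1} \oplus Z$-computably thin out the set of levels."
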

\begin{proof}
Fix two sets $C$ and $Z$ such that $C \nTred Z$.
Also fix a $Z$-computable collection of $Z$-computably bounded trees with no leaves $T_0, \dots,\allowbreak T_{d-1} \subseteq \baire$ and a $Z$-computbale coloring $f: \Subtree{2}{T_0, \dots, T_{d-1}} \to k$.
 
By \Cref{thm:cmtt-admits-strong-cone-avoidance}, there are strong subtrees $S_0, \dots, S_{d-1}$ of $T_0, \dots, T_{d-1}$, respectively, with common level function, such that $f$ is stable on
\[
	\Subtree{2}{S_0, \dots, S_{d-1}},
\]
and such that $C \nTred S_0 \oplus \dots \oplus S_{d-1} \oplus Z$. By stability, the coloring $f$ induces a $k$-partition $A_0 \sqcup \dots \sqcup A_{k-1} = \bigcup_n S_0(n) \times \dots \times S_{d-1}(n)$ by letting
$A_i$ be the set of tuples $(\sigma_0, \dots, \sigma_{d-1}) \in \bigcup_n S_0(n) \times \dots \times S_{d-1}(n)$ such that for all but finitely many levels $\ell \in \NN$,
whenever $(\{\sigma_0\} \cup H_0, \dots, \{\sigma_{d-1}\} \cup H_{d-1}) \in \Subtree{2}{S_0, \dots, S_{d-1}}$
then $f( \{\sigma_0\} \cup H_0, \dots, \{\sigma_{d-1}\} \cup H_{d-1} ) = i$.

By \Cref{thm:hl-strong-cone-avoidance}, there is some color $i < k$
and some strong subtrees $U_0, \dots, U_{d-1}$ of $S_0, \dots, S_{d-1}$, respectively, with common level function, such that $\bigcup_n U_0(n) \times \dots \times U_{d-1}(n) \subseteq A_i$ and $C \nTred U_0 \oplus \dots \oplus U_{d-1} \oplus Z$.
By $U_0 \oplus \dots \oplus U_{d-1} \oplus Z$-computably thinning out the set of levels,
we can obtain a tuple of strong subtrees $V_0, \dots, V_{d-1}$ of $U_0, \dots, U_{d-1}$, respectively, with common level function, such that
$\Subtree{2}{V_0,\dots, V_{d-1}}$ is monochromatic for color $j$ with respect to $f$.
In particular, by transitivity of the strong subtree relation, $V_0, \dots, V_{d-1}$ are strong 
subtrees of $T_0, \dots, T_{d-1}$ with common level function, and $C \nTred V_0 \oplus \dots \oplus V_{d-1} \oplus Z$.
This completes the proof.
\end{proof}

\begin{corollary}
$\RCA_0 \wedge \PMT 2{}\not\vdash\ACA_0$.
\end{corollary}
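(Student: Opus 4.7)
The plan is to deduce the corollary as an essentially immediate consequence of the preceding theorem (cone avoidance of $\PMT{2}{}$) combined with \Cref{lem:cone-avoidance-not-aca}. Since $\PMT{2}{}$ can be written in the canonical $\Pi^1_2$ form
\[
	\forall \vec T\,\forall f\,[\mathrm{Inst}(\vec T, f) \to \exists (S_0,\dots,S_{d-1})\,\mathrm{Sol}(\vec T, f, (S_0,\dots,S_{d-1}))],
\]
where $\mathrm{Inst}$ asserts that $\vec T = (T_0,\dots,T_{d-1})$ is a tuple of computably bounded infinite trees with no leaves and $f$ is a $2$-coloring of $\Subtree{2}{T_0,\dots,T_{d-1}}$, and $\mathrm{Sol}$ asserts that the tuple is a monochromatic strong subtuple, the statement fits the framework of \Cref{lem:cone-avoidance-not-aca} directly.

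First, I would invoke \Cref{thm:cone-avoidance-MTT2}: as a problem, $\PMT{2}{}$ admits cone avoidance. That is, for any sets $A,C \subseteq \NN$ with $C \nTred A$ and any $A$-computable instance $\seq{d,T_0,\dots,T_{d-1},b,f,k}$, there is a solution $(S_0,\dots,S_{d-1}) \in \Subtree{\omega}{T_0,\dots,T_{d-1}}$ on which $f$ is constant and such that $C \nTred A \oplus S_0 \oplus \cdots \oplus S_{d-1}$. The latter holds because the proof of \Cref{thm:cone-avoidance-MTT2} relativizes straightforwardly to $A$: one simply replaces $Z$ in all the arguments by $A$ (equivalently, $A \oplus Z$), and all the ingredients used (\Cref{thm:cmtt-admits-strong-cone-avoidance}, \Cref{thm:hl-strong-cone-avoidance}, and the computable thinning step) remain valid relative to any parameter.

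Next, I would apply \Cref{lem:cone-avoidance-not-aca} with $\mathsf{P} = \PMT{2}{}$. This lemma produces an $\omega$-model $(\NN,\Sc)$ of $\RCA_0 \wedge \PMT{2}{}$ in which $\emptyset' \notin \Sc$, so that $\ACA_0$ fails in this model. Therefore $\RCA_0 \wedge \PMT{2}{} \not\vdash \ACA_0$, as claimed.

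There is no serious obstacle here: the entire content of the corollary is packaged into \Cref{thm:cone-avoidance-MTT2} and \Cref{lem:cone-avoidance-not-aca}. The only minor point worth explicitly checking is that the formalized $\Pi^1_2$ form of $\PMT{2}{}$ really is the one treated by \Cref{lem:cone-avoidance-not-aca} (i.e., instances and solutions as defined in \Cref{sec:bkg_stmts} match the relation $\mathsf{P} \subseteq \Cantor \times \Cantor$ of the lemma), which is routine given the coding conventions laid out in \Cref{sec:bkg_stmts}.
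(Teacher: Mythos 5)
Your proposal is correct and matches the paper's proof exactly: the corollary is deduced by combining cone avoidance of $\PMT{2}{}$ (\Cref{thm:cone-avoidance-MTT2}) with \Cref{lem:cone-avoidance-not-aca}. You have simply spelled out the routine details that the paper compresses into the word ``immediate.''
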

\begin{proof}
Immediate by \Cref{thm:cone-avoidance-MTT2} and \Cref{lem:cone-avoidance-not-aca}.
\end{proof}


\section[Proof of Theorem 4.13]{Proof of Theorem \ref{thm:main-widget-theorem}}

We now prove the main technical result used in the preceding section. We shall restate it in full below for convenience. First, we have the following lemma.
\begin{lemma}[Finitary Halpern-La\"{u}chli theorem for leaves]\label{lem:finitary-strong-hl-tuple}
	Fix a number of colors $k \in \NN$, an arity $d \geq 1$, and a function $b:\om\to\om$. There exists a function $N \mapsto \fhl(N, k, d, b)$, uniformly $b$-computable in $k$ and $d$ as follows.
  If $U_0,\dots, U_{d-1}$ is a sequence of finite $b$-bounded trees of height $h=\fhl(N,k,d,b)$ for some fixed $N \geq 1$, and
  \[g:\exleavesprodtree{U}{d}{h}\to k\]
  is any coloring of the $d$-tuples of leaves from this sequence, then there exists 
  $(V_0,\dots,V_{d-1})\in\SubtreeLeaves{N}{U_0,\dots, U_{d-1}}$, 
  such that $g$ is constant on the product of the leaves
  \[\exleavesprodtree V d N.\]
\end{lemma}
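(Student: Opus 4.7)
The plan is to establish existence of $\fhl(N,k,d,b)$ via a compactness argument combined with the infinitary Halpern-La\"uchli theorem (\Cref{th:strong-hl}), and then separately to observe that the resulting bound is $b$-computable by finite search. The proof is by contradiction together with a König/compactness extraction of a counterexample to the infinitary theorem.

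For existence, fix $N,k,d,b$ and suppose for contradiction that no $h$ witnesses the conclusion. For each $h \geq 1$, select a bad instance $(U^h, g^h)$: $b$-bounded trees $U_0^h,\ldots,U_{d-1}^h$ of height $h$ and a coloring $g^h$ of $\exleavesprodtree{U^h}{d}{h}$ that admits no monochromatic $(V_0,\ldots,V_{d-1}) \in \SubtreeLeaves{N}{U_0^h,\ldots,U_{d-1}^h}$. Extend each $U_i^h$ above its leaves to an infinite $b$-bounded tree with no leaves $\hat U_i^h$, and extend $g^h$ arbitrarily to a coloring $\hat g^h: \bigcup_n \prod_i \hat U_i^h(n) \to k$. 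Since the space of such infinite triples is compact (as a closed subset of a countable product of finite sets), passing to a convergent subsequence yields indices $h_1 < h_2 < \cdots$ along which $(\hat U^{h_i}, \hat g^{h_i}) \to (T^\infty, g^\infty)$.

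The key step is to apply Halpern-La\"uchli so as to output a monochromatic configuration whose topmost level matches $h_j - 1$ for some $j$. To that end, first construct a strong subtree $\tilde T$ of $T^\infty$ whose common level function, viewed in $T^\infty$, enumerates $\{h_i - 1 : i \geq 1\}$; such a $\tilde T$ exists by inductively selecting one descendant per immediate child at each prescribed level, using that each $T_i^\infty$ is infinite, finitely branching, and has no leaves. Applying \Cref{th:strong-hl} to $\tilde T$ with the restriction of $g^\infty$ produces an infinite monochromatic $(\tilde S_0,\ldots,\tilde S_{d-1}) \in \Subtree{\omega}{\tilde T_0,\ldots,\tilde T_{d-1}}$ whose level function in $T^\infty$ lies entirely inside $\{h_i - 1\}$. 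Truncating to height $N$, the top sits at $h_j - 1$ for some $j$; and for $j$ large enough so that $(\hat U^{h_j}, \hat g^{h_j})$ agrees with $(T^\infty, g^\infty)$ on the first $h_j$ levels, this truncation is a monochromatic element of $\SubtreeLeaves{N}{U_0^{h_j},\ldots,U_{d-1}^{h_j}}$ for $g^{h_j}$, contradicting badness of $(U^{h_j}, g^{h_j})$.

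For the computability claim, fix $N,k,d$ with $b$ as an oracle. For each candidate $h$, deciding whether the conclusion of the lemma holds at height $h$ is a finite computation using only $b(0),\ldots,b(h-1)$: enumerate all $b$-bounded tree sequences of height $h$, enumerate all $k$-colorings of their leaf products, and for each pair exhaustively search for a monochromatic element of $\SubtreeLeaves{N}{U_0,\ldots,U_{d-1}}$. Define $\fhl(N,k,d,b)$ as the least $h$ for which the decision returns affirmatively; the existence argument above guarantees that the search terminates, and the procedure is uniform in $N,k,d$. The main obstacle throughout is the level alignment in the compactness step: a direct application of \Cref{th:strong-hl} to $(T^\infty, g^\infty)$ gives only an infinite monochromatic subtree with uncontrolled level function, which need not intersect $\{h_i - 1\}$. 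The preliminary passage to the auxiliary subtree $\tilde T$ with level function prescribed to lie in $\{h_i - 1\}$ is what forces Halpern-La\"uchli's output to land at the needed levels, and a sufficiently careful diagonal extraction of the convergent subsequence ensures that the index $j$ for which the contradiction is derived falls in the agreement range of the limit.
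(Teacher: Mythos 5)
Your overall strategy — compactness against the infinitary Halpern--La\"uchli theorem, plus a finite search for the computability claim — matches the spirit of the paper's proof, but you run compactness directly on the \emph{leaves} version, and this is where the argument breaks. You have correctly identified the level-alignment problem yourself, but the ``careful diagonal extraction'' you invoke to resolve it cannot work in general. The difficulty is a genuinely self-referential requirement: having truncated the monochromatic $\tilde S$ to height $N$ with top level $h_j - 1$, you need $(\hat U^{h_j}, \hat g^{h_j})$ to agree with the limit $(T^\infty, g^\infty)$ through level $h_j - 1$ \emph{for that same $j$}. No choice of convergent subsequence can guarantee this. Consider the toy example of a sequence in $\Cantor$ given by $X^h = 0^{h-1} 1 0 0 0 \cdots$: every convergent subsequence has limit $0^\omega$, yet $X^h$ differs from $0^\omega$ at position $h-1$ for \emph{every} $h$, so $X^h \uh h \neq 0^\omega \uh h$ along any subsequence. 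The analogous phenomenon can occur in your setting precisely because each $U^h$'s leaf level $h-1$ is the one datum that is not constrained by the limiting process (for $j' > j$, $\hat U^{h_{j'}}(h_j - 1)$ is an \emph{interior} level of $U^{h_{j'}}$, whereas $\hat U^{h_j}(h_j - 1)$ is the leaf level of $U^{h_j}$, and $\hat g^{h_j}$ below level $h_j - 1$ is arbitrary). So the extraction can, for all $j$, fail to catch up to level $h_j - 1$, and no $j$ yields the contradiction.

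The paper avoids this entirely by doing the compactness at the level of the \emph{level-product} version of Halpern--La\"uchli: it first extracts a bound $\fhl$ with the property that any $k$-coloring $f$ of $\bigcup_{n<\fhl} T_0(n)\times\dots\times T_{d-1}(n)$ for $b$-bounded $T_i$ of height $\fhl$ admits a monochromatic $(S_0,\dots,S_{d-1}) \in \Subtree{N}{T_0,\dots,T_{d-1}}$. In that compactness argument the monochromatic finite subtree coming from the limit has a \emph{fixed} top level $m$ with no constraint to hit a leaf level, so one simply waits for agreement up to level $m$; there is no alignment issue. The leaves statement is then reduced to this one inside a fixed finite tree: for each $\sigma\in U_i$ choose a leaf $l_\sigma \succeq \sigma$ and define $f(\sigma_0,\dots,\sigma_{d-1}) = g(l_{\sigma_0},\dots,l_{\sigma_{d-1}})$; apply the level-product bound to get $S$; then set $V_i = \bigcup_{n<N-1} S_i(n) \cup \{ l_\sigma : \sigma\in S_i(N-1)\}$. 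This $(V_0,\dots,V_{d-1})$ lies in $\SubtreeLeaves{N}{U_0,\dots,U_{d-1}}$, and $g$ on $\exleavesprodtree{V}{d}{N}$ agrees with $f$ on $S_0(N-1)\times\dots\times S_{d-1}(N-1)$, hence is constant. Adopting this two-step decomposition — compactness on the unconstrained version, then the leaf-choice reduction — repairs your proof; the direct compactness on the leaves version does not go through.
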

\begin{proof}
  Let $\Cc$ be the space of all functions
  \[
    f: \bigcup_n T_0(n)\times\dots\times T_{d-1}(n) \to k
  \]
  where $T_0,\dots,T_{d-1}$ are $b$-bounded trees. By compactness of $\Cc$,
  the Halpern-La\"{u}chli theorem (\Cref{th:strong-hl}) yields the existence of a function
  \[
  	\fhl(\cdot,k,d, b):\Nb\to\Nb
  \]
  such that for any $N$, any collections of $b$-bounded trees $T_0,\dots, T_{d-1}$ of height $\fhl(N,k,d,b)$, and any $f: \bigcup_{n<\fhl(N,k,d,b)} {T}_0(n) \times \dots \times {T}_{d-1}(n) \to k$, there exists $(S_0,\dots,S_{d-1}) \in \Subtree{N}{T_0,\ldots,T_{d-1}}$ such that $f$ is constant on $\bigcup_{n<N} {S}_0(n) \times \dots \times {S}_{d-1}(n)$.
  
  Now, consider the given trees $U_0,\ldots,U_{d-1}$ of height $h = \fhl(N,k,d,b)$, and the given coloring $g$.
  Define
  \[
  	f: \bigcup_{n<h} {U}_0(n) \times \dots \times {U}_{d-1}(n) \to k
  \]
  by $f(\sigma_0,\ldots,\sigma_{d-1}) = g(l_{\sigma_0},\dots, l_{\sigma_{d-1}})$,
  where $l_\sigma$ for each $\sigma \in U_i$ denotes a choice of leaf extending $\sigma$. 
  
  
  By the property of $\fhl$, let $S_0,\dots S_{d-1}$ be strong subtrees of $T_0,\dots, T_{d-1}$ of height $N$ and with a common level function such that $f$ is constant on $\bigcup_{n<N} {S}_0(n) \times \dots \times {S}_{d-1}(n)$. For $i<d$, set
  \[V_i=\bigcup_{n<N-1}S_i(n)\cup\{l_\sigma:\sigma\in S_i(N-1).\}\]
  Thus, $(V_0,\ldots,V_{d-1}) \in \SubtreeLeaves{N}{U_0,\dots, U_{d-1}}$, and as $f$ is constant on $S_0(N-1)\times\dots\times S_{d-1}(N-1)$ it follows that $g$ is constant on $V_0(N-1)\times\dots\times V_{d-1}(N-1)$. (Note that by definition of the $V_i$ and the $l_\sigma$, $V_0(N-1)\times\dots\times V_{d-1}(N-1)$ is a subset of $T_0(h-1)\times\dots\times T_{d-1}(h-1)$, the domain of $g$.)
\end{proof}

We are now ready to prove \Cref{thm:main-widget-theorem} stated earlier. Recall that it is a finitary version of Milliken's tree theorem for $\SubtreeLeaves{n+1}{\cdot}$, meaning that we color strong subtrees of a certain height that also preserve the leaves. We recall the full statement.

\widget*

We begin by giving the definition of the function $H$.

\begin{definition}
	Fix a level $\ell \in \NN$, a height $n \geq 1$, a number of colors $k \in \NN$, an arity $d \geq 1$, and a function $b:\om\to\om$. Define a function $N \mapsto \hat{H}(N,\ell,n,k,d,b)$ inductively as follows:
	\begin{enumerate}
		\item $\hat\fwidg(0,\ell,n,k,d,b)=0$;
		\item if $\hat\fwidg(N-1,\ell, n,k,d,b) = H_{N-1}$ is defined, then
		\[
   			\hat\fwidg(N,\ell,n,k,b)= \hat\fwidg(N-1,\ell,n,k,d,b) + \fhl(2, K, D, B),
  		\]
  		where
  		\begin{itemize}
  			\item $K$ is the cardinality of the set of all $k$-valued functions defined on
  			\[
  			\Pc(U_0 \uh H_N) \times \cdots \times \Pc(U_{d-1} \uh H_N) \times \Pc(U_0(H_N)) \times \cdots \times \Pc(U_{d-1}(H_N))
 			 \]
  			for some $b$-bounded trees $T_0,\ldots,T_{d-1}$;
  			\item $D=d\times\prod_{i<\ell}b(i)\prod_{i<H_{N-1}}b(\ell+i)$;
  			\item $B$ is the function $n \mapsto b(n+H_{N-1})$.
  		\end{itemize}
	\end{enumerate}
	Define $\fwidg$ by
	\[
		\fwidg(N,\ell, n,k,d,b)= \ell+\hat\fwidg(N,\ell,n,k,d,b).
	\] 
\end{definition}

Note that $D$ corresponds to a bound on the number of leaves of $d$ many $b$-bounded trees of height $\ell+H_{N-1}$, and that $B$ is a bounding function for subtrees of a $b$-bounded tree that contains all the level{s} starting from $H_{N-1}$. \Cref{fig:widget} helps shed light on some of the parameters given to $\fhl$ in the definition of $\fwidg$.

\begin{proof}[Proof of \Cref{thm:main-widget-theorem}]
  We proceed by induction on $N$, starting with $N=0$. The base case holds by taking any \[(V_0,\dots, V_{d-1}) \in\SubtreeLeaves{\ell+1}{U_0,\dots,U_{d-1}}\] with $V_i \uh \ell = U_i \uh \ell$ for all $i < d$.
  These trees satisfy \Cref{item:stems-widget,item:hbound-widget} by construction. Moreover, by assumption on $\chi$, they also satisfy \Cref{item:monochr-widget}.



  Now, suppose the result is true for some $N\geq 0$.
  To simplify notation, define $H_{N}=\fwidg(N,\ell,n+1,k,d, b)$ and $H_{N+1}=\fwidg(N+1,\ell,n+1,k,d, b)$. The construction of the solution $V_0,\dots, V_{d-1}$ is divided into three steps, summarized as follows.
  \begin{enumerate}
  \item We apply \Cref{lem:finitary-strong-hl-tuple} to the collection of trees $U_i^\sigma = U_i \uh \sigma$ for $\sigma \in U_i(H_N)$
  and a certain coloring with a large number of colors. This will yields strong subtrees $V_i^\sigma$ of $U_i^\sigma$ of height $2$ with a common level function. In turn, these will induce a coloring of $\SubtreeLeaves{n+1}{U_0\uh H_N,\dots, U_{d-1}\uh H_N}$.
  \item We apply the inductive hypothesis to $U_0\uh H_N,\dots, U_{d-1}\uh H_N$ and the induced coloring, obtaining strong subtrees $\hat V_0,\dots, \hat V_{d-1}$.
  \item For each $i < d$, we replace the leaves of $\hat V_i$ by $V_i^\sigma$ to get $V_i$.
  \end{enumerate}
  We now give the details of each step of the construction.
  
  \construction
  
  \medskip
  \noindent \textbf{Step 1.} We define a coloring
  \[
  	g:\prod_{i<d}\prod_{\sigma\in U_i(H_N)}\leaves({U_i^\sigma})\to K,
  \]
  where $K$ is the finite set of all functions
  \[
  	\zeta: \Pc(U_0 \uh H_N) \times \cdots \times \Pc(U_{d-1} \uh H_N) \times \Pc(U_0(H_N)) \times \cdots \times \Pc(U_{d-1}(H_N)) \to k.
  \]
  Let $\pi$ be an element of the domain of $g$, meaning a tuple $((\tau_i^\sigma)_{\sigma\in U_i(H_N)})_{i<d}$ consisting of one leaf $\tau^\sigma_i$ from each tree $U_i^\sigma$. Then $g(\pi)$ is the function $\zeta$ defined as follows. Given $F_i \subseteq U_i \uh H_N$ and $G_i \subseteq U_i(H_N)$ for each $i < d$,
  \[
  	\zeta(F_0,\ldots,F_{d-1},G_0,\ldots,G_{d-1}) = \chi((F_i ~\cup~\{ \tau_i^\sigma: \sigma \in G_i\})_{i < d})
  \]
  if $(F_i \cup \{ \tau_i^\sigma: \sigma \in G_i\})_{i < d} \in \SubtreeLeaves{n+1}{U_0,\ldots,U_{d-1}}$, and
  \[
  	\zeta(F_0,\ldots,F_{d-1},G_0,\ldots,G_{d-1}) = 0
  \]
  otherwise. So in particular, $g(\pi)$ records the values of $\chi$ on all strong subtrees of height $n+1$ that have leaves in $\pi$ and all other nodes below level $H_N$ in $U_0,\ldots,U_{d-1}$.

  By \Cref{lem:finitary-strong-hl-tuple} applied to the collection of $U_i^\sigma$ with the coloring $g$, using the fact that the height, $H_{N+1}-H_N$, of the trees is sufficiently large by definition of $H$, we obtain strong subtrees $V_i^\sigma$ of $U_i^\sigma$ of height 2 and with common level function such that $g$ is constant on the product of the leaves of the $V_i^\sigma$. Call the value assumed by $g$ on this product $\zeta_0 \in K$.
  
  \medskip
  \noindent \textbf{Step 2.} The function $\zeta_0$ naturally induces a coloring
  \[
  	\chi_N: \SubtreeLeaves{n+1}{U_0 \uh H_N+1,\ldots,U_{d-1} \uh H_N+1} \to k
  \]
  as follows. Given $(F_0,\ldots,F_{d-1})$ in the domain of $\chi_N$, let
  \[
  	\chi_N(F_0,\ldots,F_{d-1}) = \zeta_0(F_0 \uh n,\ldots,F_{d-1} \uh n,\leaves(F_0),\ldots,\leaves(F_{d-1})).
  \]
  Note that by choice of the $V^\sigma_i$, if $((\tau_i^\sigma)_{\sigma\in U_i(H_N)})_{i<d}$ is any tuple consisting of one leaf $\tau^\sigma_i$ from each tree $V_i^\sigma$, then $(F_i \uh n~\cup~\{ \tau_i^\sigma: \sigma \in \leaves(F_i)\})_{i < d} \in \SubtreeLeaves{n+1}{U_0,\ldots,U_{d-1}}$, so by definition we also have
  \[
  	\chi_N(F_0,\ldots,F_{d-1}) = \chi((F_i \uh n~\cup~\{ \tau_i^\sigma: \sigma \in \leaves(F_i)\})_{i < d}).
  \]
  By assumption on $\chi$, it follows that if $F_i \uh \ell \subseteq (U_i \uh H_N +1) \uh \ell = U_i \uh \ell$ for all $i < d$, then $\chi_N(F_0,\ldots,F_{d-1})$ depends only on $(F_0 \uh n,\ldots,F_{d-1} \uh n)$. We may thus apply the induction hypothesis to $\chi_N$ and the trees $U_0 \uh H_N+1,\ldots,U_{d-1} \uh H_N+1$ to obtain a tuple of strong subtrees $(\hat V_0,\dots, \hat V_{d-1}) \in \SubtreeLeaves{\ell + N+ 1}{U_0 \uh H_N+1,\ldots,U_{d-1} \uh H_N+1}$.
  
  \medskip
  \noindent \textbf{Step 3.} Finally, we glue the trees $\hat V_i$ to the trees $V_i^\sigma$ to finish the construction of the solution. More precisely, we let
  \[
  	V_i = \hat V_i \setminus \leaves(\hat V_i)~\cup~{\bigcup_{\sigma \in \leaves(\hat V_i)} V^\sigma_i}.
  \]
  Note that the height of $V_i$ is $\ell + N + 2$, as desired. This completes the construction.
  
  \verification We now prove that the collection of $V_i$ is a solution. \Cref{item:stems-widget} is satisfied since it is satisfied by $\hat V_i$. This is because $V_i$ extends $\hat V_i\setminus\leaves(\hat V_i)$, and the height of $\hat{V_i}$ is at least $\ell + 1$, so we have ${V_i \uh \ell} = (\hat V_i\setminus\leaves(\hat V_i)) \uh \ell = {\hat V_i \uh \ell} = U_i \uh \ell$.
  
  \Cref{item:hbound-widget} is satisfied by construction.
  
  It remains to verify \Cref{item:monochr-widget}. Suppose $(F_0,\dots, F_{d-1}) \in \SubtreeLeaves{n+1}{V_0,\dots, V_{d-1}}$. We consider two cases.
  
  \case{1}{$F_i(n-1) \subseteq V_i(\ell+N)$ for each $i < d$.} Since $F_i \in \SubtreeLeaves{n+1}{V_i}$ for each $i < d$ and there is only one level in $V_i$ above $\ell+N$, the elements of $F_i(n) = \leaves(F_i)$ are uniquely determined by those of $F_i(n-1)$. Namely, $F_i(n) = \{\sigma \in V_i(\ell + n + 1): (\exists \tau \in F_i(n-1))[\tau \prec \sigma]\}$. Thus, $F_i$ is completely determined by $F_i \uh n$, and so also $\chi(F_0,\ldots,F_{d-1})$ depends only on $(F_0 \uh n,\ldots,F_{d-1} \uh n)$.
  
  \case{2}{$F_i(n-1) \subseteq V_i\uh \ell+N$ for each $i < d$.} In this case, we have $F_i \uh n \subseteq \hat{V}_i \setminus \leaves(\hat{V}_i) \subseteq U_i \uh H_N$. So, if we define
  \[
  	\hat F_i = F_i \uh n \cup \{ \sigma \in U_i(H_N): (\exists \tau \in \leaves(F_i))[\sigma \prec \tau]\}
  \]
  then $(\hat F_0,\ldots,\hat F_{d-1}) \in \SubtreeLeaves{n+1}{\hat{V}_0,\ldots,\hat{V}_{d-1}}$. By choice of the $\hat V_i$, we know that $\chi_N(\hat{F}_0,\ldots,\hat F_{d-1})$ depends only on $(\hat{F}_0 \uh n,\ldots,\hat{F}_{d-1} \uh n) = (F_0 \uh n,\ldots,F_{d-1} \uh n)$.
  
  Separately, by definition of $\chi_N$ and choice of the $V^\sigma_i$, we have that if $((\tau_i^\sigma)_{\sigma\in U_i(H_N)})_{i<d}$ is any tuple consisting of one leaf $\tau^\sigma_i$ from each tree $V_i^\sigma$, then 
  \[
  	\begin{array}{lll}
  		\chi_N(\hat{F}_0,\ldots,\hat F_{d-1}) & = &  \chi((\hat F_i \uh n~\cup~\{ \tau_i^\sigma: \sigma \in \leaves(\hat F_i)\})_{i < d})\\
  		& = & \chi((F_i \uh n~\cup~\{ \tau_i^\sigma: \sigma \in \leaves(\hat F_i)\})_{i < d}).
  	\end{array}
  \]
  Since the leaves of $F_0,\ldots,F_{d-1}$ form precisely such a tuple $((\tau_i^\sigma)_{\sigma\in U_i(H_N)})_{i<d}$ and $F_i \uh n \cup \leaves(F_i) = F_i$ for each $i < d$, we conclude
  \[
  	\chi_N(\hat{F}_0,\ldots,\hat F_{d-1}) = \chi(F_0,\ldots,F_{d-1}).
  \]
  
  Combining the previous two paragraphs, we find that $\chi(F_0,\ldots,F_{d-1})$ depends only on $(F_0 \uh n,\ldots,F_{d-1} \uh n)$, as was to be shown.
\end{proof} 

\begin{figure}[h!]
  \begin{center}
    \begin{tikzpicture}[scale=1.5]
		\tikzset{
			empty node/.style={circle,inner sep=0,outer sep=0,fill=none},
			solid node/.style={circle,draw,inner sep=1.5,fill=black},
			hollow node/.style={circle,draw,inner sep=1.5,fill=white},
			gray node/.style={circle,draw={rgb:black,1;white,4},inner sep=1,fill={rgb:black,1;white,4}}
		}
		\tikzset{snake it/.style={decorate, decoration=snake, line cap=round}}
		\tikzset{gray line/.style={line cap=round,thick,color={rgb:black,1;white,4}}}
		\tikzset{gray thin line/.style={line cap=round,color={rgb:black,1;white,4}}}
		\tikzset{thick line/.style={line cap=round,rounded corners=0.1mm,thick}}
		\tikzset{thin line/.style={line cap=round,rounded corners=0.1mm}}
		\node (a)[empty node] at (0.5,-2) {};
		\node (a')[empty node] at (0,-0.4) {};
		\node (sigma0)[empty node] at (0-0.45,0.8) {};
		\node (sigma1)[empty node] at (0+0.2,0.8) {};
		\node (sigma2)[empty node] at (0+1.45,0.8) {};
		\node (b0)[solid node] at (0-0.45,1.7) {};
		\node (b1)[solid node] at (0+0.2,1.7) {};
		\node (b2)[solid node] at (0+1.45,1.7) {};
		\node (c0)[empty node] at (0-0.45,1.1) {};
		\node (c1)[empty node] at (0+0.2,1.1) {};
		\node (c2)[empty node] at (0+1.45,1.1) {};
		\node (b00)[solid node] at (0.13-0.45,2.45) {};
		\node (b01)[solid node] at (-0.15-0.45,2.45) {};
		\node (b10)[solid node] at (0.13+0.2,2.45) {};
		\node (b11)[solid node] at (-0.15+0.2,2.45) {};
		\node (b20)[solid node] at (0.13+1.45,2.45) {};
		\node (b21)[solid node] at (-0.15+1.45,2.45) {};
		\begin{pgfonlayer}{background}
		\draw[gray thin line] (-1.4,1.7) to (2.4,1.7);
		\draw[gray thin line] (-1.4,1.9) to (2.4,1.9);
		\draw[gray thin line] (-1.4,0.8) to (2.4,0.8);
		\draw[gray thin line] (-1.4,2.45) to (2.4,2.45);
		\draw[thick line] (a.center) to (2.3,2.7);
		\draw[thick line] (a.center) to (-1.3,2.7);
		\draw[thick line,decorate,decoration={snake,amplitude=-.3mm,segment length=2.5mm,pre length=3mm}] (b0.center) to (0.13-0.45,2.45);
		\draw[thick line,decorate,decoration={snake,amplitude=.3mm,segment length=2.5mm,pre length=3mm}] (b0.center) to (-0.15-0.45,2.45);
		\draw[thick line,decorate,decoration={snake,amplitude=-.3mm,segment length=2.5mm,pre length=3mm}] (b1.center) to (0.13+0.2,2.45);
		\draw[thick line,decorate,decoration={snake,amplitude=.3mm,segment length=2.5mm,pre length=3mm}] (b1.center) to (-0.15+0.2,2.45);
		\draw[thick line,decorate,decoration={snake,amplitude=-.3mm,segment length=2.5mm,pre length=3mm}] (b2.center) to (0.13+1.45,2.45);
		\draw[thick line,decorate,decoration={snake,amplitude=.3mm,segment length=2.5mm,pre length=3mm}] (b2.center) to (-0.15+1.45,2.45);
		\draw[gray line] (sigma0.center) to (0.3-0.45,2.45);
		\draw[gray line] (sigma0.center) to (-0.3-0.45,2.45);
		\draw[gray line] (sigma1.center) to (0.3+0.2,2.45);
		\draw[gray line] (sigma1.center) to (-0.3+0.2,2.45);
		\draw[gray line] (sigma2.center) to (0.3+1.45,2.45);
		\draw[gray line] (sigma2.center) to (-0.3+1.45,2.45);
		\draw[thick line,decorate,decoration={snake,amplitude=.3mm,segment length=2.5mm}] (sigma0) to (b0.center);
		\draw[thick line,decorate,decoration={snake,amplitude=.3mm,segment length=2.5mm}] (sigma1) to (b1.center);
		\draw[thick line,decorate,decoration={snake,amplitude=.3mm,segment length=2mm,post length=0.01mm}] (sigma2) to (b2.center);
		\node(dots)[empty node,fill=white] at (0.85,1.7) {$\,\cdots\,$};
		\node(S0)[empty node,label=right:{$H_{N+1}$}] at (2.45,2.45) {};
		\node(S0)[empty node,label=right:{$H_N$}] at (2.45,0.8) {};
		\end{pgfonlayer}
	\end{tikzpicture}
    \caption{The construction of a tree in Theorem~\ref{thm:main-widget-theorem} when $d=1$.
      Given a tree $T$ of height $H_N$, cutting at level $H_{N}$ yields a collection of finite perfect trees whose roots are nodes at level $H_{N}$. A finite coloring of $\SubtreeLeaves{n+1}{T}$ yields a coloring of product of leaves from the collection, by merging the colors of all possible closure into a tree of height $n+1$. The level $H_{N+1}$ is chosen large enough above $H_{N}$ so that on can apply Lemma~\ref{lem:finitary-strong-hl-tuple} to obtain strong subtrees of height 2, represented in bold.
      As explained in the proof, this yields a coloring of $\SubtreeLeaves{n}{T\uh H_{N}}$, and one can apply the induction hypothesis. 
    }
    \label{fig:widget}
\end{center}
\end{figure}
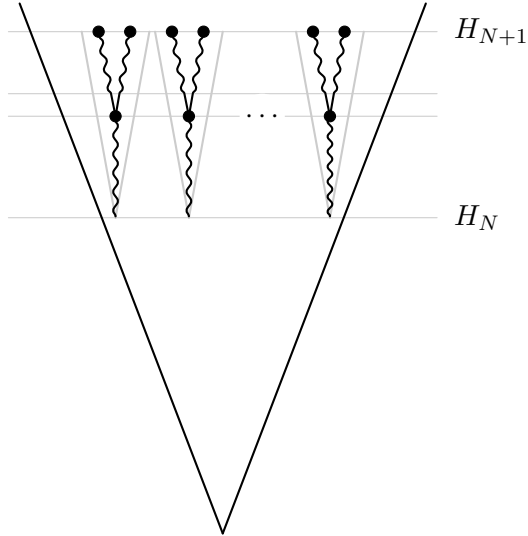

\section{Milliken's tree theorem with more colors}\label{subsect:thin-milliken}

As we have seen in the preceding sections, there is a computably detectable difference between Milliken's tree theorem for heights 2 and 3 that parallels that for Ramsey's theorem for pairs and triples. More specifically, Milliken's tree theorem for height 2 admits cone avoidance while the version for height 3 does not. In the case of Ramsey's theorem, more can be said.
Wang~\cite[Theorem 3.2]{Wang2014Some}
proved the surprising result that if we weaken Ramsey's theorem for $n$-tuples to permit a larger number $\ell$ of colors in the solution (instead of just one, which is to say, requiring the solutions to be homogeneous sets), and if $\ell$ is sufficiently large relative to $n$, then
the resulting statement admits strong cone avoidance. More recently, Cholak and Patey~\cite[Corollary 4.17]{Cholak2019Thin} gave explicit bounds on the relationship between $\ell$ and $n$, proving that cone avoidance holds so long as $\ell$ is at least as large as the $n$th Catalan number.

\begin{statement}[Ramsey's theorem for $n$-tuples and $k$ colors]\index{statement!$\RT{n}{k, \ell}$}
  $\RT{n}{k, \ell}$ is the statement: ``For any coloring $f: [\Nb]^n \to k$, there exists an infinite set $H\subseteq\Nb$ such that $f$ uses at most $\ell$ colors on $[H]^n$''.
\end{statement}

In this section, we prove a similar result for the product version of Milliken's tree theorem for height 2. More precisely, we show that whenever the number of colors in the solutions is allowed to be at least 2, then the resulting statement for height 2 admits strong cone avoidance (\Cref{thm:pmtt2k2-strong-cone-avoidance}), while the statement for height 3 admits cone avoidance (\Cref{thm:pmtt3k2-cone-avoidance}).

The notion of level-homogeneous coloring sets a bridge between Ramsey's theorem and Milliken's tree theorem. Let $T_0, \dots, T_{d-1} \subseteq \baire$ be finitely branching trees with no leaves. Recall that the level function witnessing a strong subtree is the function mapping the levels of the strong subtree to the levels in the original tree (see \Cref{def:strong-subtree}).

\begin{definition}\index{coloring!level-homogeneous}\index{level-homogeneous!coloring}\index{product tree condition!level-homogeneous}\index{level-homogeneous!product tree condition}
A coloring \[f: \Subtree{n}{T_0, \dots, T_{d-1}} \to k\] is \emph{level-homogeneous}
if the color of $(E_0, \dots, E_{d-1}) \in \Subtree{n}{T_0, \dots, T_{d-1}}$ depends only on its level function. A product tree condition \[(F_0, \dots, F_{d-1}, X_0, \dots, X_{d-1})\] is \emph{level-homogeneous} for $f$ if for every \[(E_0, \dots, E_{d-1}) \in \Subtree{n}{F_0 \cup X_0, \dots, F_{d-1} \cup X_{d-1}}\] such that $E_j \uh 1 \subseteq F_j$ for every $j < d$,
the color of  $( E_0, \dots, E_{d-1} )$ depends only on its level function.
\end{definition}

\noindent Note that the notion of level-homogeneous here extends that in \Cref{def:levhom3}, which is the particular case when $n = 1$ and $f$ is the function mapping a tuple in $\Subtree{1}{T_0, \dots, T_{d-1}}$ to the unique $i < k$ such that $A_i$ contains this tuple.

Any level-homogeneous coloring $f:\Subtree{n}{T_0, \dots, T_{d-1}}\to k$
induce{s} a coloring $g: [\NN]^n \to k$ which to some $F \in [\NN]^n$ associates 
the color of any element of $\Subtree{n}{T_0, \dots, T_{d-1}}$ whose level 
function has range $F$. This coloring $g$ is well-defined by level-homogeneity of $f$,
and for every homogeneous set $H \subseteq \NN$ for $g$, the \emph{principal function} \index{principal function} $p_H: \NN \to \NN$, which to $x$ associates the ($x+1$)st element of $H$ in natural order, is the level function of a solution to $f$. 

\begin{theorem}\label{thm:pmtt2-level-homogeneous-strong-cone-avoidance}
Fix two sets $C$ and $Z$ such that $C \nTred Z$.
Also fix a $Z$-computable collection of $Z$-computably bounded trees with no leaves $T_0, \dots,\allowbreak T_{d-1}$.
Let $f: \Subtree{2}{T_0, \dots, T_{d-1}} \to k$ be a coloring.
Then, there exist strong
  subtrees $(S_0, \dots, S_{d-1}) \in \Subtree{\omega}{T_0, \dots, T_{d-1}}$ over which $f$ is level-homogeneous, and such that
  $C \nTred S_0 \oplus \dots \oplus S_{d-1} \oplus Z$.
\end{theorem}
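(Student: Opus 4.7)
The plan is to combine two previously established strong cone avoidance results---\Cref{thm:cmtt-admits-strong-cone-avoidance} for stability and \Cref{thm:hl-strong-cone-avoidance} for the Halpern-La\"{u}chli theorem---and then perform a final thinning step via a forcing argument. First, I would apply \Cref{thm:cmtt-admits-strong-cone-avoidance} with $n+1=2$, noting that its hypotheses allow an arbitrary (not necessarily $Z$-computable) coloring $f$: this yields strong subtrees $(S_0,\dots,S_{d-1}) \in \Subtree{\omega}{T_0,\dots,T_{d-1}}$ on which $f$ is stable, with $C \nTred S_0\oplus\dots\oplus S_{d-1}\oplus Z$.

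Stability then yields a well-defined limit coloring $g:\bigcup_n S_0(n)\times\dots\times S_{d-1}(n)\to k$, sending each level tuple to its unique limit color. Applying \Cref{thm:hl-strong-cone-avoidance} to $g$, using $S_0\oplus\dots\oplus S_{d-1}\oplus Z$ in place of $Z$, produces strong subtrees $(U_0,\dots,U_{d-1})$ of $(S_0,\dots,S_{d-1})$ with common level function on which $g$ is constantly equal to some color $c$, while still $C \nTred U_0\oplus\dots\oplus U_{d-1}\oplus Z$. At this point $f$ is stable on $\Subtree{2}{U_0,\dots,U_{d-1}}$ with uniform limit color $c$: for each tuple $\vec{\sigma}$ at a common level in $(U_0,\dots,U_{d-1})$, there is a threshold $t_{\vec{\sigma}}$ beyond which every height-$2$ strong subtree rooted at $\vec{\sigma}$ has color $c$.

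The remaining step is to thin $(U_0,\dots,U_{d-1})$ to $(V_0,\dots,V_{d-1})$ so that every element of $\Subtree{2}{V_0,\dots,V_{d-1}}$ has color $c$---which is stronger than level-homogeneity and hence suffices. Combinatorially, it is enough to pick a sequence of $U$-levels $\ell_V(0)<\ell_V(1)<\cdots$ with $\ell_V(k+1)$ exceeding $t_{\vec{\sigma}}$ for every tuple $\vec{\sigma} \in U_0(\ell_V(k))\times\dots\times U_{d-1}(\ell_V(k))$. Since the $U_i$ are finitely branching, only finitely many such tuples appear at each level, so the set of admissible next levels is cofinite. To obtain the required cone avoidance for the $V_i$, I would carry out the selection of the $\ell_V(k)$ by a forcing argument modeled on those in \Cref{subsect:product-tree-forcing} and \Cref{sec:sca_hl}: conditions are cone-avoiding product tree conditions $(F_0,\dots,F_{d-1},Y_0,\dots,Y_{d-1})$ relative to $(U_0,\dots,U_{d-1})$ such that every height-$2$ strong subtree of $(F_0,\dots,F_{d-1})$ has color $c$ under $f$. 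Density of $\Gamma$-diagonalizing extensions follows by combining stability (which guarantees cofinitely many admissible next levels) with the cone avoidance basis theorem applied to the relevant $\Pi^0_1$ class of candidate extensions.

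The hard part will be this last density argument, since the combinatorial constraint defining a valid extension (the new level must exceed all current stem thresholds) is $f$-dependent, and $f$ itself may compute $C$. The delicate point is to organize the forcing so that, although $f$ is used implicitly to verify that conditions preserve the color-$c$ property, the extracted generic $(V_0,\dots,V_{d-1})$ does not inherit the computational power of $f$. In the analogous proof of \Cref{thm:hl-strong-cone-avoidance}, this was handled by phrasing density as a $\Pi^0_1$ condition relative to a cone-avoiding oracle rather than relative to $f$, and I expect the same technique to go through here, with stability supplying the additional leverage needed to pass from the Halpern-La\"{u}chli level to height-$2$ colorings.
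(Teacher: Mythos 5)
Your first two steps are sound and match the paper's proof of cone avoidance of $\PMT{2}{}$ (\Cref{thm:cone-avoidance-MTT2}): stabilize via \Cref{thm:cmtt-admits-strong-cone-avoidance}, then make the limit coloring constant via \Cref{thm:hl-strong-cone-avoidance}. The gap is the final thinning step, and it is not a technical wrinkle in the forcing but a genuine impossibility. You are claiming to produce, with cone avoidance, trees on which the \emph{arbitrary} coloring $f$ is monochromatic on $\Subtree{2}{V_0,\dots,V_{d-1}}$; that is strong cone avoidance of $\PMT{2}{k,1}$, which is false. Take $d=1$, $T_0$ the unary tree (so $\Subtree{2}{T_0}\cong[\NN]^2$ as in \Cref{thm:milliken-rt}), $C=Z'$, and $f$ the stable coloring $f(x,y)=1$ iff $y\geq m(x)$, where $m$ is the settling-time function of $C$ relative to $Z$. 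Your steps 1--2 go through: $f$ is stable with constant limit color $1$. But any infinite set homogeneous for color $1$ has principal function dominating $m$ and hence computes $C$ from $Z$, and color $0$ has no infinite homogeneous set at all. The thresholds $t_{\vec\sigma}$ are not merely ``$f$-dependent data to be verified inside a $\Pi^0_1$ class''; escaping them is itself equivalent to computing $C$. No reorganization of the forcing can hide this, because the obstruction lives in the level function of \emph{every} solution, not in the oracle used to find one. This is precisely why \Cref{thm:pmtt2k2-strong-cone-avoidance} allows two colors rather than one, via strong cone avoidance of $\RT{2}{k,2}$.

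Note also that retreating from monochromaticity to level-homogeneity does not rescue your plan as stated: after steps 1--2 the colors of height-$2$ subtrees whose second level lies \emph{below} the relevant thresholds are uncontrolled, and distinct tuples sharing the same level pair may still disagree, so a further $\PMT{2}{}$-type homogenization across tuples at a common level pair is unavoidable. The paper's proof therefore never passes through a constant limit color followed by thinning. Instead it forces directly with cone-avoiding conditions that are \emph{level-homogeneous} for $f$ (\Cref{lem:pmtt2-level-homogeneous-density-below-a-cone}); in the density argument, the role you assign to ``strong'' applications is played by ordinary cone avoidance of $\PMT{2}{}$ (\Cref{thm:cone-avoidance-MTT2}) applied to a \emph{candidate} coloring $g$ produced by the cone avoidance basis theorem (so $g$, unlike $f$, is computable in a cone-avoiding oracle), combined with the finitary Halpern--L\"{a}uchli theorem \Cref{thm:combinatorial-finite-hapern-lauchli} to align the colors across root tuples. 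Level-homogeneity is exactly the amount of uniformity that survives this scheme: the color may still depend on the pair of levels, and that slack is what keeps the construction below the cone.
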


\begin{proof}
Fix $C$, $Z$, $T_0, \dots, T_{d-1}$ and $f$.
By \Cref{thm:cmtt-admits-strong-cone-avoidance}, there are strong subtrees $(U_0, \dots, U_{d-1}) \in \Subtree{\omega}{T_0, \dots, T_{d-1}}$ on which  $f$ is stable, and such that $C \nTred U_0 \oplus \dots \oplus U_{d-1} \oplus Z$.

We build strong subtrees $(G_0, \dots, G_{d-1}) \in \Subtree{\omega}{U_0, \dots, U_{d-1}}$  on which $f$ is level-homogeneous, and such that $C \nTred G_0 \oplus \dots \oplus G_{d-1} \oplus Z$. These sets will be constructed by forcing with product tree conditions. Recall that a product tree condition $c = (F_0, \dots, F_{d-1}, X_0, \dots, X_{d-1})$ is \emph{cone avoiding} (with respect to the given set $C$) if $C \nTred X_0 \oplus \dots \oplus X_{d-1} \oplus Z$ (see \Cref{def:levhom3}).
Let $\Pb$ be the collection of all cone avoiding product tree conditions which are level-homogeneous for~$f$.

The proof of the following lemma is very similar to that of \Cref{lem:hl-sca-density-below-a-cone}. In particular, we need again that condition extensions cannot remove roots of forests (see \Cref{def:strong-product-tree-extension}).

\begin{lemma}\label{lem:pmtt2-level-homogeneous-density-below-a-cone}
There is some condition $c \in \Pb$
such that for every Turing functional $\Gamma$, the set of conditions $c' \in \Pb$
such that $c' \Vdash \Gamma^{G_0 \oplus \dots \oplus G_{d-1} \oplus Z} \neq C$
is $\Pb$-dense below $c$.
\end{lemma}
\begin{proof}
Assume for the sake of contradiction that for every condition $c \in \Pb$,
there is a Turing functional $\Gamma$ and some extension, every further extension of which $c'$ satisfies $c' \not\Vdash \Gamma^{G_0 \oplus \dots \oplus G_{d-1} \oplus Z} \neq C$.

As in \Cref{lem:hl-sca-density-below-a-cone}, we build (non-effectively) a $d$-tuple $S_0, \dots, S_{d-1}$ of infinite subsets of $T_0, \dots, T_{d-1}$, respectively, together with three functions:
\begin{itemize}
	\item[1.] $\operatorname{sets}: \NN \to \Pc(\baire) \times \dots \times \Pc(\baire)$ which to a level $\ell \in \NN$
	associates a $d$-tuple $X_0, \dots, X_{d-1}$ of infinite strong subforests of $T_0, \dots, T_{d-1}$, respectively, with common level function, such that $C \nTred X_0 \oplus \dots \oplus X_{d-1} \oplus Z$ and such that for every $j < d$, $S_j(\ell+1) = \roots(X_j)$;
	\item[2.] $\operatorname{stems}: \exprodtree{S}{d} \to \Subtree{<\omega}{T_0, \dots, T_{d-1}}$, which to a $\pi \in S_0(\ell) \times \dots \times S_{d-1}(\ell)$ associates a tuple $(F_0, \dots, F_{d-1})$ whose roots pointwise extend $\pi$, and such that $(F_0, \dots, F_{d-1}, \operatorname{sets}(\ell))$ is a $\Pb$-condition;
	\item[3.] $\operatorname{req}: \exprodtree{S}{d} \to \NN$, which to a $\pi \in S_0(\ell) \times \dots \times S_{d-1}(\ell)$ associates an index $e$ of a Turing functional $\Phi_e$
	such that for every $\Pb$-extension $c'$ of $(\operatorname{stems}(\pi), \operatorname{sets}(\ell))$,
	$c' \nVdash \Phi_e^{G_0 \oplus \dots \oplus G_{d-1} \oplus Z} \neq C$.
\end{itemize}
Moreover, we require that for every level $\ell \in \NN$,
$\operatorname{sets}(\ell+1)$ are strong subforests of $\operatorname{sets}(\ell)$
with common level function.

\bigskip
\noindent
The construction is now exactly the same as in the proof \Cref{lem:hl-sca-density-below-a-cone}.
Moreover, the following fact still holds:

\begin{fact}\label{fact:pmtt2-level-homogeneous-density-below-a-cone-condition-extension}
For every $\ell_0 < \ell_1$ and every $\pi \in S_0(\ell_0) \times \dots \times S_{d-1}(\ell_0)$, 
the tuple $(\operatorname{stems}(\pi), \operatorname{sets}(\ell_1))$ is a $\Pb$-extension of $(\operatorname{stems}(\pi), \operatorname{sets}(\ell_0))$. 
\end{fact}

By \Cref{thm:combinatorial-finite-hapern-lauchli}, there is a level $N \in \NN$
such that for every coloring $h: S_0(N) \times \dots \times S_{d-1}(N) \to k$,
there is some $\ell < N$, some $\pi \in S_0(\ell) \times \dots \times S_{d-1}(\ell)$
and some $(\ell+1)$-$\pi$-dense matrix $M \subseteq S_0(N) \times \dots \times S_{d-1}(N)$
monochromatic for~$h$. 
Fix such an $N$. Let $(X_0, \dots, X_{d-1}) = \operatorname{sets}(N-1)$. In particular, for every $j < d$, $S_j(N) = \roots(X_j)$.

Let $W$ be the set of pairs $(x, v) \in \NN \times \{0,1\}$ such that for every $k$-coloring $g: \Subtree{2}{X_0, \dots, X_{d-1}} \to k$, there is some $\ell < N$, some $\pi \in S_0(\ell) \times \dots \times S_{d-1}(\ell)$, and for every $j < d$, there is a finite set $H_j \subseteq X_j$ such that, letting $(F_0, \dots, F_{d-1}) = \operatorname{stems}(\pi)$, the following holds
\begin{itemize}
	\item[(a)] $(F_0 \cup H_0, \dots, F_{d-1} \cup H_{d-1}) \in \Subtree{<\omega}{U_0, \dots, U_{d-1}}$;
	\item[(b)] $g$ restricted to $\Subtree{2}{H_0, \dots, H_{d-1}}$ is monochromatic for some $i < k$;
	\item[(c)] $\Phi_e^{(F_0 \cup H_0) \oplus \dots \oplus (F_{d-1} \cup H_{d-1}) \oplus Z}(x)\downarrow = v$, where $e = \operatorname{req}(\pi)$.
\end{itemize}
By compactness, the set $W$ is $X_0 \oplus \dots \oplus X_{d-1} \oplus Z$-c.e.\ There are three cases:

\case{1}{$(x, 1-C(x)) \in W$ for some $x \in \NN$.} For $i < k$, let $g$ be the restriction of $f$ to $\Subtree{2}{X_0, \dots, X_{d-1}}$. Let $\ell < N$, $\pi = (F_0, \dots, F_{d-1})$ and $H_0, \dots, H_{d-1}$ witness that $(x, 1-C(x)) \in W$ for $g$.
	Let $\ell_1$ be a level large enough to witness stability of $f$ for every level of $H_j$, and let $\hat{X}_j = X_j \setminus \bigcup_{\ell_0 \leq \ell_1} X_j(\ell_0)$. Then $c' = (F_0 \cup H_0, \dots, F_{d-1} \cup H_{d-1}, \hat{X}_0, \dots, \hat{X}_{d-1})$ is a $\Pb$-extension of $(F_0, \dots, F_{d-1}, X_0, \dots, X_{d-1})$ which, by Fact \ref{fact:pmtt2-level-homogeneous-density-below-a-cone-condition-extension},
		is a $\Pb$-extension of $(\operatorname{stems}(\pi), \operatorname{sets}(\ell))$.
		Moreover
		$$
			c' \Vdash \Phi_e^{G_0 \oplus \dots \oplus G_{d-1} \oplus Z} \neq C
		$$
		where $e = \operatorname{req}(\pi)$. This contradicts item 3, 
		according to which $c$ has no such $\Pb$-extension.

\case{2}{$(x, C(x)) \not\in W$ for some $x \in \NN$.} Let $\Cc$ be the $\Pi^{0,X_0 \oplus \dots \oplus X_{d-1} \oplus Z}_1$ class of all colorings  $g: \Subtree{2}{X_0, \dots, X_{d-1}} \to k$ such that for every $\ell < N$, every $\pi \in S_0(\ell) \times \dots \times S_{d-1}(\ell)$ and every $H_0 \subseteq X_0, \dots, H_{d-1} \subseteq X_{d-1}$, one of (a), (b) or (c) fails for the pair $(x, C(x))$.
	By assumption, $\Cc \neq \emptyset$.

	By the cone avoidance basis theorem, there is some $g \in \Cc$ such that $C \nTred g \oplus X_0 \oplus \dots \oplus X_{d-1} \oplus Z$. 
	For every $j < d$, recall that $S_j(N) = \roots(X_j)$.
	We can see $X_0, \dots, X_{d-1}$ as a tuple $( X_j \uh \rho: j < d, \rho \in S_j(N) )$ of trees.
	For every $\theta = ( \rho_0, \dots, \rho_{d-1}) \in S_0(N) \times \dots \times S_{d-1}(N)$, we let $g_\theta$ be the restriction of $g$ over
	$$
		\Subtree{2}{X_0 \uh \rho_0, \dots, X_{d-1} \uh \rho_{d-1}} \to k
	$$
	By successive applications of cone avoidance of $\PMT{2}{}$ (\Cref{thm:cone-avoidance-MTT2}) applied to $g_\theta$ for each  $\theta \in S_0(N) \times \dots \times S_{d-1}(N)$,
	there is a tuple of infinite strong subtrees $( Y_{j,\rho}: j < d, \rho \in S_j(N) )$ of $( X_j \uh \rho: j < d, \rho \in S_j(N) )$ with common level function, together with a coloring $h: S_0(N) \times \dots \times S_{d-1}(N) \to k$,
		such that for every  $\theta = ( \rho_0, \dots, \rho_{d-1}) \in S_0(N) \times \dots \times S_{d-1}(N)$,
		 $g_\theta$ restricted to $\Subtree{2}{X_0 \uh \rho_0, \dots, X_{d-1} \uh \rho_{d-1}}$ is monochromatic for color $h(\theta)$. 
		
	By choice of $N$, there is some $\ell < N$, some $\pi = (\nu_0, \dots, \nu_{d-1}) \in S_0(\ell) \times \dots \times S_{d-1}(\ell)$ and some $(\ell+1)$-$\pi$-dense matrix $M \subseteq S_0(N) \times \dots \times S_{d-1}(N)$ monochromatic for~$h$. Say $M = M_0 \times \dots \times M_{d-1}$ and $i < k$ is the color of monochromaticity.
	For every $j < d$, let $P_j$ be the set of nodes in $S_j(N)$ which are not extensions of $\nu_j$. For every $j < k$, let $\hat{Y}_j = \bigcup_{\rho \in M_j \cup P_j} Y_{j,\rho}$.
	
	\begin{fact}\label{fact:pmtt2-level-homogeneous-case2-exts}
	$c' = (\operatorname{stems}(\pi), \hat{Y}_0, \dots, \hat{Y}_{d-1})$
	is a $\Pb$-extension of \[(\operatorname{stems}(\pi), \operatorname{sets}(\ell)).\]
	\end{fact}
	\begin{proof}
	Let $(\hat{X}_0, \dots, \hat{X}_{d-1}) = \operatorname{sets}(\ell)$.
	By item 1, for every $j < k$, $\roots(\hat{X}_j) = S_j(\ell+1)$. In particular, every root of $\hat{X}_j$ is extended by a root of $\hat{Y}_j$.
	\end{proof}
	
	In particular, by Fact \ref{fact:pmtt2-level-homogeneous-density-below-a-cone-condition-extension} and item 3, $c' \nVdash \Phi_e^{G_0 \oplus \dots \oplus G_{d-1} \oplus Z} \neq C$ where $e = \operatorname{req}(\pi)$.
	Moreover, since the forcing relation depends only on part of the reservoirs extending the roots of the stems, the following fact holds.
	
	\begin{fact}\label{fact:pmtt2-level-homogeneous-case2-force-diag}
	$c' \Vdash \Phi_e^{G_0 \oplus \dots \oplus G_{d-1} \oplus Z} \neq C$, where $e = \operatorname{req}(\pi)$.
	\end{fact}
	\begin{proof}
	We claim that $c' \Vdash \Phi_e^{G_0 \oplus \dots \oplus G_{d-1} \oplus Z}(x) \neq C(x)$,
	where as usual the inequality includes the possibility that the left side diverges. For every $j < d$, let $H_j \subseteq \hat{Y}_j$ be such that 
	$F_0 \cup H_0, \dots, F_{d-1} \cup H_{d-1}$ are finite strong subtrees of $T_0, \dots, T_{d-1}$, respectively, with common level function. In particular, 
	for every $j < d$, $H_j \subseteq  \bigcup_{\rho \in M_j} Y_{j,\rho}$,
	so $g$ restricted to $\Subtree{2}{H_0, \dots, H_{d-1}}$ is monochromatic for color $i$, hence since $g \in \Cc$, $\Phi_e^{(F_0 \cup H_0) \oplus \dots \oplus (F_{d-1} \cup H_{d-1}) \oplus Z}(x)$ either diverges, or is different from $C(x)$.
	This means  \[c' \Vdash \Phi_e^{G_0 \oplus \dots \oplus G_{d-1} \oplus Z}(x) \neq C(x),\] as needed.
	\end{proof}
	
	Fact \ref{fact:pmtt2-level-homogeneous-case2-force-diag} contradicts Fact \ref{fact:pmtt2-level-homogeneous-case2-exts} and item 3 of the construction, according to which $c$ has no such $\Pb$-extension. This completes Case 2.

\case{3}{otherwise.} Then $W$ is an $X_0 \oplus \dots \oplus X_{d-1} \oplus Z$-c.e.\ graph of the characteristic function of $C$, hence $C \leq X_0 \oplus \dots \oplus X_{d-1} \oplus Z$. This is a contradiction.
\end{proof}

We are now ready to complete the proof \Cref{thm:pmtt2-level-homogeneous-strong-cone-avoidance}.
By \Cref{lem:pmtt2-level-homogeneous-density-below-a-cone}, there is some cone avoiding level-homogeneous product tree condition $c$ below which, for every Turing functional $\Gamma$, the set 
$$
D_\Gamma = \{ c' \in \Pb: c' \Vdash \Gamma^{G_0 \oplus \dots \oplus G_{d-1} \oplus Z} \neq C \}
$$
is $\Pb$-dense.
Let $\Uc$ be a $\Pb$-filter which intersects every set $D_\Gamma$.
Then by definition of a product tree condition, $G^\Uc_0, \dots, G^\Uc_{d-1}$ are strong subtrees of $T_0, \dots, T_{d-1}$. Moreover, since all conditions in $\Pb$ are level-homogeneous, so are $G^\Uc_0, \dots, G^\Uc_{d-1}$. Since  $\Uc$ intersects every set $D_\Gamma$, we have $C \nTred G^\Uc_0 \oplus \dots \oplus G^\Uc_{d-1} \oplus Z$.
Lastly, by \Cref{lem:product-tree-genericity-implies-infinity}, $G^\Uc_0, \dots, G^\Uc_{d-1}$ are all infinite.
This completes the proof of \Cref{thm:pmtt2-level-homogeneous-strong-cone-avoidance}.
\end{proof} 

\begin{statement}\index{statement!$\PMT{n}{k,\ell}$}
	For all $n,k,\ell \geq 1$, $\PMT{n}{k,\ell}$ is the following statement. Let $T_0,\ldots,T_{d-1}$ be infinite trees with no leaves. For all colorings \[f: \Subtree{n}{T_0,\ldots,T_{d-1}} \to k\] there exists $(S_0,\ldots,S_{d-1}) \in \Subtree{\omega}{T_0,\ldots,T_{d-1}}$ such that $f$ takes at most $\ell$ values on $\Subtree{n}{S_0,\ldots,S_{d-1}}$.\end{statement}

\begin{theorem}\label{thm:pmtt2k2-strong-cone-avoidance}
$(\forall k)\PMT{2}{k,2}$ admits strong cone avoidance.
\end{theorem}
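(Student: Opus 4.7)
The plan is to reduce $\PMT{2}{k,2}$ to $\RT{2}{k,2}$ via the bridge established by \Cref{thm:pmtt2-level-homogeneous-strong-cone-avoidance}, exactly mirroring the way $\PMT{2}{}$ was reduced to itself plus Halpern-La\"{u}chli in \Cref{thm:cone-avoidance-MTT2}. Fix sets $C, Z$ with $C \nTred Z$, and consider an arbitrary instance consisting of infinite trees $T_0,\ldots,T_{d-1}$ with no leaves together with a coloring $f:\Subtree{2}{T_0,\ldots,T_{d-1}}\to k$. After replacing $Z$ by $Z \oplus T_0 \oplus \cdots \oplus T_{d-1} \oplus b$ (where $b$ is a bounding function for the trees), we may assume the trees are $Z$-computable and $Z$-computably bounded, so \Cref{thm:pmtt2-level-homogeneous-strong-cone-avoidance} applies. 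This produces strong subtrees $(S_0,\ldots,S_{d-1}) \in \Subtree{\omega}{T_0,\ldots,T_{d-1}}$ on which $f$ is level-homogeneous, with $C \nTred S_0 \oplus \cdots \oplus S_{d-1} \oplus Z$.

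Next I would exploit the fact that any strong subtree of height $2$ in $\Subtree{2}{S_0,\ldots,S_{d-1}}$ has a level function determined by a pair of levels $\ell_0 < \ell_1$ in the common indexing of $S_0,\ldots,S_{d-1}$. Since $f$ is level-homogeneous on the tuple $(S_0,\ldots,S_{d-1})$, this induces a well-defined coloring $g:[\NN]^2 \to k$, computable from $f \oplus S_0 \oplus \cdots \oplus S_{d-1}$, given by letting $g(\ell_0,\ell_1)$ be the common $f$-value of every $(E_0,\ldots,E_{d-1}) \in \Subtree{2}{S_0,\ldots,S_{d-1}}$ whose level function has range $\{\ell_0,\ell_1\}$.

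Now I would invoke strong cone avoidance of $\RT{2}{k,2}$, which is a known consequence of the Wang/Cholak--Patey line of work on thin set theorems for pairs (the threshold for cone avoidance of $\RT{2}{k,\ell}$ is $\ell \geq 2$, and the strengthening to strong cone avoidance at this threshold is established in the thin-set literature). Applied to the (arbitrary) coloring $g$ with parameter set $S_0 \oplus \cdots \oplus S_{d-1} \oplus Z$, it yields an infinite $H \subseteq \NN$ such that $|g''[H]^2| \leq 2$ and $C \nTred H \oplus S_0 \oplus \cdots \oplus S_{d-1} \oplus Z$. From $H$ and the $S_i$ one computes strong subtrees $V_0,\ldots,V_{d-1}$ of $S_0,\ldots,S_{d-1}$ with common level function whose range (inside the $S_i$-indexing) is exactly $H$, by inductively selecting, at each $h_n \in H$, the nodes of $S_i(h_n)$ that extend already-chosen nodes and preserve branching. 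Then $(V_0,\ldots,V_{d-1}) \in \Subtree{\omega}{T_0,\ldots,T_{d-1}}$, and by level-homogeneity of $f$ on the $S_i$, $f$ takes at most $2$ values on $\Subtree{2}{V_0,\ldots,V_{d-1}}$. Since $V_0 \oplus \cdots \oplus V_{d-1} \Tred H \oplus S_0 \oplus \cdots \oplus S_{d-1}$, we have $C \nTred V_0 \oplus \cdots \oplus V_{d-1} \oplus Z$, as required.

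The main obstacle, and the only non-trivial new input beyond what is already developed in this chapter, is citing (or verifying) strong cone avoidance of $\RT{2}{k,2}$ with sufficient uniformity: the level-homogeneity reduction produces a possibly non-computable $g$, so pure cone avoidance of $\RT{2}{k,2}$ is not enough. Everything else is a mechanical combination of \Cref{thm:pmtt2-level-homogeneous-strong-cone-avoidance} with a thin-set principle for pairs, exactly in the spirit of the two-step strategy (level-homogeneous step followed by a Ramsey-type step) used for \Cref{thm:cone-avoidance-MTT2}.
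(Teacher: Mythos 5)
Your proposal is essentially identical to the paper's proof: apply \Cref{thm:pmtt2-level-homogeneous-strong-cone-avoidance} to obtain level-homogeneous strong subtrees, read off the induced coloring $g:[\NN]^2 \to k$, invoke strong cone avoidance of $\RT{2}{k,2}$ (which the paper cites from Wang \cite[Theorem 3.2]{Wang2014Some} and Cholak--Patey \cite[Corollary 4.17]{Cholak2019Thin}, so the uncertainty you flag is resolved), and pull back the resulting set of levels to a tuple of strong subtrees via the principal function of $H$. One small point: your initial replacement of $Z$ by $Z \oplus T_0 \oplus \cdots \oplus T_{d-1} \oplus b$ is not licensed, since $C \nTred Z$ does not imply $C \nTred Z \oplus T_0 \oplus \cdots \oplus b$, and it is also unnecessary — the paper, as elsewhere in this chapter, simply fixes the trees to be $Z$-computable and $Z$-computably bounded at the outset.
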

\begin{proof}
Fix two sets $C$ and $Z$ such that $C \nTred Z$.
Also fix a $Z$-computable collection of $Z$-computably bounded trees with no leaves $T_0, \dots,\allowbreak T_{d-1} \subseteq \baire$.
Let $f: \Subtree{2}{T_0, \dots, T_{d-1}} \to k$ be a coloring.
By \Cref{thm:pmtt2-level-homogeneous-strong-cone-avoidance}, there exist strong
  subtrees $(S_0, \dots, S_{d-1}) \in \Subtree{\omega}{T_0, \dots, T_{d-1}}$ on which $f$ is level-homogeneous, and such that
  $C \nTred S_0 \oplus \dots \oplus S_{d-1} \oplus Z$.

Let  $g: [\NN]^2 \to k$ which to some $\{x_0 < x_1\} \in [\NN]^2$ associates 
the color of any element of $\Subtree{2}{S_0, \dots, S_{d-1}}$ whose level 
function has for range $\{x_0, x_1\}$. By strong cone avoidance of $\RT{2}{k,2}$ (see Wang~\cite{Wang2014Some}, Theorem 3.2, or Cholak and Patey~\cite{Cholak2019Thin}, Corollary 4.17),
there exists an infinite set $H \subseteq \NN$ such that $g$ restricted to $[H]^2$
uses at most 2 colors. Using $H$, one can compute strong subtrees $(U_0, \dots, U_{d-1}) \in \Subtree{\omega}{S_0, \dots, S_{d-1}}$ whose level function is the principal function of $H$.
By definition of $g$, $f$ uses at most 2 colors over $\Subtree{2}{U_0, \dots, U_{d-1}}$.
And by transitivity of the strong subtree relation, $(U_0, \dots, U_{d-1}) \in \Subtree{\omega}{T_0, \dots, T_{d-1}}$. 
This completes the proof of \Cref{thm:pmtt2k2-strong-cone-avoidance}.
\end{proof}

\begin{theorem}\label{thm:pmtt3k2-cone-avoidance}
$(\forall k)\PMT{3}{k,2}$ admits cone avoidance.
\end{theorem}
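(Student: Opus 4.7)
The plan is to mirror the proof of \Cref{thm:cone-avoidance-MTT2}, using stability as a bridge, but this time substituting strong cone avoidance of $\PMT{2}{k,2}$ (\Cref{thm:pmtt2k2-strong-cone-avoidance}) in place of the Halpern--La\"{u}chli theorem. Fix sets $C, Z$ with $C \nTred Z$, a $Z$-computable collection of $Z$-computably bounded trees with no leaves $T_0, \dots, T_{d-1}$, and a $Z$-computable coloring $f : \Subtree{3}{T_0, \dots, T_{d-1}} \to k$. First, I would apply \Cref{thm:cmtt-admits-strong-cone-avoidance} to $f$ to obtain $(S_0, \dots, S_{d-1}) \in \Subtree{\omega}{T_0, \dots, T_{d-1}}$ over which $f$ is stable and such that $C \nTred S_0 \oplus \cdots \oplus S_{d-1} \oplus Z$. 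Let $g : \Subtree{2}{S_0, \dots, S_{d-1}} \to k$ be the induced limit coloring. Viewing $g$, together with the trees $S_j$, as an arbitrary (possibly non-computable) instance of $\PMT{2}{k,2}$ relative to the oracle $A = S_0 \oplus \cdots \oplus S_{d-1} \oplus Z$, I would apply \Cref{thm:pmtt2k2-strong-cone-avoidance} to extract $(U_0, \dots, U_{d-1}) \in \Subtree{\omega}{S_0, \dots, S_{d-1}}$ on which $g$ takes values in some set $R \subseteq k$ with $|R| \leq 2$, and such that $C \nTred U_0 \oplus \cdots \oplus U_{d-1} \oplus A$. This is the one step where it matters that \emph{strong} cone avoidance was proved for $\PMT{2}{k,2}$: the coloring $g$ is only $\Delta^0_2$ in $A$, so ordinary cone avoidance would not apply.

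The remaining task is a computable thinning of $(U_0, \dots, U_{d-1})$ to subtrees $(V_0, \dots, V_{d-1})$ on which $f$ itself uses at most two colors. Fix, as a finite piece of non-uniform information, any two-element set $P \subseteq k$ with $R \subseteq P$. I would build $V_0, \dots, V_{d-1}$ level by level, $f \oplus U_0 \oplus \cdots \oplus U_{d-1}$-computably, by selecting a sequence $\ell_0 < \ell_1 < \cdots$ from the common level function of $(U_0, \dots, U_{d-1})$. Start with $V^0$ consisting of the roots of the $U_j$ at some initial level $\ell_0$. At stage $s$, having finite strong subtrees $V_0^s, \dots, V_{d-1}^s$ of common height $s+1$ with leaves at level $\ell_s$ of the $U_j$, search for the least $t > \ell_s$ such that for every $(F_0, \dots, F_{d-1}) \in \Subtree{2}{V_0^s, \dots, V_{d-1}^s}$ and every $H_0 \subseteq U_0(t), \dots, H_{d-1} \subseteq U_{d-1}(t)$ with $(F_0 \cup H_0, \dots, F_{d-1} \cup H_{d-1}) \in \Subtree{3}{U_0, \dots, U_{d-1}}$ we have $f(F_0 \cup H_0, \dots, F_{d-1} \cup H_{d-1}) \in P$. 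This search is decidable given $f$, the $U_j$, and $P$. Stability of $f$ is immediately inherited by the strong subtree $U_j$, so for each fixed $(F_0, \dots, F_{d-1})$ the $f$-color of every height-$3$ extension with leaves sufficiently deep equals $g(F_0, \dots, F_{d-1}) \in R \subseteq P$. Since $\Subtree{2}{V_0^s, \dots, V_{d-1}^s}$ is finite, a sufficiently large $t$ works uniformly for all of its elements, and the search terminates. Set $\ell_{s+1} = t$ and extend $V^s$ to $V^{s+1}$ by adjoining the appropriate nodes at level $t$.

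Let $V_j = \bigcup_s V_j^s$; these are strong subtrees of the $U_j$, and hence of the $T_j$ by transitivity, with common level function. Any $(G_0, \dots, G_{d-1}) \in \Subtree{3}{V_0, \dots, V_{d-1}}$ has its leaves introduced at some stage $\ell_{s+1}$, at which point its $2$-truncation already belonged to $\Subtree{2}{V_0^s, \dots, V_{d-1}^s}$, so by construction $f(G_0, \dots, G_{d-1}) \in P$. Thus $f$ takes at most two values on $\Subtree{3}{V_0, \dots, V_{d-1}}$. Since $V_0 \oplus \cdots \oplus V_{d-1}$ is computable from $f \oplus U_0 \oplus \cdots \oplus U_{d-1}$ (the finite set $P$ contributing no Turing information) and $f \Tred Z$, we get $V_0 \oplus \cdots \oplus V_{d-1} \Tred U_0 \oplus \cdots \oplus U_{d-1} \oplus Z$, and therefore $C \nTred V_0 \oplus \cdots \oplus V_{d-1} \oplus Z$ from the avoidance already established. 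The chief conceptual obstacle is the invocation of strong cone avoidance in Step 2; the rest is routine inheritance of stability and decidable bookkeeping.
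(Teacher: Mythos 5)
Your proof is correct and follows essentially the same route as the paper's: stabilize via \Cref{thm:cmtt-admits-strong-cone-avoidance}, apply strong cone avoidance of $\PMT{2}{k,2}$ to the limit coloring, then computably thin out the levels. The paper states the thinning step in one line; your explicit stage-by-stage construction and the observation about why strong (rather than ordinary) cone avoidance is needed for the $\Delta^{0,A}_2$ limit coloring are exactly the intended details.
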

\begin{proof}
Fix two sets $C$ and $Z$ such that $C \nTred Z$.
Also fix a $Z$-computable collection of $Z$-computably bounded trees with no leaves $T_0, \dots,\allowbreak T_{d-1} \subseteq \baire$.
Let $f: \Subtree{3}{T_0, \dots, T_{d-1}} \to k$ be a $Z$-computable coloring.
 
By \Cref{thm:cmtt-admits-strong-cone-avoidance}, there are strong subtrees \[(S_0, \dots, S_{d-1}) \in \Subtree{\omega}{T_0, \dots, T_{d-1}}\] on which $f$ is stable, and such that $C \nTred S_0 \oplus \dots \oplus S_{d-1} \oplus Z$.
Let $g: \Subtree{2}{S_0, \dots, S_{d-1}}$ be the limit coloring induced by stability of $f$. By strong cone avoidance of $\PMT{2}{k,2}$ (\Cref{thm:pmtt2k2-strong-cone-avoidance}),
there are strong subtrees $(U_0, \dots, U_{d-1}) \in \Subtree{\omega}{S_0, \dots, S_{d-1}}$ on which $g$ uses at most 2 colors, and $C \nTred U_0 \oplus \dots \oplus U_{d-1} \oplus Z$.
By $U_0 \oplus \dots \oplus U_{d-1} \oplus Z$-computably thinning out the set of levels,
we can obtain a tuple of strong subtrees $(V_0, \dots, V_{d-1}) \in \Subtree{\omega}{U_0, \dots, U_{d-1}}$, on which $f$ uses at most 2 colors. In particular, by transitivity of the strong subtree relation, $(V_0, \dots, V_{d-1}) \in \Subtree{\omega}{T_0, \dots, T_{d-1}}$.
Last, $C \nTred V_0 \oplus \dots \oplus V_{d-1} \oplus Z$.
This completes the proof of \Cref{thm:pmtt3k2-cone-avoidance}.
\end{proof}

\begin{corollary}
$(\forall k)\PMT{3}{k,2}$ does not imply $\ACA_0$ over $\RCA_0$.
\end{corollary}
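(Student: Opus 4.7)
The plan is to deduce this directly from the cone avoidance result just proved, using the standard bridge between cone avoidance and non-implication of $\ACA_0$ provided earlier in the manuscript. Specifically, \Cref{thm:pmtt3k2-cone-avoidance} establishes that $(\forall k)\PMT{3}{k,2}$ admits cone avoidance as a $\Pi^1_2$ problem, and \Cref{lem:cone-avoidance-not-aca} asserts that any $\Pi^1_2$ statement whose associated problem admits cone avoidance fails to imply $\ACA_0$ over $\RCA_0$, since one can iteratively construct an $\omega$-model of $\RCA_0$ together with the statement that omits $\emptyset'$.

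First, I would observe that $(\forall k)\PMT{3}{k,2}$ is expressible as a $\Pi^1_2$ statement in the language of second-order arithmetic: its instances are tuples $\langle d, T_0, \ldots, T_{d-1}, b, f, k\rangle$ with $T_0,\ldots,T_{d-1}$ infinite $b$-bounded trees with no leaves and $f: \Subtree{3}{T_0,\ldots,T_{d-1}} \to k$, and its solutions are tuples $(S_0,\ldots,S_{d-1}) \in \Subtree{\omega}{T_0,\ldots,T_{d-1}}$ on which $f$ takes at most two values. (All of this can be checked arithmetically given the relevant sets, in keeping with our standing conventions about instances being computably bounded.)

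Next, I would invoke \Cref{thm:pmtt3k2-cone-avoidance} to conclude that $(\forall k)\PMT{3}{k,2}$, as a problem, admits cone avoidance in the sense of \Cref{cone_avoidance_def} of the definition of cone avoidance: for every pair of sets $A, C \subseteq \NN$ with $C \nTred A$, every $A$-computable instance has a solution $Y$ with $C \nTred A \oplus Y$.

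Finally, plugging this into \Cref{lem:cone-avoidance-not-aca} yields an $\omega$-model of $\RCA_0 \wedge (\forall k)\PMT{3}{k,2}$ in which $\ACA_0$ fails, and hence the non-implication over $\RCA_0$. There is essentially no obstacle here — the work has all been done in the preceding theorem; this corollary is just the standard translation from computability-theoretic cone avoidance to a non-reducibility result in reverse mathematics.
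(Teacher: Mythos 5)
Your proposal is correct and is exactly the paper's argument: the corollary follows immediately by combining \Cref{thm:pmtt3k2-cone-avoidance} with \Cref{lem:cone-avoidance-not-aca}. Your additional remarks on the $\Pi^1_2$ form of the statement are accurate but not needed beyond what the cited lemma already assumes.
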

\begin{proof}
Immediate by \Cref{thm:pmtt3k2-cone-avoidance} and \Cref{lem:cone-avoidance-not-aca}.
\end{proof}


\chapter{Devlin's theorem}\label{sec:devlin}


\index{Devlin's theorem}

We now turn to some applications of Milliken's tree theorem and of our preceding work. We begin, in this chapter, with Devlin's theorem. This states that the dense linear orders admit big Ramsey numbers, meaning that for every $n$ there exists an $\ell$ such that for any finite coloring of the $n$-tuples of rationals, there exists a dense linear subordering of $\Qb$ on which the coloring takes only $\ell$ colors. This corresponds to the statement $(\forall k)\DT{n}{k,\ell}$. Moreover, the function which associates to each $n$ the minimal such $\ell$ is known, being the sequence of the so-called \index{odd tangent numbers} \index{$t_{\DT{}{}}$} ``odd tangent numbers'' $t_{\DT{}{}}(n)$, as defined in \cite[p.~147]{Todorcevic2010Ramsey}. (To list the first few, we have $t_{\DT{}{}}(1) = 1$, $t_{\DT{}{}}(2) = 2$, $t_{\DT{}{}}(3) = 16$, and $t_{\DT{}{}}(4) = 272$.)

Some variants of Devlin's theorem, such as the Erd\H{o}s-Rado theorem for colorings of rationals (see \Cref{subsect:er-theorem} below), have previously been studied in the reverse mathematics literature. So part of our motivation here is to see what new insights can be obtained using the tools from our earlier sections. Another, of course, is to understand more directly how Devlin's theorem compares to Milliken's tree theorem in its computable content. As remarked following \Cref{D:bigRamsey} above, one important idea here is to distinguish features that are intrinsic to a structure yet somehow hidden, as is the case when a structure has a big Ramsey degree bigger than $1$. This is what we alluded to as being describable by an ``enrichment'' of the language and gives rise to the notion of big Ramsey structure (\Cref{def:big-ramsey-structure}). In the case of Devlin's theorem, this can be made explicit using a representation of the rationals in terms of binary strings and so-called Joyce trees, which we define below. This is a somewhat technical construction, but it eliminates the need for more intricate combinatorial objects, such as embedding types, and will simplify our discussion not only of Devlin's theorem but also of the Rado graph theorem which we consider in the next chapter.

\section{A big Ramsey structure for dense linear orders}

As noted below Statement \ref{stmt:DT} above, the big Ramsey degree of Devlin's theorem for pairs is 2, while there is only one sub-order of size 2. We now describe an enrichment to the language of orders to obtain a big Ramsey structure for the dense linear orders with no endpoints which reflects this fact. We will see that we can represent any countable order as an anti-chain $A$ in $\cantor$ with respect to the prefix relation, equipped with the lexicographic order $\ltlex$. \index{$\ltlex$}\index{ordering!$\ltlex$} Then, given two elements $\sigma, \tau \in A$, some extra structure induced by the string representation can be exploited, such as the comparison between the length of $\sigma$ and the length of $\tau$, but as well with respect to length of their meet $\sigma \meet \tau$. 


As we will see in \Cref{thm:joyce-orders-can-be-coded} we can always ensure that the length of any string in $A^\wedge = \{ \sigma \meet \tau: \sigma, \tau \in A \}$ is unique\index{$A^\wedge$}. There are then 2 possible cases for a pair  $\sigma \ltlex \tau$ in $A$: either $|\sigma| >_\Nb |\tau|$, or $|\sigma| <_\Nb |\tau|$. The case of the equality has been ruled out since all lengths of the strings in $A^\wedge$ will be unique. While there are examples where their lengths are not unique (see the figure below), the example where they are unique will prove to be very illustrative. 

A finite subset of $n$ elements in $A$ can be represented as a particular kind of binary tree, known as a Joyce tree. A \emph{Joyce tree} \index{Joyce!tree}\index{tree!Joyce}of size $n$ is a labeled tree with $2n-1$ vertices, such that every non-leaf has exactly two immediate children. The labels \index{label!Joyce tree} are among $\{1, \dots, 2n-1\}$ and every child has a label greater than its parent (see Street~\cite{streets}). See \Cref{fig:joyce-trees} for some examples of Joyce trees.

\begin{figure}[htbp]
\centering
	\begin{tikzpicture}[scale=1.5,font=\normalsize]
		\tikzset{
			empty node/.style={circle,inner sep=0,fill=none},
			solid node/.style={circle,draw,inner sep=1.5,fill=black},
			hollow node/.style={circle,draw,inner sep=1.5,fill=white},
			gray node/.style={circle,draw={rgb:black,1;white,4},inner sep=1.5,fill={rgb:black,1;white,4}}
		}
		\tikzset{snake it/.style={decorate, decoration=snake, line cap=round}}
		\tikzset{gray line/.style={line cap=round,thick,color={rgb:black,1;white,4}}}
		\tikzset{thick line/.style={line cap=round,rounded corners=0.1mm,thick}}
		\node (a)[empty node,label=below:{$1$}] at (0,0) {};
		\node (b)[empty node,label=below:{$2$}] at (-0.4,0.5) {};
		\node (c)[empty node,label=above:{$3$}] at (0.4,0.5) {};
		\node (d)[empty node,label=above:{$4$}] at (-0.8,1) {};
		\node (e)[empty node,label=above:{$5$}] at (0,1) {};
		\draw[thick line] (a.center) to (b.center) to (d.center);
		\draw[thick line] (b.center) to (e.center);
		\draw[thick line] (a.center) to (c.center);
	\end{tikzpicture}
	\hspace{5mm}
	\begin{tikzpicture}[scale=1.5,font=\normalsize]
		\tikzset{
			empty node/.style={circle,inner sep=0,fill=none},
			solid node/.style={circle,draw,inner sep=1.5,fill=black},
			hollow node/.style={circle,draw,inner sep=1.5,fill=white},
			gray node/.style={circle,draw={rgb:black,1;white,4},inner sep=1.5,fill={rgb:black,1;white,4}}
		}
		\tikzset{snake it/.style={decorate, decoration=snake, line cap=round}}
		\tikzset{gray line/.style={line cap=round,thick,color={rgb:black,1;white,4}}}
		\tikzset{thick line/.style={line cap=round,rounded corners=0.1mm,thick}}
		\node (a)[empty node,label=below:{$1$}] at (0,0) {};
		\node (b)[empty node,label=below:{$3$}] at (-0.4,0.5) {};
		\node (c)[empty node,label=above:{$2$}] at (0.4,0.5) {};
		\node (d)[empty node,label=above:{$4$}] at (-0.8,1) {};
		\node (e)[empty node,label=above:{$5$}] at (0,1) {};
		\draw[thick line] (a.center) to (b.center) to (d.center);
		\draw[thick line] (b.center) to (e.center);
		\draw[thick line] (a.center) to (c.center);
	\end{tikzpicture}
	\hspace{5mm}
	\begin{tikzpicture}[scale=1.5,font=\normalsize]
		\tikzset{
			empty node/.style={circle,inner sep=0,fill=none},
			solid node/.style={circle,draw,inner sep=1.5,fill=black},
			hollow node/.style={circle,draw,inner sep=1.5,fill=white},
			gray node/.style={circle,draw={rgb:black,1;white,4},inner sep=1.5,fill={rgb:black,1;white,4}}
		}
		\tikzset{snake it/.style={decorate, decoration=snake, line cap=round}}
		\tikzset{gray line/.style={line cap=round,thick,color={rgb:black,1;white,4}}}
		\tikzset{thick line/.style={line cap=round,rounded corners=0.1mm,thick}}
		\node (a)[empty node,label=below:{$1$}] at (0,0) {};
		\node (b)[empty node,label=below:{$2$}] at (-0.4,0.5) {};
		\node (c)[empty node,label=above:{$4$}] at (0.4,0.5) {};
		\node (d)[empty node,label=above:{$3$}] at (-0.8,1) {};
		\node (e)[empty node,label=above:{$5$}] at (0,1) {};
		\draw[thick line] (a.center) to (b.center) to (d.center);
		\draw[thick line] (b.center) to (e.center);
		\draw[thick line] (a.center) to (c.center);
	\end{tikzpicture}
	\hspace{5mm}
	\begin{tikzpicture}[scale=1.5,font=\normalsize]
		\tikzset{
			empty node/.style={circle,inner sep=0,fill=none},
			solid node/.style={circle,draw,inner sep=1.5,fill=black},
			hollow node/.style={circle,draw,inner sep=1.5,fill=white},
			gray node/.style={circle,draw={rgb:black,1;white,4},inner sep=1.5,fill={rgb:black,1;white,4}}
		}
		\tikzset{snake it/.style={decorate, decoration=snake, line cap=round}}
		\tikzset{gray line/.style={line cap=round,thick,color={rgb:black,1;white,4}}}
		\tikzset{thick line/.style={line cap=round,rounded corners=0.1mm,thick}}
		\node (a)[empty node,label=below:{$1$}] at (0,0) {};
		\node (b)[empty node,label=below:{$2$}] at (-0.4,0.5) {};
		\node (c)[empty node,label=above:{$5$}] at (0.4,0.5) {};
		\node (d)[empty node,label=above:{$3$}] at (-0.8,1) {};
		\node (e)[empty node,label=above:{$4$}] at (0,1) {};
		\draw[thick line] (a.center) to (b.center) to (d.center);
		\draw[thick line] (b.center) to (e.center);
		\draw[thick line] (a.center) to (c.center);
	\end{tikzpicture}\\
	\medskip
	\begin{tikzpicture}[scale=1.5,font=\normalsize]
		\tikzset{
			empty node/.style={circle,inner sep=0,fill=none},
			solid node/.style={circle,draw,inner sep=1.5,fill=black},
			hollow node/.style={circle,draw,inner sep=1.5,fill=white},
			gray node/.style={circle,draw={rgb:black,1;white,4},inner sep=1.5,fill={rgb:black,1;white,4}}
		}
		\tikzset{snake it/.style={decorate, decoration=snake, line cap=round}}
		\tikzset{gray line/.style={line cap=round,thick,color={rgb:black,1;white,4}}}
		\tikzset{thick line/.style={line cap=round,rounded corners=0.1mm,thick}}
		\node (a)[empty node,label=below:{$1$}] at (0,0) {};
		\node (b)[empty node,label=below:{$2$}] at (-0.4,0.5) {};
		\node (c)[empty node,label=above:{$3$}] at (0.4,0.5) {};
		\node (d)[empty node,label=above:{$5$}] at (-0.8,1) {};
		\node (e)[empty node,label=above:{$4$}] at (0,1) {};
		\draw[thick line] (a.center) to (b.center) to (d.center);
		\draw[thick line] (b.center) to (e.center);
		\draw[thick line] (a.center) to (c.center);
	\end{tikzpicture}
	\hspace{5mm}
	\begin{tikzpicture}[scale=1.5,font=\normalsize]
		\tikzset{
			empty node/.style={circle,inner sep=0,fill=none},
			solid node/.style={circle,draw,inner sep=1.5,fill=black},
			hollow node/.style={circle,draw,inner sep=1.5,fill=white},
			gray node/.style={circle,draw={rgb:black,1;white,4},inner sep=1.5,fill={rgb:black,1;white,4}}
		}
		\tikzset{snake it/.style={decorate, decoration=snake, line cap=round}}
		\tikzset{gray line/.style={line cap=round,thick,color={rgb:black,1;white,4}}}
		\tikzset{thick line/.style={line cap=round,rounded corners=0.1mm,thick}}
		\node (a)[empty node,label=below:{$1$}] at (0,0) {};
		\node (b)[empty node,label=below:{$3$}] at (-0.4,0.5) {};
		\node (c)[empty node,label=above:{$2$}] at (0.4,0.5) {};
		\node (d)[empty node,label=above:{$5$}] at (-0.8,1) {};
		\node (e)[empty node,label=above:{$4$}] at (0,1) {};
		\draw[thick line] (a.center) to (b.center) to (d.center);
		\draw[thick line] (b.center) to (e.center);
		\draw[thick line] (a.center) to (c.center);
	\end{tikzpicture}
	\hspace{5mm}
	\begin{tikzpicture}[scale=1.5,font=\normalsize]
		\tikzset{
			empty node/.style={circle,inner sep=0,fill=none},
			solid node/.style={circle,draw,inner sep=1.5,fill=black},
			hollow node/.style={circle,draw,inner sep=1.5,fill=white},
			gray node/.style={circle,draw={rgb:black,1;white,4},inner sep=1.5,fill={rgb:black,1;white,4}}
		}
		\tikzset{snake it/.style={decorate, decoration=snake, line cap=round}}
		\tikzset{gray line/.style={line cap=round,thick,color={rgb:black,1;white,4}}}
		\tikzset{thick line/.style={line cap=round,rounded corners=0.1mm,thick}}
		\node (a)[empty node,label=below:{$1$}] at (0,0) {};
		\node (b)[empty node,label=below:{$2$}] at (-0.4,0.5) {};
		\node (c)[empty node,label=above:{$4$}] at (0.4,0.5) {};
		\node (d)[empty node,label=above:{$5$}] at (-0.8,1) {};
		\node (e)[empty node,label=above:{$3$}] at (0,1) {};
		\draw[thick line] (a.center) to (b.center) to (d.center);
		\draw[thick line] (b.center) to (e.center);
		\draw[thick line] (a.center) to (c.center);
	\end{tikzpicture}
	\hspace{5mm}
	\begin{tikzpicture}[scale=1.5,font=\normalsize]
		\tikzset{
			empty node/.style={circle,inner sep=0,fill=none},
			solid node/.style={circle,draw,inner sep=1.5,fill=black},
			hollow node/.style={circle,draw,inner sep=1.5,fill=white},
			gray node/.style={circle,draw={rgb:black,1;white,4},inner sep=1.5,fill={rgb:black,1;white,4}}
		}
		\tikzset{snake it/.style={decorate, decoration=snake, line cap=round}}
		\tikzset{gray line/.style={line cap=round,thick,color={rgb:black,1;white,4}}}
		\tikzset{thick line/.style={line cap=round,rounded corners=0.1mm,thick}}
		\node (a)[empty node,label=below:{$1$}] at (0,0) {};
		\node (b)[empty node,label=below:{$2$}] at (-0.4,0.5) {};
		\node (c)[empty node,label=above:{$5$}] at (0.4,0.5) {};
		\node (d)[empty node,label=above:{$4$}] at (-0.8,1) {};
		\node (e)[empty node,label=above:{$3$}] at (0,1) {};
		\draw[thick line] (a.center) to (b.center) to (d.center);
		\draw[thick line] (b.center) to (e.center);
		\draw[thick line] (a.center) to (c.center);
	\end{tikzpicture}
\caption{Eight Joyce trees among the sixteen Joyce trees with three leaves. The eight remaining Joyce trees are mirror reflections of these along a vertical axis though the root node.}
\label{fig:joyce-trees}
\end{figure}
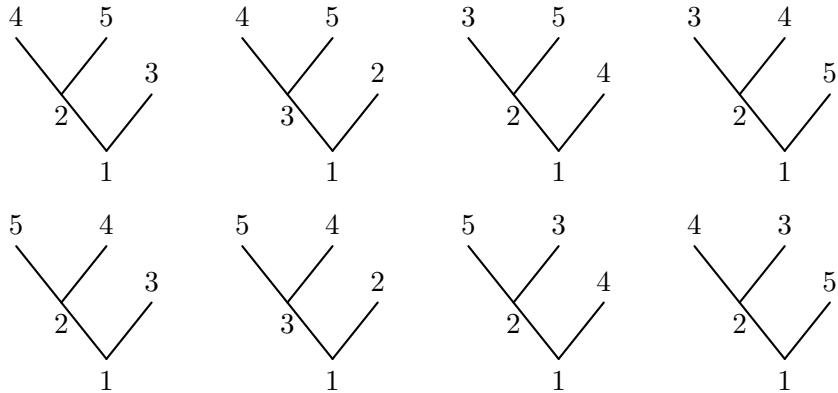


Recall that a function is \emph{symmetric} \index{function!symmetric}if its value is the same no matter the order of its arguments. In what follows, we use the string representation of a countable order $(X, <)$ to enrich the order with a symmetric binary function $\meetlevel{\cdot}{\cdot}: X^2\to \Nb$.   We shall later refer to any value of the range of $\meetlevel{\cdot}{\cdot}$ as a \emph{label}. \index{label!Joyce order}  
The label of an element $x \in X$ is $\meetlevel{x}{x}$ and $\meetlevel{x}{y}$ is the label given to $x \meet y$. Again the illustrative example is when all lengths are unique, to consider the lengths of these nodes as the labels.  

However there are other examples. Consider the first tree in \Cref{fig:joyce-trees} and name its leaves $x$, $y$ and $z$, from left to right. This tree induces a symmetric function $\meetlevel{\cdot}{\cdot}: \{x, y, z\}^2 \to \{1, \dots, 5\}$ as follows: $\meetlevel{x}{x} = 4$, $\meetlevel{y}{y} = 5$, $\meetlevel{z}{z} = 3$, $\meetlevel{x}{y} = 2$, $\meetlevel{x}{z} = 1$, $\meetlevel{y}{z} = 1$. The tree representation also induces an ordering of the labels $\{1, \dots, 5\}$ by reading them from left to right. In this case, $4 < 2 < 5 < 1 < 3$. As we will later see in \Cref{JOtoJT}  $(\{x, y, z\}, <, \meetlevel{\cdot}{\cdot})$ can be used to recover the original Joyce tree.

%

\begin{definition}\label{def:jo}\index{$\meetlevel{\cdot}{\cdot}$}\index{order!Joyce}\index{Joyce!order}
  A \emph{Joyce order} is an order $(X,<)$ equipped with a symmetric function $\meetlevel{\cdot}{\cdot}: X^2\to \Nb$ such that for every $x, y, z, t\in X$, not all equal, with $x\leq y$ and $z\leq t$:
  \begin{enumerate}
  \item[\Jo{1}] $\meetlevel{x}{y} <_\Nb \meetlevel{x}{z}\implies(x< y\iff z< y)$;
  \item[\Jo{2}] $\meetlevel{x}{y}<_\Nb\meetlevel{x}{z}\implies\meetlevel{x}{y} = \meetlevel{z}{y}$;
  \item[\Jo{3}] $\meetlevel{x}{y}=\meetlevel{z}{t}\implies \meetlevel{x}{y}<_\Nb\min(\meetlevel{x}{z}, \meetlevel{y}{t})$.
  \end{enumerate}
\end{definition}

Note that the axioms of a Joyce order are universal, hence every subset of a Joyce order induces again a Joyce order.



Every Joyce tree gives rise to a Joyce order. Let $X$ be the set of leaves, $\meetlevel{x}{y}$ be the label of the node $x \meet y$, and $\ltlex$ be the lexicographical order on $X$. Then, we claim that $(X,\ltlex,\meetlevel\cdot\cdot)$ is a Joyce order: If $\meetlevel x y<_\Nb\meetlevel x z$, then as $x\meet y$ and $x\meet z$ are comparable and every child has a label greater than its parent, $x\meet y\prec x\meet z$. But then, $x\ltlex y\iff (x\meet z)\ltlex y\iff z\ltlex y$, and $x\meet y = (x\meet z)\meet y=z\meet y$, so both \Jo1 and \Jo2 hold. Finally let $x,y,z,t\in X$ not all equal be such that $\meetlevel xy=\meetlevel zt$. By injectivity of the labelling, $x\meet y= z\meet t$. If $x=y$ then $x\meet y =x$ is a leaf, so $z\meet t$ is a leaf, thus $x=y=z=t$, a contradiction. Therefore, $x\neq y$ and $z\neq t$. Now, suppose $y=z$. As a Joyce tree is binary branching, and $x\ltlex y=z\ltlex t$, one cannot have $x\meet y=z\meet t$, a contradiction. Finally suppose that $x,y,z,t$ are all different. The fact that $x\ltlex y$ and $z\ltlex t$ implies $(x\meet y)0\preceq x$ and $(x\meet y)0\preceq z$, so $x\meet y\prec x\meet z$. By the fact that the label of a child is greater than those of its parents, $\meetlevel xy<_\Nb\meetlevel xz$. Similarly, $\meetlevel xy<_\Nb\meetlevel yt$.  

We can use the illustrative example when $X \subseteq \omega^{<\omega}$, $<$ is $<_{lex}$, and all lengths in $X^\wedge = \{\sigma \meet \tau: \sigma, \tau \in X \}$ are unique to get an intuition about these rules. Assume $x\meet z$ is longer than $x\meet y$.  \Jo{1} says that either $y$ is to the left  of both $x$ and $z$ (i.e. $y< x$ and $y < z$) or $y$ is to the right of both $x$ and $z$. \Jo{2} says that $x\meet y = z \meet y $ (after all $x \meet y \preceq x \meet z$). 
The third axiom \Jo{3} says that if both pairs $x, y$ and $z, t$ have a meet with the same label, $x< y$, and $z < t$, then $x \meet z$ must properly above $x \meet y$ and similarly for $y \meet t$.  This implies the meets are binary branching in $X^\wedge$. This also implies that different meets must have different labels.


\begin{lemma}\label{lem:prop-of-JO}
  The following is true in any Joyce order $(X,<,\meetlevel{\cdot}{\cdot})$:
  \begin{enumerate}
  \item\label{it:prop-of-JO-0} for all $x,y \in X$ with $x\neq y$, $\meetlevel{x}{y} <_\Nb \min(\meetlevel{x}{x}, \meetlevel{y}{y})$;
  \item\label{it:prop-of-JO-1} for all $x,y \in X$ with $x\neq z$, $\meetlevel{x}{x}\neq \meetlevel{z}{z}$;
  \item\label{it:prop-of-JO-2} for all $x,z,t \in X$ with $z\neq t$, $\meetlevel{x}{x}\neq \meetlevel{z}{t}$.
  \end{enumerate}
\end{lemma}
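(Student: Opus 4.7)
My plan is to prove the three items in the order \ref{it:prop-of-JO-0}, \ref{it:prop-of-JO-2}, \ref{it:prop-of-JO-1}, since \ref{it:prop-of-JO-0} is the only one requiring real work, and the other two will follow from it by short contradiction arguments using \Jo{3}. Throughout, I will use the fact that the axioms \Jo{1}--\Jo{3} are symmetric in the two arguments of $\meetlevel{\cdot}{\cdot}$, so whenever the universal hypotheses demand $x \leq y$ or $z \leq t$ I can relabel silently.

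For item \ref{it:prop-of-JO-0}, fix $x \neq y$ and suppose for contradiction that $\meetlevel{x}{y} \geq_\Nb \meetlevel{x}{x}$. I split into two subcases. If $\meetlevel{x}{y} = \meetlevel{x}{x}$, I apply \Jo{3} to the pairs $(x,y)$ and $(x,x)$ (which are not all equal because $x \neq y$); this yields $\meetlevel{x}{y} <_\Nb \min(\meetlevel{x}{x},\meetlevel{y}{x})$, and in particular $\meetlevel{x}{y} <_\Nb \meetlevel{x}{x}$, contradicting the assumed equality. If instead $\meetlevel{x}{y} >_\Nb \meetlevel{x}{x}$, then I apply \Jo{2} in the form $\meetlevel{x}{x} <_\Nb \meetlevel{x}{y}$ (taking the $y$ of the axiom to be $x$ and the $z$ of the axiom to be $y$), which gives $\meetlevel{x}{x} = \meetlevel{y}{x} = \meetlevel{x}{y}$, again a contradiction. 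Hence $\meetlevel{x}{y} <_\Nb \meetlevel{x}{x}$, and the symmetric argument (interchanging the roles of $x$ and $y$) yields $\meetlevel{x}{y} <_\Nb \meetlevel{y}{y}$, so that the desired minimum bound holds.

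For item \ref{it:prop-of-JO-2}, suppose $z \neq t$ and $\meetlevel{x}{x} = \meetlevel{z}{t}$ for some $x$. If $x \in \{z,t\}$, say $x = z$, then $\meetlevel{x}{x} = \meetlevel{x}{t}$ with $x \neq t$, directly contradicting item \ref{it:prop-of-JO-0}. Otherwise $x \notin \{z,t\}$, so the tuple $(x,x,z,t)$ is not all equal, and \Jo{3} applied to the pairs $(x,x)$ and $(z,t)$ (after possibly swapping $z$ and $t$ so that $z \leq t$) yields $\meetlevel{x}{x} <_\Nb \min(\meetlevel{x}{z},\meetlevel{x}{t})$; but item \ref{it:prop-of-JO-0} gives $\meetlevel{x}{z} <_\Nb \meetlevel{x}{x}$, again a contradiction. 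Finally, item \ref{it:prop-of-JO-1} is an immediate specialization of item \ref{it:prop-of-JO-2} taking $z$ and $t$ to be $z$ and $z$... no, that has $z = t$, so item \ref{it:prop-of-JO-1} does not formally fall out of \ref{it:prop-of-JO-2} and needs its own (equally short) argument: if $x \neq z$ but $\meetlevel{x}{x} = \meetlevel{z}{z}$, then \Jo{3} applied to the pairs $(x,x)$ and $(z,z)$ gives $\meetlevel{x}{x} <_\Nb \meetlevel{x}{z}$, contradicting item \ref{it:prop-of-JO-0}.

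The main (in fact only) obstacle is getting item \ref{it:prop-of-JO-0} off the ground, because the axioms of a Joyce order never directly compare $\meetlevel{x}{y}$ to the ``label of a singleton'' $\meetlevel{x}{x}$; the key observation making the above work is that the restrictions ``not all equal'' in \Jo{2} and \Jo{3} still allow one to plug in the degenerate pair $(x,x)$ as long as $x \neq y$, which lets us treat $\meetlevel{x}{x}$ on the same footing as any other label when performing the case split on whether $\meetlevel{x}{y}$ equals or strictly exceeds it.
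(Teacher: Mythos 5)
Your proof is correct, but your argument for item \ref{it:prop-of-JO-0} is considerably longer than it needs to be, and the "main obstacle" you identify at the end is not really there. The paper proves item \ref{it:prop-of-JO-0} in a single line: apply \Jo{3} with the substitution $z := x$ and $t := y$, so that \emph{both} pairs are $(x,y)$. The hypothesis $\meetlevel{x}{y} = \meetlevel{z}{t}$ of \Jo{3} is then the trivially true identity $\meetlevel{x}{y} = \meetlevel{x}{y}$, the ``not all equal'' clause is satisfied because $x \neq y$, and the conclusion reads off directly as $\meetlevel{x}{y} <_\Nb \min(\meetlevel{x}{z}, \meetlevel{y}{t}) = \min(\meetlevel{x}{x}, \meetlevel{y}{y})$, which is exactly the claim. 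You instead trivialized the \emph{second} pair down to $(x,x)$, which forces you to assume $\meetlevel{x}{y} = \meetlevel{x}{x}$ to satisfy the hypothesis of \Jo{3}, which in turn forces the contradiction framing, the case split on whether $\meetlevel{x}{y}$ equals or exceeds $\meetlevel{x}{x}$, and the auxiliary appeal to \Jo{2} in the strict-inequality case. Both routes are sound, but the observation you should take away is that the useful degenerate instantiation of \Jo{3} here is $(z,t) := (x,y)$, not $(z,t) := (x,x)$: the first makes the hypothesis free, while the second makes it a genuine assumption you must then discharge. Your treatments of items \ref{it:prop-of-JO-1} and \ref{it:prop-of-JO-2} agree with the paper's (\Jo{3} with the degenerate pair $(x,x)$ followed by a contradiction with item \ref{it:prop-of-JO-0}); the extra subcase $x \in \{z,t\}$ you insert in item \ref{it:prop-of-JO-2} is harmless but unnecessary, since the paper's uniform argument handles it by noting that $z \neq t$ guarantees at least one of $x \neq z$ or $x \neq t$, which is all that is needed to contradict the minimum.
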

\begin{proof}
  Item 1: by \Jo{3} with $x=z$ and $y=t$.
  Item 2: by \Jo{3} with $x=y$ and $z=t$, $\meetlevel{x}{x}=\meetlevel{z}{z}\implies \meetlevel{x}{x}<_\Nb\min(\meetlevel{x}{z}, \meetlevel{x}{z})$. By Item 1, $\meetlevel{x}{x}<_\Nb\min(\meetlevel{x}{z}, \meetlevel{x}{z})$ cannot hold, so $\meetlevel{x}{x}\neq\meetlevel{z}{z}$.
  Item 3: by \Jo{3} with $x=y$, $\meetlevel{x}{x}=\meetlevel{z}{t}\implies \meetlevel{x}{x}<_\Nb\min(\meetlevel{x}{z}, \meetlevel{x}{t})$. Since $z \neq t$, then either $x \neq z$ or $x \neq t$. In either case, by Item 1, $\meetlevel{x}{z} <_\Nb \meetlevel{x}{x}$ or $\meetlevel{x}{t} <_\Nb \meetlevel{x}{x}$, so $\meetlevel{x}{x}<_\Nb\min(\meetlevel{x}{z}, \meetlevel{x}{t})$ cannot hold, hence $\meetlevel{x}{x}\neq\meetlevel{z}{t}$.
\end{proof}

The first item says that the label of $x \meet y$ is less than the labels of $x$ and $y$.  The second says that each leaf has a unique label.  The third says that no meet can have the same label as a leaf.  Items 4 and 5 of the following lemma show that the labels of the leafs and meets are always different. 


\begin{lemma}\label{lem:JO-are-trees}
Let $(X,<,\meetlevel{\cdot}{\cdot})$ be a (finite or infinite) Joyce order with minimal label $\ell \in \omega$. Let $x \leq y \in X$ be such that $\meetlevel{x}{y} = \ell$ and let $X_x = \{ z \in X: \meetlevel{x}{z} >_\Nb \ell \}$
and $X_y = \{ z \in X: \meetlevel{y}{z} >_\Nb \ell \}$. The following holds:
\begin{enumerate}
	\item\label{it:JO-are-trees-0} if $x = y$ then $|X| = 1$ and $X_x = X_y = \emptyset$;
	\item\label{it:JO-are-trees-1} if $x < y$ then $X = X_x \sqcup X_y$ with $x \in X_x$ and $y \in X_y$;
	\item\label{it:JO-are-trees-2} for all $z \in X_x$ and all $t \in X_y, z < t$ and $\meetlevel{z}{t} = \ell$;
	\item\label{it:JO-are-trees-3} the labels over $X_x^2$ and $Y_y^2$ are disjoint;
	\item\label{it:JO-are-trees-4} if $|X| = n$ then there are $2n+1$ distinct labels over $X^2$.
\end{enumerate}
\end{lemma}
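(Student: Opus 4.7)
My plan is to establish the five items sequentially, since each builds on the previous. The common toolkit is the Joyce axioms \Jo{1}--\Jo{3}, the symmetry of $\meetlevel{\cdot}{\cdot}$, and the preceding lemma (in particular its first item, which says $\meetlevel{x}{y} <_\Nb \min(\meetlevel{x}{x}, \meetlevel{y}{y})$ whenever $x \neq y$).

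Items 1 and 2 rely on minimality of $\ell$ together with that lemma. If $x = y$, then $\ell = \meetlevel{x}{x}$, so any $z \neq x$ would give $\meetlevel{x}{z} <_\Nb \ell$, violating minimality; hence $X = \{x\}$, and both $X_x, X_y$ are empty because $\meetlevel{x}{x} = \ell$ is not strictly greater than $\ell$. If $x < y$, the same lemma puts $x \in X_x$ and $y \in X_y$. For an arbitrary $z$, both $\meetlevel{x}{z}$ and $\meetlevel{y}{z}$ are $\geq_\Nb \ell$; the sub-case ``both equal $\ell$'' is excluded by \Jo{3} applied to the pairs $(x,z)$ and $(y,z)$, ordered appropriately to satisfy the hypotheses of the axiom, which forces the contradiction $\ell <_\Nb \ell$; the sub-case ``both exceed $\ell$'' is excluded by \Jo{2} pivoted at $x$, which forces $\meetlevel{y}{z} = \meetlevel{x}{y} = \ell$. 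So exactly one of $\meetlevel{x}{z}, \meetlevel{y}{z}$ strictly exceeds $\ell$, placing $z$ in precisely one of $X_x, X_y$.

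For item 3, I would handle the degenerate cases $z = x$ and $t = y$ directly from item 2, and in the main case chain two \Jo{2} applications: pivoted at $x$, the inequality $\meetlevel{x}{y} <_\Nb \meetlevel{x}{z}$ gives $\meetlevel{z}{y} = \ell$; pivoted at $y$, the inequality $\meetlevel{y}{z} <_\Nb \meetlevel{y}{t}$ gives $\meetlevel{z}{t} = \ell$. This second step requires a preliminary observation that the implication $\meetlevel{a}{b} <_\Nb \meetlevel{a}{c} \implies \meetlevel{a}{b} = \meetlevel{b}{c}$ holds for arbitrary $a, b, c$ regardless of the order between $a$ and $b$; this is derivable from \Jo{2} and the symmetry of $\meetlevel{\cdot}{\cdot}$ by a short case analysis on the relative order of $a, b, c$, and I expect this bookkeeping step to be the main technical obstacle. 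For the strict inequality $z < t$: \Jo{1} applied to $\meetlevel{x}{y} <_\Nb \meetlevel{x}{z}$ yields $z < y$, and if instead we had $t \leq z$ then \Jo{1} applied to the pair $(t, z)$ with third element $y$---using $\meetlevel{t}{z} = \ell$ (just established) and $\meetlevel{t}{y} >_\Nb \ell$ (from $t \in X_y$)---would force $y < z$, contradicting $z < y$; together with $z \neq t$ from disjointness, this gives $z < t$.

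For item 4, suppose $a \leq b$ in $X_x$ and $c \leq d$ in $X_y$ satisfy $\meetlevel{a}{b} = \meetlevel{c}{d}$; \Jo{3} yields $\meetlevel{a}{b} <_\Nb \min(\meetlevel{a}{c}, \meetlevel{b}{d})$, but by item 3 both arguments of the minimum equal $\ell$, contradicting minimality of $\ell$. Item 5 follows by induction on $n = |X|$: the base case $n=1$ comes from item 1, giving exactly one label; for $n \geq 2$, item 1 forces $x < y$, item 2 splits $X = X_x \sqcup X_y$ into strictly smaller Joyce suborders (the three Joyce axioms being universal are inherited by subsets), item 4 makes the label sets over $X_x^2$ and $X_y^2$ disjoint, and item 3 contributes the single additional label $\ell$, distinct from those appearing in $X_x^2$ and $X_y^2$ since the minimum label of each sub-order strictly exceeds $\ell$ by the preceding lemma. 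This produces the recurrence $L(n) = L(|X_x|) + L(|X_y|) + 1$ with $L(1) = 1$, whose solution is $L(n) = 2n - 1$, matching the standard count of $2n-1$ vertices in a Joyce tree on $n$ leaves (and the specific example with three leaves giving five labels worked out in the text); accordingly, I read the ``$2n+1$'' in the statement as a typographical slip for $2n-1$, and the inductive argument above actually establishes the latter.
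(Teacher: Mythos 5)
Your proposal is correct and follows essentially the same route as the paper's proof: minimality of $\ell$ plus \Cref{lem:prop-of-JO} for items 1--2, two applications of \Jo{2} for item 3, \Jo{3} for item 4, and the same induction with recurrence $L(n)=L(|X_x|)+L(|X_y|)+1$ for item 5. You are also right on both of your side remarks: the stated $2n+1$ is a typographical slip for $2n-1$ (the paper's own proof of item 5 establishes $2n-1$), and your \Jo{1} argument for $z<t$ in item 3 supplies a step that the paper's proof actually omits.
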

\begin{proof}
  \Cref{it:JO-are-trees-0}: Let $z\in X$. If $z\neq x$ we would have by \Cref{it:prop-of-JO-0} of \Cref{lem:prop-of-JO} $\meetlevel{x}{z}<_\Nb\meetlevel{x}{x}$, a contradiction with the minimality of $\ell$. Therefore, $z=x$ and $|X|=1$. As $x\not\in X_x\subseteq X$, $X_x=X_y=\emptyset$.

    \Cref{it:JO-are-trees-1}: Let $z\in X_x$. By \Jo{2}, we must have $\meetlevel{y}{z}=\ell$, and therefore $z\not\in X_y$, so $X_x\cap X_y=\emptyset$. Now, let $t\in X$ such that $\meetlevel{x}{t}=\ell$. By \Jo{3} applied with $x=z$, we have $\meetlevel{x}{y}<\meetlevel{y}{t}$, and so $t\in X_y$. By \Cref{it:prop-of-JO-0} of \Cref{lem:prop-of-JO}, $x \in X_x$ and $y \in X_y$.

    \Cref{it:JO-are-trees-2}: We have $\meetlevel{x}{z}>\ell=\meetlevel{x}{y}$, so by \Jo{2}, we must have $\meetlevel{y}{z}=\ell$. But we also have $\meetlevel{t}{y}>\ell=\meetlevel{y}{z}$, so by another application of \Jo{2}, we must have $\meetlevel{z}{t}=\ell$.

    \Cref{it:JO-are-trees-3}: Let $z_0,t_0\in X_x$ and $z_1,t_1\in X_y$. Suppose $\meetlevel{z_0}{t_0}=\meetlevel{z_1}{t_1}$. By application of \Jo3, we would have $\meetlevel{z_0}{t_0}<_\Nb \meetlevel{z_0}{t_1}$, however by \Cref{it:JO-are-trees-2} $\meetlevel{z_0}{t_1}=\ell$, and $\meetlevel{z_0}{t_0}<_\Nb\ell$ is a contradiction.

\Cref{it:JO-are-trees-4}: By induction over $n \geq 1$. For $n = 1$, $X = \{x\}$, then the unique label is $\meetlevel{x}{x}$. For $n > 1$, assume by induction hypothesis that any non-empty Joyce order of size $m < n$ has $2m-1$ distinct labels. Let $\ell \in \omega$ be the minimal label of $X$ and let $x \leq y \in X$ be such that $\meetlevel{x}{y} = \ell$. Define $X_x$ and $X_y$ as above. By Item 1, since $|X| > 1$, then $x < y$. By Item 2, $X_x \neq \emptyset$ and $X_y \neq\emptyset$ and $X = X_x \sqcup X_y$. By induction hypothesis, there are $2|X_x|-1$ distinct labels over $X_x^2$ and $2|X_y|-1$ distinct labels over $X_y^2$. By item 4, the labels are disjoint, so there are $2(|X_x|+|X_y|)-2 = 2n-2$ distinct labels over $X_x^2 \cup X_y^2$. Last, by Item 3, for every $z \in X_x$ and $t \in X_y$, $\meetlevel{z}{t} = \ell$, so the only label over $X_x \times X_y$ is $\ell$. Therefore there are $2n-1$ distinct labels over $X^2 = X_x^2 \cup X_y^2 \cup (X_x \times X_y)$.
\end{proof}

\begin{lemma}[Representing a finite Joyce order as a Joyce tree] \label{JOtoJT}
There is a computable function $J_X$ such that if $(X,<,\meetlevel{\cdot}{\cdot})$ is a Joyce order of where $|X| = n$ then $J_X$ is Joyce tree of size $n$. We shall refer to $J_X$ as the Joyce tree coded by~$X$.
\end{lemma}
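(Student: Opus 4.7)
The plan is to define $J_X$ by recursion on $|X|$, using \Cref{lem:JO-are-trees} as the combinatorial engine. If $|X|=1$, say $X=\{x\}$, we let $J_X$ be the single-vertex tree with label $\meetlevel{x}{x}$. If $|X|=n>1$, let $\ell$ be the $\Nb$-minimal label appearing in $X^2$, and use \Cref{lem:JO-are-trees}\Cref{it:JO-are-trees-0,it:JO-are-trees-1} to locate the unique pair $x<y$ in $X$ with $\meetlevel{x}{y}=\ell$ and the ensuing partition $X=X_x\sqcup X_y$ with $x\in X_x$, $y\in X_y$. Since the Joyce order axioms \Jo{1}--\Jo{3} are universal, $X_x$ and $X_y$ inherit a Joyce order structure, so we may recursively form $J_{X_x}$ and $J_{X_y}$, and define $J_X$ to be the binary tree with root labeled $\ell$ whose left and right subtrees are $J_{X_x}$ and $J_{X_y}$, respectively.

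Next, I would verify that $J_X$ is a Joyce tree of size $n$. By induction, $J_{X_x}$ and $J_{X_y}$ have $2|X_x|-1$ and $2|X_y|-1$ vertices, so $J_X$ has $1+(2|X_x|-1)+(2|X_y|-1)=2n-1$ vertices and exactly $|X_x|+|X_y|=n$ leaves; every non-leaf has exactly two children by construction. The child-greater-than-parent property holds because at each recursive call the root is assigned the $\Nb$-minimum label of the current pair-space, and \Cref{lem:JO-are-trees}\Cref{it:JO-are-trees-3} says the label sets of $X_x^2$ and $X_y^2$ are disjoint from one another and, by minimality, each sits strictly above $\ell$. Finally, to get labels in $\{1,\dots,2n-1\}$ as required by the definition of a Joyce tree, we relabel by the $\Nb$-rank of each label among the $2n-1$ distinct labels in $X^2$ (the count is exactly $2n-1$ by \Cref{lem:JO-are-trees}\Cref{it:JO-are-trees-4}); this relabeling is order-preserving, so it does not disturb the child-greater-than-parent property.

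Computability of $X\mapsto J_X$ is immediate: from a finite code for $(X,<,\meetlevel{\cdot}{\cdot})$ one computes the label set $\{\meetlevel{u}{v}:u,v\in X\}$, extracts its minimum, identifies the splitting pair, and recurses; the rank relabeling at the end is a finite sort.

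The main potential snag is simply checking that the recursion is well-posed, namely that $X_x$ and $X_y$ are nonempty when $n>1$ (so that the recursion decreases) and that the resulting recursive trees glue correctly to give a single labeled tree satisfying the strictly-increasing condition across the boundary between root and subtree. Both of these are handled cleanly by \Cref{lem:JO-are-trees}\Cref{it:JO-are-trees-1,it:JO-are-trees-3}; beyond this, the argument is bookkeeping and there is no deep obstacle.
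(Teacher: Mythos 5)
Your proof is correct and follows essentially the same route as the paper's: recursive splitting at the $\Nb$-minimal label via \Cref{lem:JO-are-trees}, gluing the recursively built subtrees under a root labeled $\ell$, and an order-preserving relabeling onto $\{1,\dots,2n-1\}$ at the end (the paper merely makes the tree concrete as a subset of $2^{<\omega}$ with addresses $\sigma0$, $\sigma1$). The only nitpick is that the pair $x<y$ with $\meetlevel{x}{y}=\ell$ is not unique — by \Cref{lem:JO-are-trees} every cross pair from $X_x\times X_y$ has label $\ell$ — but the resulting partition is canonical, so the construction is still well-defined.
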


\begin{proof}
	


Let $L$ be the set of labels over $X^2$ and $l$ be the minimal label. For every string $\sigma \in 2^{<\omega}$, we will define a binary tree $J_{X, \sigma} \subseteq 2^{<\omega}$ whose root is $\sigma$, with $2n-1$ nodes, such that every non-leaf has two immediate children, and every node has a unique label in $L$. The construction goes inductively as follows:

If $\ell = \meetlevel{x}{x}$ for some $x \in X$, then by \Cref{lem:JO-are-trees}, $X = \{x\}$ and $J_{X,\sigma} = \{\sigma\}$ where $\sigma$ has label $\ell$.
If $\ell = \meetlevel{x}{y}$ with $x < y$, then let $X_x$ and $X_y$ be defined as in \Cref{lem:JO-are-trees}. By \Cref{it:JO-are-trees-1} of \Cref{lem:JO-are-trees}, $X = X_x \sqcup X_y$.
By Items \ref{it:JO-are-trees-2} and \ref{it:JO-are-trees-3}
of \Cref{lem:JO-are-trees}, $L = L_x \sqcup L_y \sqcup \{\ell\}$, where $L_x$ and $L_y$ are the sets of labels over $X_x^2$ and $X_y^2$, respectively.
By induction hypothesis one can define $J_{X_x, \sigma 0}$ and $J_{X_y, \sigma 1}$, which are $L_x$-labelled and $L_y$-labelled, respectively.
Then $J_{X, \sigma} = \{\sigma\} \sqcup J_{X_x, \sigma 0} \sqcup J_{X_y, \sigma 1}$ where $\sigma$ is given label $\ell$.

Let $v: L \to \{1, \dots, 2n-1\}$ be the unique isomorphism between $(L, <_\Nb)$ and $(\{1, \dots, 2n-1\}, <_\Nb)$ seen as linear orders. Then renaming the labels of $J_{X, \epsilon}$ according to $v$, one obtains a Joyce tree $J_X$. 
\end{proof}


\begin{definition}\index{Joyce!structure}\index{Joyce structure}\index{Joyce structure!DLO}\index{big Ramsey structure!DLO}
  The \emph{Joyce structure} of a Joyce order is a structure \mbox{$(X, <, \JRel)$} such that for all $x,y,z,t$, $\JRel(x,y,z,t)\iff\meetlevel{x}{y}<\meetlevel{z}{t}$. A \emph{DLO Joyce structure} is the Joyce structure of a dense linear Joyce order with no endpoints.
\end{definition}

\index{isomorphism!Joyce structure}\index{Joyce structure!isomorphism}By abuse of language, we may say that two Joyce orders are isomorphic whenever their corresponding Joyce structures are isomorphic. The construction of a Joyce tree from a Joyce order does not depend on the labels but on the  ordering of the labels.  Moreover two different orders of the labels yields two different Joyce trees. Hence the following lemma holds.

\begin{lemma}[$\RCA_0$]
Two finite Joyce structures are isomorphic if and only if they yield the same Joyce tree.
\end{lemma}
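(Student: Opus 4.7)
The plan is to argue both directions by examining the construction of $J_X$ in Lemma \ref{JOtoJT} and showing it depends only on the isomorphism type of the Joyce structure, then noting that the Joyce tree in turn determines the Joyce structure up to isomorphism. The key observation is that although the function $\meetlevel{\cdot}{\cdot}$ takes values in $\omega$, the Joyce structure $(X,<,\JRel)$ only records the linear preorder on pairs induced by the values of $\meetlevel{\cdot}{\cdot}$; and the construction of $J_X$ only ever consults this preorder.

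For the forward direction, let $\phi:(X,<,\JRel)\to(X',<',\JRel')$ be an isomorphism of finite Joyce structures. I would prove by induction on $|X|$ that $J_{X,\sigma}=J_{\phi(X),\sigma}$ (as labeled binary trees rooted at $\sigma$) once we use the normalization $v$ of labels. The base case $|X|=1$ is immediate. For the inductive step, note that the minimal label $\ell$ of $X$ in $L$ corresponds (under $\phi$) to the minimal label $\ell'$ of $X'$ in $L'$, because the relation \qt{$\meetlevel{x}{y}$ is $\leq$-least} is definable from $\JRel$ alone. The pair $x\leq y$ witnessing $\meetlevel{x}{y}=\ell$ maps under $\phi$ to a pair witnessing $\meetlevel{\phi(x)}{\phi(y)}=\ell'$, by \Cref{lem:JO-are-trees}\Cref{it:JO-are-trees-1}--\Cref{it:JO-are-trees-2}, which ensures $x,y$ are determined by $X_x,X_y$ up to the choice of minima inside each part. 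Since $\phi$ preserves $\JRel$, the sets $X_x,X_y$ are sent bijectively to the corresponding sets in $X'$, and the restrictions of $\phi$ are themselves Joyce order isomorphisms. The inductive hypothesis gives $J_{X_x,\sigma 0}=J_{\phi(X_x),\sigma 0}$ and likewise for $X_y$, and so $J_{X,\sigma}=J_{\phi(X),\sigma}$. Finally, the normalization $v:L\to\{1,\dots,2n-1\}$ depends only on the linear order of labels, which is preserved by $\phi$, so $J_X=J_{X'}$.

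For the converse, I would use the fact already noted in the paragraph preceding Lemma \ref{JOtoJT}: every Joyce tree $T$ canonically determines a Joyce order, taking $X_T$ to be the leaves of $T$, $<_T$ to be the lexicographic order, and $\meetlevel{x}{y}_T$ to be the label of $x\meet y$ in $T$. The induced Joyce structure $(X_T,<_T,\JRel_T)$ is completely recoverable from $T$. The map $X\to X_{J_X}$ sending each element of $X$ to the corresponding leaf of $J_X$ (as produced by the construction) is a bijection preserving both $<$ and $\JRel$: preservation of $<$ follows from the construction since at each stage $X_x$ is sent to the leaves of the left subtree and $X_y$ to those of the right; preservation of $\JRel$ follows because the label of $x\meet y$ in $J_X$ is precisely $v(\meetlevel{x}{y})$, and $v$ is order-preserving. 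Consequently, if $J_X=J_{X'}$ then both $(X,<,\JRel)$ and $(X',<',\JRel')$ are isomorphic to the common Joyce structure of that tree, hence to each other.

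I do not expect a serious obstacle here; the main care goes into the forward direction in verifying that the recursive choice of the witnessing pair $(x,y)$ at each stage is well-defined from the isomorphism type, i.e., that the decomposition $X=X_x\sqcup X_y$ produced by \Cref{lem:JO-are-trees} does not depend on which witnessing pair is selected. This is immediate because $X_x$ (respectively $X_y$) is definable in $(X,<,\JRel)$ as the set of $z$ such that $\meetlevel{x}{z}>\ell$ (equivalently, via $\JRel$, \qt{$\meetlevel{x}{y}<\meetlevel{x}{z}$}), and any other choice of witnessing pair gives the same two-block partition by \Cref{it:JO-are-trees-2} of \Cref{lem:JO-are-trees}. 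Everything else is a transparent induction.
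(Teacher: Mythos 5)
Your proof is correct and follows exactly the route the paper takes: the paper gives no formal proof, treating the lemma as immediate from the preceding remark that the construction of $J_X$ depends only on the ordering of the labels (which is what the Joyce structure records) and that this ordering is in turn recoverable from the tree. Your write-up simply supplies the inductive details behind that observation, including the correct verification that the two-block decomposition $X=X_x\sqcup X_y$ is independent of the choice of witnessing pair.
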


Just after the definition of a Joyce order, \Cref{def:jo}, we showed every Joyce tree yielded a Joyce order which in turn yields a Joyce structure. Hence the number of Joyce tree of size $n$, Joyce orders of size $n$, and Joyce structure of size $n$ are all the same. Street~\cite{streets} shows that this is the odd tangent number of $n$ (also see \cite[p.~147]{Todorcevic2010Ramsey}).

We now prove that every dense linear order with no endpoints can be enriched into a DLO Joyce structure. Actually, since these orders are computably categorical, that is, any two dense linear orders with no endpoints are isomorphic, and furthermore this isomorphism is computable in the orders, it suffices to prove the existence of a DLO Joyce order. For this, we need to consider the following ordering on $\cantor$:

\begin{definition}[the ordering $<_\Qb$ on $\cantor$]\label{def:leq-Q}\index{$<_\Qb$}\index{ordering!$<_\Qb$}
  Given two strings $\sigma,\tau\in\cantor$, define $\sigma<_\Qb\tau$ if and only if one of the following holds:
  \begin{enumerate}
  \item $\sigma\prec\tau$ and $\tau(|\sigma|)=1$;
  \item $\tau\prec\sigma$ and $\sigma(|\tau|)=0$;
  \item $\sigma$ and $\tau$ are incomparable and $\sigma<_{\mathrm{lex}}\tau$, where $<_{\mathrm{lex}}$ is the lexicographical order.
  \end{enumerate}
\end{definition}

Intuitively, if $\sigma<_\Qb\tau$ then $\sigma$ lies to the left of $\tau$ if one draws the standard picture of the tree $\cantor$, growing upwards from the root. (See, e.g., Figure~\ref{fig:Devlin-to-ACA}.) 

\begin{theorem}[$\RCA_0$]\label{thm:dlo-joyce-order-exists}
There exists a DLO Joyce order.
\end{theorem}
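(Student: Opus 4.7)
The plan is to realize the desired structure concretely inside $(2^{<\omega}, <_\Qb)$, by constructing a countable antichain $X = \{x_n : n \in \Nb\}$ (with respect to $\preceq$) satisfying two key properties: (i) $(X, <_\Qb)$ is order-isomorphic to $(\Qb, <)$, and (ii) the length function $\rho \mapsto |\rho|$ is injective on $X^\wedge := X \cup \{x_i \meet x_j : i, j \in \Nb,\ i \neq j\}$. Given such an $X$, setting $\meetlevel{\sigma}{\tau} := |\sigma \meet \tau|$ for $\sigma, \tau \in X$ will give the desired DLO Joyce order. Because $X$ is a $\preceq$-antichain, any two distinct elements of $X$ have a meet that is a proper prefix of both, which immediately yields $\meetlevel{x}{y} < \min(\meetlevel{x}{x}, \meetlevel{y}{y})$ whenever $x \neq y$, recovering Item 1 of \Cref{lem:prop-of-JO} automatically.

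I would fix a computable enumeration $(q_n)_{n \in \Nb}$ of $\Qb$ and define $x_n$ by recursion, at stage $n$ choosing $x_n \in 2^{<\omega}$ satisfying: (a) $x_n$ is $\preceq$-incomparable with every $x_i$ for $i < n$; (b) $x_i <_\Qb x_n \iff q_i < q_n$ for every $i < n$; and (c) the new lengths $|x_n|$ and $|x_n \meet x_i|$ (for $i < n$) are pairwise distinct and distinct from every length already appearing in $\{|x_i|\}_{i<n} \cup \{|x_i \meet x_j|\}_{i \neq j < n}$. Such an $x_n$ exists because $(2^{<\omega}, <_\Qb)$ is a dense linear order without endpoints: the position prescribed by (b) corresponds to an infinite open $<_\Qb$-interval of candidate strings, within that interval any string of length exceeding $\max_{i<n}|x_i|$ automatically fails to be a prefix of any $x_i$, and the finitely many remaining constraints in (a) and (c) are handled by tuning the tail of $x_n$ appropriately. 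The construction is computable in $(q_n)$ and hence formalizable in $\RCA_0$.

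For the axioms: by construction $(X, <_\Qb)$ is order-isomorphic to $(\Qb, <)$ and hence a dense linear order without endpoints. For \Jo1 and \Jo2, assume $\meetlevel{x}{y} < \meetlevel{x}{z}$; then $x \meet y$ and $x \meet z$ are both prefixes of $x$, and $|x \meet y| < |x \meet z|$ forces $x \meet y \prec x \meet z$. Hence $z$ agrees with $x$ at position $|x \meet y|$ while $y$ (which must properly extend $x \meet y$, since $X$ is an antichain and $x \neq y$) disagrees there. This immediately gives $y \meet z = x \meet y$, which is \Jo2, and the $<_\Qb$-comparisons of $y$ with $x$ and of $y$ with $z$ are both determined by the same bit (the one at position $|x \meet y|$, which is common to $x$ and $z$), proving \Jo1. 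For \Jo3, suppose $\meetlevel{x}{y} = \meetlevel{z}{t}$ with $x \leq y$, $z \leq t$, and not all four equal. Injectivity of lengths on $X^\wedge$ forces $x \meet y = z \meet t =: \rho$. The case $(x,y) = (z,t)$ is immediate, and the case where $x = y$ (or $z = t$) is ruled out by the disjointness between leaf-lengths and meet-lengths built into (ii). In the remaining sub-cases, the orderings $x \leq y$ and $z \leq t$ fix the bits at position $|\rho|$ (via the lex clause of $<_\Qb$) and force $x, z$ to agree at position $|\rho|$ and $y, t$ to agree there, so both $\meetlevel{x}{z}$ and $\meetlevel{y}{t}$ strictly exceed $|\rho| = \meetlevel{x}{y}$. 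The hard part of the proof will be step (c) of the construction, namely verifying simultaneously the density-of-position, antichain, and length-distinctness conditions; once this is carried out, the Joyce axioms reduce to a direct analysis of prefixes in $2^{<\omega}$.
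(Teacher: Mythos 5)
Your overall strategy---recursively build a $\preceq$-antichain $X = \{x_n : n \in \Nb\}$ that is length-injective on $X^\wedge$, then set $\meetlevel{\sigma}{\tau} := |\sigma \meet \tau|$---is different from the paper's, which fixes an explicit regular-language antichain (inside which distinct meets may share a length) and compensates by relabeling with an auxiliary injective, length-monotone map $v : 2^{<\omega} \to \omega$, setting $\meetlevel{\sigma}{\tau} := v(\sigma\meet\tau)$. Your verification of \Jo1--\Jo3 from length-injectivity is fine and, if anything, cleaner than the paper's, since there is no relabeling layer to unwind. But the construction step has a genuine gap that your phrase ``handled by tuning the tail'' does not fill, and in fact cannot fill.

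The requirement that no $x_i$ is a prefix of $x_n$ is a constraint on the \emph{prefix} of $x_n$, not on its tail, and it cannot always be met inside the prescribed $<_\Qb$-interval. Concretely, an unguarded run of your recursion is allowed to produce $x_0 = 00$ and $x_1 = 010$ (lengths $2$, $3$, meet-length $1$: all distinct, so (a)--(c) are satisfied so far). But the open $<_\Qb$-interval $(00,\, 010)$ contains no string incomparable with both $00$ and $010$: a string $\sigma$ starting with $1$ or with $011$ satisfies $\sigma >_\Qb 010$; the short strings $\epsilon, 0, 01$ are prefixes of $010$; and every remaining string begins with $00$ or $010$. So if $q_2$ lies between $q_0$ and $q_1$, stage $2$ fails regardless of how you choose the tail of $x_2$. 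The underlying issue is that at each stage you must guarantee not just the current comparisons but the possibility of all future insertions; the standard fix is a reservoir bookkeeping device: maintain for every $<_\Qb$-gap a node $\nu$ such that every extension of $\nu$ lies in that gap and is incomparable with all $x_i$ placed so far, and when filling a gap, take $x_n$ extending $\nu 0 1$ with a long fresh-length tail, bequeathing reservoirs $\nu 0 0$ and $\nu 1$ to the two resulting subgaps. With that modification---which makes the choice of $x_n$'s prefix, not merely its length, essential---your plan does go through, and it yields something close to what the paper later calls a coded Joyce order directly, bypassing \Cref{thm:joyce-orders-can-be-coded}.
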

\begin{proof}
Let $X$ be the rational language $(000 \cup 100)^{*}01$, that is, the set of strings $\sigma \in 2^{<\omega}$ of length $3n+2$ for some $n \in \omega$, such that $\sigma(3n) = 0$, $\sigma(3n+1) = 1$, and for every $j < n$, $\sigma(3j+1) = \sigma(3j+2) = 0$. For example, $10000010001 \in X$. In particular, $X$ is an infinite antichain with respect to the prefix order. Let $\ltlex$ be the lexicographic order restricted to $X$, that is, $\sigma \ltlex \tau$ if $\sigma(|\sigma \meet \tau|) <_\Nb \tau(|\sigma \meet \tau|)$. Then $(X, \ltlex)$ is a dense linear order with no endpoints. Indeed, letting $f$ be the natural one-to-one map from $X$ to $\cantor$, $f$ is an order isomorphism between $(X, \ltlex)$ and $(\cantor, <_\Qb)$ where $<_\Qb$ is order defined in \Cref{def:leq-Q}.
Last, fix an injective function $v: 2^{<\omega} \to \omega$ such that
for every $\sigma, \tau \in 2^{<\omega}$, if $|\sigma| < |\tau|$ then $v(\sigma) < v(\tau)$,
and for every $\sigma, \tau \in X$, define $\meetlevel{\sigma}{\tau} = v(\sigma \meet \tau)$.
Then $(X, \ltlex, \meetlevel{\cdot}{\cdot})$ is a dense linear Joyce order with no endpoints.

We prove that $(X, \ltlex, \meetlevel{\cdot}{\cdot})$ satisfies axioms \Jo{1}, \Jo{2} and \Jo{3}.
Let $x, y, z, t \in X$, not all equal, with $x \lelex y$ and $z \lelex t$.

Suppose $\meetlevel{x}{y} < \meetlevel{x}{z}$. By definition, $v(x \meet y) < v(x \meet z)$. By choice of the map $v$,  $|x \meet y| \leq |x \meet z|$, so $x \ltlex y$ iff $z \ltlex y$. This shows \Jo{1}.

Now suppose $\meetlevel{x}{y} < \meetlevel{x}{z}$. By definition, $v(x \meet y) < v(x \meet z)$. By choice of the map $v$,  $|x \meet y| \leq |x \meet z|$, so $x \meet y = z \meet y$, hence $v(x \meet y) = v(z \meet y)$. This shows \Jo{2}.

Finally, suppose $\meetlevel{x}{y} = \meetlevel{z}{t}$. By definition, $v(x \meet y) = v(z \meet t)$. By injectivity of the map $v$, $x \meet y = z \meet t$, so $x \meet y \prec x \meet z$ and $x \meet y \prec y \meet t$, hence $v(x \meet y) <_\Nb \min(v(x \meet z), v(y \meet t))$. This shows \Jo{3}.
\end{proof}

Depending on the choice of $v$ in the above construction, the DLO Joyce orders won't be isomorphic. Consider the $3$ leafs with the least labels.  The leaf $01$ always has the least label. The other $2$ leafs with minimal labels are always $x=00001$ and $y=10001$. Note that $x < y$. Now the structures yielded by $v_0$ and $v_1$ where $v_0(00001)<v_0(10001)$ (hence the label of $x$ is less than the label of $y$) and $v_1(00001)>v_1(10001)$ (hence the label of $x$ is greater than the label of $y$) are not isomorphic.  


\begin{corollary}[$\RCA_0$]\label{cor:order-can-be-enriched}
Every dense linear order with no endpoints $(X, <)$ can be equipped with a function $\meetlevel{\cdot}{\cdot}: X^2\to \Nb$ to form a DLO Joyce order.
\end{corollary}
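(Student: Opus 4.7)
The plan is to transport the DLO Joyce order constructed in \Cref{thm:dlo-joyce-order-exists} along an order isomorphism. Let $(X_0, \ltlex, \meetlevel{\cdot}{\cdot}_0)$ be the specific DLO Joyce order on the antichain $(000 \cup 100)^{*} 01$ produced in the proof of that theorem; in particular, $(X_0, \ltlex)$ is a countable dense linear order with no endpoints. Now let $(X, <)$ be an arbitrary countable dense linear order with no endpoints.

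The key step is to invoke the computable categoricity of the theory of countable dense linear orders without endpoints. Formalized in $\RCA_0$, the standard back-and-forth construction produces, uniformly in enumerations of $X_0$ and $X$, a bijection $f \colon X_0 \to X$ that is an order isomorphism between $(X_0, \ltlex)$ and $(X, <)$. (This is a classical result of Cantor's back-and-forth argument that is well-known to go through in $\RCA_0$; see e.g.\ \cite{Simpson2009Subsystems}.) Define $\meetlevel{x}{y} = \meetlevel{f^{-1}(x)}{f^{-1}(y)}_0$ for every $x, y \in X$.

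It then remains to verify that $(X, <, \meetlevel{\cdot}{\cdot})$ is a DLO Joyce order. Symmetry of $\meetlevel{\cdot}{\cdot}$ is immediate from symmetry of $\meetlevel{\cdot}{\cdot}_0$. For axioms \Jo{1}, \Jo{2}, and \Jo{3}, note that each is a universal statement about the structure $(X_0, \ltlex, \meetlevel{\cdot}{\cdot}_0)$ phrased purely in terms of the order and the label function; since $f$ is an order isomorphism and $\meetlevel{\cdot}{\cdot}$ is defined to be the pullback of $\meetlevel{\cdot}{\cdot}_0$ through $f$, each axiom transfers verbatim to $(X, <, \meetlevel{\cdot}{\cdot})$. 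Finally, $(X, <)$ is a dense linear order with no endpoints by hypothesis, so the resulting structure is a DLO Joyce order, as required.

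There is no real obstacle here beyond being careful to invoke back-and-forth in $\RCA_0$ rather than appealing to a non-effective isomorphism; once we have the isomorphism $f$, the transport of the Joyce structure is purely syntactic because the axioms \Jo{1}--\Jo{3} are preserved under isomorphism in an obvious way.
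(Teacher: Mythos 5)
Your proposal is correct and is essentially the paper's own argument: take the DLO Joyce order from \Cref{thm:dlo-joyce-order-exists}, use computable categoricity of countable dense linear orders without endpoints to get an order isomorphism, and pull back the label function along it (the paper orients the isomorphism from $X$ to the reference order and pushes forward, but that is the same argument). Your added remark that \Jo{1}--\Jo{3} are universal and hence transfer under isomorphism is a worthwhile explicit check that the paper leaves implicit.
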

\begin{proof}
Let $(Y, <_Y, \meetlevel{\cdot}{\cdot}_Y)$ be the DLO Joyce order of \Cref{thm:dlo-joyce-order-exists}. By computable categoricity of the dense linear orders with no endpoints, there
exists an order isomorphism $f$ between $(X, <)$ and $(Y, <_Y)$.
Define $\meetlevel{\cdot}{\cdot}: X^2 \to \Nb$ by $\meetlevel{x}{y} = \meetlevel{f(x)}{f(y)}_Y$.
Then $(X, <, \meetlevel{\cdot}{\cdot})$ is a DLO Joyce order.
\end{proof}

One can canonically represent any countable Joyce order as a set of strings which are pairwise incomparable under the prefix relation, equipped with the lexicographic order the natural $\meetlevel{\cdot}{\cdot}$ operation, that is, $\meetlevel{\sigma}{\tau} = |\sigma \meet \tau|$.

\begin{definition}\index{coded!Joyce order}\index{Joyce order!coded}
A \emph{coded Joyce order} is a Joyce order of the form $(X, \ltlex, |\cdot \meet \cdot|)$,
with $X \subseteq 2^{<\omega}$, where $|\sigma \meet \tau|$ is the length of the longest common prefix of $\sigma$ and $\tau$, such that for all $\sigma, \tau, \rho \in X$ with $|\rho| > |\sigma \meet \tau|$ and
$\sigma \meet \tau \not \preceq \rho$, then $\rho(|\sigma \meet \tau|) = 0$.

\end{definition}

In particular, letting $\sigma = \tau$, if $|\rho| > |\sigma|$, then $\rho(|\sigma|) = 0$.
Since a coded Joyce order is fully specified by its set $X$, we shall simply refer to $X$ when talking about the coded Joyce order $(X, \ltlex, |\cdot \meet \cdot|)$. Note that any subset of a coded Joyce order is again a coded Joyce order, since the axioms are universal.

\begin{theorem}[$\RCA_0$]\label{thm:joyce-orders-can-be-coded}
Every countable Joyce order is isomorphic to a coded Joyce order.
\end{theorem}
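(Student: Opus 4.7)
The plan is to prove this by a stagewise inductive construction. Enumerate $X = \{x_0, x_1, \ldots\}$ and build strings $\sigma_0, \sigma_1, \ldots \in 2^{<\omega}$ together with partial Joyce structure isomorphisms $\phi_n \colon \{x_0, \ldots, x_{n-1}\} \to \{\sigma_0, \ldots, \sigma_{n-1}\}$ sending $x_i$ to $\sigma_i$, taking $\phi = \bigcup_n \phi_n$ in the end. A key observation that motivates this strategy is that the coded Joyce order condition ``$|\rho| > |\sigma \meet \tau|$ and $\sigma \meet \tau \not\preceq \rho$ imply $\rho(|\sigma \meet \tau|) = 0$'' is universally quantified over $\sigma,\tau,\rho$, so it is inherited by any subset; hence it suffices to ensure each finite stage $\{\sigma_0, \ldots, \sigma_{n-1}\}$ forms a coded Joyce order.

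The crux is the inductive step of placing $\sigma_n$. Writing $Y_n = \{x_0,\ldots,x_{n-1}\}$, I would first analyze how $x_n$ attaches to the Joyce tree of $Y_n$: iterating \Jo{1} and \Jo{2}, together with the structural description in \Cref{lem:JO-are-trees}, shows that the new meets $\{\meetlevel{x_n}{x_i}: i<n\}$ decompose into at most one genuinely new label together with labels already present among pairs from $Y_n$. This pinpoints a unique ``attaching location'' in the Joyce tree of $Y_n$, and hence a unique node in the existing coded realization from which $\sigma_n$ should branch off. I would then define $\sigma_n$ by extending the corresponding meet string with a block of zeros (serving as padding), a single distinguishing bit opposite to the bit taken by the existing sibling branch, and further trailing bits terminating in the leaf marker $01$. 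The padding is chosen long enough that (i) the new meet length $|\sigma_n \meet \sigma_j|$ falls at the correct rank within the ordering of all meet lengths, and (ii) every existing meet position not on $\sigma_n$'s branch lies inside the inserted zero block, so that $\sigma_n$ automatically takes value $0$ at those positions and preserves the coded condition. This construction is a direct generalization of the DLO Joyce order realization $(000 \cup 100)^*01$ appearing in the proof of \Cref{thm:dlo-joyce-order-exists}, and the combinatorial core reduces to the finite Joyce tree realization of \Cref{JOtoJT}.

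The hardest part will be the case where the new meet label is strictly below every label already realized: in that scenario, the required new meet string is a proper initial segment of some existing $\sigma_j$'s meet string, so there is no way to accommodate it by extending downward. I would handle this by setting up the inductive invariant to maintain a long reserved common prefix of zeros above the existing structure, and to allow, if necessary, the bounded modification of prepending $0$s or a short distinguishing bit to the existing strings. To ensure convergence of $\sigma_i = \lim_n \sigma_i^{(n)}$ in $2^{<\omega}$, the bookkeeping must guarantee that any given bit position of a given $\sigma_i$ is altered only finitely often; this can be forced by arranging that at stage $n$ only stages involving elements attaching strictly above the current root of the realization modify earlier strings, and that after each such modification the reserved prefix is enlarged enough to absorb all future requirements derivable from the finite structure produced so far. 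The verification that the limit set is a coded Joyce order then reduces, via the universal character of the coded condition, to the verification that each finite stage satisfies it, which holds by construction.
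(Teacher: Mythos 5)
Your stagewise strategy is workable in principle, but as written the convergence step has a genuine gap. First, your stabilization criterion is too weak: you require that ``any given bit position of a given $\sigma_i$ is altered only finitely often,'' but since your modifications include prepending $0$s (hence lengthening $\sigma_i$), bitwise stabilization does not guarantee that $\sigma_i^{(n)}$ is eventually constant as a \emph{finite string} --- each string could grow forever while every fixed bit settles to $0$, and then $\lim_n \sigma_i^{(n)}$ is not an element of $2^{<\omega}$ at all. Second, your bookkeeping condition is stated backwards: elements attaching \emph{above} the current root never force a modification of earlier strings; it is elements whose new meet label is smaller than, or falls strictly between, labels already realized that cause trouble, and you give no argument that this happens only finitely often per string. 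The missing observation that rescues the scheme is that labels are natural numbers, so the minimal realized label can strictly decrease only finitely often and at most $b-a-1$ future meets can ever need to be inserted between positions realizing labels $a<b$; with that, one can reserve enough padding up front (e.g., space proportional to the actual label gaps) and never modify an earlier string at all. You also do not verify that the limit map preserves the four-place relation $\JRel$ globally (i.e., that $\meetlevel{x}{y}<\meetlevel{z}{t}$ iff $|\sigma_x\meet\sigma_y|<|\sigma_z\meet\sigma_t|$ for \emph{all} quadruples, not just those compared at a common stage), though this would follow once positions are assigned monotonically in the labels.

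For comparison, the paper avoids the induction entirely with a direct global definition: $\sigma_x$ is the string of length $\meetlevel{x}{x}$ whose $j$-th bit is $1$ exactly when $j=\meetlevel{y}{x}$ for some $y<x$ (all other bits $0$). Monotonicity in the labels, the ordering of meet lengths, and the coded condition $\rho(|\sigma\meet\tau|)=0$ then all fall out of \Jo{1}--\Jo{3} in one pass, with no padding, no revisions, and no limit to take. Your insertion analysis via \Cref{lem:JO-are-trees} (one genuinely new proper-meet label plus the new leaf label per stage) is correct and could be salvaged, but you should either adopt the label-values-as-positions idea or supply the finiteness argument above; as it stands the proof does not go through.
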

\begin{proof}
Let $(X, <, \meetlevel{\cdot}{\cdot})$ be a countable Joyce order.
Let $L$ be the set of labels over $X^2$. For every $x \in X$, let $L_x$ be the set of labels $\ell \in L$ such that $\ell <_\Nb \meetlevel{x}{x}$ and such that there is some $y \in X$ such that $y < x$ and $\meetlevel{y}{x} = \ell$.
Let $\sigma_x \in 2^{<\omega}$ be the unique string of length $\meetlevel{x}{x}$, such that for every $j < \meetlevel{x}{x}$, $\sigma_x(j) = 1$ if and only if $j \in L_x$.
Let $Y = \{ \sigma_x: x \in X \}$.

\begin{claim}
$(Y, \ltlex, |\cdot \meet \cdot|)$ is isomorphic to $(X, <, \meetlevel{\cdot}{\cdot})$.
\end{claim}
\begin{proof}
We first prove that for all $x,y,z,t\in X$, $\meetlevel xy<\meetlevel zt\implies |\sigma_x\meet \sigma_y|<_\Nb|\sigma_z\meet \sigma_t|$. We actually prove the stronger fact that for every $x,y\in X$, $\meetlevel xy = |\sigma_x\meet\sigma_y|$. If $x=y$, it is clear as by construction, $\sigma_x$ is of length $\meetlevel xx$. If $x\neq y$, we first prove that $\meetlevel xy \leq |\sigma_x\meet\sigma_y|$: indeed, for all $\ell<\meetlevel xy$, by \Jo2 we have $\ell\in L_x$ if and only if $\ell\in L_y$. It remains to show $\meetlevel xy \geq |\sigma_x\meet\sigma_y|$: if $x<y$ we have that $\meetlevel xy\in L_y\setminus L_x$, and if $y<x$, $\meetlevel xy\in L_x\setminus L_y$. So in any case, $\sigma_x(\meetlevel xy)\neq\sigma_y(\meetlevel xy)$, so $|\sigma_x\meet\sigma_y|\leq\meetlevel xy$.

Let $x<y\in X$. Then, $\meetlevel xy\not\in L_x$, as if $z$ is such that $\meetlevel xz=\meetlevel xy$, then by \Jo3 $\meetlevel xy<_\Nb\meetlevel yz$ and by \Jo1 and the fact that $x<y$, we have $x<z$. So $\sigma_x(|x\meet y|)=\sigma_x(\meetlevel xy)=0$. However, $\meetlevel xy\in L_y$ as witnessed by $x$, so $\sigma_y(|x\meet y|)=\sigma_y(\meetlevel xy)=1$. Therefore, $\sigma_x\ltlex\sigma_y$.
\end{proof}

\begin{claim}
$(Y, \ltlex, |\cdot \meet \cdot|)$ is a coded Joyce order.
\end{claim}
\begin{proof}
By the previous claim, $(Y, \ltlex, |\cdot \meet \cdot|)$ is a Joyce order isomorphic to $(X, <, \meetlevel{\cdot}{\cdot})$.
Fix $\sigma_x, \sigma_y, \sigma_z \in Y$ with $|\sigma_z| > |\sigma_x \meet \sigma_y|$ and $\sigma_x \meet \sigma_y \not \preceq \sigma_z$. Assume $x \leq y$ without loss of generality.  Let $\ell = \meetlevel{x}{y} =  |\sigma_x \meet \sigma_y|$. Suppose for the contradiction that $\ell \in L_z$. Then there is some $u \in X$ with $u < z$ such that $\meetlevel{u}{z} = |\sigma_u \meet \sigma_z| = \ell = \meetlevel{x}{y} = |\sigma_x \meet \sigma_y|$. 
Since $u < z$, then $\sigma_u \ltlex \sigma_z$ and since $x \leq y$ and $u < z$, 
by \Jo{3}, $|\sigma_y \meet \sigma_z|  >_\Nb \ell$ and $|\sigma_u \meet \sigma_x| >_\Nb \ell$.
Let $\ell_0 = |\sigma_y \meet \sigma_z|$. In particular, $\sigma_x \meet \sigma_y = \sigma_x \meet \sigma_y \uh \ell_0 = \sigma_x \meet \sigma_z \uh \ell_0$, so $\sigma_x \meet \sigma_y \preceq \sigma_z$, contradiction. So $\ell \not\in L_z$, hence $\sigma_z(\ell) = 0$. 
\end{proof}
This completes the proof of \Cref{thm:joyce-orders-can-be-coded}.
\end{proof}

Note that the proof in \Cref{thm:joyce-orders-can-be-coded} yields a coded Joyce order whose set of lengths correspond exactly to the labels of the original Joyce order.

\begin{remark}\label{remark:labels-initial-segment}
 Since Joyce structures only consider the ordering between the labels and not their actual value, we can always pick a Joyce order isomorphic to the original one, whose set of labels is an initial segment of $\Nb$, and using \Cref{thm:joyce-orders-can-be-coded}, we can represent it as a coded Joyce order whose lengths coincide with the labels, and hence form an initial segment of $\Nb$.
\end{remark}

Todorcevic~\cite[Lemma 6.20]{Todorcevic2010Ramsey} made an explicit construction of a computable coded DLO Joyce order, under a different terminology.

\begin{corollary}\label{cor:true-dlo-joyce-order-exists}
There exists a computable coded DLO Joyce order.
\end{corollary}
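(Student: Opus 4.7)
The plan is to combine the two preceding results effectively. The DLO Joyce order constructed in Theorem~\ref{thm:dlo-joyce-order-exists} is manifestly computable: the domain $X = (000 \cup 100)^*01$ is a computable subset of $\cantor$, the lexicographic order $\ltlex$ restricted to $X$ is computable, and one can choose the injection $v: 2^{<\omega} \to \omega$ to be computable and length-monotone (for instance, let $v(\sigma)$ enumerate $2^{<\omega}$ in length-lexicographic order). Thus the composition $\meetlevel{\sigma}{\tau} = v(\sigma \meet \tau)$ is uniformly computable from $\sigma$ and $\tau$.

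Next, I would apply the construction in the proof of Theorem~\ref{thm:joyce-orders-can-be-coded} to this computable DLO Joyce order. Given $(X,\ltlex,\meetlevel{\cdot}{\cdot})$ above, for each $x \in X$ we form $\sigma_x$ of length $\meetlevel{x}{x}$ whose $j$th bit records whether $j \in L_x$, where $L_x$ is the set of labels $\ell <_{\NN} \meetlevel{x}{x}$ witnessed by some $y < x$ with $\meetlevel{y}{x} = \ell$. Each predicate involved here is computable in $x$ and in the input label $\ell$ (it is a finite search over $y$'s whose strings have length at most $\meetlevel{x}{x}$, and $X$ is computable), so the map $x \mapsto \sigma_x$ is computable, and hence $Y = \{\sigma_x : x \in X\}$ is a computable subset of $\cantor$. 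The proof of Theorem~\ref{thm:joyce-orders-can-be-coded} guarantees that $(Y, \ltlex, |\cdot \meet \cdot|)$ is a coded Joyce order isomorphic to $(X, \ltlex, \meetlevel{\cdot}{\cdot})$; since the latter is dense linear with no endpoints, so is the former.

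The main ``obstacle'' is really only bookkeeping: one must check that each arithmetical operation used in the definition of $\sigma_x$ (computing $\meetlevel{x}{x}$, enumerating candidate witnesses $y < x$, and deciding membership in $L_x$) remains decidable given that the input Joyce order is computable. All three are immediate from the computability of $X$, $\ltlex$, and $\meetlevel{\cdot}{\cdot}$, so I would conclude by stating that $Y$ is the desired computable coded DLO Joyce order.
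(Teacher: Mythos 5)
Your proposal is correct and follows the same route as the paper: the paper's proof is literally the one sentence ``Immediate by Theorem~\ref{thm:joyce-orders-can-be-coded} and Theorem~\ref{thm:dlo-joyce-order-exists},'' relying on the fact that both are stated over $\RCA_0$ and hence their constructions are effective, which is exactly what you verify by hand. One small imprecision: to decide $\ell \in L_x$ you do not actually search over $y$'s of length at most $\meetlevel{x}{x}$ (such $y$ can be arbitrarily long); rather, you compute $\rho = v^{-1}(\ell)$, check that $\rho$ is a proper prefix of $x$ with $x(|\rho|)=1$, and then note that the existence of some $y\in X$ extending $\rho 0$ is decidable from the explicit regular-language description of $X=(000\cup 100)^*01$ --- but this is a harmless bookkeeping slip, not a gap.
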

\begin{proof}
Immediate by \Cref{thm:joyce-orders-can-be-coded} and \Cref{thm:dlo-joyce-order-exists}.
\end{proof}

\section{A proof of Devlin's theorem}

Note that the substructure of any Joyce structure is a Joyce structure, with the same witness function $|\cdot\meet\cdot|$. Not all DLO Joyce structures are isomorphic, as already remarked in the paragraph below \Cref{thm:dlo-joyce-order-exists}. 
However they all contain all Joyce structures. In particular, two DLO Joyce structures might not be isomorphic, but there exists an embedding from the first to the second, as well as from the second to the first.

\begin{theorem}[$\RCA_0$]\label{th:4.1LaflammeDevlin}
  Let $\Xb$ be a DLO Joyce structure, and $\Fb$ be a (finite or infinite) Joyce structure. Then, there exists an embedding from $\Fb$ to $\Xb$.
\end{theorem}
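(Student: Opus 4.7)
The plan is to proceed by induction on $|\Fb|$ when $\Fb$ is finite, and to handle the countably infinite case by enumerating $\Fb = \{f_0, f_1, \ldots\}$ and extending the embedding one element at a time. Both cases reduce to a single extension lemma: if $F' \subsetneq \Fb$ is a finite substructure, $h' \colon F' \to \Xb$ is a Joyce-structure embedding, and $f \in \Fb \setminus F'$, then there exists $x \in \Xb$ such that $h' \cup \{(f, x)\}$ remains a Joyce-structure embedding.

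To prove the extension lemma, I would first analyze how $f$ sits relative to $F' = \{f_1, \ldots, f_{n-1}\}$ in the Joyce tree of $F' \cup \{f\}$. By the combinatorics of Joyce orders (\Cref{lem:JO-are-trees}), adding the leaf $f$ to the Joyce tree of $F'$ amounts to specifying: (i) a ``point of attachment'' in the Joyce tree of $F'$, which (using \Jo{2}) is determined by the set of indices $i$ for which $\meetlevel{f}{f_i}$ is maximal; (ii) whether $f$ lies to the left or to the right of that attachment point; and (iii) when a new internal node must be inserted, the exact slot of its label in the existing ordering of labels --- it must lie strictly between $\max_{j}\meetlevel{f_i}{f_j}$ (over $j$ not attached through the same subtree) and the label of the sibling subtree. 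The task then becomes finding $x \in \Xb$ whose order and meet relations with the images $x_j := h'(f_j)$ realize conditions (i)--(iii).

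To realize these conditions, I would use \Cref{cor:order-can-be-enriched} together with \Cref{thm:joyce-orders-can-be-coded} and \Cref{remark:labels-initial-segment} to replace $\Xb$ by an isomorphic coded DLO Joyce order $X \subseteq 2^{<\omega}$ whose set of meet-lengths is an initial segment of $\Nb$. In this representation, condition (i) singles out a particular prefix $\tau \in 2^{<\omega}$ that $x$ must extend, condition (ii) restricts $x$ to lie in one of the two ``sides'' $[\tau 0]$ or $[\tau 1]$ relative to the existing images, and condition (iii) prescribes the range in which $|x \meet x_i|$ must fall. Since $X$ encodes a dense linear order with no endpoints, $\tau$ has infinitely many proper extensions in $X$ on both sides, so a valid choice of $x$ at the required depth can be found. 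To verify the Joyce-structure axioms \Jo{1}--\Jo{3} for the extended embedding, one checks directly that the prefix and lexicographic relations on $2^{<\omega}$ encode the required meet-level comparisons, which is an immediate consequence of the coded representation.

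The main obstacle will be condition (iii), namely verifying that the label placement for $\meetlevel{x}{x_i}$ can always be realized in the coded representation of $\Xb$. This amounts to showing that the coded DLO Joyce order has sufficient ``label density'': between any two existing meet-lengths appearing in $h'(F')$, one can find binary strings in $X$ whose meets with the $x_i$ realize any prescribed intermediate length. The explicit construction of \Cref{thm:dlo-joyce-order-exists} (via the regular language $(000 \cup 100)^{*}01$ and a suitable length-injective label map) provides exactly this flexibility, and by computable categoricity of dense linear orders with no endpoints (applied to the underlying order of $\Xb$) the same flexibility transfers to any coded representation of $\Xb$. Verifying this carefully, especially managing potential coincidences of labels between $F$ and the coded representation of $\Xb$, is the main technical step.
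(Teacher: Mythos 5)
Your extension lemma is too strong, and the ``label density'' claim on which it rests is false. The failure mode is as follows. Suppose the partial embedding $h'$ has already placed $x_i = h'(f_i)$ and $x_j = h'(f_j)$, and suppose the new element $f$ has $\meetlevel{f_i}{f_i} < \meetlevel{f}{f} < \meetlevel{f_j}{f_j}$, so that the embedding must satisfy $|x_i| < |x| < |x_j|$. If at an earlier stage you happened to choose $x_i$ and $x_j$ with $|x_j| = |x_i|+1$ (or more generally with no suitable leaf of $X$ of length in the open interval $(|x_i|, |x_j|)$ satisfying the additional prefix and meet constraints), the extension step simply cannot be carried out. What a coded DLO Joyce order gives you is the ability to find elements whose pairwise meets have \emph{arbitrarily large} length --- indeed, the paper uses exactly the estimate that an $n$-element subset of a meet-closed antichain determines at least $\log_2 n$ distinct meet-lengths --- not the ability to realize an \emph{arbitrary prescribed intermediate} length once some lengths are already pinned down. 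Density of the underlying linear order gives you infinitely many strings in $X$ extending any given $\tau$ on either side, but says nothing about their depths; it does not let you hit a bounded window of lengths.

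The way the paper avoids this is to abandon the element-at-a-time order and instead process the \emph{levels of $\meetclosure{F}$ in increasing order of label} (after normalizing, via \Cref{remark:labels-initial-segment}, so that the labels form an initial segment of $\Nb$). At stage $s$ the construction maintains not just the image $x_s$ of the level-$s$ node but a bracketing triple $(a_s, x_s, b_s)$ with $a_s \ltlex x_s \ltlex b_s$ and the invariant $|a_s \meet b_s| <_\Nb |x_s| <_\Nb |a_{s+1}\meet b_{s+1}|$ (Item 3 of the construction in \Cref{th:4.1LaflammeDevlin}). Because levels are processed in increasing label order, every new node lands at a length strictly larger than all lengths produced so far, so no insertion-into-the-middle is ever required; the bracketing intervals $(a_s, b_s)$ serve as reservoirs (each a DLO suborder of $X$) from which later nodes with higher labels are drawn. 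The final embedding is obtained by restricting to the leaves, $\phi(y) = x_{|y\meet y|}$, and the meet and order relations are verified by the invariant. Your proposal has the right ingredients --- coded Joyce orders, control of meet-lengths in $2^{<\omega}$, use of density --- but without the increasing-label processing order and the nested-interval bookkeeping, the greedy extension step does not go through. Repairing your argument to process $\meetclosure{F}$ level-by-level in increasing label order with reference intervals essentially reproduces the paper's construction.
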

\begin{proof}
  Let $(X,\ltlex,|\cdot\meet\cdot|)$ be a computably coded DLO Joyce order with Joyce structure $\Xb$, which can always be found using \Cref{cor:true-dlo-joyce-order-exists}. Let $(F,\ltlex,|\cdot\meet\cdot|)$ be a Joyce order of $\Fb$. By \Cref{remark:labels-initial-segment}, we can suppose that the length of the elements of $\meetclosure{F}$ form an initial segment of $\Nb$. The \emph{cardinality} of $\meetclosure F$, $S\in\om\cup\{\om\}$, is that for all $s<S$, then there exists an unique element of $\meetclosure F$ of length $s$, $\sigma_s$.   We need to include the meets in our construction, so instead of building a map from $F$ to $X$, we build a map from $\meetclosure F$ to $X$. In the end, restricting the mapping to $F$ will yield the embedding.

  By induction on $s< S$, we build $x_s, a_s$ and $b_s$ such that:
  \begin{enumerate}
  \item\label{it:Dev-LaFlamme-1} $x_s,a_s,b_s\in X$;
  \item\label{it:Dev-LaFlamme-2} $a_s\ltlex x_s\ltlex b_s$; 
  \item\label{it:Dev-LaFlamme-3} $|{a_{s}}\meet{b_{s}}|<_\Nb|{x_s}|<_\Nb|{a_{s+1}}\meet{b_{s+1}}|$;
  
  \item\label{it:Dev-LaFlamme-4} If $t<s$ and $\sigma_t\prec\sigma_s$, then: If $\sigma_s(t)=0$, then $x_s,a_s,b_s$ are in the interval $(a_t,x_t)$. Similarly, if $\sigma_s(t)=1$, then $x_s,a_s,b_s$ are in the interval $(x_t,b_t)$.
  \end{enumerate}
  Suppose that $a_t,b_t$ and $x_{t}$ are defined for $t<s$. Let $t_0<s$ be biggest such that there exists $\tau\in F$ such that $|\sigma_s\meet\tau|=t_0$. Let $A$ be the interval $(a_{t_0},x_{t_0})$ if $\sigma_s(t_0)=0$, and $A$ be the interval $(x_{t_0},b_{t_0})$ if $\sigma_s(t_0)=1$. In either case $A$ is a dense linear Joyce order with no endpoints. Let $a_s,b_s\in A$ such that $|a_s\meet b_s|>_\Nb|{x_{s-1}}|$: They exists as $A$ is infinite and $|A|\geq n$ implies $|\{|\sigma\meet\tau|:\sigma\neq\tau\in A\}|\geq\log_2(n)$. Define $x_s$ to be any element of $(a_s,b_s)$, which has to verify $|a_s\meet b_s|<|x_s|$. Items 1, 2 and 3 are satisfied, as well as item 4 for $t_0$. By definition of $t_0$, if $\sigma_t\prec\sigma_s$, then either $t=t_0$ or $\sigma_t\prec\sigma_{t_0}$. In the former case, item 4 for $t$ is satisfied. In the latter case, as $a_{t_0}$, $\sigma_{t_0}$ and $b_{t_0}$ satisfy item 4, $x_{t_0}, a_{t_0}, b_{t_0}$ are in the interval specified by item 4. But then, so are $x_s, a_s, b_s$, so they satisfy item 4 for all $t$.

  We now define the embedding $\phi$: if $y\in F$, then $\phi(y)$ is defined to be $x_{|y\meet y|}$. It remains to show that $\phi$ is an embedding. An important fact is the following: If $x\ltlex y\in F$, and $t=|x\meet y|$, then $|a_t\meet b_t|\leq_\Nb|\phi(x)\meet\phi(y)|<_\Nb|x_t|$. Indeed, by \Cref{it:Dev-LaFlamme-4} $\phi(x)\in(a_t,x_t)$ and $\phi(y)\in(x_t, b_t)$. Combining the fact with \Cref{it:Dev-LaFlamme-3}, we get that $|x\meet y|<_\Nb|z\meet t|$ implies $|\phi(x)\meet\phi(y)|<_\Nb|\phi(z)\meet\phi(t)|$.

  Now suppose that for $x,y,z,t\in F$ with $x\ltlex y$ and $z\ltlex t$, $|x\meet y|=|z\meet t|=s$. This implies that $s_0=|x\meet z|>_\Nb s$ and $s_1=|y\meet t|>_\Nb s$. But then by \Cref{it:Dev-LaFlamme-3} and the fact of the previous paragraph, $|\phi(x)\meet\phi(z)|>_\Nb|a_{s_0}\meet b_{s_0}|>_\Nb|x_s|>_\Nb|\phi(x)\meet\phi(y)|$ and $|\phi(y)\meet\phi(t)|>_\Nb|a_{s_0}\meet b_{s_0}|>_\Nb|x_s|>_\Nb|\phi(z)\meet\phi(t)|$. Therefore, by \Jo2, $|\phi(x)\meet\phi(y)|=|\phi(z)\meet\phi(t)|$.

  Finally, suppose $x\ltlex y\in F$. Let $s=|x\meet y|$, by \Cref{it:Dev-LaFlamme-4} we have $\phi(x)\in (a_s,x_s)$ and $\phi(y)\in(x_s, b_s)$ so $\phi(x)\ltlex\phi(y)$.
\end{proof}

The \emph{age} \index{age of a structure}of a structure $\Mc$ is the set of all its finitely generated substructures.

\begin{corollary}
  The age of any DLO Joyce structure is the set of finite Joyce structures.
\end{corollary}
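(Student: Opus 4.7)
The statement is a direct combination of two facts, one for each direction of the claimed set equality. Since the language of a Joyce structure is purely relational, ``finitely generated substructure'' simply means ``finite substructure'', so the age of a DLO Joyce structure $\Xb$ is the collection (up to isomorphism) of finite substructures of $\Xb$.

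For the inclusion ``age $\subseteq$ finite Joyce structures'', the plan is to observe that if $(X,<,\meetlevel{\cdot}{\cdot})$ is any Joyce order underlying a Joyce structure, then for every finite $Y \subseteq X$ the triple $(Y, <\!\!\upharpoonright Y, \meetlevel{\cdot}{\cdot}\!\!\upharpoonright Y^{2})$ still satisfies axioms \Jo{1}--\Jo{3}, since these axioms are universal in $x,y,z,t$. In particular $(Y, <\!\!\upharpoonright Y, \JRel\!\!\upharpoonright Y^{4})$, which is exactly the substructure of the Joyce structure $(X,<,\JRel)$ with domain $Y$, is itself the Joyce structure of the Joyce order on $Y$, and hence a finite Joyce structure. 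Thus every member of the age of $\Xb$ is a finite Joyce structure.

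For the reverse inclusion ``finite Joyce structures $\subseteq$ age'', the plan is simply to invoke \Cref{th:4.1LaflammeDevlin} with $\Fb$ taken to be an arbitrary finite Joyce structure: that theorem provides an embedding $\phi:\Fb \to \Xb$, whose image is a finite substructure of $\Xb$ isomorphic to $\Fb$. Hence every finite Joyce structure appears (up to isomorphism) as a substructure of $\Xb$, and so belongs to the age of $\Xb$.

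No obstacle is expected here, since both directions reduce immediately to facts already established in this section; the work has all been front-loaded into the definition of Joyce structure (ensuring universality of \Jo{1}--\Jo{3}) and into \Cref{th:4.1LaflammeDevlin}. The corollary is essentially a bookkeeping statement recording that DLO Joyce structures are the Fra\"\i ss\'e limit-like objects for the class of finite Joyce structures.
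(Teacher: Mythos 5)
Your proposal is correct and matches the paper's (implicit) argument exactly: one inclusion follows from the universality of the Joyce order axioms (so finite substructures of a Joyce structure are Joyce structures), and the other is an immediate application of \Cref{th:4.1LaflammeDevlin}. Nothing further is needed.
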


\begin{theorem}[$\RCA_0$]\label{thm:joyce-diagonalization}
There exists a DLO Joyce order \[(2^{<\omega}, <_T, \meetlevel{\cdot}{\cdot}_T)\]
such that for every coded Joyce order $X$,
the Joyce structures of \[(X,  <_T, \meetlevel{\cdot}{\cdot}_T)\] and \[(X, \ltlex, |\cdot \meet \cdot|)\] are isomorphic.
\end{theorem}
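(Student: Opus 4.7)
I will build the structure on $2^{<\omega}$ by pulling back, along an explicit bijection $f \colon 2^{<\omega} \to Y$, the coded DLO Joyce order on the antichain $Y = (000 \cup 100)^{*}01 \subseteq 2^{<\omega}$ constructed in the proof of \Cref{thm:dlo-joyce-order-exists} (whose existence as a computable coded DLO Joyce order is given by \Cref{cor:true-dlo-joyce-order-exists}). For $\sigma = \sigma(0)\sigma(1)\cdots\sigma(k-1) \in 2^{<\omega}$, I set
\[
  f(\sigma) \;=\; \sigma(0)00\,\sigma(1)00\,\cdots\,\sigma(k-1)00\,01 \;\in\; Y,
\]
take $<_T$ to be the order $<_\Qb$ of \Cref{def:leq-Q}, and define $\meetlevel{\sigma}{\tau}_T = |f(\sigma) \meet f(\tau)|$ for all $\sigma,\tau \in 2^{<\omega}$.

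The first step is to check that $f$ is an order isomorphism from $(2^{<\omega}, <_\Qb)$ onto $(Y, \ltlex)$. This reduces to a short case analysis on whether $\sigma$ and $\tau$ are prefix-comparable in $2^{<\omega}$: in each case, the position at which $f(\sigma)$ and $f(\tau)$ first differ exactly realises one of the three clauses of \Cref{def:leq-Q} on the $\ltlex$ side. Because the axioms \Jo1--\Jo3 are universal and hence preserved by isomorphism, and because $(2^{<\omega}, <_\Qb)$ is already a dense linear order with no endpoints, pulling back the coded DLO Joyce structure on $(Y, \ltlex, |\cdot \meet \cdot|)$ makes $(2^{<\omega}, <_T, \meetlevel{\cdot}{\cdot}_T)$ into a DLO Joyce order.

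Next I verify the compatibility requirement. Let $X \subseteq 2^{<\omega}$ be a coded Joyce order. First, $X$ must be a prefix antichain: otherwise $\sigma \prec \tau$ in $X$ would give $\meetlevel{\sigma}{\tau} = |\sigma\meet\tau| = |\sigma| = \meetlevel{\sigma}{\sigma}$, contradicting \Cref{lem:prop-of-JO}(3). Hence $<_\Qb$ and $\ltlex$ coincide on $X$, and the identity $X \to X$ is an order isomorphism. I then claim the identity also preserves the Joyce relation $\JRel$: a direct computation gives $|f(\sigma)\meet f(\tau)| = 3|\sigma\meet\tau|$ whenever $\sigma\neq\tau$ are incomparable in $2^{<\omega}$, and $|f(\sigma)\meet f(\sigma)| = 3|\sigma|+2$. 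Using that, in a coded Joyce order, the leaf-lengths $|\sigma|$ ($\sigma\in X$) and the meet-lengths $|\sigma\meet\tau|$ ($\sigma\neq\tau$ in $X$) are all pairwise distinct integers — this follows from \Cref{lem:prop-of-JO}(2)--(3) together with a short argument applying the coded-Joyce axiom with $\rho = \sigma_2$ and $\rho = \tau_2$ to rule out two distinct meets having equal length — one then verifies case by case (for distinct integers $a,b$: $3a<3b \Leftrightarrow a<b$; $3a+2<3b+2 \Leftrightarrow a<b$; $3a+2<3b \Leftrightarrow a<b$) that
\[
  |f(x)\meet f(y)| < |f(z)\meet f(t)| \iff |x\meet y| < |z\meet t|
\]
for every $x,y,z,t \in X$.

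The delicate point is the design of $f$. The three-bit blowup $b\mapsto b00$ is precisely what places $|f(\sigma)\meet f(\tau)|$ in the residue class $0\pmod 3$ for distinct strings and $2\pmod 3$ for self-meets, and this separation of residues is what simultaneously aligns the label orderings of $(2^{<\omega},<_T,\meetlevel{\cdot}{\cdot}_T)$ with those of $|\cdot\meet\cdot|$ on every coded Joyce order at once. No further combinatorial input is needed beyond the two routine verifications above.
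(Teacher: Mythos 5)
Your map $f$ is exactly the paper's $\sigma\mapsto\hat\sigma$ and your $<_T$ coincides with the paper's order, but your label function is wrong, and this breaks the first half of the theorem: with $\meetlevel{\sigma}{\tau}_T=|f(\sigma)\meet f(\tau)|$ the structure $(2^{<\omega},<_T,\meetlevel{\cdot}{\cdot}_T)$ is not a Joyce order. Distinct strings of the same length receive the same self-label, since $\meetlevel{\sigma}{\sigma}_T=|f(\sigma)|=3|\sigma|+2$; for instance $\meetlevel{0}{0}_T=\meetlevel{1}{1}_T=5$, contradicting item 2 of \Cref{lem:prop-of-JO}, which is a consequence of \Jo{3}. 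Likewise distinct meets of the same length receive the same label: $\meetlevel{00}{01}_T=\meetlevel{10}{11}_T=3$ while $\meetlevel{00}{10}_T=0$, so \Jo{3} fails outright. The underlying misconception is in your very first step: $(Y,\ltlex,|\cdot\meet\cdot|)$ is \emph{not} a coded DLO Joyce order (it is not even a Joyce order, and it also violates the extra clause in the definition of a coded Joyce order: take $\sigma=f(00)$, $\tau=f(01)$, $\rho=f(11)$, so that $\sigma\meet\tau=000\not\preceq\rho$ and $|\rho|>3$, yet $\rho(3)=1$). The object produced by \Cref{thm:dlo-joyce-order-exists} carries the labels $v(\sigma\meet\tau)$ for an injective, length-monotone $v$, and the coded DLO Joyce order of \Cref{cor:true-dlo-joyce-order-exists} lives on a different, re-coded set of strings. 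So there is no ``coded DLO Joyce order on $Y$'' to pull back, and the claim that universality of \Jo{1}--\Jo{3} transfers the axioms to $(2^{<\omega},<_T,\meetlevel{\cdot}{\cdot}_T)$ has a false premise.

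The repair is exactly the paper's proof: keep $f$ and $<_T$, but set $\meetlevel{\sigma}{\tau}_T=v(f(\sigma)\meet f(\tau))$ where $v:2^{<\omega}\to\omega$ is injective and satisfies $|\rho|<|\rho'|\Rightarrow v(\rho)<v(\rho')$. Injectivity of $v$ is what makes \Jo{3} hold globally, since equal labels now force equal meets, so $(2^{<\omega},<_T,\meetlevel{\cdot}{\cdot}_T)$ really is a DLO Joyce order. Your verification of the second half then survives essentially verbatim: restricted to a coded Joyce order $X$, distinct meets have distinct lengths, and length-monotonicity of $v$ ensures that the ordering of the labels $v(f(\sigma)\meet f(\tau))$ agrees with the ordering of the lengths $|f(\sigma)\meet f(\tau)|$, which by your residue computation agrees with that of $|\sigma\meet\tau|$. (Your observations that a coded Joyce order is a prefix antichain and that equal meet-lengths force equal meets inside $X$ are correct, and the latter is exactly what the paper invokes at this point.)
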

\begin{proof}
Let $(U, \ltlex, \meetlevel{\cdot}{\cdot}_U)$ be the DLO Joyce order defined in \Cref{thm:dlo-joyce-order-exists}, that is, $U = (000 \cup 100)^{*}01$ and $\meetlevel{\sigma}{\tau}_U = v(\sigma \meet \tau)$ for some injective function $v: 2^{<\omega} \to \omega$ such that
for every $\sigma, \tau \in 2^{<\omega}$, if $|\sigma| < |\tau|$ then $v(\sigma) < v(\tau)$.

Define the DLO Joyce order $(2^{<\omega}, <_T,  \meetlevel{\cdot}{\cdot}_T)$ as follows:
Given $\sigma \in 2^{<\omega}$, let $\hat{\sigma}$ be the binary string of length $3|\sigma|+2$
defined for every $j < |\sigma|$ by $\hat{\sigma}(3j) = \sigma(j)$, $\hat{\sigma}(3j+1) = \hat{\sigma}(3j+2) = 0$, and $\hat{\sigma}(3|\sigma|) = 0$ and $\hat{\sigma}(3|\sigma|+1) = 1$.
For instance, if $\sigma = 0110$ then $\hat{\sigma} = 00010010000001$.
Let $\sigma <_T \tau$ if and only if $\hat{\sigma} \ltlex \hat{\tau}$
and $\meetlevel{\sigma}{\tau}_T = \meetlevel{\hat{\sigma}}{\hat{\tau}}_U$.

Let $X$ be a coded Joyce order. We shall now show that $(X,  <_T, \meetlevel{\cdot}{\cdot}_T)$ and \mbox{$(X, \ltlex, |\cdot \meet \cdot|)$} are isomorphic via the identify function.  

Fix $\sigma, \tau \in X$. If $\sigma \ltlex \tau$, then $\hat{\sigma} \ltlex \hat{\tau}$, hence $\sigma <_T \tau$. Conversely, if $\sigma <_T \tau$, then $\hat{\sigma} \ltlex \hat{\tau}$, but since $\sigma$ and $\tau$ are incomparable with respect to the prefix relation, this implies that $\sigma \ltlex \tau$. Thus $\sigma <_T \tau$ if and only if $\sigma \ltlex \tau$.

Fix $\sigma, \tau, \rho, \mu \in X$. If $|\sigma \meet \tau| <_\Nb |\rho \meet \mu|$, then
$|\hat{\sigma} \meet \hat{\tau}| <_\Nb |\hat{\rho} \meet \hat{\mu}|$, then $v(\hat{\sigma} \meet \hat{\tau}) <_\Nb v(\hat{\rho} \meet \hat{\mu})$, hence $\meetlevel{\sigma}{\tau}_T <_\Nb \meetlevel{\rho}{\mu}_T$. Conversely, assume $\meetlevel{\sigma}{\tau}_T <_\Nb \meetlevel{\rho}{\mu}_T$. Unfolding the definition, $v(\hat{\sigma} \meet \hat{\tau}) <_\Nb v(\hat{\rho} \meet \hat{\mu})$. If $|\hat{\sigma} \meet \hat{\tau}| \neq |\hat{\rho} \meet \hat{\mu}|$, then by definition of $v$, $|\hat{\sigma} \meet \hat{\tau}| <_\Nb |\hat{\rho} \meet \hat{\mu}|$, hence $|\sigma \meet \tau| <_\Nb |\rho \meet \mu|$. If $|\hat{\sigma} \meet \hat{\tau}| = |\hat{\rho} \meet \hat{\mu}|$, then, since $X$ is a coded Joyce order, $\sigma \meet \tau = \rho \meet \mu$, so $\hat{\sigma} \meet \hat{\tau} = \hat{\rho} \meet \hat{\mu}$ and $v(\hat{\sigma} \meet \hat{\tau}) = v(\hat{\rho} \meet \hat{\mu})$, contradiction.
\end{proof}

\begin{definition}\index{Joyce order!diagonalization}\index{diagonalization!Joyce order}
A \emph{Joyce order diagonalization} for some Joyce order
\[\mbox{$(U,  <_U, \meetlevel{\cdot}{\cdot}_U)$}
\]
is a function $h: 2^{<\omega} \to U$, such that for every coded Joyce order $X$,
\[\mbox{$(h[X], <_U, \meetlevel{\cdot}{\cdot}_U)$}\]
is isomorphic to $(X, \ltlex, |\cdot \meet \cdot|)$.
\end{definition}

\begin{corollary}[$\RCA_0$]\label{cor:joyce-diagonalization-exists}
Every DLO Joyce order $(U, <_U, \meetlevel{\cdot}{\cdot}_U)$ has a Joyce order diagonalization.
\end{corollary}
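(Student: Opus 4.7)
The plan is to combine the two main ingredients already established: the specific ``universal'' DLO Joyce order constructed in \Cref{thm:joyce-diagonalization}, and the embedding property of \Cref{th:4.1LaflammeDevlin}. First I would invoke \Cref{thm:joyce-diagonalization} to fix a particular DLO Joyce order $(2^{<\omega}, <_T, \meetlevel{\cdot}{\cdot}_T)$ with the property that, for every coded Joyce order $X \subseteq 2^{<\omega}$, the Joyce structure $(X, <_T, \meetlevel{\cdot}{\cdot}_T)$ is isomorphic to the ``standard'' one, $(X, \ltlex, |\cdot \meet \cdot|)$, via the identity map on $X$. So the identity $\id: 2^{<\omega} \to 2^{<\omega}$ already serves as a Joyce order diagonalization for this particular $(2^{<\omega}, <_T, \meetlevel{\cdot}{\cdot}_T)$.

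Next I would transport this to the arbitrary given DLO Joyce order $(U, <_U, \meetlevel{\cdot}{\cdot}_U)$. Since the Joyce structure of $(2^{<\omega}, <_T, \meetlevel{\cdot}{\cdot}_T)$ is itself a (countable) Joyce structure, \Cref{th:4.1LaflammeDevlin} provides an embedding $\phi: (2^{<\omega}, <_T, \meetlevel{\cdot}{\cdot}_T) \to (U, <_U, \meetlevel{\cdot}{\cdot}_U)$ of Joyce structures, i.e., a map preserving the order and the $4$-ary relation $\JRel$. I would then simply set $h = \phi$.

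To verify that this $h$ is a Joyce order diagonalization for $(U, <_U, \meetlevel{\cdot}{\cdot}_U)$, fix an arbitrary coded Joyce order $X$. By \Cref{thm:joyce-diagonalization}, $(X, <_T, \meetlevel{\cdot}{\cdot}_T)$ is isomorphic (as a Joyce structure) to $(X, \ltlex, |\cdot \meet \cdot|)$ via the identity. Since $\phi$ is an embedding of Joyce structures and $X \subseteq 2^{<\omega}$ is a substructure of $(2^{<\omega}, <_T, \meetlevel{\cdot}{\cdot}_T)$, the restriction $\phi \upharpoonright X$ witnesses that $(h[X], <_U, \meetlevel{\cdot}{\cdot}_U) = (\phi[X], <_U, \meetlevel{\cdot}{\cdot}_U)$ is isomorphic to $(X, <_T, \meetlevel{\cdot}{\cdot}_T)$. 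Composing the two isomorphisms gives the desired isomorphism between $(h[X], <_U, \meetlevel{\cdot}{\cdot}_U)$ and $(X, \ltlex, |\cdot \meet \cdot|)$, as required.

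There is essentially no obstacle here beyond bookkeeping: the nontrivial combinatorics has already been absorbed into \Cref{thm:joyce-diagonalization} (which hands us a canonical diagonalization for a single DLO Joyce order) and into \Cref{th:4.1LaflammeDevlin} (which allows transport between any two DLO Joyce structures). The only point that requires a brief check is that $\phi$, being an embedding of Joyce structures, automatically induces such an embedding on the restriction to any substructure $X$, which is immediate from the universality of the defining axioms of a Joyce order.
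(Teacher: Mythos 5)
Your proof is correct and follows exactly the paper's own argument: take the DLO Joyce order on $2^{<\omega}$ from \Cref{thm:joyce-diagonalization}, embed it into $(U, <_U, \meetlevel{\cdot}{\cdot}_U)$ via \Cref{th:4.1LaflammeDevlin}, and compose the two isomorphisms. Nothing to add.
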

\begin{proof}
Let $(2^{<\omega}, <_T, \meetlevel{\cdot}{\cdot}_T)$ be the Joyce order of \Cref{thm:joyce-diagonalization}. By \Cref{th:4.1LaflammeDevlin}, there is an embedding $h: 2^{<\omega} \to U$. By definition of an embedding, for every coded Joyce order $X \subseteq 2^{<\omega}$, $(h[X], <_U, \meetlevel{\cdot}{\cdot}_U)$ is isomorphic to $(X, <_T, \meetlevel{\cdot}{\cdot}_T)$. By \Cref{thm:joyce-diagonalization}, $(X, <_T, \meetlevel{\cdot}{\cdot}_T)$ is isomorphic to $(X, \ltlex, |\cdot \meet \cdot|)$. Thus $h$ is a Joyce order diagonalization.
\end{proof}

The following lemma bridges finite coded Joyce orders of size $n$
and strong subtrees of $2^{<\omega}$ of height $2n-1$, by showing that any coded Joyce order of size $n$ is a subset of a strong subtree of size $2n-1$, and, conversely, any strong subtree of height $2n-1$ is a superset of at most one coded Joyce order of size $n$.

\begin{lemma}\label{lem:coded-joyce-order-to-strong-subtree}
Let $F$ be a finite coded Joyce order of size $n$ and $T \in \Subtree{\omega}{2^{<\omega}}$.
Then every $E \in \Subtree{2n-1}{T}$ contains at most one coded Joyce order isomorphic to $F$. Moreover, every coded Joyce order $H \subseteq T$ isomorphic to $F$
	is included in some $E \in \Subtree{2n-1}{T}$.
\end{lemma}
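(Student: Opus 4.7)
The plan is as follows. For the uniqueness assertion, suppose $H \subseteq E$ is a coded Joyce order isomorphic to $F$. Since $|H^\wedge| = 2n-1$ with pairwise distinct lengths (by \Cref{lem:JO-are-trees}) and $E$ consists of exactly $2n-1$ distinct levels in $2^{<\omega}$, $H^\wedge$ contains exactly one element at each level of $E$; write $\ell_0 < \cdots < \ell_{2n-2}$ for the corresponding lengths and $\mu_i \in H^\wedge$ for the element of length $\ell_i$. The isomorphism $H \cong F$ identifies $J_H$ with $J_F$, so the position of each $\mu_i$ as a labeled node (with designated left and right children) in the abstract binary tree $J_F$ is known. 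I will show that the bits $\mu_i(\ell_0), \ldots, \mu_i(\ell_{i-1})$ are determined purely by $J_F$. Once this is done, the fact that elements of $E(i)$ biject with $\{0,1\}^i$ via the path from the root of $E$ (with the $(j+1)$st bit of the path being exactly the bit at position $\ell_j$) pins $\mu_i$ down uniquely in $E(i)$, so $H$ is unique.

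To verify the bit-determination claim, fix $\mu_i$ and a leaf descendant $\rho$ of $\mu_i$ in $J_F$ (taking $\rho = \mu_i$ if $\mu_i$ is already a leaf), so $\rho \in H$, $\mu_i \preceq \rho$, and $\rho(\ell_j) = \mu_i(\ell_j)$ for all $j < i$. For each such $j$, either $\mu_j$ is an ancestor of $\mu_i$ in $J_F$, in which case $\mu_j \preceq \rho$ and $\rho(\ell_j)$ is $0$ or $1$ according as $\rho$ lies in the left or right subtree of $\mu_j$ in $J_F$; or $\mu_j$ is not an ancestor of $\mu_i$ in $J_F$, in which case $\mu_j \not\preceq \rho$ (otherwise both $\mu_j$ and $\mu_i$ would be prefixes of $\rho$ with $|\mu_j| < |\mu_i|$, forcing $\mu_j \preceq \mu_i$), so the coded Joyce order axiom applied with $\sigma = \tau = \mu_j$ forces $\rho(\ell_j) = 0$. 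In each case the bit is a function of $J_F$ alone.

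For existence, given a coded Joyce order $H \subseteq T$ isomorphic to $F$, let $k_0 < \cdots < k_{2n-2}$ enumerate the $T$-levels containing the elements of $H^\wedge$. Build $E$ inductively: set $E(0)$ to be the singleton containing the root of $H^\wedge$, and given $E(i)$, define $E(i+1)$ by assigning to each $\sigma \in E(i)$ two children $\sigma^L, \sigma^R \in T(k_{i+1})$ extending $\sigma 0$ and $\sigma 1$ in $2^{<\omega}$, respectively, where $\sigma^L$ is chosen to be the $T(k_{i+1})$-prefix of any $\tau \in H^\wedge$ whose $k_i$-prefix is $\sigma$ and which extends $\sigma 0$ if such a $\tau$ exists, and arbitrarily otherwise (symmetrically for $\sigma^R$). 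The prescription is consistent: if $\tau_1, \tau_2 \in H^\wedge$ both have $k_i$-prefix $\sigma$ and both extend $\sigma 0$, then $\tau_1 \meet \tau_2 \in H^\wedge$ strictly extends $\sigma$, so it lies at some level $k_{j_0}$ with $j_0 \geq i+1$, and hence $\tau_1$ and $\tau_2$ share a common $T(k_{i+1})$-prefix. A straightforward induction on $j$ shows that every $\tau \in H^\wedge$ at $T$-level $k_j$ ends up in $E(j)$, so $H \subseteq H^\wedge \subseteq E$ and $E \in \Subtree{2n-1}{T}$ by construction. The main obstacle is the bit-determination step in the uniqueness part: verifying that the coded Joyce order axiom, together with the tree structure of $J_F$, accounts for every bit of $\mu_i$ at the $2^{<\omega}$-positions where $E$ branches. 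The existence part is essentially combinatorial bookkeeping, with the only subtlety being the consistency check noted above.
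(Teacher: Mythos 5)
Your proof is correct and follows essentially the same approach as the paper: identify the $2n-1$ meets of $H$ with the $2n-1$ levels of $E$, show each bit of each meet at a branching position is determined by the abstract Joyce tree (so $H^\wedge$ and hence $H$ is pinned down uniquely within $E$), and build $E$ level by level from $H^\wedge$ for existence. One small imprecision: when $\mu_j$ is an internal node of $J_F$ it need not lie in $H$, so the coded Joyce order axiom cannot literally be applied with $\sigma = \tau = \mu_j$; instead one should write $\mu_j = \sigma \meet \tau$ for suitable $\sigma, \tau \in H$ (which exist since $\mu_j \in H^\wedge$) and apply the axiom to that pair — the conclusion $\rho(\ell_j)=0$ is unchanged. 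Your explicit dichotomy (is $\mu_j$ an ancestor of $\mu_i$ in $J_F$, or not) is in fact a bit cleaner than the paper's phrasing of the bit-determination step.
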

\begin{proof}
  Let $E\in\Subtree{2n-1}T$, $h$ its level function and $F_0, F_1\subseteq E$ be two coded Joyce orders isomorphic to $F$. By \Cref{it:JO-are-trees-4} of \Cref{lem:JO-are-trees}, the set $\{|\sigma\meet\tau|:\sigma,\tau\in F_0\}$ has cardinality $2n-1$, 
  and similarly for $F_1$.  

  Remark that $F_0$ and $F_1$ are uniquely identified as the set of leaves of respectively $\meetclosure F_0$ and $\meetclosure F_1$, and that the isomorphism between $F_i$ and $F$ can be extended to an isomorphism between $\meetclosure F_i$ and $\meetclosure F$, where the element of length $h(j)$ (or equivalently of level $j$) of $\meetclosure F_i$ is mapped to the element of level $j$ of $F$. Let $\ell$ be the first level, if it exists, where $\meetclosure F_0\uh h(\ell)\neq\meetclosure F_1\uh h(\ell)$. Let $\sigma_i\in \meetclosure F_i$ be the unique element of $\meetclosure F_i(\ell)$, and $\sigma$ the unique element of $F(\ell)$. For every $j<\ell$, the values of $\sigma_0(h(j))=\sigma_1(h(j))$ are determined: 0 iff there is a $\tau$ where $\sigma\ltlex\tau\in \meetclosure F(j)$ and 1 iff there is a $\tau$ where $\sigma\gtlex\tau\in \meetclosure F(j)$. 
    As $E$ is a strong subtree, determining the values of $\sigma$ at levels $h(j)$ for $h(j)<|\sigma_i|$ entirely defines $\sigma_i$.
  Therefore, $\meetclosure F_0=\meetclosure F_1$ and $F_0=F_1$.

For the second part, let $H \subseteq T$ be a coded Joyce order isomorphic to $F$. We claim that $H$ is included in some $E \in \Subtree{2n-1}{T}$.
Let $H^{\meet} = \{\sigma \meet \tau: \sigma, \tau \in H\}$ be the $\meet$-closure of $H$.
In particular, $H^{\meet}$ is a finite tree of height $2n-1$ with exactly one string at each level. Since $T \in \Subtree{\omega}{2^{<\omega}}$ and $H \subseteq T$ then $H^{\meet} \subseteq T$. Let $L = \{\ell_0 < \dots < \ell_{2n-2}\}$ be the set of levels of the nodes of $H^{\meet}$ in $T$. Let $E$ be the largest (in the sense of inclusion) subtree of $T$ of height $2n-1$ containing $H^{\meet}$ such that for every $i < 2n-1$, $E(i) \subseteq T(\ell_i)$. We claim that for every $i < 2n-2$, every node $\sigma \in E(i)$ is 2-branching in $E$. Since $E \subseteq 2^{<\omega}$ is a tree, it is $\meet$-closed, $\sigma$ is at most 2-branching. Since $T \in \Subtree{\omega}{2^{<\omega}}$, every node in $T$ is 2-branching. Then $\sigma$ has two extensions $\tau_0, \tau_1 \in T(\ell_{i+1})$ such that $\tau_0 \meet \tau_1 = \sigma$. By maximality of $E$, $\sigma$ is 2-branching in $E$.
 	Thus $E \in \Subtree{2n-1}{T}$.
\end{proof}

\begin{theorem}[$\ACA_0$]\label{thm:strong-devlin-one-type}
  Let $\Xb$ be a countable DLO Joyce structure, and $\mathbb{F}$ be a finite Joyce structure. Then, the big Ramsey number of $\mathbb{F}$ in $\Xb$ is 1.
\end{theorem}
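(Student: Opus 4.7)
The plan is to reduce the statement to Milliken's tree theorem for strong subtrees of height $2n-1$, where $n = |\mathbb{F}|$, exploiting the correspondence between finite coded Joyce orders and such strong subtrees provided by \Cref{lem:coded-joyce-order-to-strong-subtree}. Since $\Xb$ is a DLO Joyce structure, by \Cref{thm:joyce-orders-can-be-coded} I may assume that it is presented as a coded DLO Joyce order $X \subseteq \cantor$, and by \Cref{cor:joyce-diagonalization-exists} I fix a Joyce order diagonalization $h: \cantor \to \Xb$, which sends each coded Joyce suborder of $\cantor$ onto an isomorphic Joyce suborder of $\Xb$.

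Given a coloring $f: \binom{\Xb}{\mathbb{F}} \to k$, I will define $g: \Subtree{2n-1}{\cantor} \to k$ by setting $g(E) = f(h[H_E])$ whenever $E$ contains a coded Joyce suborder $H_E$ isomorphic to $F$, and $g(E) = 0$ otherwise. This is well-defined: by the first clause of \Cref{lem:coded-joyce-order-to-strong-subtree} the suborder $H_E$ is unique when it exists, and $h[H_E]$ is then a copy of $\mathbb{F}$ in $\Xb$. I then apply Milliken's tree theorem for height $2n-1$, which is provable in $\ACA_0$ by \Cref{thm:milliken-aca}, to obtain $S \in \Subtree{\omega}{\cantor}$ and $i < k$ with $g \equiv i$ on $\Subtree{2n-1}{S}$.

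The final step is to locate inside $S$ a coded Joyce suborder $X^* \subseteq S$ that is isomorphic, as a Joyce structure, to $X$. Given such $X^*$, the image $h[X^*]$ will be a copy of $\Xb$ in $\Xb$, and every copy $\mathbb{F}'$ of $\mathbb{F}$ in $h[X^*]$ will be of the form $h[H^*]$ for a coded Joyce suborder $H^* \subseteq X^*$ isomorphic to $F$. The second clause of \Cref{lem:coded-joyce-order-to-strong-subtree}, applied inside $S$, then guarantees that $H^*$ is contained in some $E \in \Subtree{2n-1}{S}$, so $f(\mathbb{F}') = g(E) = i$, showing that $h[X^*]$ is monochromatic for $f$.

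The hard part will be producing $X^*$ inside $S$. Here I will use that, being a strong subtree of $\cantor$ of infinite height, $S$ is combinatorially rich enough to accommodate any countable coded Joyce order: by mimicking the level-by-level construction used in the proofs of \Cref{thm:dlo-joyce-order-exists} and \Cref{thm:joyce-orders-can-be-coded} within $S$, I select an antichain $X^* \subseteq S$ satisfying the bit condition defining a coded Joyce order and whose induced Joyce structure is isomorphic to that of $X$.
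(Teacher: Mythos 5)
Your proposal is the paper's own proof, step for step: the reduction to Milliken's tree theorem for strong subtrees of height $2n-1$ via the induced coloring $g$, the role of the Joyce order diagonalization $h$, and the use of both clauses of \Cref{lem:coded-joyce-order-to-strong-subtree} all match. For the step you call ``hard'', the paper's one-liner is to take $X^* = \phi[X]$, where $\phi: \cantor \to S$ is the natural level- and branching-preserving bijection; the verification that $\phi[X]$ is again a coded Joyce order comes down to noticing that the bit positions constrained by that definition are lengths of nodes of $S$, and there the bit of $\phi(\rho)$ simply records the branching direction of $\rho$ in $\cantor$, which $\phi$ preserves.
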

\begin{proof}
Let $X$ be a countable coded DLO Joyce order and $F$ be a finite coded Joyce order of size $n$. Fix a coloring $f: {X \choose F} \to k$. Here, ${X \choose F}$ denotes all the subcopies of $F$ in $X$.

Let $h: 2^{<\omega} \to X$ be a Joyce order diagonalization, which exists by \Cref{cor:joyce-diagonalization-exists}. Let $g: \Subtree{2n-1}{2^{<\omega}} \to k$ be defined for every $E \in \Subtree{2n-1}{2^{<\omega}}$ by $g(E) = f(h(H))$ where $H \subseteq E$ is the unique element coded Joyce order isomorphic to $F$, if it exists. Otherwise let $g(E) = 0$. This coloring is well-defined by \Cref{lem:coded-joyce-order-to-strong-subtree}.

By Milliken's tree theorem for height $2n-1$, there is a strong subtree $S \in \Subtree{\omega}{2^{<\omega}}$ such that $g$ restricted to $\Subtree{2n-1}{S}$ is monochromatic for some color $i < k$. In particular, by \Cref{lem:coded-joyce-order-to-strong-subtree}, for every coded Joyce order $H \subseteq S$ isomorphic to $F$, there is some $E \in \Subtree{2n-1}{S}$ containing $H$, and $g(E) = f(h(H)) = i$. 

Since $S \in \Subtree{\omega}{2^{<\omega}}$, there is an injective function $\phi: 2^{<\omega} \to S$ such that $\phi[X]$ is a coded Joyce order isomorphic to $X$. 
In particular, since $h$ is a Joyce diagonalization, $Y = h[\phi[X]]$ is a DLO coded Joyce order isomorphic to $X$, hence a subcopy of~$X$. Note that $Y$ is a coded Joyce order since it is a subset of $X$ which is a coded Joyce order.  

We claim that $f$ restricted to ${Y \choose F}$ is monochromatic for color~$i$.
Let $\hat{H}$ be a copy of $F$ in $Y = h[\phi[X]]$. Let $H \subseteq \phi[X]$ be such that $h[H] = \hat{H}$. In particular since $\phi[X]$ is a coded Joyce order, so is $H$, so since $h$ is a Joyce order diagonalization, $\hat{H} = h[H]$ is a coded Joyce order isomorphic to $H$. In other words, $H$ is a copy of $F$ in $\phi[X] \subseteq S$, so $H$ is a copy of $F$ in $S$. 
	By \Cref{lem:coded-joyce-order-to-strong-subtree}, there is some $E \in \Subtree{2n-1}{S}$ containing $H$, and by definition of $g$, $g(E) = f(h(H))$.  By choice of $S$, $g$ restricted to $\Subtree{2n-1}{S}$ is homogeneous for color~$i$, so $g(E) = f(h[H]) = i$, so $f(\hat{H}) = i$. 
\end{proof}

\begin{statement}[Joyce Devlin's theorem for $n$-tuples and $\ell$ colors]\index{statement!$\JDT{n}{k, \ell}$}
  $\JDT{n}{k, \ell}$ is the statement: ``For any Joyce structure $\Xb$ and coloring $f: [\Xb]^n \to k$, there exists a strong subcopy of $\Xb$ such that $f$ uses at most $\ell$ colors''.
\end{statement}

\begin{corollary}[Tight bounds on Joyce Devlin's theorem]\label{cor:strong-devlin-tight-joyce-orders}
  For any $n$, $(\forall k)\JDT{n}{k, \ell}$ holds, $\ell$ being the number of Joyce orders with $n$ elements, and this bound is tight.
\end{corollary}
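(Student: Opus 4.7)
The plan is to prove the upper and lower bounds separately, both reducing to results already in hand. Enumerate the $\ell$ isomorphism types of Joyce orders of size $n$ as $\Fb_1,\dots,\Fb_\ell$. Any $n$-element subset $A$ of a Joyce structure $\Xb$ inherits a Joyce structure by restriction, which is isomorphic to exactly one $\Fb_i$, so $[\Xb]^n$ partitions naturally as $\bigsqcup_{i\leq \ell}\binom{\Xb}{\Fb_i}$. This is the combinatorial bookkeeping that converts between ``unlabelled'' $n$-element subsets and the type-by-type framework used in \Cref{thm:strong-devlin-one-type}.

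For the upper bound, given $f\colon[\Xb]^n\to k$, I would iteratively apply \Cref{thm:strong-devlin-one-type}. Set $\Xb_0=\Xb$; assuming $\Xb_{i-1}$ is a subcopy of $\Xb$ (hence a DLO Joyce structure by closure of the axioms under substructure), apply \Cref{thm:strong-devlin-one-type} to the restriction of $f$ to $\binom{\Xb_{i-1}}{\Fb_i}$ to obtain a subcopy $\Xb_i\subseteq\Xb_{i-1}$ on which $f$ is constant on copies of $\Fb_i$. After $\ell$ iterations, $\Xb_\ell$ is a subcopy of $\Xb$ on which $f$ uses at most one color per Joyce type, for a total of at most $\ell$ colors. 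This establishes $(\forall k)\JDT{n}{k,\ell}$.

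For tightness, define the canonical coloring $c\colon[\Xb]^n\to\ell$ sending each $A\in[\Xb]^n$ to the index of the unique $\Fb_i$ to which the Joyce substructure induced on $A$ is isomorphic. By the corollary following \Cref{th:4.1LaflammeDevlin}, the age of any DLO Joyce structure is exactly the class of all finite Joyce structures; in particular, any subcopy $\Yb$ of $\Xb$ contains a copy of every $\Fb_i$, so $c$ attains all $\ell$ values on $[\Yb]^n$. Hence no strong subcopy can reduce the number of colors below $\ell$.

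The only point to watch is that a subcopy of $\Xb$ is again a DLO Joyce structure, so the iteration in the upper bound stays within the hypotheses of \Cref{thm:strong-devlin-one-type} and the universality invoked for tightness applies uniformly at each stage; this is immediate from the definitions, since ``subcopy'' means ``substructure isomorphic to $\Xb$''. No substantial obstacle is anticipated: the upper bound is a direct $\ell$-fold iteration of a theorem already proved in $\ACA_0$, and the lower bound follows from the universality of the age established earlier.
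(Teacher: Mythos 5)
Your proof is correct and follows essentially the same route as the paper's: an $\ell$-fold iteration of \Cref{thm:strong-devlin-one-type} over the Joyce types for the upper bound, and the canonical type-coloring together with universality of the DLO Joyce structure (\Cref{th:4.1LaflammeDevlin} or its age corollary) for tightness.
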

\begin{proof}
Let $\ell$ be the number of Joyce order structures with $n$ elements. Let $F_0, \dots, F_{\ell-1}$ be a finite enumeration of all the finite coded Joyce orders of size $n$.

We first prove that $(\forall k)\JDT{n}{k, \ell}$ holds.
Fix a coloring $f: [X]^n \to k$ for some countable DLO Joyce structure $(X, <, \JRel)$. By \Cref{thm:strong-devlin-one-type}, build a finite decreasing sequence of subsets $X = X_0 \supseteq X_1 \supseteq \dots \supseteq X_\ell$ of $X$ such that for every $s < \ell$:
\begin{enumerate}
	\item $(X_{s+1},  <, \JRel)$ is a subcopy of $(X_s, <, \JRel)$;
	\item every copy of $F_s$ in $(X_{s+1},  <, \JRel)$ is monochromatic for $f$ for some color $i_s < k$.
\end{enumerate}
The Joyce structure $(X_\ell, <, \JRel)$ is a subcopy of $(X, <, \JRel)$.
Moreover, for every $E \in [X_\ell]^n$, $(E, <, \JRel)$ is isomorphic to $F_s$ for some $s < k$,
so $f(E) = i_s$. It follows that $f[X_\ell]^n \subseteq \{ i_s: s < \ell \}$, hence $|f[X_\ell]^n| \leq \ell$.

We now show that the bound is tight. Let $f: [X]^n \to k$ be defined by $f(E) = s$ for the unique $s < \ell$ such that $(E, <, \JRel)$ is isomorphic to $F_s$.
Let $(Y, <, \JRel)$ be a subcopy of $(X, <, \JRel)$. In particular, $(Y, <, \JRel)$ is a DLO Joyce structure, so by \Cref{th:4.1LaflammeDevlin}, for every $s < \ell$, there is an embedding of $F_s$ into $(Y, <, \JRel)$. Therefore, $|f[Y]^n| \geq \ell$.
\end{proof}

It is clear that Joyce Devlin's theorem for $n$-tuples and $\ell$ colors implies Devlin's theorem for $n$-tuples and $\ell$ colors: indeed, by the existence of a DLO Joyce structure and computable categoricity of dense linear orders without endpoints, any such order can be turned into a DLO Joyce structure (see \Cref{cor:order-can-be-enriched}). The following theorem shows the converse:

\begin{theorem}[$\RCA_0$]\label{thm:suborder-to-subjoyce}
  Let $\Xb=(X,<,\JRel)$ be a DLO Joyce structure. Let $\Xb'=(X',<)$ be an isomorphic subcopy of $(X,<)$, that is, a dense linear order with no endpoints. Then, there exists a subcopy $(X'',<)$ of $(X',<)$ such that $(X'',<,\JRel)$ is a subcopy of $\Xb$.
\end{theorem}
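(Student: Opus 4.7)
The plan is to apply Theorem \ref{th:4.1LaflammeDevlin} with $\Xb$ as the source Joyce structure and an enrichment of $\Xb'$ as the target DLO Joyce structure. The only real work is observing that $\Xb'$ carries a DLO Joyce structure ``for free'' once we restrict $\JRel$.

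First I would note that the axioms \Jo{1}, \Jo{2}, \Jo{3} of a Joyce order (\Cref{def:jo}) are universal, so any subset of a Joyce structure, equipped with the restrictions of $<$ and $\JRel$, is again a Joyce structure. In particular, the triple $(X', <, \JRel\!\upharpoonright\! (X')^4)$ is a Joyce structure. Since $(X', <)$ is by hypothesis a dense linear order with no endpoints (a subcopy of $(X,<)$ as a DLO), this Joyce structure is in fact a DLO Joyce structure.

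Next I would apply \Cref{th:4.1LaflammeDevlin} with source $\Fb = \Xb$ (which is a Joyce structure, possibly infinite but countable, and the theorem is stated for finite or infinite $\Fb$) and target DLO Joyce structure $(X', <, \JRel\!\upharpoonright\! (X')^4)$. This yields an embedding $\phi \colon \Xb \to (X', <, \JRel\!\upharpoonright\! (X')^4)$ of Joyce structures. Setting $X'' = \phi[X] \subseteq X'$, the image $(X'', <, \JRel\!\upharpoonright\! (X'')^4)$ is isomorphic to $\Xb$ via $\phi$, hence is a subcopy of $\Xb$ as a Joyce structure. In particular, $(X'',<)$ is isomorphic to $(X,<)$, so it is a dense linear order without endpoints inside $(X',<)$, which makes it a subcopy of $(X',<)$ as a DLO.

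There is no real obstacle here beyond the conceptual point of recognizing that being a Joyce structure is preserved by taking substructures (because the axioms are universal), and that Theorem \ref{th:4.1LaflammeDevlin} applies to countable, not only finite, Joyce structures $\Fb$. Once that is observed, the result is immediate.
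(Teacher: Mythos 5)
Your proof is correct and matches the paper's argument: both observe that restricting $\JRel$ to $X'$ yields a DLO Joyce structure (by universality of the Joyce axioms and the hypothesis that $(X',<)$ is a DLO without endpoints), then apply Theorem~\ref{th:4.1LaflammeDevlin} to embed $\Xb$ into this restricted structure, and take the image as $X''$. Your version spells out the preliminary observation about substructures explicitly, but the key lemma and the strategy are identical to the paper's proof.
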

\begin{proof}
  The structure $\hat\Xb'=(X',<,\JRel)$ is a DLO Joyce structure, even if it might not be isomorphic to $\Xb$. By \Cref{th:4.1LaflammeDevlin}, there exists an embedding of $\Xb$ into $\hat\Xb'$. The image of the embedding is $\Xb''$.
\end{proof}

\begin{corollary}[$\RCA_0$]
  Devlin's theorem for $n$-tuples and $\ell$ colors implies Joyce Devlin's theorem for $n$-tuples and $\ell$ colors.
\end{corollary}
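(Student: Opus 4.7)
The plan is to start with an arbitrary instance of Joyce Devlin's theorem --- a DLO Joyce structure $\Xb = (X, <, \JRel)$ together with a coloring $f \colon [X]^n \to k$ --- forget the relation $\JRel$, apply the hypothesis (Devlin's theorem for $n$-tuples and $\ell$ colors) to the underlying dense linear order, and then reintroduce the Joyce enrichment by invoking \Cref{thm:suborder-to-subjoyce}.

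First, since $(X,<)$ is a countable dense linear order with no endpoints, computable categoricity of such orders (provable in $\RCA_0$) provides an isomorphism $\phi \colon (X,<) \to (\Qb,<)$. I would transport $f$ to a coloring $g \colon [\Qb]^n \to k$ by setting $g(\{q_1,\dots,q_n\}) = f(\{\phi^{-1}(q_1),\dots,\phi^{-1}(q_n)\})$. Applying $\DT{n}{k,\ell}$ to $g$ then yields a dense suborder $T \subseteq \Qb$ with no endpoints on which $g$ assumes at most $\ell$ values, and pulling back, $S := \phi^{-1}[T]$ is a dense suborder of $(X,<)$ with no endpoints satisfying $|f([S]^n)| \leq \ell$.

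The potential obstacle at this point is that $(S, <, \JRel)$ need not be a subcopy of $\Xb$ as a Joyce structure --- merely thinning the linear order does nothing to guarantee that the induced Joyce labels on pairs from $S$ match those prescribed by $\Xb$. This is precisely what \Cref{thm:suborder-to-subjoyce} handles: it supplies a further subcopy $(X'', <)$ of $(S, <)$ such that $(X'', <, \JRel)$ is a subcopy of $\Xb$ as Joyce structures. Since $X'' \subseteq S$, one automatically has $[X'']^n \subseteq [S]^n$, so $|f([X'']^n)| \leq \ell$, and $(X'', <, \JRel)$ is the required Joyce subcopy.

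Every step can be carried out in $\RCA_0$, so the implication goes through in the base theory. The main conceptual content sits in \Cref{thm:suborder-to-subjoyce}, which has already been established; once that is in hand, the deduction is essentially a three-line chain of prior results, with no additional combinatorial work required.
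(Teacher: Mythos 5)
Your proof is correct and matches the argument implied by the paper: the paper gives no explicit proof for this corollary, but it is clearly intended as an immediate consequence of \Cref{thm:suborder-to-subjoyce}, exactly as you deduce — forget the enrichment, apply $\DT{n}{k,\ell}$ to the underlying order, and restore a Joyce subcopy inside the resulting solution. The only content you add beyond the paper's implicit reasoning is the explicit categoricity/transport step, which is fine and entirely within $\RCA_0$.
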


\begin{corollary}[$\RCA_0$]
  The tight bound for Devlin's theorem and Joyce Devlin's theorem  for $n$ elements are the same, that is, the number of Joyce structures with $n$ elements, or the number of Joyce trees with $n$ leaves, or the odd tangent number of $n$.
\end{corollary}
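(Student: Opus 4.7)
The plan is to deduce this corollary almost immediately from results already established in the excerpt, with only a small amount of bookkeeping to identify the three enumerated quantities. First I would invoke \Cref{cor:strong-devlin-tight-joyce-orders}, which already gives the tight bound for Joyce Devlin's theorem as the number of Joyce structures with $n$ elements. The remaining work is therefore to (a) transfer this bound to ordinary Devlin's theorem, and (b) identify the three quantities in the statement.

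For (a), I would combine the previous corollary (Devlin's theorem for $n$-tuples with $\ell$ colors implies Joyce Devlin's theorem for $n$-tuples with $\ell$ colors) together with the trivial converse noted right before \Cref{thm:suborder-to-subjoyce} (Joyce Devlin for $n$-tuples and $\ell$ colors implies Devlin for $n$-tuples and $\ell$ colors, since any dense linear order without endpoints can be enriched to a DLO Joyce structure via \Cref{cor:order-can-be-enriched}). These two implications together show that the least $\ell$ witnessing $(\forall k)\DT{n}{k,\ell}$ equals the least $\ell$ witnessing $(\forall k)\JDT{n}{k,\ell}$. By \Cref{cor:strong-devlin-tight-joyce-orders} this common value is the number of Joyce structures of size $n$.

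For (b), the equality between the number of Joyce structures and the number of Joyce trees with $n$ leaves is already explicit in the discussion following \Cref{JOtoJT}: every Joyce tree gives rise to a Joyce order (and hence a Joyce structure), the construction of \Cref{JOtoJT} inverts this on the level of structures, and the lemma just before the definition of DLO Joyce structures confirms that two finite Joyce structures are isomorphic iff they yield the same Joyce tree. Hence these two counts agree. Finally, the identification with the odd tangent number of $n$ is the classical result of Street~\cite{streets} (also cited in Todorcevic~\cite[p.~147]{Todorcevic2010Ramsey}), which enumerates Joyce trees of size $n$ by the odd tangent numbers.

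There is no real obstacle here: the corollary is a packaging statement, and all of the combinatorial and reverse-mathematical content has been done in the earlier results. The only point that needs a brief sentence is that the tight lower bound for Devlin's theorem is at least as large as for Joyce Devlin's theorem, which is immediate from the easy direction (Joyce Devlin implies Devlin), while the upper bound for Devlin's theorem is at most that for Joyce Devlin's theorem by the harder direction proved in the previous corollary.
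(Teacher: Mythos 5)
Your proposal is correct and follows essentially the same route as the paper: the paper proves $b_0 \le b_1$ by enriching an arbitrary dense linear order via \Cref{cor:order-can-be-enriched} and applying the Joyce Devlin bound, proves $b_1 \le b_0$ via \Cref{thm:suborder-to-subjoyce} (i.e., the content of the preceding corollary), and then cites \Cref{cor:strong-devlin-tight-joyce-orders} together with Street's enumeration of Joyce trees for the identification of the common value. The only blemish is your closing sentence, where the two inequalities are attributed to the wrong implications ($b_0 \le b_1$ follows from the \emph{easy} direction, Joyce Devlin implies Devlin, while $b_0 \ge b_1$ follows from the \emph{harder} direction); since your main paragraph states both implications correctly, this is just a wording slip rather than a gap.
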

\begin{proof}
Let $b_0$ and $b_1$ be the tight bound for Devlin's theorem and Joyce Devlin's theorem for $n$ elements, respectively.

We first claim that $b_0 \leq b_1$.
Let $(X, <)$ be a dense linear order with no endpoints. By \Cref{cor:order-can-be-enriched}, one can enrich this order with a relation $\JRel$ so that $(X, <, \JRel)$ is a DLO Joyce structure. Let $f: [X]^n \to k$ be a coloring. By choice of $b_1$, there is a Joyce subcopy $(Y, <, \JRel)$ of $(X, <, \JRel)$ such that $|f[Y]^n| \leq b_1$. In particular, $(Y, <)$ is a subcopy of $(X, <)$ so $b_0 \leq b_1$.

We then claim that $b_1 \leq b_0$.
Let $(X, <, \JRel)$ be a DLO Joyce structure.  Let $f: [X]^n \to k$ be a coloring. By choice of $b_0$, there is a subcopy $(Y, <)$ of $(X, <)$ such that $|f[Y]^n| \leq b_0$.
By \Cref{thm:suborder-to-subjoyce}, there is a subcopy $(Z, <)$ of $(Y, <)$ such that $(Z, <, \JRel)$ is a Joyce subcopy of $(X, <, \JRel)$. In particular, $|f[Z]^n| \leq b_0$. Thus $b_1 \leq b_0$.

It follows that $b_0 = b_1$. Moreover, by \Cref{cor:strong-devlin-tight-joyce-orders}, this tight bound is the number of Joyce structures with $n$ elements, that is, the odd tangent number of $n$ (see \cite[p.~147]{Todorcevic2010Ramsey}).
\end{proof}

\section{Lower bounds on Devlin's theorem}\label{sect:lower-bound-devlin}

A coloring that witness the need for 2 colors for Devlin's theorem for pairs is the coloring $f_0$ defined as follows. Let $(q_n)_{n\in\Nb}$ be an enumeration of the rationals, and define $f_0: [\mathbb Q]^2\to 2$ by letting $f_0(q_n, q_m) = 0$ if $q_n<q_m\iff n<m$, and $f_0(q_n, q_m) = 1$ otherwise. Now every subset $S\subseteq\mathbb Q$ of order-type $\mathbb Q$ (or even $\mathbb Z$) must contain pairs of both colors under $f_0$, as every element of a dense linear order has infinitely many element both below it and above it.

Recall the ordering $<_\Qb$ on $\cantor$ from \Cref{def:leq-Q}. An explicit embedding of $<_\Qb$ into $\Qb$ is given by the following function: $\sigma\mapsto\sum_{i<|\sigma|}(\sigma(i)-\frac12)2^{-i}$. Thus, the $i$th bit of $\sigma$ contributes to the sum either $-2^{-i-1}$ or $2^{-i-1}$, depending as it is $0$ or $1$.

\begin{theorem}\label{th:lower-bound-devlin}
  There is a computable instance of $\DT{2}{4,3}$ all of whose solutions compute the halting set.
\end{theorem}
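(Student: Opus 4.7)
My approach is to construct $f$ as a product of a binary ``Joyce type'' coloring with a binary Jockusch-style coloring, giving $4$ colors in all. Fix, using \Cref{cor:true-dlo-joyce-order-exists}, a computable coded DLO Joyce order $(X,<_\Qb,|\cdot\meet\cdot|)$, in which $X\subseteq\cantor$ and distinct elements have distinct lengths. Identify $\Qb$ with $X$. For $\sigma<_\Qb\tau$ in $X$, set $t(\sigma,\tau)=0$ if $|\sigma|<|\tau|$ and $t(\sigma,\tau)=1$ otherwise (these are the two Joyce types of pairs in $[X]^2$, as discussed after \Cref{stmt:DT}), and set
\[
j(\sigma,\tau)=1 \iff \emptyset'_{|\sigma|}\uh|\sigma\meet\tau| = \emptyset'_{|\tau|}\uh|\sigma\meet\tau|.
\]
Let $f(\sigma,\tau)=(t(\sigma,\tau),j(\sigma,\tau))$, which is a computable $4$-coloring.

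Now let $S\subseteq\Qb$ be any solution using at most $3$ colors. By \Cref{thm:suborder-to-subjoyce}, I may pass to an $S$-computable subcopy $S'\subseteq S$ that is a Joyce subcopy of $X$. Some color $(i^*,j^*)\in 2\times 2$ is absent on $[S']^2$; since both Joyce types appear densely in any DLO Joyce copy, this forces $j\equiv 1-j^*$ on all type-$i^*$ pairs in $S'$.

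The heart of the argument is then in two steps. First, I rule out the case $j^*=1$: if $j\equiv 0$ held on type-$i^*$ pairs of $S'$, then by a Fra\"iss\'e-style density argument (see below), for any fixed $k$ I may find a type-$i^*$ pair $\sigma,\tau\in S'$ with $|\sigma\meet\tau|=k$ and with both $|\sigma|$ and $|\tau|$ exceeding the stabilization time of $\emptyset'\uh k$; for such a pair $\emptyset'_{|\sigma|}\uh k = \emptyset'\uh k = \emptyset'_{|\tau|}\uh k$, contradicting $j(\sigma,\tau)=0$. Hence $j^*=0$, so $j\equiv 1$ on all type-$i^*$ pairs of $S'$. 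Second, I show $\emptyset'\leq_T S$: given $n$, $S'$-effectively search for any type-$i^*$ pair $\sigma,\tau\in S'$ with $|\sigma\meet\tau|>n$ and output $\emptyset'_{|\sigma|}(n)$. By density I may pair this $\sigma$ with arbitrarily long $\xi\in S'$ so that $(\sigma,\xi)$ is of type $i^*$ with $|\sigma\meet\xi|>n$; the hypothesis gives $\emptyset'_{|\sigma|}(n) = \emptyset'_{|\xi|}(n)$ for every such $\xi$, and letting $|\xi|\to\infty$ forces the common value to equal $\emptyset'(n)$. A minor uniformity issue, that $i^*$ is not known a priori, is resolved by noting that $i^*$ is $S$-computable: wait for a type-$(1-i^*)$ pair of Jockusch color $0$ to appear in $[S]^2$, which must happen if only one of $(0,0),(1,0)$ is missing; otherwise both are missing and either choice of $i^*$ yields a valid algorithm.

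The main obstacle is the density/universality statement used in both steps: that the Joyce subcopy $S'$ contains, for each Joyce type and each prescribed (small) meet length, pairs with arbitrarily large endpoint lengths. This is an extension property established by standard back-and-forth, reflecting the Fra\"iss\'e universality of $(X,<_\Qb,|\cdot\meet\cdot|)$ among finite Joyce structures (\Cref{th:4.1LaflammeDevlin}); it can be formalized within $\RCA_0$ and applied to the finite Joyce templates that force a type-$i^*$ pair with the desired meet label and arbitrarily deep endpoints.
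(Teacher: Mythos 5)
Your approach is the same as the paper's: color by the product of the two-valued Joyce-type invariant with a Jockusch-style stage-approximation coloring evaluated at $(|\sigma\meet\tau|,|\sigma|,|\tau|)$, observe that the missing color forces the Jockusch component to be constantly $1$ on one Joyce type, and decode $\emptyset'$. Your second step (decoding) is in fact a mild streamlining: the paper re-locates, for each $n$, a four-element configuration $\alpha^i_j$ (\Cref{fact:dlo-blossom2}) and outputs the approximation at stage $|\alpha^0_0|$, whereas you observe that it suffices to locate any type-$i^*$ pair $\{\sigma,\tau\}$ with $|\sigma\meet\tau|>n$ and output the approximation at the stage of the shorter endpoint, because that endpoint can be paired with arbitrarily long $\xi$ drawn from the interval $(\sigma,\tau)\cap S'$, and all such pairs carry Jockusch color~$1$. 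That argument is sound.

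The gap is in your Step 1, ruling out $j\equiv 0$ on type-$i^*$ pairs. The claim ``for any fixed $k$ I may find a type-$i^*$ pair $\sigma,\tau\in S'$ with $|\sigma\meet\tau|=k$ and with both $|\sigma|$ and $|\tau|$ exceeding the stabilization time of $\emptyset'\uh k$'' does not follow from Fra\"iss\'e universality and is false as stated: a Joyce subcopy realizes only some lengths as meet levels, and the isomorphism supplied by \Cref{th:4.1LaflammeDevlin}/\Cref{thm:suborder-to-subjoyce} preserves only the \emph{order} of the labels $|\cdot\meet\cdot|$, not their numeric values, so it gives no control over the actual string lengths your computable Jockusch coloring evaluates $\emptyset'$ at. What the argument genuinely requires is a meet $\rho$ realized by some pair in $S'$ such that \emph{both} $S'\cap[\rho 0]$ and $S'\cap[\rho 1]$ are infinite; only then can you hold $|\sigma\meet\tau|=|\rho|$ fixed while sending \emph{both} $|\sigma|$ and $|\tau|$ to infinity, which is what forces $j=1$. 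The naive ``fix one endpoint and vary the other through the interval'' argument (the one that does suffice for your Step 2) lengthens only one endpoint, and then the Jockusch component can legitimately remain $0$ if the fixed endpoint's stage index lies below the settling time of $\emptyset'\uh|\rho|$. The two-sided infiniteness is exactly what the paper's Facts~\ref{fact:dlo-blossom} and~\ref{fact:dlo-blossom2} supply, via the four pairwise-incompatible strings $\alpha^i_j$ with $\alpha^0_0\meet\alpha^0_1$ and $\alpha^1_0\meet\alpha^1_1$ incompatible; a lemma of this shape is not a routine density fact, it is the combinatorial heart of the argument and must be proved for the proposal to be self-contained.
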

\begin{proof}
  Recall the order $<_\Qb$ from Definition~\ref{def:leq-Q}, and that $(\Qb, <)\cong(\cantor, <_\Qb)$ via a computable bijection. Therefore, the rationals will now be considered as finite strings. 

  Let $f_{<_\Qb}:[\cantor]^2\to2$ be the function such that $f_{<_\Qb}(\sigma, \tau)=1$ if and only if $|\sigma|<|\tau|\iff \sigma<_{\Qb}\tau$. Any dense (in the sense of $<_\Qb$) subset of $\cantor$ must contain a pair with both 0, and a pair with color 1. Let also $f_J:[\Nb]^3\to2$ be such that for any $x<y<z$, $f_J(x,y,z)=1$ if and only if $K_y\upharpoonright x = K_z\upharpoonright x$, where $K$ is a complete $\Si01$ set with fixed computable enumeration $(K_s)_{s \in \omega}$. (The function $f_J$ was devised by Jockusch~\cite[Theorem 5.7]{Jockusch1972Ramseys} to show the analogue of the present theorem for Ramsey's theorem for triples.)

  The function of interest for us is the product function $f=f_{<_\Qb}\times f_J:(\sigma,\tau)\mapsto \langle f_{<_\Qb}(\sigma, \tau),f_J(|\sigma\meet\tau|, |\sigma|, |\tau|)\rangle$. This is a $4$-coloring, so by $\DT{2}{4,3}$ let $S\subseteq\cantor$ be a dense linear ordering for $<_\Qb$ such that $f$ uses at most three colors on $[S]^2$. Suppose for instance that for some $c \in 2$ and for every $\sigma,\tau\in S$,  we have $f(\sigma,\tau)\neq(1, 1-c)$. (The case where the color $(0, 1-c)$ is avoided is symmetric). This means that any $\sigma_0<_\Qb\sigma_1$ in $S$ with $|\sigma_0|<|\sigma_1|$ must have color $c$ under $f_J$.

  The remainder of the proof consists of two parts. The first is the proof that $c$ must be 1, and the second is an argument to show how to compute $K$ from $S$. The main ingredient will be the fact that for every $n\in\Nb$, we can find arbitrarily long strings $\sigma$ and $\tau$ in $S$ with $|\sigma \meet \tau| > n$. This is depicted in Figure~\ref{fig:Devlin-to-ACA}.

  Given two strings $\sigma<_\Qb\tau$, define $\interval[open]\sigma\tau = \{\rho \in \cantor:\sigma<_\Qb\rho<_\Qb\tau\}$. Note that if $I\subseteq\cantor$ is a dense linear ordering without endpoints under $<_\Qb$, then so is $I\cap\interval[open]\sigma\tau$. Also, note that if $\xi,\rho \in \interval[open]\sigma\tau$ then $\xi \meet \rho = \sigma \meet \tau$.

  \begin{fact}\label{fact:dlo-blossom}
    If $I\subseteq\cantor$ is a dense linear ordering without endpoints under $<_\Qb$, then $I$ contains a pair of incompatible strings, $\sigma$ and $\tau$. Moreover, for every $n\in\Nb$, we can find such $\sigma$ and $\tau$ so that $|\sigma\meet\tau|>n$.
  \end{fact}
  \begin{proof}
  	Fix $n \in \Nb$. As $I$ is infinite but $2^n$ is finite, there exist $\rho_0,\rho_1\in I$ such that $\rho_0\upharpoonright n=\rho_1\upharpoonright n$. If $\rho_0$ and $\rho_1$ are incompatible, then these can serve as $\sigma$ and $\tau$. So suppose otherwise, say $\rho_0 <_\Qb \rho_1$. Fix any $\xi \in I\cap\interval[open]{\rho_0}{\rho_1}$. Since $I\cap\interval[open]{\rho_0}{\rho_1}$ is a dense linear order without endpoints, there are infinitely many $\sigma, \tau\in I\cap\interval[open]{\rho_0}{\rho_1}$ with $\sigma<_\Qb\xi<_\Qb\tau$, so these can be chosen so that $|\sigma|>|\xi|$ and $|\tau|>|\xi|$. But now, if $\sigma$ and $\tau$ were compatible, then by definition of $<_\Qb$ they would both be above or both below $\xi$, a contradiction. Thus, $\sigma$ and $\tau$ are incomparable elements of $I$. Furthermore, since $\rho_0<_\Qb\sigma <_\Qb \tau<_\Qb\rho_1$, we have $\sigma\upharpoonright n = \tau\upharpoonright n$, so $|\sigma\meet\tau|>n$.
  \end{proof}
  \begin{fact}\label{fact:dlo-blossom2}
    If $I\subseteq\cantor$ is a dense linear ordering without endpoints under $<_\Qb$, then for every $n\in\Nb$ there exists four pairwise incompatible strings $\alpha^i_j\in I$ for $i,j\in 2$ such that $\alpha^0_0<_\Qb\alpha^0_1<_\Qb\alpha^1_0<_\Qb\alpha^1_1$, the strings $\alpha^0_0\meet\alpha^0_1$ and $\alpha^1_0\meet\alpha^1_1$ are incompatible, and $|\alpha^0_{0}\meet\alpha^0_{1}\meet\alpha^1_{0}\meet\alpha^1_{1}|>n$. (See Figure~\ref{fig:Devlin-to-ACA}.)
  \end{fact}
  \begin{proof}
  		Fix $n \in \Nb$. First, suppose that whenever $\rho_0,\rho_1 \in I$ satisfy $|\rho_0\meet\rho_1|>n$ then they are incompatible. Since $I$ is infinite and $2^n$ is finite, we can then pick $\alpha^0_0 <_\Qb \alpha^0_1 <_\Qb \alpha^1_0 <_\Qb \alpha^1_1$ in $I$ with $|\alpha^0_{0}\meet\alpha^0_{1}\meet\alpha^1_{0}\meet\alpha^1_{1}|>n$. Then by assumption, all the $\alpha^i_j$ must be pairwise incompatible, as must $\alpha^0_0 \meet \alpha^0_1$ and $\alpha^1_0 \meet \alpha^1_1$.

  		So suppose otherwise, and fix $\rho_0 <_\Qb \rho_1$ with $|\rho_0\meet\rho_1|>n$. Fix $\gamma\in S\cap\interval[open]{\rho_0}{\rho_1}$ (represented in grey in Figure~\ref{fig:Devlin-to-ACA}). As $S\cap\interval[open]{\rho_0}\gamma$ and $S\cap\interval[open]\gamma{\rho_1}$ are two dense linear orderings without endpoints, we can apply the preceding fact to find incompatible $\alpha^0_{0},\alpha^0_{1}\in S\cap\interval[open]{\rho_0}\gamma$ and incompatible $\alpha^1_{0},\alpha^1_{1}\in S\cap\interval[open]\gamma{\rho_1}$ with $|\alpha^0_0\meet\alpha^0_1|>|\gamma|$ and $|\alpha^1_0\meet\alpha^1_1| > |\gamma|$.

  		Since $|\alpha^i_0\meet\alpha^i_1| > |\gamma|$ for each $i \in 2$, we have  $|\alpha^i_j| > |\gamma|$ for all $i,j \in 2$. Hence, $\alpha^0_j$ and $\alpha^1_j$ are incompatible for each $j \in 2$, being on opposite sides of $\gamma$ under~$<_\Qb$.

  		Since $\alpha^0_0,\alpha^0_1 <_\Qb \gamma <_\Qb \alpha^1_0,\alpha^1_1$, we have necessarily $\alpha^0_0\meet\alpha^0_1 \leq_\Qb \gamma \leq_\Qb \alpha^1_0,\alpha^1_1$, but since $|\alpha^i_0\meet\alpha^i_1| > |\gamma|$ for each $i \in 2$ these inequalities must be strict. It follows that $\alpha^0_0\meet\alpha^0_1$ and $\alpha^1_0\meet\alpha^1_1$ are incompatible, as desired.

  		Finally, as $\rho_0 <_\Qb \alpha^i_j <_\Qb \rho_1$ for all $i,j \in 2$, we have $\rho_0 \leq_\Qb \alpha^0_{0}\meet\alpha^0_{1}\meet\alpha^1_{0}\meet\alpha^1_{1} \leq_\Qb \rho_1$, meaning that $\alpha^0_{0}\meet\alpha^0_{1}\meet\alpha^1_{0}\meet\alpha^1_{1} = \rho_0 \meet \rho_1$ and hence $|\alpha^0_{0}\meet\alpha^0_{1}\meet\alpha^1_{0}\meet\alpha^1_{1}|>n$.
  \end{proof}

  We now use Fact \ref{fact:dlo-blossom2} to prove that the color 1 for $f_J$ cannot be avoided. Fix any $n$, and find $\alpha^i_j \in S$ for $i,j\in 2$ as in Fact \ref{fact:dlo-blossom2}. Fix $l$ such that $K_l\upharpoonright N= K\upharpoonright N$, where $N=|\alpha^0_0\meet\alpha^0_1\meet\alpha^1_0\meet\alpha^1_1|$. Pick $\sigma_n\in S\cap\interval[open]{\alpha^0_{0}}{\alpha^0_{1}}$ and $\sigma_m\in S\cap\interval[open]{\alpha^1_{0}}{\alpha^1_{1}}$ such that $n<m$ and $|\sigma_n|,|\sigma_m|>l$.
  Now, as $\sigma_n\meet\sigma_m=\alpha^0_0\meet\alpha^0_1\meet\alpha^1_0\meet\alpha^1_1$, we have $f_J(|\sigma_n\meet\sigma_m|, |\sigma_n|,|\sigma_m|) = 1$ as the approximation for $K\upharpoonright N$ does not change after stage $l$. Therefore, the product coloring $f$ assigns $(\sigma_n,\sigma_m)$ the color $(1,1)$. In particular, $c = 1$, as desired.

  It remains to show that $K$ is $S$-computable. Given $n$, we uniformly compute $K \upharpoonright n$ from $S$. First, search for four strings $(\alpha^i_j)_{i,j\in2}$ in $S$ satisfying Fact~\ref{fact:dlo-blossom2}, which will be found as they exist. Then, output $K_{|\alpha^0_{0}|}\upharpoonright n$. Indeed, if it were the case that $K_{|\alpha^0_{0}|}\upharpoonright n\neq K\upharpoonright n$, then it would also be true that $K_{|\alpha^0_{0}|}\upharpoonright n\neq K_l \upharpoonright n$ for all sufficiently large $l$. But then we would have $f_J(\alpha^0_{0}, \sigma)= 0$ for any $\sigma\in S\cap\interval[open]{\alpha^1_{0}}{\alpha^1_{1}}$ with $|\sigma|$ sufficiently big, contradicting that fact that $c = 1$.
\end{proof}

\begin{corollary}\label{cor:dt2to3-implies-aca}
	Over $\RCA_0$, $\DT{2}{4,3}$ implies $\ACA$.
\end{corollary}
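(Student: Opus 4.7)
The plan is to derive the corollary as a routine relativization and formalization of Theorem~\ref{th:lower-bound-devlin}. Recall that over $\RCA_0$, to prove $\ACA_0$ it suffices to show that for every set $X \subseteq \NN$, the jump $X'$ exists as a set. So the task reduces to establishing: for every $X$, there is an $X$-computable instance of $\DT{2}{4,3}$ each of whose solutions computes $X'$, and all of this is provable in $\RCA_0$.

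First I would relativize the construction in Theorem~\ref{th:lower-bound-devlin} to an arbitrary parameter $X$. The coloring $f_{<_\Qb}$ is already computable and requires no relativization. The Jockusch coloring $f_J$ should be replaced by the $X$-computable coloring $f_J^X:[\NN]^3 \to 2$ defined by $f_J^X(x,y,z) = 1$ iff $K^X_y \uh x = K^X_z \uh x$, where $K^X = \{ e : \Phi_e^X(e)\downarrow\}$ is a complete $X$-c.e.\ set with fixed $X$-computable enumeration $(K^X_s)_s$. The product coloring $f^X = f_{<_\Qb} \times f_J^X$ on $[\cantor]^2$ is then uniformly $X$-computable. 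Via the standard computable bijection $(\Qb,<) \cong (\cantor,<_\Qb)$, this yields an $X$-computable instance of $\DT{2}{4,3}$.

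Next I would carry over the two-part analysis of the proof of Theorem~\ref{th:lower-bound-devlin} verbatim, substituting $K^X$ for $K$ throughout. The only nontrivial ingredients are the ``blossom'' lemmas (Facts analogous to \ref{fact:dlo-blossom} and \ref{fact:dlo-blossom2}), which are purely combinatorial statements about dense linear orders on $\cantor$ under $<_\Qb$ and are provable in $\RCA_0$ since they require only finitary search and the definability of finite subsets. Given a solution $S$ for $f^X$, the same case analysis shows that the avoided color under $f_J^X$ must be the ``small'' one, and the same uniform procedure---searching inside $S$ for a configuration $(\alpha^i_j)_{i,j<2}$ as in Fact~\ref{fact:dlo-blossom2} with $|\alpha^0_0 \meet \alpha^0_1 \meet \alpha^1_0 \meet \alpha^1_1| > n$, and outputting $K^X_{|\alpha^0_0|} \uh n$---produces $X' \uh n$ from $S \oplus X$. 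Thus $X' \leq_T S \oplus X$, and $X'$ exists by $\Delta^0_1$ comprehension relative to $S \oplus X$.

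Putting these pieces together, $\RCA_0 + \DT{2}{4,3}$ proves that $X'$ exists for every $X$, which is equivalent to $\ACA_0$ by \Cref{def:subsystems}. The step that requires the most care is the formalization of the density and blossom facts in $\RCA_0$ (in particular, ensuring that the bounded searches used to locate the $\alpha^i_j$ inside a given dense suborder $S$ succeed provably); but these are $\Sigma^0_1$ statements over the parameter $S$ whose truth follows from the density axioms for $S$, so no induction beyond $\mathrm{I}\Sigma^0_1$ is needed.
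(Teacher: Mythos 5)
Your proposal is correct and is exactly the argument the paper intends: the corollary is left without an explicit proof precisely because it follows from \Cref{th:lower-bound-devlin} by the routine relativization-and-formalization step you carry out (replace $K$ by $K^X$ in $f_J$, note both blossom facts and the decoding procedure are provable in $\RCA_0$ with only $\Sigma^0_1$ searches, and conclude $X'$ exists for every $X$). Nothing is missing.
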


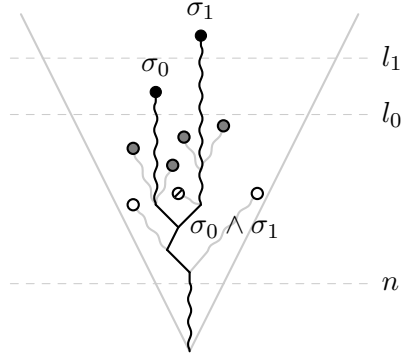
\begin{figure}[h!]
\begin{center}
	\begin{tikzpicture}[scale=1.5]
		\tikzset{
			empty node/.style={circle,inner sep=0,outer sep=0,fill=none},
			solid node/.style={circle,draw,inner sep=1.5,fill=black},
			hollow node/.style={circle,draw,inner sep=1.5,fill=white,thick},
			gray node/.style={circle,draw={rgb:black,1;white,4},inner sep=1,fill={rgb:black,1;white,4},thick},
			split node/.style={forbidden sign,draw,inner sep=1.5,fill=white,thick},
		}
		\tikzset{snake it/.style={decorate, decoration=snake, line cap=round}}
		\tikzset{gray line/.style={line cap=round,thick,color={rgb:black,1;white,4}}}
		\tikzset{gray thin line/.style={line cap=round,color={rgb:black,1;white,4}}}
		\tikzset{thick line/.style={line cap=round,rounded corners=0.1mm,thick}}
		\tikzset{thin line/.style={line cap=round,rounded corners=0.1mm}}
		\node (a)[empty node] at (0,0) {};
		\node (a0)[empty node] at (-1.5,3) {};
		\node (a1)[empty node] at (1.5,3) {};
		\node (b0)[empty node] at (-1.6,0.6) {};
		\node (b1)[empty node] at (1.6,0.6) {};
		\node (c0)[empty node] at (-1.6,2.1) {};
		\node (c1)[empty node] at (1.6,2.1) {};
		\node (d0)[empty node] at (-1.6,2.6) {};
		\node (d1)[empty node] at (1.6,2.6) {};
		\node (1)[empty node] at (0,0.7){};
		\node (2)[empty node] at (-0.2,0.9){};
		\node (3)[empty node,label=right:{$\sigma_0 \meet \sigma_1$}] at (-0.1,1.1){};
		\node (4)[empty node] at (-0.3,1.3){};
		\node (5)[empty node] at (0.1,1.3){};
		\node (6)[solid node,label=above:{$\sigma_0$}] at (-0.3,2.3){};
		\node (7)[solid node,label=above:{$\sigma_1$}] at (0.1,2.8){};
		\node (8)[hollow node] at (-0.5,1.3){};
		\node (9)[hollow node] at (0.6,1.4){};
		\node (10)[split node] at (-0.1,1.4){};
		\node (11)[solid node,fill=gray,thick] at (-0.5,1.8){};
		\node (12)[solid node,fill=gray,thick] at (-0.15,1.65){};
		\node (13)[solid node,fill=gray,thick] at (-0.05,1.9){};
		\node (14)[solid node,fill=gray,thick] at (0.3,2){};
		\node (15)[empty node] at (0.1,1.6){};
		\begin{pgfonlayer}{background}
		\draw[gray line] (a0) to (a) to (a1);
		\draw[gray thin line,dash pattern={on 3pt off 3pt}] (b0) to (b1);
		\draw[gray thin line,dash pattern={on 3pt off 3pt}] (c0) to (c1);
		\draw[gray thin line,dash pattern={on 3pt off 3pt}] (d0) to (d1);
		\draw[gray line,decorate,decoration={snake,amplitude=.2mm,segment length=3mm}] (2.center) to (8.center);
		\draw[gray line,decorate,decoration={snake,amplitude=.2mm,segment length=3mm,pre length=3.5mm}] (1.center) to (9.center);
		\draw[gray line,decorate,decoration={snake,amplitude=.2mm,segment length=3mm}] (5.center) to (10.center);
		\draw[gray line,,decorate,decoration={snake,amplitude=.2mm,segment length=3mm}] (4.center) to (11.center);
		\draw[gray line,decorate,decoration={snake,amplitude=.2mm,segment length=3mm}] (4.center) to (12.center);
		\draw[gray line,decorate,decoration={snake,amplitude=.2mm,segment length=3mm}] (15.center) to (13.center);
		\draw[gray line,decorate,decoration={snake,amplitude=.2mm,segment length=3mm}] (15.center) to (14.center);
		\draw[thick line,decorate,decoration={snake,amplitude=.2mm,segment length=3mm}] (a.center) to (1.center);
		\draw[thick line] (1.center) to (2.center);
		\draw[thick line] (2.center) to (3.center);
		\draw[thick line] (3.center) to (4.center);
		\draw[thick line] (3.center) to (5.center);
		\draw[thick line,decorate,decoration={snake,amplitude=.2mm,segment length=3mm}] (4.center) to (6.center);
		\draw[thick line,decorate,decoration={snake,amplitude=.2mm,segment length=3mm}] (5.center) to (7.center);
		\end{pgfonlayer}
		\node (n)[empty node,label=right:{$n$}] at (1.6,0.6) {};
		\node (l0)[empty node,label=right:{$l_0$}] at (1.6,2.1) {};
		\node (l1)[empty node,label=right:{$l_1$}] at (1.6,2.6) {};
	\end{tikzpicture}
\caption{Finding $\sigma_0$ and $\sigma_1$ above $l_0$ and $l_1$, with a meet above $n$. The nodes $\rho_0$ and $\rho_1$ from Fact~\ref{fact:dlo-blossom} are represented as hollow nodes, the node $\gamma$ from the proof of Fact~\ref{fact:dlo-blossom2} is represented by a slashed node, and the nodes $\alpha^i_j$ from Fact~\ref{fact:dlo-blossom2} are in grey.}
\label{fig:Devlin-to-ACA}
\end{center}
\end{figure}
\begin{theorem}\label{thm:dt2-implies-rt2}
For every $k, \ell \geq 1$, $\RT{2}{k, \ell} \leq_c \DT{2}{2k,2\ell+1}$.
\end{theorem}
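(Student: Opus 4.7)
The plan is to encode the Ramsey coloring $f\colon [\NN]^2 \to k$ into a Devlin coloring $g\colon [\Qb]^2 \to 2k$ that records each pair's $f$-value alongside which of the two possible Joyce embedding types the pair realizes, and then to exploit the pigeonhole pressure created by the $(2\ell+1)$-bound on colors. First I would fix a computable coded DLO Joyce order $X \subseteq 2^{<\omega}$ whose length function $\sigma \mapsto |\sigma|$ is a computable bijection onto $\NN$; such an $X$ exists by applying \Cref{remark:labels-initial-segment} to the order of \Cref{cor:true-dlo-joyce-order-exists}. This identifies $[X]^2$ with $[\NN]^2$ via lengths, and for any pair $\{\sigma, \tau\} \in [X]^2$ with $\sigma \ltlex \tau$ the pair falls into one of two types according to whether $|\sigma| < |\tau|$ (lex-order agrees with length-order) or $|\sigma| > |\tau|$.

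Given $f$, define
\[
  g(\sigma,\tau) \;=\; 2\,f\bigl(\min(|\sigma|,|\tau|),\max(|\sigma|,|\tau|)\bigr) + e(\sigma,\tau),
\]
where $e(\sigma,\tau) \in \{0,1\}$ is the embedding type. This is an $f$-computable instance of $\DT{2}{2k, 2\ell+1}$. Let $S$ be any solution, that is, a dense linear suborder of $X$ without endpoints satisfying $|g[[S]^2]| \leq 2\ell+1$. For $e \in \{0,1\}$, set $C_e := \{c < k : 2c + e \in g[[S]^2]\}$. Since every dense linear suborder of $X$ contains pairs of each embedding type (any dense set contains arbitrarily long lex-monotone and lex-antitone sequences, and the injectivity of lengths forces matching length-monotonicity), both $C_0, C_1$ are nonempty, and $|C_0| + |C_1| \leq 2\ell+1$ forces $\min(|C_0|,|C_1|) \leq \ell$.

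Next I would $S$-computably build two chains $H_0, H_1 \subseteq S$ by a greedy construction: $H_0 = \{\sigma^0_0 \ltlex \sigma^0_1 \ltlex \cdots\}$ with $|\sigma^0_0| < |\sigma^0_1| < \cdots$, and $H_1 = \{\sigma^1_0 \gtlex \sigma^1_1 \gtlex \cdots\}$ again with increasing lengths. Each greedy step succeeds because $S$ is dense without endpoints and lengths are injective on $X$, so among the infinitely many $\ltlex$-successors (or $\ltlex$-predecessors) of the current chain maximum, only finitely many have length below any fixed bound. All pairs in $H_e$ have embedding type $e$, so setting $L_e := \{|\sigma| : \sigma \in H_e\}$ gives $f[[L_e]^2] \subseteq C_e$. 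Consequently, whenever $|C_e| \leq \ell$ the set $L_e$ is a valid $\RT{2}{k,\ell}$-solution to $f$.

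The hard part will be outputting a uniform $(f \oplus S)$-computable $Y$ even though the predicate ``$|C_e| \leq \ell$'' is $\Pi^0_1(f \oplus S)$ and so not directly decidable. My plan to handle this exploits that its negation is $\Sigma^0_1$: by a stagewise enumeration of $[S]^2$ yielding approximations $C_e^s$ to $C_e$, the event $|C_0^s| > \ell$ is effectively detectable, and once detected it forces $|C_1| \leq \ell$, so $L_1$ becomes a safe committed choice (symmetrically for $L_1$). The delicate point is avoiding the ``mixing'' problem where elements of both $L_0$ and $L_1$ contaminate $Y$ with cross-pair colors: the construction enumerates candidates for $Y$ lazily, committing the $n$-th element only after consulting $C_e^{s(n)}$ for a sufficiently large $s(n)$, and defaulting to $L_0$ when no threshold-crossing is observed (which is exactly the case in which $|C_0| \leq \ell$ and $L_0$ already works). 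The slack provided by the $+1$ in $2\ell+1$ is what makes the detection robust enough to carry out this delay-and-commit scheme uniformly, yielding the desired $Y \leq_T f \oplus S$ with $|f[[Y]^2]| \leq \ell$.
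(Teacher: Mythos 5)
Your encoding and the combinatorial core of your argument are exactly the paper's: the paper enumerates $\Qb = \{x_0, x_1, \dots\}$ and colors $\{x_p,x_q\}$ by the pair $\langle d, f(p,q)\rangle$ where $d$ records whether the $\Qb$-order agrees with the index order (your "embedding type" read off from lengths is the same datum), then applies pigeonhole to $|g[U]^2|\leq 2\ell+1$ to find a side $d$ carrying at most $\ell$ colors, and greedily extracts a $\Qb$-monotone, index-increasing chain on that side. Up to that point your proposal is correct.

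The last step of your proposal, however, is both unnecessary and, as described, does not work. It is unnecessary because $\leq_c$ is the \emph{non-uniform} analogue of Weihrauch reducibility (see the definition of computable reducibility in \Cref{sec:bkg_rm}): the requirement is only that $X \oplus \widehat{Y}$ \emph{computes} some solution, so the index $e$ with $|C_e|\leq\ell$ may simply be fixed non-uniformly and hard-coded into the reduction, which is precisely what the paper does ("Say $d=0$, the other case is symmetrical"). And your delay-and-commit scheme does not repair the uniformity issue you raise: the predicate $|C_0|\leq\ell$ is $\Pi^0_1$ and is never \emph{verified} at any finite stage, so any element committed under the default branch is committed without certainty; if the threshold crossing $|C_0^s|>\ell$ appears afterwards, the already-committed elements of $L_0$ cannot be retracted from $Y$ (membership decisions in a Turing computation of $Y$ are final), and the resulting mixed set only satisfies $f[[Y]^2]\subseteq C_0\cup C_1$, a bound of $2\ell+1$ rather than $\ell$. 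Waiting until "a sufficiently large $s(n)$" does not help, since no finite $s(n)$ suffices. The correct fix is simply to delete this step and invoke non-uniformity; if you actually wanted a uniform (Weihrauch) reduction, a genuinely different idea would be needed.
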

\begin{proof}
Let $f: [\omega]^2 \to k$ be an instance of $\RT{2}{k, \ell}$.
Let $\Qb = \{x_0, x_1, \dots \}$ be a computable enumeration of all the rationals.
Define $g: [\Qb]^2 \to 2k$ for every pair $\{x_p, x_q\} \in [\Qb]^2$
by $g(x_p, x_q) = \langle 0, f(p, q) \rangle$ if $x_p <_\Qb x_q$ and $g(x_p, x_q) = \langle 1, f(p, q) \rangle$ otherwise.
Let $U \subseteq \Qb$ be a solution to the instance $g$ of $\DT{2}{2k,2\ell+1}$,
that is, $(U, <_\Qb)$ is a DLO order and $|g[U]^2| \leq 2\ell+1$. Let $d < 2$ and $I \subseteq \{0, \dots, 2k-1\}$ with $|I| \leq \ell$  be such that $\{ i: \langle d, i \rangle \in g[U]^ 2\} \subseteq I$. Say $d  = 0$, the other case is symmetrical.
Build $U$-computably an infinite sequence $x_{p_0} <_\Qb x_{p_1} <_\Qb \dots$ such that $p_n < p_{n+1}$. Such a sequence exists since $(U, <_\Qb)$ has no endpoints.
For every $s < t \in \omega$, $f(x_{p_s}, x_{p_t}) = \langle 0, f(p_s, p_t)\rangle$.
Since $\{ i: \langle d, i \rangle \in g[U]^ 2\} \subseteq I$, $f(p_s, p_t) \in I$. Thus, letting $H = \{ p_s: s \in \omega \}$, $f[H]^2 \subseteq I$ so $|f[H]^2| \leq \ell$.
\end{proof}

We now give.a better lower bound to Devlin's theorem for pairs by constructing a computable instance of it with no $\Sigma^0_3$ solution.

\begin{definition}\index{thin set}\index{set!thin}\index{homogeneous set!for a tree}
  A set $H\subseteq\Nb$ is \emph{thin} for a $\sigma\in k^{<\Nb}$ is there exists
  some $i<2$ such that for all $n\in H,$ $n<|\sigma|\implies \sigma(n)\neq i$. It is
 \emph{thin} for a tree $T\subseteq2^{<\Nb}$ if the tree $\{\sigma\in T:H\text{
  is thin for }\sigma\}$ is infinite.
\end{definition}

Whenever $k = 2$, a thin set is also called \emph{homogeneous}.

\begin{definition}\index{approximation!$\Delta^0_2$}\index{approximation!$\Delta^0_3$}\index{$\Delta^0_2$ approximation}
  A \emph{$\Delta^0_2$ approximation} of a sequence $\sigma\in k^{\leq\om}$ is a sequence $(\sigma_s)_{s\in\Nb}$ of finite sequence such
  that for every $n$, $\lim \sigma_s(n)$ exists and has value $\sigma(n)$.

  A \emph{$\Delta^0_3$ approximation} of a sequence $\sigma$ is a sequence $(\sigma_{s,t})_{s,t\in\Nb}$
  such that for every $s\in \Nb$, $(\sigma_{s,t})_{t\in\Nb}$ is a $\Delta^0_2$
  approximation of a sequence $\sigma_s$, and $(\sigma_s)_{s\in\Nb}$ is a $\Delta^0_2$
  approximation of $\sigma$.
\end{definition}

\begin{theorem}\label{th:dt-implies-rwkl''}
  Let $F$ be a finite Joyce order with two elements, $\Jb$ be a DLO Joyce structure and $k$ be an integer.
  For every $\Delta^0_3$ approximation of an infinite tree $T\subseteq k^{<\infty}$, there exists a coloring
  $f:{\Jb \choose F}\to k$ such that for every DLO Joyce suborder $S\subseteq \Jb$, if $f$ avoids $1$ color in ${S\choose F}$ then $S$ computes a thin set for $T$.
\end{theorem}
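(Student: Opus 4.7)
The plan is to encode the $\Delta^0_3$ approximation $(\sigma_{s,t})$ of $T$ into a coloring of the pairs of Joyce type $F$, using the three distinct labels of such a pair, from smallest to largest, as the queried position, the outer approximation stage, and the inner approximation stage of the $\Delta^0_3$ hierarchy. By \Cref{cor:true-dlo-joyce-order-exists} together with \Cref{th:4.1LaflammeDevlin}, I may assume without loss of generality that $\Jb$ is presented as a fixed computable coded DLO Joyce order $(X,\ltlex,|\cdot\meet\cdot|)$ with $X\subseteq 2^{<\omega}$, so that the labels coincide with string lengths. The 2-element Joyce order $F$ has one of two embedding types, depending on whether $\meetlevel{x}{x}<\meetlevel{y}{y}$ or the converse when $x\ltlex y$; by symmetry I will treat the former, in which case pairs of type $F$ in $\Jb$ are exactly those $\{x,y\}$ with $x\ltlex y$ and $|x|<|y|$.

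For such a pair $\{x,y\}$, I will set
\[
f(\{x,y\})=\sigma_{|x|,|y|}(|x\meet y|)
\]
when $|x\meet y|<|\sigma_{|x|,|y|}|$, and $f(\{x,y\})=0$ otherwise; by \Cref{lem:prop-of-JO} the three quantities satisfy $|x\meet y|<|x|<|y|$, so the choice of indices matches the $\Delta^0_3$ hierarchy: $|x|$ is the outer parameter along which the $\sigma_s$ are approximated, $|y|$ is the inner parameter driving $\sigma_{s,t}\to\sigma_s$, and $|x\meet y|$ is the position at which the resulting sequence is to be queried.

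Now suppose $S\subseteq\Jb$ is a DLO Joyce suborder and some color $i<k$ is not in $f[{S\choose F}]$. The candidate thin set is the $S$-computable infinite set
\[
H=\{\,|x\meet y|:\{x,y\}\in{S\choose F}\,\}.
\]
For each $m\in H$, fix any $F$-type pair in $S$ realizing this meet level; this singles out a meet node at level $m$ with two branches, the left one carrying the $\ltlex$-smaller and the right one the $\ltlex$-larger element of every $F$-type pair of meet level $m$. Both branches are themselves DLO Joyce suborders by \Cref{th:4.1LaflammeDevlin}, hence realize arbitrarily large labels. Thus for every label $s$ appearing on the left branch and every larger label $t$ appearing on the right branch, one can find $x,y\in S$ with $x\ltlex y$, $|x|=s$, $|y|=t$, and $|x\meet y|=m$; by the color-avoidance hypothesis, $\sigma_{s,t}(m)=f(\{x,y\})\neq i$. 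Letting $t\to\infty$ through right-branch labels gives $\sigma_s(m)=\lim_t\sigma_{s,t}(m)\neq i$, and then letting $s\to\infty$ through left-branch labels gives the outer limit $\sigma(m)=\lim_s\sigma_s(m)\neq i$. Hence the path $\sigma$ approximated by the $\Delta^0_3$ data avoids color $i$ at every position in $H$, so every initial segment of $\sigma$ that lies in $T$ is thin for $H$ with witness $i$; since $\sigma$ is infinite, this yields infinitely many elements of $T$ thin for $H$, which is the required conclusion.

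The main technical obstacle will be to verify that both lateral branches of $S$ at every accessible meet level are infinite and densely realize the labels needed to push the two successive limits through the $\Delta^0_3$ approximation; this is exactly what the DLO Joyce hypothesis on $S$ guarantees via \Cref{th:4.1LaflammeDevlin}, and it is the one place where the strength of $S$ (and not merely its infinity) is used. A secondary point is to fix a precise reading of ``$\Delta^0_3$ approximation of an infinite tree'' under which the double limit $\sigma=\lim_s\lim_t\sigma_{s,t}$ produces (initial segments of) infinitely many elements of $T$; the natural interpretation, in which the approximation targets an infinite path of $T$, makes the argument above go through, while the other embedding type of $F$ is handled by the mirror construction, swapping the roles of $|x|$ and $|y|$ in the definition of $f$.
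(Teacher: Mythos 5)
Your coloring is the same as the paper's (color the pair $\{x,y\}$ of type $F$ by $P_{|x|,|y|}(|x\meet y|)$, where the smaller length is the outer approximation index and the larger the inner one, and where $P_{s,t}$ is extracted from the tree approximation by taking leftmost paths), and the intended verification---push the two limits of the $\Delta^0_3$ approximation through by letting the two lengths grow while the meet level stays fixed---is also the paper's. But there is a genuine gap in your choice of thin set. You take $H=\{|x\meet y|:\{x,y\}\in{S\choose F}\}$ and assert that for each $m\in H$ the two lateral branches of $S$ above the meet node are themselves DLO suborders, citing \Cref{th:4.1LaflammeDevlin}. That theorem says nothing of the sort (it is an embedding theorem for Joyce structures), and the claim is false: the set of elements of $S$ extending $\rho i$ is merely a convex subset of the DLO $(S,\ltlex)$, and a convex subset of a DLO can be a single point. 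Concretely, in the coded DLO Joyce order of \Cref{thm:dlo-joyce-order-exists} the only element extending $01$ is $01$ itself, so for the meet node $0$ one branch is a singleton; a level $m$ can thus lie in your $H$ while only one value of one of the two lengths is ever realized above it, the double limit cannot be evaluated, and nothing prevents $P(m)=i$ there. Your $H$ is therefore not guaranteed to be thin for $T$.

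The fix is exactly what the paper's definition of $H$ encodes: restrict to levels $\ell=|a\meet c|$ witnessed by four elements $a\ltlex b\ltlex c\ltlex d$ of $S$ with $|a\meet c|<|a\meet b|$ and $|a\meet c|<|c\meet d|$. Then $S\cap(a,b)$ and $S\cap(c,d)$ are nonempty \emph{open} intervals of a DLO without endpoints, hence infinite and realizing arbitrarily large lengths, and every $\sigma\in S\cap(a,b)$, $\tau\in S\cap(c,d)$ satisfies $\sigma\meet\tau=a\meet c$, which is what licenses taking $|\sigma|\to\infty$ and then $|\tau|\to\infty$ at the fixed level $\ell$. One must then separately check that this restricted $H$ is still infinite, which is the content of \Cref{fact:dlo-blossom2} in the proof of \Cref{th:lower-bound-devlin}. (Two smaller points: $H$ so defined is only c.e.\ in $S$, not computable in $S$, so one passes to an infinite computable subset at the end; and the reduction from ``approximation of a tree'' to ``approximation of a sequence whose initial segments lie in $T$'' via leftmost paths, which you defer, does need to be carried out.)
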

\begin{proof}
  \newcommand{\femb}{f_{<_\Qb}}
  We can always suppose $\Jb$ is a coded Joyce order. Let $m,M$ be such that $\{m;M\}=F$, and $|m|<|M|$.
  Let $(T_{s,t})_{s,t\in\Nb}$ be a $\Delta^0_3$ approximation of an infinite
  tree, that is for every $s\in\Nb$, $T_s=\lim_t T_{s,t}$ exists and $T=\lim_s
  T_s$ exists. Let $P_{s,t}$ be the
  leftmost path of $T_{s,t}$ of length $s$. Note that $P_s=\lim_tP_{s,t}$ is the
  leftmost path of $T_s$ of length $s$, and $P=\lim_s P_s$ is the leftmost path
  of $T$. If $\{\sigma,\tau\}\in {\Jb\choose F}$ with $|\sigma|>|\tau|$, define \[f(\sigma, \tau)=P_{|\sigma|,|\tau|}(|\sigma\meet\tau|),\] 
  a computable coloring of $\Jb\choose F$ in $k$ colors.

  Now, suppose that $S\subseteq\cantor$ is of order-type $\Qb$ and such that $S\choose F$ avoids some color $i<k$ for $f$.
   The claim is that the set \[H=\{|a\meet c|:(\exists a,b,c,d\in S)[a\ltlex
   b\ltlex c\ltlex d\land |a\meet c|<|a\meet b|,|c\meet d|]\}\] is thin for
  $P$, and thus for $T$.

  Here, we suppose $m\ltlex M$, so that if $x,y\in\Jb$ satisfies $|x|<|y|$, then $\{x,y\}\in{\Jb\choose F}$ iff $x\ltlex y$.
  Let $\ell\in H$, fix $a,b,c,d$ witnessing it. 
  Let $s_0>\ell$ be such that $P_{s}(\ell)$ has settled for every
  $s\geq s_0$. Let $\sigma\in S$ in the interval with bounds $a$ and $b$ such
  that $s_1=|\sigma|\geq s_0$, which exists are there are infinitely many
  elements of $S$ in this interval.
  Let $t_0$ be such
  that $P_{s_1,t_0}(\ell)$ has settled for every $t\geq t_0$, and let $\tau\in
  S$ be in the interval with bounds $c$ and $d$ with $t_1=|\tau|\geq
  \max(t_0,\ell, s_1)$.
  Then, $\{\sigma,\tau\}\in{S\choose F}$ and
  $f_P(\sigma,\tau)=P_{s_1,t_1}(\ell)\neq i$ as $S\choose F$ avoids color $i$. By our choice of $t_1$,
  $P_{s_1,t_1}(\ell)=P_{s_1}(\ell)$, and by our choice of $s_1$,
  $P_{s_1}(\ell)=P(\ell)\neq i$, that is, $H$ is thin for $P$, and thus for $T$.

  If $m\gtlex M$, we do the same argument, but we take $\sigma$ in the interval with bounds $c$ and $d$, and $\tau$ in the interval with bound $a$ and $b$, to get the same conclusion.
  
  We proved that $H$ is thin
  for $T$. As $H$ is c.e. in $S$, it contains an infinite computable subset,
  which is thin for $T$ as well.
\end{proof}
\begin{corollary}
  Let $k$ be an integer.
  For every $\Delta^0_3$ approximation of an infinite tree $T\subseteq k^{<\infty}$, there exists a coloring
  $f:[\Qb]^2\to 2k$ such that for every DLO Joyce suborder $S\subseteq \Jb$, if $f$ takes only $2k-1$ color on $[S]^2$ then $S$ computes a thin set for $T$.
\end{corollary}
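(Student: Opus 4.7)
The plan is to reduce the corollary to \Cref{th:dt-implies-rwkl''} applied to both Joyce types of pairs in a DLO Joyce structure. Recall that, up to isomorphism, there are exactly two Joyce orders on a two-element set: one where $m \ltlex M$ and $|m| < |M|$, call it $F_0$, and one where $m \ltlex M$ and $|m| > |M|$, call it $F_1$. Every pair $\{x,y\}$ in a DLO Joyce structure $\Jb$ has exactly one of these two embedding types, so ${\Jb \choose F_0}$ and ${\Jb \choose F_1}$ partition $[\Jb]^2$.

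First I would fix a computable DLO Joyce structure $\Jb$ (which exists by \Cref{cor:true-dlo-joyce-order-exists}) and apply \Cref{th:dt-implies-rwkl''} twice, once to each of $F_0$ and $F_1$, producing colorings $f_0 : {\Jb \choose F_0} \to k$ and $f_1 : {\Jb \choose F_1} \to k$ with the property that whenever a DLO Joyce suborder $S \subseteq \Jb$ has $f_i$ missing at least one color on ${S \choose F_i}$, the order $S$ computes a thin set for $T$. I would then define $f : [\Jb]^2 \to 2k$ by
\[
  f(\{x,y\}) = \langle i, f_i(\{x,y\}) \rangle,
\]
where $i < 2$ is the (unique) index such that $\{x,y\} \in {\Jb \choose F_i}$. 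Using the computable isomorphism between $(\Qb,<)$ and the underlying order of $\Jb$, this transfers to the desired coloring $[\Qb]^2 \to 2k$.

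Now suppose $S \subseteq \Jb$ is a DLO Joyce suborder such that $f$ takes only $2k-1$ colors on $[S]^2$. The key observation is that since $S$ itself is a DLO Joyce structure, by \Cref{th:4.1LaflammeDevlin} both Joyce types $F_0$ and $F_1$ embed into $S$, so both ${S \choose F_0}$ and ${S \choose F_1}$ are non-empty. Therefore the missed color is of the form $\langle i, j\rangle$ for some fixed $i < 2$ and $j < k$, and on the nonempty set ${S \choose F_i}$ the coloring $f_i$ misses the color $j$. By the conclusion of \Cref{th:dt-implies-rwkl''} applied to $f_i$, the order $S$ computes a thin set for $T$.

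There is no main obstacle here beyond the bookkeeping of the product coloring; the whole content is in \Cref{th:dt-implies-rwkl''} and in the fact that a DLO Joyce suborder has to contain pairs of both embedding types, so that any single missing color in the product coloring forces one of the two factor colorings to miss a color on a nonempty copy set.
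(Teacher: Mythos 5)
Your proposal is correct and is essentially the paper's own argument: apply \Cref{th:dt-implies-rwkl''} to each of the two Joyce structures of size two, form the product coloring tagged by the embedding type, and observe that a missing color in the product forces one factor coloring to miss a color on the corresponding copy set. The only (harmless) extra step you include is explicitly verifying via \Cref{th:4.1LaflammeDevlin} that both types occur in $S$, which the paper leaves implicit.
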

\begin{proof}
  Let $F_0$ and $F_1$ be the two Joyce structure with two elements. Let $f_0$ and $f_1$  be given by \Cref{th:dt-implies-rwkl''} for $F_0$ and $F_1$. Define $f:\Qb\to2k$ by enriching $\Qb$ to a Joyce order, and if $\sigma,\tau\in\Qb$, then $f(\sigma,\tau)=(i,f_i(\sigma,\tau))$ if and only if $\{\sigma,\tau\}$ is isomorphic to $F_i$.

  If $S\subseteq\Qb$ is an isomorphic substructure such that $f$ takes at most $2k-1$ color on $[S]$, then let $(i,j)$ with $i<2$ and $j<k$ be an avoided color. Then, $f_i$ avoids color $j$ on ${S\choose F_i}$, and by our choice of $f_i$, $S$ computes a thin set for $T$.
\end{proof}

\begin{definition}\index{DNC!function}\index{DNC!degree}\index{function!DNC}
A function $f: \omega \to \omega$ is \emph{DNC relative to $X$}
if for every $e$, $f(e) \neq \Phi^X_e(e)$. Here, $f(e)$ can be any value if $\Phi^X_e(e)\uparrow$. A Turing degree is DNC relative to $X$ if it computes such a function.
\end{definition}

\begin{lemma}\label{lem:rwkl-dnc}
For every $k \geq 2$ and set $X$, there exists an $X$-computable tree $T \subseteq k^{<\Nb}$
such that every infinite set thin for $T$ is of DNC degree relative to $X$.
\end{lemma}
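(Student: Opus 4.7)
The plan is to construct an explicit $X$-computable tree whose infinite branches are DNC$^X$ functions and then argue that every infinite set thin for it can compute such a branch. The natural candidate is
$$T \;=\; \bigl\{\sigma \in k^{<\omega} : \forall e < |\sigma|,\; \Phi^X_e(e)[|\sigma|]<k \Rightarrow \sigma(e) \neq \Phi^X_e(e)[|\sigma|]\bigr\}.$$
Membership in $T$ is decidable from $X$, so $T$ is $X$-computable, and since $k \geq 2$ every $\sigma \in T$ has at least one valid one-step extension, so $T$ is infinite. Any infinite branch $P$ of $T$ satisfies $P(e) \neq \Phi^X_e(e)$ at every $e$ where the latter converges below $k$, and hence $P$ is a DNC$^X$ function with range contained in $k$.

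Given an infinite $H$ thin for $T$, the pigeonhole principle applied to the $k$ subtrees
$$T_i^H \;=\; \{\sigma \in T : \sigma(n) \neq i \text{ for every } n \in H \cap |\sigma|\}, \qquad i<k,$$
yields some $i<k$ for which $T_i^H$ is infinite. Each $T_i^H$ is $(X \oplus H)$-computable and $k$-branching, hence by König's lemma it carries an infinite path, and any such path lies in $T$ and is therefore DNC$^X$.

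The delicate step, and the one I expect to be the main technical obstacle, is to upgrade this bare existence of an infinite path into an $H \oplus X$-computable DNC$^X$ function, since a priori a path of $T_i^H$ is only $(X \oplus H)'$-computable. My plan is to refine the tree $T$ by partitioning $\omega$ into disjoint widgets of $k$ consecutive positions, one per index $e$, and encoding $\Phi^X_e(e)$ redundantly inside widget $e$ so that once $\Phi^X_e(e)$ converges, the value it takes can be read off from which cell of that widget is forced to a distinguished value (say $0$). Thinness of $H$ for this enriched $T$ then implies, uniformly in the thinness color, that for each widget $H$ misses a specific position, which encodes $\Phi^X_e(e)$, allowing an $H \oplus X$-effective recovery of a value different from $\Phi^X_e(e)$. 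Running the argument for the $k$ possible thinness colors in parallel and merging via pigeonhole once $H$ is fixed yields the required total DNC$^X$ function, completing the proof.
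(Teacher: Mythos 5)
Your proposed tree does not satisfy the lemma: it has a concrete family of thin sets with no DNC power. The constraints in your $T$ are concentrated on $K^X = \{e : \Phi^X_e(e){\downarrow}\}$, and the coordinates where $\Phi^X_e(e)$ diverges are entirely unconstrained. Since $K^X$ is not simple relative to $X$ (by padding, say), there is an infinite $X$-computable set $H \subseteq \overline{K^X}$. Take any DNC$^X$ function $P \colon \omega \to k$ and set $P'(n) = 1$ for $n \in H$, $P'(n) = P(n)$ otherwise; because $H$ avoids the converging indices, $P'$ is still DNC$^X$ and so is an infinite branch of $T$, and it avoids the value $0$ on $H$. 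Hence $H$ is thin for $T$, but $H \oplus X \equiv_T X$ is certainly not of DNC degree relative to $X$. So the difficulty you flag — that the path of $T_i^H$ is only $(X\oplus H)'$-computable — is not the real problem; the tree itself is the wrong object, and no amount of path-extraction cleverness can fix it.

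The widget refinement does not escape this either: whatever redundant encoding you place inside widget $e$ is only \emph{activated} once $\Phi^X_e(e)$ converges, so the widgets indexed by $e \in \overline{K^X}$ remain unconstrained, and an infinite $X$-computable thin set supported on those widgets again has no DNC power. The structural obstruction is that any tree whose branches are produced by diagonalizing against $\Phi^X_e(e)$ leaves a co-c.e.\ set of free coordinates, and computable thin sets can live there.

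The paper takes a genuinely different route to avoid exactly this. It chooses $T$ to be an $X$-computable $k$-branching tree all of whose infinite paths are Martin-L\"of random relative to $X$ (e.g.\ the complement of a universal ML test relative to $X$), so there is no computably identifiable slack anywhere in the tree. Thinness of $H$ for $T$ then produces, via K\"onig's lemma and stabilization of the avoided color, a path $P \in [T]$ and a color $i < k$ with $H \subseteq \{n : P(n) \neq i\}$. The theorem that every infinite subset of a Martin-L\"of random sequence (or of its complement), relativized to $X$, is of DNC degree relative to $X$ — due to Kjos-Hanssen, Greenberg–Miller — then finishes the proof. In short: randomness, rather than diagonalization, is what rules out computable thin sets, and this is precisely the extra ingredient your construction is missing.
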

\begin{proof}
Let $T \subseteq k^{<\Nb}$ be an infinite $X$-computable tree such that every infinite path is a Martin-L\"of random relative to $X$ in base $k$. Let $H$ be an infinite set thin for $T$. In particular, there is some path $P \in [T]$ and some color $i < k$ such that $H \subseteq \{ x: P(x) \neq i \}$. Let $Z$ be the Martin-L\"of random $P$ in base 2. The set $H$ computes an infinite subset of $Z$ or of $\overline{Z}$, hence is of DNC degree relative to $X$.
\end{proof}

\begin{corollary}\label{cor:dt2-implies-dnczpp}
For every $\ell \geq 2$, there exists a computable instance of $(\forall k)\DT{2}{k,\ell}$ such that every solution is of DNC degree relative to $\emptyset''$.
\end{corollary}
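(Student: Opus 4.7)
The plan is to combine Lemma~\ref{lem:rwkl-dnc} (relativized to $\emptyset''$) with the corollary to Theorem~\ref{th:dt-implies-rwkl''}. The key observation is that any $\emptyset''$-computable tree automatically admits a $\Delta^0_3$ approximation, so the two results can be composed directly.

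Fix $\ell \geq 2$ and set $k = \ell$. First, I would apply Lemma~\ref{lem:rwkl-dnc} with this $k$ and $X = \emptyset''$ to obtain an $\emptyset''$-computable infinite tree $T \subseteq k^{<\Nb}$ such that every infinite set thin for $T$ has DNC degree relative to $\emptyset''$. Since $T \leq_T \emptyset''$, by the Shoenfield limit lemma iterated twice, $T$ admits a computable $\Delta^0_3$ approximation $(T_{s,t})_{s,t \in \Nb}$ in the sense of the definition preceding Theorem~\ref{th:dt-implies-rwkl''}.

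Next, I would feed this $\Delta^0_3$ approximation into the corollary to Theorem~\ref{th:dt-implies-rwkl''} (applied with the parameter $k = \ell$) to obtain a computable coloring $f: [\Qb]^2 \to 2\ell$ with the property that whenever a DLO Joyce suborder $S \subseteq \Qb$ has $f$ using at most $2\ell - 1$ colors on $[S]^2$, then $S$ computes a set thin for $T$.

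Finally, I would verify that $f$ is the desired instance. Since $\ell \geq 2$, we have $\ell \leq 2\ell - 1$, so $f$ is a bona fide instance of $\DT{2}{2\ell,\ell}$, and hence of $(\forall k)\DT{2}{k,\ell}$ in the sense that it is a computable instance witnessing that the principle has non-trivial computable instances. If $S$ is any solution to this instance, then by definition $|f''[S]^2| \leq \ell \leq 2\ell-1$, so by the property of $f$, $S$ computes an infinite set $H$ thin for $T$. By the choice of $T$ via Lemma~\ref{lem:rwkl-dnc}, $H$ is of DNC degree relative to $\emptyset''$, and therefore so is $S$. No step is really an obstacle here; the only subtlety is to match parameters so that the colors used by a solution still leave at least one color avoided, which is why taking $k = \ell$ (giving $2k$ colors in total) works uniformly for every $\ell \geq 2$.
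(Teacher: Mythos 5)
Your proof is correct and follows essentially the same route as the paper: apply Lemma~\ref{lem:rwkl-dnc} relativized to $\emptyset''$ to get the tree $T\subseteq \ell^{<\Nb}$, pass to a computable $\Delta^0_3$ approximation of it, and feed that into (the corollary to) Theorem~\ref{th:dt-implies-rwkl''} to get a computable $2\ell$-coloring of $[\Qb]^2$ whose solutions, using at most $\ell\leq 2\ell-1$ colors, compute a set thin for $T$ and hence are DNC relative to $\emptyset''$. Your explicit appeal to the iterated Shoenfield limit lemma just spells out a step the paper leaves implicit.
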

\begin{proof}
Fix $\ell \geq 2$.
  By \Cref{lem:rwkl-dnc} relativized to $\emptyset''$, there exists a computable
  $\Delta^0_3$-approximation of a tree $T \subseteq \ell^{<\Nb}$ 
  such that every infinite set thin for $T$ is of DNC degree relative to $\emptyset''$. By
  \Cref{th:dt-implies-rwkl''}, let $f$ be a computable instance of $\DT{2}{2\ell,\ell}$ such
  that every solution compute a set $H$ thin for $T$. Then every solution is of DNC degree relative to $\emptyset''$.
  \end{proof}

\begin{lemma}\label{lem:ce-dlo-has-computable-subcopy}
For every $X$-c.e.\ dense linear order with no endpoints $(D, <_D)$,
there is an $X$-computable subset $S \subseteq D$ such that $(S, <_D)$ is a sub-copy of $(D, <_D)$.
\end{lemma}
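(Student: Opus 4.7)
I will build, $X$-computably and in stages, an order-embedding $\psi:(\Qb,<)\to(D,<_D)$ whose image $S=\psi(\Qb)$ is an $X$-computable subset of $D$. This suffices, because $(S,<_D)$ will then be an isomorphic copy of $(\Qb,<)$ sitting inside $(D,<_D)$, and since all countable DLOs without endpoints are isomorphic, $(S,<_D)$ is a sub-copy of $(D,<_D)$. Fix a computable enumeration $q_0,q_1,\ldots$ of $\Qb$. The construction maintains two invariants at each stage $k$: (i) $\psi$ is order-preserving on $\{q_0,\ldots,q_k\}$, and (ii) the values $\psi(q_0),\psi(q_1),\ldots$ form a strictly $\NN$-increasing sequence of natural numbers. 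Invariant (ii) is the decisive one: it forces $\psi(q_k)\geq k$, so that $n\in S$ iff $n\in\{\psi(q_0),\ldots,\psi(q_n)\}$, which is an $X$-computable condition.

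At stage $0$, set $\psi(q_0)$ to be the first element that appears in the $X$-computable enumeration of $D$. At stage $k+1$, determine the $<$-position of $q_{k+1}$ among $\{q_0,\ldots,q_k\}$: let $q_L$ be its immediate $<$-predecessor and $q_U$ its immediate $<$-successor in this finite set (omitting whichever is absent when $q_{k+1}$ is extremal). We must choose $\psi(q_{k+1}):=d$ for some $d\in D$ with $\psi(q_L)<_D d<_D \psi(q_U)$ and $d>_{\NN}\psi(q_k)$. Using that $D$ is $X$-c.e. and that $<_D$, being a linear order whose strict part is $X$-c.e., is in fact $X$-decidable on pairs in $D$ (the two alternatives $a<_D b$ and $a>_D b$ are both $X$-c.e., and exactly one holds when $a\neq b$), we $X$-computably search through the enumeration of $D$ until such a $d$ is found.

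The only point requiring verification is that this search always terminates, and this is precisely where the hypothesis on $(D,<_D)$ is used: density plus the absence of endpoints guarantees that the $<_D$-interval $\{d\in D:\psi(q_L)<_D d<_D \psi(q_U)\}$ (or the corresponding half-infinite interval when one bound is absent) is infinite, and any infinite subset of $\NN$ contains elements larger than $\psi(q_k)$. Consequently $\psi$ is $X$-computable and total, and by construction it realizes an order-isomorphism $(\Qb,<)\cong(S,<_D)$; thus $(S,<_D)$ is a DLO without endpoints, while invariant (ii) makes $S$ $X$-computable as required. I foresee no substantive obstacle; the only delicate point is simultaneously maintaining (i) and (ii), which is handled by the simple device of always requiring the new value to exceed the previous one in $\NN$-order.
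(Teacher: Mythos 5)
Your proposal is correct and follows essentially the same construction as the paper's proof: build, $X$-computably, a sequence in $D$ that is strictly increasing in the $\mathbb{N}$-ordering (so that membership in $S$ is decidable by bounded search), placing each new element in the right $<_D$-interval and using density and the absence of endpoints to guarantee the search terminates. The one difference is cosmetic but worth noting: the paper builds the sequence directly and appeals to ``choosing the minimal interval in an appropriate way'' to ensure the result is dense without endpoints, whereas you make this explicit by fixing an enumeration of $\mathbb{Q}$ and constructing an order-embedding $\psi:\mathbb{Q}\to D$, so that $(S,<_D)\cong(\mathbb{Q},<)$ comes for free. Your version is thus a bit cleaner at that step, but the two arguments are the same at heart.
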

\begin{proof}
We build an $X$-computable $<_\N$-increasing sequence \[x_0 <_\N x_1 <_\N \dots\] of elements of $D$ such that letting $S = \{x_n: n \in \N\}$, $(S, <_D)$ is a dense linear order without endpoints. Start with $x_0 \in D$ being any element. Having defined $F = \{x_0 <_\N x_1, \dots <_\N x_n\}$,
consider a minimal interval in $F \cup \{-\infty, +\infty\}$ with respect to $<_D$, that is, an interval $(a, b)$ with $a <_D b \in F \cup \{-\infty, +\infty\}$ such that $(a, b) \cap F = \emptyset$. Then wait until some element $x_{n+1}$ appears in $W \cap (a, b)$ with $x_{n+1} >_\N x_n$. Such element must be found since, as $(D, <_D)$ is a DLO with no endpoints, there are infinitely many elements in $W \cap (a, b)$, so elements of arbitrary large value with respect to $<_\N$. By choosing the minimal interval in an appropriate way, we can ensure that $(S, <_D)$ is a DLO with no endpoints. Since $D$ is $X$-c.e., searching for $x_{n+1}$ is done $X$-computably, so $S \leq_T X$.
\end{proof}

\begin{corollary}\label{cor:dt2-no-sigma3}
For every $\ell \geq 2$, there exists a computable instance of $(\forall k)\DT{2}{k,\ell}$ 
with no $\Sigma^0_3$ solution.
\end{corollary}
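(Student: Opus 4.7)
The plan is to derive this corollary by combining \Cref{cor:dt2-implies-dnczpp} with \Cref{lem:ce-dlo-has-computable-subcopy}, exploiting the basic fact that no set is DNC relative to its own Turing degree. Concretely, fix $\ell \geq 2$ and let $f : [\Qb]^2 \to k$ be the computable instance of $(\forall k)\DT{2}{k,\ell}$ provided by \Cref{cor:dt2-implies-dnczpp}, whose solutions are all of DNC degree relative to $\emptyset''$. I claim that this same instance $f$ has no $\Sigma^0_3$ solution.

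Suppose for contradiction that $S \subseteq \Qb$ is a $\Sigma^0_3$ solution to $f$, meaning $(S, <)$ is a dense linear order with no endpoints and $|f[S]^2| \leq \ell$. Being $\Sigma^0_3$, $S$ is c.e.\ in $\emptyset''$. By \Cref{lem:ce-dlo-has-computable-subcopy} applied to $X = \emptyset''$ and to the DLO $(S, <)$, there exists an $\emptyset''$-computable subset $S' \subseteq S$ such that $(S', <)$ is itself a dense linear order with no endpoints (and hence a subcopy of $(\Qb,<)$).

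Since $S' \subseteq S$, we have $f[S']^2 \subseteq f[S]^2$, so $|f[S']^2| \leq \ell$, and thus $S'$ is also a solution to $f$. By \Cref{cor:dt2-implies-dnczpp}, $S'$ must have DNC degree relative to $\emptyset''$, i.e., $S'$ computes a function $g$ with $g(e) \neq \Phi_e^{\emptyset''}(e)$ for all $e$. But $S' \leq_T \emptyset''$, so $g \leq_T \emptyset''$ as well, meaning $g = \Phi_{e_0}^{\emptyset''}$ for some index $e_0$. Then $g(e_0) = \Phi_{e_0}^{\emptyset''}(e_0)$, contradicting the DNC property of $g$. This contradiction shows that no such $S$ can exist.

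The argument is almost entirely an assembly of results already established; the only slightly delicate point is verifying that the $\emptyset''$-computable subcopy $S'$ inherits solutionhood from $S$, which is immediate since passage to a subset can only decrease the range of $f$. Hence no obstacle beyond invoking the two cited results is expected.
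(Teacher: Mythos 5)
Your proof is correct and follows essentially the same route as the paper: fix the instance from \Cref{cor:dt2-implies-dnczpp}, thin a hypothetical $\Sigma^0_3$ solution to a $\Delta^0_3$ (i.e.\ $\emptyset''$-computable) sub-solution via \Cref{lem:ce-dlo-has-computable-subcopy}, and contradict the fact that no $\emptyset''$-computable set can be of DNC degree relative to $\emptyset''$. No issues.
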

\begin{proof}
By \Cref{cor:dt2-implies-dnczpp}, there is a computable instance $f: [\Qb]^2 \to k$ of $(\forall k)\DT{2}{k,\ell}$ such that every solution is of DNC degree relative to $\emptyset''$.
Suppose for the contradiction that there is a $\Sigma^0_3$ sub-copy $(U, <_{\Qb})$ of $(\Qb, <_{\Qb})$ such that $|f[U]^2| \leq \ell$.
By \Cref{lem:ce-dlo-has-computable-subcopy}, there is a $\Delta^0_3$ subset $H \subseteq U$ such that $(H, <_{\Qb})$ is a sub-copy of $(\Qb, <_{\Qb})$.
Since $H$ is of DNC degree relative to $\emptyset''$, is computes a function $f: \omega \to \omega$ such that for all $e$, $f(e) \neq \Phi^{\emptyset''}_e(e)$. Since $H$ is $\Delta^0_3$, so is $f$, hence there is some $e$ such that $\Phi_e^{\emptyset''} = f$.
But then $f(e) = \Phi_e^{\emptyset''}(e)$, contradiction.
\end{proof}


%
%

\section{Above the big Ramsey number of Devlin's theorem}

In the case of Devlin's theorem for pairs, the existence of the big Ramsey number implies $\ACA_0$. By \Cref{cor:dt2to3-implies-aca}, this is also the case when weakening the statement by allowing 3 instead of 2 colors in the solution.  We shall now conclude the chapter about Devlin's theorem by proving that this bound is tight, in that the statement $(\forall k)\DT{2}{k, 4}$ does not imply $\ACA_0$ over $\RCA_0$. The proof consists essentially of replacing the use of Milliken's tree theorem for height 3 by the statement $(\forall k)\PMT{3}{k,2}$ which admits cone avoidance by \Cref{thm:pmtt3k2-cone-avoidance}. The cost of this substitution is an increase in the number of colors allowed in the solution.

\begin{theorem}[$\RCA_0 \wedge (\forall k)\PMT{3}{k,2}$]\label{thm:dt2to4-one-embedding-type-cone-avoidance}
Let $X$ be a DLO Joyce structure and $F$ be a Joyce structure of size 2. Then for every $k \in \omega$ and every coloring $f: {X \choose F} \to k$,
there is a subcopy $Y$ of $X$ such that $f$ uses at most 2 colors over ${Y \choose F}$.
\end{theorem}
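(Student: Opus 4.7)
The plan is to mimic the proof of \Cref{thm:strong-devlin-one-type}, replacing the appeal to Milliken's tree theorem for height $3$ (which implies $\ACA_0$) by an application of $(\forall k)\PMT{3}{k,2}$. Since $F$ has two elements, \Cref{lem:coded-joyce-order-to-strong-subtree} applies with $n=2$ and yields the correspondence between coded copies of $F$ and strong subtrees of height $3 = 2n-1$: each $E \in \Subtree{3}{T}$ contains at most one coded copy of $F$, and each coded copy of $F$ in $T$ embeds into some such $E$. This is exactly the parameter regime $(\forall k)\PMT{3}{k,2}$ controls, at the price of allowing two colors in the solution rather than one.

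First, by computable categoricity, I may replace $X$ with a coded DLO Joyce order, and use \Cref{cor:joyce-diagonalization-exists} to fix a Joyce order diagonalization $h : 2^{<\omega} \to X$. Define a coloring $g: \Subtree{3}{2^{<\omega}} \to k$ by setting $g(E) = f(h[H])$ when $E$ contains a (necessarily unique) coded copy $H$ of $F$, and $g(E) = 0$ otherwise; this is well defined by \Cref{lem:coded-joyce-order-to-strong-subtree}. Next, apply the single-tree case ($d=1$) of $(\forall k)\PMT{3}{k,2}$ to $g$, obtaining $S \in \Subtree{\omega}{2^{<\omega}}$ and a set of colors $I \subseteq k$ with $|I| \leq 2$ such that $g(\Subtree{3}{S}) \subseteq I$.

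Now mimic the last step of \Cref{thm:strong-devlin-one-type}: choose an injection $\phi : 2^{<\omega} \to S$ whose image is a coded Joyce order isomorphic to $(2^{<\omega}, \ltlex, |\cdot \meet \cdot|)$, which exists because $S$ is a strong subtree of $2^{<\omega}$. Then $Y := h[\phi[X]]$ is a coded subcopy of the original DLO Joyce order $X$, since $\phi[X]$ is a coded Joyce order and $h$ is a Joyce order diagonalization. To verify the bound on colors, let $\hat{H}$ be any coded copy of $F$ inside $Y$. Writing $\hat H = h[H]$ with $H \subseteq \phi[X] \subseteq S$, the set $H$ is a coded copy of $F$ in $S$, so by \Cref{lem:coded-joyce-order-to-strong-subtree} it is contained in some $E \in \Subtree{3}{S}$. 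Then $f(\hat H) = f(h[H]) = g(E) \in I$, whence $f(\binom{Y}{F}) \subseteq I$ has at most two colors.

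The proof is essentially a reparameterization: the only obstacle is the bookkeeping to ensure that the $\mathcal{S}_\omega$-structure produced by $\PMT{3}{k,2}$ really carries a subcopy of the ambient DLO Joyce structure, and this is handled exactly as in \Cref{thm:strong-devlin-one-type} via Joyce order diagonalization and the isomorphism-preservation clause in \Cref{lem:coded-joyce-order-to-strong-subtree}.
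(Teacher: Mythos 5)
Your proposal is correct and is exactly the paper's argument: the paper's proof of this theorem is literally the one-line remark that one repeats the proof of \Cref{thm:strong-devlin-one-type} with the application of Milliken's tree theorem for height $3$ replaced by $(\forall k)\PMT{3}{k,2}$, which is precisely the substitution you carry out (with the same use of \Cref{lem:coded-joyce-order-to-strong-subtree} at $n=2$ and the same diagonalization/subcopy bookkeeping at the end).
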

\begin{proof}
The proof is exactly the same as the one of \Cref{thm:strong-devlin-one-type},
but replacing an application of Milliken's tree theorem for height 3 by
$(\forall k)\PMT{3}{k,2}$.
\end{proof}

\begin{theorem}\label{thm:pmt-to-dt-more-colors}
$(\forall k)\PMT{3}{k,2}$ implies $(\forall k)\JDT{2}{k, 4}$ over $\RCA_0$.
\end{theorem}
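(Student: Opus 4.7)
The plan is a straightforward two-step reduction: apply Theorem \ref{thm:dt2to4-one-embedding-type-cone-avoidance} once for each of the two Joyce structures of size 2, and then combine the color bounds additively.

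Fix an instance of $\JDT 2{k,4}$: a DLO Joyce structure $\Xb$ and a coloring $f:[\Xb]^2\to k$. Up to isomorphism there are exactly two Joyce structures of size 2, corresponding to the two possible orderings of the two leaf labels relative to each other (the meet label must be smaller than both, by \Cref{lem:prop-of-JO}). Call these $F_0$ and $F_1$. Since every pair $\{x,y\}\subseteq\Xb$ induces a unique Joyce substructure, we have the disjoint decomposition
\[
[\Xb]^2 \;=\; {\Xb\choose F_0}\;\sqcup\;{\Xb\choose F_1}.
\]

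Now the proof proceeds in two applications of \Cref{thm:dt2to4-one-embedding-type-cone-avoidance}. First, restrict $f$ to ${\Xb\choose F_0}$ and apply \Cref{thm:dt2to4-one-embedding-type-cone-avoidance} (which is available under $(\forall k)\PMT{3}{k,2}$) to obtain a subcopy $\Yb_0$ of $\Xb$ such that $f$ takes at most $2$ colors on ${\Yb_0\choose F_0}$. Second, restrict $f$ to ${\Yb_0\choose F_1}$ and apply \Cref{thm:dt2to4-one-embedding-type-cone-avoidance} again, now inside $\Yb_0$, to obtain a subcopy $\Yb_1$ of $\Yb_0$ such that $f$ takes at most $2$ colors on ${\Yb_1\choose F_1}$. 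Since the subcopy relation is transitive, $\Yb_1$ is also a subcopy of $\Xb$; and since every copy of $F_0$ inside $\Yb_1$ is already a copy of $F_0$ inside $\Yb_0$, the bound from the first step is preserved: $f$ still uses at most $2$ colors on ${\Yb_1\choose F_0}$.

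Combining the two bounds via the disjoint decomposition above gives $|f[\Yb_1]^2|\leq 2+2=4$, as required. There is no genuine obstacle here; the only thing to note is that the two applications of \Cref{thm:dt2to4-one-embedding-type-cone-avoidance} are instances of $(\forall k)\PMT{3}{k,2}$ (with potentially different values of $k$ after coding), and their composition is straightforwardly formalizable in $\RCA_0$ once one is given the enumeration of the two embedding types $F_0,F_1$.
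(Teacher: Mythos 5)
Your proof is correct and takes essentially the same approach as the paper: two successive applications of \Cref{thm:dt2to4-one-embedding-type-cone-avoidance}, one for each of the two Joyce structures of size 2, followed by the observation that the resulting subcopy bounds the number of colors by $2+2=4$.
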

\begin{proof}
Let $F_0$ and $F_1$ be the two coded Joyce orders of size $2$.
Let $X$ be a coded DLO Joyce order and let $f: [X]^2 \to k$ be a coloring.
By \Cref{thm:dt2to4-one-embedding-type-cone-avoidance},
there is a subcopy $X_0$ of $X$ such that $f$ uses at most 2 colors $i_0, i_1$ over ${X_0 \choose F_0}$.
Again by \Cref{thm:dt2to4-one-embedding-type-cone-avoidance},
there is a subcopy $X_1$ of $X_0$ such that $f$ uses at most 2 colors $j_0, j_1$ over ${X_1 \choose F_1}$.
We claim that $f[X_1]^2 \subseteq \{i_0, i_1, j_0, j_1\}$. Let $E \in [X_1]^2$. In particular, $E$ is isomorphic to $F_0$ or $F_1$. In the first case, $f(E) \in \{i_0, i_1\}$ and in the second case, $f(E) \in \{j_0, j_1\}$.
Thus $X_1$ is a subcopy of $X$ such that $|f[X]^2| \leq 4$.
\end{proof}

\begin{corollary}\label{cor:jdt4-cone-avoidance}
$(\forall k)\JDT{2}{k, 4}$ admits cone avoidance.
\end{corollary}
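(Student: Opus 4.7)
The plan is to derive the corollary by combining cone avoidance of $(\forall k)\PMT{3}{k,2}$ (Theorem \ref{thm:pmtt3k2-cone-avoidance}) with the implication of Theorem \ref{thm:pmt-to-dt-more-colors}, but carefully tracing the reduction to verify that Turing degrees are preserved along the way. More precisely, I would fix sets $C, Z \subseteq \NN$ with $C \nTred Z$ together with a $Z$-computable DLO Joyce structure $X$ and a $Z$-computable coloring $f : [X]^2 \to k$, and then build a subcopy $Y$ of $X$ satisfying $|f[Y]^2| \leq 4$ and $C \nTred Y \oplus Z$.

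First I would verify that Theorem \ref{thm:dt2to4-one-embedding-type-cone-avoidance} is in fact a computable reduction from $\JDT{}{}$-for-one-embedding-type to $\PMT{3}{k,2}$. Tracing the proof of Theorem \ref{thm:strong-devlin-one-type} (which \ref{thm:dt2to4-one-embedding-type-cone-avoidance} invokes), given a $Z$-computable coloring $f : {X \choose F_i} \to k$ with $F_i$ of size $2$, one constructs a $Z$-computable coloring $g : \Subtree{3}{2^{<\omega}} \to k$ via a fixed computable Joyce order diagonalization $h : 2^{<\omega} \to X$; any solution $S$ of $g$ as a $\PMT{3}{k,2}$-instance then yields a subcopy $Y \Tred S \oplus Z$ of $X$ on which $f$ uses at most two colors. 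Hence if one begins with a $Z$-computable instance of Joyce Devlin's theorem for embedding type $F_i$ and invokes cone avoidance of $\PMT{3}{k,2}$ (Theorem \ref{thm:pmtt3k2-cone-avoidance}), one obtains a subcopy $Y$ with $C \nTred Y \oplus Z$.

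Next I would iterate this reduction twice, once per embedding type, exactly as in the proof of Theorem \ref{thm:pmt-to-dt-more-colors}. Applying the cone-avoiding form of Theorem \ref{thm:dt2to4-one-embedding-type-cone-avoidance} to $F_0$ and the givens $(X,f)$ produces a subcopy $X_0$ of $X$ with $C \nTred X_0 \oplus Z$ and $|f''{X_0 \choose F_0}| \leq 2$. Then applying it again, this time relativized to $X_0 \oplus Z$, with parameters $F_1$, the coloring $f$ restricted to ${X_0 \choose F_1}$, and the non-computability hypothesis $C \nTred X_0 \oplus Z$, yields a further subcopy $X_1$ of $X_0$ with $C \nTred X_1 \oplus Z$ and $|f''{X_1 \choose F_1}| \leq 2$. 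Since every element of $[X_1]^2$ is isomorphic either to $F_0$ or to $F_1$, we conclude $|f[X_1]^2| \leq 4$, giving the required cone-avoiding solution.

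I do not expect any real obstacle here: the only thing to check is that the two reductions compose in a cone-preserving way, which they do because each step replaces $Z$ by $X_0 \oplus Z$ as the oracle and cone avoidance of $\PMT{3}{k,2}$ is stated relative to an arbitrary base set. The corollary can also be phrased as noting that the composition of a computable reduction to a principle admitting cone avoidance still admits cone avoidance, which is standard.
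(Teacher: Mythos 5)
Your proof is correct and takes essentially the same approach as the paper, which simply cites cone avoidance of $(\forall k)\PMT{3}{k,2}$ (\Cref{thm:pmtt3k2-cone-avoidance}) together with the implication of \Cref{thm:pmt-to-dt-more-colors}; your careful tracking of oracles through the two applications of \Cref{thm:dt2to4-one-embedding-type-cone-avoidance} just makes explicit why that implication, being a finite composition of computable reductions relativizing correctly, indeed preserves cone avoidance.
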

\begin{proof}
By \Cref{thm:pmtt3k2-cone-avoidance}, $(\forall k)\PMT{3}{k,2}$ admits cone avoidance
hence so does $(\forall k)\JDT{2}{k, 4}$ by \Cref{thm:pmt-to-dt-more-colors}.
\end{proof}

\begin{corollary}\label{cor:cone-avoidance-joyce-devlin-4-colors}
$(\forall k)\JDT{2}{k, 4}$  does not imply $\ACA_0$ over $\RCA_0$.
\end{corollary}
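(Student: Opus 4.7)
The proof plan is essentially an immediate application of two previously established results. By \Cref{cor:jdt4-cone-avoidance}, $(\forall k)\JDT{2}{k,4}$ admits cone avoidance as a problem. Since $(\forall k)\JDT{2}{k,4}$ is readily seen to be expressible as a $\Pi^1_2$ statement in the language of second-order arithmetic (its instances are pairs consisting of a DLO Joyce structure $\Xb$ and a finite coloring $f \colon [\Xb]^2 \to k$, and its solutions are subcopies $\Yb$ with $|f[\Yb]^2| \leq 4$), we are in position to invoke \Cref{lem:cone-avoidance-not-aca}.

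Concretely, \Cref{lem:cone-avoidance-not-aca} yields an $\omega$-model $(\NN,\Sc)$ of $\RCA_0 \wedge (\forall k)\JDT{2}{k,4}$ in which $\emptyset' \notin \Sc$, so that $\Sc$ is not a jump ideal and hence $(\NN,\Sc) \nmodels \ACA_0$. The existence of such a model immediately witnesses that $(\forall k)\JDT{2}{k,4}$ does not imply $\ACA_0$ over $\RCA_0$, completing the argument.

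The only conceivable obstacle would be checking that the formalization of $(\forall k)\JDT{2}{k,4}$ as a $\Pi^1_2$ problem in the sense of \Cref{sec:bkg_rm} is compatible with the way cone avoidance was established in \Cref{cor:jdt4-cone-avoidance}; but this is routine since DLO Joyce structures and their subcopies are coded in a standard manner by subsets of $\NN$, and the notion of cone avoidance used throughout this chapter has been consistently formulated relative to such codings. Hence no further work is required beyond citing the two results.
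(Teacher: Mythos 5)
Your proof is correct and follows essentially the same route as the paper: the paper constructs the $\omega$-model from cone avoidance of $(\forall k)\PMT{3}{k,2}$ via \Cref{lem:cone-avoidance-not-aca} and then transfers it through \Cref{thm:pmt-to-dt-more-colors}, whereas you apply \Cref{lem:cone-avoidance-not-aca} directly to the already-packaged \Cref{cor:jdt4-cone-avoidance}; these differ only in the order of the two steps. No gap.
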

\begin{proof}
By \Cref{thm:pmtt3k2-cone-avoidance}, $(\forall k)\PMT{3}{k,2}$ admits cone avoidance,
hence there is a model $\Mc$ of $\RCA_0 \wedge (\forall k)\PMT{3}{k,2}$ which is not a model of $\ACA_0$.
In particular, $\Mc \models (\forall k)\JDT{2}{k, 4}$ by \Cref{thm:pmt-to-dt-more-colors}.
\end{proof}

\section{The Erd\H{o}s Rado theorem}\label{subsect:er-theorem}

Erd\H{o}s and Rado proved that it is always possible to obtain either a copy of $\Qb$ of one color, or else an infinite homogeneous set (in the sense of Ramsey's theorem) of the other color.
\begin{theorem}[Erd\H{o}s Rado theorem]\index{theorem!Erd\H{o}s Rado theorem}\index{Erd\H{o}s Rado theorem}
  For every $f:[\mathbb Q]^2\to 2$, there exists a subset $S\subseteq\mathbb Q$ such that either $S$ is infinite and $f$-homogeneous of color 0, or $S$ is of order-type $\Qb$ and $f$-homogeneous of color 1.
\end{theorem}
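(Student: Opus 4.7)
The plan is to derive the Erd\H{o}s Rado theorem as a consequence of Joyce Devlin's theorem for pairs, using the coded DLO Joyce order representation of $\Qb$ from \Cref{cor:true-dlo-joyce-order-exists}. First, I would identify $\Qb$ with such a coded DLO Joyce order $X \subseteq \cantor$, so that the underlying order is given by $\ltlex$ and the Joyce structure is recorded by the meet-length function $|\cdot \meet \cdot|$. Under this identification, the two Joyce structures of size $2$, call them $F_0$ and $F_1$, correspond to the two configurations of a pair $\{\sigma, \tau\}$ with $\sigma \ltlex \tau$: type $F_0$ when $|\sigma| <_\Nb |\tau|$, and type $F_1$ when $|\sigma| >_\Nb |\tau|$.

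Second, I would invoke \Cref{thm:strong-devlin-one-type} twice, once for each embedding type, to produce a sub-copy $S$ of $X$ (itself a coded DLO Joyce order) on which $f$ is constant on copies of $F_0$ with some value $c_0 \in \{0,1\}$, and constant on copies of $F_1$ with some value $c_1 \in \{0,1\}$.

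Third, the proof concludes by case analysis on $(c_0, c_1)$. If $c_0 = c_1 = 0$ then $S$ itself, being infinite, is an infinite homogeneous set of color $0$; if $c_0 = c_1 = 1$ then $S$ is a sub-copy of $\Qb$ homogeneous of color $1$. In the asymmetric cases the key observation is that in a coded DLO Joyce order every nonempty open interval of $(S,\ltlex)$ is infinite, and since there are only finitely many binary strings of each fixed length, such an interval contains strings of arbitrarily large length. Exploiting this, in the case $c_0 = 0$, $c_1 = 1$ I would greedily construct a sequence $\sigma_0 \ltlex \sigma_1 \ltlex \cdots$ in $S$ with strictly increasing lengths: given $\sigma_n$, pick $\sigma_{n+1}$ in $S \cap (\sigma_n, +\infty)$ with $|\sigma_{n+1}| >_\Nb |\sigma_n|$. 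Every pair in this sequence then has type $F_0$, hence color $0$. In the symmetric case $c_0 = 1$, $c_1 = 0$ I would instead construct $\sigma_0 \gtlex \sigma_1 \gtlex \cdots$ in $S$ with strictly increasing lengths, drawing each $\sigma_{n+1}$ from $S \cap (-\infty, \sigma_n)$ with $|\sigma_{n+1}| >_\Nb |\sigma_n|$; every pair then has type $F_1$, again color $0$.

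The mildly subtle step is recognizing that the two asymmetric cases still yield an infinite homogeneous set of color $0$, not a $\Qb$-copy of color $1$: an order-type $\Qb$ copy of color $1$ is in fact unavailable in these cases because it would force $\ltlex$ to coincide with (or to reverse) a natural-number-valued length function densely, which is impossible. What saves the theorem is precisely this one-sided asymmetry between ``infinite set'' and ``copy of $\Qb$'' in the Erd\H{o}s Rado statement, together with the length-arbitrary density observation above.
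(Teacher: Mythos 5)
Your proof is correct and takes essentially the same route as the paper. The paper establishes $\mathrm{ER}^2$ by first proving Theorem~\ref{th:dt242-implies-er2}, that $\DT{2}{4,2}$ implies $\mathrm{ER}^2$, applying $\DT{2}{4,2}$ to the product coloring $f \times f_0$, where $f_0$ is the witness to the big Ramsey degree of pairs of rationals being $2$; you instead apply Theorem~\ref{thm:strong-devlin-one-type} once per embedding type directly in the coded Joyce order picture, which is exactly the engine under the hood of $\DT{2}{4,2}$. The net effect is the same: a subcopy $S$ of $\mathbb{Q}$ on which the colour depends only on the embedding type, followed by the same four-way case split. One small cosmetic observation is that in your asymmetric cases the greedy construction of a sequence with strictly increasing lengths is a mild overkill relative to the paper's Case~2, which just enumerates $\mathbb{Q}$ and runs a pigeonhole argument on the enumeration indices; both work and are equivalent under the Joyce representation, since lengths in a coded Joyce order play the role of the enumeration. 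Your ``mildly subtle step'' observation (that a dense $\ltlex$-copy of colour~1 is unavailable in the asymmetric cases) is true and good intuition, but not strictly needed, since you already produce a witness of the other kind.
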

\begin{statement}\index{statement!$\mathrm{ER}^2$}
  $\mathrm{ER}^2$ is the statement denoting the Erd\H{o}s Rado theorem.
\end{statement}
This statement was studied by \cite{Chong2019Strengthc, Frittaion2017Coloring, Dzhafarov2017Coloring} in the setting of reverse mathematics. One would expect it to be a consequence of Devlin's theorem by the optimality of the bounds noted above. We give a direct combinatorial proof of $\mathrm{ER}^2$ from Devlin's theorem for pairs of rationals.
\begin{theorem}\label{th:dt242-implies-er2}
  $\DT{2}{4,2}$ implies $\mathrm{ER}^2$.
\end{theorem}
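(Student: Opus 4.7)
The plan is to apply $\DT{2}{4,2}$ to the product of $f$ with the lower-bound coloring $f_{<_\Qb}$ introduced in \Cref{sect:lower-bound-devlin}, and then read off an $\mathrm{ER}^2$-solution from the two colors that survive. Identifying $\Qb$ with $(\cantor, <_\Qb)$ via the computable bijection, I would, given $f \colon [\Qb]^2 \to 2$, define $g \colon [\cantor]^2 \to 4$ by $g(\sigma,\tau) = (f(\sigma,\tau),\, f_{<_\Qb}(\sigma,\tau))$ and invoke $\DT{2}{4,2}$ to obtain a dense suborder $(S, <_\Qb)$ of $\cantor$ with no endpoints on which $g$ takes at most two values.

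The first step is to observe that $f_{<_\Qb}$ is forced to take both values on $[S]^2$: given any $\sigma <_\Qb \tau$ in $S$, density of $S$ together with the fact that only finitely many strings have any given length lets us pick $\rho \in S \cap \interval[open]\sigma\tau$ with $|\rho| > \max(|\sigma|,|\tau|)$; then $(\sigma,\rho)$ receives color $1$ and $(\rho,\tau)$ receives color $0$ under $f_{<_\Qb}$. Consequently, the two colors assumed by $g$ on $[S]^2$ are of the form $\{(a,0),(b,1)\}$ for some $a,b \in \{0,1\}$.

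The argument then splits into four cases according to $(a,b)$. When $a = b$, $f$ is constant on $[S]^2$ and $S$ itself is an $\mathrm{ER}^2$-solution: a copy of $\Qb$ homogeneous of color $1$ if $a = 1$, and an infinite homogeneous set of color $0$ if $a = 0$. When $a \neq b$, on $[S]^2$ we have either $f = f_{<_\Qb}$ or $f = 1 - f_{<_\Qb}$. In the first subcase I would $S$-computably build a sequence $\sigma_0 >_\Qb \sigma_1 >_\Qb \cdots$ in $S$ with $|\sigma_0| < |\sigma_1| < \cdots$; this is possible because $(S, <_\Qb)$ has no minimum and any infinite subset of $\cantor$ has unbounded lengths. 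Every pair from this sequence then satisfies $f_{<_\Qb} = 0$, hence $f = 0$. In the second subcase I would build analogously a sequence in $S$ that is strictly increasing both in $<_\Qb$ and in length, so that every pair has $f_{<_\Qb} = 1$, hence again $f = 0$. In either subcase we obtain an infinite $f$-homogeneous set of color $0$.

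The only delicate step is the verification that $f_{<_\Qb}$ cannot be monochromatic on any dense suborder of $(\cantor, <_\Qb)$, but this is essentially what makes $f_{<_\Qb}$ a witness to the non-triviality of Devlin's theorem for pairs, and the argument above (picking a node with very long length between two given nodes) is already implicit in the use of $f_{<_\Qb}$ in \Cref{sect:lower-bound-devlin}. Everything else is routine and uniformly effective in $f$ and $S$, so the whole implication formalizes directly over $\RCA_0$.
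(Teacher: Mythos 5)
Your proof is correct and follows essentially the same route as the paper's: apply $\DT{2}{4,2}$ to the product of $f$ with the witness coloring, observe that the witness coloring is forced to take both values on any dense suborder, and then read off the $\mathrm{ER}^2$-solution by cases on the two surviving colors. The only cosmetic difference is that you work with $f_{<_\Qb}$ on $(\cantor,<_\Qb)$ where the paper uses $f_0$ defined from an enumeration of the rationals, but these are the same coloring up to a swap of the two color values, so the case analysis and the construction of the homogeneous sequences coincide with the paper's.
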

\begin{proof}
  Let $f:[\mathbb Q]^2\to 2$ be a coloring of pairs of rationals, regarded as a given instance of $\mathrm{ER}^2$. Let $f_0$ be the $2$-coloring of $[\Qb]^2$ witnessing the fact that big Ramsey degree of the pairs of rationals is 2, that is, $f_0$ is such that for every sub-copy $S$ of the rationals, $|f_0[S]^2| = 2$. An explicit construction of $f_0$ is given at the start of \Cref{sect:lower-bound-devlin}.

Apply $\DT{2}{4,2}$ to the $4$-coloring $f\times f_0:(q, r)\mapsto \langle f(q, r), f_0(q,r) \rangle$ to get a subcopy of the rationals $S$ such that $f \times f_0$ uses at most $t_{\DT{}{}}(2) = 2$ colors on $[S]^2$. As $[S]^2$ must have two colors for $f_0$, the two colors of $[S]^2$ for $f\times f_0$ must be of the form $(c_0, 0)$ and $(c_1,1)$. The rest of the proof is split into 3 cases.

 \case{1}{$c_0 = c_1 = 1$.} In this case, $[S]^2$ is monochromatic with color 1 for $f$, and since $S$ has order-type $\mathbb Q$ it is a solution to $f$ as an instance $\mathrm{ER}^2$.

 \case{2}{$c_0 = 0$ and $c_1 = 1$.} Then $f_0(q,r) = 0$ implies $f(q,r) = 0$, for all $q,r \in S$. We build an infinite set $T=\{q_{n_i}:i\in\Nb\}$ such that $[T]^2$ is monochromatic for $f_0$ with color $0$, and therefore also for $f$ with color $0$. To this end, we build an increasing sequence of rationals $(q_{n_i})_{i\in\Nb}$ in $S$, such that $(n_i)_{i\in\Nb}$ is also increasing. Fix any $q_{n_0}\in S$, and suppose $q_{n_i}$ has been defined. As there exists infinitely many rationals in $S$ above $q_{n_i}$, there exists $n_{i+1}>n_i$ such that $q_{n_{i+1}}> q_{n_i}$ and $q_{n_{i+1}}\in S$. This completes the construction. Now, $T$ is an infinite $f$-homogeneous set with color $0$, and hence a solution to $f$ as an instance of $\mathrm{ER}^2$.

 \case{3}{$c_1 = 0$ and $c_0 = 0$.} Symmetric to Case 2.
\end{proof}



However, $\mathrm{ER}^2$ admits cone avoidance.
As a warm-up before the proof of this result, we prove the following:
\begin{lemma}\label{le:ER-construction-example}
  Let $f_J:[\cantor]^2\to2$ be such that $f_J(\sigma,\tau)=1$ iff $\emptyset'[|\sigma|]\uh|\sigma\meet\tau|=\emptyset'[|\tau|]\uh|\sigma\meet\tau|$. Then, there exists two computable infinite sets $X_0$ and $X_1$ such that $[X_i]^2$ is monochromatic of color $i$ for $f_J$.
\end{lemma}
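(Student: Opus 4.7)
The plan is to handle the two colors separately, by rather different constructions that both exploit the specific form of $f_J$.

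For $X_1$, I will take $X_1=\{\emptyset'[s]\uh s : s\geq 1\}$, the stagewise approximations to $\emptyset'$ each truncated to its stage. This set is uniformly computable in $s$ and infinite since its elements all have distinct lengths. For $\sigma=\emptyset'[s]\uh s$ and $\tau=\emptyset'[t]\uh t$ in $X_1$ with $s<t$, let $\ell=|\sigma\meet\tau|\leq s$. By definition of the meet, $\emptyset'[s]\uh\ell=\sigma\uh\ell=\tau\uh\ell=\emptyset'[t]\uh\ell$, which is exactly $\emptyset'[|\sigma|]\uh|\sigma\meet\tau|=\emptyset'[|\tau|]\uh|\sigma\meet\tau|$, so $f_J(\sigma,\tau)=1$.

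For $X_0$, I will take $X_0=\{0^{n_i}1:i\in\omega\}$ for a computable, strictly increasing sequence of positive integers $n_0<n_1<\cdots$ to be constructed. For $i<j$ one has $\sigma_i\meet\sigma_j=0^{n_i}$, so $|\sigma_i\meet\sigma_j|=n_i$, $|\sigma_i|=n_i+1$, $|\sigma_j|=n_j+1$, and the coloring condition $f_J(\sigma_i,\sigma_j)=0$ reduces to $\emptyset'[n_i+1]\uh n_i\neq\emptyset'[n_j+1]\uh n_i$. The strategy is to arrange, for each $i$, a witness element $x_i<n_i$ that enters $\emptyset'$ at some stage $s_i$ with $n_i+1<s_i\leq n_{i+1}+1$; then $x_i\in\emptyset'[n_j+1]\setminus\emptyset'[n_i+1]$ for every $j>i$, so the two approximations must disagree on $[0,n_i)$.

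The main obstacle is ensuring the construction does not get stuck, which reduces to showing the set $S=\{n\geq 1:\emptyset'[n+1]\uh n\neq\emptyset'\uh n\}$ is infinite. Indeed, if $S$ were bounded by some $N$, then for every $k$ we would have $\emptyset'(k)=\emptyset'[k+N+2](k)$, making $\emptyset'$ computable, a contradiction. The set $S$ is c.e., since enumerating a stage $t>n+1$ that witnesses $\emptyset'[t]\uh n\neq\emptyset'[n+1]\uh n$ simultaneously yields a witness element $x<n$ and its entry stage $s\leq t$. The sequence $(n_i)$ is then defined recursively: at step $i$, search the c.e.\ enumeration of $S$ for the first $n$ exceeding $s_{i-1}-1$ (any $n\in S$ suffices at step $0$), set $n_i=n$, and extract $x_i,s_i$ from the same enumeration. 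Infinitude of $S$ makes every search terminate, the resulting sequence is computable and strictly increasing (since $s_i>n_i+1$ forces $n_{i+1}\geq s_i-1>n_i$), and by construction $n_{i+1}+1\geq s_i$, which is exactly the inequality needed above.
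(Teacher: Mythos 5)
Your proof is correct and follows essentially the same route as the paper's: the identical set $\{\emptyset'[s]\uh s\}$ for color $1$, and an antichain $\{0^{n_i}1\}$ for color $0$ built by recursively locating stages where the approximation to $\emptyset'$ has not yet settled, using non-computability of $\emptyset'$ to show the search never stalls. Your version merely makes explicit the witness elements, entry stages, and the c.e.-ness of the set of "unsettled" lengths, which the paper leaves implicit.
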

\begin{proof}
  First, we do the construction for $i=0$. The set $X_0$ is defined as $\{0^{n_i}1:i\in\om\}$ for an increasing sequence $(n_i)_{i\in\om}$ verifying that $\emptyset'[n_i+1]\uh n_i\neq \emptyset'[n_{i+1}+1]\uh n_i$. Suppose that $n_i$ is defined. Then $n_{i+1}$ is the first integer $n>n_i$ found such that $\emptyset'[n+1]\uh n\neq \emptyset'\uh n$ and $\emptyset'[n_i+1]\uh n_i\neq \emptyset'[n+1]\uh n_i$, which must exists as otherwise $\emptyset'$ would be computable. Then, $[X_0]^2$ is monochromatic of color 0 by construction.

  For the other case, define $X_1=\{\emptyset'[n]\uh n:n\in\om\}$, we claim that $[X_1]^2$ is monochromatic for $f_J$ of color 1. Let $\sigma_0,\sigma_1\in X_1$ and for $i<2$, the length $n_i=|\sigma_i|$ is such that $\sigma_i=\emptyset'[n_i]\uh n_i$. We have $f_J(\sigma_0,\sigma_1)$ iff $\emptyset'[|\sigma_0|]\uh|\sigma_0\meet\sigma_1|=\emptyset'[|\sigma_1|]\uh|\sigma_0\meet\sigma_1|$, which we claim is true. Indeed, as $|\sigma_0|>|\sigma_0\meet\sigma_1|$, we have
  \begin{equation*}
    \begin{aligned}
      \emptyset'[|\sigma_0|]\uh|\sigma_0\meet\sigma_1|  & = (\emptyset'[n_0]\uh n_0)\uh|\sigma_0\meet\sigma_1| \\
       & = \sigma_0\uh|\sigma_0\meet\sigma_1|
    \end{aligned}
  \end{equation*}
  and as $|\sigma_1|>|\sigma_0\meet\sigma_1|$:
  \begin{equation*}
    \begin{aligned}
      \emptyset'[|\sigma_1|]\uh|\sigma_0\meet\sigma_1|  & = (\emptyset'[n_1]\uh n_1)\uh|\sigma_0\meet\sigma_1| \\
       & = \sigma_1\uh|\sigma_0\meet\sigma_1|.
    \end{aligned}
  \end{equation*}
  By definition of the meet operator, $\sigma_0\uh|\sigma_0\meet\sigma_1|=\sigma_1\uh|\sigma_0\meet\sigma_1|$, therefore $f_J(\sigma_0,\sigma_1)=1$ and $[X_1]^2$ is monochromatic of color 1.
\end{proof}

\begin{theorem}
  The statement $\mathrm{ER}^2$ admits cone avoidance.
\end{theorem}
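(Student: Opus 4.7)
The plan is to translate the instance into the tree setting where all of the machinery developed in the monograph becomes available. Using the computable isomorphism between $(\mathbb{Q},<)$ and $(\cantor,<_\Qb)$ from Definition \ref{def:leq-Q}, regard a $Z$-computable instance $f:[\mathbb{Q}]^2\to 2$ as a coloring $f:[\cantor]^2\to 2$. Fix $C\not\leq_T Z$. A solution to the original $\mathrm{ER}^2$-instance will either be an infinite antichain in $T$ (yielding an infinite $f$-homogeneous set of color $0$) or an antichain in $T$ of order type $\Qb$ under $<_\Qb$ (yielding a $\Qb$-ordered $f$-homogeneous set of color $1$).

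First I would normalize $f$ by applying \Cref{thm:pmtt2-level-homogeneous-strong-cone-avoidance} (strong cone avoidance of level-homogeneous $\PMT{2}{}$) to $T_0=\cantor$ and $f$. This produces an infinite strong subtree $T\subseteq\cantor$ with $C\not\leq_T T\oplus Z$ on which $f$ is level-homogeneous. Level-homogeneity implies that for $\sigma,\tau\in T$ with $|\sigma|<|\tau|$, the value $f(\sigma,\tau)$ depends only on the triple of levels $(|\sigma\meet\tau|,|\sigma|,|\tau|)$ of $T$ and on which of the two Joyce types of \Cref{cor:strong-devlin-tight-joyce-orders} the pair realizes (the shorter node lex-below, or lex-above, the longer). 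This yields two $T\oplus Z$-computable auxiliary colorings $g_L,g_R:[\omega]^2\to 2$ on the level-set of $T$.

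Next I would carry out a case analysis modelled on the warm-up \Cref{le:ER-construction-example}, but inside a forcing notion that preserves cone avoidance. The two basic combinatorial moves are exactly those of the warm-up: either we witness infinitely many levels $n_0<n_1<\cdots$ such that along some embedding-type pattern the color stabilizes at $0$ (so a construction analogous to $X_0=\{0^{n_i}1\}$ produces an infinite $0$-homogeneous antichain in $T$), or for every embedding type the color stabilizes at $1$ in the limit (so an $X_1$-style construction, built along a $<_\Qb$-dense branching pattern, produces a $\Qb$-ordered $1$-homogeneous antichain in $T$). The forcing conditions will be pairs $(F,U)$ with $F$ a finite partial solution of one of the two prescribed shapes, $U\subseteq T$ an infinite strong subforest reservoir with $C\not\leq_T U\oplus Z$, and an extension relation in the spirit of \Cref{def:product-tree-extension}. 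A density lemma analogous to \Cref{lem:mtt1-sca-forcing-requirement} will show that for every Turing functional $\Gamma$ one can extend below any condition to force $\Gamma^{G\oplus Z}\neq C$: the forcing question is $\Sigma^{0,U\oplus Z}_1$, and if no diagonalizing extension exists the cone-avoidance basis theorem applied to the corresponding $\Pi^{0,U\oplus Z}_1$ class would $U\oplus Z$-compute $C$, contradicting cone avoidance of the reservoir.

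The main obstacle is the ``commitment'' step: at each forcing stage we do not yet know whether the generic solution will be of $\omega$-type (color $0$) or $\Qb$-type (color $1$), and we must simultaneously keep both possibilities alive while still forcing diagonalization. I would handle this by a compound-condition device analogous to the one in \Cref{sec:sca_mt1}: conditions carry both a tentative $0$-stem $F^{(0)}$ and a family $\{F^{(1)}_\pi\}$ of tentative $\Qb$-stems indexed by the possible level-patterns of $<_\Qb$-types, and at each step one either diagonalizes against $\Gamma$ on one of these stems or trims the reservoir to exclude a color at the current level, using $(\forall k)\PMT{2}{k,2}$-style thinning inside the reservoir to preserve cone avoidance. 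A generic filter will then commit to exactly one of the two solution types while satisfying all diagonalization requirements, completing the proof.
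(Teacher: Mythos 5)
There is a genuine gap. Your proposal reproduces the outer shell of the paper's argument (pass to $\cantor$, normalize the coloring, then force while diagonalizing against $C$), but it omits the device that actually makes the case analysis work: the construction of an auxiliary set $G$ with $C\not\leq_T Z\oplus G$ but $C\leq_T(Z\oplus G)'$, and the comparison of $f$ against the relativized Jockusch coloring $f_J^G$, defined by $f_J^G(\sigma,\tau)=1$ iff $(Z\oplus G)'[|\sigma|]\uh|\sigma\meet\tau|=(Z\oplus G)'[|\tau|]\uh|\sigma\meet\tau|$. The paper applies cone avoidance of $\DT{2}{<\infty,4}$ (\Cref{cor:cone-avoidance-joyce-devlin-4-colors}) to the product of $f$, $f_J^G$ and the embedding-type coloring $f_{<_\Qb}$, and then reasons by cases on which four colors survive. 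The role of $f_J^G$ is to supply a reference coloring whose ``large'' color class provably computes $(Z\oplus G)'\geq_T C$ on any dense suborder (by the argument of \Cref{th:lower-bound-devlin}), so that cone avoidance of the Devlin solution forces $f$ to align with $f_J^G$, with its complement, or with a constant, in one of a small number of configurations, each admitting an explicit cone-avoiding construction (the two halves of \Cref{le:ER-construction-example}, the second carried out by forcing). Your dichotomy --- ``either the color stabilizes at $0$ along some level pattern, or it stabilizes at $1$ for every embedding type'' --- is neither well-defined nor exhaustive for a general level-homogeneous coloring of triples of levels, and without the reference coloring there is no mechanism to exclude the configuration in which neither an infinite $0$-homogeneous set nor a $\Qb$-ordered $1$-homogeneous set can be extracted from the data you have retained. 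For instance, for $f=1-f_J$ the only available solution is an infinite set all of whose pairs have ``agreeing approximations,'' and building such a set cone-avoidingly is exactly the forcing of Case~3 of the paper's proof, which has no analogue in your sketch.

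A secondary but consequential error: after normalization a pair of strings determines \emph{three} levels (the meet and the two endpoints), so the level-homogenization relevant to pairs of rationals concerns strong subtrees of height $3$, not height $2$; your auxiliary colorings live on $[\omega]^3$, not $[\omega]^2$. This is precisely why the paper routes the argument through $(\forall k)\PMT{3}{k,2}$ (\Cref{thm:pmtt3k2-cone-avoidance}) and $\DT{2}{<\infty,4}$ rather than through \Cref{thm:pmtt2-level-homogeneous-strong-cone-avoidance} alone. Finally, your compound-condition device for keeping both solution shapes alive is unnecessary once the $G$-trick is in place: the case analysis on the surviving colors determines the shape of the eventual solution \emph{before} the final forcing begins, so each forcing notion only ever builds one type of object.
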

\begin{proof}
  \newcommand\ER{\mathrm{ER}}
  \newcommand\femb{f_{<_\Qb}}
  \newcommand\iemb{{i_{<_\Qb}}}
  \renewcommand\ltlex{<_\Qb}
  Let $Z$, and $C$ with $C\not\leq_T Z$. Let $f: [\cantor]^2\to 2$ be a $Z$-computable coloring, seen as an instance of $\ER^2$ as $(\Qb,<)$ and $(\cantor,\ltlex)$ are computably isomorphic. Define $i_\infty=0$ and $i_\Qb=1$, so that the goal is to find either an infinite set homogeneous for color $i_\infty$, or a set of order-type $\Qb$ homogeneous for color $i_\Qb$.

  The proof goes as follows: first, we build a set $G$ such that $C\not\leq_TZ\oplus G$ but $C\leq_T(Z\oplus G)'$. Then, we apply cone avoidance of $\DT2{<\infty, 4}$ to the product of three colorings: the initial instance of $\ER^2$, the Jockush coloring relativized to $Z\oplus G$, and the coloring witnessing the fact that at least two colors must remain. Finally, we reason depending on which are the four remaining colors, with the two main constructions being linked with the two constructions of \cref{le:ER-construction-example}. Let us start with the existence of $G$. 

  \begin{claim}
    There exists $G$ such that  $C\not\leq_TZ\oplus G$ but $C\leq_T(Z\oplus G)'$.
  \end{claim}
  \begin{proof}
  Define a forcing, whose conditions are the tuples $(p,n)$ where $p:\om\times\om\to 2$ has finite domain, and $n$ is an integer. A condition $(q,m)$ extends a condition $(p,n)$ if $q\supset p$, and for every $(x,y)\in\dom(q)\setminus\dom(p)$, if $x<n$ then $q(x,y)=C(x)$. It is clear that if $G$ is generic enough for this forcing, then $G'\geq_T C$: Indeed, for every $i$, the set of conditions $\{(p,n):n\geq i\}$ is dense. Therefore, $\lim_{s\to\infty}G(i,s)$ is always defined with value $C(i)$.

  It remains to show that $C\not\leq_TZ\oplus G$. We prove that for every $e$, the set of conditions $(p,n)$ for which there is an $i$ such that $\Phi_e^{Z\oplus p}(i)\downarrow\neq C(i)$ or there is an $i$ such that for all $(q,m)$ extending $(p,n)$, $\Phi_e^{Z\oplus q}(i)\uparrow$, is dense. Indeed, fix $(p_0,n_0)$. If there exists $(p,n)\leq (p_0,n_0)$ and $i$ such that $\Phi^{Z\oplus p}(i)\downarrow\neq C(i)$, then $(p,n_0)$ extends $(p_0,n_0)$ and forces $\Phi_e^{Z\oplus G}$ not to compute $C$. If there is an $i$ such that no $(q,n)\leq (p_0,n_0)$ are such that $\Phi_e^{Z\oplus q}(i)\downarrow$, then already $(p_0, n_0)$ forces partiality of $\Phi_e^{Z\oplus G}$. If none of the two previous cases happen, then $Z$ computes $C$: to know the value of $C(i)$, guess the first $n_0$ values of $C$, using these find a $(q,n)\leq (p_0, n_0)$ such that $\Phi_e^{Z\oplus g}(i)\downarrow$, we have $\Phi_e^{Z\oplus g}(i)=C(i)$. This contradicts that $C\not\leq_T Z$.
\end{proof}
Let $f_J^G$ be the coloring defined in the proof of \Cref{th:lower-bound-devlin} relativized to $Z\oplus G$, that is, $f_J^G(\sigma,\tau)=1$ iff $(Z\oplus G)'[|\sigma|]\uh |\sigma\meet\tau| = (Z\oplus G)'[|\tau|]\uh |\sigma\meet\tau|$. If $|\tau|>|\sigma|$, we can see color 1 for $f_J^G$ as saying: $|\tau|$ witness that the interval from $|\sigma\meet\tau|$ to $|\sigma|$ is ``large'' (relatively to $Z\oplus G$). To reflect this, we define $i_s=0$ the ``small'' color, and $i_\ell=1$ the ``large'' color. As in \Cref{th:lower-bound-devlin}, let also $\femb(\sigma,\tau)=1$ iff $(\sigma\ltlex\tau\iff|\sigma|<|\tau|)$, note that $\femb$ can be seen as the coloring which outputs the finite Joyce structure of $\{\sigma,\tau\}$. For the colors of $\femb$, we will use the variable $\iemb$.

Finally, define $g:[\cantor]^2\to(2\times2\times2)$ by
\[
g(\sigma,\tau)=(f(\sigma,\tau),f_J^G(\sigma,\tau), \femb(\sigma,\tau)).\]
We apply cone avoidance of $\DT2{<\infty,4}$, \Cref{{cor:cone-avoidance-joyce-devlin-4-colors}}, to the coloring $g$ to get a set $S\subseteq\cantor$ such that $S\oplus G\oplus Z\not\geq_T C$ and $(S,\ltlex)$ is a dense linear order with no endpoints, and such that $g$ takes at most 4 colors on $[S]^2$. 

  Recall that none of the colors of $\femb$ can be avoided in a subset of $\cantor$ of order-type $\Qb$, therefore the two sets $S_\iemb=\{(\sigma,\tau):\femb(\sigma,\tau)=\iemb\}$ for $\iemb<2$ must be non empty; and the sum of the number of colors taken by $g$ on them is at most 4. Start by supposing that for each $\iemb<2$, $g$ takes at most 2 colors on $S_\iemb$.

  We reason depending on the following cases:
 
  \case{1}{There exists $\iemb<2$, such that $S_\iemb$ is monochromatic for $f_J^G$.}
 
  \case{2}{There exists $\iemb<2$, such that on $S_\iemb$, $f=f_J^G$.}
 
  \case{3}{There exists $\iemb<2$, such that on $S_\iemb$, either $f$ is homogeneous of color $i_\infty$, or $f=1-f_J^G$.}
 
  \case{4}{For all $\iemb<2$, $S_\iemb$ is monochromatic of color $i_\Qb$ for $f$.}
 
 \medskip
  We now prove the four cases in three different construction, Case~4 being trivial. The first construction is the one from \Cref{th:lower-bound-devlin}, and shows that Case~1 cannot happen. The second and third construction correspond to the two constructions of \Cref{le:ER-construction-example}. To separate them more clearly, the proof is divided in claims.
  \begin{claim}
    In Case 1, $S\oplus Z\oplus G$ computes $(Z\oplus G)'$.
  \end{claim}
  \begin{proof}
    The first paragraph after the proof of Fact \ref{fact:dlo-blossom2} asserts that the function $f^G_J$ must be monochromatic for color $i_\ell$. The second paragraph asserts that in this case, $S\oplus G\oplus Z$ computes $(Z\oplus G)'$.
  \end{proof}
  By our choice of $G$, $(Z\oplus G)'$ computes $C$, and thus $S\oplus Z\oplus G\geq_TC$, a contradiction with our choice of $S$, so Case 1 cannot happen.
  \begin{claim}
    In Case 2, there exists a set $\hat S\subseteq S$ computable in $S\oplus Z\oplus G$, such that $[\hat S]^2\subseteq S_\iemb$ is an infinite subset monochromatic for color $i_\infty$.
  \end{claim}
  \begin{proof}

    Note that by the fact that we are in Case 2, a set $\hat S$ with $[\hat S]^2\subseteq S_0$ is such that $[\hat S]^2$ is monochromatic of color $i_\infty$ for $f$ if and only if it is monochromatic of color $i_s$ for $f_J^G$. The following construction corresponds to the first case of \Cref{le:ER-construction-example}. We computably in $S\oplus G\oplus Z$ define a sequence $(F_n, A_n)$, where $F_n$ is a finite approximation to $\hat S$ and $A_n$ a reservoir for future addition to $F_n$, such that for all $n\in\om$ the following holds:
    \begin{enumerate}
    \item $F_n$ is a finite set such that $[F_n]^2\subseteq S_\iemb$;
    \item $A_n\subseteq S$ is of order-type $\Qb$;
    \item $F_n\subsetneq F_{n+1}$ and $A_{n+1}\subseteq A_n$;
    \item for all $\sigma\in F_n$ and all $\tau\in F_n\cup A_n$, $(\sigma,\tau)\in S_\iemb$ and $f_J^G(\sigma,\tau)=i_s$;
    \item for all $\sigma\in F_{n+1}\setminus F_n$, $\sigma\in A_n$.


    \end{enumerate}
Suppose $F_n,A_n$ are defined. If there is no $\sigma, \tau_0, \tau_1\in A_n$ with $(\sigma, \tau_i)\in S_\iemb$ for $i<2$ such that $(G\oplus Z)'[|\sigma|]\uh \ell\neq (G\oplus Z)'\uh \ell$ where $\ell=\min|\sigma\meet \tau_0|,|\sigma\meet \tau_1|$, then $A_n\oplus G\oplus Z$ would compute $(G\oplus Z)'\geq_T C$. 
Define $F_{n+1}=F_n\cup\{\sigma\}$ and \[A_{n+1}=\{\tau\in A_n:(G\oplus Z)'\uh \ell=(G\oplus Z)'[|\tau|]\uh \ell\land \tau_0\ltlex \tau\ltlex\tau_1\}.\]
By construction, all items are satisfied. Define $\hat S=\bigcup_n F_n$. By Item 3, $\hat S$ is infinite, and by Item 4 and 5, $[\hat S]^2$ is monochromatic of color $i_s$ for $f_J^G$, and thus monochromatic of color $i_\infty$ for $f$.
  \end{proof}
  \begin{claim}
    In Case 3, there exists $\hat S\subseteq S$  such that $\hat S\oplus Z\not\geq_T C$ and $[\hat S]^2\subseteq S_i$ is an infinite set monochromatic of color $i_\infty$ for $f$.
  \end{claim}
  \begin{proof}
    If $f$ is homogeneous of color $i_\infty$ on $S_\iemb$ then $S_\iemb$ is already a witness of the claim. Otherwise, $f(\sigma,\tau)=i_\infty$ if and only if $f_J^X(\sigma, \tau)=i_\ell$, so all we need is to find a subset $\hat S\subseteq S$ such that $[\hat S]^2\subseteq S_\iemb$ and $[\hat S]^2$ is monochromatic of color $i_\ell$ for $f_J^G$. 

         The following construction is roughly analogous to the second case in the proof of \Cref{le:ER-construction-example}, however the number of time we can take a lower meet to avoid having color $i_s$ is not anymore equal to the number of times we might have to do it. This prevents us from doing the construction computably, however we can still make it cone avoiding. We build $\hat S$ using the following forcing:
    \begin{definition}
      A \emph{condition} is a couple $(F,D)$ such that $F$ is a finite set with $[F]^2\subseteq S_{\iemb}$, and $D\subseteq S$ is computable in $S$ and of order-type $\Qb$, and such that: for all $\sigma\in F$ and all $\tau\in F\cup D$, $(\sigma,\tau)\in S_\iemb$ and $f_J^G(\sigma,\tau)=i_\ell$.

      A condition $(F_1,D_1)$ \emph{extends} a condition $(F_0,D_0)$ if $D_1\subseteq D_0$ and for all $\sigma\in F_1\setminus F_0$, $\sigma\in D_0$.  We write $(F_1,D_1)\leq(F_0, D_0)$.
    \end{definition}
    If $\Fc$ is a filter for this forcing, then we let $S_\Fc=\bigcup\{ F: (\exists D)[(F,D)\in\Fc]\}$. We have that $[S_\Fc]^2\subseteq S_\iemb$ is monochromatic of color $i_\ell$ for $f_j^G$. So we need to find a filter ensuring that $S_\Fc$ is infinite and $S_\Fc\oplus Z$ does not compute $C$.
    \begin{definition}
      Let $(F,D)$ be a condition and $\varphi$ be a $\Delta^{0,Z}_0$ formula with a free set parameter $\hat S$. We say that:
      \begin{enumerate}
      \item $(F,D)\Vdash (\exists x)\varphi(\hat S, x)$ if $\varphi(F,x)$ holds for some $x\in\om$; 
      \item $(F,D)\Vdash (\forall x)\varphi(\hat S, x)$ if $\varphi(F\cup E,x)$ holds for every $x$, and for every $E\subseteq D$ with $[F\cup E]^2\subseteq S_\iemb$ monochromatic of color $i_\ell$ for $f_J^G$. 
      \end{enumerate}
    \end{definition}
    
    We claim that for every Turing functional, for every condition $(F, D)$, there is a condition $(F',D')\leq(F,D)$ such that $(F',D')\Vdash\Gamma^{\hat S\oplus Z}\neq C$. Let $D_0\ltlex D_1$ be two subsets of $D$ computable in $S$ of order type $\Qb$: For instance, pick $x_0\ltlex x_1\ltlex x_2$ in $D$, and define $D_0=\{x\in D:x_0\ltlex x\ltlex x_1\}$ and $D_1=\{x\in D:x_1\ltlex x\ltlex x_2\}$. Fix $e\in\om$.

    Define the following c.e. set, where by ``$E$ is compatible with $F$'' we mean that for all $\sigma,\tau\in F\cup E$, $f_J^G(\sigma,\tau)=i_\ell$ and $[F\cup E]^2\subseteq S_{\iemb}$: \[W=\{\langle x,i\rangle: (\exists E\subseteq_{\mathrm{fin}} D_{1-\iemb})\text{ compatible with $F$})[\Phi_e^{F\cup E\oplus G\oplus Z}(x)\downarrow=i]\}.\]
    We consider the three following cases.
    
    \case{1}{There exists $x\in\om$ such that
      $\langle x, 1-C(x)\rangle\in W$.} Let $E$ be a witness of
      this. The condition $(F\cup E, \hat D_\iemb)$ forces $\Phi_e$ to
      be different from $C$, where $\hat D_\iemb$ is $D_\iemb$ with a
      finite number of elements removed, so that for all $\sigma\in E$ and all $\tau\in \hat D_\iemb$,
      $(G\oplus Z)'\uh|\sigma| = (G\oplus Z)'[|\tau|]\uh|\sigma|$.

    \case{2}{There exists $x\in\om$ such that for each $i\in 2$, $\langle x,i\rangle\not\in W$.} The condition $(F,D_{1-\iemb})$ already forces divergence of $\Phi_e$.

    \case{3}{Otherwise.} Thus, for every $x$ there is an $i$ such that $\langle x,i\rangle\in W$ and $\langle x,i\rangle\in W\implies i=C(x)$. But as $W$ is c.e, this implies that $C$ is computable, a contradiction.
    


	\medskip
      Finally, any sufficiently generic for this forcing is infinite: indeed, consider the functional $\Gamma$ which halts if and only if its oracle has at least $n$ elements. It is impossible to have a condition forcing $\Gamma$ to halt, so any sufficiently generic has at least $n$ elements, and so for every $n$. 
    \end{proof}
    By the previous claims, Case 1 cannot happen, and Cases 2 and 3 validate the theorem with an infinite and cone avoiding set homogeneous for color $i_\infty$. In Case 4, we are also done, as $S$ is a homogeneous set for color $i_\Qb$, of order-type $i_\Qb$.

    In the making of the four cases, we supposed that each $S_{i_{w_\Qb}}$ takes at most two colors. It remains the case when for some $\iemb$, $S_\iemb$ takes only one color by $g$, and $S_{1-\iemb}$ takes three colors by $g$. But then, Case 1 holds for $\iemb$, a contradiction.
\end{proof}


\chapter{The Rado graph theorem}\label{sec:radomain}
\section{A big Ramsey structure for the Rado graph}
\index{Rado Graph theorem}
\begin{definition}\index{Joyce!graph}\index{Joyce graph}\index{graph!Joyce}
  A \emph{Joyce graph} is a graph $\mathcal{G} = (G,E)$ together with an order $<$ on $G$ and a symmetric function $\meetlevel\cdot\cdot:G^2\to\Nb$ such that $(G,<,\meetlevel\cdot\cdot)$ is a Joyce order and
  \begin{itemize}
  \item[\Jo4] for all $x,y,z\in G$, $\meetlevel xx < \meetlevel yz \implies (xEy\iff xEz)$.
  \end{itemize}
\end{definition}

As in the previous chapter, the function $\meetlevel\cdot\cdot$ has to be taken as the height of a meet. The axiom \Jo4 states that if two elements have a meet above the height of a third element, they are both linked to it or none are linked to it. In this sense, the axiom states some compatibility between the $\meetlevel\cdot\cdot$ operator and the edge relation. However, compared to the axiom \Jo2 which states a compatibility between the order and the $\meetlevel\cdot\cdot$ operator, the crucial height is the one of the element and not of the meet. In other words, the relevant height to decide whether $x<y$ is at the level of the meet, while the relevant height to decide the edge relation between $x$ and $y$ is at the level of $x$ or $y$.



\begin{definition}\index{Joyce!Rado graph}\index{Rado graph!Joyce}\index{graph!Joyce Rado}
  A \emph{Joyce Rado graph} is a Joyce graph $(G,E,<, \meetlevel\cdot\cdot)$ such that $(G,E)$ is a Rado graph.
\end{definition}

\index{$\Epn$}In what follows, define the relation $\Epn$ on strings of different length by $\sigma\Epn\tau$ if and only if $\sigma(|\tau|)=1$ and $|\tau|<|\sigma|$, or $\tau(|\sigma|)=1$ and $|\sigma|<|\tau|$.

\begin{theorem}[$\RCA_0$]\label{thm:joyce-rado-graph-exists}
  There exists a Joyce Rado graph.
\end{theorem}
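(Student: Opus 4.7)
The plan is to represent $G$ concretely as an antichain in $\cantor$, adapting the construction of the DLO Joyce order from \Cref{thm:dlo-joyce-order-exists} so as to simultaneously realize the Rado extension property. I will inductively build a sequence of pairwise incomparable binary strings $\sigma_0, \sigma_1, \ldots$ of strictly increasing lengths and then set $G = \{\sigma_n : n \in \omega\}$, $\sigma E \tau \iff \sigma \Epn \tau$, $\sigma < \tau \iff \sigma \ltlex \tau$, and $\meetlevel{\sigma}{\tau} = v(\sigma \meet \tau)$, where $v \colon 2^{<\omega} \to \omega$ is any fixed injection that strictly preserves the length ordering, exactly as in \Cref{thm:dlo-joyce-order-exists}. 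Throughout the construction I maintain two bookkeeping invariants: (a) the meet-lengths $|\sigma_i \meet \sigma_j|$ for $i \neq j$ are pairwise distinct, and (b) no element length $|\sigma_n|$ ever equals any meet-length $|\sigma_i \meet \sigma_j|$.

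The construction itself is a standard diagonalization. First I fix a computable enumeration $(F_0^n, F_1^n)_{n \in \omega}$ of all pairs of disjoint finite subsets of $\omega$; each pair codes a partial instance of the Rado extension property relative to $G_n = \{\sigma_0, \ldots, \sigma_{n-1}\}$. At stage $n$ I choose $\sigma_n$ long enough to: (i) be incomparable with every $\sigma_i$ for $i < n$; (ii) satisfy $\sigma_n(|\sigma_i|) = 1$ whenever $i \in F_0^n \cap n$ and $\sigma_n(|\sigma_i|) = 0$ whenever $i \in F_1^n \cap n$; and (iii) respect invariants (a) and (b), by placing the new branching points at previously unused lengths and taking $|\sigma_n|$ itself outside the finite set of already forbidden values. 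Each requirement is a finite constraint on finitely many bits or on the single length $|\sigma_n|$, and they are trivially mutually consistent, so $\sigma_n$ can be produced primitive recursively and the whole construction is formalizable in $\RCA_0$.

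It remains to verify the axioms. For \Jo1, \Jo2, and \Jo3, the argument is essentially verbatim the one in \Cref{thm:dlo-joyce-order-exists}: it uses only that $G$ is an antichain and that $v$ is a strictly length-preserving injection, both of which are built in. That $(G, E)$ is a Rado graph follows from the enumeration, since every disjoint pair of finite subsets of $G$ eventually appears as some $(F_0^n, F_1^n)$ and is witnessed by $\sigma_n$. The only axiom requiring a new argument is \Jo4. Suppose $\meetlevel{x}{x} < \meetlevel{y}{z}$, i.e., $v(x) < v(y \meet z)$. Since $v$ is strictly length-preserving, this forces $|x| \leq |y \meet z|$, and invariant (b) rules out equality, so $|x| < |y \meet z| \leq \min(|y|, |z|)$. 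Hence $y(|x|) = z(|x|)$, and unfolding $\Epn$ gives $x E y \iff y(|x|) = 1 \iff z(|x|) = 1 \iff x E z$, which is \Jo4. The hard part of the proof is purely the bookkeeping of simultaneously enforcing (a), (b), the antichain property, and the Rado bit requirements at each stage, which is handled by reserving a sufficiently large fresh block of bits for each new $\sigma_n$.
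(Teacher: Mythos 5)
Your overall architecture is the same as the paper's: realize $G$ as an antichain in $\cantor$ with $E=\Epn$, $<\;=\;\ltlex$, and $\meetlevel{\sigma}{\tau}=v(\sigma\meet\tau)$ for a length-monotone injection $v$; get \Jo{1}--\Jo{3} exactly as in \Cref{thm:dlo-joyce-order-exists}; and get \Jo{4} by segregating element lengths from meet lengths. The paper packages this differently (an explicit bit-tripling map $g$ applied to a cofinal set $S$ of strings of distinct lengths, so that element lengths are $\equiv 2 \pmod 3$ while meets occur at positions $\equiv 0,1\pmod 3$, and the Rado property follows from cofinality), whereas you do a stage-by-stage diagonalization. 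Two remarks before the main issue. First, your invariant (a) as literally stated is unachievable: for any three pairwise incomparable strings, the two shortest of the three pairwise meets coincide, so the meet-\emph{lengths} over all pairs can never be pairwise distinct once $n\geq 3$. Fortunately (a) is never used in your verification, so this is only a misstatement. Second, your \Jo{4} argument is correct given invariant (b) (after disposing of the trivial case $y=z$).

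The genuine gap is the claim that the stage-$n$ requirements are ``trivially mutually consistent.'' The lengths $|\sigma_n\meet\sigma_i|$ are not parameters you set directly; they are the first positions where $\sigma_n$ disagrees with $\sigma_i$. The Rado requirements force the bits of $\sigma_n$ at \emph{exactly} the element-length positions $|\sigma_i|$, which are precisely the positions where, by invariant (b), no new meet may occur. Concretely, if $\sigma_n\uh|\sigma_i|=\sigma_j\uh|\sigma_i|$ for some $j$ but the prescribed Rado bit satisfies $\sigma_n(|\sigma_i|)\neq\sigma_j(|\sigma_i|)$, then $|\sigma_n\meet\sigma_j|=|\sigma_i|$, invariant (b) fails, and your proof of \Jo{4} breaks for the triple $(\sigma_i,\sigma_n,\sigma_j)$. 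You cannot escape this by ``agreeing with everything on a long initial segment and branching at fresh positions,'' because the previous strings already disagree with one another below their meets, so $\sigma_n$ necessarily inherits, on any initial segment it copies from one $\sigma_j$, bits that determine where it splits from the others --- and those splits may land on element lengths. This is exactly the tension the paper's map $g$ is designed to resolve: by reserving disjoint residue classes for ``edge bits'' and for ``terminators,'' branchings provably never occur at element lengths. To repair your construction you must build an analogous syntactic segregation into the recursion (e.g., only place Rado/adjacency bits and string endpoints in one reserved class of positions, only allow first disagreements in a disjoint reserved class, padding all other positions with $0$) and verify that this invariant, rather than consistency of ``finitely many independent bit constraints,'' is maintained at each stage.
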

\begin{proof}
  Consider $g:\cantor\to\cantor$ to be the function such that $g(\sigma)=\tau$, where $|\tau|=3|\sigma|+2$, for all $n<|\sigma|$, $\tau(3n)=\tau(3n+1)=\tau(3n+2)=\sigma(n)$, and $\tau(3|\sigma|)=0$, $\tau(3|\sigma|+1)=1$. The image of $g$ is a antichain. 
  Fix an injective function $v: 2^{<\omega} \to \omega$ such that 
for every $\sigma, \tau \in 2^{<\omega}$, if $|\sigma| < |\tau|$ then $v(\sigma) < v(\tau)$,
and for every $\sigma, \tau \in \cantor$, define $\meetlevel{\sigma}{\tau} = v(\sigma \meet \tau)$.
Last, fix a cofinal set $S\subseteq\cantor$ such that for all $\sigma,\tau\in S$, $|\sigma|\neq|\tau|$. The claim is that $(g[S], \Epn, \ltlex, \meetlevel{\cdot}{\cdot})$ is a Joyce Rado graph.

We prove that $(g[S], \Epn, \ltlex, \meetlevel{\cdot}{\cdot})$ satisfies axioms \Jo{1}, \Jo{2}, \Jo3 and \Jo{4}.
Let $x, y, z, t \in g[S]$, not all equal, with $x \lelex y$ and $z \lelex t$.

\Jo{1}: Suppose $\meetlevel{x}{y} < \meetlevel{x}{z}$. By definition, $v(x \meet y) < v(x \meet z)$. By choice of the map $v$,  $|x \meet y| \leq |x \meet z|$, so $x \ltlex y$ iff $z \ltlex y$.

\Jo{2}: Suppose $\meetlevel{x}{y} < \meetlevel{x}{z}$. By definition, $v(x \meet y) < v(x \meet z)$. By choice of the map $v$,  $|x \meet y| \leq |x \meet z|$, so $x \meet y = z \meet y$, hence $v(x \meet y) = v(z \meet y)$.

\Jo{3}: Suppose $\meetlevel{x}{y} = \meetlevel{z}{t}$. By definition, $v(x \meet y) = v(z \meet t)$. By injectivity of the map $v$, $x \meet y = z \meet t$, so $x \meet y \prec x \meet z$ and $x \meet y \prec y \meet t$, hence $v(x \meet y) <_\Nb \min(v(x \meet z), v(y \meet t))$.

\Jo{4}: Let $x,y,z\in g[S]$. Suppose $\meetlevel{x}{x} < \meetlevel{y}{z}$. By definition, $v(x)=v(x \meet x) < v(y \meet z)$. By choice of the map $v$,  $|x| \leq |y \meet z|$, but as the length of elements of $g[S]$ and the length of proper meets of $g[S]$ are different by construction, $|x| < |y \meet z|$. Therefore $y(|x|)=z(|x|) $ so $x \Epn y$ iff $z \Epn y$.

It remains to show that $(g[S], \Epn)$ is a Rado graph. Let $F_0,F_1\subseteq g[S]$ be finite disjoint sets. As $S$ contains at most one element of each length, and is cofinal, let $\sigma$ be a string in $S$ such that $\sigma(\ell)=i$ whenever there exists $\tau\in S$ of size $\ell$ with $g(\tau)\in F_i$. By definition of $g$, $\sigma\Epn\tau$ iff $g(\sigma)\Epn g(\tau)$, so $g(\sigma)$ is linked with all of $F_1$ and none of $F_0$. Therefore, $(g[S], \Epn)$ is a Rado graph.

\end{proof}


\begin{corollary}[$\RCA_0$]\label{cor:rado-can-be-enriched}
Every Rado graph $\mathcal{G} = (G,E)$ can be ordered and equipped with a function $\meetlevel{\cdot}{\cdot}: G^2\to \Nb$ to form a Joyce Rado graph.
\end{corollary}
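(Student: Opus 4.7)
The plan is to mimic the strategy used for \Cref{cor:order-can-be-enriched} in the previous chapter. Recall that there we invoked the existence of a DLO Joyce order and the computable categoricity of dense linear orders without endpoints to transport the Joyce enrichment along an isomorphism. Here the analogous ingredients are \Cref{thm:joyce-rado-graph-exists}, which has just been established, and the well-known computable categoricity of the Rado graph, which is provable in $\RCA_0$ by a standard back-and-forth construction (since the extension axioms defining the Rado graph are $\Sigma^0_1$ and can be satisfied by effective search).

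More precisely, let $(Y, E_Y, <_Y, \meetlevel{\cdot}{\cdot}_Y)$ be the Joyce Rado graph produced by \Cref{thm:joyce-rado-graph-exists}. Given an arbitrary Rado graph $\mathcal{G}=(G,E)$, use computable categoricity to obtain an isomorphism $f \colon (G,E) \to (Y, E_Y)$. Define $x <_G y$ iff $f(x) <_Y f(y)$, and set $\meetlevel{x}{y}_G := \meetlevel{f(x)}{f(y)}_Y$. I claim $(G, E, <_G, \meetlevel{\cdot}{\cdot}_G)$ is a Joyce Rado graph.

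The verification is routine: since $f$ is a bijection respecting $E$, the order $<_G$ is a linear order, and $\meetlevel{\cdot}{\cdot}_G$ is symmetric. Each of the universal axioms \Jo1, \Jo2, \Jo3, and \Jo4 transfers along $f$: for instance, if $\meetlevel{x}{y}_G <_\Nb \meetlevel{x}{z}_G$ then by definition $\meetlevel{f(x)}{f(y)}_Y <_\Nb \meetlevel{f(x)}{f(z)}_Y$, whence by \Jo1 in $Y$ we get $f(x) <_Y f(y) \iff f(z) <_Y f(y)$, i.e., $x <_G y \iff z <_G y$; and similarly for \Jo2 and \Jo3. For \Jo4, $\meetlevel{x}{x}_G <_\Nb \meetlevel{y}{z}_G$ translates to $\meetlevel{f(x)}{f(x)}_Y <_\Nb \meetlevel{f(y)}{f(z)}_Y$, giving $f(x) E_Y f(y) \iff f(x) E_Y f(z)$, and hence $xEy \iff xEz$ since $f$ preserves the edge relation.

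The only subtlety is confirming that the back-and-forth isomorphism between two Rado graphs can be carried out in $\RCA_0$; this is standard, and indeed even computable relative to the two graphs, since the extension property of the Rado graph is decidable once a finite partial isomorphism has been fixed. No further obstacle is expected, as all the axioms defining a Joyce Rado graph are preserved under isomorphism of the enriched structure by construction.
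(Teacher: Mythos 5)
Your proposal is correct and follows exactly the same route as the paper: invoke \Cref{thm:joyce-rado-graph-exists} to get a concrete Joyce Rado graph, appeal to computable categoricity of the Rado graph to obtain an isomorphism from the given $(G,E)$, and pull back the order and $\meetlevel{\cdot}{\cdot}$ along it. The only difference is that you verify the transfer of the axioms \Jo1--\Jo4 explicitly, whereas the paper leaves this (correct and routine) check implicit.
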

\begin{proof}
Let $(X, \Epn, \ltlex, \meetlevel{\cdot}{\cdot}_X)$ be the Joyce Rado graph of \Cref{thm:joyce-rado-graph-exists}. By computable categoricity of the Rado graph, there
exists a graph isomorphism $f$ between $\mathcal{G} = (G, E)$ and $(X, \Epn)$.
Define $x < y$ for $x, y \in G$ if and only if $f(x) \ltlex f(y)$.
Also define $\meetlevel{\cdot}{\cdot}: G^2 \to \Nb$ by $\meetlevel{x}{y} = \meetlevel{f(x)}{f(y)}_X$.
Then $(G, E, <, \meetlevel{\cdot}{\cdot})$ is a Joyce Rado graph.
\end{proof}

The first-order structure that is of interest for us is the following.
\begin{definition}\index{structure!Joyce graph}\index{Joyce graph!structure}\index{big Ramsey structure!Rado graph}
  The \emph{Joyce (Rado) graph structure} of a Joyce (Rado) graph $(G,E,<, \meetlevel\cdot\cdot)$ is the structure $(G,E,<,\JRel)$ such that $(G,<,\JRel)$ is the Joyce structure of the
  Joyce order $(G,<, \meetlevel\cdot\cdot)$. 
\end{definition}
We shall prove later that Joyce Rado graphs structures have big Ramsey degree 1 for every finite Joyce graph structure.

\begin{statement}\index{statement!$\JRG^n_{k,\ell}$}
	For all $n,k,\ell \geq 1$, $\JRG^n_{k,\ell}$ is the assertion that for every Joyce Rado graph structure $\mathcal{G}$ and every coloring $f:[\mathcal{G}]^n\to k$, there exists an isomorphic substructure $\mathcal{G}'$ of $\mathcal{G}$ satisfying $|f[\mathcal{G}']^n|\leq \ell$.
\end{statement}

As every Joyce graph is in particular a Joyce order, every finite Joyce graph of size $n$ can be fully specified by a finite Joyce order and a finite graph, both of size $n$, or equivalently by a finite Joyce tree with $n$ leaves and a finite graph of size $n$. In particular, for a fixed graph $G$ of size $n$, there are at most as many Joyce graphs isomorphic to it as there are Joyce trees with $n$ leaves. On the other hand, as we shall see in \Cref{fig:graph-representation}, if a finite graph $G$ of size $n$ is neither the clique, nor the anti-clique with $n$ vertices, there are some Joyce orders of size $n$ which cannot be enriched to form a Joyce graph isomorphic to $G$.

\begin{theorem}[Joyce Rado graph theorem]
	For all $n,k \geq 1$, $\JRG^n_{k,J_n}$ holds, where $J_n$ is the number of non isomorphic Joyce graphs with $n$ elements. Moreover, this bound is tight: $\JRG^n_{k,\ell}$ does not hold for any $\ell < J_n$.
\end{theorem}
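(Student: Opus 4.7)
The plan is to mirror the proof of Corollary 5.22 (the tight Joyce Devlin bound), adapting each combinatorial ingredient from finite Joyce orders to finite Joyce graphs. First I would develop the analogues of the technical lemmas in Section 5.1 for Joyce graphs. Namely, I would define a \emph{coded Joyce Rado graph} to be a coded Joyce order $X\subseteq 2^{<\omega}$ enriched with an edge relation, and check that every countable Joyce Rado graph is isomorphic to a coded one (the construction of Theorem 5.7 goes through verbatim once one records the additional information needed to represent edges on top of the Joyce-order data). I would also prove an embedding theorem: every countable Joyce graph embeds into any countable Joyce Rado graph structure. The existence part follows from Theorem 6.3, and universality is obtained by a back-and-forth argument that refines the proof of Theorem 5.10, using axiom \Jo{4} to guarantee that edges between the image of a newly placed vertex and those already placed are determined correctly (given enough freedom in the reservoir, the Rado property of the target is exactly what allows each such choice to be realized).

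Next, I would prove the \emph{one-type} version analogous to Theorem 5.20: for every countable Joyce Rado graph structure $\Xb$ and every finite Joyce graph structure $\mathbb F$ of size $n$, the big Ramsey degree of $\mathbb F$ in $\Xb$ is $1$. To do this, I would construct a \emph{Joyce graph diagonalization} $h\colon 2^{<\omega}\to X$ mirroring Corollary 5.18: first exhibit a computable DLO Joyce Rado graph such that for every coded Joyce Rado graph $Y\subseteq 2^{<\omega}$, $(h[Y], E_h,<_X,\JRel_X)$ is isomorphic, as a Joyce graph structure, to $Y$; then transport along an embedding given by the embedding theorem. With such an $h$ in hand, the analogue of Lemma 5.19 (coded Joyce orders of size $n$ sit inside strong subtrees of height $2n-1$, and each strong subtree of height $2n-1$ contains at most one copy of $\mathbb F$) lifts to Joyce graphs without change, since the edges are determined by the lengths of the relevant meets via \Jo{4}. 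A single application of Milliken's tree theorem to the induced coloring on $\Subtree{2n-1}{2^{<\omega}}$ then yields a strong subtree $S$ on which the coloring is monochromatic, and $h[\phi[X]]$ for an appropriate $\phi\colon 2^{<\omega}\to S$ is the desired monochromatic subcopy.

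From the one-type theorem, iterating over a list $\mathbb F_0,\dots,\mathbb F_{J_n-1}$ of all non-isomorphic finite Joyce graph structures of size $n$ gives the upper bound exactly as in Corollary 5.22: one builds a descending chain $X=X_0\supseteq X_1\supseteq\dots\supseteq X_{J_n}$ of subcopies so that each copy of $\mathbb F_s$ in $X_{s+1}$ receives a fixed color $i_s$, and then $f[X_{J_n}]^n\subseteq\{i_0,\dots,i_{J_n-1}\}$. For the lower bound, define $f\colon[\Xb]^n\to J_n$ by sending $E\in[\Xb]^n$ to the index of the unique $\mathbb F_s$ to which the induced substructure of $E$ is isomorphic. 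Any subcopy $\Yb$ of $\Xb$ is again a countable Joyce Rado graph structure, so by the embedding theorem every $\mathbb F_s$ embeds into $\Yb$; hence $|f[\Yb]^n|\geq J_n$, showing that the bound $J_n$ cannot be improved.

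The main obstacle I anticipate is the embedding/universality step: the back-and-forth argument must simultaneously respect the order $<$, the Joyce level function $\meetlevel{\cdot}{\cdot}$ (which, unlike in the DLO case, interacts with both the order via \Jo{1}--\Jo{3} and the edges via \Jo{4}), and the edge relation $E$, and do so with only the Rado extension axiom and the density of intervals in the DLO reduct as tools. The rest of the argument is a direct transcription of the Devlin-style proof.
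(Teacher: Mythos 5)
Your overall architecture is exactly the paper's: pass to coded Joyce Rado graphs, prove a universality/embedding theorem for finite and countable Joyce graphs, build a Joyce graph diagonalization, note that each strong subtree of height $2n-1$ contains at most one copy of a given size-$n$ coded Joyce graph, apply Milliken's tree theorem once per isomorphism type to get big Ramsey degree $1$ for each type, iterate over the $J_n$ types for the upper bound, and use the ``color by isomorphism type'' coloring together with universality for the lower bound. All of that matches the paper's proof.

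The gap is the universality step, which you rightly flag as the main obstacle but do not resolve, and your proposed route --- a direct back-and-forth ``refining'' the DLO embedding argument of \Cref{th:4.1LaflammeDevlin}, using \Jo{4} and the Rado property --- does not go through as stated. The Rado extension axiom only controls adjacency to a finite set of already-placed vertices; it gives no control over where the new vertex sits with respect to the order and the meet-level function. In the coded representation these demands compete: realizing the prescribed meets constrains the bits of the new string at the levels of previously placed meets, realizing the prescribed edges constrains its bits at the lengths of previously placed vertices, and nothing in the axioms guarantees that the set of vertices with a given adjacency pattern meets the required cone above a given node, let alone richly enough to sustain the recursion. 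In the DLO case the recursion survives because every interval of a DLO is again a DLO; there is no free analogue of that closure property for Joyce Rado graphs. The paper supplies it by factoring through Joyce blossom graphs: \Cref{thm:blossom-to-joyce-rado} extracts a blossom graph from an arbitrary coded Joyce Rado graph via a ``largeness'' argument whose combinatorial core (\Cref{claim:blossom-to-joyce-rado-combi}) splits a large cone into two large cones differing at a prescribed level, and \Cref{thm:joyce-to-rado-blossom} then embeds any Joyce graph into a blossom graph by explicit coding, yielding \Cref{th:4.1LaflammeRado}. Moreover this step provably cannot be effective --- the paper exhibits a computable Joyce Rado graph in which every blossom-graph embedding computes $\emptyset'$, so the embedding theorem genuinely requires $\ACA_0$ --- which rules out any ``straightforward'' back-and-forth of the kind you sketch. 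To complete your proof you must supply this largeness argument (or an equivalent) for the universality step; everything else in your outline then goes through as in the paper.
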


Just as we did for the Joyce order, we can canonically represent any countable Joyce graph as a set of strings equipped with the lexicographic order and  $|{\cdot}\meet{\cdot}|$, but also the relation $\Epn$.

\begin{definition}\index{Joyce graph!coded}\index{coded!Joyce graph}
  A \emph{coded Joyce graph} is a Joyce graph of the form
  \[
  	(X,\Epn,\ltlex,|\cdot\meet\cdot|)
  \]
  such that for all $\sigma,\tau,\rho\in X$ with $\sigma\neq\tau$, $|\rho|>|\sigma\meet\tau|$ and $\sigma \meet \tau \not \preceq \rho$, we have $\rho(|\sigma\meet\tau|)=0$.
\end{definition}
Note that if $(X,\Epn,\ltlex,|\cdot\meet\cdot|)$ is a coded Joyce graph, that does not mean that $(X,\ltlex,|\cdot\meet\cdot|)$ is a coded Joyce order. Indeed, there is no restriction on $\rho(|\sigma\meet\sigma|)$ in the case of a coded Joyce graph, while this value must be 0 in the case of a coded Joyce order. The two notions thus coincides  if and only if $\lnot \sigma\Epn\tau$ for every $\sigma,\tau\in X$, by definition of $\Epn$. 

\begin{figure}[htbp]
  \begin{center}
    \begin{tikzpicture}[scale=1.5,font=\normalsize]
		\tikzset{
			empty node/.style={circle,inner sep=0,fill=none},
			solid node/.style={circle,draw,inner sep=1.5,fill=black},
			hollow node/.style={circle,draw,inner sep=1.5,fill=white},
			gray node/.style={circle,draw={rgb:black,1;white,4},inner sep=1.5,fill={rgb:black,1;white,4}}
		}
		\tikzset{snake it/.style={decorate, decoration=snake, line cap=round}}
		\tikzset{gray line/.style={line cap=round,thick,color={rgb:black,1;white,4}}}
		\tikzset{thick line/.style={line cap=round,rounded corners=0.1mm,thick}}
		\tikzset{thin line/.style={line cap=round,rounded corners=0.1mm}}
		\node (a)[solid node,label=below:{$x_0$}] at (-0.5,0.5) {};
		\node (b)[solid node,label=below:{$x_1$}] at (0,0.5) {};
		\node (c)[solid node,label=below:{$x_2$}] at (0.5,0.5) {};
		\node (label)[empty node,label=below:{(a)}] at (0,-0.15) {};
		\draw[thick line] (a) to[out=50,in=130] (b);
	\end{tikzpicture}
	\hspace{5mm}
	\begin{tikzpicture}[scale=1.5,font=\normalsize]
		\tikzset{
			empty node/.style={circle,inner sep=0,fill=none},
			solid node/.style={circle,draw,inner sep=1.5,fill=black},
			hollow node/.style={circle,draw,inner sep=1.5,fill=white},
			gray node/.style={circle,draw={rgb:black,1;white,4},inner sep=1.5,fill={rgb:black,1;white,4}}
		}
		\tikzset{snake it/.style={decorate, decoration=snake, line cap=round}}
		\tikzset{gray line/.style={line cap=round,thick,color={rgb:black,1;white,4}}}
		\tikzset{thick line/.style={line cap=round,rounded corners=0.1mm,thick}}
		\tikzset{thin line/.style={line cap=round,rounded corners=0.1mm}}
		\node (root)[solid node] at (0,0) {};
		\node (a)[solid node] at (-0.5,0.5) {};
		\node (b)[solid node,label=right:{$0$}] at (0.5,0.5) {};
		\node (c)[solid node,label=left:{$0$}] at (-0.75,1) {};
		\node (d)[solid node,label=right:{$0$}] at (-0.25,1) {};
		\node (e)[solid node,label=above:{$x_2$}] at (0.25,1) {};
		\node (f)[solid node,label=above:{$x_0$}] at (-0.9,1.5) {};
		\node (g)[solid node,label=right:{$1$}] at (-0.4,1.5) {};
		\node (h)[solid node,label=above:{$x_1$}] at (-0.25,2) {};
		\draw[-,thin line,dash pattern={on 3pt off 3pt}] (-1,0.5) to (1,0.5);
		\draw[-,thin line,dash pattern={on 3pt off 3pt}] (-1,1) to (1,1);
		\draw[-,thin line,dash pattern={on 3pt off 3pt}] (-1,1.5) to (1,1.5);
		\draw[thick line] (root) to (a);
		\draw[thick line] (root) to (b);
		\draw[thick line] (a) to (c);
		\draw[thick line] (a) to (d);
		\draw[thick line] (b) to (e);
		\draw[thick line] (c) to (f);
		\draw[thick line] (d) to (g);
		\draw[thick line] (g) to (h);
		\node (label)[empty node,label=below:{(b)}] at (0,-0.15) {};
	\end{tikzpicture}
	\hspace{5mm}
	\begin{tikzpicture}[scale=1.5,font=\normalsize]
		\tikzset{
			empty node/.style={circle,inner sep=0,fill=none},
			solid node/.style={circle,draw,inner sep=1.5,fill=black},
			hollow node/.style={circle,draw,inner sep=1.5,fill=white},
			gray node/.style={circle,draw={rgb:black,1;white,4},inner sep=1.5,fill={rgb:black,1;white,4}}
		}
		\tikzset{snake it/.style={decorate, decoration=snake, line cap=round}}
		\tikzset{gray line/.style={line cap=round,thick,color={rgb:black,1;white,4}}}
		\tikzset{thick line/.style={line cap=round,rounded corners=0.1mm,thick}}
		\tikzset{thin line/.style={line cap=round,rounded corners=0.1mm}}
		\node (root)[solid node] at (0,0) {};
		\node (a)[solid node] at (-0.5,0.5) {};
		\node (b)[solid node,label=right:{$0$}] at (0.5,0.5) {};
		\node (c)[solid node,label=left:{$1$}] at (-0.75,1) {};
		\node (d)[solid node,label=above:{$x_1$}] at (-0.25,1) {};
		\node (e)[solid node,label=right:{$0$}] at (0.25,1) {};
		\node (f)[solid node,label=above:{$x_0$}] at (-0.6,1.5) {};
		\node (g)[solid node,label=right:{$0$}] at (0.1,1.5) {};
		\node (h)[solid node,label=above:{$x_2$}] at (0,2) {};
		\draw[-,thin line,dash pattern={on 3pt off 3pt}] (-1,0.5) to (1,0.5);
		\draw[-,thin line,dash pattern={on 3pt off 3pt}] (-1,1) to (1,1);
		\draw[-,thin line,dash pattern={on 3pt off 3pt}] (-1,1.5) to (1,1.5);
		\draw[thick line] (root) to (a);
		\draw[thick line] (root) to (b);
		\draw[thick line] (a) to (c);
		\draw[thick line] (a) to (d);
		\draw[thick line] (b) to (e);
		\draw[thick line] (c) to (f);
		\draw[thick line] (e) to (g) to (h);
		\node (label)[empty node,label=below:{(c)}] at (0,-0.15) {};
	\end{tikzpicture}
	\\
	\smallskip
	\begin{tikzpicture}[scale=1.5,font=\normalsize]
		\tikzset{
			empty node/.style={circle,inner sep=0,fill=none},
			solid node/.style={circle,draw,inner sep=1.5,fill=black},
			hollow node/.style={circle,draw,inner sep=1.5,fill=white},
			gray node/.style={circle,draw={rgb:black,1;white,4},inner sep=1.5,fill={rgb:black,1;white,4}}
		}
		\tikzset{snake it/.style={decorate, decoration=snake, line cap=round}}
		\tikzset{gray line/.style={line cap=round,thick,color={rgb:black,1;white,4}}}
		\tikzset{thick line/.style={line cap=round,rounded corners=0.1mm,thick}}
		\node (a)[empty node,label=below:{$1$}] at (0,0) {};
		\node (b)[empty node,label=above:{$2$}] at (-0.4,0.5) {};
		\node (c)[empty node,label=below:{$3$}] at (0.4,0.5) {};
		\node (d)[empty node,label=above:{$5$}] at (0,1) {};
		\node (e)[empty node,label=above:{$4$}] at (0.8,1) {};
		\draw[thick line] (a.center) to (b.center);
		\draw[thick line] (c.center) to (e.center);
		\draw[thick line] (a.center) to (c.center) to (d.center);
		\node (label)[empty node,label=below:{(d)}] at (0,-0.4) {};
	\end{tikzpicture}
	\hspace{5mm}
	\hspace{5mm}
	\begin{tikzpicture}[scale=1.5,font=\normalsize]
		\tikzset{
			empty node/.style={circle,inner sep=0,fill=none},
			solid node/.style={circle,draw,inner sep=1.5,fill=black},
			hollow node/.style={circle,draw,inner sep=1.5,fill=white},
			gray node/.style={circle,draw={rgb:black,1;white,4},inner sep=1.5,fill={rgb:black,1;white,4}}
		}
		\tikzset{snake it/.style={decorate, decoration=snake, line cap=round}}
		\tikzset{gray line/.style={line cap=round,thick,color={rgb:black,1;white,4}}}
		\tikzset{thick line/.style={line cap=round,rounded corners=0.1mm,thick}}
		\node (a)[empty node,label=below:{$1$}] at (0,0) {};
		\node (b)[empty node,label=below:{$2$}] at (-0.4,0.5) {};
		\node (c)[empty node,label=above:{$3$}] at (0.4,0.5) {};
		\node (d)[empty node,label=above:{$4$}] at (-0.8,1) {};
		\node (e)[empty node,label=above:{$5$}] at (0,1) {};
		\draw[thick line] (a.center) to (b.center) to (d.center);
		\draw[thick line] (b.center) to (e.center);
		\draw[thick line] (a.center) to (c.center);
		\node (label)[empty node,label=below:{(e)}] at (0,-0.4) {};
	\end{tikzpicture}
	\hspace{5mm}
	\hspace{5mm}
	\begin{tikzpicture}[scale=1.5,font=\normalsize]
		\tikzset{
			empty node/.style={circle,inner sep=0,fill=none},
			solid node/.style={circle,draw,inner sep=1.5,fill=black},
			hollow node/.style={circle,draw,inner sep=1.5,fill=white},
			gray node/.style={circle,draw={rgb:black,1;white,4},inner sep=1.5,fill={rgb:black,1;white,4}}
		}
		\tikzset{snake it/.style={decorate, decoration=snake, line cap=round}}
		\tikzset{gray line/.style={line cap=round,thick,color={rgb:black,1;white,4}}}
		\tikzset{thick line/.style={line cap=round,rounded corners=0.1mm,thick}}
		\node (a)[empty node,label=below:{$1$}] at (0,0) {};
		\node (b)[empty node,label=below:{$2$}] at (-0.4,0.5) {};
		\node (c)[empty node,label=above:{$5$}] at (0.4,0.5) {};
		\node (d)[empty node,label=above:{$4$}] at (-0.8,1) {};
		\node (e)[empty node,label=above:{$3$}] at (0,1) {};
		\draw[thick line] (a.center) to (b.center) to (d.center);
		\draw[thick line] (b.center) to (e.center);
		\draw[thick line] (a.center) to (c.center);
		\node (label)[empty node,label=below:{(f)}] at (0,-0.4) {};
	\end{tikzpicture}
\end{center}
\caption{In (a), a finite graph $G = (\{x_0, x_1, x_1\}, \{ \{x_0, x_1\}\})$. In (b) and (c), two coded Joyce graphs isomorphic to $G$. The trees (e) and (f) are Joyce trees corresponding the coded Joyce graphs (b) and (c), respectively. In (d), a Joyce tree which cannot represent the graph $G$. Indeed, since there is an edge between $x_0$ and $x_1$ but not between $x_0$ and $x_1$, then for any coded Joyce graph $\{\sigma_0, \sigma_1, \sigma_2\}$ representing $G$, $|\sigma_1 \meet \sigma_2| < |\sigma_0|$.}
\label{fig:graph-representation}
\end{figure}
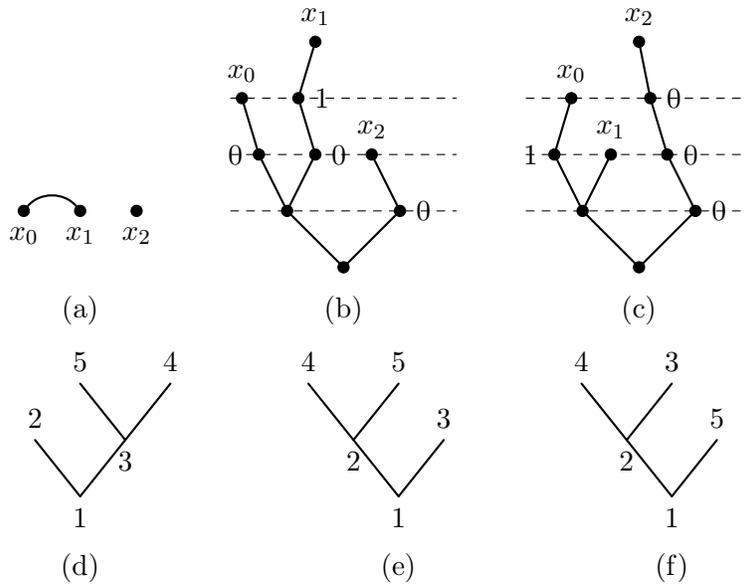

\begin{theorem}[$\RCA_0$]\label{thm:joyce-graph-to-coded}
  Every countable Joyce graph is isomorphic to a coded Joyce graph.
\end{theorem}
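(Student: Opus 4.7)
My plan is to adapt the construction in the proof of \Cref{thm:joyce-orders-can-be-coded} by enriching the encoding to also capture the edge relation. Given a countable Joyce graph $(X,E,<,\meetlevel{\cdot}{\cdot})$, I will associate to each $y\in X$ a binary string $\sigma_y$ of length $\meetlevel{y}{y}$ so that the resulting set $Y=\{\sigma_y:y\in X\}$, equipped with $\Epn$, $\ltlex$, and $|\cdot\meet\cdot|$, is a coded Joyce graph isomorphic to the original. By item~2 of \Cref{lem:prop-of-JO} the map $y\mapsto\meetlevel{y}{y}$ is injective, and by item~3 of that lemma no position $j<\meetlevel{y}{y}$ is both a vertex label $\meetlevel{x}{x}$ and a proper meet label $\meetlevel{z}{t}$ (with $z\neq t$); this lets me partition the positions of $\sigma_y$ into two clean classes, where I can assign bits independently.

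Concretely, for each $j<\meetlevel{y}{y}$ I set $\sigma_y(j)=1$ precisely when either (a) $j=\meetlevel{x}{x}$ for the (unique) $x\in X$ with that label, and $xEy$ in the graph; or (b) $j\neq\meetlevel{x}{x}$ for every $x\in X$ and there exists $z<y$ with $\meetlevel{z}{y}=j$ (i.e.\ $j\in L_y$ in the notation of \Cref{thm:joyce-orders-can-be-coded}). Otherwise $\sigma_y(j)=0$. The edge information is thus recorded at the levels corresponding to single vertices, while the order/meet data is encoded exactly as before at the remaining levels.

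The verification proceeds in three claims. First, the identity $\meetlevel{x}{y}=|\sigma_x\meet\sigma_y|$ and the equivalence $x<y\iff\sigma_x\ltlex\sigma_y$ are carried over from the Joyce order proof: for positions $j<\meetlevel{x}{y}$ of type (b) the argument is unchanged (using \Jo1 and \Jo2 to show $j\in L_x\iff j\in L_y$), while for positions $j<\meetlevel{x}{y}$ of type (a), say $j=\meetlevel{w}{w}$, \Jo4 (applied with $\meetlevel{w}{w}<\meetlevel{x}{y}$) gives $wEx\iff wEy$ so $\sigma_x(j)=\sigma_y(j)$; and at $j=\meetlevel{x}{y}$, \Jo3 rules out $j$ being any $\meetlevel{w}{w}$, reducing to the original argument which forces $\sigma_x(j)\neq\sigma_y(j)$. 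Second, for $x\neq y$ with, say, $\meetlevel{x}{x}<\meetlevel{y}{y}$, the bit $\sigma_y(\meetlevel{x}{x})$ is governed by clause (a), so $\sigma_x\Epn\sigma_y\iff xEy$. Third, the ``coded'' condition follows: if $|\sigma_z|>|\sigma_x\meet\sigma_y|=\ell=\meetlevel{x}{y}$ and $\sigma_z(\ell)=1$, then since $\ell$ is not a vertex label we have $\ell=\meetlevel{w}{z}$ for some $w<z$; applying \Jo3 to the equality $\meetlevel{x}{y}=\meetlevel{w}{z}$ yields $\meetlevel{x}{w},\meetlevel{y}{z}>\ell$, and propagating through \Jo2 forces $\sigma_x,\sigma_y,\sigma_w,\sigma_z$ to share the same length-$\ell$ initial segment, giving $\sigma_x\meet\sigma_y\preceq\sigma_z$, contradicting the hypothesis.

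The main obstacle I anticipate is exactly the interference check in the third claim: one must ensure that the new edge-encoding bits at ``vertex positions'' do not accidentally destroy the invariant of \Cref{thm:joyce-orders-can-be-coded} that controlled the coded condition. The resolution is purely axiomatic and uses \Jo4 in an essential way (never used in the Joyce order version), combined with the three items of \Cref{lem:prop-of-JO} guaranteeing that vertex labels are both unique and disjoint from meet labels. Once these are isolated, the proof reduces to a careful but routine bookkeeping of which clause applies at each bit, tracking the pattern of the Joyce order proof one case at a time.
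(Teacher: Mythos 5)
Your construction is exactly the paper's: bits at vertex-label positions record the edge relation (verified via \Jo4), bits at proper-meet-label positions record the order data as in \Cref{thm:joyce-orders-can-be-coded}, with \Cref{lem:prop-of-JO} guaranteeing the two classes of positions are disjoint. The verification also mirrors the paper's (which defers the coded-condition check to the earlier theorem, whereas you spell it out), so the proposal is correct and follows the same route.
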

\begin{proof}
  Let $(G, E, <, \meetlevel{\cdot}{\cdot})$ be a countable Joyce graph.
  Let $\sigma_x \in 2^{<\omega}$ be the unique string of length $\meetlevel{x}{x}$, such that for any $j < \meetlevel{x}{x}$:
  \begin{enumerate}
  \item\label{it:jctc1} if $j=\meetlevel xy$ for some $y\in G$, then $\sigma_x(j) = 1$ if and only if $y<x$;
  \item\label{it:jctc2} if $j=\meetlevel yy$ for some $y\in G$, then $\sigma_x(j) = 1$ if and only if $xEy$;
  \item\label{it:jctc3} $\sigma_x(j)=0$ otherwise.
  \end{enumerate}
  One first need to show that $\sigma_x$ is well-defined. First, there is no $y,z\in G$ such that $\meetlevel yz = \meetlevel zz$ by \Jo3, so \Cref{it:jctc1} and \Cref{it:jctc2} are compatible. \Cref{it:jctc1} do not contradict itself as there is no $y,z\in G$ such that $\meetlevel yy=\meetlevel zz$, also by \Jo3. It remains to show that \Cref{it:jctc2} does not contradict itself: Let $x,y,z\in G$ be such that $\meetlevel xy = \meetlevel xz<\meetlevel xx$. Then, by \Jo3 $\meetlevel xy<_\Nb\meetlevel yz$ and by \Jo1 we have $x<z$ iff $x<z$. So $\sigma_x$ is well-defined.
  
Let $X = \{ \sigma_x: x \in G \}$.
We claim that $(X, \Epn, \ltlex, |\cdot \meet \cdot|)$ is a Joyce graph whose structure is isomorphic to the Joyce structure of $(G, E, <, \meetlevel{\cdot}{\cdot})$.

Let $\sigma_x, \sigma_y\in X$. We have $|\sigma_x|=\meetlevel xx\neq \meetlevel yy=|\sigma_y|$, we suppose $|\sigma_x|<|\sigma_y|$. But then, $\sigma_x\Epn\sigma_y$ iff $\sigma_y(|\sigma_x|)=1$ iff $xEy$ by \Cref{it:jctc2}.

The rest of the proof follows the same argument that the construction in the proof of \Cref{thm:joyce-orders-can-be-coded} works. 
We prove that for all $x,y,z,t\in G$, $\meetlevel xy<\meetlevel zt\implies |\sigma_x\meet \sigma_y|<_\Nb|\sigma_z\meet \sigma_t|$. We actually prove the stronger fact that for every $x,y\in G$, $\meetlevel xy = |\sigma_x\meet\sigma_y|$. If $x=y$, it is clear as by construction, $\sigma_x$ is of length $\meetlevel xx$. If $x\neq y$, we first prove that $\meetlevel xy \leq |\sigma_x\meet\sigma_y|$: indeed, for all $n<\meetlevel xy$, by \Jo1, \Jo4 and the construction, we have $\sigma_x(n)=1$ iff $\sigma_y(n)=1$. It remains to show $\meetlevel xy \geq |\sigma_x\meet\sigma_y|$: if $x<y$ we have that $\sigma_x(\meetlevel xy)=0\neq 1=\sigma_y(\meetlevel xy)$ by \Cref{it:jctc1}, so $\meetlevel xy \geq |\sigma_x\meet\sigma_y|$, and similarly for $x>y$.

Let $x<y\in G$. Then, $\meetlevel xy=|\sigma_x\meet\sigma_y|$, thus $\sigma_x(|\sigma_x\meet\sigma_y|)=0\neq 1=\sigma_y(|\sigma_x\meet\sigma_y|)$ by \Cref{it:jctc2}. It follows that $\sigma_x\ltlex\sigma_y$.

\end{proof}
\begin{corollary}
  There exists a computably coded Joyce Rado graph.
\end{corollary}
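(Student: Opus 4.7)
The plan is to combine the two preceding theorems and observe that both constructions can be carried out effectively. First I would revisit the proof of \Cref{thm:joyce-rado-graph-exists} and verify that the Joyce Rado graph produced there is in fact computable. The only nonconstructive-looking ingredients are the auxiliary map $v:2^{<\omega}\to\omega$ (required to be injective and to respect lengths) and the cofinal antichain $S\subseteq\cantor$ of strings with pairwise distinct lengths; both can plainly be taken computable, e.g.\ $v(\sigma)=2^{|\sigma|}+\mathrm{rank}(\sigma\text{ in }2^{|\sigma|})$ and $S=\{\sigma:\sigma\text{ is the code of an element of }\cantor\text{ indexed by its length}\}$ produced by a fixed computable listing. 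Then $g$ is computable, so $g[S]$ together with the relations $\Epn$, $\ltlex$ and $\meetlevel{\cdot}{\cdot}$ constitute a computable Joyce Rado graph.

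Second, I would apply \Cref{thm:joyce-graph-to-coded} to this computable Joyce Rado graph. The key observation is that the construction $x\mapsto\sigma_x$ in that proof is uniformly computable from the given Joyce graph: for each $x$ of label $\meetlevel{x}{x}$, determining the bit $\sigma_x(j)$ for $j<\meetlevel{x}{x}$ requires only decidable questions about the original structure (``is $j=\meetlevel{x}{y}$ for some $y$ with $y<x$?'' and ``is $j=\meetlevel{y}{y}$ for some $y$ with $xEy$?''), and these are uniformly answerable since the Joyce graph is computable and only finitely many candidates $y$ need to be inspected (those with label at most $j<\meetlevel{x}{x}$, of which there are finitely many by \Cref{it:JO-are-trees-4} of \Cref{lem:JO-are-trees} applied to finite substructures, or directly because the labels below $\meetlevel{x}{x}$ form a bounded initial segment). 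Hence the map $x\mapsto\sigma_x$ and its image are computable.

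Finally, the resulting structure $(X,\Epn,\ltlex,|\cdot\meet\cdot|)$ is a coded Joyce graph isomorphic to the Joyce Rado graph from step one, so in particular its underlying graph is a Rado graph; thus it is a computably coded Joyce Rado graph, as required. There is no real obstacle here: both building blocks (existence of a Joyce Rado graph and the coding lemma) are manifestly effective once the auxiliary choices are made computably, so the corollary is simply a matter of tracing the complexity through the two constructions and concluding by composing them.
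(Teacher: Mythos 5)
Your proposal takes the same route as the paper: the paper proves the corollary in one line as ``Immediate by \Cref{thm:joyce-graph-to-coded} and \Cref{thm:joyce-rado-graph-exists}'', both of which carry $\RCA_0$ tags, and it is those tags that license the effectivity you spell out. So the structure of your argument matches exactly, with you filling in computability details the paper leaves implicit.

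One of those details is slightly off, however. Your justification that only finitely many candidates $y$ need inspecting rests on ``the labels below $\meetlevel{x}{x}$ form a bounded initial segment'', but nothing in \Cref{thm:joyce-rado-graph-exists} forces the range of $v$ restricted to $(g[S])^{\wedge}$ to be an initial segment of $\Nb$, and one cannot freely rechoose $v$ to make it so while keeping $v$ injective and length-monotone on all of $\cantor$. The correct (and easy) fix is to use length-monotonicity directly: $v(\tau)\leq j$ forces $|\tau|\leq\log_2(j+1)$, so the set of $y\in g[S]$ with $\meetlevel{y}{y}\leq j$ is computably bounded; and for the ``is $j=\meetlevel{x}{y}$ for some $y<x$'' question, well-definedness (via \Jo1 and \Jo3, as in the proof of \Cref{thm:joyce-graph-to-coded}) lets you settle $\sigma_x(j)$ by exhibiting a single witness, which can be found effectively since $v$ has a computable inverse and $g[S]$ is a computable cofinal set. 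With that repair, everything you wrote goes through and coincides with the paper's intended argument.
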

\begin{proof}
  Immediate by \Cref{thm:joyce-graph-to-coded} and \Cref{thm:joyce-rado-graph-exists}.
\end{proof}

\section{Joyce blossom graphs and an embedding theorem}

\begin{definition}\label{def:blossom-tree}\index{blossom!tree}\index{tree!blossom}\index{blossom!Joyce graph}\index{Joyce graph!blossom}
  A \emph{blossom tree} is a pair $(f, g)$
  where $f: \cantor \to \cantor$ is a $\prec$-preserving, $\ltlex$-preserving and $\meet$-preserving function, such that for every $\sigma, \tau \in \cantor$:
  \begin{enumerate}
  \item $g(\sigma) \succ f(\sigma)$; 
  \item if $|\tau| > |\sigma|$ then $|f(\tau)| > |g(\sigma)|$;
  \item\label{it:blossom-rado} if $|\tau| = |\sigma|$ then
   $f(\tau 0)(|g(\sigma)|) \neq f(\tau 1)(|g(\sigma)|)$.
  \end{enumerate}
  A \emph{Joyce blossom graph} is a structure $(g[S],\Epn,\ltlex,|\cdot\meet\cdot|)$ for some blossom tree $(f,g)$ and some set $S$ cofinal in $\cantor$ such that
   for all $\sigma,\tau\in\meetclosure S$, $|\sigma|\neq|\tau|$.
\end{definition}
Note that a Joyce blossom graph $\mathcal{G}$ is a Joyce Rado graph: indeed, let $F_0$ and $F_1$ be two disjoint finite sets of vertices of $\mathcal{G}$, and let $f,g$ and $S$ be the witnesses of the fact that $\mathcal{G}$ is a Joyce blossom graph. By \Cref{it:blossom-rado} of the definition, there exists $\sigma\in f[\cantor]$ with $|\sigma|>\max\{|\tau|:\tau\in F_0\cup F_1\}$ and such that for every $\tau\in F_0\cup F_1$ $\sigma(|\tau|)=0$ iff $\tau\in F_0$. By the fact that $S$ is cofinal, let $\rho\succ f^{-1}(\sigma)$ in $S$. Then, $g(\rho)\succ \sigma$, and therefore $g(\rho)\Epn\tau$ if $\tau\in F_1$ and $\lnot g(\rho)\Epn\tau$ if $\tau\in F_0$. The Joyce requirements are satisfied by the fact that the relations are $\Epn$, $\ltlex$ and $|\cdot\meet\cdot|$, and that for all $\sigma,\tau\in\meetclosure S$, $|\sigma|\neq|\tau|$.



Note that if $(f, g)$ is a blossom tree, then letting $B = g[\cantor]$ it follows by Item 1 that 
$(B, \preceq)$ is an antichain and $(B, \ltlex)$ contains a dense linear order with no endpoints.
From a computability-theoretic viewpoint, $\prec$-preservation of $f$ and Item 1 of  \Cref{def:blossom-tree}  ensures that if $(f, g)$ is computable, then so is $g[\cantor]$. Conversely, Item 2 of \Cref{def:blossom-tree} implies that $(f, g)$ is computable from $g[\cantor]$. One can therefore switch from one notion to the other in the computability realm.

\begin{lemma}\label{le:computable-bjrg}
  There exists a computable Joyce blossom graph, with a DLO induced order.
\end{lemma}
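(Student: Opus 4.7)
The plan is to construct a computable blossom tree $(f,g)$ together with a computable cofinal set $S \subseteq \cantor$, and verify that $(g[S],\Epn,\ltlex,|\cdot\meet\cdot|)$ is both a Joyce blossom graph and a dense linear order under $\ltlex$. First, I would fix a computable function $h$ with $h(0)=0$ and $h(n+1)-h(n)=2^n+2$, and define $f$ level by level so that every string of length $n$ is mapped to a string of length exactly $h(n)$. Enumerate the strings of length $n$ as $\sigma_0^n \ltlex \sigma_1^n \ltlex \cdots \ltlex \sigma_{2^n-1}^n$ and commit in advance to $|g(\sigma_i^n)| = h(n)+i+1$, with $g(\sigma_i^n) = f(\sigma_i^n)\cdot 0^{i+1}$. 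This forces Items~1 and 2 of Definition~6.6 directly.

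To extend $f$ to level $n+1$, for each $\tau$ of length $n$ define
\[
	f(\tau 0) = f(\tau)\cdot 0\cdot 0^{2^n}\cdot 0, \qquad f(\tau 1) = f(\tau)\cdot 1\cdot 1^{2^n}\cdot 0.
\]
Then $f$ is automatically $\prec$-, $\meet$-, and $\ltlex$-preserving: meet preservation is immediate because $f(\tau 0)$ and $f(\tau 1)$ first differ at position $h(n)$, so the meet of images tracks the meet of preimages; lex preservation follows because the first bit where two distinct same-length images differ is determined by which branch each took at the meet level. For Item~3 (the blossom property), by construction $f(\tau 0)$ and $f(\tau 1)$ disagree at every position in $[h(n),h(n)+2^n]$, which includes all positions $|g(\sigma_i^n)|=h(n)+i+1$ for $i<2^n$, exactly as required.

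Next, I would construct $S$ in stages. Fix a computable enumeration $\rho_0,\rho_1,\dots$ of $\cantor$, and at stage $n$ choose a string $\sigma_n \succ \rho_n$ (securing $\prec$-cofinality) whose length exceeds every length currently appearing in $\meetclosure{\{\sigma_0,\dots,\sigma_{n-1}\}}$. The nontrivial point is that for $m<n$, if $\rho_n$ and $\sigma_m$ are incomparable then $\sigma_n\meet\sigma_m = \rho_n\meet\sigma_m$ is already determined by the enumeration, whereas if $\rho_n \preceq \sigma_m$ then we retain control over the branching position of $\sigma_n$ from $\sigma_m$. In both cases, by choosing $\sigma_n$ to branch off from each previous $\sigma_m$ at a fresh length (avoiding finitely many forbidden values) and to have its own length fresh, we maintain the invariant that $\meetclosure{\{\sigma_0,\dots,\sigma_n\}}$ has pairwise distinct lengths. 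This is a finite-injury-free construction.

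Finally, I would verify that $(g[S],\ltlex)$ is a DLO with no endpoints. Density follows because $f$ is $\ltlex$-preserving and between any two strings of $\cantor$ under $\ltlex$ there are further strings; combined with $\prec$-cofinality of $S$, one can always find an element of $g[S]$ strictly between two given ones. Absence of endpoints follows analogously. The main obstacle will be the joint construction of $S$: we must juggle $\prec$-cofinality (forcing us to extend every $\rho_n$) against the distinct-length requirement on $\meetclosure S$ (which is partly rigidified once prior $\sigma_m$'s are chosen). The key observation that unlocks this tension is that in the incomparable case the meet length $|\rho_n\meet\sigma_m|$ is already fixed before stage $n$, so we need only ensure we never create a \emph{new} collision, which is arrangeable since the forbidden set at each stage is finite while the space of admissible extensions of $\rho_n$ at large lengths is infinite.
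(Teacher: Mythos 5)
Your construction of the blossom tree $(f,g)$ itself is correct and is a genuinely different route from the paper's: the paper simply recycles the Joyce Rado graph built in \Cref{thm:joyce-rado-graph-exists}, passes to its coded form via \Cref{thm:joyce-graph-to-coded}, and reads off $f$ and $g$ from that earlier construction, whereas you build $(f,g)$ from scratch. Your verification of \Cref{def:blossom-tree} is sound: with $|f(\sigma)|=h(n)$ for $|\sigma|=n$ and $|g(\sigma^n_i)|=h(n)+i+1$, Items~1 and~2 are immediate, and since $f(\tau 0)$ and $f(\tau 1)$ disagree on all of $[h(n),h(n)+2^n]$ while the lengths $|g(\sigma^n_i)|$ exhaust $\{h(n)+1,\dots,h(n)+2^n\}$, Item~3 holds as well.

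The gap is in the construction of $S$. Your claim that one ``need only ensure we never create a new collision, which is arrangeable since the forbidden set at each stage is finite'' fails precisely in the incomparable case that you flag as nontrivial: there the new meet $\rho_n\meet\sigma_m$ is a \emph{fixed string of fixed length} over which you have no control, and its length may already be occupied by a \emph{different} string of $\meetclosure{\{\sigma_0,\dots,\sigma_{n-1}\}}$. Concretely, under the standard enumeration the stages for $\rho=00$ and $\rho=01$ force two elements of $S$ branching at $0$, so $0\in\meetclosure{S}$ with length $1$; the later stages for $\rho=10$ and $\rho=11$ then force two elements of $S$ branching at $1$, so $1\in\meetclosure{S}$ as well, and no choice of extensions avoids two distinct strings of length $1$ in $\meetclosure{S}$. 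Indeed the goal as you have set it up is unachievable outright: if every string has an extension in $S$ (which your density and no-endpoints arguments, and every use of cofinality in the paper, require), then every $\rho\in\cantor$ is the meet of some $\sigma\succeq\rho 0$ and $\tau\succeq\rho 1$ in $S$, so $\meetclosure{S}=\cantor$. What can actually be secured --- and what the paper's construction arranges, imposing distinct lengths only on $S$ itself --- is length-injectivity on the \emph{images}: on $g[S]$ and on the meets $g(\sigma)\meet g(\tau)=f(\sigma\meet\tau)$. Your $f$ does not deliver this, since it sends all strings of length $n$ to strings of the single length $h(n)$, so two distinct same-level proper meets of $S$ yield distinct meets of images of equal length. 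The construction of $S$ therefore needs to be replaced by an argument at the level of the images (e.g.\ by making $f$ length-injective across same-level nodes), not merely patched.
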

\begin{proof}
  Let $g$ and $S$ be the objects defined in the proof of \Cref{thm:joyce-rado-graph-exists}, and $\mathcal{G} =(g[S],\Epn,\ltlex,\meetlevel\cdot\cdot)$ be the Joyce graph defined in the same proof. By \Cref{thm:joyce-graph-to-coded}, let $\mathcal{G}'=(G',\Epn,\ltlex, |\cdot\meet\cdot|)$ be the coded Joyce Rado Graph computably isomorphic to $\mathcal{G}$ via $e$.

  It remains to show that $\mathcal{G}'$ is a Joyce blossom graph. 
  Define $g'=e\circ g$ and $f:\sigma\mapsto g'(\tau0)\meet g'(\tau1)$. It is easy to check that  $f$, $g'$ and the set $S$ are witnesses of the fact that $\mathcal{G}'$ is a Joyce blossom graph.
\end{proof}

\begin{lemma}[$\RCA_0$]\label{thm:graph-embedding-cantor-exists}
For every Rado graph $(G, E)$,
there exists a graph embedding $e: (2^{<\omega}, \Epn) \to (G, E)$.
\end{lemma}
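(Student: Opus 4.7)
The plan is to construct $e$ by a straightforward one-sided back-and-forth, exploiting the defining extension property of a Rado graph. First I would enumerate $\cantor$ in length-lexicographic order as $\sigma_0, \sigma_1, \ldots$ (this enumeration is computable, so the whole construction formalizes in $\RCA_0$). Then at stage $n$, assuming $e(\sigma_0), \ldots, e(\sigma_{n-1})$ have already been defined, I would split the already-assigned images into
\[
F_1 = \{e(\sigma_i) : i < n,\ \sigma_n \Epn \sigma_i\}, \quad F_0 = \{e(\sigma_i) : i < n,\ \lnot\, \sigma_n \Epn \sigma_i\},
\]
and invoke the Rado property on these two disjoint finite subsets of $G$ to pick some $v \in G$ with $vEy$ for all $y \in F_1$ and $\lnot v E y$ for all $y \in F_0$, setting $e(\sigma_n) = v$.

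The one subtlety to address is injectivity: I need to be able to choose such a $v$ that does not already appear among $e(\sigma_0), \ldots, e(\sigma_{n-1})$. For this I would prove the following small strengthening of the Rado property: for any disjoint finite $F_0, F_1 \subseteq G$, the set of witnesses $v$ is infinite. The argument is a routine iteration: if $\{v_1, \ldots, v_k\}$ were the full (finite) set of witnesses, then since no witness lies in $F_1$ (as the graph is irreflexive), the pair $(F_0 \cup \{v_1, \ldots, v_k\},\ F_1)$ is still a disjoint pair of finite subsets of $G$, and applying the Rado property to it produces a further witness outside $\{v_1, \ldots, v_k\}$, contradiction. Hence I may always choose $v$ outside the finite forbidden set $\{e(\sigma_0), \ldots, e(\sigma_{n-1})\}$.

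By the choice of $e(\sigma_n)$, for every $i < n$ we have $e(\sigma_n) E e(\sigma_i)$ iff $\sigma_n \Epn \sigma_i$, so the induction preserves both edges and non-edges, and the resulting $e$ is the desired graph embedding. There is no genuine obstacle here; the whole argument is little more than a careful application of the extension property, and the mild strengthening above is the only point requiring a word of justification.
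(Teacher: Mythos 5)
Your construction is essentially the paper's: enumerate $\cantor$ in nondecreasing-length order and at stage $n$ split the previously assigned images into $F_1,F_0$ according to whether $\sigma_n\Epn\sigma_i$ holds, then invoke the Rado property. If anything your definition is slightly more careful, since your $F_0$ absorbs the images $e(\sigma_i)$ with $|\sigma_i|=|\sigma_n|$ (where $\lnot\,\sigma_n\Epn\sigma_i$ always holds and a non-edge is required), whereas the paper's $F_0,F_1$ only range over $\tau$ with $|\tau|<|\sigma_n|$ and so leave the equal-length constraints implicit.

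The genuine gap is in your ``routine iteration'' purporting to show the witness set is infinite. You add $v_1,\dots,v_k$ to $F_0$ (the side the new vertex must be \emph{disconnected} from) and conclude that the resulting witness $v'$ lies outside $\{v_1,\dots,v_k\}$. But $\lnot\,v'Ev_i$ does not imply $v'\neq v_i$: since the Rado graph is irreflexive, $\lnot\,v_iEv_i$ always holds, so $v'=v_i$ is fully consistent with the disconnectedness constraint. To force $v'\neq v_i$ by an edge constraint you must instead \emph{connect} $v'$ to $v_i$ (then $v'Ev_i$ with irreflexivity gives $v'\neq v_i$), but adding $v_i$ to the connected side is only permissible when $v_i\notin F_0$, and your argument has not ruled out a witness lying inside $F_0$. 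A correct repair: for each vertex $x$ to be excluded, choose a fresh auxiliary vertex $z_x\notin F_0\cup F_1\cup X$ and add $z_x$ to whichever side of the Rado query is \emph{opposite} to $x$'s adjacency to $z_x$ (i.e., to the disconnected side if $xEz_x$, to the connected side if $\lnot\,xEz_x$); the resulting witness then disagrees with each such $x$ about its edge to $z_x$ and hence is distinct from it. The paper itself asserts without proof that $g$ may be chosen outside the current image, so the omission is shared with the source; but since you single this step out as the only one needing justification, the justification offered should actually close it.
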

\begin{proof}
  Let $(\sigma_n)_{n\in\om}$ be an enumeration of $\cantor$ such that $|\sigma_n|>|\sigma_m|$ implies $n>m$. Suppose $e(\sigma_i)$ is defined for every $i<n$. Let $F_0=\{e(\tau):\tau\in\cantor, |\tau|<|\sigma_n|, \sigma_n(|\tau|)=0\}$ and $F_1=\{e(\tau):\tau\in\cantor, |\tau|<|\sigma_n|, \sigma_n(|\tau|)=1\}$. By the fact that $(G,E)$ is a Rado graph, there exists an $g\in G$ such that for all $a\in F_0$, $\lnot aEg$ and for all $a\in F_1$, $aEg$, and $g$ is not already in the image of $e$. Define $e(\sigma_n)=g$.
\end{proof}

\begin{theorem}[$\ACA_0$]\label{thm:blossom-to-joyce-rado}
For every coded Joyce Rado graph $\mathcal{G}=(G,E,<,\meetlevel\cdot\cdot)$,
there is an embedding from a Joyce blossom graph to $G$.
\end{theorem}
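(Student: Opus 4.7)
The plan is to build a blossom tree $(f,g)$ with values in $G$ by induction on $\cantor$-levels, so that the Joyce blossom graph it determines sits inside $\mathcal{G}$ as a substructure. By \Cref{thm:joyce-graph-to-coded} (applied to the isomorphic copy if needed) we may assume $\mathcal{G}$ is already coded, so that $G\subseteq 2^{<\omega}$ and the Joyce graph operations on $G$ are the natural string operations. The desired embedding will then be the inclusion of $g[S]$ into $G$, where $S\subseteq\cantor$ is any cofinal set whose meet-closure has pairwise distinct lengths (e.g.\ a sufficiently spread-out antichain). Since both structures use the same relations $\Epn$, $\ltlex$, and $|\cdot\meet\cdot|$, preservation of the Joyce graph structure is automatic once $f,g$ are built correctly.

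The inductive invariant at stage $n$ is that $f(\sigma),g(\sigma)\in G$ are defined for every $\sigma$ of $\cantor$-length $\leq n$, that there exist common $G$-lengths $h_f(n)<h_g(n)$ with $|f(\sigma)|=h_f(n)$ and $|g(\sigma)|=h_g(n)$ for all $|\sigma|=n$, that $f(\sigma)\prec g(\sigma)$, and that the blossom and Joyce constraints already hold among the placed elements. To extend to level $n+1$, we process each $\sigma_i\in 2^n$ in turn and choose $f(\sigma_i 0), f(\sigma_i 1)$ extending $g(\sigma_i)$ so that their meet in $G$ is exactly $g(\sigma_i)$, they agree with $\ltlex$ as $\sigma_i 0 \ltlex \sigma_i 1$, and they disagree at every $G$-position $h_g(n)$ in the required way (the blossom condition). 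The Rado property of $G$ is exactly what provides this freedom: for any prescribed $\Epn$-pattern to a finite set of already-placed strings and any prefix constraint dictated by the coding axiom, we can find a string in $G$ realizing that pattern. Applying the Rado property successively for each $\sigma_i$ and each value $b\in\{0,1\}$, and then "lifting" by further Rado extensions to bring all new $f$-values to a common length $h_f(n+1)>h_g(n)$ and subsequently all new $g$-values to a common $h_g(n+1)>h_f(n+1)$, yields the level-$(n{+}1)$ extension.

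The main obstacle is the simultaneous satisfaction of three kinds of constraints at each level: (i) the coded-Joyce constraint, that strings in $G$ which do not extend a given meet must carry a $0$ at that meet-length, (ii) the common-length requirement across distinct $\sigma_i$, and (iii) the blossom disagreement pattern, which prescribes the bits at specific positions for every pair $(f(\sigma_i 0), f(\sigma_i 1))$. The first constraint is handled by always working above a shared prefix encoding all relevant meets, the second by a sequence of Rado "length-lifting" steps, and the third by using the Rado property independently for each pair $(f(\sigma_i 0), f(\sigma_i 1))$ since the positions $h_g(n)$ lie below where the new strings are being chosen. The use of $\ACA_0$ enters precisely because each existential step of the construction asks whether a suitable $G$-element exists at a suitable length with a prescribed $\Epn$-pattern to a growing finite set of already-placed nodes; unfolding the Rado property this way gives a $\Sigma^0_2$ search, and the total construction of $f,g$ as functions on $\cantor$ requires arithmetic comprehension to assemble. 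Once $f,g$ are obtained, any cofinal $S\subseteq\cantor$ with distinct meet-closure lengths produces the desired Joyce blossom graph, and its inclusion into $G$ is the embedding.
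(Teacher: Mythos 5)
Your proposal has a genuine gap at its combinatorial core, which the paper overcomes with a tool your argument never introduces. You claim that "for any prescribed $\Epn$-pattern to a finite set of already-placed strings and any prefix constraint dictated by the coding axiom, we can find a string in $G$ realizing that pattern," attributing this to the Rado property. But the Rado property of $(G,\Epn)$ only lets you prescribe the bits of a new element $x\in G$ at positions of the form $|y|$ for already-placed $y\in G$; it gives no control whatsoever over the bits of $x$ at the remaining positions below $|x|$. Likewise, the coded Joyce graph axiom only forces $\rho(|\sigma\meet\tau|)=0$ when $\rho$ happens not to extend $\sigma\meet\tau$ — it does not force an arbitrary element of $G$ to carry a prescribed prefix. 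So nothing in your proposal guarantees the existence of elements of $G$ extending a given already-built prefix, and the "length-lifting" and "shared prefix" steps have no justification. This is exactly where the paper does real work: it first constructs a graph embedding $e\colon(\cantor,\Epn)\to(G,\Epn)$, defines $L\subseteq\cantor$ to be \emph{large above $\tau$} when $e^{-1}[L]$ is cofinal above $\tau$, establishes partition regularity of largeness, and then proves the combinatorial Claim~\ref{claim:blossom-to-joyce-rado-combi}: from a large $G\uh\sigma$ one can always extract two large subsets $G\uh\sigma_0$, $G\uh\sigma_1$ with $\sigma\preceq\sigma_0\meet\sigma_1$ that disagree at a controllable position $|e(\rho)|$. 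That claim is the engine that lets the recursion place $g$-values in $G$ at all; your proposal supplies no substitute for it.

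There is also a smaller inconsistency in the stated invariant. You require $f(\sigma),g(\sigma)\in G$ for all $\sigma$ with $|f(\sigma)|=h_f(n)$ a \emph{common} length for all $|\sigma|=n$. But in any Joyce order the labels $\meetlevel x x$ are pairwise distinct (\Cref{lem:prop-of-JO}), so in a coded Joyce Rado graph distinct elements of $G$ must have distinct lengths. For $n\ge1$ there are $2^n\ge2$ strings of that length, so this invariant cannot hold if $f$ maps into $G$. The definition of blossom tree does not in fact require $f$ to land in $G$ — only $g$ must — and in the paper's construction $f$ is simply a $\meet$ of two nodes in $\cantor$. Relatedly, your requirement that $f(\sigma_i0),f(\sigma_i1)$ extend $g(\sigma_i)$ and have meet $g(\sigma_i)$ contradicts the $\meet$-preservation of $f$ together with $f(\sigma_i)\prec g(\sigma_i)$: the meet $f(\sigma_i0)\meet f(\sigma_i1)$ must equal $f(\sigma_i)$, a strict prefix of $g(\sigma_i)$. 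These issues are fixable by relabelling, but the missing largeness argument is not.
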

\begin{proof}
We show the stronger result that there exists a blossom tree $(f,g)$ such that $(g[\cantor],\Epn,\ltlex,|\cdot\meet\cdot|)$ embeds into $\mathcal{G}$. Thus, for any $S\subseteq\cantor$ such that for all $\sigma,\tau\in \meetclosure S$ we have $|\sigma|\neq|\tau|$ whenever $\sigma\neq\tau$, we have $(g[S],\Epn,\ltlex,|\cdot\meet\cdot|)$ embeds into $\mathcal{G}$. 
  
Let $e: (\cantor, \Epn) \to (G, \Epn)$ be the graph embedding constructed in \Cref{thm:graph-embedding-cantor-exists}.
We say that $L \subseteq \cantor$ is \emph{large above $\tau \in \cantor$}
if $e^{-1}[L]$ is cofinal in $2^{<\omega}$ above $\tau$. The set $L$ is large if it is large above some $\tau \in \cantor$. Note that the collection of large sets is partition regular, that is, if $L_0 \cup \dots \cup L_{d-1}$ is large, then there is some $j < d$ such that $L_j$ is large. Moreover, if $L$ is large above $\tau$, then it is large above any $\rho \succeq \tau$. Last, $G$ is large above $\epsilon$.
Given a $\sigma \in \cantor$, we write $G \uh \sigma = \{ \tau \in G: \tau \succeq \sigma \}$.
Note that if $G \uh \sigma$ is large above $\tau$, then so is $G \uh \rho$ for every $\rho \preceq \sigma$.

The following claim is the combinatorial core of the theorem.

\begin{claim}\label{claim:blossom-to-joyce-rado-combi}
If $G \uh \sigma$ is large, then for cofinitely many $\rho \in \cantor$, there are some $\sigma_0, \sigma_1 \in G$ such that
$G \uh \sigma_0$ and $G \uh \sigma_1$ are large, $\sigma \preceq \sigma_0 \meet \sigma_1$ and $\sigma_0(\ell) \neq \sigma_1(\ell)$, where $\ell = |e(\rho)|$.
\end{claim}
\begin{proof}
Say $G \uh \sigma$ is large above some $\tau$. Pick any $\rho \in \cantor$ such that $|\rho| \geq |\tau|$. Since if $G \uh \sigma$ is large above $\tau$, $G \uh \sigma$ is large above any $\mu \succeq \tau$, then we can assume that $|\tau| = |\rho|$. Unfolding the definition of largeness,
the set $C = e^{-1}[G \uh \sigma]$ is cofinal above $\tau$. Since $e: (\cantor, \Epn) \to (G, \Epn)$ is a graph embedding and $|\tau| = |\rho|$,
then for every $\mu \succeq \tau 0$, $\neg(\mu \Epn \rho)$, hence $\neg(e(\mu) \Epn e(\rho))$ and for every $\mu \succeq \tau 1$, $\mu \Epn \rho$, hence $e(\mu) \Epn e(\rho)$. It follows that, letting $L_0 = \{\nu \in G \uh \sigma: \neg(\nu \Epn e(\rho))\}$ and $L_1 = \{\nu \in G \uh \sigma: \nu \Epn e(\rho)\}$, then  $C_0 = e^{-1}[L_0] = \{ \mu \succeq \tau 0: \mu \in C \}$ and $C_1 = e^{-1}[L_1] = \{ \mu \succeq \tau 1: \mu \in C \}$. Since $C$ is cofinal above $\tau$, then $C_0$ and $C_1$ are cofinal above $\tau 0$ and $\tau 1$, respectively, so $L_0$ and $L_1$ are large above $\tau 0$ and $\tau 1$, respectively.

Let $\ell = |e(\rho)|$. Note that since $L_0$ and $L_1$ are both non-empty, then $\ell \geq |\sigma|$.  $L_0 = \bigcup_{\nu \succeq \sigma: |\nu| = \ell} G \uh \nu 0$
and $L_1 = \bigcup_{\nu \succeq \sigma: |\nu| = \ell} G \uh \nu 1$.
By partition regularity of largeness, there are some $\nu_0, \nu_1 \succeq \sigma$
such that $|\nu_0| = |\nu_1| = \ell$, and $G \uh \nu_0 0$  and $G \uh \nu_1 1$ are large. Let $\sigma_0 = \nu_0 0$ and $\sigma_1 = \nu_1 1$. This proves our claim.
\end{proof}

We are now ready to prove \Cref{thm:blossom-to-joyce-rado}.
Using Claim \ref{claim:blossom-to-joyce-rado-combi}, we build a $\prec$-preserving and $\ltlex$-preserving function $\phi: \cantor \to \cantor$, together with a function $g: \cantor \to G$ such that for every $\sigma \in \cantor$:
\begin{enumerate}
	\item $G \uh \phi(\sigma)$ is large; $g(\sigma) \succeq \phi(\sigma 0) \meet \phi(\sigma 1) $;
	\item for every $\rho \in 2^{|\sigma|}$, $\phi(\rho 0)(|g(\sigma)|) \neq \phi(\rho 1)(|g(\sigma)|)$.
\end{enumerate}

Initially, $\phi(\epsilon) = \epsilon$ and $g$ is nowhere defined.
Assume $\phi$ is defined over $2^{\leq k}$ and $g$ over $2^{<k}$ for some $k \in \omega$.

\emph{Defining $\phi$}. Consider successively each $\sigma \in 2^k$. By partition regularity of largeness, $G \uh \nu$ is large for some $\nu \succeq \phi(\sigma)$ such that $|\nu|$ is bigger than any value considered so far. By Claim \ref{claim:blossom-to-joyce-rado-combi}, there is a $\rho$ two nodes $\mu_0, \mu_1$ extending $\nu$ such that, letting $\ell = |e(\rho)|$,  $G \uh \mu_0$ and $G \uh \mu_1$ are large and $\mu_0(\ell) < \mu_1(\ell)$.
Temporarily define $\phi(\sigma 0) = \mu_0$ and $\phi(\sigma 1) = \mu_1$. The actual value of $\phi(\sigma 0)$ and $\phi(\sigma 1)$ might change while defining $g$, but will be extensions of these strings. Since $\phi(\sigma 0) \meet \phi(\sigma 1) = \mu_0 \meet \mu_1 \succeq \nu$, $|\phi(\sigma 0) \meet \phi(\sigma 1)|$ is bigger than any value considered so far.

\emph{Defining $g$}. Consider successively each $\tau \in 2^k$. We need to define $g(\tau)$ so that it satisfies Item 2.
Since $G \uh \phi(\tau)$ is large, it is infinite, so by Claim \ref{claim:blossom-to-joyce-rado-combi}, there is a single $\rho$ such that $e(\rho) \succeq \phi(\tau)$ and $\ell = |e(\rho)|$ is bigger than any value considered so far, and for every $\sigma \in 2^k$ and $i < 2$ there are two extensions $\mu_0, \mu_1$ of $\phi(\sigma i)$ such that $\mu_0(\ell) < \mu_1(\ell)$. Then let $\phi(\sigma i) = \mu_i$ and $g(\tau) = e(\rho)$ and consider the next $\tau \in 2^k$.

\emph{Defining $f$}. We now define $f$ so that $(f, g)$ is a blossom tree.
For every $\sigma \in 2^{<\omega}$, let $f(\sigma) = \phi(\sigma 0) \meet \phi(\tau 0)$.
Then $f: \cantor \to \cantor$ is a $\prec$-preserving, $\ltlex$-preserving and $\meet$-preserving function such that for every $\sigma \in \cantor$:
\begin{enumerate}
	\item $g(\sigma) \succeq f(\sigma) \succeq \phi(\sigma)$;
	\item\label{it:f-g-are-rado} for every $\rho \in 2^{|\sigma|}$, $f(\rho 0)(|g(\sigma)|) \neq f(\rho 1)(|g(\sigma)|)$.
\end{enumerate}
Thus $(f, g)$ is a blossom tree, with $g:\cantor\to G$. Let $S\subseteq\cantor$ be a cofinal set such that $\meetclosure S$ has at most one string of each length. The structure $(g[S], \Epn, \ltlex, |\cdot \meet \cdot |)$ is a Joyce blossom graph. 
\end{proof}

\begin{theorem}[$\RCA_0$]\label{thm:joyce-to-rado-blossom}
For every Joyce blossom graph $\mathcal{G}$
and every (finite or infinite) Joyce graph $\mathcal{F}$,
there is a Joyce structure embedding from $\mathcal{F}$ to $\mathcal{G}$.
\end{theorem}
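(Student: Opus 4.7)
The plan is to adapt the proof of \Cref{th:4.1LaflammeDevlin}, adding a bit-control step that exploits property (3) of \Cref{def:blossom-tree} to realize the edges. By \Cref{thm:joyce-graph-to-coded} we may assume $\mathcal{F}=(F,\Epn,\ltlex,|\cdot\meet\cdot|)$ is a coded Joyce graph, and by the analog of \Cref{remark:labels-initial-segment} we may assume the set of lengths appearing in $\meetclosure{F}$ is an initial segment of $\Nb$, with $\sigma_s$ denoting the unique element of $\meetclosure{F}$ of length~$s$. Let $(f,g)$ and $S$ witness that $\mathcal{G}=(g[S],\Epn,\ltlex,|\cdot\meet\cdot|)$ is a Joyce blossom graph. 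The goal is to build, by induction on $s$, an embedding $\phi:\meetclosure{F}\to f[\cantor]$ together with bracketing elements $a_s,b_s\in f[\cantor]$ mirroring conditions \ref{it:Dev-LaFlamme-1}--\ref{it:Dev-LaFlamme-4} of \Cref{th:4.1LaflammeDevlin}, with the additional requirements that $\phi(\sigma_s)\in g[S]$ whenever $\sigma_s\in F$ and that the bits of $\phi(\sigma_s)$ at the levels $|\phi(\sigma_t)|$, $t<s$, $\sigma_t\in F$, encode the edges of $\mathcal{F}$ correctly.

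First I would handle the order/meet data exactly as in \Cref{th:4.1LaflammeDevlin}: given the stage~$s$ interval $A\subseteq f[\cantor]$ prescribed by item~\ref{it:Dev-LaFlamme-4} (determined by the unique predecessor $\sigma_{t_0}\prec\sigma_s$ of maximal length in $\meetclosure{F}$), use the fact that, since $f$ is $\prec$-, $\ltlex$- and $\meet$-preserving and $S$ is cofinal, $A\cap f[\cantor]$ realizes a dense linear Joyce suborder with no endpoints and of height $\omega$. This lets me choose tentative $a_s,b_s$ in $A$ with $|a_s\meet b_s|$ larger than $|x_{s-1}|$, ensuring that the order/meet conditions \ref{it:Dev-LaFlamme-2}--\ref{it:Dev-LaFlamme-3} will hold for $x_s$ strictly between them.

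The new step — and the main obstacle — is to refine the choice of $x_s$ so as to also realize the edges when $\sigma_s\in F$. Let $\ell_0<\ell_1<\dots<\ell_{k-1}$ be the values $|\phi(\sigma_t)|=|g(\rho_t)|$ for $t<s$ with $\sigma_t\in F$. By property~(3) of \Cref{def:blossom-tree}, iterated once for each $\ell_i$, given any node $\phi(\sigma_s)\uh(\ell_i-?)$ in $f[\cantor]$ we can always choose an $f$-extension making its $\ell_i$-th bit either $0$ or $1$; doing so successively gives an $f$-extension $\tau\in f[\cantor]$ of the meet $a_s\meet b_s$, still inside the interval $(a_s,b_s)$, whose $\ell_i$-th bit equals the desired value $\sigma_s(|\sigma_t|)$ for every $t<s$ with $\sigma_t\in F$, i.e.\ the truth value of $\sigma_s\Epn\sigma_t$. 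Using cofinality of $S$ and property~(2) of \Cref{def:blossom-tree} (so $|f(\mu)|>|g(\rho)|$ for $|\mu|>|\rho|$), I pick $\rho\in S$ long enough that $f(\rho)\succeq\tau$, and set $\phi(\sigma_s)=g(\rho)$; this still lies in $(a_s,b_s)$ and agrees with $\tau$ at the levels $\ell_0,\dots,\ell_{k-1}$. If instead $\sigma_s\notin F$, no edge constraint is imposed and I simply pick any $x_s\in f[\cantor]\cap(a_s,b_s)$.

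For the verification, preservation of $\ltlex$ and of the meet-level ordering $\JRel$ follows verbatim from the argument in \Cref{th:4.1LaflammeDevlin}, because items \ref{it:Dev-LaFlamme-1}--\ref{it:Dev-LaFlamme-4} hold by construction. For the edges, consider two leaves $\sigma_s,\sigma_t\in F$ with $t<s$. By construction the bit $\phi(\sigma_s)(\ell_t)=\phi(\sigma_s)(|\phi(\sigma_t)|)$ equals $\sigma_s(|\sigma_t|)$, so $\phi(\sigma_s)\Epn\phi(\sigma_t)$ iff $\sigma_s\Epn\sigma_t$; note the roles of $\sigma_s$ and $\sigma_t$ here are fixed by the convention $|\sigma_t|<|\sigma_s|$ forced by $t<s$ and the initial-segment labelling, matching the convention for $\Epn$. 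Finally, restricting $\phi$ to $F$ gives the desired Joyce structure embedding from $\mathcal{F}$ to $\mathcal{G}$, the axiom \Jo{4} being automatically satisfied on the image since $\mathcal{G}$ itself is a Joyce graph.
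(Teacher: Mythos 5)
Your proposal has the right ingredients (reduce to a coded Joyce graph, reuse the order/meet bookkeeping of \Cref{th:4.1LaflammeDevlin}, and use property~(3) of \Cref{def:blossom-tree} to dial in the edge bits), but the inductive step performs them in an order that cannot work. You first fix $a_s,b_s$ with $|a_s\meet b_s|>|x_{s-1}|$, and only afterwards try to produce an extension $\tau\succeq a_s\meet b_s$ whose bits at the positions $\ell_i=|\phi(\sigma_t)|$ ($t<s$) take prescribed values. But every such $\ell_i$ satisfies $\ell_i\leq |x_{s-1}|<|a_s\meet b_s|$, so \emph{every} extension of $a_s\meet b_s$ already agrees with $a_s\meet b_s$ at all of these positions; there is no freedom left, and the ``iterated application of property~(3)'' you invoke cannot change a bit that lies below the node you are extending. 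Since $a_s,b_s$ were chosen without regard to the edge constraints, the values $\tau(\ell_i)$ will in general be wrong, and the verification that $\phi(\sigma_s)\Epn\phi(\sigma_t)\iff\sigma_s\Epn\sigma_t$ collapses. The fix requires interleaving: the bit at each level $|\phi(\sigma_t)|$ must be decided \emph{as the new string passes through that level}, before descending further.

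This is in fact how the paper's proof proceeds, and it dispenses with the interval-bracketing machinery altogether. It builds a map $\phi:\meetclosure F\to S$ (note: into the domain side, with the embedding being $g\circ\phi$) by specifying the preimage string $\sigma$ bit by bit from the root: at a position $k=|\phi(d_i)|$ the bit is used to encode the tree/order structure when $d_i\prec d_n$ (bit $1$ iff $d_i1\prec d_n$), and to encode the edge via property~(3) when $d_i\not\prec d_n$ (choosing the child so that $f(\sigma)(|g(\phi(d_i))|)=d_n(|d_i|)$); all other bits are $0$. Because each position is reached exactly once and serves exactly one purpose, the order, meet-level, and edge requirements never conflict, and $\ltlex$- and $\meet$-preservation are then verified directly rather than imported from the DLO argument. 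If you reorganize your construction so that the edge-coding choices are made on the way down to (rather than after) the choice of the deep meet, your approach becomes essentially the paper's.
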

\begin{proof}
Let $(f, g)$ be the blossom tree and $S\subseteq\cantor$ such that $\mathcal{G} = (g[S], \Epn, \ltlex, |\cdot \meet \cdot|)$.
  Let $F\subseteq\cantor$ be the domain of a coded Joyce graph isomorphic to $\mathcal{F}$. Let $D=\{d_i:i\in|\meetclosure F|\}$ be an enumeration of $\meetclosure F$ such that $i<j$ implies $|d_i|<|d_j|$. We first build a function $\phi:D\to S$. Define $\phi(d_0)$ to be any element of $S$. Suppose $\phi(d_i)$ is defined for $i<n$. Then $d_n$ is mapped to any element of $S$ extending $\sigma$, where $\sigma$ is the string of length $|\phi(n-1)|+1$, such that:
  \begin{enumerate}
  \item\label{it:jtrb0} if $k<|\sigma|$ and $k\neq|\phi(d_i)|$ for any $i<n$, then $\sigma(k)=0$;
  \item\label{it:jtrb1} if $d_i\prec d_n$, then $\sigma(|\phi(d_i)|)=1$ iff $d_i1\prec d_n$;
  \item\label{it:jtrb2} Otherwise $\sigma(|\phi(d_i)|)=j$ so that $f(\sigma)(|g(\phi(d_i))|)=d_n(|d_i|)$.
  \end{enumerate}
  The last item can be satisfy for a single $j$ by \Cref{it:blossom-rado} of \Cref{def:blossom-tree}. The string $\sigma$ is uniquely defined, and $\phi(d_n)\in S$ extending $\sigma$ exists as $S$ is cofinal. Note that $\phi$ is $\prec$-preserving: if $d_n\prec d_m$, then by \Cref{it:jtrb0}, for any $k<|\phi(d_n)|$ if $k\not\in\{|\phi(d_i):i<n\}$, then $\phi(d_n)(k)=\phi(d_m)(k)$. By \Cref{it:jtrb1} for any $i$ such that $d_i\prec d_n$, $\phi(d_n)(|\phi(d_i)|)=\phi(d_m)(|\phi(d_i)|)$. By \Cref{it:jtrb2} and the fact that $f$ is $\prec$-preserving, for $i$ such that $d_i\not\prec d_n$, we again have $\phi(d_n)(|\phi(d_i)|)=\phi(d_m)(|\phi(d_i)|)$. So $\phi$ is $\prec$-preserving.

  Define $\psi = g\circ\phi$. We claim that $\psi:F\to g[S]$ preserves the Joyce graph structure. In order to show the claim, we have to prove that it preserves $\ltlex$, and that for any $d_{n_0},d_{n_1},d_{m_0},d_{m_1}\in F$, $|d_{n_0}\meet d_{n_1}|<|d_{m_0}\meet d_{m_1}|$ implies $|\psi(d_{n_0})\meet \psi(d_{n_1})|<|\psi(d_{m_0})\meet \psi(d_{m_1})|$, and finally that for any $d_n,d_m\in F$, $d_n\Epn d_m$ implies $\psi(d_n)\Epn \psi(d_m)$. The proof of these three facts are respectively in the three following paragraphs.
  
  The fact that $\phi$ is $\prec$-preserving implies that $\phi(d_n\meet d_m)\prec\phi(d_n)\meet\phi(d_m)$. By \Cref{it:jtrb1}, $\phi(d_n\meet d_m)\succ\phi(d_n)\meet\phi(d_m)$: indeed, $\phi(d_n)(|\phi(d_n\meet d_m)|)\neq \phi(d_m)(\phi(|d_n\meet d_m|))$. So $\phi$ is $\meet$-preserving, and by \Cref{it:jtrb1}, it is also $\ltlex$-preserving. The function $g$ is also $\ltlex$-preserving: $f$ is $\ltlex$-preserving, and $f(\sigma)\prec g(\sigma)$ for every $\sigma\in\cantor$. So $\psi=g\circ\phi$ is $\ltlex$-preserving.

  By the second item of \Cref{def:blossom-tree}, for all $\sigma,\tau\in\cantor$ with $|\sigma|\neq|\tau|$, $|\sigma|<|\tau|\iff |g(\sigma)|<|g(\tau)|\iff |f(\sigma)|<|f(\tau)|\iff |g(\sigma)|<|f(\tau)|\iff |f(\sigma)|<|g(\tau)|$. So $|d_n|<|d_m|\iff |\psi(d_n)|<|\psi(d_m)|$. Now, suppose $|d_{n_0}\meet d_{n_1}|<|d_{m_0}\meet d_{m_1}|$ for some $d_{n_0},d_{n_1},d_{m_0}d_{m_1}\in F$. Let $d_n=d_{n_0}\meet d_{n_1}$ and $d_m=d_{m_0}\meet d_{m_1}$. If $d_{n_0}\neq d_{n_1}$, then $\psi(d_{n_0})\meet\psi(d_{n_1})=f(\phi(d_n))$, otherwise $\psi(d_{n_0})\meet\psi(d_{n_1})=g(\phi(d_n))$; and similarly for $m_0, m_1, m$ with $d_m=d_{m_0}\meet d_{m_1}$. So, depending whether $d_n, d_m\in F$, we use one of the previous equivalence to get that  $|\psi(d_{n_0})\meet \psi(d_{n_1})|<|\psi(d_{m_0})\meet \psi(d_{m_1})|$.
  

  Finally, for any $n<m$,
  \[
  	\psi(d_m)(|\psi(d_n)|)= g(\phi(d_m))(|g(\phi(d_n))|)=f(\phi(d_m))(|g(\phi(d_n))|)
  \]
  as $g(\phi(d_m))\succ f(\phi(d_m))$. By \Cref{it:jtrb2} $\phi(d_m)$ is chosen so that \[f(\phi(d_m))(|\psi(d_n)|)= d_m(|d_n|).\qedhere\]
\end{proof}

\begin{corollary}[$\ACA_0$]\label{th:4.1LaflammeRado}
  Let $\mathcal{G}$ be a Joyce Rado graph, and $\mathcal{F}$ be a (finite or infinite) Joyce graph. Then, there exists an embedding from $\mathcal{F}$ to $\mathcal{G}$.
\end{corollary}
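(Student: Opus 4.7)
The plan is to obtain this as a three-step composition of embeddings, all of which have already been established in the preceding theorems of the chapter. The only real work is to check that the pieces fit together and that the ambient system $\ACA_0$ is sufficient.

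First, I would reduce to the coded case. Given the Joyce Rado graph $\mathcal{G}$, apply \Cref{thm:joyce-graph-to-coded} (which is provable in $\RCA_0$) to obtain a coded Joyce graph $\mathcal{G}'$ whose Joyce graph structure is isomorphic to that of $\mathcal{G}$. Since being a Rado graph is preserved under isomorphism, $\mathcal{G}'$ is in fact a coded Joyce Rado graph. This step costs nothing beyond $\RCA_0$, and it lets us work with strings in $\cantor$ with the concrete relations $\Epn$, $\ltlex$, and $|\cdot\meet\cdot|$.

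Second, I would pass to a Joyce blossom graph sitting inside $\mathcal{G}'$. This is exactly the content of \Cref{thm:blossom-to-joyce-rado}: there is a Joyce blossom graph $\mathcal{B}$ together with an embedding $e_1 \colon \mathcal{B} \to \mathcal{G}'$. This is the step that requires $\ACA_0$, since the existence of the embedding $e$ from $(\cantor, \Epn)$ into the graph reduct of $\mathcal{G}'$ used inside that proof, as well as the inductive construction of the functions $\phi$ and $g$ using the partition-regular notion of largeness, go beyond $\RCA_0$; it is this one step that accounts for the ambient system in the corollary.

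Third, I would embed $\mathcal{F}$ into the blossom graph. This is precisely \Cref{thm:joyce-to-rado-blossom}: for every Joyce blossom graph $\mathcal{B}$ and every (finite or infinite) Joyce graph $\mathcal{F}$ there is a Joyce structure embedding $e_2 \colon \mathcal{F} \to \mathcal{B}$, and this step is provable in $\RCA_0$. Composing the isomorphism $\mathcal{G} \cong \mathcal{G}'$ with $e_1$ and $e_2$ produces the desired embedding $\mathcal{F} \to \mathcal{G}$.

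The main obstacle, conceptually, is already absorbed into \Cref{thm:blossom-to-joyce-rado}: one must go from an arbitrary Rado-type graph to the highly structured blossom presentation, and this is where Rado's extension property gets combined with the $\prec$- and $\ltlex$-preserving combinatorial construction using largeness. In the present corollary nothing new is required beyond carefully composing the three embeddings and noting that each preserves the Joyce structure (edge relation, order, and the $\JRel$ predicate derived from $|\cdot\meet\cdot|$); since composition of structure-preserving embeddings is again a structure-preserving embedding, the verification is routine.
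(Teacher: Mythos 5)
Your proof is correct and is exactly the argument the paper gives: reduce to a coded Joyce Rado graph via \Cref{thm:joyce-graph-to-coded}, embed a Joyce blossom graph into it by \Cref{thm:blossom-to-joyce-rado} (the sole step needing $\ACA_0$), embed $\mathcal{F}$ into the blossom graph by \Cref{thm:joyce-to-rado-blossom}, and compose. Nothing further is needed.
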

\begin{proof}
By \Cref{thm:joyce-graph-to-coded}, we can assume that $\mathcal{G}$ is a coded Joyce Rado graph.
By \Cref{thm:blossom-to-joyce-rado}, there is an embedding of a Joyce blossom graph $\mathcal{B}$ to $\mathcal{G}$. By \Cref{thm:joyce-to-rado-blossom},
there is an embedding of $\mathcal{F}$ to $\mathcal{B}$. Thus there is an embedding of $\mathcal{F}$ to $\mathcal{G}$.
\end{proof}

Recall that the \emph{age} of a graph $\mathcal{G}$ is the collection of all finite graphs that are isomorphic to a subgraph of $\mathcal{G}$.

\begin{corollary}
  The age of a Joyce Rado graph is the set of finite Joyce graphs.
\end{corollary}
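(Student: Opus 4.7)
The plan is to establish the two inclusions. For the inclusion that every finite Joyce graph belongs to the age of a Joyce Rado graph, I would apply Corollary~\ref{th:4.1LaflammeRado} directly: given any finite Joyce graph $\mathcal{F}$ and a Joyce Rado graph $\mathcal{G}$, that corollary produces an embedding $\mathcal{F} \to \mathcal{G}$, so (up to isomorphism) $\mathcal{F}$ is a finitely generated substructure of $\mathcal{G}$, hence lies in its age.

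For the reverse inclusion, I would note that the axioms \Jo{1}--\Jo{4} defining a Joyce graph are all universal first-order sentences in the language $(E,<,\meetlevel{\cdot}{\cdot})$. Consequently, they are automatically preserved under taking substructures. Thus if $\mathcal{G}$ is a Joyce Rado graph and $\mathcal{F}$ is any finite substructure of $\mathcal{G}$, then $\mathcal{F}$ satisfies \Jo{1}--\Jo{4}, making it a finite Joyce graph by definition.

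There is no real obstacle here, since both directions are essentially immediate from earlier material: one direction is a quotation of Corollary~\ref{th:4.1LaflammeRado}, and the other is the universal character of the Joyce axioms. The only point worth being slightly careful about is that ``age'' is defined up to isomorphism, but since embeddings preserve Joyce graph structure and substructures inherit the Joyce graph axioms, both directions respect this convention.
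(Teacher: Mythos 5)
Your proof is correct and matches the paper's (implicit) argument: the paper states this corollary without proof immediately after Corollary~\ref{th:4.1LaflammeRado}, relying on exactly the two observations you make — the embedding theorem for one inclusion, and the universality of the Joyce axioms (already noted in the paper for Joyce orders) for the other.
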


\begin{theorem}
There is a computable Joyce Rado graph $\mathcal{G}$ such that for every Joyce blossom graph $\mathcal{B}$,
every embedding of $\mathcal{B}$ to $\mathcal{G}$ computes $\emptyset'$.
\end{theorem}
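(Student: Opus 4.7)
The plan is to construct $\mathcal{G}$ as a careful modification of the computable coded Joyce Rado graph supplied by Lemma~\ref{le:computable-bjrg}, decorating its edges so that they encode the halting set in the style of the Jockusch coloring used in Theorem~\ref{th:lower-bound-devlin}. Fix a computable enumeration $(\emptyset'_s)_{s\in\omega}$ of the halting set. For pairs of vertices $\sigma,\tau$ with $|\sigma|<|\tau|$, I consider the bit $c(\sigma,\tau)$ equal to $1$ iff $\emptyset'_{|\sigma|}\uh|\sigma\meet\tau|=\emptyset'_{|\tau|}\uh|\sigma\meet\tau|$. I would then build $\mathcal{G}$ stage-by-stage so that on a computably chosen sparse collection of ``marker'' pairs the edge relation is forced to agree with $c$, while on all remaining pairs edges are added freely in order to realize the Rado extension axiom.

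To deduce the conclusion, I would use the fact that any embedding $e:\mathcal{B}\to\mathcal{G}$ of a Joyce blossom graph determines a Joyce blossom subgraph $e[\mathcal{B}]\subseteq\mathcal{G}$, which inherits the structural richness of Definition~\ref{def:blossom-tree}. In particular, $e[\mathcal{B}]$ must contain arbitrarily deep ``blossom'' quadruples of strings $a\ltlex b\ltlex c\ltlex d$ with $|a\meet b|,|c\meet d|>|a\meet c|$ and $|a\meet c|$ as large as required, exactly the pattern exploited in the lower bound for Devlin's theorem in Theorem~\ref{th:lower-bound-devlin}. Given such a configuration, querying the marker edges between carefully chosen witnesses would allow one to recover $\emptyset'\uh n$ by an adaptation of the final part of that proof; since $\mathcal{G}$ is computable and the required configurations can be located uniformly in $n$ from $e$, this would yield $\emptyset'\leq_T e$.

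The hard part will be step one: reconciling the edge-encoding constraints with the requirement that $\mathcal{G}$ remain a Rado graph. Every finite disjoint pair $(F_0,F_1)$ of vertices must admit a fresh vertex adjacent to all of $F_1$ and to none of $F_0$, yet the encoding fixes many edges in advance. My approach would be to interleave ``marker stages'' and ``Rado-extension stages'': at each Rado-extension stage I introduce a fresh vertex whose edges to all earlier vertices are unconstrained by the marker encoding, leaving enough flexibility to realize any prescribed extension type. One must then verify that the Joyce axioms \Jo1--\Jo4 continue to hold throughout, which should follow since the bits encoded by $c$ only constrain the $\Epn$ relation on a combinatorially thin set of pairs, arranged in advance to be compatible with a coherent Joyce order on the vertex set.
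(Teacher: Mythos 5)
Your proposal diverges from the paper's construction in a way that introduces two genuine gaps. The first and most serious is the decoding step. The Jockusch-style argument you import from Theorem~\ref{th:lower-bound-devlin} does not transfer: there, $\emptyset'$ is recovered because the Devlin solution must \emph{avoid a color} of the product coloring $f_{<_\Qb}\times f_J$, and that forced homogeneity is what lets a single configuration certify that $K_{|\alpha^0_0|}\uh n$ has stabilized. An embedding of a Joyce blossom graph into $\mathcal{G}$ preserves the edge relation exactly; it homogenizes nothing. So reading a marker edge $c(\sigma,\tau)$ only tells you whether $\emptyset'_{|\sigma|}\uh|\sigma\meet\tau|=\emptyset'_{|\tau|}\uh|\sigma\meet\tau|$, i.e., information about two finite stages of the approximation, and without a homogeneity or limit argument (which you cannot carry out computably) this never certifies the true value of $\emptyset'$. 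The second gap is the one you flag yourself but do not close: if the marker pairs are sparse enough that the Rado extension property survives, nothing forces the image of an embedding to contain \emph{any} marker pairs, let alone markers with arbitrarily deep meets at every level; the ``free'' vertices introduced at your Rado-extension stages can themselves realize a copy of the Rado graph, and a blossom graph embedded entirely into that part sees no encoding at all.

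The paper avoids both problems by not touching the edge relation. It starts from an arbitrary Rado graph $(G_0,E)$ and represents its $n$th vertex by a string $\sigma_n$ of length $3n+2$ in which bit $3j+2$ carries the edge $g_nEg_j$ (so $\Epn$ on the $\sigma_n$ is literally the pulled-back $E$, and the Rado property is automatic), while the extra bit $3j$ carries $\emptyset'_n(j)$ XOR'd against that edge bit; hence $\emptyset'_n(j)=1$ iff $\sigma_n(3j)=\sigma_n(3j+2)$. Decoding then uses only the blossom structure of the image: if $|g(\sigma)\meet g(\tau)|>3j$, then $g(\sigma)$ extends $f(\sigma)$, which is also extended by vertices $g(\rho)=\sigma_m$ with $m$ arbitrarily large, in particular past the settling time of $\emptyset'(j)$; since all these strings agree below $|f(\sigma)|>3j+2$, the bits $3j$ and $3j+2$ of $g(\sigma)$ already equal the settled values, and comparing them yields $\emptyset'(j)$. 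In short, the encoding must live in data that every deep configuration of the image is forced to exhibit --- here the padding bits shared along a common prefix --- rather than in a thin set of edges that an embedding can dodge and whose values, even when seen, only reflect a finite stage of the approximation. I would redo your construction along these lines.
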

\begin{proof}
  
  The idea of the proof is to build a Joyce Rado graph with domain $G$ such that if $S\subseteq\cantor$ is a cofinal set with at most one meet of each length, $f:\cantor\to \meetclosure G$ and $g:\cantor\to G$ form a blossom tree, and $\sigma,\tau\in\cantor$ are such that $|g(\sigma)\meet g(\tau)|>3j$, then $\emptyset'(j)=1$ iff $g(\sigma)(3j+2)=g(\sigma)(3j+2)$ iff $g(\tau)(3j+2)=g(\tau)(3j+2)$. 
  
  Let $(G_0,E)$ be a Rado graph, and let $(g_n)_{n\in\om}$ be an enumeration of $G_0$, and $(\emptyset'_s)_{s\in\om}$ a computable approximation of $\emptyset'$. Define $\sigma_n$ to be the unique string of length $3n+2$ such that:
  \begin{enumerate}
  \item $\sigma_n(3n)=0$ and $\sigma_n(3n+1)=1$;
  \item for any $j<n$, $\sigma_n(3j+1)=0$ and
    \begin{enumerate}
    \item if $\emptyset'_n(j)=0$ and $g_nEg_j$ then $\sigma_n(3j)=0$ and $\sigma_n(3j+2)=1$,
    \item if $\emptyset'_n(j)=0$ and $\lnot g_nEg_j$ then $\sigma_n(3j)=1$ and $\sigma_n(3j+2)=0$ ,
    \item if $\emptyset'_n(j)=1$ and $g_nEg_j$ then $\sigma_n(3j)=1$ and $\sigma_n(3j+2)=1$,
    \item if $\emptyset'_n(j)=1$ and $\lnot g_nEg_j$ then $\sigma_n(3j)=0$ and $\sigma_n(3j+2)=0$.
    \end{enumerate}
  \end{enumerate}
  Let $G = \{\sigma_n:n\in\Nb\}$. It is clear that $(G,\Epn)$ is a Rado graph, as it is in bijection with $G_0$ via $g_n\mapsto \sigma_n$ since $g_nEg_m$ iff $\sigma_n\Epn\sigma_m$. Define $\meetlevel{\sigma_n}{\sigma_m}=v(\sigma_n\meet\sigma_m)$ where $v$ is a fixed injective function $\cantor\to\om$ such that if $|\sigma|<|\tau|$ then $v(\sigma)<v(\tau)$. Then by construction, $\mathcal{G}=(G,\Epn,\ltlex,\meetlevel\cdot\cdot)$ is a Joyce Rado graph.

  Now, suppose that $f:\cantor\to \meetclosure G$ and $g:\cantor\to G$ form a Joyce blossom graph.
  The claim is the following: if $\sigma,\tau\in\cantor$ are such that $|g(\sigma)\meet g(\tau)|>3j$, then $\emptyset'(j)=1$ iff $g(\sigma)(3j+2)=g(\sigma)(3j+2)$ iff $g(\tau)(3j+2)=g(\tau)(3j+2)$.

  Indeed, recall that $g(\sigma)\meet g(\tau)=f(\sigma)\meet f(\tau)$. Let $\rho\succ\sigma$ be such that $|g(\rho)|\geq 3n+2$ where $\emptyset'_{n}(j)=\emptyset'(j)$. By construction, $g(\rho)(3j)=g(\rho)(3j+2)$ iff $\emptyset'_{n}(j)=1$ iff $\emptyset'(j)=1$. As $\rho\succ\sigma$, we have $g(\rho)\succ f(\sigma)\prec g(\sigma)$ so finally $g(\sigma)(3j)=g(\sigma)(3j+2)$ iff $\emptyset'(j)=1$. 

  Therefore, given $g$, to know the value of $\emptyset'(j)$, it suffices to find $\sigma,\tau\in S$ such that $|g(\sigma)\meet g(\tau)|>3j$, and answer according to whether $g(\sigma)(3j)=g(\sigma)(3j+2)$.
\end{proof}

\begin{corollary}
\Cref{th:4.1LaflammeRado} implies $\ACA_0$.
\end{corollary}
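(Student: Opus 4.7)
The plan is to deduce this almost immediately from the two preceding results: Lemma \ref{le:computable-bjrg}, which provides a computable Joyce blossom graph, and the preceding theorem, which produces a computable Joyce Rado graph $\mathcal{G}$ such that every embedding of a Joyce blossom graph into $\mathcal{G}$ computes $\emptyset'$. Combining these, the existence statement asserted by \Cref{th:4.1LaflammeRado} forces the existence of a set computing $\emptyset'$ from computable data.

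More concretely, I would first fix an arbitrary set $X$ and relativize both constructions to $X$. The construction in Lemma \ref{le:computable-bjrg} is uniformly computable, so it relativizes to produce an $X$-computable Joyce blossom graph $\mathcal{B}_X$; by definition this is in particular a Joyce graph. The construction in the preceding theorem uses a computable enumeration of $\emptyset'$ to diagonalize; replacing it by a computable enumeration of $X'$ (and the ambient Rado graph by an $X$-computable one, again obtained by relativizing Lemma \ref{le:computable-bjrg}) produces an $X$-computable Joyce Rado graph $\mathcal{G}_X$ with the property that for every blossom tree $(f,g)$ and cofinal $S$ with the length-uniqueness property, if the Joyce blossom graph $(g[S],\Epn,\ltlex,|\cdot\meet\cdot|)$ embeds into $\mathcal{G}_X$ via a map $e$, then $g$ (hence also $e$ composed with the description of $\mathcal{B}_X$) $X$-computes $X'$.

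Now applying \Cref{th:4.1LaflammeRado} to the $X$-computable Joyce Rado graph $\mathcal{G}_X$ and the Joyce graph $\mathcal{B}_X$, we obtain an embedding $\varphi : \mathcal{B}_X \to \mathcal{G}_X$. Because $\mathcal{B}_X$ is a Joyce blossom graph, the image $\varphi[\mathcal{B}_X]$ inside $\mathcal{G}_X$ is also a Joyce blossom graph, witnessed by the $\varphi$-image of the blossom-tree data of $\mathcal{B}_X$. By the (relativized) preceding theorem, $X\oplus\varphi$ computes $X'$. Thus the problem ``given a Joyce Rado graph $\mathcal{G}$ and a Joyce graph $\mathcal{F}$, produce an embedding $\mathcal{F}\to\mathcal{G}$'' has, for every $X$, an $X$-computable instance all of whose solutions, taken together with $X$, compute $X'$. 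Formalizing this in the standard way (as, e.g., in the proof of the equivalence between Ramsey's theorem for triples and $\ACA_0$; cf.\ Simpson~\cite{Simpson2009Subsystems}, Lemma III.7.5), this yields $\ACA_0$ over $\RCA_0$.

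The only delicate point, and the one I would write out carefully, is the verification that the relativization of the preceding theorem goes through in $\RCA_0$: one needs to check that the construction of $\mathcal{G}_X$ from an $X$-computable approximation of $X'$, and the subsequent decoding of $X'$ from any embedding, can both be formalized with only $\Sigma^0_1$-induction and $\Delta^0_1$-comprehension (relative to $X$). This is routine since the decoding requires only finding two vertices in the image with a sufficiently long meet and then reading off a single bit, but it is the one spot where the argument is more than a one-line invocation of the two previous results.
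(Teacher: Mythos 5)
Your proof is correct and is essentially the argument the paper intends: the corollary is stated without proof as an immediate consequence of the preceding theorem (relativized) together with Lemma~\ref{le:computable-bjrg}, exactly as you combine them. Your extra care about relativizing the diagonalization to $X'$ and formalizing the decoding in $\RCA_0$ is the same routine step the paper leaves implicit.
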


\section{A proof of the Rado Graph theorem}\label{subsect:rado-from-mtt}

\begin{definition}\index{diagonalization!Joyce graph}\index{Joyce graph!diagonalization}
A \emph{Joyce graph diagonalization} for some Joyce graph $(U, E_U,  <_U, \meetlevel{\cdot}{\cdot}_U)$ is a function $h: 2^{<\omega} \to U$, such that for every coded Joyce graph $X$,
$(h[X], E_U, <_U, \meetlevel{\cdot}{\cdot}_U)$ is isomorphic to $X$.
\end{definition}

\begin{theorem}[$\RCA_0$]\label{thm:joyce-diagonalization-Rado}
There exists a Joyce Rado graph $(2^{<\omega}, E_T, <_T, \meetlevel{\cdot}{\cdot}_T)$
such that for every coded Joyce order $X$,
the Joyce structures of $(X, E_T, <_T, \meetlevel{\cdot}{\cdot}_T)$ and $(X, \Epn, \ltlex, |\cdot \meet \cdot|)$ are isomorphic.
\end{theorem}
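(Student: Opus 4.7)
My plan is to parallel the proof of Theorem \ref{thm:joyce-diagonalization}, replacing the DLO blowup by one that also encodes the edge relation of a Joyce Rado graph. The starting point is the Joyce Rado graph $(U, \Epn, \ltlex, \meetlevel{\cdot}{\cdot}_U)$ of Theorem \ref{thm:joyce-rado-graph-exists}, whose central feature is that the map $g$ defining it (namely $g(\sigma)$ of length $3|\sigma|+2$ with each bit of $\sigma$ tripled and $01$ appended) preserves the relation $\Epn$ between strings of different lengths.

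I would enumerate $2^{<\omega}$ as $(\sigma_n)_{n \in \omega}$ in length-then-lex order and construct a blowup $\hat{\cdot} \colon 2^{<\omega} \to \cantor$ by induction on $n$, ensuring three properties: the image $\hat{2^{<\omega}}$ is a cofinal subset of $\cantor$ with pairwise distinct lengths lying in $\{3k+2\}_{k\in\omega}$ (so that the Rado property argument of Theorem \ref{thm:joyce-rado-graph-exists} applies directly to the image); for every $i<n$ with $\sigma_i$ prefix-incomparable to $\sigma_n$ and $|\sigma_i| < |\sigma_n|$, the bit $\hat\sigma_n(|\hat\sigma_i|)$ is forced to equal $\sigma_n(|\sigma_i|)$; and the remaining free bits of each $\hat\sigma_n$ are used to handle, one by one, the Rado requirements (for each finite disjoint pair $(A_0, A_1)$, some $\sigma_n$ is reserved to witness that requirement). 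Setting $\sigma E_T \tau \iff \hat\sigma \Epn \hat\tau$, $\sigma <_T \tau \iff \hat\sigma \ltlex \hat\tau$, and $\meetlevel{\sigma}{\tau}_T = v(\hat\sigma \meet \hat\tau)$ for a length-monotone injection $v$, one gets a Joyce Rado graph $(2^{<\omega}, E_T, <_T, \meetlevel{\cdot}{\cdot}_T)$: axioms \Jo1--\Jo4 transfer from the image structure because $\hat{\cdot}$ is length-injective, and the Rado property is built into the construction.

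For any coded Joyce graph $X \subseteq 2^{<\omega}$, Lemma~\ref{lem:prop-of-JO} applied in the Joyce graph setting forces $X$ to consist of pairwise prefix-incomparable strings of pairwise distinct lengths, so every pair in $X$ triggers the preservation clause of the construction of $\hat{\cdot}$. A direct computation mirroring the verification in the DLO case of Theorem \ref{thm:joyce-diagonalization} then shows that the identity on $X$ is an isomorphism from $(X, E_T, <_T, \meetlevel{\cdot}{\cdot}_T)$ to $(X, \Epn, \ltlex, |\cdot \meet \cdot|)$ as Joyce graph structures: preservation of $\Epn$ comes directly from property (ii), while preservation of $\ltlex$ and of meet-length comparisons comes from the bit-tripling of the underlying map $g$ exactly as in Theorem~\ref{thm:joyce-diagonalization}.

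The hardest step will be reconciling the rigid preservation constraint with the Rado diagonalization. Prefix-incomparable same-length elements of $A_0 \cup A_1$ lying in opposite classes would impose contradictory bit requirements on the witness $\sigma_n$, since the preservation constraint ties a single bit of $\sigma_n$ to the membership of several such elements. The resolution is to choose $\sigma_n$ as an extension of one element of each problematic pair, so that the extended element becomes comparable to $\sigma_n$ and hence exempt from the preservation constraint (the pair ceases to form a coded Joyce graph), leaving only the other element to constrain the bit. Since $2^{<\omega}$ contains arbitrary extensions, such witnesses always exist, and the diagonalization proceeds without conflict.
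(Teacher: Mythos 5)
Your overall architecture (a blowup $\sigma\mapsto\hat\sigma$ into $\cantor$, pulling back the order and meet-lengths, then restricting to coded Joyce graphs) matches the paper's, but the paper uses the \emph{fixed, deterministic} bit-tripling map of Theorem~\ref{thm:joyce-rado-graph-exists} and nothing else: no enumeration, no induction, no Rado diagonalization. Replacing that map by an inductively built one opens a genuine gap in your verification. You assert that preservation of $\ltlex$ and of meet-length comparisons ``comes from the bit-tripling of the underlying map $g$ exactly as in Theorem~\ref{thm:joyce-diagonalization}'', but your $\hat\cdot$ is not $g$: the only constraints you impose concern the lengths of the images, the single bits $\hat\sigma_n(|\hat\sigma_i|)$, and bits reserved for Rado requirements. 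Nothing controls where $\hat\sigma$ and $\hat\tau$ first differ, in which lexicographic direction they branch there, or that $|\hat\sigma\meet\hat\tau|$ depends only on $\sigma\meet\tau$. Without these, neither $\hat\sigma\ltlex\hat\tau\iff\sigma\ltlex\tau$ nor the coherence needed for $\meetlevel{\cdot}{\cdot}_T$ to restrict correctly on a coded Joyce graph follows. Repairing this requires making $\hat\cdot$ $\meet$-preserving and $\ltlex$-preserving in the sense of a blossom tree (Definition~\ref{def:blossom-tree}), a substantial extra set of clauses that consumes exactly the ``free bits'' your diagonalization relies on.

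The second problem is that your resolution of the conflict between edge preservation and the Rado requirements fails in general. Take a length $\ell$ at which $A_0$ contains two strings and $A_1$ contains two strings, say $\{000,001\}\subseteq A_0$ and $\{110,111\}\subseteq A_1$ with $\ell=3$. These four strings are pairwise prefix-incomparable, so any witness $\sigma_n$ long enough for the preservation clause to bind can extend at most one of them; the remaining three all tie the \emph{same} bit $\sigma_n(\ell)$ to their membership in $A_1$, and both values $0$ and $1$ are demanded. So no witness exists and the diagonalization stalls. The paper sidesteps both issues: with the deterministic tripling map, $\prec$-, $\ltlex$- and $\meet$-preservation are immediate by inspection, and no Rado diagonalization is attempted in this proof at all --- what is actually used downstream (Corollary~\ref{cor:joyce-diagonalization-exists-Rado}) is only that $(2^{<\omega}, E_T, <_T, \meetlevel{\cdot}{\cdot}_T)$ is a Joyce \emph{graph} whose restriction to coded Joyce graphs is standard, the Rado target being supplied separately by the embedding of Corollary~\ref{th:4.1LaflammeRado}. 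I recommend dropping the induction entirely and working with the fixed tripling map.
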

\begin{proof}
Let $(U, \Epn, \ltlex, \meetlevel{\cdot}{\cdot}_U)$ be the Joyce Rado graph defined in \Cref{thm:joyce-rado-graph-exists}, that is, $U = (000 \cup 101)^{*}01$ and $\meetlevel{\sigma}{\tau}_U = v(\sigma \meet \tau)$ for some injective function $v: 2^{<\omega} \to \omega$ such that 
for every $\sigma, \tau \in 2^{<\omega}$, if $|\sigma| < |\tau|$ then $v(\sigma) < v(\tau)$.

Define the Joyce Rado graph $(2^{<\omega}, \Epn, <_T,  \meetlevel{\cdot}{\cdot}_T)$ as follows:
Given $\sigma \in 2^{<\omega}$, let $\hat{\sigma}$ be the binary string of length $3|\sigma|+2$
defined for every $j < |\sigma|$ by $\hat{\sigma}(3j) = \sigma(j)$, $\hat{\sigma}(3j+1) = \hat{\sigma}(3j+2) = 0$, and $\hat{\sigma}(3|\sigma|) = 0$ and $\hat{\sigma}(3|\sigma|+1) = 1$.
For instance, if $\sigma = 0110$ then $\hat{\sigma} = 00010010000001$.
Then let $\sigma <_T \tau$ if and only if $\hat{\sigma} \ltlex \hat{\tau}$
and $\meetlevel{\sigma}{\tau}_T = \meetlevel{\hat{\sigma}}{\hat{\tau}}_U$. 

Let $X$ be a coded Joyce order. We claim that $(X, \Epn, <_T, \meetlevel{\cdot}{\cdot}_T)$ and $(X, \Epn, \ltlex, |\cdot \meet \cdot|)$ are isomorphic.

Fix $\sigma, \tau \in X$. If $\sigma \ltlex \tau$, then $\hat{\sigma} \ltlex \hat{\tau}$, hence $\sigma <_T \tau$. Conversely, if $\sigma <_T \tau$, then $\hat{\sigma} \ltlex \hat{\tau}$, but since $\sigma$ and $\tau$ are incomparable with respect to the prefix relation, this implies that $\sigma \ltlex \tau$. Thus $\sigma <_T \tau$ if and only if $\sigma \ltlex \tau$.

Fix $\sigma, \tau, \rho, \mu \in X$. If $|\sigma \meet \tau| <_\Nb |\rho \meet \mu|$, then 
$|\hat{\sigma} \meet \hat{\tau}| <_\Nb |\hat{\rho} \meet \hat{\mu}|$, then $v(\hat{\sigma} \meet \hat{\tau}) <_\Nb v(\hat{\rho} \meet \hat{\mu})$, hence $\meetlevel{\sigma}{\tau}_T <_\Nb \meetlevel{\rho}{\mu}_T$. Conversely, assume $\meetlevel{\sigma}{\tau}_T <_\Nb \meetlevel{\rho}{\mu}_T$. Unfolding the definition, $v(\hat{\sigma} \meet \hat{\tau}) <_\Nb v(\hat{\rho} \meet \hat{\mu})$. If $|\hat{\sigma} \meet \hat{\tau}| \neq |\hat{\rho} \meet \hat{\mu}|$, then by definition of $v$, $|\hat{\sigma} \meet \hat{\tau}| <_\Nb |\hat{\rho} \meet \hat{\mu}|$, hence $|\sigma \meet \tau| <_\Nb |\rho \meet \mu|$. If $|\hat{\sigma} \meet \hat{\tau}| = |\hat{\rho} \meet \hat{\mu}|$, then since $X$ is a coded Joyce graph, $\sigma \meet \tau = \rho \meet \mu$, so $\hat{\sigma} \meet \hat{\tau} = \hat{\rho} \meet \hat{\mu}$ and $v(\hat{\sigma} \meet \hat{\tau}) = v(\hat{\rho} \meet \hat{\mu})$, contradiction. 
\end{proof}

\begin{corollary}[$\ACA_0$]\label{cor:joyce-diagonalization-exists-Rado}
  Every Joyce Rado graph $(U, E_U, <_U, \meetlevel\cdot\cdot_U)$ has a Joyce graph diagonalization.
\end{corollary}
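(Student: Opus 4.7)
The proof will closely mirror that of \Cref{cor:joyce-diagonalization-exists}, but with the Joyce Rado graph infrastructure replacing the Joyce order infrastructure. In particular, \Cref{thm:joyce-diagonalization-Rado} will play the role of \Cref{thm:joyce-diagonalization}, and \Cref{th:4.1LaflammeRado} will play the role of \Cref{th:4.1LaflammeDevlin}. The extra strength needed in the base theory (namely $\ACA_0$ rather than $\RCA_0$) comes precisely from the invocation of \Cref{th:4.1LaflammeRado}.

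The plan is as follows. First, I will invoke \Cref{thm:joyce-diagonalization-Rado} to fix the ``canonical'' Joyce Rado graph structure $(2^{<\omega}, E_T, <_T, \meetlevel{\cdot}{\cdot}_T)$ having the property that for every coded Joyce graph $X \subseteq 2^{<\omega}$, the Joyce graph structures of $(X, E_T, <_T, \meetlevel{\cdot}{\cdot}_T)$ and $(X, \Epn, \ltlex, |\cdot \meet \cdot|)$ coincide up to isomorphism. Next, since $(2^{<\omega}, E_T, <_T, \meetlevel{\cdot}{\cdot}_T)$ is in particular a Joyce graph, I will apply \Cref{th:4.1LaflammeRado} (using $\ACA_0$) to the given Joyce Rado graph $(U, E_U, <_U, \meetlevel\cdot\cdot_U)$ to obtain an embedding $h: 2^{<\omega} \to U$ of Joyce graph structures.

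The candidate diagonalization is this $h$. To verify it, let $X \subseteq 2^{<\omega}$ be any coded Joyce graph. Since $h$ is an embedding of Joyce graph structures, its restriction to $X$ witnesses that $(h[X], E_U, <_U, \meetlevel{\cdot}{\cdot}_U)$ is isomorphic as a Joyce graph structure to $(X, E_T, <_T, \meetlevel{\cdot}{\cdot}_T)$. By the defining property of the structure supplied by \Cref{thm:joyce-diagonalization-Rado}, the latter is in turn isomorphic to $(X, \Epn, \ltlex, |\cdot \meet \cdot|)$, i.e., to $X$ regarded as a coded Joyce graph. Composing these isomorphisms shows that $(h[X], E_U, <_U, \meetlevel{\cdot}{\cdot}_U) \cong X$ as Joyce graph structures, which is exactly the conclusion that $h$ is a Joyce graph diagonalization.

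No step in this argument presents a real obstacle, since the genuinely combinatorial content has already been absorbed into the two ingredients being cited. The only subtle point to be careful about is that \Cref{th:4.1LaflammeRado}, unlike its Joyce-order counterpart, is proved in $\ACA_0$ rather than $\RCA_0$; hence the corollary is stated over $\ACA_0$. One should also confirm that the ``coded Joyce order'' phrasing in the statement of \Cref{thm:joyce-diagonalization-Rado} suffices for our needs when applied to coded Joyce graphs — which it does, since its proof in fact treats the underlying $\meet$-comparison on $X$ and does not use the stronger coding axiom of Joyce orders beyond what coded Joyce graphs provide.
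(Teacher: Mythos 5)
Your proof is correct and is essentially the same as the paper's: fix the canonical Joyce Rado graph on $2^{<\omega}$ from \Cref{thm:joyce-diagonalization-Rado}, embed it into $U$ via \Cref{th:4.1LaflammeRado}, and compose the resulting isomorphisms to verify the diagonalization property. The remark about $\ACA_0$ being needed only for \Cref{th:4.1LaflammeRado} also matches the paper's treatment.
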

\begin{proof}
Let $(2^{<\omega}, E_T, <_T, \meetlevel{\cdot}{\cdot}_T)$ be the Joyce Rado graph of \Cref{thm:joyce-diagonalization-Rado}. By \Cref{th:4.1LaflammeRado}, there is an embedding $h: 2^{<\omega} \to U$. By definition of an embedding, for every coded Joyce graph $X \subseteq 2^{<\omega}$, $(h[X], E_U, <_U, \meetlevel{\cdot}{\cdot}_U)$ is isomorphic to $(X, E_T, <_T, \meetlevel{\cdot}{\cdot}_T)$. By \Cref{thm:joyce-diagonalization-Rado}, $(X, E_T, <_T, \meetlevel{\cdot}{\cdot}_T)$ is isomorphic to $(X, \Epn, \ltlex, |\cdot \meet \cdot|)$. Thus $h$ is a Joyce graph diagonalization.
\end{proof}

In the case of Joyce blossom graphs, the existence of a Joyce graph diagonalization holds in $\RCA_0$;

\begin{corollary}[$\RCA_0$]\label{cor:joyce-diagonalization-exists-blossom}
  Every Joyce blossom graph has a Joyce graph diagonalization.
\end{corollary}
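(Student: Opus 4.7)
The plan is to mimic the proof of \Cref{cor:joyce-diagonalization-exists-Rado} almost verbatim, but replace the $\ACA_0$-level embedding result \Cref{th:4.1LaflammeRado} by the $\RCA_0$-provable \Cref{thm:joyce-to-rado-blossom}. The key observation is that the universal Joyce Rado graph $(2^{<\omega}, E_T, <_T, \meetlevel{\cdot}{\cdot}_T)$ constructed in \Cref{thm:joyce-diagonalization-Rado} already exists in $\RCA_0$, and in particular it is a (countable) Joyce graph, so it can play the role of the source structure $\mathcal{F}$ in \Cref{thm:joyce-to-rado-blossom}.

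Concretely, let $(U, E_U, <_U, \meetlevel{\cdot}{\cdot}_U)$ be an arbitrary Joyce blossom graph. First, I would invoke \Cref{thm:joyce-diagonalization-Rado} to obtain the Joyce Rado graph $(2^{<\omega}, E_T, <_T, \meetlevel{\cdot}{\cdot}_T)$ on $2^{<\omega}$ whose defining property is that for every coded Joyce graph $X \subseteq 2^{<\omega}$, the restricted structure $(X, E_T, <_T, \meetlevel{\cdot}{\cdot}_T)$ has the same Joyce structure as the coded Joyce graph $(X, \Epn, \ltlex, |\cdot\meet\cdot|)$. Next, I would apply \Cref{thm:joyce-to-rado-blossom} to the blossom graph $(U, E_U, <_U, \meetlevel{\cdot}{\cdot}_U)$ and to the countable Joyce graph $(2^{<\omega}, E_T, <_T, \meetlevel{\cdot}{\cdot}_T)$ to produce, in $\RCA_0$, a Joyce structure embedding $h: 2^{<\omega} \to U$.

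It then remains to verify that $h$ is a Joyce graph diagonalization. Given any coded Joyce graph $X \subseteq 2^{<\omega}$, the restriction of $h$ to $X$ is still a Joyce structure embedding, so $(h[X], E_U, <_U, \meetlevel{\cdot}{\cdot}_U)$ is isomorphic to $(X, E_T, <_T, \meetlevel{\cdot}{\cdot}_T)$ as a Joyce structure. By \Cref{thm:joyce-diagonalization-Rado}, the latter is in turn isomorphic to the coded Joyce structure $(X, \Epn, \ltlex, |\cdot\meet\cdot|)$. Chaining the two isomorphisms yields the required isomorphism, so $h$ is a Joyce graph diagonalization for $(U, E_U, <_U, \meetlevel{\cdot}{\cdot}_U)$.

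There is essentially no obstacle here: all the heavy lifting was done in \Cref{thm:joyce-to-rado-blossom} (the combinatorial embedding into a blossom graph, done in $\RCA_0$) and in \Cref{thm:joyce-diagonalization-Rado} (the construction of a universal target on $2^{<\omega}$, also done in $\RCA_0$). The only thing to be slightly careful about is checking that \Cref{thm:joyce-to-rado-blossom} really applies with $\mathcal{F}$ taken to be $(2^{<\omega}, E_T, <_T, \meetlevel{\cdot}{\cdot}_T)$, i.e.\ that this is genuinely a Joyce graph (it is, since any Joyce Rado graph is in particular a Joyce graph), and that one obtains the embedding effectively enough to be formalizable in $\RCA_0$. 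Both points are immediate from the corresponding statements.
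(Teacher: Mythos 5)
Your proposal is correct and is exactly the paper's argument: the paper proves this corollary by repeating the proof of \Cref{cor:joyce-diagonalization-exists-Rado} with \Cref{th:4.1LaflammeRado} replaced by the $\RCA_0$-provable \Cref{thm:joyce-to-rado-blossom}, applied to the universal Joyce Rado graph of \Cref{thm:joyce-diagonalization-Rado} as the source structure. Nothing further is needed.
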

\begin{proof}
Similar to the proof of \Cref{cor:joyce-diagonalization-exists-Rado},
but apply \Cref{thm:joyce-to-rado-blossom} instead of \Cref{th:4.1LaflammeRado}.
\end{proof}

\begin{lemma}\label{lem:coded-joyce-order-to-strong-subtree-Rado}
Let $F$ be a finite coded Joyce graph of size $n$ and $T \in \Subtree{\omega}{2^{<\omega}}$.
Then every $E \in \Subtree{2n-1}{T}$ contains at most one coded Joyce graph isomorphic to $F$. Moreover, every coded Joyce graph $H \subseteq T$ isomorphic to $F$
	is included in some $E \in \Subtree{2n-1}{T}$.
\end{lemma}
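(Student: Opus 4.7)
The proof will proceed by direct analogy with \Cref{lem:coded-joyce-order-to-strong-subtree}, exploiting the fact that a coded Joyce graph is in particular a Joyce order: indeed, the Joyce order axioms \Jo1--\Jo3 hold for any Joyce graph, and the density condition imposed on a coded Joyce graph coincides with that of a coded Joyce order for all indices except the lengths of the elements themselves (i.e.\ the leaf labels of the associated Joyce tree). The graph structure is encoded entirely in the bits $\rho(|\sigma|)$ for $\sigma \in X$, via the relation $\Epn$, and by axiom \Jo4 it is determined by the Joyce structure together with these bit values.

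For the uniqueness part, fix $E \in \Subtree{2n-1}{T}$ with level function $h$, and suppose that $F_0, F_1 \subseteq E$ are two coded Joyce graphs isomorphic to $F$. By \Cref{it:JO-are-trees-4} of \Cref{lem:JO-are-trees} applied to the underlying Joyce orders, each of $\meetclosure{F}_0$ and $\meetclosure{F}_1$ has cardinality exactly $2n-1$, and in fact contains exactly one element at each level of $E$. I will then run the same inductive argument as in the proof of \Cref{lem:coded-joyce-order-to-strong-subtree}: if $\ell$ is the first level at which $\meetclosure{F}_0 \uh h(\ell) \neq \meetclosure{F}_1 \uh h(\ell)$, then denoting the unique elements of $\meetclosure{F}_i(\ell)$ and $F(\ell)$ by $\sigma_0$, $\sigma_1$, and $\sigma$ respectively, the values $\sigma_i(h(j))$ for $j < \ell$ are forced by the Joyce tree shape of $F$ (namely: $0$ if $\sigma$ lies to the left of the level-$j$ node of $\meetclosure{F}$ in the lexicographic order, and $1$ otherwise), so $\sigma_0 \upharpoonright h(\ell-1)+1 = \sigma_1 \upharpoonright h(\ell-1)+1$, and then the strong subtree property of $E$ determines $\sigma_i$ uniquely at level $h(\ell)$. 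This yields $\meetclosure{F}_0 = \meetclosure{F}_1$, hence $F_0 = F_1$. The one point that requires a brief additional remark over the Joyce order case is why the elements of $F_0$ themselves (the leaves of $\meetclosure{F}_0$) also agree: here it suffices to observe that, within a strong subtree of $2^{<\omega}$, each leaf of $\meetclosure{F}_0$ is the unique node of $E$ at the prescribed level lying above its parent meet and on the correct lexicographic side, and both pieces of data are fixed by the abstract Joyce order structure of $F$. The edge relations come along for free, because once the strings are pinned down in $2^{<\omega}$, the bits $\sigma(|\tau|)$ are determined and thus $\Epn$ is determined.

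For the second part, given a coded Joyce graph $H \subseteq T$ isomorphic to $F$, I will repeat verbatim the construction from \Cref{lem:coded-joyce-order-to-strong-subtree}. Let $H^{\meet}$ be the $\meet$-closure of $H$, which is a finite tree of height $2n-1$ with exactly one string per level by \Cref{it:JO-are-trees-4} of \Cref{lem:JO-are-trees} applied to the underlying Joyce order of $H$. Because $T \in \Subtree{\omega}{2^{<\omega}}$ is $\meet$-closed we have $H^{\meet} \subseteq T$; let $L = \{\ell_0 < \cdots < \ell_{2n-2}\}$ be the set of $T$-levels of the nodes of $H^{\meet}$, and take $E$ to be the largest subtree of $T$ of height $2n-1$ that contains $H^{\meet}$ and satisfies $E(i) \subseteq T(\ell_i)$ for every $i$. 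The same argument as in \Cref{lem:coded-joyce-order-to-strong-subtree} shows, using maximality of $E$ and the fact that every node of $T$ is $2$-branching, that every non-leaf node of $E$ is $2$-branching in $E$, so $E \in \Subtree{2n-1}{T}$.

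The main obstacle (really the only subtle point) is checking that the inductive string-reconstruction argument from \Cref{lem:coded-joyce-order-to-strong-subtree} is not spoiled by the fact that, in a coded Joyce graph, the bit $\rho(|\sigma\meet\sigma|) = \rho(|\sigma|)$ at a leaf-length is no longer forced to be $0$ (it carries the edge information). The observation that resolves this is that the argument never needed those bits to be zero; what it needed was that, at a \emph{meet} level of $\meetclosure{F}_i$, the only non-zero bits of any $\rho \in \meetclosure{F}_i$ extending below the meet are at the lengths of strict ancestors, and this is exactly the density condition that still holds in a coded Joyce graph (since it only excludes $\sigma \neq \tau$). Once this is verified, the inductive step goes through unchanged, and the rest of the proof is formal.
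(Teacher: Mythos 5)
Your proof is correct and takes exactly the approach the paper intends: the paper's own proof of this lemma consists of the single remark that it is a straightforward adaptation of \Cref{lem:coded-joyce-order-to-strong-subtree}, \emph{mutatis mutandis}. You have supplied the details of that adaptation accurately, and in particular you correctly isolate and resolve the one genuine point of difference, namely that in a coded Joyce graph the bits at leaf lengths are no longer forced to be $0$ but are instead determined by the edge relation (with \Jo4 guaranteeing consistency when the string in question is a meet), so the level-by-level reconstruction still pins down $\meetclosure{F_0}=\meetclosure{F_1}$.
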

\begin{proof}
The proof is a straightforward adaptation of \Cref{lem:coded-joyce-order-to-strong-subtree}, \emph{mutatis mutandis}.
\end{proof}

\begin{theorem}[$\ACA_0$]\label{thm:strong-rado-one-type}
  Let $\mathcal{G}$ be a Joyce Rado structure, and $\mathcal{F}$ be a finite Joyce graph. Then, the big Ramsey number of $\mathcal{F}$ in $\mathcal{G}$ is 1.
\end{theorem}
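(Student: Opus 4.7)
The plan is to mirror the proof of \Cref{thm:strong-devlin-one-type}, replacing the use of a Joyce order diagonalization with the Joyce graph diagonalization provided by \Cref{cor:joyce-diagonalization-exists-Rado}, and using the analogous transfer principle between coded Joyce graphs of size $n$ and strong subtrees of $2^{<\omega}$ of height $2n-1$ given by \Cref{lem:coded-joyce-order-to-strong-subtree-Rado}. By \Cref{thm:joyce-graph-to-coded}, any countable Joyce Rado structure $\mathcal{G}$ can be assumed to be presented as a coded Joyce Rado graph $X$, and any finite Joyce graph $\mathcal{F}$ can be represented as a coded Joyce graph $F$, say of size $n$.

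Fix a coloring $f \colon {X \choose F} \to k$. Apply \Cref{cor:joyce-diagonalization-exists-Rado} to obtain a Joyce graph diagonalization $h \colon 2^{<\omega} \to X$. Define $g \colon \Subtree{2n-1}{2^{<\omega}} \to k$ by setting $g(E) = f(h(H))$ whenever $H \subseteq E$ is a coded Joyce graph isomorphic to $F$, and $g(E) = 0$ otherwise; this is well-defined because \Cref{lem:coded-joyce-order-to-strong-subtree-Rado} guarantees at most one such $H$ inside any $E$. Then invoke Milliken's tree theorem (\Cref{th:milliken-theorem}) for height $2n-1$ to obtain $S \in \Subtree{\omega}{2^{<\omega}}$ such that $g$ takes a single value $i < k$ on $\Subtree{2n-1}{S}$; this step is what requires the full strength of $\ACA_0$.

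To extract the desired monochromatic subcopy, use the fact that $S \in \Subtree{\omega}{2^{<\omega}}$ to fix an injection $\phi \colon 2^{<\omega} \to S$ whose image is a coded Joyce graph structurally isomorphic to $(2^{<\omega}, \Epn, \ltlex, |\cdot \meet \cdot|)$. Because $h$ is a Joyce graph diagonalization, the image $Y = h[\phi[X]]$ is then an isomorphic substructure of $X$, i.e., a subcopy of $\mathcal{G}$. Given any copy $\hat H$ of $F$ inside $Y$, pull it back to $H \subseteq \phi[X] \subseteq S$ with $h[H] = \hat H$; since subsets of coded Joyce graphs are again coded Joyce graphs, $H$ is a copy of $F$ in $S$. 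By \Cref{lem:coded-joyce-order-to-strong-subtree-Rado}, some $E \in \Subtree{2n-1}{S}$ contains $H$, so $f(\hat H) = g(E) = i$.

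The substantive work is concentrated in the combinatorial lemmas that have already been proven: the existence of a Joyce graph diagonalization (which, unlike in the order case, requires $\ACA_0$ via the embedding \Cref{th:4.1LaflammeRado}), and the matching between coded Joyce graphs of size $n$ and height-$(2n-1)$ strong subtrees of $2^{<\omega}$. The main conceptual hurdle is bookkeeping: one must verify that taking subsets, passing through $\phi$, and applying $h$ all preserve the coded-Joyce-graph property and the isomorphism type of finite substructures, so that the color computed by $g$ really is the color assigned by $f$ to the image copy. Provided those preservation facts are checked, the diagonal argument proceeds exactly as in the DLO case, yielding big Ramsey degree $1$ for $\mathcal{F}$ inside $\mathcal{G}$.
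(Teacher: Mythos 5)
Your proposal is correct and follows essentially the same route as the paper's own proof: both pass to coded presentations, use the Joyce graph diagonalization of \Cref{cor:joyce-diagonalization-exists-Rado} together with the correspondence of \Cref{lem:coded-joyce-order-to-strong-subtree-Rado} to transfer the coloring to $\Subtree{2n-1}{2^{<\omega}}$, apply Milliken's tree theorem for height $2n-1$, and pull the monochromatic strong subtree back through $\phi$ and $h$ to obtain a subcopy of $\mathcal{G}$. The preservation bookkeeping you flag is exactly what the paper verifies, so nothing is missing.
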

\begin{proof}
Let $X$ be a countable coded Joyce Rado graph and $F$ be a finite coded Joyce Rado graph of size $n$. Fix a coloring $f: {X \choose F} \to k$. Here, ${X \choose F}$ denotes all the subcopies of $F$ in $X$.

Let $h: 2^{<\omega} \to X$ be a Joyce graph diagonalization, which exists by \Cref{cor:joyce-diagonalization-exists-Rado}. Let $g: \Subtree{n}{2^{<\omega}} \to k$ be defined for every $E \in \Subtree{2n-1}{2^{<\omega}}$ by $g(E) = f(h(H))$ where $H \subseteq E$ is the unique element coded Joyce graph isomorphic to $F$, if it exists. Otherwise let $g(E) = 0$. This coloring is well-defined by \Cref{lem:coded-joyce-order-to-strong-subtree}.

By Milliken's tree theorem for height $2n-1$, there is a strong subtree $S \in \Subtree{\omega}{2^{<\omega}}$ such that $g$ restricted to $\Subtree{2n-1}{S}$ is monochromatic for some color $i < k$. In particular, by \Cref{lem:coded-joyce-order-to-strong-subtree-Rado}, for every coded Joyce graph $H \subseteq S$ isomorphic to $F$, there is some $E \in \Subtree{2n-1}{S}$ containing $H$, and $g(E) = f(h(H)) = i$.

Since $S \in \Subtree{\omega}{2^{<\omega}}$, there is an injective function $\phi: 2^{<\omega} \to S$ such that $\phi[X]$ is a coded Joyce graph isomorphic to $X$. In particular, since $h$ is a Joyce graph diagonalization, $Y = h[\phi[X]]$ is a coded Joyce Rado graph isomorphic to $X$, hence a subcopy of~$X$.

We claim that $f$ restricted to ${Y \choose F}$ is monochromatic for color~$i$.
Let $\hat{H}$ be a copy of $F$ in $Y = h[\phi[X]]$. Let $H \subseteq \phi[X]$ be such that $h[H] = \hat{H}$. In particular since $\phi[X]$ is a coded Joyce graph, so is $H$, so since $h$ is a Joyce graph diagonalization, $\hat{H} = h[H]$ is a coded Joyce graph isomorphic to $H$. In other words, $H$ is a copy of $F$ in $\phi[X] \subseteq S$, so $H$ is a copy of $F$ in $S$. By choice of $S$, $f(h[H]) = i$, so $f(\hat{H}) = i$. This completes the proof of \Cref{thm:strong-rado-one-type}.
\end{proof}

\begin{corollary}[$\ACA_0$]
  The statement $(\forall k) \JRG^n_{k,J_n}$ holds, where $J_n$ is the number of non isomorphic Joyce graph structure with $n$ elements, while $(\forall k)\JRG^n_{k,J_n-1}$ does not hold.
\end{corollary}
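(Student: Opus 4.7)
The plan is to mirror the two-step argument given earlier for the Joyce version of Devlin's theorem (Corollary \ref{cor:strong-devlin-tight-joyce-orders}), substituting the Rado-graph embedding theorem (\Cref{th:4.1LaflammeRado}) and the single-type Ramsey statement (\Cref{thm:strong-rado-one-type}) in place of their DLO counterparts. The split between upper and lower bound is clean, and neither half should require any new combinatorics beyond what has already been set up.

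For the upper bound $(\forall k) \JRG^n_{k,J_n}$, I would fix an enumeration $F_0, F_1, \dots, F_{J_n-1}$ of representatives of the $J_n$ isomorphism classes of Joyce graph structures of size $n$. Given a Joyce Rado structure $\mathcal{G}$ and a coloring $f:[\mathcal{G}]^n \to k$, I iteratively build a finite descending chain of subcopies $\mathcal{G} = \mathcal{G}_0 \supseteq \mathcal{G}_1 \supseteq \dots \supseteq \mathcal{G}_{J_n}$ such that for each $s < J_n$, every copy of $F_s$ in $\mathcal{G}_{s+1}$ receives the same $f$-color $i_s < k$. Each step is a direct application of \Cref{thm:strong-rado-one-type} to the restriction of $f$ to the copies of $F_s$ (noting, as remarked just before the statement of the corollary, that $\mathcal{G}_s$ is itself a Joyce Rado structure since it is isomorphic to $\mathcal{G}$, by \Cref{th:4.1LaflammeRado} together with the fact that every substructure of a Joyce Rado structure containing an isomorphic copy of the Rado graph is still a Joyce Rado structure). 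Since every $E \in [\mathcal{G}_{J_n}]^n$ is isomorphic (as a Joyce graph structure) to exactly one $F_s$, the value $f(E)$ lies in $\{i_0, \dots, i_{J_n-1}\}$, so $|f[\mathcal{G}_{J_n}]^n| \leq J_n$.

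For the lower bound, i.e., to show $(\forall k) \JRG^n_{k,J_n - 1}$ fails, I would exhibit an explicit coloring $f:[\mathcal{G}]^n \to J_n$ witnessing that $J_n$ colors are unavoidable. Namely, for $E \in [\mathcal{G}]^n$, let $f(E) = s$ where $s < J_n$ is the unique index such that the induced Joyce graph structure on $E$ is isomorphic to $F_s$. (This coloring is well-defined because every $n$-element subset of a Joyce graph structure inherits a Joyce graph structure, the Joyce axioms being universal.) Then for any subcopy $\mathcal{G}'$ of $\mathcal{G}$, $\mathcal{G}'$ is again a Joyce Rado structure, so by \Cref{th:4.1LaflammeRado} each of the finite Joyce graph structures $F_0, \dots, F_{J_n-1}$ embeds into $\mathcal{G}'$. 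Hence every color class is nonempty on $[\mathcal{G}']^n$, giving $|f[\mathcal{G}']^n| = J_n$.

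The only part that might require care is confirming that each intermediate $\mathcal{G}_s$ in the upper-bound argument is still a Joyce Rado structure so that \Cref{thm:strong-rado-one-type} legitimately applies at each stage; this follows because \Cref{thm:strong-rado-one-type} yields, for each $F_s$, a subcopy $\mathcal{G}_{s+1}$ of $\mathcal{G}_s$, and a subcopy of a Joyce Rado structure is by definition again a Joyce Rado structure. No further obstacle is anticipated: the whole argument is essentially a bookkeeping adaptation of \Cref{cor:strong-devlin-tight-joyce-orders}, with ``Joyce order'' replaced by ``Joyce graph'' throughout.
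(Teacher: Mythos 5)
Your proof is correct and follows essentially the same route as the paper: iterate the single-type theorem (\Cref{thm:strong-rado-one-type}) over the $J_n$ isomorphism classes to get the upper bound, and use the embedding theorem (\Cref{th:4.1LaflammeRado}) applied to the type-coloring to show all $J_n$ colors are unavoidable in any subcopy. This is precisely the paper's argument, which is itself a bookkeeping adaptation of \Cref{cor:strong-devlin-tight-joyce-orders}.
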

\begin{proof}
Let $\ell$ be the number of Joyce graph with $n$ elements. Let $F_0, \dots, F_{\ell-1}$ be a finite enumeration of all the finite coded Joyce graph structures of size $n$.

We first prove that $\JRG^n_{k, \ell}$ holds.
Fix a coloring $f: [X]^n \to k$ for some countable Joyce Rado graph structure $(X, E, <, \JRel)$. By \Cref{thm:strong-devlin-one-type}, build a finite decreasing sequence of subsets $X = X_0 \supseteq X_1 \supseteq \dots \subseteq X_\ell$ of $X$ such that for every $s < \ell$:
\begin{enumerate}
	\item $(X_{s+1},  E, <, \JRel)$ is a subcopy of $(X_s, E, <, \JRel)$;
	\item every copy of $F$ in $(X_{s+1},  E, <, \JRel)$ is monochromatic for $f$ for some color $i_s < k$
\end{enumerate}
The Joyce graph structure $(X_\ell, E, <, \JRel)$ is a subcopy of $(X, <, \JRel)$.
Moreover, for every $E \in [X_\ell]^n$, $(E, <, \JRel)$ is isomorphic to $F_s$ for some $s < k$,
so $f(E) = i_s$. It follows that $f[X_\ell]^n \subseteq \{ i_s: s < \ell \}$, hence $|f[X_\ell]^n| \leq \ell$.

We now show that the bound is tight. Let $f: [X]^n \to k$ be defined by $f(H) = s$ for the unique $s < \ell$ such that $(H, E, <, \JRel)$ is isomorphic to $F_s$. 
Let $(Y, E, <, \JRel)$ be a subcopy of $(X, E, <, \JRel)$. In particular, $(Y, E, <, \JRel)$ is a Joyce Rado graph structure, so by \Cref{th:4.1LaflammeRado}, for every $s < \ell$, there is an embedding of $F_s$ into $(Y, E, <, \JRel)$. Therefore, $|f[Y]^n| \geq \ell$.
\end{proof}

\begin{theorem}[$\ACA_0$]\label{thm:subgraph-to-subjoyce-Rado}
  Let $\mathcal{G}=(G,E,<,\JRel)$ be a Joyce graph structure. Let $\mathcal{G}'=(g',E)$ be an isomorphic subcopy of $(G,E)$, that is, a Rado graph. Then, there exists a subcopy $(G'',E)$ of $(G',E)$ such that $(G'',E,<,\JRel)$ is a subcopy of $\mathcal{G}$.
\end{theorem}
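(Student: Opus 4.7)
The plan is to mimic exactly the strategy used for the analogous order-theoretic result (\Cref{thm:suborder-to-subjoyce}), replacing the appeal to the Devlin embedding theorem \Cref{th:4.1LaflammeDevlin} with its Rado-graph counterpart \Cref{th:4.1LaflammeRado}. The key conceptual point is that although $\mathcal{G}' = (G', E)$ is only given as a Rado graph (a reduct of $\mathcal{G}$), we can freely re-enrich $G'$ by the ambient order and quaternary predicate to obtain a Joyce Rado graph structure $\hat{\mathcal{G}}' = (G', E, <, \JRel)$, even though this enriched structure need not be isomorphic to $\mathcal{G}$ itself.

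First I would check that $\hat{\mathcal{G}}' = (G', E, <, \JRel)$ is indeed a Joyce Rado graph structure. All axioms defining a Joyce graph (\Jo1--\Jo4) are universal, so restricting to the subset $G' \subseteq G$ preserves them; hence $\hat{\mathcal{G}}'$ is a Joyce graph structure. Moreover the underlying graph $(G', E)$ is a Rado graph by hypothesis, so $\hat{\mathcal{G}}'$ qualifies as a Joyce Rado graph structure. Next I would invoke \Cref{th:4.1LaflammeRado} applied with the ambient Joyce Rado graph $\hat{\mathcal{G}}'$ and the Joyce graph $\mathcal{G}$ itself (viewed as a finite-or-infinite Joyce graph), producing an embedding $\varphi \colon \mathcal{G} \to \hat{\mathcal{G}}'$ of Joyce graph structures.

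Finally I would set $G'' = \varphi[G]$. Because $\varphi$ is a Joyce graph embedding into $\hat{\mathcal{G}}'$, the induced substructure $(G'', E, <, \JRel)$ is isomorphic to $\mathcal{G}$ via $\varphi$, so it is a subcopy of $\mathcal{G}$. At the same time $G'' \subseteq G'$, so $(G'', E)$ is a subcopy of $(G', E)$, which completes the proof.

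The only step that requires genuine work is the invocation of \Cref{th:4.1LaflammeRado}, which itself was proved in $\ACA_0$ via the chain \Cref{thm:blossom-to-joyce-rado} followed by \Cref{thm:joyce-to-rado-blossom}; everything else is a routine check that the Joyce axioms are universal and hence inherited by substructures. There is no real combinatorial obstacle, and the proof will be a one-paragraph parallel to \Cref{thm:suborder-to-subjoyce}.
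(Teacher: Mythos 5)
Your argument is exactly the paper's proof: enrich $(G',E)$ by the ambient $<$ and $\JRel$ to get a Joyce Rado graph structure (the Joyce axioms being universal), apply \Cref{th:4.1LaflammeRado} to embed $\mathcal{G}$ into it, and take the image as $G''$. No differences worth noting.
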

\begin{proof}
  The structure $\hat\Xb'=(X',E, <,\JRel)$ is a Joyce Rado graph structure, even if it might not be isomorphic to $\Xb$. By \Cref{th:4.1LaflammeRado}, there exists an embedding of $\Xb$ into $\hat\Xb'$. The image of the embedding is $\Xb''$.
\end{proof}

Note that contrary to the case of Joyce orders, for which the proof that Devlin's theorem implies the Joyce Devlin theorem holds in $\RCA_0$, the following corollary holds in $\ACA_0$. The difference comes from proof of \Cref{th:4.1LaflammeRado} which is more complex than \Cref{th:4.1LaflammeDevlin}.

\begin{corollary}[$\ACA_0$]
  The Rado Graph theorem for $n$-tuples and $\ell$ colors implies the Joyce Rado graph theorem for $n$-tuples and $\ell$ colors.
\end{corollary}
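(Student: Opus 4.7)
The plan is to mimic the proof for Devlin's theorem that followed \Cref{thm:suborder-to-subjoyce}, replacing each appeal to the linear-order version by its Rado-graph counterpart, namely \Cref{cor:rado-can-be-enriched} and \Cref{thm:subgraph-to-subjoyce-Rado}. The key conceptual point is that, although a Joyce Rado graph carries extra structure $({<},\JRel)$ beyond its underlying graph, every bound coming from coloring copies of a finite graph $\mathcal{F}$ in the (larger) set ${G \choose \mathcal{F}}$ automatically bounds the number of colors on copies of any Joyce-graph refinement $\mathbb{F}$ of $\mathcal{F}$.

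Concretely, let $\mathcal{G}=(G,E,<,\JRel)$ be a Joyce Rado graph structure and let $f:[\mathcal{G}]^n \to k$ be a coloring, viewed as an instance of the Joyce Rado graph theorem. Forget the extra structure: the underlying graph $(G,E)$ is a Rado graph, and $f$ is in particular a coloring of $[G]^n$. Apply the Rado Graph theorem for $n$-tuples and $\ell$ colors to obtain an isomorphic subgraph $(G',E)$ of $(G,E)$ with $|f[G']^n|\leq \ell$. Now invoke \Cref{thm:subgraph-to-subjoyce-Rado} (which is the content requiring $\ACA_0$, via \Cref{th:4.1LaflammeRado}) to pass to a further subset $G''\subseteq G'$ such that $(G'',E,<,\JRel)$ is a Joyce subcopy of $\mathcal{G}$. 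Since $[G'']^n\subseteq[G']^n$, we still have $|f[G'']^n|\leq \ell$, and this yields the required Joyce subcopy witnessing the Joyce Rado graph theorem.

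Two aspects deserve a brief comment. First, the argument works uniformly for the versions of the statements formulated with graph parameter $\mathcal{G}$ rather than with arity $n$: indeed, if $\mathbb{F}$ is a fixed finite Joyce graph structure with underlying finite graph $\mathcal{F}$, the copies of $\mathbb{F}$ in $\mathcal{G}$ form a subset of the copies of $\mathcal{F}$ in $(G,E)$, so the bound transfers without change. Second, contrary to the Devlin case, the proof goes through only in $\ACA_0$ and not in $\RCA_0$. The obstruction, and indeed the main point to watch, is exactly in the use of \Cref{thm:subgraph-to-subjoyce-Rado}, whose proof relies on the embedding theorem \Cref{th:4.1LaflammeRado}; the linear-order analogue \Cref{th:4.1LaflammeDevlin} is provable in $\RCA_0$, but for Rado graphs the embedding argument goes through a Joyce blossom graph (\Cref{thm:blossom-to-joyce-rado}) using partition regularity of ``largeness'', which genuinely needs arithmetical comprehension. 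So the main conceptual work is already encapsulated in the earlier results, and the present corollary is essentially a one-line deduction; the work to verify is simply that Rado-type bounds on colorings of $[G]^n$ pull back along the inclusion $G''\hookrightarrow G'$.
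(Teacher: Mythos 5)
Your proposal is correct and matches the paper's intended argument exactly: forget the Joyce structure, apply the Rado graph theorem to the underlying graph, then invoke \Cref{thm:subgraph-to-subjoyce-Rado} to pass to a Joyce subcopy, noting that the color bound is preserved under the inclusion $G''\hookrightarrow G'$. Your observation about why the Rado case needs $\ACA_0$ while the Devlin analogue works in $\RCA_0$ is also exactly the remark the paper makes immediately before this corollary.
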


\begin{corollary}[$\ACA_0$]
  The tight bound for the Rado graph theorem and the Joyce Rado graph theorem for $n$ elements are the same, that is, the number of Joyce graph structures with $n$ elements.
\end{corollary}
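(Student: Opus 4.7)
My plan is to mirror the analogous proof given just before for Devlin's theorem versus Joyce Devlin's theorem, replacing each ingredient by its Rado graph counterpart already established in this chapter. Let $b_0$ and $b_1$ denote, respectively, the tight bound for the Rado graph theorem for $n$ elements and the tight bound for the Joyce Rado graph theorem for $n$ elements. The goal is to prove $b_0 = b_1$; the previous corollary then identifies this common value with the number of Joyce graph structures of size $n$.

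For $b_0 \leq b_1$, I would argue as follows. Fix a Rado graph $(G,E)$ and a coloring $f : [G]^n \to k$. By \Cref{cor:rado-can-be-enriched}, one can order $(G,E)$ and equip it with a symmetric function $\meetlevel{\cdot}{\cdot} : G^2 \to \Nb$ so that $(G,E,<,\meetlevel{\cdot}{\cdot})$ is a Joyce Rado graph, and hence induces a Joyce Rado graph structure $(G,E,<,\JRel)$. By definition of $b_1$, there is a Joyce substructure $(Y,E,<,\JRel)$ of $(G,E,<,\JRel)$ with $|f[Y]^n| \leq b_1$. Restricting to the language of graphs, $(Y,E)$ is an isomorphic subcopy of $(G,E)$, witnessing $b_0 \leq b_1$.

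For $b_1 \leq b_0$, fix a Joyce Rado graph structure $(G,E,<,\JRel)$ together with a coloring $f : [G]^n \to k$. Reducing to the graph language, the Rado graph theorem yields an isomorphic subcopy $(Y,E)$ of $(G,E)$ with $|f[Y]^n| \leq b_0$. Now apply \Cref{thm:subgraph-to-subjoyce-Rado} to $(G,E,<,\JRel)$ and $(Y,E)$: there is a subcopy $(Z,E)$ of $(Y,E)$ such that $(Z,E,<,\JRel)$ is a Joyce substructure of $(G,E,<,\JRel)$. Since $Z \subseteq Y$, we have $|f[Z]^n| \leq |f[Y]^n| \leq b_0$, so $b_1 \leq b_0$. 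Combining both inequalities with the previous corollary gives the desired equality.

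There is no real obstacle: the argument is a direct translation of the order case, and both key ingredients (enrichment and the subgraph-to-Joyce-subcopy theorem) are now in place. The only point worth noting is that the reverse inequality $b_1 \leq b_0$ invokes \Cref{thm:subgraph-to-subjoyce-Rado}, which itself sits in $\ACA_0$ rather than $\RCA_0$ (unlike \Cref{thm:suborder-to-subjoyce} in the order setting), and this is precisely why the present corollary is stated over $\ACA_0$.
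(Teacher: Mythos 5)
Your proof is correct and follows essentially the same route as the paper's: $b_0 \leq b_1$ via \Cref{cor:rado-can-be-enriched}, and $b_1 \leq b_0$ via \Cref{thm:subgraph-to-subjoyce-Rado}, with the numerical value supplied by the preceding tight-bound corollary. Your closing remark about why the statement sits over $\ACA_0$ rather than $\RCA_0$ matches the paper's own observation.
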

\begin{proof}
Let $b_0$ and $b_1$ be the tight bound for the Rado graph theorem and the Joyce Rado graph theorem for $n$ elements, respectively. 

We first claim that $b_0 \leq b_1$. 
Let $(X, E)$ be a Rado graph. By \Cref{cor:rado-can-be-enriched}, one can enrich this graph with an order $<$ and a relation $\JRel$ so that $(X, E, <, \JRel)$ is a Joyce Rado graph structure. Let $f: [X]^n \to k$ be a coloring. By choice of $b_1$, there is a Joyce subcopy $(Y, E, <, \JRel)$ of $(X, E, <, \JRel)$ such that $|f[Y]^n| \leq b_1$. In particular, $(Y, E)$ is a subcopy of $(X, E)$ so $b_0 \leq b_1$.

We then claim that $b_1 \leq b_0$.
Let $(X, E, <, \JRel)$ be a Joyce Rado graph structure.  Let $f: [X]^n \to k$ be a coloring. By choice of $b_0$, there is a subcopy $(Y, E)$ of $(X, E)$ such that $|f[Y]^n| \leq b_0$. 
By \Cref{thm:subgraph-to-subjoyce-Rado}, there is a subcopy $(Z, E)$ of $(Y, E)$ such that $(Z, E, <, \JRel)$ is a Joyce subcopy of $(X, E, <, \JRel)$. In particular, $|f[Z]^n| \leq b_0$. Thus $b_1 \leq b_0$.

It follows that $b_0 = b_1$. Moreover, by \Cref{cor:strong-devlin-tight-joyce-orders}, this tight bound is the number of Joyce graph structures with $n$ elements.
\end{proof}

\section{Cone avoidance of the Rado Graph theorem for pairs}

\begin{theorem}\label{thm:blossom-limit-cone-avoidance}
Fix two sets $C$ and $Z$ such that $C \not \leq_T Z$.
Let
\[
	\mathcal{B} = (B, \Epn, \ltlex, |\cdot \meet \cdot|)
\]
be a $Z$-computable Joyce blossom graph. For every $Z$-computable function $f: [B]^2 \to k$, there exists a subcopy $(U, \Epn, \ltlex, |\cdot \meet \cdot|)$ of $\mathcal{B}$ and a finite set of colors $I \subseteq k$ such that $C \not \leq_T U \oplus Z$, $|I| \leq 4$, and 
$$
(\forall \ell_0)( \forall^{\infty} \ell_1)( \forall^{\infty} \ell_2)[ f[U(\ell_0, \ell_1, \ell_2)]^2 \subseteq I],
$$
where $U(\ell_0, \ell_1, \ell_2)$ is the set of Joyce subgraphs of size 2 whose labels are exactly $\ell_0, \ell_1, \ell_2$, that is,  $U(\ell_0, \ell_1, \ell_2) = \{ \{\sigma, \tau\} \in [U]^2: |\sigma \meet \tau| = \ell_0, |\sigma| = \ell_1, |\tau| = \ell_2 \}$.
\end{theorem}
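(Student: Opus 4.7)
The plan is to prove the theorem by forcing with a blossom-graph analog of the product tree conditions of \Cref{subsect:product-tree-forcing}, combining the stabilization technique of \Cref{thm:cmtt-admits-strong-cone-avoidance} with the level-homogeneization technique of \Cref{thm:pmtt2-level-homogeneous-strong-cone-avoidance}. First, I would observe that in any Joyce blossom graph, every pair $\{\sigma,\tau\} \in [B]^2$ has $|\sigma| \neq |\tau|$, so its Joyce graph type is entirely determined by the two binary parameters $|\sigma| <_\N |\tau|$ and $\sigma \Epn \tau$. This gives exactly four Joyce graph types for size-two substructures, and the target set $I$ will consist of at most one color per type, produced as a ``limit color'' of the coloring $f$ restricted to pairs of that type.

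Second, I would introduce a notion of \emph{blossom graph condition}: a pair $(F,X)$ where $F \subseteq B$ is a finite coded Joyce subgraph (the stem), $X \subseteq B$ is an infinite sub-blossom-graph of $\mathcal{B}$ with $C \nTred X \oplus Z$ (the reservoir), and $X$ is compatible with $F$ in the sense that every way of extending $F$ by finitely many elements of $X$ at a common future level of its blossom tree produces a coded Joyce subgraph of $\mathcal{B}$. The forcing relation is modelled on \Cref{def:mtt1-sca-forcing-relation,def:product-tree-forcing-relation}, and any sufficiently generic filter yields a subcopy $U$ of $\mathcal{B}$ with $C \nTred U \oplus Z$ by the usual diagonalization against Turing functionals, as in \Cref{lem:cmtt-admits-strong-cone-avoidance-req}.

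Third, the core density lemma: given a condition $(F,X)$, a meet level $\ell_0$ already present in the stem of $F$, a level $\ell_1 > \ell_0$, and a Joyce graph type $\theta$, there is a cone-avoiding extension $(F', X')$ and a single color $i(F,\ell_0,\ell_1,\theta) \in k$ such that for cofinitely many levels $\ell_2$, every pair $\{\sigma,\tau\} \in U(\ell_0,\ell_1,\ell_2)$ of type $\theta$ (whose smaller element lies in $F'$) has $f(\sigma,\tau) = i(F,\ell_0,\ell_1,\theta)$. This is the step where strong cone avoidance of the Halpern--Lauchli theorem (\Cref{thm:hl-strong-cone-avoidance}) enters: fixing the smaller element $\sigma$, the coloring $\tau \mapsto f(\sigma,\tau)$ on the tree of extensions of $\sigma$ within $X$ (which, via the underlying blossom tree $(f_X,g_X)$, corresponds to a strong subtree of $2^{<\omega}$) is a finite coloring to which \Cref{thm:hl-strong-cone-avoidance} applies, yielding a cone-avoiding sub-blossom-graph on which the color stabilizes; a further application, in the spirit of \Cref{thm:pmtt2-level-homogeneous-strong-cone-avoidance}, makes the stable color depend only on the Joyce graph type, and not on $(\ell_0,\ell_1)$.

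Finally, I would construct the generic $U$ by iterating this density lemma through a bookkeeping of all tuples $(\ell_0,\ell_1,\theta)$ together with all Turing functionals $\Gamma$ (diagonalizing so that $\Gamma^{U \oplus Z} \neq C$), exactly as in the proofs of \Cref{thm:mtt1-strong-cone-avoidance,thm:cmtt-admits-strong-cone-avoidance,thm:pmtt2-level-homogeneous-strong-cone-avoidance}. The resulting $U$ is a subcopy of $\mathcal{B}$ with $C \nTred U \oplus Z$, and $I = \{ i(\theta) : \theta \text{ a Joyce graph type}\}$ has cardinality at most $4$ and satisfies the iterated $\forall \ell_0\, \forall^\infty \ell_1\, \forall^\infty \ell_2$ conclusion. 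The main obstacle will be the final collapse of the limit colors $i(F,\ell_0,\ell_1,\theta)$ to a single color $i(\theta)$ depending only on the type: this requires the double stabilization (first in $\ell_2$, then in $\ell_1$, then in $\ell_0$) to be carried out inside the forcing while preserving cone avoidance, which should be possible by using the strong cone avoidance of the pigeonhole principle at each stabilization stage, following the template of \Cref{lem:pmtt2-level-homogeneous-density-below-a-cone}.
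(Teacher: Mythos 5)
Your architecture is sound and would work, but it is genuinely different from the paper's proof, and in one place you reach for a slightly wrong tool. The paper does not introduce any new forcing notion for this theorem. Instead it reduces everything to the tree side: using the Joyce graph diagonalization $h:\cantor\to B$ of \Cref{cor:joyce-diagonalization-exists-blossom} together with \Cref{lem:coded-joyce-order-to-strong-subtree-Rado}, a size-$2$ Joyce subgraph of $\mathcal{B}$ of type $\mathcal{F}_j$ sitting inside a height-$3$ strong subtree determines that subtree's $g_j$-color, so $f$ pulls back to four colorings $g_j:\Subtree{3}{2^{<\omega}}\to k$, one per Joyce type. Then the triple quantifier alternation $\forall\ell_0\,\forall^\infty\ell_1\,\forall^\infty\ell_2$ is obtained purely by composing black boxes: two successive applications of \Cref{thm:cmtt-admits-strong-cone-avoidance} (stabilize in the third level, then stabilize the limit coloring in the second level), followed by one application of \Cref{thm:hl-strong-cone-avoidance} to make each node-coloring $\mu_j$ monochromatic, and finally re-embedding a copy of $X$ into the resulting strong subtree and pushing forward through $h$. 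Your plan instead rebuilds a bespoke Mathias-style forcing on the blossom graph itself, which amounts to re-proving graph-side analogues of \Cref{thm:cmtt-admits-strong-cone-avoidance} and \Cref{lem:pmtt2-level-homogeneous-density-below-a-cone}; this buys nothing here, whereas the reduction buys the whole theorem in a page, at the mild cost of having to check that the limit colorings (which are only arithmetical in the previous stage's data, not computable) are legitimate inputs — which is exactly why the paper needs the \emph{strong} (arbitrary-instance) versions of those theorems.

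One caution on your final step. The collapse that removes the dependence on $\ell_0$ is not a stabilization and is not an application of the pigeonhole principle on $\omega$: after the two stabilizations, what remains is a coloring of the \emph{nodes} of a perfect tree (the meet-levels $\ell_0$ are indexed by the nodes of $f_X[\cantor]$, one per node, all of distinct lengths), and you must make it monochromatic while preserving the strong-subtree/blossom structure so that the result is still a subcopy of $\mathcal{B}$. That requires strong cone avoidance of the Halpern--La\"uchli theorem (\Cref{thm:hl-strong-cone-avoidance} with $d=1$), not of $\RT{1}{k}$; an infinite set of lengths chosen by the ordinary pigeonhole principle need not be realizable as the set of meet-lengths of a sub-blossom-graph. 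Since you already invoke \Cref{thm:hl-strong-cone-avoidance} elsewhere, this is an imprecision rather than a fatal gap, but it is the step your phrase ``strong cone avoidance of the pigeonhole principle at each stabilization stage'' glosses over.
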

\begin{proof}
By \Cref{cor:joyce-diagonalization-exists-blossom}, there is a $Z$-computable Joyce graph diagonalization $h: \cantor \to B$.
Let $\mathcal{F}_0, \mathcal{F}_1, \mathcal{F}_2, \mathcal{F}_3$ be the 4 Joyce graph structures of size 2.

For every $j < k$, let $g_j: \Subtree{3}{2^{<\omega}} \to k$ be defined for every $E \in \Subtree{3}{2^{<\omega}}$ by $g(E) = f(h(H))$ where $H \subseteq E$ is the unique element coded Joyce graph isomorphic to $\mathcal{F}_j$, if it exists. Otherwise let $g(E) = 0$. This coloring is well-defined by \Cref{lem:coded-joyce-order-to-strong-subtree}.

By 4 successive applications of \Cref{thm:cmtt-admits-strong-cone-avoidance}, there exists a strong subtree $R \in \Subtree{\omega}{2^{<\omega}}$ such that for each $j < 4$, $g_j$ restricted to $\Subtree{3}{R}$ is stable. For each $j < 4$, let $\hat{g}_j: \Subtree{2}{R} \to k$
 be the (non-computable) limit coloring of $g_j$. 
 
 Again, by 4 successive applications of \Cref{thm:cmtt-admits-strong-cone-avoidance}, there exists a strong subtree $S \in \Subtree{\omega}{R}$ such that for each $j < 4$, $\hat{g}_j$ restricted to $\Subtree{2}{S}$ is stable. For each $j < 4$, let $\mu_j: S \to k$
 be the (non-computable) limit coloring of $\hat{g}_j$.
 
 Last, by 4 successive applications of \Cref{thm:hl-strong-cone-avoidance},
 there exists a strong subtree $T \in \Subtree{\omega}{T}$ such that for each $j < 4$,
 $\mu_j$ restricted to $T$ is monochromatic for some color $i_j < 4$. Let $I = \{i_j: j < 4\}$.

In particular, by \Cref{lem:coded-joyce-order-to-strong-subtree-Rado}, for every coded Joyce graph $H \subseteq T$ isomorphic to $\mathcal{F}_j$, there is some $E \in \Subtree{3}{S}$ containing $H$, and $g_j(E) = f(h(H))$.

Let $(X, \Epn, \ltlex, |\cdot \meet \cdot|)$ be a $Z$-computable coded Joyce graph isomorphic to $\mathcal{B}$, which exists by \Cref{thm:joyce-graph-to-coded}.
Since $T \in \Subtree{\omega}{2^{<\omega}}$, there is an injective function $\phi: 2^{<\omega} \to T$ such that $\phi[X]$ is a coded Joyce graph isomorphic to $X$, hence to $\mathcal{B}$. In particular, since $h$ is a Joyce graph diagonalization, $U = h[\phi[X]]$ is a coded Joyce Rado graph isomorphic to $X$, hence a subcopy of~$\mathcal{B}$.

We claim that the statement of the theorem holds for $U$, $f$ and $I$.
Given any $\ell \in \omega$, there is at most one level $n \in \omega$ such that for every $\sigma, \tau \in T$ with $|\sigma \meet \tau| = n$, $|h(\phi(\sigma)) \meet h(\phi(\tau))| = \ell$. We call $n$ the \emph{preimage} of $\ell$. Moreover, if $\ell$ is a label of $U$, that is, there is some $\rho, \nu \in U$ such that $|\rho \meet \nu| = \ell$, then it has a preimage. 

Fix $\ell_0 \in \omega$. If $\ell_0$ has no preimage, then $U(\ell_0, \ell_1, \ell_2) = \emptyset$ for every $\ell_1, \ell_2$ and the property is vacuously satisfied. Let $n_0$ be the preimage of $\ell_0$. Since for every $j < 4$, $\hat{g}_j$ is stable over $\Subtree{\omega}{T}$ with limit color $i_j$, there is some threshold $t_0 \in \omega$ such that for every strong subtree $E$ of $U$ of height 2 whose first level is $n_0$ and second is higher than $t_0$, $\hat{g}_j(E) = i_j$. For all but finitely many $\ell_1$, the preimage of $\ell_1$, if it exists, is larger than $t_0$. Fix any such $\ell_1$ with preimage $n_1$. For every $j < 4$, since $\hat{g}_j$ is the limit coloring of $g_j$, there is a threshold $t_1 \in \omega$ such that for every strong subtree $E$ of height 3 whose first two levels are $n_0$ and $n_1$, respectively, and whose last level is higher than $t_1$, $g_j(E) = i_j$. 
	For all but finitely many $\ell_2$, its preimage is larger than $t_1$. 
	
	Fix any such $\ell_2$, and let $\hat{H} \in U(\ell_0, \ell_1, \ell_2)$. 
	Let $j < 4$ be such that $\hat{H}$ is isomorphic to $\mathcal{F}_j$. Let $H \subseteq \phi[X]$ be such that $h[H] = \hat{H}$. In particular since $\phi[X]$ is a coded Joyce graph, so is $H$, so since $h$ is a Joyce graph diagonalization, $\hat{H} = h[H]$ is a coded Joyce graph isomorphic to $H$. In other words, $H$ is a copy of $\hat{H}$ in $\phi[X] \subseteq T$, so $H$ is a copy of $\mathcal{F}_j$ in $T$. By choice of $T$, $f(h[H]) = i_j$, so $f(\hat{H}) = i_j \in I$.
This completes the proof of \Cref{thm:blossom-limit-cone-avoidance}.
\end{proof}

\begin{theorem}\label{thm:weak-rado-cone-avoidance}
$(\forall k)\RG^2_{k, 4}$ admits cone avoidance.
\end{theorem}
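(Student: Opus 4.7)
The plan is to apply \Cref{thm:blossom-limit-cone-avoidance} to reduce to a subcopy with a good ``limit'' behaviour, and then to extract a genuine Rado subcopy by a Mathias-like forcing argument that simultaneously enforces the color constraint, the Rado extension property, and cone avoidance.

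Fix sets $C$ and $Z$ with $C \not\leq_T Z$, and a $Z$-computable coloring $f: [\mathcal{R}]^2 \to k$. Relativizing \Cref{le:computable-bjrg} to $Z$ and invoking \Cref{cor:rado-can-be-enriched}, we may identify $\mathcal{R}$ with a $Z$-computable Joyce blossom graph $\mathcal{B}$, so that $f$ becomes a $Z$-computable coloring of $[\mathcal{B}]^2$. Apply \Cref{thm:blossom-limit-cone-avoidance} to $\mathcal{B}$ and $f$ to obtain a subcopy $(U, \Epn, \ltlex, |\cdot \meet \cdot|)$ of $\mathcal{B}$ and a set $I \subseteq k$ with $|I| \leq 4$ such that $C \not\leq_T U \oplus Z$ and
$$ (\forall \ell_0)(\forall^\infty \ell_1)(\forall^\infty \ell_2)[f[U(\ell_0, \ell_1, \ell_2)]^2 \subseteq I]. $$
Since $U$ is itself a subcopy of $\mathcal{R}$, it suffices to extract from $U$ a further subcopy $V$ of $\mathcal{R}$ such that $f[V]^2 \subseteq I$ and $C \not\leq_T V \oplus Z$; this will be a solution to $f$ as an instance of $\RG^2_{k,4}$.

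To build $V$, I would employ a Mathias-like forcing $\Pb$ relative to $U \oplus Z$ whose conditions are pairs $(F, X)$ with $F \subseteq U$ finite, $X \subseteq U \setminus F$ infinite, $C \not\leq_T X \oplus U \oplus Z$, and satisfying the compatibility constraints $f[F]^2 \subseteq I$ and $f(\{\sigma, \tau\}) \in I$ for every $\sigma \in F$ and $\tau \in X$. An extension $(F', X') \leq (F, X)$ requires $F \subseteq F' \subseteq F \cup X$ and $X' \subseteq X \setminus F'$. A sufficiently generic filter $\Uc$ yields $V = \bigcup\{F : (F, X) \in \Uc\}$ with $f[V]^2 \subseteq I$ by construction, and $V \leq_T U \oplus Z \oplus \Uc$. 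The two remaining tasks are (i) ensuring density of conditions realizing each Rado extension request so that $V$ is itself a Rado graph, and (ii) ensuring density of conditions forcing $\Gamma^{V \oplus Z} \neq C$ for each Turing functional $\Gamma$.

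Density for (i) follows from the limit property combined with the fact that $U$ is a Rado graph: given $(F, X)$ and a Rado extension request over $F$, the set of $\tau \in U$ realizing the request is infinite, and for each fixed $\sigma \in F$ the limit property implies that for cofinitely many labels the pair $\{\sigma, \tau\}$ satisfies $f(\{\sigma, \tau\}) \in I$, so the intersection with $X$ remains infinite and allows us to both append a realizer and thin the reservoir suitably. Density for (ii) follows from a standard argument mirroring \Cref{lem:mtt1-sca-forcing-requirement} and \Cref{thm:cmtt-admits-strong-cone-avoidance-req}: either some finite set of potential extensions of $(F, X)$ already forces $\Gamma^{V \oplus Z}(x) \downarrow \neq C(x)$ for some $x$, in which case we extend accordingly, or the relevant $\Pi^{0,U \oplus Z}_1$ class of admissible thinnings of $X$ is nonempty, and the cone-avoidance basis theorem applied relative to $U \oplus Z$ furnishes a valid extension, using that $C \not\leq_T U \oplus Z$. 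The main obstacle will be the bookkeeping needed to weave these three requirements together along a single sequence of extensions, in particular verifying that the density arguments for the color constraint and for the Rado extension property do not interfere with the cone-avoidance argument — this parallels the combinatorial tension already addressed in the proofs of \Cref{thm:mtt1-strong-cone-avoidance} and \Cref{thm:cone-avoidance-MTT2}, and is handled by the same careful interleaving of dense requirements within a single Mathias-like notion of forcing.
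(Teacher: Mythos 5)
Your high-level plan --- reduce to a Joyce blossom graph, apply \Cref{thm:blossom-limit-cone-avoidance} to obtain $U$ and $I$ with $|I| \leq 4$, and then force a Rado subcopy of $U$ whose pairs take colors only in $I$ --- matches the paper's route. But the Mathias-style forcing you propose has a genuine gap in its Rado-density argument, and that gap is precisely what the paper's choice of forcing notion is designed to avoid.

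Your conditions $(F, X)$ constrain $X$ only to be an infinite, cone-avoiding, color-compatible subset of $U \setminus F$. There is no invariant ensuring that $F \cup X$ retains the Rado extension property. Your claim that ``the set of $\tau \in U$ realizing the request is infinite, and \ldots the intersection with $X$ remains infinite'' appeals to $U$ being Rado, but $X$ is not $U$: after several rounds of thinning --- and especially after a pass through the cone-avoidance basis theorem, which returns an arbitrary member of a $\Pi^0_1$ class --- there may be no $\mu \in X$ at all realizing a given $2$-partition request over $F$. Crucially, being a Rado subgraph is a $\Pi^0_2$ property, not $\Pi^0_1$, so you cannot fold ``$F \cup X$ is Rado'' into the class you feed to the basis theorem. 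This is exactly where the Rado graph theorem differs from the strong-subtree statements: in \Cref{thm:cmtt-admits-strong-cone-avoidance-req} the constraint ``the reservoir is a strong subforest'' is $\Pi^0_1$ (given computable bounds), so the basis theorem applies, whereas ``the reservoir is a Rado graph'' is not, and the analogous move breaks down.

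The paper sidesteps this by never letting the reservoir be an arbitrary infinite set. Its conditions $(F, R)$ are finitely coded: $R \subseteq 2^{<\omega}$ is a finite prefix-free set of promise strings, and the reservoir is implicitly $D \uh R$ for a fixed cofinal $Z$-computable set $D$. The invariant that every $2$-partition of $F$ is witnessed by some $\sigma \in R$ is part of the definition, and \Cref{lem:weak-rado-cone-avoidance-condition-is-rado} shows that $(F \cup (D \uh R), \Epn)$ is always a Rado graph, so realizers of every partition request live in the reservoir at every stage. Extensions are made by lengthening strings of $R$ rather than by passing to arbitrary subsets, and this gives the control on meet-lengths needed to make the $(\forall \ell_0)(\forall^\infty \ell_1)(\forall^\infty \ell_2)$ limit property bite: one fixes $\ell_0$ above everything in $R$, moves past the thresholds $\ell_1, \ell_2$ supplied by the limit property, and places a prescribed bit at position $\ell_0$ so that all new pairs formed have short meets and long members, hence color in $I$. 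Your sketch never explains how to force new pairs into the cofinite region where the limit property applies.

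Finally, once conditions are finitely coded, the cone-avoidance density (\Cref{lem:weak-rado-cone-avoidance-req}) needs no basis theorem at all: one defines a $Z$-c.e.\ set $W$ of pairs $(x,v)$ witnessed by finite $E \subseteq D \uh \hat{R}$ with $\Phi^{(F \cup E) \oplus Z}(x)\downarrow = v$; if some $(x,1-C(x)) \in W$ a witness gives the diagonalizing extension, if some $(x,C(x)) \notin W$ the current condition already forces disagreement, and otherwise $W$ would compute $C$ from $Z$, a contradiction. Repairing your argument essentially forces you to replace the raw Mathias reservoir with something like the paper's promise sets $R$; at that point you have reconstructed the paper's proof.
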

\begin{proof}
Fix two sets $Z, C$ such that $C \not \leq_T Z$.
Let $(V, E)$ be a $Z$-computable Rado graph and $h: [V]^2 \to k$ be a $Z$-computable coloring.

By computable categoricity of the Rado graph, $(V, E)$ is $Z$-computably isomorphic to the graph of a computable Joyce blossom graph \[\mathcal{B} = (B, \Epn, \ltlex, |\cdot \meet \cdot|).\]
This induces a $Z$-computable coloring $\hat{h}: [B]^2 \to k$ by composing the coloring $h$ with the isomorphism.
By \Cref{thm:blossom-limit-cone-avoidance}, there is a subcopy $(U, \Epn, \ltlex, |\cdot \meet \cdot|)$ of $\mathcal{B}$ and a finite set of colors $I \subseteq k$ such that $C \not \leq_T U \oplus Z$, $|I| \leq 4$, and 
\begin{equation}\label{eqn:weak-rado-pairs-ca-1}
(\forall \ell_0)( \forall^{\infty} \ell_1)( \forall^{\infty} \ell_2)[ \hat{h}[U(\ell_0, \ell_1, \ell_2)]^2 \subseteq I],
\end{equation}
where $U(\ell_0, \ell_1, \ell_2)$ is the set of Joyce subgraphs of size 2 whose labels are exactly $\ell_0, \ell_1, \ell_2$, that is, $U(\ell_0, \ell_1, \ell_2) = \{ \{\sigma, \tau\} \in [U]^2: |\sigma \meet \tau| = \ell_0, |\sigma| = \ell_1, |\tau| = \ell_2 \}$.
Let $(f, g)$ be a $U \oplus Z$-computable blossom tree and $D \subseteq \cantor$ be a $Z$-computable set cofinal in $\cantor$ such that $g[D] = B$. 
In particular, by \Cref{eqn:weak-rado-pairs-ca-1}, the following holds:
\begin{equation}\label{eqn:weak-rado-pairs-ca-2}
(\forall \rho \in \cantor)(\forall^{\infty} \sigma_0 \in D)(\forall^{\infty} \sigma_1 \in D)[\sigma_0 \meet \sigma_1 = \rho \Rightarrow \hat{h}(g(\sigma_0), g(\sigma_1)) \in I].
\end{equation}

We are going to build a by forcing infinite set $G \subseteq D$ such that $(g[G], \Epn)$ is a Rado graph and $\hat{h}[g[G]]^2 \subseteq I$. 

\begin{definition}
A string $\sigma \in \cantor$ \emph{witnesses} a finite 2-partition $F_0 \sqcup F_1 \subseteq \cantor$
if for every $i < 2$ and every $\rho \in F_i$, $\sigma(|\rho|) = i$.
\end{definition}

In other words, $\sigma$ witnesses $F_0 \sqcup F_1 \subseteq \cantor$ if in the graph $(\cantor, \Epn)$, $\sigma$ is connected to all the elements of $F_1$ and disconnected from all the elements of $F_0$.
Note that if $\sigma$ witnesses  $F_0 \sqcup F_1 \subseteq D$, then so does any $\tau \succeq \sigma$.

\begin{lemma}\label{lem:weak-rado-cone-avoidance-Rado-preservation}
If $(G, \Epn)$ is a Rado graph for some $G \subseteq D$, then so is $(g[G], \Epn)$.	
\end{lemma}
\begin{proof}
Let $\hat{F}_0 \sqcup \hat{F}_1 \subseteq g[G]$ be a finite 2-partition. 
Let $F_0 = g^{-1}[\hat{F}_0]$ and $F_1 = g^{-1}[\hat{F}_1]$. In particular, $F_0$ and $F_1$ are disjoint. 
Since $(G, \Epn)$ is a Rado graph, then there is some $\sigma \in G$ witnessing the 2-partition $F_0 \sqcup F_1 \subseteq G$. We claim that $g(\sigma)$ witnesses the 2-partition $\hat{F}_0 \sqcup \hat{F}_1 \subseteq g[G]$. By definition of a blossom tree $(f,g)$, the function $f$ is $\meet$-preserving, so $f(\sigma)$ witnesses the 2-partition $\hat{F}_0 \sqcup \hat{F}_1 \subseteq g[G]$. Since $g(\sigma) \succeq f(\sigma)$, then so does $g(\sigma)$.
\end{proof}

Given $\sigma \in \cantor$, we write $D \uh \cantor = \{ \tau \in D: \tau \succeq \sigma \}$.
Given a finite set $R \subseteq \cantor$, we write $D \uh R = \{ \tau \in D: (\exists \sigma \in R)[\tau \succeq \sigma] \}$.

\begin{definition}
A \emph{condition} is a pair $(F, R)$ where $F \subseteq D$ and $R \subseteq \cantor$
are both finite sets such that $R$ is prefix-free and:
\begin{enumerate}[(1)]
	\item every finite 2-partition $F_0 \sqcup F_1 = F$ is witnessed by some $\sigma \in R$;
	\item for every $\sigma \in F$ and $\tau \in F \cup (D \uh R)$, $\hat{h}(g(\sigma), g(\tau)) \in I$;
\end{enumerate}
A condition $(E, S)$ \emph{extends} $(F, R)$ (written $(E, S) \leq (F, R)$) if $F \subseteq E$, $E \setminus F \subseteq D \uh R$ and for every $\tau \in S$, there is some $\sigma \in R$ such that $\tau \succeq \sigma$.
\end{definition}

One can see a condition $(F, R)$ as the Mathias condition $(F, D \uh R)$.
In particular, for every filter $\Fc$ for this notion of forcing, letting $G_\Fc = \bigcup \{ F: (F, R) \in \Fc \}$, if $(F, R) \in \Fc$ then $F \subseteq G_\Fc \subseteq F \cup (D \uh R)$.
Structurally, we ensured that $\hat{h}[g[G_\Fc]]^2 \subseteq I$. Note that $(\emptyset, \{\epsilon\})$ is a valid condition.

\begin{lemma}\label{lem:weak-rado-cone-avoidance-condition-is-rado}
For every condition $(F, R)$, $(F \cup (D \uh R), \Epn)$ is a Rado graph.	
\end{lemma}
\begin{proof}
Fix any finite 2-partition  $E_0 \sqcup E_1 \subseteq F \cup D \uh R$.
By definition of a condition, there is some $\sigma \in R$ witnessing the 2-partition $E_0 \cap F, E_1 \cap F$. Since $D$ is cofinal in $\cantor$, there is some $\tau \in D$ such that $\tau \succeq \sigma$, and for every $i < 2$ and $\rho \in E_i \setminus F$, $\tau(|\rho|) = i$. 
Thus $\tau$ witnesses the 2-partition $E_0 \sqcup E_1 \subseteq F \cup D \uh R$. Moreover $\tau \in D \uh \sigma$, hence $\tau \in D \uh R$.
\end{proof}

The following lemma shows that if $\Fc$ is a sufficiently generic filter, then $(G_\Fc, \Epn)$ is a Rado graph.

\begin{lemma}\label{lem:weak-rado-cone-avoidance-progress}
For every condition $(F, R)$ and every 2-partition $F_0 \sqcup F_1 \subseteq F$,
an extension $(E, S)$ such that $E$ contains an element witnessing the 2-partition.
\end{lemma}
\begin{proof}
Let $\ell_0 \in \omega$ be larger than the length of any string in $R$.
Let $\ell_1 \in \omega$ be sufficiently large with respect to $\ell$ so that
\begin{equation}\label{eqn:weak-rado-pairs-ca-3}
(\forall \rho \in 2^{\leq \ell_0})(\forall \sigma_0 \in D^{\geq \ell_1})(\forall^{\infty} \sigma_1 \in D)[\sigma_0 \meet \sigma_1 = \rho \Rightarrow \hat{h}(g(\sigma_0), g(\sigma_1)) \in I].
\end{equation}
Such an $\ell_1$ exists by \Cref{eqn:weak-rado-pairs-ca-2}.
Let $\hat{R}$ be obtained from $R$ by extending each string $\sigma \in R$ into a string $\tau$ of length $\ell_1$ such that $\tau(\ell_0) = 0$.
Then $(F, \hat{R})$ is again a condition.

By \Cref{lem:weak-rado-cone-avoidance-condition-is-rado}, since $(F \cup (D \uh \hat{R}), \Epn)$ is a Rado graph, there is some $\tau \in F \cup (D \uh \hat{R})$ witnessing the 2-partition.
If $\tau \in F$ then we are done since $(F, \hat{R})$ then satisfies the lemma, so assume $\tau \in D \uh \hat{R}$. 
Let $E = F \cup \{\tau\}$. Since $|\tau| \geq \ell_1$, by \Cref{eqn:weak-rado-pairs-ca-3}, there is some $\ell_2 \in \omega$ so that
\begin{equation}\label{eqn:weak-rado-pairs-ca-4}
(\forall \rho \in 2^{\leq \ell_0})(\forall \mu \in D^{\geq \ell_2})[\tau \meet \mu = \rho \Rightarrow \hat{h}(g(\tau), g(\mu)) \in I].
\end{equation}
For every $\sigma \in R$, let $\sigma_0, \sigma_1 \in R$ be strings of length at least $\ell_2$ such that $\sigma_0(|\tau|) = 0$, $\sigma_1(|\tau|) = 1$, $\sigma_0(\ell_0) = \sigma_1(\ell_0) = 1$, $\sigma_0 \meet \sigma_1 \succeq \sigma$, and define $S = \{\sigma_0, \sigma_1: \sigma \in R \}$.

We claim that $(E, S)$ is a condition.
We first prove Item (1). Let $F_0 \sqcup F_1 \subseteq E$ be a 2-partition. Since $(F, R)$ is a condition, there is some $\sigma \in R$ witnessing the 2-partition $(F_0 \setminus \{\tau\}) \sqcup  (F_1 \setminus \{\tau\})$. If $\tau \in F_i$ for some $i < 2$, then $\sigma_i \succeq \sigma$ witnesses the 2-partition  $F_0 \sqcup F_1 \subseteq E$. If $\tau \not \in F_0 \sqcup F_1$, then any $\sigma_i \succeq \sigma$ witnesses it.

We now prove Item (2). Fix $\nu \in E$ and $\mu \in E \cup (D \uh S)$ with $|\nu| \leq |\mu|$. If $\nu \neq \tau$, then $\nu \in F$ and $\hat{h}(g(\nu), g(\mu)) \in I$ by Item (2) for $(F, R)$.
If $\nu = \tau$, then $\mu \in (D \uh S)$. By choice of $S$, $\mu(\ell_0) = 1$, and $\nu(\ell_1) = 0$. Thus, $|\mu \meet \nu| \leq \ell_0$. Moreover, since every string in $S$ has length at least $\ell_2$, $|\mu| \geq \ell_2$, so by \Cref{eqn:weak-rado-pairs-ca-4}, $\hat{h}(g(\nu), g(\mu)) \in I$.
\end{proof}

\begin{definition}
Let $c = (F, R)$ be a condition and let $\varphi(G, x)$ be a $\Delta^{0,Z}_0$ formula with a free set parameter $G$ and a free integer parameter $x$.
\begin{enumerate}[1.]
	\item $c \Vdash (\exists x)\varphi(G, x)$ if $\varphi(F, x)$ holds for some $x \in \omega$;
	\item $c \Vdash (\forall x)\varphi(G, x)$ if $\varphi(F \cup E, x)$ holds for every $x \in \omega$ and every $E \subseteq D \uh R$ such that $\hat{h}[g[E]]^2 \subseteq I$.
\end{enumerate}
\end{definition}

In particular, the forcing relation is closed under extension, and if $\Fc$ is a filter and $c \Vdash \varphi(G)$ for some $\Sigma^{0,Z}_1$ or $\Pi^{0,Z}_1$ formula and some $c \in \Fc$, then $\varphi(G_\Fc)$ actually holds. As usual, we write $c \Vdash \Gamma^{G \oplus Z} \neq C$ if either $c \Vdash \Gamma^{G \oplus Z}(x)\uparrow$ or $c \Vdash \Gamma^{G \oplus Z}(x)\downarrow \neq C(x)$ for some $x \in \omega$.

\begin{lemma}\label{lem:weak-rado-cone-avoidance-req}
For every condition $c$ and every Turing functional $\Gamma$,
there is an extension $d$ of $c$ such that $d \Vdash \Gamma^{G \oplus Z} \neq C$.
\end{lemma}
\begin{proof}
As in the proof of \Cref{lem:weak-rado-cone-avoidance-progress}, let $\ell_0$ and $\ell_1$ be sufficiently large to satisfy \Cref{eqn:weak-rado-pairs-ca-3}. Again, let $\hat{R}$ be obtained from $R$ by extending each string $\sigma \in R$ into a string $\tau$ of length $\ell_1$ such that $\tau(\ell_0) = 0$.

Let $W$ be the set of all pairs $(x, v) \in \omega \times 2$ such that there is a finite set $E \subseteq D \uh \hat{R}$ satisfying $\hat{h}[g[E]]^2 \subseteq I$ and such that $\Phi^{(F \cup E) \oplus Z}(x) \downarrow = v$. Note that the set $W$ is $Z$-c.e. We have three cases.

\case{1}{$(x, 1-C(x)) \in W$ for some $x \in \omega$.} Let $E \subseteq D \uh \hat{R}$ witness that $(x, 1-C(x)) \in W$, that is, $\hat{h}[g[E]]^2 \subseteq I$ and $\Phi^{(F \cup E) \oplus Z}(x) \downarrow = 1-C(x)$. Since for every $\tau \in E$, $|\tau| \geq \ell_1$, then by \Cref{eqn:weak-rado-pairs-ca-3}, there is some $\ell_2 \in \omega$ so that
\begin{equation}\label{eqn:weak-rado-pairs-ca-5}
(\forall \rho \in 2^{\leq \ell_0})(\forall \tau \in E)(\forall \mu \in D^{\geq \ell_2})[\tau \meet \mu = \rho \Rightarrow \hat{h}(g(\tau), g(\mu)) \in I].
\end{equation}

For every $\sigma \in R$, and every 2-partition $E_0 \sqcup E_1 = E$, let $\sigma_{E_0, E_1}$ be a string of length at least $\ell_2$ extending $\sigma$, such that $\sigma_{E_0, E_1}(|\tau|) = 0$ for every $\tau \in E_0$ and $\sigma_{E_0, E_1}(|\tau|) = 1$ for every $\tau \in E_1$. Let $S = \{\sigma_{E_0, E_1}: \sigma \in R, E_0 \sqcup E_1 = E \}$.

We claim that $(F \cup E, S)$ is a condition.
We first prove Item (1). Let $(F_0 \cup E_0) \sqcup (F_1 \cup E_1) \subseteq F \cup E$ be a 2-partition with $F_0 \sqcup F_1 \subseteq F$ and $E_0 \sqcup E_1 \subseteq E$. Since $(F, R)$ is a condition, there is some $\sigma \in R$ witnessing the 2-partition $F_0 \sqcup F_1 \subseteq F$. By construction, $\sigma_{E_0, E_1} \in S$ witnesses the 2-partition $E_0 \sqcup E_1 \subseteq E$ and extends $\sigma$, so $\sigma_{E_0, E_1}$ witnesses the 2-partition $(F_0 \cup E_0) \sqcup (F_1 \cup E_1) \subseteq F \cup E$.

We now prove Item (2). Fix $\nu \in F \cup E$ and $\mu \in F \cup E \cup (D \uh S)$ with $|\nu| \leq |\mu|$. If $\nu \in F$, then $\hat{h}(g(\nu), g(\mu)) \in I$ by Item (2) for $(F, R)$.
If $\nu \in E$ and $\mu \in F \cup E$, then $\mu \in E$ since $|\nu| \leq |\mu|$, and $\hat{h}(g(\nu), g(\mu)) \in I$ since $\hat{h}[g[E]]^2 \subseteq I$.
If $\nu \in \tau$ and $\mu \in (D \uh S)$, then by choice of $S$, $\mu(\ell_0) = 1$, and $\nu(\ell_1) = 0$. Thus, $|\mu \meet \nu| \leq \ell_0$. Moreover, since every string in $S$ has length at least $\ell_2$, $|\mu| \geq \ell_2$, so by \Cref{eqn:weak-rado-pairs-ca-5}, $\hat{h}(g(\nu), g(\mu)) \in I$.

Moreover, the condition $(F \cup E, S)$ forces $\Phi^{G \oplus Z}(x)\downarrow = 1-C(x)$, thus satisfies the lemma.

\case{2}{$(x, C(x)) \not \in W$ for some $x \in \omega$.} Then the condition $(F, \hat{R})$ is an extension of $(F, R)$ forcing $\Phi^{G \oplus Z}(x)\uparrow \vee \Phi^{G \oplus Z}(x)\downarrow \neq C(x)$, and we are done.

\case{3}{otherwise.} Then $W$ is a $Z$-c.e.\ graph of the characteristic function of $C$, hence $C$ is $Z$-computable, contradicting the hypothesis. This case therefore cannot happen. This completes the proof of \Cref{lem:weak-rado-cone-avoidance-req}.
\end{proof}

We are now ready to prove \Cref{thm:weak-rado-cone-avoidance}.
Let $\Fc$ be a sufficiently generic filter for this notion of forcing.
By definition of a forcing condition, $\hat{h}[g[F]]^2 \subseteq I$ for every $(F, R) \in \Fc$, so $\hat{h}[g[G_\Fc]]^2 \subseteq I$. By \Cref{lem:weak-rado-cone-avoidance-req}, $C \not \leq_T G_\Fc \oplus Z$. Since $g \leq_T Z$ then $C \not \leq_T g[G_\Fc] \oplus Z$. By \Cref{lem:weak-rado-cone-avoidance-progress}, $(G_\Fc, \Epn)$ is a Rado graph, and by \Cref{lem:weak-rado-cone-avoidance-Rado-preservation}, so is $(g[G_\Fc], \Epn)$. 
The image of $(g[G_\Fc], \Epn)$ be the $Z$-computable isomorphism between the Rado graph $(V, E)$ and the Joyce blossom graph $\mathcal{B}$ yields a Rado subgraph $(\hat{V}, E)$ of $(V, E)$ such that $h[\hat{V}]^2 \subseteq I$ and $C \not \leq_T \hat{V} \oplus Z$.
This completes the proof of \Cref{thm:weak-rado-cone-avoidance}.
\end{proof}

\begin{corollary}
$(\forall k)\RG^2_{k, 4}$ does not imply $\ACA_0$ over $\RCA_0$.
\end{corollary}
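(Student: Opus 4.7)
The plan is to combine the cone avoidance result just established (\Cref{thm:weak-rado-cone-avoidance}) with the general machinery of \Cref{lem:cone-avoidance-not-aca}, which states that any $\Pi^1_2$ statement admitting cone avoidance fails to imply $\ACA_0$ over $\RCA_0$.

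First, I would verify that $(\forall k)\RG^2_{k,4}$ is (equivalent to) a $\Pi^1_2$ statement: an instance is a countable Rado graph $(\mathcal{R},E)$ together with a natural number $k$ and a coloring $f:[\mathcal{R}]^2\to k$, and a solution is an isomorphic subgraph $\mathcal{R}'$ of $\mathcal{R}$ such that $|f''[\mathcal{R}']^2|\leq 4$; existence of a Rado subgraph and boundedness of the image are both arithmetical conditions on the solution relative to the instance, so the whole statement has the $\forall\exists$ form of \Cref{eqn:Pi12}. Then \Cref{thm:weak-rado-cone-avoidance} says exactly that this problem admits cone avoidance in the sense of \Cref{cone_avoidance_def}.

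Applying \Cref{lem:cone-avoidance-not-aca} directly then yields an $\omega$-model of $\RCA_0\wedge(\forall k)\RG^2_{k,4}$ in which $\ACA_0$ fails, which suffices to conclude that $(\forall k)\RG^2_{k,4}$ does not imply $\ACA_0$ over $\RCA_0$. There is no real obstacle here: all the combinatorial work has already been done in the proof of \Cref{thm:weak-rado-cone-avoidance}, and the deduction of the non-implication is a purely formal application of the general lemma, exactly parallel to the arguments given in \Cref{cor:jdt4-cone-avoidance} and its surrounding corollaries.
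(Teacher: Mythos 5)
Your proposal is correct and matches the paper's argument exactly: the corollary is derived by combining the cone avoidance result of \Cref{thm:weak-rado-cone-avoidance} with \Cref{lem:cone-avoidance-not-aca}. Your extra paragraph checking that $(\forall k)\RG^2_{k,4}$ is a $\Pi^1_2$ statement is a hypothesis the paper leaves implicit, but it is the right thing to observe before invoking the lemma.
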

\begin{proof}
Immediate by \Cref{thm:weak-rado-cone-avoidance} and \Cref{lem:cone-avoidance-not-aca}.
\end{proof}

\section{Lower bound on the Rado Graph theorem}
In order to show lower bounds, we  use the notion of Joyce blossom graph: Indeed, one can computably embed any Joyce graph in a Joyce blossom graph by \Cref{thm:joyce-to-rado-blossom}, and there is a computable one by \Cref{le:computable-bjrg}.  
The embeddings of a Joyce complete graph with order type $\Nb$ or $\Qb$ allow to show that $\JRG^2_{8,7}$ implies respectively $\RT22$ and the Devlin Theorem. Even though it is weaker, We include the proof of $\RT22$ from $\JRG^2_{8,7}$ as it yields a computable reduction.
\begin{theorem}
  For every $k,n$, the statement $\JRG^2_{4k, 4n+3}$ implies $\RT2{k,n}$. In particular, $\JRG^2_{8, 7}$ implies $\RT22$.
\end{theorem}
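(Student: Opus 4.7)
The plan is to establish a computable reduction $\RT2{k,n}\cred \JRG^2_{4k,4n+3}$. First I would fix, by Lemma~\ref{le:computable-bjrg}, a computable Joyce blossom graph $\mathcal{B}$ together with a computable enumeration $B=\{b_0,b_1,\dots\}$ of its domain; being a Joyce blossom graph, $\mathcal{B}$ is in particular a Joyce Rado graph. Given an instance $f:[\omega]^2\to k$ of $\RT2{k,n}$, define a computable coloring $g:[B]^2\to 4k$ by
\[
  g(\{b_i,b_j\}) \;=\; \langle f(\{i,j\}),\ \tau(b_i,b_j)\rangle,
\]
where $\tau(b_i,b_j)\in\{0,1,2,3\}$ codes the isomorphism type of the two-element Joyce subgraph induced by $\{b_i,b_j\}$; there are exactly four such types, as discussed earlier in the chapter.

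Apply $\JRG^2_{4k,4n+3}$ to this instance to obtain a subcopy $\mathcal{B}'$ with domain $B'\subseteq B$ satisfying $|g[B']^2|\leq 4n+3$. For each $t\in\{0,1,2,3\}$, set $n_t=|\{c<k:\langle c,t\rangle\in g[B']^2\}|$, so that $\sum_t n_t\leq 4n+3$. By the pigeonhole principle some $t^*$ satisfies $n_{t^*}\leq n$; let $I=\{c<k:\langle c,t^*\rangle\in g[B']^2\}$, so $|I|\leq n$. The choice of $t^*$ and $I$ need not be made uniformly from $\mathcal{B}'$, which is allowed since $\cred$ is a non-uniform reduction.

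The main step is to $\mathcal{B}'$-computably build an infinite set $H=\{h_0,h_1,\dots\}\subseteq B'$ in which every pair has type $t^*$, proceeding greedily: given $h_0,\dots,h_{m-1}$, enumerate $B'$ in search of some $h_m$ such that $\tau(h_i,h_m)=t^*$ for every $i<m$. The key combinatorial fact underpinning this search is that, since $\mathcal{B}'$ is a subcopy of $\mathcal{B}$, it is itself a Joyce Rado graph, so by the age property (Corollary~\ref{th:4.1LaflammeRado}) it embeds every finite Joyce graph, in particular the one on $m+1$ vertices all of whose pairs have type $t^*$. Hence the desired $h_m$ is guaranteed to exist and is found in finitely many steps. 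Setting $\tilde H=\{i\in\omega:b_i\in H\}$, every pair $i<j$ in $\tilde H$ satisfies $g(\{b_i,b_j\})=\langle f(\{i,j\}),t^*\rangle\in g[B']^2$, hence $f(\{i,j\})\in I$, giving $|f[\tilde H]^2|\leq|I|\leq n$; so $\tilde H$ is the required $\RT2{k,n}$-solution.

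The main obstacle is verifying that, for each $t\in\{0,1,2,3\}$, the specification ``for every $i<m$, the pair $(h_i,h_m)$ has type $t$'' corresponds to a genuine finite Joyce graph on $m+1$ elements admissible under \Jo1--\Jo4, so that the age property of $\mathcal{B}'$ may be invoked to produce $h_m$. This reduces to a direct combinatorial check on the four types (each fixing the relative order, the relative label, and the edge status of a pair), together with an explicit description of a finite Joyce graph realizing the prescribed pattern uniformly across a chain of $m+1$ vertices.
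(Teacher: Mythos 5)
Your reduction, the product coloring, and the pigeonhole over the four two-element Joyce types to isolate a type $t^*$ carrying at most $n$ colors all match the paper's argument. The gap is in the extraction of the infinite type-$t^*$-uniform set. Your greedy step is justified by the age property, which says only that $\mathcal{B}'$ contains \emph{some} copy of the $(m{+}1)$-vertex Joyce graph all of whose pairs have type $t^*$; what the greedy search needs is an \emph{extension} property, namely that the particular tuple $h_0,\dots,h_{m-1}$ you have already committed to extends inside $\mathcal{B}'$. Universality does not imply this, and in fact the extension property genuinely fails. In the coded Joyce Rado graph $(g[S],\Epn,\ltlex,\meetlevel{\cdot}{\cdot})$ of \Cref{thm:joyce-rado-graph-exists} (take $S$ cofinal with $0^\ell\in S$), set $w=g(0^\ell)$: any $x=g(s)$ with $x\ltlex w$ and $|x|>|w|$ must have $s\succeq 0^{\ell+1}$, hence $g(s)(3\ell+2)=s(\ell)=0$, hence $\lnot(x\Epn w)$. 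So no element of the graph is simultaneously lex-below $w$, of larger label, and adjacent to $w$. Since only finitely many elements have label below $\meetlevel{w}{w}$, the element $w$ belongs to \emph{no} infinite set that is uniform for the type ``edge present, order reversed relative to labels.'' This defect is expressible in the language $(E,<,\JRel)$, so it transfers to the isomorphic computable blossom graph of \Cref{le:computable-bjrg} and to its subcopy $\mathcal{B}'$. If $t^*$ is that type and your search happens to pick $h_0$ to be (the image of) $w$, it never terminates. Your closing paragraph checks only that the target pattern is a consistent finite Joyce graph, which addresses realizability somewhere, not extendability of your chosen stem.

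The repair is the paper's route: do not extend a previously chosen tuple one vertex at a time, but embed an entire infinite type-$t^*$-uniform Joyce graph into the solution in a single application of the embedding machinery. One first exhibits, for the selected type, an infinite Joyce graph all of whose pairs have that type (the paper's Claim does this for the complete graph whose order agrees with the label order via $\{1^{2n}0:n\geq 1\}$; the other three types need analogous witnesses), and then applies \Cref{thm:joyce-to-rado-blossom} (rather than the $\ACA_0$-level \Cref{th:4.1LaflammeRado}) to embed it computably into $\mathcal{B}'$. This simultaneously fixes the effectivity of the reduction, which an unbounded search through $B'$ with no termination guarantee cannot provide.
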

\begin{proof}
  We first give the insight about what $4n+3$ corresponds to: if the union of $4$ disjoints sets is of cardinality $4n+3$, then one of them is of cardinality at most $N$.
  \begin{claim}\label{cl:Joyce-complete-linear-graph}
    There exists a Joyce graph $(G,E,<,\meetlevel\cdot\cdot)$ such that $(G,E)$ is the complete graph and for all $a,b\in G$, $a<b\iff \meetlevel aa<\meetlevel bb$.
  \end{claim}
  \begin{proof}
    Let $G=\{1^{2n}0:n\geq 1\}$, then $(G,\Epn, \ltlex, |\cdot\meet\cdot|)$ is a witness of the claim.    
  \end{proof}
Let $f:\Nb^2\to k$ be a symmetric coloring. By \Cref{le:computable-bjrg}, let $\mathcal{G}=(G,E,<,\meetlevel\cdot\cdot)$ be a computable Joyce blossom graph. 

  Let $(\mathcal{F}_i)_{i<4}$ be an enumeration of the finite Joyce graph structures with two elements. If $a,b\in G$, define $g(a,b)=\langle f(\meetlevel aa, \meetlevel bb), i\rangle$ where $i$ is such that $\{a,b\}$ is isomorphic to $\mathcal{F}_i$. By $\JRG^2_{<\infty, 2n+1}$, let $\mathcal{G}'$ be a subcopy of $\mathcal{G}$ in $\mathcal{G}$, using at most $4n+3$ colors. Either $\{(a,b):aEb\}$ or $\{(a,b):\lnot aEb\}$ uses at most $2n+1$ colors, suppose for instance that it is the former. Similarly, either $\{(a,b):a<b\land \meetlevel aa<\meetlevel bb\land aEb\}$ or $\{(a,b):a<b\land \meetlevel aa>\meetlevel bb\land aEb\}$ uses at most $n$ colors, suppose that it is the former. By Claim \ref{cl:Joyce-complete-linear-graph} there exists a Joyce complete graph $\Kb$ such that  for all $a,b\in \Kb$, $a<b\iff \meetlevel aa<\meetlevel bb$. By \Cref{thm:joyce-to-rado-blossom}, there exists a computable subcopy $\mathcal{G}''$ of $\Kb$ inside $\mathcal{G}'$. In particular, $\mathcal{G}''$ uses at most $n$ colors for $g$. As a consequence, $\{\meetlevel aa:a\in\mathcal{G}''\}$ uses at most $n$ colors for $f$.
\end{proof}

\begin{theorem}
  For every $n,k$, the statement $\JRG^2_{2k, 2n+1}$ implies $\DT2{k,n}$ and thus $\JRG^2_{8, 7}$ implies $\ACA_0$.
\end{theorem}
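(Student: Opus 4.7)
The plan is to follow closely the template of the preceding proof of $\JRG^2_{4k,4n+3} \implies \RT 2{k,n}$, replacing the Joyce complete linear graph there by a Joyce complete graph whose underlying Joyce order is a dense linear order without endpoints. Fix an instance $f:[\Qb]^2 \to k$ of $\DT 2{k,n}$. By \Cref{le:computable-bjrg}, let $\mathcal{G}=(G,E,<,\meetlevel\cdot\cdot)$ be a computable Joyce blossom graph with DLO induced order, and let $\phi:\Qb \to G$ be the computable order isomorphism arising from computable categoricity. Pull $f$ back to a coloring $\tilde f:[G]^2\to k$ by $\tilde f(a,b)=f(\phi^{-1}(a),\phi^{-1}(b))$.

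Next, enrich $\tilde f$ with the edge information to form a product coloring $g:[G]^2 \to 2k$ defined by $g(a,b) = \langle \tilde f(a,b),[aEb]\rangle$. Apply $\JRG^2_{2k,2n+1}$ to $g$ to obtain a Joyce subcopy $\mathcal{G}'=(G',E,<,\meetlevel\cdot\cdot)$ of $\mathcal{G}$ with $|g[G']^2|\le 2n+1$. Partition the used colors according to their second coordinate: the ``edge'' colors (with second coordinate $1$) and the ``non-edge'' colors (with second coordinate $0$). Since the total number of colors is at most $2n+1$, one of the two classes has cardinality at most $n$. Without loss of generality, the edge class does, so $\tilde f$ takes at most $n$ distinct values on $\{\{a,b\}\in [G']^2 : aEb\}$.

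To extract a DLO subcopy on which $\tilde f$ uses these $\le n$ colors, construct a Joyce complete DLO graph $\Kb$ by taking any DLO Joyce order (for instance the one from \Cref{thm:dlo-joyce-order-exists}) and adjoining all edges. Axiom \Jo4 is vacuous in a complete graph, so $\Kb$ is indeed a Joyce graph, and its underlying Joyce order is a DLO. By \Cref{thm:joyce-to-rado-blossom}, there is a computable embedding of $\Kb$ into $\mathcal{G}'$; let $\mathcal{G}''$ be the image, a sub-Joyce-graph of $\mathcal{G}'$ that is simultaneously a clique (all pairs are edges of $\mathcal{G}'$) and a DLO as a Joyce order. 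Pulling back through $\phi$, the set $U = \phi^{-1}(G'')\subseteq \Qb$ is a DLO subcopy of $\Qb$, and every pair in $[U]^2$ corresponds to an edge pair of $\mathcal{G}'$, so $|f[U]^2| = |\tilde f[G'']^2| \le n$. This establishes $\JRG^2_{2k,2n+1}\implies \DT 2{k,n}$. Setting $k=4$, $n=3$ yields $\JRG^2_{8,7}\implies \DT 2{4,3}$, and \Cref{cor:dt2to3-implies-aca} then gives $\JRG^2_{8,7}\implies \ACA_0$.

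The main obstacle I foresee is justifying the application of \Cref{thm:joyce-to-rado-blossom} with $\mathcal{G}'$ as the target, since a Joyce blossom graph is defined via a specific blossom-tree representation rather than as a first-order property, and $\mathcal{G}'$ is a priori only a subcopy. This is the same step invoked silently in the preceding proof of $\JRG^2_{4k,4n+3}\implies\RT 2{k,n}$, and it rests on transferring the blossom-tree structure of $\mathcal{G}$ along the inclusion $G'\subseteq G$; if one prefers to avoid this, the same argument goes through via \Cref{th:4.1LaflammeRado} at the cost of working in $\ACA_0$, which is harmless here since $\DT 2{4,3}$ already implies $\ACA_0$.
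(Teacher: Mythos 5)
Your proof takes essentially the same route as the paper's: form the product coloring with edge data, apply the $\JRG$ hypothesis, split the resulting colors into an edge class and a non-edge class, extract a DLO clique (resp.\ anticlique) subcopy via an embedding of a Joyce complete DLO graph, and pull back along the order embedding into $\Qb$. The concern you flag about \Cref{thm:joyce-to-rado-blossom} is a real subtlety — that theorem asks for the target to be a Joyce blossom graph, while $\mathcal{G}'$ is a priori only a Joyce Rado subcopy of one — and the paper's own proof of this step (as well as of the preceding $\RT{2}{k,n}$ reduction) is silent on the point, so you are right to notice it.

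However, your proposed fallback via \Cref{th:4.1LaflammeRado} over $\ACA_0$ is circular for the $\ACA_0$ conclusion. If $\JRG^2_{8,7}\implies\DT 2{4,3}$ is established only assuming $\ACA_0$, then to derive $\DT2{4,3}$ from $\JRG^2_{8,7}$ in an arbitrary $\RCA_0$-model you would already need $\ACA_0$, which is exactly the conclusion you are trying to reach; combining the $\ACA_0$-level implication with $\DT 2{4,3}\implies\ACA_0$ yields nothing stronger than $\ACA_0\implies\ACA_0$. Any repair must stay within $\RCA_0$, e.g.\ by arguing that the blossom-tree data of $\mathcal{G}$ restricts to a blossom-tree witness for $\mathcal{G}'$, or by composing the embedding $\Kb\hookrightarrow\mathcal{G}$ from \Cref{thm:joyce-to-rado-blossom} with a computable Joyce-structure isomorphism $\mathcal{G}\cong\mathcal{G}'$ — but the latter is not immediate since Joyce Rado graph structures are not obviously computably categorical.
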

\begin{proof}
  We start with a claim.
  \begin{claim}\label{cl:Joyce-complete-dlo-graph}
    There exists a Joyce graph $(G,E,<,\meetlevel\cdot\cdot)$ such that $(G,E)$ is the complete graph and $(G,<)$ is a DLO.
  \end{claim}
  \begin{proof}
    Let $(G,<,\meetlevel\cdot\cdot)$ be a DLO Joyce order. Define $E=\{(a,b):a,b\in G\}$. Then $(G, E,<,\meetlevel\cdot\cdot)$ is a witness of the claim.
  \end{proof}
  Let $\Xb=(X,<_X)$ be a computable dense linear order with no endpoints, and $f:X^2\to k$ be a symmetric coloring. By \Cref{le:computable-bjrg}, let $\mathcal{G}$ be a computable Joyce blossom graph. In particular, there exists an embedding $e$ from $(G,\ltlex)$ to $\Xb$.

  If $a,b\in G$, define $g(a,b)=\langle f(e[\{a;b\}]), 1\rangle$ if $aEb$ and
  \[
  g(a,b)=\langle f(e[\{a;b\}]), 0\rangle
  \]
  if $\lnot aEb$. By $\JRG^2_{<\infty, 2n+1}$, let $\mathcal{G}'$ be a subcopy of $\mathcal{G}$ in $\mathcal{G}$, using at most $2n+1$ colors. Either $\{(a,b):aEb\}$ or $\{(a,b):\lnot aEb\}$ uses at most $n$ colors, suppose for instance that it is the former. By Claim \ref{cl:Joyce-complete-dlo-graph} there exists a Joyce complete graph $\Kb$ whose order is a DLO. By \Cref{thm:joyce-to-rado-blossom}, there exists a computable subcopy $\mathcal{G}''$ of $\Kb$ inside $\mathcal{G}'$. In particular, $\mathcal{G}''$ is a DLO and uses at most $n$ colors for $g$. As a consequence, $e[\mathcal{G}'']\subseteq\Xb$ is a DLO and uses at most $n$ colors for $f$.

  Thus, $\JRG^2_{8, 7}$ implies $\DT2{4,3}$ which implies $\ACA_0$.
\end{proof}


Larson~\cite{MR2488745} computed the big Ramsey number for small subgraphs of the Rado graph.
The sum of the big Ramsey numbers for subgraphs of size 3 is equal to the number of Joyce graphs of size 3, that is, 112. In other words, $(\forall k)\RG^3_{k, 112}$ holds, while $(\forall k)\RG^3_{k, 111}$ does not. In the particular case of the complete graph $K_3$ of size 3,
$(\forall k)\RG^{K_3}_{k, 16}$ holds, while $(\forall k)\RG^{K_3}_{k, 15}$ does not.


The proof technique for the following theorem stems from Jockusch~\cite{Jockusch1972Ramseys} who constructed a computable instance of Ramsey's theorem for triples whose solutions compute the halting set. It was later refined by Hirschfeldt and Jockusch~\cite[Theorem 2.1]{Hirschfeldt2016notions} who showed the existence of a computable such instance such that every solution is of PA degrees over $\emptyset'$. Although it is unknown whether the Rado graph statement for graphs of size $n$ implies Ramsey's theorem for $n$-tuples, we can adapt the argument of Hirschfeldt and Jockusch to graphs.

\begin{theorem}\label{thm:weak-rado-triples-aca}
Let $(G, E)$ be a computable Rado graph and $F$ be a finite graph of size $3$. Let $b \in \omega$ be the Ramsey degree of $F$ in the Rado graph theorem, that is, the number of Joyce graphs isomorphic to $F$. There exists a computable coloring $f: {G \choose F} \to 2b$, such that for every $\hat{G} \subseteq G$ for which $(\hat{G}, E)$ is a Rado graph and $f$ restricted to ${\hat{G} \choose F}$ has at most $b$ colors, $\hat{G}$ is of PA degree over $\emptyset'$.
\end{theorem}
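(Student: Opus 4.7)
The plan is to adapt the Hirschfeldt--Jockusch construction (\cite{Hirschfeldt2016notions}, Theorem~2.1) to the Rado graph setting by working inside a computable Joyce blossom graph. By \Cref{le:computable-bjrg}, we may assume $(G,E)$ is presented as a coded Joyce blossom graph, so each vertex is a string $\sigma \in 2^{<\omega}$ of a distinct length, and every triple $\pi \in {G \choose F}$ carries a well-defined triple of lengths $\lgth{\sigma_0} < \lgth{\sigma_1} < \lgth{\sigma_2}$ together with a Joyce type among the $b$ finitely many Joyce structures that enrich $F$.

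First, I would fix an enumeration $\mathcal{J}_0, \dots, \mathcal{J}_{b-1}$ of the Joyce graphs on $F$. For each $j < b$ I will design a computable $2$-coloring $f_j$ that is defined on triples of Joyce type $\mathcal{J}_j$ and takes values in the pair $\{2j, 2j+1\}$; the full coloring $f: {G \choose F} \to 2b$ is then the disjoint union. Following Hirschfeldt--Jockusch, I would fix a uniformly $\emptyset'$-computable sequence of $\emptyset'$-approximations $(T_{s,t})_{s,t \in \omega}$ to an infinite binary tree $T$ whose paths are of PA degree relative to $\emptyset'$, and for a triple of strings $\sigma_0, \sigma_1, \sigma_2$ of Joyce type $\mathcal{J}_j$ with $\lgth{\sigma_0} < \lgth{\sigma_1} < \lgth{\sigma_2}$, set $f_j(\sigma_0, \sigma_1, \sigma_2) = 2j + P_{\lgth{\sigma_1}, \lgth{\sigma_2}}(\lgth{\sigma_0})$, where $P_{s,t}$ is the leftmost length-$s$ path in $T_{s,t}$ (cf.\ the proof of \Cref{th:dt-implies-rwkl''}). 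This is computable because the approximation is, and Joyce type is recognizable from the three strings.

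Next, suppose $\hat{G} \subseteq G$ is such that $(\hat{G}, E)$ is a Rado graph and $\lgth{f''{\hat{G} \choose F}} \leq b$. By \Cref{thm:joyce-to-rado-blossom} applied to $\hat{G}$ (which carries an induced Joyce blossom structure), every Joyce graph $\mathcal{J}_j$ embeds into $\hat{G}$, so $f$ must take at least one value in each of the $b$ disjoint color pairs $\{2j, 2j+1\}$. The hypothesis $\lgth{f''{\hat{G} \choose F}} \leq b$ therefore forces $f$ to use exactly one color per Joyce type: for each $j < b$ there is a fixed $c_j \in \{0,1\}$ such that $f_j \equiv 2j + c_j$ on ${\hat{G} \choose F}$ restricted to triples of Joyce type $\mathcal{J}_j$. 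Fix any one such $j$. The standard Hirschfeldt--Jockusch argument then shows that the set $H = \{ \lgth{\sigma} : \sigma \in \hat{H}\}$ arising from a sufficiently rich $\hat{H} \subseteq \hat{G}$ lets one compute, via a majority/limit argument using the monochromaticity of $f_j$, an infinite path through the tree $T$, hence a completion of PA over $\emptyset'$. The only subtlety is that we need arbitrarily long triples of Joyce type $\mathcal{J}_j$ with prescribed smallest length; this follows because $\mathcal{J}_j$ embeds into $\hat{G}$ and $\hat{G}$ is a Rado graph (so inside any cofinite tail one can still realize~$\mathcal{J}_j$).

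The main obstacle is the last step: verifying that the blossom/Joyce constraints in $\hat{G}$ do not prevent the Hirschfeldt--Jockusch extraction. Concretely, I need to ensure that for any $\ell$ and any large enough stage $s$, there exist $\sigma_0, \sigma_1, \sigma_2 \in \hat{G}$ of Joyce type $\mathcal{J}_j$ with $\lgth{\sigma_0} = \ell$ and $\lgth{\sigma_1}, \lgth{\sigma_2} \geq s$; this requires using the richness of Rado subgraphs plus the fact that in a Joyce blossom graph one can freely extend any vertex-pattern at arbitrarily high levels (as exploited in \Cref{thm:joyce-to-rado-blossom}). Once this is in place, the standard $\emptyset'$-limit argument produces a $(\hat{G})$-computable path through $T$, and hence $\hat{G}$ is of PA degree over $\emptyset'$, completing the proof.
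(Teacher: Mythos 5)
Your overall architecture matches the paper's: enrich $(G,E)$ to a Joyce Rado graph, enumerate the $b$ Joyce types of $F$, take the product of the type-coloring with an auxiliary $2$-coloring of the associated triples of labels, and use the embedding theorem (\Cref{th:4.1LaflammeRado}) to argue that every type must occur in $\hat G$, so the bound of $b$ colors forces exactly one color per type. That much is sound. The gap is in your choice of the auxiliary $2$-coloring and, consequently, in the extraction step. Monochromaticity of $f_j(\sigma_0,\sigma_1,\sigma_2)=2j+P_{\lgth{\sigma_1},\lgth{\sigma_2}}(\lgth{\sigma_0})$ for a fixed colour only tells you that the limit path $P$ takes that value at the positions $\lgth{\sigma_0}$ arising from realized triples; you learn the bits of $P$ on an infinite set of positions, not $P$ itself. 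This is exactly the thin-set conclusion of \Cref{th:dt-implies-rwkl''}, which the paper can only leverage into DNC-type lower bounds (\Cref{lem:rwkl-dnc}, \Cref{cor:dt2-implies-dnczpp}); an infinite set on which some path of a PA-path tree is constant need not be of PA degree over $\emptyset'$ (this is the $\mathrm{RWKL}$-versus-$\mathrm{WKL}$ distinction). So the claimed ``limit argument produces a $\hat G$-computable path through $T$'' does not go through. There is also a secondary problem: if the tree approximation is $\emptyset'$-computable your coloring is not computable, and if it is a computable double ($\Delta^0_3$) approximation the argument lands at $\emptyset''$ rather than $\emptyset'$.

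The repair, and what the paper actually does, is to replace the leftmost-path coloring by the Hirschfeldt--Jockusch settling coloring on the triple of labels: $h(x,y,z)=1$ iff $\Phi_e^{\emptyset'[y]}(e)[y]=\Phi_e^{\emptyset'[z]}(e)[z]$ for every $e<x$, and $f(\{x,y,z\})=\langle h(\meetlevel{x}{x},\meetlevel{y}{y},\meetlevel{z}{z}),\, j\rangle$ where $j$ is the Joyce type. A sufficiently sparse Rado subgraph of $\hat G$ realizes every type with $h$-colour $1$, so by cardinality the forced colour for each type is $\langle 1,j\rangle$; this pinning-down of \emph{which} colour survives is essential and has no analogue in your version. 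Homogeneity for colour $1$ then says that for any copy $\{x,y\}$ in $\hat G$ of the two-element initial segment of $F$ and any $e<\meetlevel{x}{x}$ with $\Phi_e^{\emptyset'}(e)\!\downarrow$, the stage-$\meetlevel{y}{y}$ approximation already converges (witnessed by a third vertex with large label), so $\hat G$ computes a total completion of $e\mapsto\Phi_e^{\emptyset'}(e)$. That is precisely PA over $\emptyset'$, and no path through any tree is ever computed.
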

\begin{proof}
By \Cref{cor:rado-can-be-enriched}, there exists an order $<$ over $G$ and a function $\meetlevel{\cdot}{\cdot}: G^2 \to \omega$ such that $(G, E, <, \meetlevel{\cdot}{\cdot})$ is a Joyce Rado graph. Let $J_0, \dots, J_{b-1}$ be an enumeration of all the Joyce graph structures of size 3 isomorphic to $F$. Let $g: {G \choose F} \to 2b$ be the coloring which to $\{x, y, z\}$ associates the index $i < b$ so that the Joyce graph structure of $(\{x, y, z\}, E, <, \meetlevel{\cdot}{\cdot})$ is isomorphic of $J_i$.

Let $h: [\omega]^3 \to 2$ be defined for every $x < y < z$ by $h(x, y, z) = 1$ iff for every $e < x$, $\Phi_e^{\emptyset'[y]}(e)[y] = \Phi_e^{\emptyset'[z]}(e)[z]$. Last, let $f: {G \choose F} \to 2 \times b$ be defined by 
$$
f(x, y, z) = \langle h(\meetlevel{x}{x}, \meetlevel{y}{y}, \meetlevel{z}{z}), g(x, y, z) \rangle.
$$
Let $\hat{G} \subseteq G$ be such that $(\hat{G}, E)$ is a Rado graph and $f$ restricted to ${\hat{G} \choose F}$ has at most $b$ colors.

\begin{claim}\label{claim:weak-rado-triples-aca-1}
$f{\hat{G} \choose F} = \{\langle 1, i\rangle: i < b\}$.
\end{claim}
\begin{proof}
Let $H \subseteq \hat{G}$ be a (non-computable) set such that $(H, E)$ is a Rado graph, and $H$ is sparse enough so that for every $x, y \in H$ such that $\meetlevel{x}{x} <_\Nb \meetlevel{y}{y}$, then for every $e < x$ such that $\Phi_e^{\emptyset'}(e)\downarrow$,  $\Phi_e^{\emptyset'[y]}(e)[y]\downarrow$. In particular, $\{ \meetlevel{x}{x}: x \in H \}$ is $h$-homogeneous for color~1.
By \Cref{th:4.1LaflammeRado}, for every $i < b$, $J_i$ embeds into $(H, E, <, \meetlevel{\cdot}{\cdot})$, so for every $i < b$, there is a unique $j < 2$ such that $\langle j, i\rangle \in f{\hat{G} \choose F}$. Moreover, by sparsity of $H$, $j = 1$. Thus $\{\langle 1, i\rangle: i < b\} \subseteq f{H \choose F} \subseteq f{\hat{G} \choose F}$, and by cardinality, $f{\hat{G} \choose F} = \{\langle 1, i\rangle: i < b\}$.
\end{proof}

Let $F_0$ be the finite graph induced by the two first elements of $F$.

\begin{claim}\label{claim:weak-rado-triples-aca-2}
For every $\{x, y\} \in {\hat{G} \choose F_0}$ with $\meetlevel{x}{x} <_\Nb \meetlevel{y}{y} <_\Nb \meetlevel{z}{z}$, and every $e < \meetlevel{x}{x}$, if $\Phi_e^{\emptyset'}(e)\downarrow$ then $\Phi_e^{\emptyset'[\meetlevel{y}{y}]}(e)[\meetlevel{y}{y}]\downarrow$
\end{claim}
\begin{proof}
Since $\hat{G}$ is a Rado graph, we can find a $z$ with $\meetlevel{z}{z}$ sufficiently large such that $\{x, y, z\} \in {\hat{G} \choose F}$ and for every $e < \meetlevel{x}{x}$, if $\Phi_e^{\emptyset'}(e)\downarrow$ then $\Phi_e^{\emptyset'[\meetlevel{z}{z}]}(e)[\meetlevel{z}{z}]\downarrow$.
By Claim \ref{claim:weak-rado-triples-aca-1}, $h(\meetlevel{x}{x}, \meetlevel{y}{y}, \meetlevel{z}{z}) = 1$, so by definition of $h$, for every $e < \meetlevel{x}{x}$, $\Phi_e^{\emptyset'[\meetlevel{y}{y}]}(e)[\meetlevel{y}{y}] = \Phi_e^{\emptyset'[\meetlevel{z}{z}]}(e)[\meetlevel{z}{z}]$. In particular, if $\Phi_e^{\emptyset'}(e)\downarrow$ then $\Phi_e^{\emptyset'[\meetlevel{y}{y}]}(e)[\meetlevel{y}{y}]\downarrow$.
\end{proof}

We are now ready to prove that $\hat{G}$ is of PA degree relative to $\emptyset'$.
For this, we prove that $\hat{G}$ computes a completion of the universal partial $\emptyset'$-computable function $e \mapsto \Phi^{\emptyset'}_e(e)$. 
Given $e$, search $\hat{G}$-computably for a pair $\{x, y\} \in {\hat{G} \choose F_0}$ such that $e < \meetlevel{x}{x}$, and return $\Phi_e^{\emptyset'[\meetlevel{y}{y}]}(e)[\meetlevel{y}{y}]$ if it halts, otherwise return 0.
Such a pair $\{x, y\}$ is always found since $\hat{G}$ is a Rado graph. By Claim \ref{claim:weak-rado-triples-aca-2}, if $\Phi_e^{\emptyset'[\meetlevel{y}{y}]}(e)[\meetlevel{y}{y}]$ does not halt, then $\Phi_e^{\emptyset'}(e)\uparrow$, so this is a valid completion.
This completes the proof of \Cref{thm:weak-rado-triples-aca}.
\end{proof}

\begin{corollary}\label{cor:rado-graph-3-aca}
For every finite graph $F$ of size $3$, letting $b$ be the number of Joyce graphs isomorphic to $F$, $(\forall k)\RG^{F}_{k, b}$ implies $\ACA_0$ over $\RCA_0$.
\end{corollary}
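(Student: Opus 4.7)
The plan is to derive the corollary as a direct application of Theorem~\ref{thm:weak-rado-triples-aca}, suitably relativized. To prove $\ACA_0$ over $\RCA_0$, it suffices to show that for every set $X$, the Turing jump $X'$ exists. So fix an arbitrary $X$ and work relative to it.

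First, I would invoke the relativization of Theorem~\ref{thm:weak-rado-triples-aca} to $X$. This produces an $X$-computable Rado graph $(G,E)$ together with an $X$-computable coloring $f:{G\choose F}\to 2b$ such that any $\hat G\subseteq G$ for which $(\hat G,E)$ is a Rado graph and $|f{\hat G \choose F}|\leq b$ is of PA degree over $X'$. Note that $\seq{(G,E),f}$ is a valid instance of $\RG^F_{2b,b}$, so applying the hypothesis $(\forall k)\RG^F_{k,b}$ to it yields a solution $\hat G$.

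Next I would use the standard fact that any set of PA degree over $X'$ computes $X'$: such a set computes a $\{0,1\}$-valued function DNC relative to $X'$, which, together with $X'$ as a parameter in the DNC construction, allows one to uniformly compute $X'$ itself (equivalently, PA degrees over $X'$ are by definition $\geq_T X'$ since they compute a complete consistent extension of $\mathrm{PA}$ plus the atomic diagram of $X'$). Thus $X'\leq_T \hat G$, so $X'$ exists as a set in the ambient model. Since $X$ was arbitrary, this establishes arithmetic comprehension, i.e.\ $\ACA_0$.

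The formalization in $\RCA_0$ is routine: the construction of $f$ in Theorem~\ref{thm:weak-rado-triples-aca} relativizes uniformly, and the deduction ``$\hat G$ solution $\Rightarrow X'\leq_T \hat G$'' is carried out by the same $\hat G$-computable search procedure described in the proof of Theorem~\ref{thm:weak-rado-triples-aca}, which outputs, on input $e$, the value $\Phi_e^{X'[\meetlevel{y}{y}]}(e)[\meetlevel{y}{y}]$ for any pair $\{x,y\}$ in $\hat G$ inducing the graph $F_0$ with $e<\meetlevel{x}{x}$. There is no real obstacle here; the only point that deserves care is noting that the bound $2b$ is a specific finite number, so the hypothesis $(\forall k)\RG^F_{k,b}$ applies directly to the instance $\seq{(G,E),f}$.
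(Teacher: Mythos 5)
Your proof is correct and is exactly the argument the paper intends: the corollary is stated as an immediate consequence of \Cref{thm:weak-rado-triples-aca}, obtained by relativizing that theorem to an arbitrary set $X$, applying $(\forall k)\RG^F_{k,b}$ to the resulting $X$-computable instance, and using that any degree of PA degree over $X'$ computes $X'$ (most directly here because the theorem's proof produces from $\hat{G}$ a completion $g$ of $e \mapsto \Phi_e^{X'}(e)$, and $g \geq_T X'$ via indices $e_n$ with $\Phi_{e_n}^{Y}(e_n) = Y(n)$ for all $Y$). The one cosmetic quibble is that your parenthetical justification ``together with $X'$ as a parameter in the DNC construction'' is circular as phrased, but your alternative justification via complete consistent extensions (or the completion argument just given) is fine, so there is no gap.
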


\begin{corollary}\label{cor:rado-graph-complete-3-aca}
$(\forall k)\RG^{K_3}_{k, 16}$ implies $\ACA_0$ over $\RCA_0$.
\end{corollary}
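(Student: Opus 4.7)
The plan is to derive this as the specialization of Corollary \ref{cor:rado-graph-3-aca} (equivalently, of Theorem \ref{thm:weak-rado-triples-aca}) to the case $F = K_3$. As noted in the paragraph preceding Theorem \ref{thm:weak-rado-triples-aca}, the big Ramsey degree of $K_3$ in the Rado graph is exactly $16$; that is, the number of Joyce graph structures (equivalently, pairs consisting of a Joyce tree with three leaves and an ordered edge pattern on those leaves compatible with the tree) that are isomorphic as graphs to $K_3$ is $16$. Thus Theorem \ref{thm:weak-rado-triples-aca}, instantiated with $F = K_3$ and $b = 16$, supplies a computable coloring $f \colon {\mathcal{R} \choose K_3} \to 32$ with the property that any subcopy $\hat{\mathcal{R}}$ of $\mathcal{R}$ on which $f$ uses at most $16$ colors has $\hat{\mathcal{R}}$ of PA degree over $\emptyset'$, and in particular computes $\emptyset'$.

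To obtain $\ACA_0$ over $\RCA_0$, I will relativize this construction. Fix an arbitrary set $X$; it suffices to produce $X'$ from $X$ together with some $(\forall k)\RG^{K_3}_{k,16}$-solution to an $X$-computable instance. Relativizing both the choice of computable Joyce Rado graph (Lemma \ref{le:computable-bjrg}) and Theorem \ref{thm:weak-rado-triples-aca} to $X$, one obtains an $X$-computable coloring $f^X \colon {\mathcal{R}^X \choose K_3} \to 32$ of an $X$-computable Rado graph $\mathcal{R}^X$ such that, for any Rado subgraph $\hat{\mathcal{R}}$ witnessing $|f^X{\hat{\mathcal{R}} \choose K_3}| \leq 16$, the explicit procedure at the end of the proof of Theorem \ref{thm:weak-rado-triples-aca} computes a completion of $e \mapsto \Phi_e^{X'}(e)$ from $\hat{\mathcal{R}} \oplus X$, hence computes $X'$ itself.

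To formalize this in $\RCA_0$, the key point is that the proof of Theorem \ref{thm:weak-rado-triples-aca} does not actually invoke the abstract result "PA-over-$\emptyset'$ computes $\emptyset'$"; the final paragraph of that proof gives a direct uniform reduction from $\hat{\mathcal{R}}$ to a completion of the universal partial $\emptyset'$-computable function. This reduction relativizes cleanly, and its correctness depends only on Claims \ref{claim:weak-rado-triples-aca-1} and \ref{claim:weak-rado-triples-aca-2}, both of which rely on pigeonhole-type counting and on the fact that every finite Joyce graph embeds into any Joyce Rado graph — a property available in $\RCA_0$ by the blossom-tree version (Theorem \ref{thm:joyce-to-rado-blossom}) applied to an $X$-computable Joyce blossom graph.

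The only potential obstacle is making sure that the combinatorial ingredients used in Theorem \ref{thm:weak-rado-triples-aca} (the enumeration of the $16$ Joyce structures on $K_3$, the enrichment of the Rado graph to a Joyce Rado graph, and the classification of $3$-element subgraphs by their Joyce structure) are all available in $\RCA_0$ rather than requiring $\ACA_0$; but these are finite-object manipulations together with Corollary \ref{cor:rado-can-be-enriched}, which was already stated in $\RCA_0$. With this verified, the derivation $(\forall k)\RG^{K_3}_{k,16} \vdash \ACA_0$ over $\RCA_0$ is immediate.
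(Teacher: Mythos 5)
Your proposal is correct and follows the route the paper takes: the corollary is a direct specialization of Corollary \ref{cor:rado-graph-3-aca} (hence of Theorem \ref{thm:weak-rado-triples-aca}) with $F = K_3$ and $b = 16$, which the paper treats as immediate and does not spell out. You fill in the relativization-to-$X$ step and the observation that a $\{0,1\}$-valued completion of the $X'$-diagonal (equivalently, PA over $X'$) computes $X'$ uniformly, which is exactly the unstated content behind the paper's "implies $\ACA_0$" claim.

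One small inaccuracy worth flagging: you suggest that the embedding fact used in Claims \ref{claim:weak-rado-triples-aca-1} and \ref{claim:weak-rado-triples-aca-2} can be routed through Theorem \ref{thm:joyce-to-rado-blossom} ($\RCA_0$) rather than Corollary \ref{th:4.1LaflammeRado} ($\ACA_0$) by "applying it to an $X$-computable Joyce blossom graph." But in the paper's proof those claims apply the embedding fact to the auxiliary sparse set $H \subseteq \hat{G}$, which is a Rado subgraph chosen for its sparsity and is not obviously a blossom graph, so the substitution is not as routine as you indicate. This does not affect correctness here, since the verification that solutions compute $X'$ is carried out semantically (the paper's stated convention); but your parenthetical gives the misleading impression that the $\ACA_0$-to-$\RCA_0$ replacement in those claims is already fully straightforward.
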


\begin{theorem}\label{thm:rado-graph-tight-tuple-to-graph}
For every $n \geq 1$, every finite graph $F$ of size $n$,
let $b_n$ be the tight bound of $(\forall k)\RG^n_{k, b_n}$ and $c_n$ be the tight bound of $(\forall k)\RG^F_{k, c_n}$, that is, $b_n$ is the number of Joyce graphs of size $n$
and $c_n$ is the number of Joyce graphs isomorphic to $F$.
Then $(\forall k)\RG^n_{k, b_n}$ implies  $(\forall k)\RG^F_{k, c_n}$ over $\RCA_0$.
\end{theorem}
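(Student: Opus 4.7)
The plan is to encode the $F$-subgraph coloring into a coloring of all $n$-subsets using a Joyce Rado enrichment, and then apply the hypothesis $(\forall k)\RG^n_{k, b_n}$. The Joyce structure refines $[\mathcal{R}]^n$ into the $b_n$ Joyce types, so in any subcopy realizing all these types, the allowed $b_n$ colors will be forced to distribute in a very constrained way among the copies of $F$.

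First I would, via \Cref{cor:rado-can-be-enriched}, enrich $\mathcal{R}$ to a Joyce Rado graph $\hat{\mathcal{R}} = (\mathcal{R}, E, <, \meetlevel{\cdot}{\cdot})$. Enumerate the Joyce graphs of size $n$ as $J_1, \ldots, J_{b_n}$, ordered so that $J_1, \ldots, J_{c_n}$ are precisely those whose underlying graph is isomorphic to $F$. Given an instance $f: {\mathcal{R} \choose F} \to k$, define $\tilde{f}: [\mathcal{R}]^n \to kc_n + (b_n - c_n)$ by
\[
\tilde{f}(X) = \begin{cases} k(i-1) + f(X) & \text{if the Joyce type of $X$ in $\hat{\mathcal{R}}$ is $J_i$ with $i \leq c_n$,} \\ kc_n + (i - c_n - 1) & \text{if the Joyce type of $X$ in $\hat{\mathcal{R}}$ is $J_i$ with $i > c_n$.} \end{cases}
\]
The hypothesis then supplies a graph-isomorphic subcopy $\mathcal{R}'$ of $\mathcal{R}$ with $|\tilde{f}[\mathcal{R}']^n| \leq b_n$.

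The crucial step is to argue that each of the $b_n$ Joyce types is realized as the type of some $n$-subset of $\mathcal{R}'$ under the Joyce structure inherited from $\hat{\mathcal{R}}$. Since the Joyce axioms are universal, the restrictions of $<$ and $\meetlevel{\cdot}{\cdot}$ to $\mathcal{R}'$ make it a Joyce structure; combined with the fact that $\mathcal{R}'$ is itself a Rado graph, this makes $\mathcal{R}'$ a Joyce Rado graph, and \Cref{th:4.1LaflammeRado} then embeds each $J_i$ into $\mathcal{R}'$. It follows that all $b_n - c_n$ colors in $\{kc_n, \ldots, kc_n + b_n - c_n - 1\}$ appear in $\tilde{f}[\mathcal{R}']^n$, so at most $c_n$ colors from $\{0, \ldots, kc_n - 1\}$ remain. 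For every $X \in {\mathcal{R}' \choose F}$ the value $\tilde{f}(X) = k(i-1) + f(X)$ recovers both $i$ and $f(X)$ uniquely, so at most $c_n$ distinct pairs $(i, f(X))$ arise, whence $|f[{\mathcal{R}' \choose F}]| \leq c_n$.

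The main obstacle is the $\RCA_0$ formalization of the key age step: as stated, \Cref{th:4.1LaflammeRado} uses $\ACA_0$. For $n \geq 3$, however, the hypothesis $(\forall k)\RG^n_{k, b_n}$ itself already delivers $\ACA_0$, by an adaptation of the lower bound of \Cref{thm:weak-rado-triples-aca} (which encodes the halting problem into the finer Joyce-type structure), so the invocation is permissible. For $n \leq 2$ there are only finitely many Joyce types of size $n$, and each can be realized inside any infinite Joyce Rado graph by a short explicit construction using the Rado extension property together with the Joyce axioms \Jo{1}--\Jo{4}, bypassing the need for the full age theorem.
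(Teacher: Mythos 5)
Your proposal is correct and is essentially the paper's own proof: the paper likewise forms the product coloring $h(H) = \langle g(H), f(H)\rangle$ (with a dummy value on non-$F$ tuples), where $g$ is the Joyce-type coloring witnessing tightness of $b_n$, applies $(\forall k)\RG^n_{k,b_n}$, and uses the fact that every subcopy realizes all $b_n$ Joyce types to conclude that each type carries a unique $f$-value. Your arithmetic coding of the pair and your extra care about the base theory for the age/embedding step are the only (inessential) differences.
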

\begin{proof}
Let $f: {G \choose F} \to k$ be an instance of $(\forall k)\RG^F_{k, c_n}$.
Let $g: [G]^n \to b_n$ be the coloring witnessing the tightness of the bound $b_n$.
In particular, $g$ restricted to ${\hat{G} \choose F}$ uses exactly $c_n$ many colors.
Given $H \in [G]^n$, let $h(H) = \langle g(H), f(H) \rangle$ if the graph $H$ is isomorphic to $F$, and $h(H) = \langle g(H), \bot \rangle$ otherwise. 
By $(\forall k)\RG^n_{k, b_n}$, there is a Rado subgraph $\hat{G} \subseteq G$
such that $h[\hat{G}]^n$ has at most $b_n$ colors. By choice of $g$,
for every $i < b_n$, there is some $v$ such that $\langle i, v\rangle \in h[\hat{G}]^n$.
Thus for every $i < b_n$, there is exactly one $v$ such that $\langle i, v\rangle \in h[\hat{G}]^n$. In particular, $f$ restricted to ${\hat{G} \choose F}$ has at most $b_n$ many colors.
\end{proof}

\begin{corollary}
$(\forall k)\RG^3_{k, 112}$ implies $\ACA_0$ over $\RCA_0$.
\end{corollary}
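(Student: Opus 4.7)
The plan is to derive this corollary immediately by chaining together the two results that immediately precede it in the text, namely \Cref{cor:rado-graph-complete-3-aca} and \Cref{thm:rado-graph-tight-tuple-to-graph}. Recall that Larson's computation of the small big Ramsey numbers gives that the total number of Joyce graphs of size $3$ is $b_3 = 112$, and that among these exactly $c_3 = 16$ are isomorphic (as underlying graphs) to the complete graph $K_3$ on three vertices. So $b_3$ and $c_3$ are precisely the tight bounds relevant to the hypotheses of those two results.

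First I would apply \Cref{thm:rado-graph-tight-tuple-to-graph} with $n = 3$ and $F = K_3$. This theorem says (and one checks this is an $\RCA_0$-provable implication, since its proof is purely the product-coloring trick combined with the fact that every Joyce graph of size $n$ is realized in every Rado subgraph by \Cref{th:4.1LaflammeRado}) that $(\forall k)\RG^3_{k,b_3}$ implies $(\forall k)\RG^{K_3}_{k,c_3}$, i.e. that $(\forall k)\RG^3_{k,112}$ implies $(\forall k)\RG^{K_3}_{k,16}$ over $\RCA_0$.

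Then I would invoke \Cref{cor:rado-graph-complete-3-aca}, which gives directly that $(\forall k)\RG^{K_3}_{k,16}$ implies $\ACA_0$ over $\RCA_0$. Composing the two implications yields that $(\forall k)\RG^3_{k,112}$ implies $\ACA_0$ over $\RCA_0$, as desired. There is no real obstacle here: the entire combinatorial and computability-theoretic work has already been done in \Cref{thm:weak-rado-triples-aca} (which produces a computable instance whose solutions are of PA degree over $\emptyset'$, hence compute $\emptyset'$) and in the product-coloring lemma \Cref{thm:rado-graph-tight-tuple-to-graph}. The only mild point to verify is that the proof of \Cref{thm:rado-graph-tight-tuple-to-graph} formalizes in $\RCA_0$ --- this is routine, since it only uses the existence of the witnessing coloring $g$ (given by Larson's count of Joyce graph structures of size $3$, which is finitely presentable and hence available in $\RCA_0$) and a $\Delta^0_1$-definable combination of $g$ with the given coloring $f$.
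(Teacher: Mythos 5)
Your proof is correct and follows essentially the same route as the paper: apply \Cref{thm:rado-graph-tight-tuple-to-graph} with $n=3$ and $F=K_3$ (using Larson's values $b_3 = 112$ and $c_3 = 16$), then compose with \Cref{cor:rado-graph-complete-3-aca}. The paper cites the slightly more general \Cref{cor:rado-graph-3-aca} in place of \Cref{cor:rado-graph-complete-3-aca}, but instantiates it at $K_3$ exactly as you do, so there is no substantive difference.
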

\begin{proof}
Immediate by \Cref{cor:rado-graph-3-aca} and \Cref{thm:rado-graph-tight-tuple-to-graph}.
112 is the tight bound for the Rado graph theorem for triples, and 16 the tight bound for the Rado graph theorem restricted to the complete graph of size 3.
\end{proof}

We do not know whether $(\forall k)\RG^2_{k,4}$ implies $\RT{2}{2}$ over $\RCA_0$. \Cref{thm:rg284-implies-srt22} however is a partial result towards that direction. Ramsey's theorem for pairs $\mathsf{RT}^2_k$ was decomposed by Cholak, Jockusch and Slaman~\cite{Cholak2001strength} into a stability version ($\mathsf{SRT}^2_k$) and a cohesiveness principle ($\COH$) in order to simplify the computability-theoretic analysis of the theorem.

\begin{definition}
An infinite set $C$ is \emph{cohesive}\index{set!cohesive} for a sequence of sets $R_0, R_1, \dots \subseteq \N$ if for every $n$, $C \subseteq^{*} R_n$ or $C \subseteq^{*} \overline{R}_n$. A coloring $f: [\omega]^2 \to k$ is \emph{stable}\index{coloring!stable} if for every $x$, $\lim_y f(x, y)$ exists.
\end{definition}

\begin{statement}[Cohesiveness]
$\COH$\index{statement!$\mathsf{COH}$} is the statement \qt{Every countable sequence of set has an infinite cohesive set}.s
\end{statement}

\begin{statement}[Stable Ramsey's theorem for pairs]
$\mathsf{SRT}^2_k$\index{statement!$\mathsf{SRT}^2_2$} is the restriction of $\mathsf{RT}^2_k$ to stable colorings.
\end{statement}

Cholak, Jockusch and Slaman~\cite[Lemma 7.11]{Cholak2001strength} and Mileti~\cite[Corollary A.1.4]{Mileti2004Partition} proved the equivalence between $\RT 2k$ and $\mathsf{SRT}^2_k \wedge \COH$ over $\RCA_0$.
The following theorem shows that  $(\forall k)\RG^2_{k,4}$ implies $\mathsf{SRT}^2_2$, hence any proof of separation would be a proof of separation of  $(\forall k)\RG^2_{k,4}$ from $\COH$. 

\begin{theorem}\label{thm:rg284-implies-srt22}
$\RG^2_{8,4}$ implies $\mathsf{SRT}^2_2$ over $\RCA_0$.
\end{theorem}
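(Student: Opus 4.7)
The plan is to reduce $\mathsf{SRT}^2_2$ to $\RG^2_{8,4}$ by encoding a stable coloring $f: [\N]^2 \to 2$ as an $8$-coloring on pairs in the Rado graph. First I would use \Cref{cor:rado-can-be-enriched} to enrich the Rado graph $\mathcal{R} = (R, E)$ with a Joyce Rado graph structure $(R, E, <, \meetlevel{\cdot}{\cdot})$, arranging for the labels $\{\meetlevel{x}{x} : x \in R\}$ to coincide with $\N$. Each unordered pair $\{x, y\}$ is isomorphic as a Joyce graph to one of exactly four size-$2$ structures---determined by adjacency together with whether the Joyce order on $\{x, y\}$ agrees with the label order---giving a canonical function $\tau : [R]^2 \to \{0, 1, 2, 3\}$. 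Define the $8$-coloring $h(x, y) = (f(\meetlevel{x}{x}, \meetlevel{y}{y}), \tau(\{x, y\}))$.

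Next I would apply $\RG^2_{8,4}$ to $h$, producing an isomorphic Rado subcopy $\mathcal{R}' = (R', E)$ of $\mathcal{R}$ with $|h[R']^2| \leq 4$. The key observation is that all four Joyce types of pairs are realized in $[R']^2$: the Rado extension property yields both adjacent and non-adjacent pairs in $\mathcal{R}'$, and since the labels in $R'$ form an infinite subset of $\N$, within each adjacency class both directions of Joyce-versus-label agreement can be realized. (In $\ACA_0$ this is immediate from \Cref{th:4.1LaflammeRado}; in $\RCA_0$ a direct Rado-extension argument applies, cleanest when the underlying Joyce Rado graph is taken to be a computable Joyce blossom graph via \Cref{le:computable-bjrg}, since by \Cref{thm:joyce-to-rado-blossom} every finite Joyce graph embeds into it.) Hence the four colors of $h$ on $[R']^2$ must be of the form $(c_\tau, \tau)$ for some $c_0, c_1, c_2, c_3 \in \{0, 1\}$, so the $f$-value on a pair in $\mathcal{R}'$ depends only on its Joyce type.

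Since $\{c_0, c_1, c_2, c_3\} \subseteq \{0, 1\}$, at least two of the $c_\tau$ coincide, and the extraction splits into cases. In the ``parallel'' cases $c_0 = c_1$ or $c_2 = c_3$, every non-adjacent (respectively adjacent) pair in $R'$ has the same $f$-value, and a greedy Rado construction produces an infinite independent set (respectively clique) in $R'$ whose set of labels is $f$-homogeneous. In the ``direction'' cases $c_0 = c_2$ or $c_1 = c_3$, every pair in $R'$ whose Joyce order is aligned (respectively opposed) to its label order is $f$-monochromatic, and an infinite subset of $R'$ on which Joyce and label orders everywhere agree (respectively disagree)---obtainable inside a Joyce blossom graph by selecting vertices along an appropriate chain---gives the homogeneous set. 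The remaining ``cross'' cases $c_0 = c_3$ or $c_1 = c_2$ require genuine use of stability: for each $x \in R'$ the limit $\lim_{|y|\to\infty} f(\meetlevel{x}{x}, |y|)$ exists, and the $4$-color constraint combined with stability pins down this limit structurally, letting us select an infinite sequence of vertices with sufficiently spread labels on which $f$ is constant.

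The main obstacle I anticipate is this case analysis, in particular the ``direction'' and ``cross'' subcases, which require careful combinatorial control over both adjacency and the Joyce-versus-label-order structure while staying in $\RCA_0$. Fixing a computable Joyce blossom graph at the outset should provide enough flexibility---its underlying blossom tree makes the required independent sets, cliques, and order-aligned chains all obtainable greedily---and the stability of $f$ is exactly what closes the cross cases, which explains why the conclusion is $\mathsf{SRT}^2_2$ rather than full $\mathsf{RT}^2_2$.
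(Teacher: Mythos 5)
Your setup is exactly the paper's: enrich the Rado graph to a Joyce Rado graph, color a pair by the product of its $f$-value on labels with its type among the four size-$2$ Joyce graphs, apply $\RG^2_{8,4}$, and note that all four types survive in the subcopy, so the four remaining colors have the form $(c_\tau,\tau)$ for $\tau<4$. The divergence --- and the gap --- is in the extraction. You split into six cases according to which two of the $c_\tau$ coincide; your parallel and direction cases are fine (an infinite anti-clique, clique, or order-aligned chain in the subcopy does give a homogeneous label set), but the cross cases $c_0=c_3$ and $c_1=c_2$ are left at the level of ``stability pins down this limit structurally,'' which is not an argument, and these are precisely the cases not covered by the others.

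The paper avoids the case analysis entirely by using stability once, globally: fix any vertex $\sigma$ of the solution $H$ and any two types $i_0,i_1$. Via \Cref{th:4.1LaflammeRado} one embeds a Joyce blossom graph into $(H,\ltlex,\Epn,|\cdot\meet\cdot|)$ and finds partners $\tau_0,\tau_1$ realizing types $i_0,i_1$ with $\sigma$, whose labels are large enough to witness $\lim_s f(|\sigma|,s)$; then $c_{i_0}=f(|\sigma|,|\tau_0|)=\lim_s f(|\sigma|,s)=f(|\sigma|,|\tau_1|)=c_{i_1}$. Hence all four colors coincide, the set of \emph{all} labels of $H$ is $f$-homogeneous, no clique or chain needs to be built, and your cross configuration simply cannot occur. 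Your closing remark that stability is ``exactly what closes the cross cases'' inverts the logic: stability collapses all four colors at once, and the point that genuinely needs care --- that inside an \emph{arbitrary} Rado subcopy every type is realized with one vertex fixed and the other of arbitrarily large label --- comes from the blossom-graph embedding, not from the raw Rado extension property, which controls adjacency but not the interaction of $\ltlex$ with the labels.
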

\begin{proof}
Let $f: [\omega]^2 \to 2$ be a stable coloring. 
Fix a computable coded Joyce Rado graph $(G, \ltlex, \Epn, |\cdot \meet \cdot|)$,
let $F_0, F_1, F_2, F_3$ be the 4 Joyce graphs of size 2, 
and define an instance $g: [G]^2 \to 4 \times 2$ of $\RG^2_{8,4}$ by $g(\{\sigma, \tau\}) = (i, f(\{|\sigma|, |\tau|\}))$ where $F_i$ is the unique Joyce graph isomorphic to $(\{\sigma, \tau\}, \ltlex, \Epn, |\cdot \meet \cdot|)$. 

Let $H \subseteq G$ be a $\RG^2_{8,4}$-solution to $g$, that is, 
$(H, \Epn)$ is a Rado graph and $|g[H]^2| \leq 4$.
By \Cref{th:4.1LaflammeRado}, for every $i < 4$, there is an embedding from $F_i$ to $(H, \ltlex, \Epn, |\cdot \meet \cdot|)$, so for every $i < 4$, there is a unique value $v(i) < 2$ such that $(i, v(i)) \in g[H]^2$. Thus $g[H]^2 = \{(i, v(i)): i < 4\}$.

\begin{claim}\label{claim:rg284-implies-srt22-monochromatic}
$g[H]^2 = \{(i, j): i < 4\}$ for some $j < 2$.
\end{claim}
\begin{proof}
Fix $i_0 < i_1 < 4$. We need to show that $v(i_0) = v(i_1)$.
By \Cref{th:4.1LaflammeRado}, there is an embedding $h$ of a Joyce blossom graph $B$ in $(H, \ltlex, \Epn, |\cdot \meet \cdot|)$. One can find some $\sigma \in B$, $\tau_0, \tau_1 \in B$
such that $\{\sigma, \tau_0\}, \ltlex, \Epn, |\cdot \meet \cdot|)$ and $\{\sigma, \tau_1\}, \ltlex, \Epn, |\cdot \meet \cdot|)$ are isomorphic to $F_{i_0}$ and $F_{i_1}$, respectively.
Then $g(\{h(\sigma), h(\tau_0)\}) = (i_0, v(i_0))$ and $g(\{h(\sigma), h(\tau_1)\}) = (i_1, v(i_1))$, so $f(\{h(\sigma), h(\tau_0)\}) = v(i_0)$ and $f(\{h(\sigma), h(\tau_1)\}) = v(i_1)$.
Moreover, $\tau_0$ and $\tau_1$ can be chosen so that $|h(\tau_0)|$ and $|h(\tau_1)|$ is large enough to witness stability of $\lim_s f(|h(\sigma)|, s)$.
Then $f(\{h(\sigma), h(\tau_0)\}) = v(i_0) = f(\{h(\sigma), h(\tau_1)\}) = v(i_1)$.
\end{proof}

Let $j < 2$ be such that Claim \ref{claim:rg284-implies-srt22-monochromatic} holds.
It follows that $Y = \{|\sigma|: \sigma \in H \}$ is $f$-homogeneous for color $j$.
This completes the proof of \Cref{thm:rg284-implies-srt22}.
\end{proof}

We now prove that for every $\ell \geq 1$, $\RT{2}{2}$ does not imply $(\forall k)\RG^2_{k,\ell}$ over $\RCA_0$. For this, we need two essential notions in computability theory, namely, lowness and hyperimmunity. Lowness is a weakness property over Turing degrees, saying informally that a low Turing degree behaves like the computable Turing degree from the viewpoint of the halting set. Hyperimmunity is aa strength property about the ability to compute fast-growing functions, not dominated by any computable function.

\begin{definition}\index{low set}\index{set!low}
  A set $A$ is \emph{low} if $A'\equiv_T\emptyset'$. A set $A$ is \emph{low relative to $X$} for a set $X$ if $(A\oplus X)'\equiv_T X'$.
\end{definition}

\begin{definition}
A function $f: \N \to \N$ is \emph{hyperimmune}\index{hyperimmune!function}\index{function!hyperimmune} relative to $X$ if it is not dominated by any $X$-computable function. 
An infinite set $H \subseteq \Nb$ is \emph{hyperimmune}\index{hyperimmune!set}\index{set!hyperimmune} relative to $X$ if its principal function, that is, the function which to $n$ associates the $n$th element of $H$, is hyperimmune relative to $X$.
\end{definition}

\begin{theorem}\label{th:rg2-no-low-over-zp}
  Let $P$ be low relative to $\emptyset'$. For every $\ell$, there exists a computable instance of $\RG^2_{\ell+1,\ell}$ with no solution computable in $P$.
\end{theorem}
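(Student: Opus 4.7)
The plan is to construct, for each $\ell$, a computable instance of $\RG^2_{\ell+1,\ell}$ whose every solution has DNC degree relative to $\emptyset''$. This is enough: since $P$ is low relative to $\emptyset'$, we have $P\leq_T(P\oplus\emptyset')'\equiv_T\emptyset''$, so every $P$-computable set is $\emptyset''$-computable, and by the self-reference argument used in the proof of \Cref{cor:dt2-no-sigma3} no $\emptyset''$-computable set can have DNC degree relative to $\emptyset''$. Thus such an instance admits no $P$-computable solution.

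\textbf{The case $\ell<4$.}
Since the big Ramsey degree of pairs in the Rado graph equals $4$, a computable $(\ell+1)$-coloring of $[\mathcal R]^2$ admitting no Rado subgraph with $\leq\ell$ colors can be written down explicitly for each $\ell\in\{0,1,2,3\}$: the Joyce-graph-structure coloring (for $\ell=3$), a merger of two Joyce classes into a single color (for $\ell=2$), the edge/non-edge coloring (for $\ell=1$), and any constant coloring (for $\ell=0$), the point being that every Rado subgraph realises all four Joyce structures of size~$2$, both edges and non-edges, and is infinite. No solution exists in any of these cases, so trivially none is $P$-computable.

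\textbf{The main case $\ell\geq 4$.}
Set $k=\ell-2\geq 2$ and, by \Cref{lem:rwkl-dnc} relativized to $\emptyset''$, fix a computable $\Delta^0_3$-approximation of an infinite tree $T\subseteq k^{<\N}$ all of whose infinite thin sets have DNC degree relative to $\emptyset''$. We first prove a Rado-graph analog of \Cref{th:dt-implies-rwkl''}: for every finite Joyce graph structure $F$ of size~$2$ and every $\Delta^0_3$-approximation of a tree $U\subseteq k^{<\N}$, there is a computable coloring $g_F^U:{\mathcal R\choose F}\to k$ such that every isomorphic Rado subcopy $\mathcal R'$ of $\mathcal R$ on which $g_F^U$ omits a color computes an infinite thin set for $U$. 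Working in a fixed computable Joyce Rado graph and enumerating the four Joyce structures of size~$2$ as $F_0,F_1,F_2,F_3$, we define $f:[\mathcal R]^2\to\ell+1$ to agree with $g_{F_0}^T$ on ${\mathcal R\choose F_0}$ (using colors $0,\ldots,k-1$) and to send every pair in ${\mathcal R\choose F_i}$ to the single color $\ell-3+i$ for $i=1,2,3$. By \Cref{th:4.1LaflammeRado} every Rado subgraph $\mathcal R'$ realises all four Joyce structures, so uses the three colors $\ell-2,\ell-1,\ell$; if $|f''[\mathcal R']^2|\leq\ell$ then $\mathcal R'$ uses at most $\ell-3=k-1$ of the $k$ colors available on $F_0$-pairs, so omits at least one and therefore computes a thin set for $T$, which by choice of $T$ has DNC degree relative to $\emptyset''$.

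\textbf{Main obstacle.}
The one nontrivial step is the Rado-graph analog of \Cref{th:dt-implies-rwkl''}. Its proof mirrors the DLO argument of \Cref{sect:lower-bound-devlin}, the only substantive change being the density lemma: given a color omitted on ${\mathcal R'\choose F_0}$, one needs, for each finite string $\rho$, arbitrarily many $F_0$-pairs $\{\sigma,\tau\}\subseteq\mathcal R'$ whose three levels $|\sigma|,|\tau|,|\sigma\meet\tau|$ grow in prescribed ratios, so that the $\Delta^0_3$-approximation of $T$ stabilises uniformly and a thin set can be read off computably from $\mathcal R'$. The requisite pairs are supplied by the Joyce blossom structure of \Cref{thm:blossom-to-joyce-rado}, which here plays the role that \Cref{fact:dlo-blossom,fact:dlo-blossom2} played in the DLO setting.
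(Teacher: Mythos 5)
Your overall reduction (every solution has DNC degree over $\emptyset''$, while $P \leq_T (P\oplus\emptyset')' \equiv_T \emptyset''$ and no $\emptyset''$-computable set is DNC over $\emptyset''$) is sound, and the color-budget arithmetic for $\ell\geq 4$ is fine. The gap is in the step you flag as "the one nontrivial step": the Rado-graph analog of \Cref{th:dt-implies-rwkl''} does not go through by mirroring the DLO argument. In the DLO case, the thin set $H$ is c.e.\ in the solution $S$ precisely because of \Cref{fact:dlo-blossom,fact:dlo-blossom2}: once a meet level $\ell$ enters $H$ (witnessed by $a,b,c,d$), the intervals $(a,b)$ and $(c,d)$ contain infinitely many elements of $S$ of arbitrarily large length, all pairs across the two intervals having meet level exactly $\ell$; this is what lets the $\Delta^0_3$ approximation stabilize and certifies $P(\ell)\neq i$. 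A graph-isomorphic Rado subcopy $\mathcal{R}'$ gives you no such control: the extension property for $\Epn$ constrains new vertices only at the positions $|y|$ for $y$ already in the subgraph, so for a meet $\rho=\sigma\meet\tau$ realized in $\mathcal{R}'$ there need not be any further element of $\mathcal{R}'$ extending $\rho 0$, hence no second pair with meet level $|\rho|$ and larger lengths. So you cannot define a set of meet levels that is c.e.\ in $\mathcal{R}'$ and provably thin. Invoking \Cref{thm:blossom-to-joyce-rado} does not repair this: that embedding is an $\ACA_0$ construction (it quantifies over "large" sets), so the blossom subgraph of $\mathcal{R}'$ is not computable from $\mathcal{R}'$, and a thin set read off from it would only be computable from a jump or two of the solution, which destroys the DNC conclusion.

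The paper's proof is built around exactly this obstruction. Instead of a thin set, it extracts only a \emph{domination} fact from the solution: it constructs (using that $P$ is low over $\emptyset'$, so $\Delta^0_2(P)\subseteq\Delta^0_3$) a $\Delta^0_3$ coloring $g:\N\to\ell+1$ of \emph{levels} such that each $A_k=\{n: g(n)\neq k\}$ is hyperimmune relative to $P$, and colors a pair by the approximated $g$-value of its meet level. From a solution avoiding color $k$, one computes $h(n)=$ the least level below which $\mathcal{R}'$ has $n$ vertices; the $2^n$ adjacency types over those $n$ vertices are all realized in $\mathcal{R}'$ by strings of arbitrarily large length, producing $2^n-1>n$ meets below $h(n)$, each of which must lie in $A_k$. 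One never needs to know \emph{which} meet levels occur, only how many lie below a computable bound — and that suffices to dominate $p_{A_k}$, contradicting hyperimmunity. Note also that the paper's instance depends on $P$, whereas your plan requires a single $P$-independent instance; the hyperimmunity route is what makes the $P$-dependent construction available. To salvage your approach you would need a genuinely new density lemma for Rado subcopies, which the graph extension property alone does not provide.
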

\begin{proof}
  We first need the following definition and lemma to build the instance.
  \begin{lemma}
    There exists a $\Delta^0_3$ coloring $g:\Nb\to\ell+1$ such that for every $k\leq\ell$, the set $A_k=\{n\in\Nb:g(n)\neq k\}$ is hyperimmune relative to $P$.
  \end{lemma}
  \begin{proof}
    We prove that there exists a $\Delta^0_2$ coloring $g:\Nb\to\ell+1$ such that for all $k$, $\{n:g(n)\neq k\}$ is hyperimmune. The relativization to $P$ of this proof yields the result of the lemma, as a $\Delta^0_2$ relative to $P$ is a $\Delta^0_3$ set by lowness relative to $\emptyset'$ of $P$.

    We build uniformly in $\emptyset'$ a sequence $\sigma_s$ of compatible strings of increasing length. Start with $\sigma_0$ the empty string. Suppose $\sigma_s$ is defined, with $s=\langle k,e\rangle$. Let $n_s=|\{i<|\sigma_s|:\sigma_s(i)\neq k\}|$ Using $\emptyset'$, we can decide if $\varphi_e(n_s)\downarrow$. If so we define $\sigma_{s+1}$ to be $\sigma_s$ followed by the string consisting of $\varphi_e(n_s)+1$ times $k$. Otherwise, $\sigma_{s+1}$ is $\sigma_s$ followed by $0$.

    Define $g=\bigcup_s \sigma_s$. By construction, for $s=\langle k, e\rangle$, $\sigma_{s+1}$ diagonalizes against $\varphi_e$ dominating $\{n\in\Nb:g(n)\neq k\}$. Thus, for any $k<\ell$, $\{n\in\Nb:g(n)\neq k\}$ is hyperimmune.
  \end{proof}
  Fix $g$ as in the lemma. Let $(g_{s,t})_{s,t\in\Nb}$ be a $\Delta^0_3$ approximation of $g$. One can always arrange the approximation so that for every $s,t$, $g_{s,t}$ is a coloring of $\Nb$ in $\ell+1$ colors.
  Let $\Gb=(G, \Epn)$ with $G\subseteq\cantor$ a coded Rado graph. Define $f$ to be the following: for every $\sigma,\tau\in\cantor$ with $|\sigma|<|\tau|$,
  \[ f(\sigma,\tau)=g_{|\sigma|,|\tau|}(|\sigma\meet\tau|)\]
  Let $S\subseteq\cantor$ be such that $(S,\Epn)$ is a Rado graph, and $f$ takes at most $\ell$ colors on $[S]^2$. Let $k$ be the avoided color. We prove that $S$ computes a function bounding the principal function of $A_k$, so by hyperimmunity relative to $P$ of $A_k$, $P$ cannot compute $S$.

  Let $h\leq_T S$ be the function so that $h(n)$ is the smallest such that $S\cap2^{<h(n)}$ has $n$ elements. We prove that for every $n$, $A_k$ contains at least $n$ elements smaller than $h(n)$. Indeed, let $e_0,\dots,e_{n-1}$ be the $n$ elements of $S\cap 2^{<h(n)}$. Let $(\tau_j)_{j<2^n}$ be an enumeration of the set $2^n$, we say that a string $\rho$ realize $\tau_j$ if for all $i<n$ $(\rho\Epn e_i\iff\tau_j(i)=1)$, note that for any $j$ there are infinitely many elements of $S$ realizing $\tau_j$, so there are some of arbitrarily big length. Let $\rho_0$ be the first element of $S$ such that $g_{s}(m)=g(m)$ for all $s\geq |\rho_0|$ and $m<h(n)$, and $\rho_0$ realizes $\tau_0$. 
  If $\rho_j$ is defined, then define $\rho_{j+1}$ to be the first element of $S$ realizing $\tau_{j+1}$ such that for all $t\geq|\rho_{j+1}|$, $g_{|\rho_j|, t}(m)=g(m)$ for all $m<h(n)$.

  This defines $2^n$ many different strings $\rho_j$ and thus $2^n-1$ many meets, which must be of height below $h(n)$ by construction: given $\rho_{j_0},\rho_{j_1}$, they must differ at $|e_i|$ for some $i<n$ such that $\tau_{j_0}(i)\neq \tau_{j_1}(i)$.
  Moreover, as $\rho_j\in S$ for all $j$, if $|\rho_{j_0}|<|\rho_{j_1}|$, we have $g_{|\rho_{j_0}|,|\rho_{j_1}|}(|\rho_{j_0}\meet\rho_{j_1}|)\neq k$, but also $g_{|\rho_{j_0}|,|\rho_{j_1}|}(|\rho_{j_0}\meet\rho_{j_1}|)=g(|\rho_{j_0}\meet\rho_{j_1}|)$ as $|\rho_{j_0}\meet\rho_{j_1}|<h(n)$.

  Thus, for every $j_0\neq j_1<2^n$, $|\rho_{j_0}\meet\rho_{j_1}|\in A_k$ and is below $h(n)$. There are $2^n-1>n$ many such meet. So $h$ is a function computable in $S$ that dominates the principal function of $A_k$. As $A_k$ is hyperimmune relative to $P$, $P$ cannot compute $h$, and so cannot compute $S$.
\end{proof}

\begin{lemma}[{Simpson~\cite[Theorem 6.5]{Simpson1977Degrees}}]\label{lem:pa-degree-dense}
For every pair of sets $A$ and $C$ such that $A$ is of PA degree over $C$,
there is a set $B$ such that $A$ is PA over $B$ and $B$ is PA over $C$.
\end{lemma}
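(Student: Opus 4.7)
The plan is to use the standard characterization of PA-over-$C$ degrees due to Scott and Solovay: a set $X$ has PA degree over $C$ iff $X$ computes a $\{0,1\}$-valued function that diagonalizes against the universal partial $C$-computable $\{0,1\}$-valued function $g_C$, equivalently iff $X$ computes a member of every nonempty $\Pi^{0,C}_1$ subclass of $2^\omega$. Note that this notion is upward closed in the Turing degrees, a fact that will be used implicitly throughout.

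The key step will be to construct a nonempty $\Pi^{0,C}_1$ class $P \subseteq 2^\omega \times 2^\omega$ whose members are pairs $(X, Y)$ where $X$ witnesses PA-over-$C$ and $Y$ witnesses PA-over-$(C \oplus X)$. Concretely, I would set
\[
P = \{(X, Y) \in 2^\omega \times 2^\omega : X \text{ extends } g_C \text{ and } Y \text{ extends } g_{C \oplus X}\},
\]
where ``$X$ extends $g_C$'' means $X(n) = g_C(n)$ whenever $g_C(n)\downarrow$. The class is $\Pi^{0,C}_1$ because the second conjunct is a condition asserting, for all $n$ and $s$, a $\Sigma^0_1$ fact in the parameter $X$ relative to $C$, which remains $\Pi^0_1$ in $C$ once one treats $X$ as a coordinate of the class. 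It is nonempty by Scott's theorem applied twice: first pick any $X \in 2^\omega$ extending $g_C$, and then apply the theorem relative to $C \oplus X$ to produce a $Y$ extending $g_{C \oplus X}$.

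Having $P$, since $A$ is PA over $C$ there is a pair $(B_0, D_0) \in P$ with $B_0 \oplus D_0 \leq_T A$. I would then set $B = B_0 \oplus C$. By the first conjunct defining $P$, $B_0$ extends $g_C$, so $B$ is PA over $C$. By the second conjunct, $D_0$ extends $g_{C \oplus B_0}$, so $D_0$ has PA degree over $C \oplus B_0 \equiv_T B$; since $D_0 \leq_T A$ and the PA-over-$B$ property is upward closed in the Turing degrees, $A$ is PA over $B$, as desired.

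The only delicate point, and the place I would spend most of the write-up, is the verification that $P$ really is $\Pi^{0,C}_1$ as a class of pairs: one has to check that the clause ``$Y$ extends $g_{C \oplus X}$'' can be presented as a co-c.e.\ condition in $C$ once $X$ and $Y$ are viewed as coordinates of the ambient space $2^\omega \times 2^\omega$ (rather than as free parameters). This is routine once one writes $g_{C \oplus X}(n) \downarrow = i$ as an existential quantifier over computation stages using finitely much of $X$, but it is the one piece of bookkeeping that the argument genuinely depends on; every other step is a direct appeal to Scott's theorem and to upward closure of PA-over-$B$.
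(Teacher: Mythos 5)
Your argument is correct and is essentially the same as the paper's: both construct a nonempty $\Pi^{0,C}_1$ class of pairs $(X,Y)$ in which the first coordinate witnesses PA over $C$ and the second witnesses PA over the first (joined with $C$), then invoke the fact that $A$ computes a member of that class. The only differences are cosmetic — the paper uses the class of $\{0,1\}$-valued DNC functions where you use completions of the universal partial function, and it takes $B$ to be the first coordinate itself rather than its join with $C$; the ``delicate point'' you flag at the end (that the pair class really is $\Pi^{0,C}_1$ once both coordinates are viewed as living in the ambient space) is precisely the same bookkeeping the paper leaves implicit.
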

\begin{proof}
Fix a universal $\Pi^0_1$ class functional $\Cc \subseteq 2^\omega$ such that for every $X$, $\Cc^X \neq \emptyset$ and all the members of $\Cc^X$ are of PA degree relative to $X$. For example, take $\Cc^X$ to be the class of all $\{0,1\}$-valued DNC functions relative to $X$. Then the class $\Dc = \{B\oplus Z: B \in \Cc^{C} \mbox{ and } Z \in \Cc^B \}$ is a non-empty $\Pi^{0,C}_1$ class. In particular, $A$ computes some $B \oplus Z \in \Dc$, and since $Z$ is of PA degree relative to $B$ and $B$ is of PA degree relative to $C$, the result follows.
\end{proof}

\begin{lemma}\label{lem:model-rt22-jump-pa-zp}
For every set $P$ of PA degree over $\emptyset'$, there exists an $\omega$-model $\Mc$ of $\RT 22$ such that for every $X \in \Mc$, $X' \leq_T P$.
\end{lemma}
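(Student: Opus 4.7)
The plan is to build $\mathcal{M}$ as the downward closure under $\leq_T$ of an increasing sequence $\emptyset = X_0 \leq_T X_1 \leq_T \cdots$ constructed so that:
(i) each $X_n \leq_T P$ via a uniform (in $n$) index;
(ii) each $X_n' \leq_T P$ via a uniform index; and
(iii) every $X_n$-computable instance of $\RT 22$ has a solution among the $X_m$ for $m > n$.
Then $\mathcal{M}$ is a Turing ideal, it satisfies $\RT 22$ by (iii), and every $X \in \mathcal{M}$ has $X' \leq_T X_n' \leq_T P$ by (ii). The construction runs through a standard dovetail: at stage $\langle e, m\rangle$, if $\Phi_e^{X_m}$ codes a 2-coloring of pairs, I pick a solution $H$ and set $X_{\langle e,m\rangle+1} = X_m \oplus H$; otherwise I let $X_{n+1} = X_n$.

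The engine is a single-step extension lemma, essentially a jump-controlled relativization of Cholak--Jockusch--Slaman: for every $X$ satisfying (i)--(ii), and every $X$-computable $f:[\omega]^2 \to 2$, there is an infinite $f$-homogeneous $H$ with $(X\oplus H)' \leq_T P$, and an index for this can be found uniformly from indices for $X$, $X'$, and $f$ in $P$. To prove it, I would use the decomposition $\RT 22 \equiv \COH + \SRT 22$. For the cohesive piece, I build $C$ by Mathias forcing with reservoirs that are finite Boolean combinations of $X$-computable sets; the usual analysis shows that $C$ and $C'$ are both coded by a path through an $X'$-computable tree. For the stable piece, the limit coloring $\hat f$ is $\Delta^{0,X}_2$ and hence $P$-computable, which reduces the problem to finding, inside a $P$-computable $\Delta_1^{0,P}$ set, an infinite subset $H$ whose join with $X$ has controlled jump; I handle this by a hungry-set / low-basis argument relative to $X'$.

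The main obstacle is bounding the jump by $P$ rather than by $\emptyset''$. The classical CJS bound would give $(X \oplus H)'' \leq_T X''$, i.e.\ $(X \oplus H)' \leq_T P'$, which is too weak. The fix is to exploit the PA-over-$\emptyset'$ hypothesis: at every use of a basis theorem, the "jumps of candidate solutions" form a $\Pi^{0,X'}_1$ class whose index is uniformly $P$-computable (by (i)--(ii)), and one encodes this class as a $\Pi^{0,\emptyset'}_1$ class by feeding in the $P$-computation of $X'$; PA-over-$\emptyset'$ then produces a member. If necessary I invoke \Cref{lem:pa-degree-dense} to interpolate PA-over-$\emptyset'$ degrees between $P$ and $\emptyset'$ so that the construction at each stage has room to operate without damaging the uniform indices needed at later stages. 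Threading this uniformity through infinitely many iterations---so that a single $P$-computable procedure realises the whole sequence $(X_n, \text{index of }X_n', \text{index of solution})_n$---is the most delicate bookkeeping in the argument, and it is where the strength of the PA-over-$\emptyset'$ assumption is fully used.
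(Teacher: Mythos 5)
Your overall strategy coincides with the paper's: iterate the Cholak--Jockusch--Slaman jump-controlled homogeneous-set theorem over a dovetailed enumeration of instances, and take $\Mc$ to be the Turing ideal generated by the resulting sequence. The one substantive problem is your invariant. You maintain only that $X_n' \leq_T P$, and your single-step extension lemma claims that this (together with $P$ being of PA degree over $\emptyset'$) suffices to produce a homogeneous $H$ with $(X_n \oplus H)' \leq_T P$. That is not enough. The CJS jump-control theorem needs an oracle of PA degree over $X_n'$, and $X_n' \leq_T P$ does not imply that $P$ is of PA degree over $X_n'$: after one careless step $X_n'$ could be Turing equivalent to $P$, and $P$ need not be of PA degree over itself. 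The invariant one must carry through the construction is the stronger statement that $P$ is of PA degree over $X_n'$, and preserving it is exactly the role of \Cref{lem:pa-degree-dense} --- its use is essential, not a contingency. At each stage one interpolates a $Q$ with $P$ of PA degree over $Q$ and $Q$ of PA degree over $X_n'$, applies CJS with oracle $Q$ to obtain $(X_n \oplus H)' \leq_T Q$, and then concludes that $P$ is of PA degree over $X_{n+1}'$ because $X_{n+1}' \leq_T Q$. With that correction your argument is the paper's proof.

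Two smaller remarks. First, the uniformity bookkeeping you describe as the most delicate part is unnecessary: the sequence is built non-uniformly, $\Mc$ is simply the downward $\leq_T$-closure of the $X_n$, and the conclusion only requires $X' \leq_T P$ for each $X \in \Mc$ individually. Second, there is no need to reprove the single-step lemma via the $\COH$ plus $\mathsf{SRT}^2_2$ decomposition; the jump-control form (for every $Q$ of PA degree over $X'$ there is a homogeneous $H$ with $(X \oplus H)' \leq_T Q$) is already the statement cited from Hirschfeldt, Corollary 6.58, so the ``classical bound'' is not merely the low${}_2$ estimate $(X \oplus H)'' \leq_T X''$ that you propose to improve.
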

\begin{proof}
Fix $P$. We inductively define $A_0, A_1, \dots \subseteq \Nb$ as follows.
Let $A_0 = \emptyset$, and suppose we have defined $A_s$ for some $s \in \Nb$ and that $P$ is of Pa degree above $A_s'$. By \Cref{lem:pa-degree-dense}, there is some $Q$ be such that $P$ is PA relative to $Q$ and $Q$ is PA relative to $A_s'$. 
If $s \neq (e,t)$ for some $e \in \Nb$ and some $t < s$, or if $\Phi_e^{A_t}$ is not a coloring $f: [\omega]^2 \to 2$, then let $A_{s+1} = A_s$. Otherwise, by Cholak, Jockusch and Slaman~\cite{Cholak2001strength} (see Hirschfeldt~\cite[Corollary 6.58]{Hirschfeldt2015Slicing} for an explicit formulation), there is an infinite $f$-homogeneous set $H$ such that $(H \oplus A_s)' \leq_T Q$. Let $A_{s+1} = A_s \oplus H$. In particular, $A_{s+1}' \leq_T Q$, so $P$ is of PA degree over $A_{s+1}'$. s

Let $\Sc = \{ Z: (\exists s)[Z \leq_T A_s]\}$, which is a Turing ideal since $A_t \leq_T A_s$ for all $t \leq s$. By construction, if $f: [\omega]^2 \to 2$ is any instance of $\RT 22$ in $\Sc$ then $\Sc$ contains a solution to $f$. (Indeed, if $f = \Phi_e^{A_t}$, say, leet $s = (e, t)$; then a solution to $X$ is computable from $A_{s+1}$.) It follows that $\Mc = (\Nb, \Sc)$ is a model of $\RCA_0 \wedge \RT 22$, and by construction, for every $X \in \Sc$, $X' \leq_T P$. 
\end{proof}

We are now ready to prove our separation theorem.

\begin{theorem}
For every $\ell \geq 1$, $\RT{2}{2}$ does not imply $(\forall k)\RG^2_{k,\ell}$ over $\RCA_0$.
\end{theorem}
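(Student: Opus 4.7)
The plan is to combine the two preparatory lemmas immediately preceding the statement. Namely, \Cref{th:rg2-no-low-over-zp} produces, from any $P$ low relative to $\emptyset'$, a computable instance of $\RG^2_{\ell+1,\ell}$ with no $P$-computable solution, while \Cref{lem:model-rt22-jump-pa-zp} produces, from any $P$ of PA degree over $\emptyset'$, an $\omega$-model $\Mc \models \RT{2}{2}$ such that $X' \leq_T P$ (and hence $X \leq_T P$) for all $X \in \Mc$. The obvious strategy is to find a single $P$ enjoying both properties: of PA degree over $\emptyset'$ and low over $\emptyset'$. Such a $P$ can be obtained by applying the low basis theorem relative to $\emptyset'$ to the $\Pi^{0,\emptyset'}_1$ class of completions of Peano arithmetic relative to $\emptyset'$; any member of that class is PA over $\emptyset'$, and the low basis theorem furnishes one satisfying $(P \oplus \emptyset')' \equiv_T \emptyset''$, i.e., low over $\emptyset'$.

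With this $P$ fixed, I would first apply \Cref{lem:model-rt22-jump-pa-zp} to obtain an $\omega$-model $\Mc = (\NN, \Sc)$ of $\RCA_0 \wedge \RT{2}{2}$ in which every set $X \in \Sc$ has $X' \leq_T P$, and in particular $X \leq_T P$. Next, I would invoke \Cref{th:rg2-no-low-over-zp} for this same $P$ to produce a computable coloring $f: [G]^2 \to \ell+1$ witnessing an instance of $\RG^2_{\ell+1,\ell}$ that admits no $P$-computable solution. Since $f$ is computable, it lives in every $\omega$-model, in particular in $\Mc$. Any hypothetical $\Mc$-internal solution $H \in \Sc$ would satisfy $H \leq_T P$, contradicting the conclusion of \Cref{th:rg2-no-low-over-zp}. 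Therefore $\Mc \not\models \RG^2_{\ell+1,\ell}$, and a fortiori $\Mc \not\models (\forall k)\RG^2_{k,\ell}$, which suffices to establish the non-implication over $\RCA_0$.

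The only non-routine step is verifying that a single $P$ can simultaneously be of PA degree over $\emptyset'$ and low over $\emptyset'$; this is a standard application of the relativized low basis theorem, and the fact that the class of $\{0,1\}$-valued DNC functions relative to $\emptyset'$ (equivalently, completions of PA relative to $\emptyset'$) forms a non-empty $\Pi^{0,\emptyset'}_1$ class. Everything else is a straightforward plumbing together of the two lemmas cited. I do not foresee any obstacle beyond checking that \Cref{lem:model-rt22-jump-pa-zp} does indeed deliver a model in which membership is closed under $\leq_T$ (which is clear, as $\Sc$ is explicitly defined there as a Turing ideal) and that \Cref{th:rg2-no-low-over-zp} is stated in a form applicable to the $P$ we construct, which it is.
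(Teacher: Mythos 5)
Your proposal is correct and follows essentially the same route as the paper: fix a single $P$ of PA degree over $\emptyset'$ that is low over $\emptyset'$ via the relativized low basis theorem, build the $\omega$-model of $\RT{2}{2}$ from \Cref{lem:model-rt22-jump-pa-zp}, and observe that the computable instance from \Cref{th:rg2-no-low-over-zp} has no solution in that model. Nothing is missing.
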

\begin{proof}
By the low basis theorem relativized to $\emptyset'$ (see Jockusch and Soare \cite[Theorem 2.1]{Jockusch197201}), there is a set $P$ of PA degree over $\emptyset'$, such that $P' \leq_T \emptyset''$. 
By \Cref{lem:model-rt22-jump-pa-zp}, there is an $\omega$-model $\Mc$ of $\RT 22$ such that for every $X \in \Mc$, $X' \leq_T P$.
By \Cref{th:rg2-no-low-over-zp}, there is a computable instance $f$ of $(\forall k)\RG^2_{k, \ell}$ with no $P$-computable solution. In particular, $f \in \Mc$, but $\Mc$ does not contain a solution to $f$, so $\Mc$ is not a model of $(\forall k)\RG^2_{k, \ell}$.
\end{proof}


\chapter{A generalized tree theorem}\label{sec:GenCHMTT}


In this chapter, we study the principle $\mathrm{CHMTT^n_{k,l}}$ stating that given a $k$-coloring of $[2^{<\omega}]^n$, there is a subtree $S\subseteq 2^{<\omega}$ such that $(S, \preceq)$ is isomorphic to $(2^{<\omega}, \preceq)$ and such that $[S]^n$ uses at most $l$ colors. Note that we do not require $S$ to be a strong subtree of $T$ or even $S$ to be meet-closed --- thus enlarging a bit for this chapter the original definition of a tree given with \cref{def:trees}. 

The existence for every $n$ of a finite big Ramsey degree associated with these structures --- a smallest number $l_n$ such that $\mathrm{CHMTT^n_{k,l_n}}$ holds for every $k$ --- easily follows from the Milliken's tree theorem. We will try to identify more precisely the specific values of these big Ramsey degrees $l_n$, a new sequence of numbers, which does not seem to have appeared before in combinatorics.

In order to pursue this study, we introduce first a simpler principle, for which we now require the subtree to be a strong subtree.

\begin{theorem}[Strong generalized CHM tree theorem]\index{generalized CHM tree theorem!strong}\index{tree theorem!strong generalized CHM tree theorem}
For every $n \geq 1$ there exists $\ell \geq 1$ such that for every $k \geq 1$ and every $f: [2^{<\omega}]^n \to k$ there is a strong subtree $S \subseteq 2^{<\omega}$ such that $|f ([S]^n)| \leq \ell$. 
\end{theorem}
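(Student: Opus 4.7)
The strategy is to reduce the theorem to Milliken's tree theorem (\Cref{th:milliken-theorem}) by partitioning $[2^{<\omega}]^n$ into finitely many isomorphism types and applying Milliken's theorem once per type. Declare $F \sim F'$ in $[2^{<\omega}]^n$ if there is a bijection $\phi: \meetclosure{F} \to \meetclosure{F'}$ preserving $\preceq$, satisfying $\phi(F) = F'$, and preserving the linear ordering on node lengths (that is, $|\sigma| < |\tau| \iff |\phi(\sigma)| < |\phi(\tau)|$). Since $|\meetclosure{F}| \leq 2n-1$, the number $N$ of equivalence classes is finite and depends only on $n$; we will take $\ell := N$.

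Enumerate the types as $\tau_1, \ldots, \tau_N$, and let $h_{\tau_i} \leq 2n-1$ be the number of distinct lengths occurring in the meet-closure of any representative of $\tau_i$. Given a coloring $f: [2^{<\omega}]^n \to k$, we plan to build a decreasing sequence $2^{<\omega} = S_0 \supseteq S_1 \supseteq \cdots \supseteq S_N$ of infinite strong subtrees. Each $T \in \Subtree{h_{\tau_i}}{2^{<\omega}}$ is a complete binary tree of height $h_{\tau_i}$, canonically isomorphic to $2^{<h_{\tau_i}}$ via the left/right structure inherited from $2^{<\omega}$; fixing once and for all a specific template (a copy of $\tau_i$ sitting inside $2^{<h_{\tau_i}}$) then selects an unambiguous subset $F_{\tau_i}(T) \subseteq T$ that realizes $\tau_i$. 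We define $g_i: \Subtree{h_{\tau_i}}{S_{i-1}} \to k$ by $g_i(T) = f(F_{\tau_i}(T))$, and apply Milliken's tree theorem to $g_i$ on the ambient infinite tree $S_{i-1}$ to obtain $S_i \in \Subtree{\omega}{S_{i-1}}$ on which $g_i$ is constant with some value $c_i$. Transitivity of the strong subtree relation guarantees $S_i \in \Subtree{\omega}{2^{<\omega}}$.

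Setting $S := S_N$, we finish by showing that every $F \in [S]^n$ of type $\tau_i$ satisfies $f(F) = c_i$, so that $f([S]^n) \subseteq \{c_1, \ldots, c_N\}$. To do this we exhibit $T \in \Subtree{h_{\tau_i}}{S}$ with $F_{\tau_i}(T) = F$: take the level function of $T$ to enumerate exactly the lengths occupied by $\meetclosure{F}$ in $2^{<\omega}$ (these all lie in the level set of $S$, since $\meetclosure{F} \subseteq S$), insist that $\meetclosure{F} \subseteq T$, and greedily fill in the remaining nodes of $T$ at each level by choosing additional elements of $S$ at that level so as to complete the binary-branching structure; this padding succeeds because $S$ itself is binary-branching. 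Then $T \in \Subtree{h_{\tau_i}}{S_i}$, so $f(F) = g_i(T) = c_i$. The main technical obstacle will be reconciling the fixed canonical template for $\tau_i$ with the arbitrary placement of $F$ inside $2^{<\omega}$: up to left/right swaps at internal nodes of $\meetclosure{F}$, a given abstract type admits several embeddings into the binary tree, so one must either refine the notion of type to record this positional data (enlarging $N$ accordingly) or design the padding so that the resulting $T$ realizes the fixed template, which can always be arranged by a careful choice of which extensions to include at each level.
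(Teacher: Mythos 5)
Your overall strategy is the same as the paper's: partition $[2^{<\omega}]^n$ into finitely many structural types, apply Milliken's tree theorem once per type to a coloring of strong subtrees of the appropriate height, and let $\ell$ be the number of types. The paper calls these classes \emph{tuple types} and the resulting bound $t_{\sTT}(n)$.

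The genuine gap is in your treatment of the left/right data, and the escape hatch you lean on does not work. Your equivalence relation preserves only $\preceq$ and the length order on $\meetclosure{F}$, so it forgets, for incomparable $\sigma,\tau\in F$, whether $\sigma$ extends $(\sigma\meet\tau)0$ or $(\sigma\meet\tau)1$. That orientation is intrinsic to $F$ and cannot be altered by padding: in any strong subtree $T\subseteq 2^{<\omega}$ with $\meetclosure{F}\subseteq T$, the node $\sigma\meet\tau$ is $2$-branching in $T$ with its two sides inherited from $2^{<\omega}$ (branching is preserved, \Cref{def:strong-subtree}), so which child of $\sigma\meet\tau$ each element of $F$ lies above is the same in $T$ as in $2^{<\omega}$. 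Hence if the fixed template for $\tau_i$ has the opposite orientation at some internal node, no choice of additional nodes of $S$ can make $F$ equal to $F_{\tau_i}(T)$, and the second alternative in your closing sentence (``design the padding so that the resulting $T$ realizes the fixed template, which can always be arranged'') is false. Your first alternative is the correct one and is exactly what the paper does: its strong isomorphisms between full closures are required to satisfy $\sigma i\preceq\tau\leftrightarrow f(\sigma)i\preceq f(\tau)$, so the tuple types record orientation, $N$ is enlarged to $t_{\sTT}(n)$, and then every $F\in[S]^n$ of type $\embfont t_i$ genuinely sits as the distinguished tuple inside some member of $\Subtree{h_{\tau_i}}{S}$. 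Two smaller remarks: the paper works with the full closure $\lvlclosure{(\meetclosure{F})}$ rather than $\meetclosure{F}$ alone, which is what makes ``the unique tuple of type $\embfont t_i$ in a strong subtree of height $m_i$'' well defined without choosing a template; and since the theorem only asks for the existence of some $\ell$, the blow-up in $N$ caused by recording orientation is harmless.
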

\begin{statement}\index{statement!$\mathrm{SCHMTT^n_{k,l}}$}
  We call $\mathrm{SCHMTT^n_{k,l}}$ the statement of the Strong generalized CHM tree theorem.
\end{statement}

Note that both $\mathrm{SCHMTT^n_{k,l}}$ and $\mathrm{CHMTT^n_{k,l}}$ would work exactly the same way if we start from a coloring of any perfect tree $T$ rather than $2^{<\omega}$: via an isomorphism between $T$ and $2^{<\omega}$, the theorem applied to $2^{<\omega}$ also gives via the isomorphism a solution for $T$. We start by introducing the notion of embedding types, useful in the conduct of our study.

\section{Embedding types}

We shall try to identify what can be used by a coloring of $[2^{<\omega}]^n$ to identify some structure one will never be able to remove in any strong subtree. A first step for that is the identification of the concept of embedding type, for which we introduce the following preliminary notions.

\begin{definition}
Let $S$ be a set of strings.
\begin{enumerate}\index{meet-closed}\index{level-closed}
\item $S$ is \emph{meet-closed} if for every $\sigma,\tau\in S$, $\sigma \wedge \tau \in S$. 
\item $S$ is \emph{level-closed} if for every $\sigma,\tau\in S$, $\tau\uh|\sigma|\in S$.
\end{enumerate}
\end{definition}

We will be interested in finite trees which are both meet-closed and level closed. 

\begin{definition}[closure]
Let $S$ be a set of strings.
\begin{enumerate}\index{meet closure}\index{level closure}\index{full closure}\index{closure!meet}\index{closure!full}\index{closure!level}\index{$A^\wedge$}\index{$\lvlclosure A$}\index{$A^{\mathrm{cl}}$}
\item The \emph{meet closure} of $S$ is the set $S^\wedge=\{\sigma \wedge \tau:\sigma,\tau\in S\}$.
\item The \emph{level closure} of $S$ is the set $\lvlclosure S=\{\sigma\uh|\tau|:\sigma,\tau\in S\}$.
\item The \emph{full closure} of $S$ is the set $S^{\mathrm{cl}} = \lvlclosure {(S^\wedge)}$.
\end{enumerate}
\end{definition}

Note that $S \subseteq S^\wedge$ and $S \subseteq \lvlclosure S$ by taking $\sigma=\tau$ in the above definitions. Any strong subtree of $2^{<\omega}$ is meet-closed and level-closed but not conversely, as witnessed by the following example $S=\{\emptystr;0;00;01;1;11\}$. In Figure~\ref{fig:some-trees}, we give an example of a subtree, and its full closure.
\begin{figure}[h]
  \centering
  \centering
	\begin{tikzpicture}[scale=1.5,font=\footnotesize]
		\tikzset{
			empty node/.style={circle,inner sep=0,fill=none},
			solid node/.style={circle,draw,inner sep=1.5,fill=black},
			hollow node/.style={circle,draw,inner sep=1.5,fill=white},
			gray node/.style={circle,draw={rgb:black,1;white,4},inner sep=1.5,fill={rgb:black,1;white,4}}
		}
		\tikzset{snake it/.style={decorate, decoration=snake, line cap=round}}
		\tikzset{gray line/.style={line cap=round,thick,color={rgb:black,1;white,4}}}
		\tikzset{thick line/.style={line cap=round,rounded corners=0.1mm,thick}}
		\node (a)[solid node] at (0,0) {};
		\node (a1)[gray node] at (0.3,0.5) {};
		\node (a0)[gray node] at (-0.3,0.5) {};
		\node (a00)[gray node] at (-0.45,1) {};
		\node (a01)[solid node] at (-0.15,1) {};
		\node (a10)[solid node] at (0.15,1) {};
		\node (a11)[solid node] at (0.45,1) {};
		\draw[-,thick line] (a) to (-0.3,0.5) to (a01);
		\draw[-,thick line] (a) to (0.3,0.5) to (a10);
		\draw[-,thick line] (a) to (0.3,0.5) to (a11);
		\draw[-,gray line] (a0) to (a00);
		\draw[-,thick line] (0,-0.2) to (a);
		\node at (0,-0.5) {(a)};
	\end{tikzpicture}
	\hspace{5mm}
	\begin{tikzpicture}[scale=1.5,font=\footnotesize]
		\tikzset{
			empty node/.style={circle,inner sep=0,fill=none},
			solid node/.style={circle,draw,inner sep=1.5,fill=black},
			hollow node/.style={circle,draw,inner sep=1.5,fill=white},
			gray node/.style={circle,draw={rgb:black,1;white,4},inner sep=1.5,fill={rgb:black,1;white,4}}
		}
		\tikzset{snake it/.style={decorate, decoration=snake, line cap=round}}
		\tikzset{gray line/.style={line cap=round,thick,color={rgb:black,1;white,4}}}
		\tikzset{thick line/.style={line cap=round,rounded corners=0.1mm,thick}}
		\node (a)[solid node] at (0,0) {};
		\node (a1)[solid node] at (0.3,0.5) {};
		\node (a0)[gray node] at (-0.3,0.5) {};
		\node (a00)[gray node] at (-0.45,1) {};
		\node (a01)[solid node] at (-0.15,1) {};
		\node (a10)[solid node] at (0.15,1) {};
		\node (a11)[solid node] at (0.45,1) {};
		\draw[-,thick line] (a) to (-0.3,0.5) to (a01);
		\draw[-,thick line] (a) to (0.3,0.5) to (a10);
		\draw[-,thick line] (a) to (0.3,0.5) to (a11);
		\draw[-,gray line] (a0) to (a00);
		\draw[-,thick line] (0,-0.2) to (a);
		\node at (0,-0.5) {(b)};
	\end{tikzpicture}
	\hspace{5mm}
	\begin{tikzpicture}[scale=1.5,font=\footnotesize]
		\tikzset{
			empty node/.style={circle,inner sep=0,fill=none},
			solid node/.style={circle,draw,inner sep=1.5,fill=black},
			hollow node/.style={circle,draw,inner sep=1.5,fill=white},
			gray node/.style={circle,draw={rgb:black,1;white,4},inner sep=1.5,fill={rgb:black,1;white,4}}
		}
		\tikzset{snake it/.style={decorate, decoration=snake, line cap=round}}
		\tikzset{gray line/.style={line cap=round,thick,color={rgb:black,1;white,4}}}
		\tikzset{thick line/.style={line cap=round,rounded corners=0.1mm,thick}}
		\node (a)[solid node] at (0,0) {};
		\node (a1)[solid node] at (0.3,0.5) {};
		\node (a0)[solid node] at (-0.3,0.5) {};
		\node (a00)[gray node] at (-0.45,1) {};
		\node (a01)[solid node] at (-0.15,1) {};
		\node (a10)[solid node] at (0.15,1) {};
		\node (a11)[solid node] at (0.45,1) {};
		\draw[-,thick line] (a) to (-0.3,0.5) to (a01);
		\draw[-,thick line] (a) to (0.3,0.5) to (a10);
		\draw[-,thick line] (a) to (0.3,0.5) to (a11);
		\draw[-,gray line] (a0) to (a00);
		\draw[-,thick line] (0,-0.2) to (a);
		\node at (0,-0.5) {(b)};
	\end{tikzpicture}
  \caption{The set of nodes (a) is level-closed but not meet-closed. The set of nodes (b) is the meet-closure of (a). Note that it is now not level-closed. The set of nodes (c) is the level-closure of (b) which corresponds to the full closure of (a).}
  \label{fig:some-trees}
\end{figure}
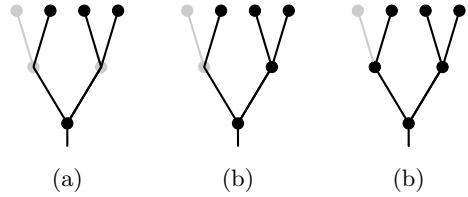

  
The idea is the following: given a set of strings $S = \{\sigma_1, \dots, \sigma_n\} \in [2^{<\omega}]^n$, one can easily compute the tree $S^{\mathrm{cl}}$. A coloring can then identify which \emph{type} of tree arise from $S$ and give a different color to each of them. The number of these type is defined below as the \emph{embedding types} that we now formally define.
  
\begin{definition}\index{tree!strong isomorphism}\index{isomorphism!fully closed tree}\index{embedding type}\index{type!embedding}
Two finite fully closed trees $F_0,F_1 \subseteq 2^{<\omega}$ are \emph{strongly isomorphic} if there is a bijection $f:F_0 \rightarrow F_1$ such that $\sigma i \preceq \tau \leftrightarrow f(\sigma) i \preceq f(\tau)$ for any $\sigma, \tau \in F_0$. The \emph{embedding types} are the equivalence classes of the strong isomorphism relation on finite fully closed trees.
\end{definition}  
 
Any embedding type has a minimal element with respect its height. We usually use this minimal element as a canonical representative of the class.

\Cref{fig:same-emb-types,fig:some-emb-height,fig:some-emb-types} illustrate the notion of embedding types. \Cref{fig:some-emb-types} consists of example of different embedding types. \Cref{fig:same-emb-types} shows several level-closed subtrees with the same embedding type. \Cref{fig:some-emb-height} illustrates the height of an embedding type.

\begin{figure}[h]
  \centering
  \begin{tikzpicture}[scale=1.5,font=\footnotesize]
		\tikzset{
			empty node/.style={circle,inner sep=0,fill=none},
			solid node/.style={circle,draw,inner sep=1.5,fill=black},
			hollow node/.style={circle,draw,inner sep=1.5,fill=white},
			gray node/.style={circle,draw={rgb:black,1;white,4},inner sep=1.5,fill={rgb:black,1;white,4}}
		}
		\tikzset{snake it/.style={decorate, decoration=snake, line cap=round}}
		\tikzset{gray line/.style={line cap=round,thick,color={rgb:black,1;white,4}}}
		\tikzset{thick line/.style={line cap=round,rounded corners=0.1mm,thick}}
		\node (a)[solid node] at (0,0) {};
		\node (a1)[solid node] at (0.3,0.5) {};
		\node (a0)[gray node] at (-0.3,0.5) {};
		\node (a00)[gray node] at (-0.45,1) {};
		\node (a01)[gray node] at (-0.15,1) {};
		\node (a10)[solid node] at (0.15,1) {};
		\node (a11)[gray node] at (0.45,1) {};
		\begin{pgfonlayer}{background}
		\draw[-,gray line] (a) to (a0);
		\draw[-,thick line] (a) to (a1);
		\draw[-,gray line] (a0) to (a00);
		\draw[-,gray line] (a0) to (a01);
		\draw[-,thick line] (a1) to (a10);
		\draw[-,gray line] (a1) to (a11);
		\end{pgfonlayer}
		\node at (0,-0.5) {(d)};
	\end{tikzpicture}
	\hspace{5mm}
	\begin{tikzpicture}[scale=1.5,font=\footnotesize]
		\tikzset{
			empty node/.style={circle,inner sep=0,fill=none},
			solid node/.style={circle,draw,inner sep=1.5,fill=black},
			hollow node/.style={circle,draw,inner sep=1.5,fill=white},
			gray node/.style={circle,draw={rgb:black,1;white,4},inner sep=1.5,fill={rgb:black,1;white,4}}
		}
		\tikzset{snake it/.style={decorate, decoration=snake, line cap=round}}
		\tikzset{gray line/.style={line cap=round,thick,color={rgb:black,1;white,4}}}
		\tikzset{thick line/.style={line cap=round,rounded corners=0.1mm,thick}}
		\node (a)[solid node] at (0,0) {};
		\node (a1)[solid node] at (0.3,0.5) {};
		\node (a0)[gray node] at (-0.3,0.5) {};
		\node (a00)[gray node] at (-0.45,1) {};
		\node (a01)[gray node] at (-0.15,1) {};
		\node (a10)[gray node] at (0.15,1) {};
		\node (a11)[solid node] at (0.45,1) {};
		\begin{pgfonlayer}{background}
		\draw[-,gray line] (a) to (a0);
		\draw[-,thick line] (a) to (a1);
		\draw[-,gray line] (a0) to (a00);
		\draw[-,gray line] (a0) to (a01);
		\draw[-,gray line] (a1) to (a10);
		\draw[-,thick line] (a1) to (a11);
		\end{pgfonlayer}
		\node at (0,-0.5) {(e)};
	\end{tikzpicture}
	\hspace{5mm}
	\begin{tikzpicture}[scale=1.5,font=\footnotesize]
		\tikzset{
			empty node/.style={circle,inner sep=0,fill=none},
			solid node/.style={circle,draw,inner sep=1.5,fill=black},
			hollow node/.style={circle,draw,inner sep=1.5,fill=white},
			gray node/.style={circle,draw={rgb:black,1;white,4},inner sep=1.5,fill={rgb:black,1;white,4}}
		}
		\tikzset{snake it/.style={decorate, decoration=snake, line cap=round}}
		\tikzset{gray line/.style={line cap=round,thick,color={rgb:black,1;white,4}}}
		\tikzset{thick line/.style={line cap=round,rounded corners=0.1mm,thick}}
		\node (a)[solid node] at (0,0) {};
		\node (a1)[solid node] at (0.3,0.5) {};
		\node (a0)[solid node] at (-0.3,0.5) {};
		\node (a00)[gray node] at (-0.45,1) {};
		\node (a01)[gray node] at (-0.15,1) {};
		\node (a10)[solid node] at (0.15,1) {};
		\node (a11)[solid node] at (0.45,1) {};
		\begin{pgfonlayer}{background}
		\draw[-,thick line] (a) to (a0);
		\draw[-,thick line] (a) to (a1);
		\draw[-,gray line] (a0) to (a00);
		\draw[-,gray line] (a0) to (a01);
		\draw[-,thick line] (a1) to (a10);
		\draw[-,thick line] (a1) to (a11);
		\end{pgfonlayer}
		\node at (0,-0.5) {(f)};
	\end{tikzpicture}
	\hspace{5mm}
	\begin{tikzpicture}[scale=1.5,font=\footnotesize]
		\tikzset{
			empty node/.style={circle,inner sep=0,fill=none},
			solid node/.style={circle,draw,inner sep=1.5,fill=black},
			hollow node/.style={circle,draw,inner sep=1.5,fill=white},
			gray node/.style={circle,draw={rgb:black,1;white,4},inner sep=1.5,fill={rgb:black,1;white,4}}
		}
		\tikzset{snake it/.style={decorate, decoration=snake, line cap=round}}
		\tikzset{gray line/.style={line cap=round,thick,color={rgb:black,1;white,4}}}
		\tikzset{thick line/.style={line cap=round,rounded corners=0.1mm,thick}}
		\node (a)[solid node] at (0,0) {};
		\node (a1)[gray node] at (0.3,0.5) {};
		\node (a0)[solid node] at (-0.3,0.5) {};
		\node (a00)[solid node] at (-0.45,1) {};
		\node (a01)[solid node] at (-0.15,1) {};
		\node (a10)[gray node] at (0.15,1) {};
		\node (a11)[gray node] at (0.45,1) {};
		\begin{pgfonlayer}{background}
		\draw[-,thick line] (a) to (a0);
		\draw[-,gray line] (a) to (a1);
		\draw[-,thick line] (a0) to (a00);
		\draw[-,thick line] (a0) to (a01);
		\draw[-,gray line] (a1) to (a10);
		\draw[-,gray line] (a1) to (a11);
		\end{pgfonlayer}
		\node at (0,-0.5) {(g)};
	\end{tikzpicture}
	\hspace{5mm}
	\begin{tikzpicture}[scale=1.5,font=\footnotesize]
		\tikzset{
			empty node/.style={circle,inner sep=0,fill=none},
			solid node/.style={circle,draw,inner sep=1.5,fill=black},
			hollow node/.style={circle,draw,inner sep=1.5,fill=white},
			gray node/.style={circle,draw={rgb:black,1;white,4},inner sep=1.5,fill={rgb:black,1;white,4}}
		}
		\tikzset{snake it/.style={decorate, decoration=snake, line cap=round}}
		\tikzset{gray line/.style={line cap=round,thick,color={rgb:black,1;white,4}}}
		\tikzset{thick line/.style={line cap=round,rounded corners=0.1mm,thick}}
		\node (a)[solid node] at (0,0) {};
		\node (a1)[solid node] at (0.3,0.5) {};
		\node (a0)[solid node] at (-0.3,0.5) {};
		\node (a00)[solid node] at (-0.45,1) {};
		\node (a01)[solid node] at (-0.15,1) {};
		\node (a10)[solid node] at (0.15,1) {};
		\node (a11)[solid node] at (0.45,1) {};
		\begin{pgfonlayer}{background}
		\draw[-,thick line] (a) to (a0);
		\draw[-,thick line] (a) to (a1);
		\draw[-,thick line] (a0) to (a00);
		\draw[-,thick line] (a0) to (a01);
		\draw[-,thick line] (a1) to (a10);
		\draw[-,thick line] (a1) to (a11);
		\end{pgfonlayer}
		\node at (0,-0.5) {(h)};
	\end{tikzpicture}
  \caption{
    A few embedding types of height 3, the underlying grey tree being $\cantor$. 
    All of them are different.
  }
  \label{fig:some-emb-types}
\end{figure}
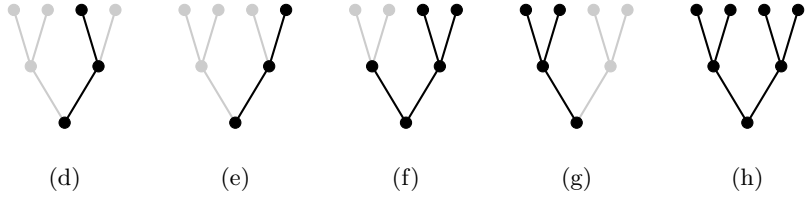


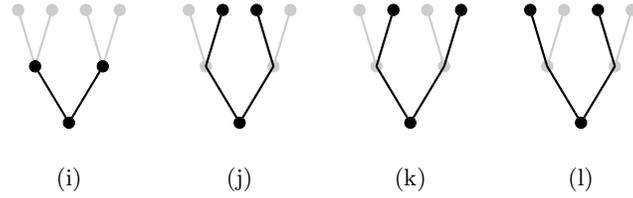
\begin{figure}[h]
  \centering
  \begin{tikzpicture}[scale=1.5,font=\footnotesize]
		\tikzset{
			empty node/.style={circle,inner sep=0,fill=none},
			solid node/.style={circle,draw,inner sep=1.5,fill=black},
			hollow node/.style={circle,draw,inner sep=1.5,fill=white},
			gray node/.style={circle,draw={rgb:black,1;white,4},inner sep=1.5,fill={rgb:black,1;white,4}}
		}
		\tikzset{snake it/.style={decorate, decoration=snake, line cap=round}}
		\tikzset{gray line/.style={line cap=round,thick,color={rgb:black,1;white,4}}}
		\tikzset{thick line/.style={line cap=round,rounded corners=0.1mm,thick}}
		\node (a)[solid node] at (0,0) {};
		\node (a1)[solid node] at (0.3,0.5) {};
		\node (a0)[solid node] at (-0.3,0.5) {};
		\node (a00)[gray node] at (-0.45,1) {};
		\node (a01)[gray node] at (-0.15,1) {};
		\node (a10)[gray node] at (0.15,1) {};
		\node (a11)[gray node] at (0.45,1) {};
		\begin{pgfonlayer}{background}
		\draw[-,thick line] (a) to (a0);
		\draw[-,thick line] (a) to (a1);
		\draw[-,gray line] (a0) to (a00);
		\draw[-,gray line] (a0) to (a01);
		\draw[-,gray line] (a1) to (a10);
		\draw[-,gray line] (a1) to (a11);
		\end{pgfonlayer}
		\node at (0,-0.5) {(i)};
	\end{tikzpicture}
	\hspace{5mm}
	\begin{tikzpicture}[scale=1.5,font=\footnotesize]
		\tikzset{
			empty node/.style={circle,inner sep=0,fill=none},
			solid node/.style={circle,draw,inner sep=1.5,fill=black},
			hollow node/.style={circle,draw,inner sep=1.5,fill=white},
			gray node/.style={circle,draw={rgb:black,1;white,4},inner sep=1.5,fill={rgb:black,1;white,4}}
		}
		\tikzset{snake it/.style={decorate, decoration=snake, line cap=round}}
		\tikzset{gray line/.style={line cap=round,thick,color={rgb:black,1;white,4}}}
		\tikzset{thick line/.style={line cap=round,rounded corners=0.1mm,thick}}
		\node (a)[solid node] at (0,0) {};
		\node (a1)[gray node] at (0.3,0.5) {};
		\node (a0)[gray node] at (-0.3,0.5) {};
		\node (a00)[gray node] at (-0.45,1) {};
		\node (a01)[solid node] at (-0.15,1) {};
		\node (a10)[solid node] at (0.15,1) {};
		\node (a11)[gray node] at (0.45,1) {};
		\draw[-,gray line] (a) to (a0);
		\draw[-,gray line] (a) to (a1);
		\draw[-,gray line] (a0) to (a00);
		\draw[-,gray line] (a0) to (a01);
		\draw[-,gray line] (a1) to (a10);
		\draw[-,gray line] (a1) to (a11);
		\draw[-,thick line] (a) to (0.3,0.5) to (a10);
		\draw[-,thick line] (a) to (-0.3,0.5) to (a01);
		\node at (0,-0.5) {(j)};
	\end{tikzpicture}
	\hspace{5mm}
	\begin{tikzpicture}[scale=1.5,font=\footnotesize]
		\tikzset{
			empty node/.style={circle,inner sep=0,fill=none},
			solid node/.style={circle,draw,inner sep=1.5,fill=black},
			hollow node/.style={circle,draw,inner sep=1.5,fill=white},
			gray node/.style={circle,draw={rgb:black,1;white,4},inner sep=1.5,fill={rgb:black,1;white,4}}
		}
		\tikzset{snake it/.style={decorate, decoration=snake, line cap=round}}
		\tikzset{gray line/.style={line cap=round,thick,color={rgb:black,1;white,4}}}
		\tikzset{thick line/.style={line cap=round,rounded corners=0.1mm,thick}}
		\node (a)[solid node] at (0,0) {};
		\node (a1)[gray node] at (0.3,0.5) {};
		\node (a0)[gray node] at (-0.3,0.5) {};
		\node (a00)[gray node] at (-0.45,1) {};
		\node (a01)[solid node] at (-0.15,1) {};
		\node (a10)[gray node] at (0.15,1) {};
		\node (a11)[solid node] at (0.45,1) {};
		\draw[-,gray line] (a) to (a0);
		\draw[-,gray line] (a) to (a1);
		\draw[-,gray line] (a0) to (a00);
		\draw[-,gray line] (a0) to (a01);
		\draw[-,gray line] (a1) to (a10);
		\draw[-,gray line] (a1) to (a11);
		\draw[-,thick line] (a) to (0.3,0.5) to (a11);
		\draw[-,thick line] (a) to (-0.3,0.5) to (a01);
		\node at (0,-0.5) {(k)};
	\end{tikzpicture}
	\hspace{5mm}
	\begin{tikzpicture}[scale=1.5,font=\footnotesize]
		\tikzset{
			empty node/.style={circle,inner sep=0,fill=none},
			solid node/.style={circle,draw,inner sep=1.5,fill=black},
			hollow node/.style={circle,draw,inner sep=1.5,fill=white},
			gray node/.style={circle,draw={rgb:black,1;white,4},inner sep=1.5,fill={rgb:black,1;white,4}}
		}
		\tikzset{snake it/.style={decorate, decoration=snake, line cap=round}}
		\tikzset{gray line/.style={line cap=round,thick,color={rgb:black,1;white,4}}}
		\tikzset{thick line/.style={line cap=round,rounded corners=0.1mm,thick}}
		\node (a)[solid node] at (0,0) {};
		\node (a1)[gray node] at (0.3,0.5) {};
		\node (a0)[gray node] at (-0.3,0.5) {};
		\node (a00)[solid node] at (-0.45,1) {};
		\node (a01)[gray node] at (-0.15,1) {};
		\node (a10)[solid node] at (0.15,1) {};
		\node (a11)[gray node] at (0.45,1) {};
		\draw[-,gray line] (a) to (a0);
		\draw[-,gray line] (a) to (a1);
		\draw[-,gray line] (a0) to (a00);
		\draw[-,gray line] (a0) to (a01);
		\draw[-,gray line] (a1) to (a10);
		\draw[-,gray line] (a1) to (a11);
		\draw[-,thick line] (a) to (-0.3,0.5) to (a00);
		\draw[-,thick line] (a) to (0.3,0.5) to (a10);
		\node at (0,-0.5) {(l)};
	\end{tikzpicture}
  \caption{
    A few finite level-closed subtrees with the same embedding type. The fact that they are level-closed depends on the underlying grey tree.
  }
  \label{fig:same-emb-types}
\end{figure}

\begin{figure}[h]
  \centering
	\begin{tikzpicture}[scale=1.5,font=\footnotesize]
		\tikzset{
			empty node/.style={circle,inner sep=0,fill=none},
			solid node/.style={circle,draw,inner sep=1.5,fill=black},
			hollow node/.style={circle,draw,inner sep=1.5,fill=white},
			gray node/.style={circle,draw={rgb:black,1;white,4},inner sep=1.5,fill={rgb:black,1;white,4}}
		}
		\tikzset{snake it/.style={decorate, decoration=snake, line cap=round}}
		\tikzset{gray line/.style={line cap=round,thick,color={rgb:black,1;white,4}}}
		\tikzset{thick line/.style={line cap=round,rounded corners=0.1mm,thick}}
		\node (a)[gray node] at (0-0.6,0) {};
		\node (a1)[gray node] at (0.3-0.6,0.5) {};
		\node (a0)[solid node] at (-0.3-0.6,0.5) {};
		\node (a00)[gray node] at (-0.45-0.6,1) {};
		\node (a01)[gray node] at (-0.15-0.6,1) {};
		\node (a10)[gray node] at (0.15-0.6,1) {};
		\node (a11)[gray node] at (0.45-0.6,1) {};
		\draw[-,thick line] (0-0.6,0) to (a0);
		\draw[-,gray line] (a) to (a1);
		\begin{pgfonlayer}{background}
		\draw[-,gray line] (a0) to (a00);
		\draw[-,gray line] (a0) to (a01);
		\end{pgfonlayer}
		\draw[-,gray line] (a1) to (a10);
		\draw[-,gray line] (a1) to (a11);
		\node (b)[solid node] at (0+0.6,0) {};
		\node (b1)[gray node] at (0.3+0.6,0.5) {};
		\node (b0)[gray node] at (-0.3+0.6,0.5) {};
		\node (b00)[gray node] at (-0.45+0.6,1) {};
		\node (b01)[gray node] at (-0.15+0.6,1) {};
		\node (b10)[gray node] at (0.15+0.6,1) {};
		\node (b11)[gray node] at (0.45+0.6,1) {};
		\begin{pgfonlayer}{background}
		\draw[-,gray line] (b) to (b0);
		\draw[-,gray line] (b) to (b1);
		\draw[-,gray line] (b0) to (b00);
		\draw[-,gray line] (b0) to (b01);
		\draw[-,gray line] (b1) to (b10);
		\draw[-,gray line] (b1) to (b11);
		\end{pgfonlayer}
		\node at (0,-0.5) {height 1};
	\end{tikzpicture}
	\hspace{5mm}
	\begin{tikzpicture}[scale=1.5,font=\footnotesize]
		\tikzset{
			empty node/.style={circle,inner sep=0,fill=none},
			solid node/.style={circle,draw,inner sep=1.5,fill=black},
			hollow node/.style={circle,draw,inner sep=1.5,fill=white},
			gray node/.style={circle,draw={rgb:black,1;white,4},inner sep=1.5,fill={rgb:black,1;white,4}}
		}
		\tikzset{snake it/.style={decorate, decoration=snake, line cap=round}}
		\tikzset{gray line/.style={line cap=round,thick,color={rgb:black,1;white,4}}}
		\tikzset{thick line/.style={line cap=round,rounded corners=0.1mm,thick}}
		\node (a)[solid node] at (0-0.6,0) {};
		\node (a1)[gray node] at (0.3-0.6,0.5) {};
		\node (a0)[gray node] at (-0.3-0.6,0.5) {};
		\node (a00)[gray node] at (-0.45-0.6,1) {};
		\node (a01)[solid node] at (-0.15-0.6,1) {};
		\node (a10)[gray node] at (0.15-0.6,1) {};
		\node (a11)[solid node] at (0.45-0.6,1) {};
		\draw[-,thick line] (a) to (-0.3-0.6,0.5) to (a01);
		\draw[-,thick line] (a) to (0.3-0.6,0.5) to (a11);
		\draw[-,gray line] (a0) to (a00);
		\draw[-,gray line] (a1) to (a10);
		\node (b)[solid node] at (0+0.6,0) {};
		\node (b1)[solid node] at (0.3+0.6,0.5) {};
		\node (b0)[solid node] at (-0.3+0.6,0.5) {};
		\node (b00)[gray node] at (-0.45+0.6,1) {};
		\node (b01)[gray node] at (-0.15+0.6,1) {};
		\node (b10)[gray node] at (0.15+0.6,1) {};
		\node (b11)[gray node] at (0.45+0.6,1) {};
		\begin{pgfonlayer}{background}
		\draw[-,thick line] (b) to (b0);
		\draw[-,thick line] (b) to (b1);
		\draw[-,gray line] (b0) to (b00);
		\draw[-,gray line] (b0) to (b01);
		\draw[-,gray line] (b1) to (b10);
		\draw[-,gray line] (b1) to (b11);
		\end{pgfonlayer}
		\node at (0,-0.5) {height 2};
	\end{tikzpicture}
	\hspace{5mm}
	\begin{tikzpicture}[scale=1.5,font=\footnotesize]
		\tikzset{
			empty node/.style={circle,inner sep=0,fill=none},
			solid node/.style={circle,draw,inner sep=1.5,fill=black},
			hollow node/.style={circle,draw,inner sep=1.5,fill=white},
			gray node/.style={circle,draw={rgb:black,1;white,4},inner sep=1.5,fill={rgb:black,1;white,4}}
		}
		\tikzset{snake it/.style={decorate, decoration=snake, line cap=round}}
		\tikzset{gray line/.style={line cap=round,thick,color={rgb:black,1;white,4}}}
		\tikzset{thick line/.style={line cap=round,rounded corners=0.1mm,thick}}
		\node (a)[solid node] at (0-0.6,0) {};
		\node (a1)[solid node] at (0.3-0.6,0.5) {};
		\node (a0)[gray node] at (-0.3-0.6,0.5) {};
		\node (a00)[gray node] at (-0.45-0.6,1) {};
		\node (a01)[gray node] at (-0.15-0.6,1) {};
		\node (a10)[solid node] at (0.15-0.6,1) {};
		\node (a11)[gray node] at (0.45-0.6,1) {};
		\draw[-,gray line] (a) to (a0);
		\draw[-,thick line] (a) to (a1);
		\draw[-,gray line] (a0) to (a00);
		\draw[-,gray line] (a0) to (a01);
		\draw[-,thick line] (a1) to (a10);
		\draw[-,gray line] (a1) to (a11);
		\node (b)[solid node] at (0+0.6,0) {};
		\node (b1)[solid node] at (0.3+0.6,0.5) {};
		\node (b0)[solid node] at (-0.3+0.6,0.5) {};
		\node (b00)[solid node] at (-0.45+0.6,1) {};
		\node (b01)[solid node] at (-0.15+0.6,1) {};
		\node (b10)[solid node] at (0.15+0.6,1) {};
		\node (b11)[solid node] at (0.45+0.6,1) {};
		\draw[-,thick line] (b) to (b0);
		\draw[-,thick line] (b) to (b1);
		\draw[-,thick line] (b0) to (b00);
		\draw[-,thick line] (b0) to (b01);
		\draw[-,thick line] (b1) to (b10);
		\draw[-,thick line] (b1) to (b11);
		\node at (0,-0.5) {height 3};
	\end{tikzpicture}
  \caption{ Some subtrees with embedding type of different height. The two first have the same embedding type, the unique embedding type of height 1. The two in the middle have the same embedding type, of height 2. The last pair consists of level-closed subtrees with two different embedding types of height 3. }
  \label{fig:some-emb-height}
\end{figure}
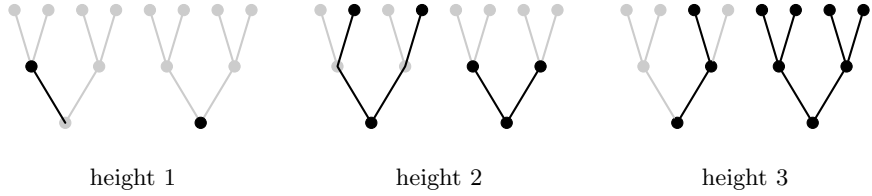

\section{Strong generalized CHM tree theorem}

No embedding type can be avoided in a strong subtree of $2^{<\omega}$. For this reason the number of colors that cannot be avoided by $n$-tuples of elements of $2^{<\omega}$ is at least the number of embedding types that can be generated by these tuples.

\begin{definition}\index{$e_{\sTT}$}
Let $e_{\sTT}:\omega \rightarrow \omega$ be the function which to $n$ associates the number of embedding types that can be generated by $n$ distinct strings. 
\end{definition}

Let us provide an example with \Cref{fig:tuple-to-height}: all the possible embedding types that are generated by two strings. We have $e_{\sTT}(2) = 7$.

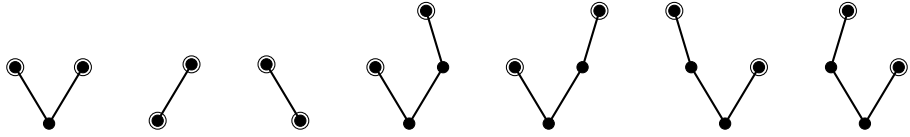
\begin{figure}[h]
  \begin{center}
    \begin{tikzpicture}[scale=1.5,font=\footnotesize]
		\tikzset{
			empty node/.style={circle,inner sep=0,fill=none},
			solid node/.style={circle,draw,inner sep=1.5,fill=black},
			hollow node/.style={circle,draw,inner sep=1.5,fill=white},
			gray node/.style={circle,draw={rgb:black,1;white,4},inner sep=1.5,fill={rgb:black,1;white,4}}
		}
		\tikzset{snake it/.style={decorate, decoration=snake, line cap=round}}
		\tikzset{gray line/.style={line cap=round,thick,color={rgb:black,1;white,4}}}
		\tikzset{thick line/.style={line cap=round,rounded corners=0.1mm,thick}}
		\begin{pgfonlayer}{background}
		\node (a)[solid node] at (0,0) {};
		\node (a1)[solid node] at (0.3,0.5) {};
		\node (a0)[solid node] at (-0.3,0.5) {};
		\draw[fill=none] (a0) circle [radius=0.75mm];
		\draw[fill=none] (a1) circle [radius=0.75mm];
		\end{pgfonlayer}
		\draw[-,thick line] (a) to (a0);
		\draw[-,thick line] (a) to (a1);
	\end{tikzpicture}
	\hspace{5mm}
	\begin{tikzpicture}[scale=1.5,font=\footnotesize]
		\tikzset{
			empty node/.style={circle,inner sep=0,fill=none},
			solid node/.style={circle,draw,inner sep=1.5,fill=black},
			hollow node/.style={circle,draw,inner sep=1.5,fill=white},
			gray node/.style={circle,draw={rgb:black,1;white,4},inner sep=1.5,fill={rgb:black,1;white,4}}
		}
		\tikzset{snake it/.style={decorate, decoration=snake, line cap=round}}
		\tikzset{gray line/.style={line cap=round,thick,color={rgb:black,1;white,4}}}
		\tikzset{thick line/.style={line cap=round,rounded corners=0.1mm,thick}}
		\begin{pgfonlayer}{background}
		\node (a)[solid node] at (0,0) {};
		\node (a1)[solid node] at (0.3,0.5) {};
		\draw[fill=none] (a) circle [radius=0.75mm];
		\draw[fill=none] (a1) circle [radius=0.75mm];
		\end{pgfonlayer}
		\draw[-,thick line] (a) to (a1);
	\end{tikzpicture}
	\hspace{5mm}
	\begin{tikzpicture}[scale=1.5,font=\footnotesize]
		\tikzset{
			empty node/.style={circle,inner sep=0,fill=none},
			solid node/.style={circle,draw,inner sep=1.5,fill=black},
			hollow node/.style={circle,draw,inner sep=1.5,fill=white},
			gray node/.style={circle,draw={rgb:black,1;white,4},inner sep=1.5,fill={rgb:black,1;white,4}}
		}
		\tikzset{snake it/.style={decorate, decoration=snake, line cap=round}}
		\tikzset{gray line/.style={line cap=round,thick,color={rgb:black,1;white,4}}}
		\tikzset{thick line/.style={line cap=round,rounded corners=0.1mm,thick}}
		\begin{pgfonlayer}{background}
		\node (a)[solid node] at (0,0) {};
		\node (a0)[solid node] at (-0.3,0.5) {};
		\draw[fill=none] (a) circle [radius=0.75mm];
		\draw[fill=none] (a0) circle [radius=0.75mm];
		\end{pgfonlayer}
		\draw[-,thick line] (a) to (a0);
	\end{tikzpicture}
	\hspace{5mm}
	\begin{tikzpicture}[scale=1.5,font=\footnotesize]
		\tikzset{
			empty node/.style={circle,inner sep=0,fill=none},
			solid node/.style={circle,draw,inner sep=1.5,fill=black},
			hollow node/.style={circle,draw,inner sep=1.5,fill=white},
			gray node/.style={circle,draw={rgb:black,1;white,4},inner sep=1.5,fill={rgb:black,1;white,4}}
		}
		\tikzset{snake it/.style={decorate, decoration=snake, line cap=round}}
		\tikzset{gray line/.style={line cap=round,thick,color={rgb:black,1;white,4}}}
		\tikzset{thick line/.style={line cap=round,rounded corners=0.1mm,thick}}
		\begin{pgfonlayer}{background}
		\node (a)[solid node] at (0,0) {};
		\node (a1)[solid node] at (0.3,0.5) {};
		\node (a0)[solid node] at (-0.3,0.5) {};
		\node (a10)[solid node] at (0.15,1) {};
		\draw[fill=none] (a0) circle [radius=0.75mm];
		\draw[fill=none] (a10) circle [radius=0.75mm];
		\end{pgfonlayer}
		\draw[-,thick line] (a) to (a0);
		\draw[-,thick line] (a) to (a1);
		\draw[-,thick line] (a1) to (a10);
	\end{tikzpicture}
	\hspace{5mm}
	\begin{tikzpicture}[scale=1.5,font=\footnotesize]
		\tikzset{
			empty node/.style={circle,inner sep=0,fill=none},
			solid node/.style={circle,draw,inner sep=1.5,fill=black},
			hollow node/.style={circle,draw,inner sep=1.5,fill=white},
			gray node/.style={circle,draw={rgb:black,1;white,4},inner sep=1.5,fill={rgb:black,1;white,4}}
		}
		\tikzset{snake it/.style={decorate, decoration=snake, line cap=round}}
		\tikzset{gray line/.style={line cap=round,thick,color={rgb:black,1;white,4}}}
		\tikzset{thick line/.style={line cap=round,rounded corners=0.1mm,thick}}
		\begin{pgfonlayer}{background}
		\node (a)[solid node] at (0,0) {};
		\node (a1)[solid node] at (0.3,0.5) {};
		\node (a0)[solid node] at (-0.3,0.5) {};
		\node (a11)[solid node] at (0.45,1) {};
		\draw[fill=none] (a0) circle [radius=0.75mm];
		\draw[fill=none] (a11) circle [radius=0.75mm];
		\end{pgfonlayer}
		\draw[-,thick line] (a) to (a0);
		\draw[-,thick line] (a) to (a1);
		\draw[-,thick line] (a1) to (a11);
	\end{tikzpicture}
	\hspace{5mm}
	\begin{tikzpicture}[scale=1.5,font=\footnotesize]
		\tikzset{
			empty node/.style={circle,inner sep=0,fill=none},
			solid node/.style={circle,draw,inner sep=1.5,fill=black},
			hollow node/.style={circle,draw,inner sep=1.5,fill=white},
			gray node/.style={circle,draw={rgb:black,1;white,4},inner sep=1.5,fill={rgb:black,1;white,4}}
		}
		\tikzset{snake it/.style={decorate, decoration=snake, line cap=round}}
		\tikzset{gray line/.style={line cap=round,thick,color={rgb:black,1;white,4}}}
		\tikzset{thick line/.style={line cap=round,rounded corners=0.1mm,thick}}
		\begin{pgfonlayer}{background}
		\node (a)[solid node] at (0,0) {};
		\node (a1)[solid node] at (0.3,0.5) {};
		\node (a0)[solid node] at (-0.3,0.5) {};
		\node (a00)[solid node] at (-0.45,1) {};
		\draw[fill=none] (a1) circle [radius=0.75mm];
		\draw[fill=none] (a00) circle [radius=0.75mm];
		\end{pgfonlayer}
		\draw[-,thick line] (a) to (a0);
		\draw[-,thick line] (a) to (a1);
		\draw[-,thick line] (a0) to (a00);
	\end{tikzpicture}
	\hspace{5mm}
	\begin{tikzpicture}[scale=1.5,font=\footnotesize]
		\tikzset{
			empty node/.style={circle,inner sep=0,fill=none},
			solid node/.style={circle,draw,inner sep=1.5,fill=black},
			hollow node/.style={circle,draw,inner sep=1.5,fill=white},
			gray node/.style={circle,draw={rgb:black,1;white,4},inner sep=1.5,fill={rgb:black,1;white,4}}
		}
		\tikzset{snake it/.style={decorate, decoration=snake, line cap=round}}
		\tikzset{gray line/.style={line cap=round,thick,color={rgb:black,1;white,4}}}
		\tikzset{thick line/.style={line cap=round,rounded corners=0.1mm,thick}}
		\begin{pgfonlayer}{background}
		\node (a)[solid node] at (0,0) {};
		\node (a1)[solid node] at (0.3,0.5) {};
		\node (a0)[solid node] at (-0.3,0.5) {};
		\node (a01)[solid node] at (-0.15,1) {};
		\draw[fill=none] (a1) circle [radius=0.75mm];
		\draw[fill=none] (a01) circle [radius=0.75mm];
		\end{pgfonlayer}
		\draw[-,thick line] (a) to (a0);
		\draw[-,thick line] (a) to (a1);
		\draw[-,thick line] (a0) to (a01);
	\end{tikzpicture}
    \caption{The seven possible embedding types generated by two nodes (shown as circled). That is, these are the embedding types of their full closure. The maximal height of the embedding types here is 3.}
     \label{fig:tuple-to-height}
\end{center}
\end{figure}

Given an element of $[2^{<\omega}]^n$ one can computably recognize which embedding type it generates. It follows that given an enumeration $\embfont e_1, \embfont e_2, \dots, \embfont e_{e_{\sTT}(n)}$ of the embedding types that can be generated by $n$ distinct strings, one can define the color $c$ on $[2^{<\omega}]^n$ which to each element generating the embedding type $\embfont e_i$ associates $i$. No strong subtree of $2^{<\omega}$ can avoid any embedding type and thus at least $e_{\sTT}(n)$ colors are used by $c$ within any strong subtree of $2^{<\omega}$.

We can in fact force even more colors: Given a finite strong subtree $F \subseteq 2^{<\omega}$, there might be distinct tuples $\overline{\sigma_1},\overline{\sigma_2} \in [F]^n$ such that $\overline{\sigma_1}^{cl} = \overline{\sigma_2}^{cl} = F$. Note that such a phenomenon does not happen for $n=2$ and below, but start to happen from $n=3$. For instance the tuples $\{\sigma0,\sigma1,\sigma00\}$ and $\{\sigma,\sigma1,\sigma00\}$ generate the same embedding type. This leads to the following definition:

\begin{definition}\index{tuple type}\index{type!tuple}
A \emph{tuple type} is an equivalence class on the following relation defined on $\bigcup_n [2^{<\omega}]^n \times [2^{<\omega}]^n$: We say that $\overline{\sigma}, \overline{\tau} \in [2^{<\omega}]^n$ are equivalent if there is a strong isomorphism $f$ from $\overline{\sigma}^{\mathrm{cl}}$ to $\overline{\tau}^{\mathrm{cl}}$ which associates elements of $\overline{\sigma}$ to elements of $\overline{\tau}$.
\end{definition}

Note that the tuple types are a refinement of the embedding types. Also for $n>2$ this refinement is strict, by the example given above with $\{\sigma0,\sigma1,\sigma00\}$ and $\{\sigma,\sigma1,\sigma00\}$: no strong isomorphism from $\{\sigma0,\sigma1,\sigma00\}^{\mathrm{cl}}$ to $\{\sigma,\sigma1,\sigma00\}^{\mathrm{cl}}$ can map elements of $\{\sigma0,\sigma1,\sigma00\}$ to those of $\{\sigma,\sigma1,\sigma00\}$ as no string is a prefix of the other two in the former but one is in the latter.

\begin{definition}\index{$t_{\sTT}$}
Let $t_{\sTT}:\omega \rightarrow \omega$ be the function which to $n$ associates the number of tuple types that can be generated by $n$ distinct strings. 
\end{definition}

Just like a coloring on $[2^{<\omega}]^n$ can recognize the generated embedding types, it can recognize the corresponding tuple types: the embedding type together with the role played by each string generating it. And just like no embedding type can be avoided in a strong perfect tree, also no tuple type can be avoided in a strong perfect tree. It follows that if $c$ is a coloring of $[2^{<\omega}]^n$ which associates to an element its corresponding tuple type, then any strong perfect subtree of $T$ needs at least $t_{\sTT}(n)$ colors. In the next section we show that this number is optimal.



We are now ready to formally state and prove the strong generalized CHM tree theorem.

\begin{theorem}[Strong generalized CHM tree theorem] \label{th:strong_gen_treeth}
For every $n$, the principle $(\forall k) \mathrm{SCMHTT^n_{k,t_{\sTT}(n)}}$ is provable in $\ACA_0$, and $\RCA_0$ proves that the principle $(\forall k) \mathrm{SCMHTT^n_{k,t_{\sTT}(n)-1}}$ is false.
\end{theorem}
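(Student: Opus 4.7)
My plan is to prove both halves separately. Enumerate the tuple types $T_1, \ldots, T_{t_{\sTT}(n)}$ realisable by $n$-tuples in $2^{<\omega}$, and for each $T_j$ fix a canonical fully closed representative $C_j \subseteq 2^{<h_j}$ of height $h_j$, together with its distinguished $n$-subset $D_j \subseteq C_j$.

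For the negative half, I consider the computable coloring $c: [2^{<\omega}]^n \to t_{\sTT}(n)$ assigning each $n$-tuple the index of its tuple type. Because any strong subtree $S \subseteq 2^{<\omega}$ is strongly isomorphic to $2^{<\omega}$ in a way preserving the $0/1$ branching labels, every tuple type realised in $2^{<\omega}$ is also realised in $S$, so $|c([S]^n)| = t_{\sTT}(n)$. This refutes $(\forall k)\mathrm{SCHMTT^n_{k, t_{\sTT}(n)-1}}$ in $\RCA_0$.

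For the positive half, the key observation is that each $H \in \Subtree{h_j}{2^{<\omega}}$ admits a unique strong isomorphism $\psi_H: 2^{<h_j} \to H$, obtained inductively by mapping the root of $2^{<h_j}$ to the root of $H$ and then sending the $b$-child of $\sigma$ to the unique $b$-descendant of $\psi_H(\sigma)$ at $H$'s next level. This yields a well-defined $k$-coloring $g_j(H) = f(\psi_H(D_j))$ on $\Subtree{h_j}{2^{<\omega}}$. Starting from $S_0 = 2^{<\omega}$ and iteratively applying Milliken's tree theorem (\Cref{th:milliken-theorem}, provable in $\ACA_0$ by \Cref{thm:milliken-aca}) to each $g_j$, I would obtain $S_j \in \Subtree{\omega}{S_{j-1}}$ on which $g_j$ is constantly equal to some $c_j$, and then set $S = S_{t_{\sTT}(n)}$. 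To finish, fix $\overline{\sigma} \in [S]^n$ of type $T_j$ with closure $F \subseteq S$; uniqueness of strong isomorphisms gives a unique $\phi: C_j \to F$ with $\phi(D_j) = \overline{\sigma}$. I would then build a strong subtree $H \in \Subtree{h_j}{S}$ sharing $F$'s level function, by induction on $\sigma \in 2^{<h_j}$: if $\sigma b \in C_j$ set the $b$-child of $\psi_H(\sigma)$ in $H$ to be $\phi(\sigma b)$, otherwise pick any node of $S$ extending $\psi_H(\sigma) \cdot b$ at the required actual level. The necessary witnesses exist because $S$ is a strong subtree of $2^{<\omega}$ and, at the $j$th distinct level $\ell_j$ of $F$, the set $S(\ell_j)$ contains at least $2^j$ elements. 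By construction $\psi_H$ extends $\phi$, so $\psi_H(D_j) = \overline{\sigma}$ and therefore $f(\overline{\sigma}) = g_j(H) = c_j$, proving $|f([S]^n)| \leq t_{\sTT}(n)$.

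The main obstacle I expect lies in making this extension step fully precise: one must verify both that the level function of $F$ can always be realised by a height-$h_j$ strong subtree of $S$ (which uses that $|S(\ell)|$ grows exponentially along $S$'s level function) and that the inductive choices genuinely assemble into a strong subtree of $S$ rather than just a fully closed subset. This is the analogue, in the present setting, of \Cref{lem:coded-joyce-order-to-strong-subtree} from the Devlin chapter, and it is what converts Milliken-type monochromaticity on height-$h_j$ strong subtrees into monochromaticity on arbitrary type-$T_j$ tuples in $S$.
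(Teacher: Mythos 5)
Your proof takes essentially the same route as the paper: enumerate the tuple types, associate to each a coloring of height-$h_j$ strong subtrees via the unique strong isomorphism, iterate Milliken's theorem once per type, and for the negative half observe that the type-identifying coloring realises all $t_{\sTT}(n)$ types in any strong subtree. You correctly isolate the verification step the paper glosses over (the analogue of Lemma~\ref{lem:coded-joyce-order-to-strong-subtree}) and your construction of $H$ is sound; the only remark is that what makes it go through is not the cardinality bound $|S(\ell_j)| \geq 2^j$ but the strong-subtree structure of $S$ itself — each node of $S$ at any $S$-level has descendants extending both its $0$-side and its $1$-side at every later $S$-level, and this is what supplies the freely-chosen nodes for those $\sigma b \notin C_j$.
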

\begin{proof}
We already saw that this principle is false when the maximal number of color is $t_{\sTT}(n)-1$, with as an example the coloring on $[2^{<\omega}]^n$ which on each element associates an integer representing its tuple type.

Let us now show that one can prove the statement within $\ACA_0$ for $t_{\sTT}(n)$ colors. Let $\embfont t_1, \embfont t_2, \dots, \embfont t_{t_{\sTT}(n)}$ be an enumeration of the tuple types of size $n$. Let $c$ be any coloring on the elements of $[2^{<\omega}]^n$.

Let $T_0 = 2^{<\omega}$ and inductively for $i < t_{\sTT}(n)$, let $\embfont e_{i}$ be the embedding type that $\embfont t_{i}$ belongs to. Let $m_i$ be the height of the canonical representative of $\embfont t_{i}$. Let $c_i$ be the color on $T_i$ which on any strong subtree $F$ of height $m_i$ associates the color $c$ gives on the unique element of $[F]^{n}$ with tuple type $\embfont t_i$. Using \cref{thm:milliken-aca} stating that Milliken theorem for height $m_i$ is provable in $\ACA_0$, let then $T_{i+1}$ be a strong perfect subtree of $T_i$ which belongs to $\Mc$ and which is monochromatic for $c_{i}$ and let $k_{i}$ be the corresponding color. For this step, note that even if Milliken theorem is stated for strong subtrees of $2^{<\omega}$, we can also apply it for strong subtrees of $T$ where $T$ is itself a strong subtree of $2^{<\omega}$.

Let $S = T_{t_{\sTT}(n)}$. By induction we have that $S$ is a strong subtree of $2^{<\omega}$. Any $\overline{\sigma} \in [S]^n$ belongs to some tuple type $\embfont t_{i}$ and thus has color $k_i$. Thus at most $t_{\sTT}(n)$ color are used in $S$.
\end{proof}

\section{Avoiding types}

We now turn to the study of $\mathrm{CMHTT^n_{k,}}$. In particular, we do not require our subtrees to be strings anymore. As expected we need fewer colors, basically because we can avoid some embedding types, and within the embedding types which cannot be avoided, we can avoid some tuple types. 

Note that one can easily create a perfect tree which avoids almost all embedding types. Suppose we force for instance every node to be of different length. Formally such a tree has only embedding types which consists of comparable nodes, because any embedding type with two incomparable nodes contains two distinct nodes of the same length. However, this does not help: what we want is to avoid the embedding types (resp. the tuple types) which can be generated by the $n$-tuples of the tree, even though elements in the strong closure of the $n$-tuple are not necessarily all in the tree.

We will show given any strong tree $S$ how to compute a perfect subtree $T$ of $S$ which avoids as many tuple types as possible. We in fact give right away the syntactic property a tree must have to avoid as many tuple types as possible.

\begin{definition} \label{def:syntactic_minimize}\index{syntactical minimization}
We say that a perfect tree $T$ \emph{syntactically minimizes the number of tuple types} if:
\begin{enumerate}
\item[(1)] any two nodes of $T^{\wedge}$ is of different length;
\item[(2)] for any nodes $\sigma,\tau \in T$ with $\sigma \prec \tau$ we have $\sigma 0 \preceq \tau$;
\item[(3)] for any nodes $\sigma,\tau \in T^{cl}$ with $\sigma \notin T^{\wedge}$ and $\sigma \prec \tau$ we have $\sigma 0 \preceq \tau$.
\end{enumerate}
\end{definition}

Given (1), note that (3) in the previous definition is equivalent to have for any incomparable nodes $\sigma,\tau \in T^{\wedge}$ with $|\sigma| < |\tau|$ that $\tau(|\sigma|) = 0$.

\begin{lemma} \label{lem:existance_smntt}
Given any strong perfect subtree $S \subseteq 2^{<\omega}$, there is an $S$-computable perfect subtree $T \subseteq S$ which syntactically minimizes the number of tuple types.
\end{lemma}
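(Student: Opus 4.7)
The plan is to construct $T = \{t_\rho : \rho \in 2^{<\omega}\}$ by explicitly placing each prospective element of $T^\wedge$ --- each $t_\rho$ together with each meet $m_\rho := t_{\rho 0} \wedge t_{\rho 1}$ --- as a specific $S$-node. The central observation I will exploit is that in a strong subtree $S \subseteq 2^{<\omega}$ with level function $f$, the bit at position $f(i)$ of any $S$-node equals the $(i{+}1)$-st step of its $S$-path, i.e., of the sequence of left/right choices taken from the $S$-root. Hence controlling $S$-paths directly prescribes bits at $S$-levels; moreover, since $S$ is closed under meets, every member of $T^\wedge$ will automatically land at some $S$-level, and all lengths of $T^\wedge$-nodes will be values of $f$.

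First I would enumerate the desired $T^\wedge$-nodes in breadth-first order and assign them strictly increasing $S$-levels $\ell(\nu)$, an assignment which automatically respects $T^\wedge$-ancestry. For each $\nu$, I would then define its $S$-path $(a_1, \ldots, a_{\ell(\nu)})$ by the rule: set $a_{i+1} = 1$ precisely when the $T^\wedge$-node previously assigned to $S$-level $i$ (if any) is a meet-node ancestor $m_\rho$ of $\nu$ with $\nu$ descending from the right branch $t_{\rho 1}$, and set $a_{i+1} = 0$ in every other case (including when no $T^\wedge$-node sits at $S$-level $i$, when the $T^\wedge$-node there is a $T$-node ancestor of $\nu$, or when it is not an ancestor of $\nu$ at all). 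The node $\nu$ itself is then declared to be the $S$-node at $S$-level $\ell(\nu)$ reached by this path, computable uniformly from $S$.

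Verification of the three clauses of \Cref{def:syntactic_minimize} should then be routine: condition (1) follows from the injectivity of $\ell$ together with the strict monotonicity of $f$; condition (2) follows because at the $S$-level of any $T$-node ancestor the rule assigns step $0$, forcing bit $0$ at that ancestor's length; and condition (3) follows because if $\mu, \nu \in T^\wedge$ are incomparable with $|\mu| < |\nu|$, then $\mu$ is not an ancestor of $\nu$, so the rule set $\nu$'s step $a_{\ell(\mu)+1}$ to $0$, giving $\nu(|\mu|) = 0$. Since the $S$-path lookup is uniformly $S$-computable, $T$ will be $S$-computable. The delicate point, which I view as the main obstacle, will be verifying that the assigned $S$-paths faithfully realize the desired $T^\wedge$-prefix structure --- i.e., that whenever $\mu \prec \nu$ in $T^\wedge$ the $S$-path of $\nu$ extends that of $\mu$, and that sibling descendants of a meet-node actually diverge in $2^{<\omega}$. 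Both reduce to the observation that the $T^\wedge$-ancestors of $\mu$ form an initial segment of those of $\nu$, so the path-defining rule gives matching values up to step $\ell(\mu)$, while at the splitting meet-node the two children have opposite values at the branching step by construction.
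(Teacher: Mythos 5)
Your proposal is correct and takes essentially the same approach as the paper: a greedy recursive placement of the nodes of $T$ and their meets, controlling the bit of each new node at the length of every previously placed node, with the strong-subtree structure of $S$ used to translate between positions in $2^{<\omega}$ and left/right choices in $S$. The only difference is organizational — the paper first reduces to $S = 2^{<\omega}$ and then builds the tree in two stages (a meet-closed $T'$ achieving (1) and (3), then a thinning to get (2)), whereas you achieve all three clauses in a single pass directly inside $S$.
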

\begin{proof}
Without loss of generality we consider that we work with $S = 2^{<\omega}$. The subtree that we build can then be pulled back in $S$ using some isomorphism between $2^{<\omega}$ and $S$.

We start by computing a meet-closed subtree $T' \subseteq 2^{\omega}$ such that (1) and (3) are satisfied. We put in $T_{0}$ the root of $2^{<\omega}$. Then inductively suppose we have a finite perfect tree $T_n$ such that each of its leaf is of level $n$ and such that for $\tau_1,\tau_2 \in T_{n}$ we have $|\tau_1| +1 < |\tau_2|$ or $|\tau_2| +1 < |\tau_1|$. Let $\sigma_1, \dots, \sigma_k$ be the leaves of $T_n$ such that $|\sigma_i|+1 < |\sigma_{i+1}|$. We define $T_{n+1, 0}$ to be $T_{n}$. Inductively for $i \leq k$ suppose we have defined a perfect tree $T_{n+1, i} \supseteq T_{n+1,0}$ such that for $\tau_1,\tau_2 \in T_{n+1, i}$ we have $|\tau_1| +1 < |\tau_2|$ or $|\tau_2| +1 < |\tau_1|$ and such that $|\sigma_i|$ is the smallest among the leaves of $T_{n+1, i}$. We let $\tau_0$ be the lexicographically smallest such that:

\begin{itemize}
\item $|\sigma_{i+1}0\tau_0| - 1$ is bigger than every string in $T_{n+1, i}$;
\item for every $\sigma \in T_{n+1, i}$ different from $\sigma_{i+1}$ we have $\sigma_{i+1}0\tau_0(|\sigma|) = 0$. Note that by the induction hypothesis we can find such a string.
\end{itemize}
Then let $\tau_1$ be the lexicographically smallest such that:
\begin{itemize}
\item $|\sigma_{i+1}1\tau_1| - 1$ is bigger than every string in $T_{n+1, i} \cup \{\tau_0\}$;
\item for every string $\sigma \in T_{n+1, i} \cup \{\tau_0\}$ different from $\sigma_{i+1}$ we have $\sigma_{i+1}1\tau_1(|\sigma|) = 0$. Note that by the induction hypothesis we can find such a string.
\end{itemize}

Let us then define $T_{n+1, i+1} = T_{n+1, i} \cup \{\tau_0, \tau_1\}$. Note that $\sigma_{i+1}$ becomes the smallest leaf of $T_{n+1, i+1}$. Once we have defined $T_{n+1, i}$ for every $i \leq k$ we define $T_{n+1} = T_{n+1, k}$.

We finally define $T' = \bigcup_n T_n$. By construction $T'$ has the desired properties. Note that for any perfect subtree $T \subseteq T'$ then also every node is of different length, so (1) is preserved. Furthermore as $T'$ is meet closed then also for any perfect tree $T \subseteq T'$ and any two incomparable nodes $\sigma,\tau \in T^{\wedge}$ with $|\sigma| < |\tau|$ we have $\tau(|\sigma|) = 0$, so (3) is preserved.

We finally find a perfect subtree $T \subseteq T'$ such that for any $\sigma,\tau \in T$ with $\sigma \prec \tau$ we have $\sigma0 \preceq \tau$. Given an isomorphism $f:2^{<\omega}  \rightarrow T'$ we define $T$ to be the range of $f$ on strings of the form $\sigma00$ or $\sigma01$ for $\sigma \in 2^{<\omega}$. One can easily verify that $T$ is a perfect subtree of $T'$ on which (1) (2) and (3) are verified.
\end{proof}

See \Cref{fig:lastminute1} for an illustration.
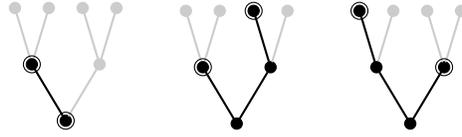
\begin{figure}
	\begin{center}
	  \begin{tikzpicture}[scale=1.5,font=\footnotesize]
			\tikzset{
				empty node/.style={circle,inner sep=0,fill=none},
				solid node/.style={circle,draw,inner sep=1.5,fill=black},
				hollow node/.style={circle,draw,inner sep=1.5,fill=white},
				gray node/.style={circle,draw={rgb:black,1;white,4},inner sep=1.5,fill={rgb:black,1;white,4}}
			}
			\tikzset{snake it/.style={decorate, decoration=snake, line cap=round}}
			\tikzset{gray line/.style={line cap=round,thick,color={rgb:black,1;white,4}}}
			\tikzset{thick line/.style={line cap=round,rounded corners=0.1mm,thick}}
			\node (a)[solid node] at (0,0) {};
			\node (a1)[gray node] at (0.3,0.5) {};
			\node (a0)[solid node] at (-0.3,0.5) {};
			\node (a00)[gray node] at (-0.45,1) {};
			\node (a01)[gray node] at (-0.15,1) {};
			\node (a10)[gray node] at (0.15,1) {};
			\node (a11)[gray node] at (0.45,1) {};
			\draw[-,thick line] (0,0) to (a0);
			\draw[-,gray line] (a) to (a1);
			\begin{pgfonlayer}{background}
			\draw[-,gray line] (a0) to (a00);
			\draw[-,gray line] (a0) to (a01);
			\end{pgfonlayer}
			\draw[-,gray line] (a1) to (a10);
			\draw[-,gray line] (a1) to (a11);
			\draw[fill=none] (a) circle [radius=0.75mm];
			\draw[fill=none] (a0) circle [radius=0.75mm];
		\end{tikzpicture}
		\hspace{5mm}
		\begin{tikzpicture}[scale=1.5,font=\footnotesize]
			\tikzset{
				empty node/.style={circle,inner sep=0,fill=none},
				solid node/.style={circle,draw,inner sep=1.5,fill=black},
				hollow node/.style={circle,draw,inner sep=1.5,fill=white},
				gray node/.style={circle,draw={rgb:black,1;white,4},inner sep=1.5,fill={rgb:black,1;white,4}}
			}
			\tikzset{snake it/.style={decorate, decoration=snake, line cap=round}}
			\tikzset{gray line/.style={line cap=round,thick,color={rgb:black,1;white,4}}}
			\tikzset{thick line/.style={line cap=round,rounded corners=0.1mm,thick}}
			\node (a)[solid node] at (0,0) {};
			\node (a1)[solid node] at (0.3,0.5) {};
			\node (a0)[solid node] at (-0.3,0.5) {};
			\node (a00)[gray node] at (-0.45,1) {};
			\node (a01)[gray node] at (-0.15,1) {};
			\node (a10)[solid node] at (0.15,1) {};
			\node (a11)[gray node] at (0.45,1) {};
			\draw[-,thick line] (0,0) to (a0);
			\draw[-,thick line] (a) to (a1);
			\begin{pgfonlayer}{background}
			\draw[-,gray line] (a0) to (a00);
			\draw[-,gray line] (a0) to (a01);
			\end{pgfonlayer}
			\draw[-,thick line] (a1) to (a10);
			\draw[-,gray line] (a1) to (a11);
			\draw[fill=none] (a0) circle [radius=0.75mm];
			\draw[fill=none] (a10) circle [radius=0.75mm];
		\end{tikzpicture}
		\hspace{5mm}
		\begin{tikzpicture}[scale=1.5,font=\footnotesize]
			\tikzset{
				empty node/.style={circle,inner sep=0,fill=none},
				solid node/.style={circle,draw,inner sep=1.5,fill=black},
				hollow node/.style={circle,draw,inner sep=1.5,fill=white},
				gray node/.style={circle,draw={rgb:black,1;white,4},inner sep=1.5,fill={rgb:black,1;white,4}}
			}
			\tikzset{snake it/.style={decorate, decoration=snake, line cap=round}}
			\tikzset{gray line/.style={line cap=round,thick,color={rgb:black,1;white,4}}}
			\tikzset{thick line/.style={line cap=round,rounded corners=0.1mm,thick}}
			\node (a)[solid node] at (0,0) {};
			\node (a1)[solid node] at (0.3,0.5) {};
			\node (a0)[solid node] at (-0.3,0.5) {};
			\node (a00)[solid node] at (-0.45,1) {};
			\node (a01)[gray node] at (-0.15,1) {};
			\node (a10)[gray node] at (0.15,1) {};
			\node (a11)[gray node] at (0.45,1) {};
			\draw[-,thick line] (0,0) to (a0);
			\draw[-,thick line] (a) to (a1);
			\begin{pgfonlayer}{background}
			\draw[-,thick line] (a0) to (a00);
			\draw[-,gray line] (a0) to (a01);
			\end{pgfonlayer}
			\draw[-,gray line] (a1) to (a10);
			\draw[-,gray line] (a1) to (a11);
			\draw[fill=none] (a00) circle [radius=0.75mm];
			\draw[fill=none] (a1) circle [radius=0.75mm];
		\end{tikzpicture}
	\end{center}\caption{An example of three tuple types on $[T]^{2}$, for a tree $T$ that syntactically minimizes the number of types.}\label{fig:lastminute1}
\end{figure}

We shall see that the number above is optimal. Of course, it is not the case that one of the above types can never be omitted in a perfect tree and it is in fact one difficulty in showing that a tree $T$ syntactically minimizing the number of tuple types really does so: every tuple type can be omitted in some perfect tree. Of course omitting a type may force some other type to become unavoidable. In order to overcome this difficulty, we need to introduce a third equivalence relation, within which we erase the part of a tuple type which can be omitted.

\begin{definition}\index{type!weak tuple}\index{weak tuple type}
The \emph{weak tuple types} are the equivalence classes of the following relation: $\overline{\sigma}, \overline{\tau}$ have the same weak tuple type if there is a bijection $f$ from $\overline{\sigma}^{cl}$ to $\overline{\tau}^{cl}$ such that:
\begin{itemize}
\item for $\sigma_1,\sigma_2 \in \overline{\sigma}^{cl}$ we have $\sigma_1 \preceq \sigma_2$ iff $f(\sigma_1) \preceq f(\sigma_2)$;
\item for $\sigma_1,\sigma_2,\sigma_3 \in \overline{\sigma}^{cl}$ we have $\sigma_1 0 \preceq \sigma_2$ and $\sigma_1 1 \preceq \sigma_3$ iff $f(\sigma_1) 0 \preceq f(\sigma_2)$ and $f(\sigma_1) 1 \preceq f(\sigma_3)$;
\item elements of $\overline{\sigma}$ are sent to $\overline{\tau}$.
\end{itemize}
\end{definition}

In other word, weak tuple types are tuple types, modulo the fact that whenever a node is not branching, it does not matter for its extension to go left or right. It is clear from the definition that the tuple types are a refinement of the weak tuple types. But the weak tuple types are not a refinement of the embedding types, nor are the embedding types a refinement of the weak tuple types.

\begin{lemma} \label{lem:coincide_smntt}
Let $T$ be a perfect tree which syntactically minimizes the number of tuple types. Then its tuple types and weak tuple types coincide.
\end{lemma}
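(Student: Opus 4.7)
The plan is to show that for tuples from a tree $T$ satisfying the three minimization properties, the seemingly weaker notion of weak tuple type already encodes the direction ($0$ or $1$) of extensions at non-branching nodes of the full closure, so that the forgotten information is in fact recoverable. Since tuple types manifestly refine weak tuple types, I only need the converse: given $\bar\sigma, \bar\tau \in [T]^n$ of the same weak tuple type, witnessed by a bijection $f: \bar\sigma^{\mathrm{cl}} \to \bar\tau^{\mathrm{cl}}$, I will upgrade $f$ to a strong isomorphism. The weak type already preserves the prefix relation, the branching pattern, and the membership in $\bar\sigma$ versus $\bar\tau$; what remains to verify is that whenever $\nu_1 i \preceq \nu_2$ for $\nu_1, \nu_2 \in \bar\sigma^{\mathrm{cl}}$, the same index $i$ satisfies $f(\nu_1) i \preceq f(\nu_2)$.

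The first step is a structural classification: I claim that every non-branching node $\nu$ of $\bar\sigma^{\mathrm{cl}}$ either lies in $\bar\sigma$ or else lies in $\bar\sigma^{\mathrm{cl}} \setminus T^\wedge$. Indeed, every nontrivial meet $\sigma_i \meet \sigma_j \in \bar\sigma^\wedge \setminus \bar\sigma$ is automatically branching because $\sigma_i$ and $\sigma_j$ give two incomparable extensions in the closure; and any $\nu \in \bar\sigma^{\mathrm{cl}} \setminus \bar\sigma^\wedge$ is of the form $\rho \uh |\mu|$ with $\rho, \mu \in \bar\sigma^\wedge$ and $\rho$ incomparable with $\mu$, so $\nu$ has length $|\mu|$, and property (1) of Definition \ref{def:syntactic_minimize} --- uniqueness of length in $T^\wedge$ --- then forces $\nu \notin T^\wedge$ (otherwise $\nu$ would coincide with $\mu$).

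The second step is to show that in both surviving cases, every extension of $\nu$ in $\bar\sigma^{\mathrm{cl}}$ passes through $\nu 0$. When $\nu = \sigma_i \in \bar\sigma$, I use property (2) repeatedly: any $\sigma_j \in \bar\sigma$ above $\sigma_i$ extends $\sigma_i 0$; any meet $\sigma_k \meet \sigma_l$ above $\sigma_i$ inherits this because both $\sigma_k, \sigma_l$ extend $\sigma_i 0$; and any level-closure node above $\sigma_i$ sits below some $\rho \in \bar\sigma^\wedge$ strictly above $\sigma_i$, to which the previous observation applies. When $\nu \in \bar\sigma^{\mathrm{cl}} \setminus T^\wedge$, property (3) applied in $T^{\mathrm{cl}}$ directly yields $\nu 0 \preceq \mu$ for every $\mu \in \bar\sigma^{\mathrm{cl}}$ with $\nu \prec \mu$.

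The final step combines these inputs. If $\nu_1 \in \bar\sigma^{\mathrm{cl}}$ is branching, the defining clause of weak tuple type transports the direction across $f$ by using a companion element on the opposite side of $\nu_1$. If $\nu_1$ is non-branching, Step 2 applied to $\bar\sigma$ forces the direction to be $0$; since branching is preserved by $f$, the image $f(\nu_1)$ is also non-branching in $\bar\tau^{\mathrm{cl}}$, and Step 2 applied to $\bar\tau$ (which also satisfies the minimization properties) forces $f(\nu_1) 0 \preceq f(\nu_2)$. The main obstacle I anticipate is the case analysis of Step 2 for $\nu \in \bar\sigma$: one has to rule out that level-closure or meet-closure operations introduce a ``phantom'' extension through $\nu 1$, which is where properties (1), (2) and (3) must be combined carefully and where the rigidity of $T$ is genuinely exploited.
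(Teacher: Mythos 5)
Your proof is correct and follows essentially the same route as the paper's: reduce to showing the weak-type bijection preserves the extension direction, dispose of branching nodes via the defining clause of weak tuple types, and for non-branching nodes use the classification into elements of $\overline{\sigma}$ (handled by property (2)) versus level-closure nodes outside $T^\wedge$ (handled by properties (1) and (3)). Your symmetric packaging of Steps 1–2 lets you invoke preservation of branching directly on the image side, where the paper instead re-derives $f(\sigma_1)\notin T^\wedge$ by a small contradiction argument, but this is only a cosmetic difference.
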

\begin{proof}
We already have that the tuple types are a refinement of the weak tuple types. All we have to do is to show that restricted to $T$, the weak tuple types are a refinement of the tuple types.

For any $n$ consider any two $n$-tuples $\overline{\sigma},\overline{\tau}$ of $T$. Suppose they are in the same weak tuple type via some bijection $f:\overline{\sigma}^{cl} \rightarrow \overline{\tau}^{cl}$. Let us show that $f$ in fact witnesses that $\overline{\sigma}$ and $\overline{\tau}$ are in the same tuple types. For that it is enough to show that $\sigma_1 i \preceq \sigma_2$ implies $f(\sigma_1) i \preceq f(\sigma_2)$ for $\sigma_1,\sigma_2 \in \overline{\sigma}^{cl}$. Let $\sigma_1,\sigma_2 \in \overline{\sigma}^{cl}$ with $\sigma_1 i \preceq \sigma_2$. 

Suppose first that we have a string $\sigma_3 \in \overline{\sigma}^{cl}$ such that $\sigma_1 (1-i) \preceq \sigma_3$. Then by definition of a weak tuple type we have $f(\sigma_1) i \preceq f(\sigma_2)$. Otherwise there are two possibilities: either $\sigma_1 \in \overline{\sigma}$ or $\sigma_1 \in \overline{\sigma}^{cl}$ but $\sigma_1 \notin \overline{\sigma}^{\wedge}$. 

In the first case, note that we must have a string $\sigma_3 \in \overline{\sigma}$ with $\sigma_1 i \preceq \sigma_2 \preceq \sigma_3$. Note that as $\sigma_1,\sigma_3 \in \overline{\sigma}$ then also $\sigma_1,\sigma_3 \in T$. Then by property (2) in \Cref{def:syntactic_minimize} (the definition of syntactically minimizing the number of tuple type), we have $\sigma_1 0 \preceq \sigma_2 \preceq \sigma_3$. Note also that $f(\sigma_1) \preceq f(\sigma_1) \preceq f(\sigma_3)$ and that by hypothesis on $f$ we have $f(\sigma_1),f(\sigma_3) \in \overline{\tau} \subseteq T$. Therefore also we have $f(\sigma_1)0 \prec f(\sigma_3)$ by (2) in \Cref{def:syntactic_minimize} and thus we have $f(\sigma_1)0 \prec f(\sigma_2)$.

In the second case we have $\sigma_1,\sigma_2 \in T^{cl}$ and $\sigma_1 \notin T^{\wedge}$. Thus by property (3) in \Cref{def:syntactic_minimize} we have $\sigma_1 0 \preceq \sigma_2$. We shall argue that also $f(\sigma_1) \notin T^{\wedge}$. Suppose for contradiction that $f(\sigma_1) \in T^{\wedge}$. Then also $f(\sigma_1) \in \overline{\tau}^{\wedge}$. Recall that by hypothesis $\sigma_1 \notin \overline{\sigma}$ and thus $f(\sigma_1) \notin \overline{\tau}$ (as $f$ is a bijection between the two). It follows that $f(\sigma_1)$ must be the meet of two nodes in $\overline{\tau}$ and thus is branching in $\overline{\tau}^{cl}$. On the other hand as $\sigma_1 \notin T^{\wedge}$ it is not branching in $\overline{\sigma}^{cl}$ which contradicts the properties of $f$. Thus $f(\sigma_1) \notin T^{\wedge}$ and it follows by property (3) in the definition of syntactically minimizes the number of tuple type that $f(\sigma_1) 0 \preceq f(\sigma_2)$.

We then have that $\overline{\sigma}$ and $\overline{\tau}$ are in the same tuple types.
\end{proof}

We shall now identify the weak tuple types no perfect tree can omit. We shall then see that weak tuple types of a tree which syntactically minimizes the number of tuple types are all of this form. It will then follow that such a tree really minimizes the number of tuple types.

\begin{definition}\index{tuple type!length-injective}
A tuple type (resp. a weak tuple type) $\overline{\sigma}$ is \emph{length-injective} if $\sigma_1,\sigma_2 \in \overline{\sigma}^{\wedge}$ implies $|\sigma_1| \neq |\sigma_2|$.
\end{definition}

\begin{definition}\index{tuple type!meet-avoiding}
A tuple type (resp. a weak tuple type) $\overline{\sigma}$ is \emph{meet-avoiding} if for any incomparable $\sigma_1,\sigma_2 \in \overline{\sigma}$ we have $\sigma_1 \wedge \sigma_2 \notin \overline{\sigma}$.
\end{definition}

See \Cref{fig:lastminute2} for an illustration. 
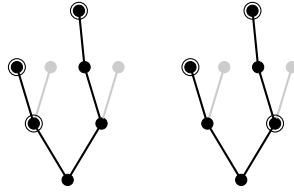
\begin{figure}
	\begin{center}
	  \begin{tikzpicture}[scale=1.5,font=\footnotesize]
			\tikzset{
				empty node/.style={circle,inner sep=0,fill=none},
				solid node/.style={circle,draw,inner sep=1.5,fill=black},
				hollow node/.style={circle,draw,inner sep=1.5,fill=white},
				gray node/.style={circle,draw={rgb:black,1;white,4},inner sep=1.5,fill={rgb:black,1;white,4}}
			}
			\tikzset{snake it/.style={decorate, decoration=snake, line cap=round}}
			\tikzset{gray line/.style={line cap=round,thick,color={rgb:black,1;white,4}}}
			\tikzset{thick line/.style={line cap=round,rounded corners=0.1mm,thick}}
			\node (a)[solid node] at (0,0) {};
			\node (a1)[solid node] at (0.3,0.5) {};
			\node (a0)[solid node] at (-0.3,0.5) {};
			\node (a00)[solid node] at (-0.45,1) {};
			\node (a01)[gray node] at (-0.15,1) {};
			\node (a10)[solid node] at (0.15,1) {};
			\node (a11)[gray node] at (0.45,1) {};
			\node (a100)[solid node] at (0.1,1.5) {};
			\draw[-,thick line] (a) to (a0);
			\draw[-,thick line] (a) to (a1);
			\begin{pgfonlayer}{background}
			\draw[-,thick line] (a0) to (a00);
			\draw[-,gray line] (a0) to (a01);
			\end{pgfonlayer}
			\draw[-,thick line] (a1) to (a10);
			\draw[-,thick line] (a10) to (a100);
			\draw[-,gray line] (a1) to (a11);
			\draw[fill=none] (a0) circle [radius=0.75mm];
			\draw[fill=none] (a00) circle [radius=0.75mm];
			\draw[fill=none] (a100) circle [radius=0.75mm];
		\end{tikzpicture}
		\hspace{5mm}
		\begin{tikzpicture}[scale=1.5,font=\footnotesize]
			\tikzset{
				empty node/.style={circle,inner sep=0,fill=none},
				solid node/.style={circle,draw,inner sep=1.5,fill=black},
				hollow node/.style={circle,draw,inner sep=1.5,fill=white},
				gray node/.style={circle,draw={rgb:black,1;white,4},inner sep=1.5,fill={rgb:black,1;white,4}}
			}
			\tikzset{snake it/.style={decorate, decoration=snake, line cap=round}}
			\tikzset{gray line/.style={line cap=round,thick,color={rgb:black,1;white,4}}}
			\tikzset{thick line/.style={line cap=round,rounded corners=0.1mm,thick}}
			\node (a)[solid node] at (0,0) {};
			\node (a1)[solid node] at (0.3,0.5) {};
			\node (a0)[solid node] at (-0.3,0.5) {};
			\node (a00)[solid node] at (-0.45,1) {};
			\node (a01)[gray node] at (-0.15,1) {};
			\node (a10)[solid node] at (0.15,1) {};
			\node (a11)[gray node] at (0.45,1) {};
			\node (a100)[solid node] at (0.1,1.5) {};
			\draw[-,thick line] (a) to (a0);
			\draw[-,thick line] (a) to (a1);
			\begin{pgfonlayer}{background}
			\draw[-,thick line] (a0) to (a00);
			\draw[-,gray line] (a0) to (a01);
			\end{pgfonlayer}
			\draw[-,thick line] (a1) to (a10);
			\draw[-,thick line] (a10) to (a100);
			\draw[-,gray line] (a1) to (a11);
			\draw[fill=none] (a1) circle [radius=0.75mm];
			\draw[fill=none] (a00) circle [radius=0.75mm];
			\draw[fill=none] (a100) circle [radius=0.75mm];
		\end{tikzpicture}
	\end{center}\caption{An example of two length-injective and meet-avoiding tuple types, generating the same embedding type.}\label{fig:lastminute2}
\end{figure}

Up to symmetry and restricted to a tree which syntactically minimizes the number of types, these are the only tuple types generated by three strings and which are in the same embedding type. We will now see that the length-injective and meet-avoiding weak tuple types are exactly those which cannot be avoided by a perfect tree.

\begin{lemma} \label{lem:doesnotavoid_smntt}
Let $S$ be any perfect tree. Then $S$ has a member inside every length-injective and meet-avoiding weak tuple type.
\end{lemma}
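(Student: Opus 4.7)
My plan is to construct, for each length-injective meet-avoiding representative $\overline{\sigma}$, a tuple $\overline{\tau} \subseteq S$ with the same weak tuple type, by first building an embedding $\phi$ of the meet-closure $\overline{\sigma}^{\wedge}$ into $S$ and then setting $\overline{\tau} = \phi(\overline{\sigma})$.

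First I would list the elements of $\overline{\sigma}^{\wedge}$ in strictly increasing order of length as $\alpha_1, \ldots, \alpha_k$, which is well-defined by length-injectivity. I would then construct $\phi$ recursively, maintaining four invariants throughout: (i) $\phi$ is order-preserving, $\alpha \preceq \beta \iff \phi(\alpha) \preceq \phi(\beta)$; (ii) $\phi$ preserves meets, $\phi(\alpha) \meet \phi(\beta) = \phi(\alpha \meet \beta)$; (iii) at every branching node $\alpha \in \overline{\sigma}^{\wedge}$, necessarily a strict meet and hence not in $\overline{\sigma}$ by meet-avoidance, the child via bit $k$ maps to an extension of $\phi(\alpha) k$; and (iv) $\phi$ is strictly length-monotonic. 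The recursive step assigns $\phi(\alpha_{i+1})$ to be an extension of $\phi(\alpha_j)$ in $S$, where $\alpha_j$ is the immediate predecessor of $\alpha_{i+1}$ in $\overline{\sigma}^{\wedge}$, chosen strictly longer than all previously defined images; when $\alpha_j$ is branching, we select the bit direction opposite to the sibling already processed, using perfectness of $S$ to guarantee that $\phi(\alpha_j) \in S$ has extensions in $S$ via both successor bits.

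Setting $\overline{\tau} = \phi(\overline{\sigma})$, I would extend $\phi$ to a bijection $f \colon \overline{\sigma}^{\mathrm{cl}} \to \overline{\tau}^{\mathrm{cl}}$ by declaring, for each $\gamma \in \overline{\sigma}^{\mathrm{cl}} \setminus \overline{\sigma}^{\wedge}$, $f(\gamma) = \phi(\beta)\uh|\phi(\alpha)|$, where $\alpha$ is the unique element of $\overline{\sigma}^{\wedge}$ of length $|\gamma|$ (unique by length-injectivity) and $\beta$ is any element of $\overline{\sigma}^{\wedge}$ with $\gamma \preceq \beta$. Well-definedness follows from meet-preservation combined with strict length-monotonicity: if $\gamma \preceq \beta_1$ and $\gamma \preceq \beta_2$ then $\gamma \preceq \beta_1 \meet \beta_2$, and since $\gamma \neq \alpha$ we get $|\beta_1 \meet \beta_2| > |\alpha|$, which forces $|\phi(\beta_1 \meet \beta_2)| > |\phi(\alpha)|$, so $\phi(\beta_1)$ and $\phi(\beta_2)$ agree on their first $|\phi(\alpha)|$ bits. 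After this one verifies that $f$ is a bijection onto $\overline{\tau}^{\mathrm{cl}}$ preserving $\preceq$, the branching direction at every branching node, and the membership in $\overline{\sigma}$.

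The main obstacle I anticipate is this well-definedness step and the associated bookkeeping: we need the embedding to preserve the relative order of lengths between incomparable elements of $\overline{\sigma}^{\wedge}$, which is precisely why length-injectivity is crucial, since it allows the recursive construction to place images at strictly increasing lengths without conflict and ensures that each level of $\overline{\sigma}^{\mathrm{cl}}$ outside $\overline{\sigma}^{\wedge}$ corresponds unambiguously to a unique level of $\overline{\tau}^{\mathrm{cl}}$. Meet-avoidance meanwhile is what makes invariant (iii) consistently imposable, since the branching nodes of $\overline{\sigma}^{\wedge}$ lie outside $\overline{\sigma}$ and are therefore pure internal branching points whose direction can be matched freely, without ever conflicting with the constraint that an element of $\overline{\sigma}$ be mapped into $S$.
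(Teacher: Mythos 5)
Your construction follows the same broad outline as the paper's proof—list $\overline{\sigma}^{\wedge}$ in increasing length order using length-injectivity, build an embedding recursively that preserves order, meets, branching directions, and length order, then extend to $\overline{\sigma}^{\mathrm{cl}}$—but there is a genuine gap in the recursive step at branching nodes. You propose to embed all of $\overline{\sigma}^{\wedge}$ directly into $S$ and claim that ``perfectness of $S$ [guarantees] that $\phi(\alpha_j) \in S$ has extensions in $S$ via both successor bits.'' This is false for perfect trees in the sense of this chapter. A perfect tree $S$ is only required to have $(S,\preceq)$ isomorphic to $(2^{<\omega},\preceq)$; it need not be downward closed or meet-closed, and a node $\sigma \in S$ need not split at its own length. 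For instance, if $S = \{\epsilon, 00, 01, 0000, 0001, 0100, 0101, \dots\}$ then $\epsilon \in S$ has no extension in $S$ beginning with the bit $1$: its two minimal incomparable extensions $00$ and $01$ in $S$ share the strictly longer common prefix $0$. The correct fact is that a \emph{branching node of $S^{\wedge}$} (a meet of two incomparable members of $S$) has extensions in $S$ via both bits—but such a node is generally not itself a member of $S$.

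This is precisely where meet-avoidance must enter the construction, not merely as bookkeeping but as the reason the embedding can be built at all. Since $\overline{\sigma}$ is meet-avoiding, the branching nodes of $\overline{\sigma}^{\wedge}$ are exactly the elements of $\overline{\sigma}^{\wedge} \setminus \overline{\sigma}$, and so they do not need to land in $S$. The paper's proof therefore sends branching nodes of $\overline{\sigma}^{\wedge}$ to branching nodes of $S^{\wedge}$, and only the elements of $\overline{\sigma}$ itself into $S$; the meet-preservation invariant then holds because images of children split at the image of their parent. Your version, by insisting $\phi(\alpha_j) \in S$ at branching $\alpha_j$, cannot maintain the invariant $\phi(\alpha) \meet \phi(\beta) = \phi(\alpha \meet \beta)$: if you nevertheless choose images of the two children as extensions of the two minimal incomparable members of $S$ above $\phi(\alpha_j)$, their meet will be some $\nu \succ \phi(\alpha_j)$, breaking meet-preservation. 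So the target of the embedding needs to be $S^{\wedge}$, with only $\phi(\overline{\sigma}) \subseteq S$ required; once this change is made, your argument for extending $\phi$ to $\overline{\sigma}^{\mathrm{cl}}$ via length order and the well-definedness check go through essentially as in the paper.
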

\begin{proof}
Let $\overline{\sigma}$ be a length-injective and meet-avoiding tuple type. Let us define first an injection $f$ from $\overline{\sigma}^{\wedge}$ into $S^{\wedge}$ with the following properties:
\begin{enumerate}
\item[(1)] $f(\overline{\sigma}) \subseteq S$;
\item[(2)] for $\sigma_1,\sigma_2 \in \overline{\sigma}^{\wedge}$ we have $\sigma_1 \preceq \sigma_2$ iff $f(\sigma_1) \preceq f(\sigma_2)$;
\item[(3)] if $\sigma_1$ is branching in $\overline{\sigma}^{\wedge}$ then $f(\sigma_1)$ is branching in $S^\wedge$. Furthermore for $\sigma_1,\sigma_2, \sigma_3 \in \overline{\sigma}^{\wedge}$ we have $\sigma_1 0 \preceq \sigma_2$ and $\sigma_1 1 \preceq \sigma_3$ iff $f(\sigma_1) 0 \preceq f(\sigma_2)$ and $f(\sigma_1) 1 \preceq f(\sigma_3)$;
\item[(4)] for $\sigma_1,\sigma_2 \in \overline{\sigma}^{\wedge}$ we have $|\sigma_1| < |\sigma_2|$ iff $|f(\sigma_1)| < |f(\sigma_2)|$.
\end{enumerate}

Let $\sigma_1, \dots, \sigma_n$ be a list of the elements of $\overline{\sigma}^{\wedge}$ with $|\sigma_1| < |\sigma_2| < \dots < |\sigma_n|$. Note that we must have $\sigma_i \preceq \sigma_j$ implies $i \leq j$. Note also that as $\overline{\sigma}$ is meet-avoiding we must have that $\rho \in \overline{\sigma}^\wedge$ is branching in $\overline{\sigma}^\wedge$ iff $\rho \notin \overline{\sigma}$.

If $\sigma_1 \in \overline{\sigma}$ then find the lexicographically first $\tau \in S$ and let $f(\sigma_1) = \tau$. Otherwise find the lexicographically first $\tau \in S^\wedge$ which is branching in $S^\wedge$ and let $f(\sigma_1) = \tau$. Note that so far (1)(2)(3) and (4) are satisfied.

Suppose $f(\sigma_1), \dots, f(\sigma_k)$ have been defined with (1)(2)(3) and (4) satisfied so far. Consider $\sigma_{k+1}$. Let $j \leq k$ be the largest such that $\sigma_{j} \prec \sigma_{k+1}$. Suppose first $\sigma_j$ is branching in $\overline{\sigma}^{\wedge}$. Then by induction hypothesis (3) we must have that $f(\sigma_j)$ is branching in $S^{\wedge}$. In this case let $i \in \{0,1\}$ be such that $\sigma_{j} i \preceq \sigma_{k+1}$. If $\sigma_{k+1} \in \overline{\sigma}$ then find the lexicographically first $\tau \in S$ such that $|\tau| > |f(\sigma_k)|$ and such that $f(\sigma_j) i \preceq \tau$. Then let $f(\sigma_{k+1}) = \tau$. Otherwise find the lexicographically first branching $\tau \in S^{\wedge}$, such that $|\tau| > |f(\sigma_k)|$ and such that $f(\sigma_j) i \preceq \tau$. Then let $f(\sigma_{k+1}) = \tau$. Note that in any case (1) (2) (3) and (4) are satisfied so far.

Suppose now $\sigma_j$ is not branching in $\overline{\sigma}^{\wedge}$. If $\sigma_{k+1} \in \overline{\sigma}$ then find the lexicographically first $\tau \in S$ such that $|\tau| > |f(\sigma_k)|$ and such that $f(\sigma_j) \preceq \tau$. Then let $f(\sigma_{k+1}) = \tau$. Otherwise find the lexicographically first branching $\tau \in S^{\wedge}$, such that $|\tau| > |f(\sigma_k)|$ and such that $f(\sigma_j) \preceq \tau$. Then let $f(\sigma_{k+1}) = \tau$. Note that in any case (1) (2) (3) and (4) are satisfied so far. This ends the first part of the construction.

Note also that $f$ is a bijection between $\overline{\sigma}$ and $f(\overline{\sigma}) \subseteq S$. In order to show that $\overline{\sigma}$ and $f(\overline{\sigma})$ are in the same weak tuple type, we shall now extend $f$ to $\overline{\sigma}^{cl}$ such that $f$ becomes a bijection from $\overline{\sigma}^{cl}$ to $f(\overline{\sigma})^{cl}$.

Let us first argue that so far $f$ is a bijection from $\overline{\sigma}^{\wedge}$ to $f(\overline{\sigma})^{\wedge}$. We have $f(\overline{\sigma}) \subseteq f(\overline{\sigma}^{\wedge})$. By design we also have that $f(\overline{\sigma}^{\wedge})$ is meet closed and thus we have $f(\overline{\sigma})^{\wedge} = f(\overline{\sigma}^{\wedge})$. As $f$ is injective it is a bijection from $\overline{\sigma}^{\wedge}$ to $f(\overline{\sigma}^\wedge)$ and then it is a bijection from $\overline{\sigma}^{\wedge}$ to $f(\overline{\sigma})^\wedge$.

Let us now extend $f$ to $\overline{\sigma}^{cl}$: for incomparable $\sigma_1,\sigma_2 \in \overline{\sigma}^{\wedge}$ with $|\sigma_1| < |\sigma_2|$ we assign $f(\sigma_2 \upharpoonright {|\sigma_1|})$ to $f(\sigma_2)  \upharpoonright {|f(\sigma_1)|}$. Let us now show that $f(\overline{\sigma}^{cl}) = f(\overline{\sigma})^{cl}$. It is clear by definition of $f$ that $f(\overline{\sigma}^{cl}) \subseteq f(\overline{\sigma})^{cl}$. Let us now show $f(\overline{\sigma})^{cl} \subseteq f(\overline{\sigma}^{cl})$.

Suppose $\tau \in f(\overline{\sigma})^{cl}$. Then as $f(\overline{\sigma})^{cl} = (f(\overline{\sigma})^{\wedge})^{cl} = f(\overline{\sigma}^{\wedge})^{cl}$ we have $\tau \in f(\overline{\sigma}^{\wedge})^{cl}$. Then there exists $\sigma_1,\sigma_2 \in \overline{\sigma}^{\wedge}$ with $|f(\sigma_1)| < |f(\sigma_2)|$ and with $\tau = f(\sigma_2) \upharpoonright {|f(\sigma_1)|}$. By (4) we have $|\sigma_1| < |\sigma_2|$ and then $\sigma_2 \upharpoonright {|\sigma_1|} \in \overline{\sigma}^{cl}$. Thus $\tau \in f(\overline{\sigma}^{cl})$ and then $f(\overline{\sigma})^{cl} \subseteq f(\overline{\sigma}^{cl})$ and then $f(\overline{\sigma})^{cl} = f(\overline{\sigma}^{cl})$.

Let us now show that $f$ is injective on $\overline{\sigma}^{cl}$. Let $\sigma_1,\sigma_2, \rho_1, \rho_2 \in \overline{\sigma}^{\wedge}$ with $\sigma_1,\sigma_2$ and $\rho_1,\rho_2$ incomparable, with $|\sigma_1| < |\sigma_2|$ and with $|\rho_1| < |\rho_2|$. Suppose $\sigma_2 \upharpoonright {|\sigma_1|} \neq \rho_2 \upharpoonright {|\rho_1|}$. If $\sigma_1 \neq \rho_1$ then by (4) we must have $|f(\sigma_1)| \neq |f(\rho_1)|$ and thus $f(\sigma_2)  \upharpoonright {|f(\sigma_1)|} \neq f(\rho_2)  \upharpoonright {|f(\rho_1)|}$. Otherwise it must be that $\sigma_2 \upharpoonright {s} \neq \rho_2 \upharpoonright {s}$ for $s = |\sigma_1| = |\rho_1|$. By definition of $f$ it must be that $f((\sigma_2 \upharpoonright {s}) \wedge (\rho_2 \upharpoonright {s})) = f(\sigma_2 \upharpoonright {s}) \wedge f(\rho_2 \upharpoonright {s})$ and thus by (3) that $f(\sigma_2 \upharpoonright {s}) \neq f(\rho_2 \upharpoonright {s})$. 

It follows that $f$ is a bijection from $\overline{\sigma}^{cl}$ to $f(\overline{\sigma}^{cl})$ and thus that it is a bijection from $\overline{\sigma}^{cl}$ to $f(\overline{\sigma})^{cl}$.

It is clear that property (1) and (2) is still satisfied by $f$ on $\overline{\sigma}^{cl}$. Also as every branching node of $\overline{\sigma}^{cl}$ is already branching in $\overline{\sigma}^{\wedge}$ property (3) is till satisfied on $\overline{\sigma}^{cl}$. It follows that $\overline{\sigma}$ and $f(\overline{\sigma})$ are in the same weak tuple type.
\end{proof}

\begin{lemma} \label{lem:realizeall_smntt}
Let $T$ be a tree which syntactically minimizes the number of tuple types. Then every weak tuple type of $T$ is length-injective and meet-avoiding.
\end{lemma}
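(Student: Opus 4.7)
The plan is to take a tuple $\overline{\sigma} \in [T]^n$ realizing a given weak tuple type and show that $\overline{\sigma}$ itself serves as a witnessing representative that is both length-injective and meet-avoiding. Since the weak tuple type of $\overline{\sigma}$ contains $\overline{\sigma}$, this is enough to show that the weak tuple type admits a length-injective, meet-avoiding representative. No clever combinatorics should be needed: both properties will fall out directly from clauses (1) and (2) of \Cref{def:syntactic_minimize}.

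First, for length-injectivity, I will use the observation that $\overline{\sigma}^\wedge \subseteq T^\wedge$ (the meet closure of a subset of $T$ is contained in the meet closure of $T$). Property (1) of \Cref{def:syntactic_minimize} states that any two distinct nodes of $T^\wedge$ have distinct lengths, so the same is a fortiori true for distinct nodes of $\overline{\sigma}^\wedge$. That dispatches length-injectivity without any further work.

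Second, for meet-avoidance, I will argue by contradiction. Suppose that $\sigma_1, \sigma_2 \in \overline{\sigma}$ are incomparable and $\rho := \sigma_1 \meet \sigma_2 \in \overline{\sigma}$. Then $\rho \in T$ and $\rho \prec \sigma_1$, $\rho \prec \sigma_2$, with $\sigma_1, \sigma_2 \in T$. By clause (2) of \Cref{def:syntactic_minimize}, this forces $\rho 0 \preceq \sigma_1$ and $\rho 0 \preceq \sigma_2$. But then $\sigma_1$ and $\sigma_2$ agree at coordinate $|\rho|$, which contradicts the fact that $\rho = \sigma_1 \meet \sigma_2$, since by definition of the meet, $\sigma_1$ and $\sigma_2$ must disagree there.

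The ``main obstacle'' here is really only conceptual: making sure that the properties of being length-injective and meet-avoiding for a weak tuple type are verified on the concrete representative $\overline{\sigma} \subseteq T$, rather than on some abstract normal form. Once one is comfortable passing to the concrete representative, both verifications are immediate from the defining conditions of syntactic minimization, and no use of clause (3) (which is needed only later, in the proof that weak tuple types and tuple types coincide on $T$, \Cref{lem:coincide_smntt}) is required.
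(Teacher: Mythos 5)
Your proof is correct and follows essentially the same route as the paper's: length-injectivity is immediate from clause (1) since $\overline{\sigma}^\wedge \subseteq T^\wedge$, and meet-avoidance follows from clause (2) applied to an alleged meet $\rho = \sigma_1 \meet \sigma_2$ lying in $T$ (the paper phrases the contradiction as one of $\sigma_1,\sigma_2$ extending $\rho 1$ in violation of (2), while you phrase it as both extending $\rho 0$ in violation of $\rho$ being the meet — these are the same observation). No gap; your remark that clause (3) is not needed here also matches the paper.
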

\begin{proof}
By definition we have that $\sigma_1,\sigma_2 \in T^{\wedge}$ implies $|\sigma_1| \neq |\sigma_2|$. Thus the weak tuple types of $T$ are length-injective. Suppose now $\sigma_1,\sigma_2 \in T$ with $\sigma_1,\sigma_2$ incomparable. Suppose for contradiction that $\sigma_1 \wedge \sigma_2 \in T$. Then we have $(\sigma_1 \wedge \sigma_2) 1 \preceq \sigma_1$ or $(\sigma_1 \wedge \sigma_2) 1 \preceq \sigma_2$. In any case we violate property (2) of syntactically minimizing the number of types. Thus for any $\sigma_1,\sigma_2 \in T$ with $\sigma_1,\sigma_2$ incomparable we have $\sigma_1 \wedge \sigma_2 \notin T$ which implies that the weak tuple types of $T$ are meet-avoiding.
\end{proof}

\section{Generalized CHM tree theorem}

We are now ready to study the generalized CHM tree theorem, where we do not necessarily required the subtree to be a strong subtree.

\begin{definition}\index{$t^T_{\TT}(n)$}
Given a perfect tree $T$, let $t^T_{\TT}(n)$ be the number of tuple types generated by $n$ distinct strings of $T$. Let 
$$t_{\TT}(n) = \min \{t^T_{\TT}(n)\:\ T \text{ is a perfect tree}\}.$$
\end{definition}

\begin{theorem} \label{th:reallyminimizes}
Suppose $T$ syntactically minimizes the number of tuple types, then $t_{\TT}(n) = t_{\TT}^T(n)$.
\end{theorem}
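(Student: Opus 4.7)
The plan is to combine Lemmas 7.8, 7.11, and 7.12 to show that $t_\TT^T(n)$ counts precisely the length-injective and meet-avoiding weak tuple types of size $n$, and that every perfect tree must realize at least this many tuple types.

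First, I would compute $t_\TT^T(n)$ exactly when $T$ syntactically minimizes the number of tuple types. By Lemma \ref{lem:realizeall_smntt}, every weak tuple type realized by an $n$-tuple in $T$ is length-injective and meet-avoiding. Conversely, since $T$ is itself a perfect tree, Lemma \ref{lem:doesnotavoid_smntt} guarantees that $T$ contains an $n$-tuple in every length-injective and meet-avoiding weak tuple type. So the weak tuple types of $n$-tuples in $T$ are \emph{exactly} the length-injective and meet-avoiding weak tuple types of size $n$. Then Lemma \ref{lem:coincide_smntt}, which tells us that tuple types and weak tuple types coincide on $T$, yields $t_\TT^T(n) = L(n)$, where $L(n)$ denotes the number of length-injective and meet-avoiding weak tuple types on $n$ strings.

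Next, I would show that $L(n)$ is a lower bound for $t_\TT^S(n)$ for an arbitrary perfect tree $S$. Let $W_1, \dots, W_{L(n)}$ enumerate the length-injective and meet-avoiding weak tuple types of size $n$. By Lemma \ref{lem:doesnotavoid_smntt}, $S$ contains an $n$-tuple $\overline{\sigma}_i$ of weak tuple type $W_i$ for each $i$. Each $\overline{\sigma}_i$ has some definite tuple type $\tau_i$, and since tuple types refine weak tuple types, the map $\tau_i \mapsto W_i$ is well-defined; distinct $W_i$'s force distinct $\tau_i$'s. Thus $S$ realizes at least $L(n)$ distinct tuple types, giving $t_\TT^S(n) \geq L(n) = t_\TT^T(n)$.

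Taking the minimum over all perfect trees $S$ yields $t_\TT(n) \geq t_\TT^T(n)$, and the reverse inequality is immediate from the definition of $t_\TT(n)$. No step here should be an obstacle given the preceding lemmas; the only thing to be careful about is the direction of the refinement between tuple types and weak tuple types in the last step, to ensure that the correspondence $\tau_i \mapsto W_i$ really is injective on the tuple types realized in $S$.
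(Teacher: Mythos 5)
Your proposal is correct and follows essentially the same route as the paper's proof: both use Lemma~\ref{lem:coincide_smntt} to identify tuple types with weak tuple types on $T$, Lemma~\ref{lem:realizeall_smntt} to show $T$'s weak tuple types are all length-injective and meet-avoiding, Lemma~\ref{lem:doesnotavoid_smntt} to show any perfect tree $S$ realizes all such weak tuple types, and the fact that tuple types refine weak tuple types to pass from weak tuple type counts to tuple type counts. Your version simply makes the intermediate count $L(n)$ explicit, which is a cosmetic rearrangement of the same inequality chain.
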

\begin{proof}
Suppose $T$ syntactically minimizes the number of tuple types. Then by \Cref{lem:coincide_smntt} the tuple types of $T$ coincide with its weak tuple types. By \Cref{lem:realizeall_smntt} every weak tuple type of $T$ is length-injective and meet-avoiding. By \Cref{lem:doesnotavoid_smntt} we then have that every weak-tuple type of $T$ is a weak-tuple type in any perfect tree $S$. Using the fact that the tuple types are a refinement of the weak tuple types, we then have that given any $n$, the number of tuple types of $[S]^{n}$ is bigger than the number of weak tuple types of $[S]^{n}$ and then bigger than the number of weak tuple types of $[T]^{n}$ and then bigger than the number of tuple types of $[T]^{n}$. Thus $t_{\TT}(n) = t_{\TT}^T(n)$.
\end{proof}

\begin{theorem}[Generalized CHM tree theorem]
For every $n$, the principle $\mathrm{CMHTT^n_{k,t_{\TT}(n)}}$ is provable in $\ACA_0$ but $\RCA_0$ proves that the principle $\mathrm{CMHTT^n_{k,t_{\TT}(n)-1}}$ is false.
\end{theorem}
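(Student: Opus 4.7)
The plan is to split the statement into the lower bound (falsifiability in $\RCA_0$) and the upper bound (provability in $\ACA_0$), with the former being essentially immediate from the definition of $t_{\TT}(n)$ and the latter reducing, via the strong generalized CHM tree theorem (Theorem~\ref{th:strong_gen_treeth}), to the combinatorial facts already established in Lemmas~\ref{lem:existance_smntt}--\ref{lem:realizeall_smntt} and Theorem~\ref{th:reallyminimizes}.

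For the lower bound, I would fix a computable enumeration $\mathbf{t}_1,\ldots,\mathbf{t}_N$ of all tuple types generated by $n$ distinct strings, and define the coloring $c\colon [2^{<\omega}]^n\to N$ by letting $c(\overline{\sigma})$ be the index of the tuple type of $\overline{\sigma}$. This is computable, since the tuple type of any finite tuple is decidable. For any perfect subtree $T\subseteq 2^{<\omega}$, the number of distinct tuple types realized by $[T]^n$ is, by definition, at least $t_{\TT}(n)$, so $|c([T]^n)|\geq t_{\TT}(n)$. Hence $c$ witnesses, in $\RCA_0$, that $\mathrm{CHMTT}^n_{N,t_{\TT}(n)-1}$ fails.

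For the upper bound, given $f\colon [2^{<\omega}]^n \to k$, I would first apply the strong generalized CHM tree theorem (\Cref{th:strong_gen_treeth}), which is provable in $\ACA_0$, to obtain a strong subtree $S\subseteq 2^{<\omega}$ such that $f$ takes at most $t_{\sTT}(n)$ many colors on $[S]^n$. The key observation is that, as inspection of the proof of \Cref{th:strong_gen_treeth} shows, one may in fact require more: the coloring $S$ is produced by a finite iteration of Milliken's tree theorem, one application for each tuple type $\mathbf{t}_i$, yielding a single color $k_i$ per tuple type. Consequently, the restriction of $f$ to $[S]^n$ depends only on the tuple type of its argument. Next, inside $S$ (which is strongly isomorphic to $2^{<\omega}$), I would apply \Cref{lem:existance_smntt} to obtain a perfect subtree $T\subseteq S$ which syntactically minimizes the number of tuple types. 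By \Cref{th:reallyminimizes}, the perfect tree $T$ realizes exactly $t_{\TT}(n)$ tuple types in $[T]^n$. Since the color $f(\overline{\sigma})$ of any $\overline{\sigma}\in [T]^n\subseteq [S]^n$ is determined by its tuple type, it follows that $|f([T]^n)|\leq t_{\TT}(n)$, as required.

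The only delicate step, and the one I would flag as the principal technical point rather than an obstacle per se, is verifying that \Cref{lem:existance_smntt} genuinely applies inside an arbitrary strong subtree $S$ of $2^{<\omega}$. Because the syntactic properties of \Cref{def:syntactic_minimize} are phrased in terms of literal string lengths and concrete initial segments in $2^{<\omega}$, one must argue that $T\subseteq S$ can be chosen so that, viewed as a subset of $2^{<\omega}$ (and not merely of $S$), the three clauses of \Cref{def:syntactic_minimize} hold. This is where the explicit construction in \Cref{lem:existance_smntt}, together with the fact that $S$ preserves meets, branching, and relative order, is used: its construction carries over verbatim with $S$ in place of $2^{<\omega}$, since the only structural features invoked are meet-closure and perfectness, both inherited from $S$. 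Once this is noted, the rest of the argument formalizes routinely in $\ACA_0$: the application of \Cref{th:strong_gen_treeth} is the only nonconstructive step, and both the construction of $T$ and the verification that $f$ uses at most $t_{\TT}(n)$ colors on $[T]^n$ are effective.
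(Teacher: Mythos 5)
Your proposal is correct and matches the paper's own proof closely: both use \Cref{th:strong_gen_treeth} to obtain a strong subtree on which the color depends only on the tuple type, then pass to a perfect subtree syntactically minimizing tuple types via \Cref{lem:existance_smntt}, and invoke \Cref{th:reallyminimizes} to bound the count by $t_{\TT}(n)$; the lower bound is also identical (color by tuple type and use minimality of $t_{\TT}(n)$). The technical point you flag — that \Cref{lem:existance_smntt} must apply inside an arbitrary strong subtree $S$, since the syntactic conditions refer to literal lengths and bits in $2^{<\omega}$ — is a legitimate concern, but the lemma is already stated (and proved, via the pull-back along the canonical, left/right-preserving isomorphism $2^{<\omega}\to S$) at that level of generality, so no additional argument is needed.
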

\begin{proof}
Let $\Mc$ be a model of $\ACA_0$. Let $T \in \Mc$ be a perfect tree and $c \in \Mc$ be a color of $[T]^n$. Using \Cref{th:strong_gen_treeth}, there is a strong subtree $S \subseteq T$ such that every tuple type of $[S]^n$ is monochromatic for $c$. Using \Cref{lem:existance_smntt} let $R$ be a $S$-computable perfect subtree of $S$ which synctactically minimizes the number of tuple types. Note that $R \in \Mc$ and that by \Cref{th:reallyminimizes} $[R]^n$ has at most $t_{\TT}(n)$ many tuple types. It follows that $c$ uses at most $t_{\TT}(n)$ many colors on $R$.

To show optimality, and given an enumeration $\{\embfont e_i\}_{i \leq t_{\sTT}(n)}$ of the tuple types generated by $n$ strings, let us define a color on $2^{<\omega}$ which associates $i$ to $\overline{\sigma}$ of tuple type $\embfont e_i$. By minimality of $t_{\TT}(n)$ among $t^T_{\TT}(n)$ for a perfect tree $T$ we have that every perfect subtree of $2^{<\omega}$ uses at least $t_{\TT}(n)$ colors.
\end{proof}

\begin{theorem}[CHM tree theorem for $n$-tuple and $k$-colors]
For every coloring of $n$-tuples of pairwise comparable strings, there exists a perfect tree on which the coloring is monochromatic.
\end{theorem}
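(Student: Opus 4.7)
The plan is to derive this from the generalized CHM tree theorem together with the key observation that, in a perfect tree that syntactically minimizes the number of tuple types, every $n$-tuple of pairwise comparable strings falls into a single tuple type. First I would extend the given $k$-coloring $f$ of $n$-chains to an arbitrary coloring $\hat f : [2^{<\omega}]^n \to k$ (for example, assigning $0$ to every non-chain). Applying the strong generalized CHM tree theorem (\Cref{th:strong_gen_treeth}) to $\hat f$, I obtain a strong subtree $S \subseteq 2^{<\omega}$ on which each tuple type is monochromatic. Then \Cref{lem:existance_smntt} produces a perfect subtree $T \subseteq S$ which syntactically minimizes the number of tuple types.

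The central step is to check that any two $n$-chains in $T$ have the same tuple type. Given a chain $\sigma_1 \prec \cdots \prec \sigma_n$ in $T$, its full closure is just $\{\sigma_1,\ldots,\sigma_n\}$: the meet closure adds nothing because the $\sigma_i$ are pairwise comparable, and the level closure adds nothing because $\sigma_j \upharpoonright |\sigma_i| = \sigma_i$ whenever $i \le j$. By property (2) of \Cref{def:syntactic_minimize}, $\sigma_i 0 \preceq \sigma_{i+1}$ holds for each $i<n$, which forces $\sigma_i 0 \preceq \sigma_k$ and $\sigma_i 1 \not\preceq \sigma_k$ whenever $i < k$. The same is true for any other chain $\tau_1 \prec \cdots \prec \tau_n$ in $T$, so the map $\sigma_i \mapsto \tau_i$ is a strong isomorphism of closures sending the marked elements to marked elements. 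Hence all $n$-chains of $T$ belong to the same tuple type.

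Since each tuple type is monochromatic under $\hat f$ in $S$ (and hence in $T$), the coloring $\hat f$ takes a single value on every $n$-chain of $T$, and this value coincides with $f$ by construction. Thus $f$ is monochromatic on the $n$-tuples of pairwise comparable strings of $T$, which is the desired conclusion. The only real obstacle is the tuple-type computation of the previous paragraph, but it is a direct verification from the definitions of full closure and of syntactic minimization; everything else reduces to quoting results already proven in this chapter.
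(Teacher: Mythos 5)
Your proof is correct and follows essentially the same route as the paper: the paper's own (one-line) argument invokes the fact that there is only one \emph{weak} tuple type of size $n$ consisting of pairwise comparable strings, combined with the machinery of syntactic minimization, whereas you verify directly via property (2) of \Cref{def:syntactic_minimize} that all $n$-chains of a syntactically minimizing tree $T$ lie in a single tuple type. The two arguments are the same in substance, and your explicit computation of the full closure of a chain is a valid unpacking of what the paper leaves implicit.
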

\begin{proof}
This follows from the fact that given any $n$, there is only one weak tuple type of size $n$ which contains only comparable strings.
\end{proof}

It is easy to determine the number $e_n$ of embedding types of height $n$, which is given by the following induction:

$$
\begin{array}{rcl}
e_0&=&1,\\
e_1&=&1,\\
e_{n+1}&=&2 \times e_n \times (\sum_{i<n} e_i) + e_n^2.
\end{array}
$$
The definition above is justified by the following observation: there is one tree of height $0$ (the emptyset), there is one tree of height $1$ (the empty string) and for any $n \geq 1$, the possibilities to build trees of height $n+1$ are as follow: having a left subtree of the root (the empty string) of height $n$ and a right subtree of the root of height $<n$, or the inverse of that, or have both a left and a right subtree of the root of height $n$. 

The number of embedding types generated by $n$ strings, namely $e_{\sTT}(n)$ appears much harder to compute. It is the same for $t_{\sTT}(n)$ and $t_{\TT}(n)$. We can also define the function $n \mapsto e_{\TT}(n)$ which to $n$ associates the minimal number of embedding type within any perfect tree (which is the number of embedding types generated by $n$ strings of a tree which syntactically minimizes the number of tuple types).

We computed the first values of each with the help of a computer program:

\[
\begin{array}{|c|c|c|c|c|}
\hline
&e_{\sTT}&t_{\sTT}&e_{\TT}&t_{\TT}\\
\hline
\hline
0&1&1&1&1\\
\hline
1&1&1&1&1\\
\hline
2&7&7&3&3\\
\hline
3&345&369&27&29\\
\hline
4&136949&145215&561&635\\
\hline
\end{array}
\]

None of these sequence appears in OEIS, The On-Line Encyclopedia of Integer Sequences \cite{OEIS}. It then seems that each of them is a new natural combinatorial sequence. Even if it seems that these sequences cannot be computed with an easy mathematical induction like for the number of embedding types of height $n$, we conjecture each of them to be polynomial time computable.


\chapter{Open Questions}\label{sect:open-questions}

The computability-theoretic study of Milliken's tree theorem and its applications being completely new, this work leaves many questions open. We collect some here that seem most promising for follow-up research directions.

\section{Milliken's tree theorem in the arithmetical hierarchy}

When analyzing a mathematical problem from a computability-theoretic viewpoint, the first step usually consists in determining whether the computable instances of the problem admit arithmetical solutions, and if so, trying to identify the exact level in the arithmetical hierarchy where they stand. For example, Jockusch~\cite{Jockusch1972Ramseys} proved that every computable instance of Ramsey's theorem for $n$-tuples admits $\Pi^0_n$ solutions, and for each $n \geq 2$, constructed a computable instance of Ramsey's theorem for $n$-tuples and 2 colors with no $\Sigma^0_n$ solutions. Thus, the status of Ramsey's theorem with respect to the arithmetical hierarchy is fully determined. 

The case of Milliken's tree theorem is less clear. By \Cref{thm:milliken-arithmetic}, computable instances of Milliken's tree theorem admit arithmetical solutions. More precisely, every computable instance of Milliken's tree theorem for subtrees of height $n$ admits a $\Delta^0_{2n-1}$ solution. On the other hand, since Milliken's tree theorem generalizes Ramsey's theorem, for every $n \geq 2$, there exists a computable instance of Milliken's tree theorem for trees of height $n$ with no $\Sigma^0_n$ solutions. This leaves a gap between the lower and upper bound.

\begin{question}\label{quest:mttn-deltanp1}
Does every computable instance of Milliken's tree theorem for height $n$
admit a $\Delta^0_{n+1}$ solution?
\end{question}

The proof by Jockusch~\cite{Jockusch1972Ramseys} of the existence of a $\Pi^0_2$ solution for every computable instance of Ramsey's theorem for $n$-tuples is by an inductive argument based on the notion of prehomogeneous set. In particular, he proves that every PA degree relative to $\emptyset'$ is sufficient to compute a prehomogeneous set. Hirschfeldt and Jockusch~\cite[Theorem 2.1]{Hirschfeldt2016notions} actually proved a reversal, by constructing a computable instance of Ramsey's theorem for triples such that every prehomogeneous set is of PA degree relative to $\emptyset'$. This bound on prehomogeneous sets is sufficient to make increasing the level in the arithmetical hierarchy only by one when increasing the size of the colored tuples by one, by taking prehomogeneous sets of low degree over $\emptyset'$.

Similarly, the current upper bound of Milliken's tree theorem is proved using the corresponding notion of prehomogeneous tree, but \Cref{thm:milliken-prehomogeneous} yields only a $\Delta^0_3$ solution, which makes increase the level in the arithmetical hierarchy by 2 instead of 1 when coloring larger tuples. The following questions are still open:

\begin{question}
Given a computable instance of the Milliken's tree theorem for height $n$, does any PA degree relative to $\emptyset'$ compute a prehomogeneous infinite strong subtree? Is there always a prehomogeneous infinite strong subtree of low degree relative to $\emptyset'$?
\end{question}

A positive answer to either question would be sufficient to answer positively Question \ref{quest:mttn-deltanp1}.

\section{Larger degrees and cone avoidance}

Cone avoidance is a central notion in the computability-theoretic analysis of theorems. It is the main tool for separating a theorem from $\ACA_0$ over $\omega$-structures. It is in particular a desirable property to have, and given a statement which does not admit cone avoidance, one can ask whether there exists a natural weakening of it which admits it. The analysis of Ramsey's theorem gives a good example:  Jockusch~\cite{Jockusch1972Ramseys} constructed for every $n \geq 3$ a computable instance of Ramsey's theorem for $n$-tuples whose solutions compute the halting set. In particular, this shows that Ramsey's theorem for 3-tuples does not admit cone avoidance. On the other hand, Wang~\cite[Theorem 3.2]{Wang2014Some} proved that when weakening the notion of homogeneity in Ramsey's theorem by allowing a larger number of colors, then the resulting statement admits cone avoidance. In particular, he proved that $(\forall k)\RT{3}{k, 2}$ admits cone avoidance, where $(\forall k)\RT{n}{k, \ell}$ is the statement whose instances are colorings $f: [\omega]^n \to k$ for some $k$, and whose solutions are infinite sets $H$ such that $|f[H]| \leq \ell$. In general, Wang proved that for every $n$, and every $\ell$ sufficiently large with respect to $n$, the statement  $(\forall k)\RT{n}{k, \ell}$ admits cone avoidance. Cholak and Patey~\cite{Cholak2019Thin} computed the exact bound where this threshold phenomenon happens, which happens to be $(\forall k)\RT{n}{k, C_{n-1}}$, where $C_0, C_1, \dots$ is the Catalan sequence, starting with $1, 1, 2, 5, 14, 42, \dots$

Milliken's tree theorem behaves like Ramsey's theorem with many respects. Milliken's tree theorem for pairs admits cone avoidance, while there exists a computable instance of Milliken's tree theorem for trees of height 3 whose solutions compute the halting set. By a similar investigation, we proved in \Cref{subsect:thin-milliken} that $(\forall k)\PMT{3}{k,2}$ admits cone avoidance (\Cref{thm:pmtt3k2-cone-avoidance}), where $(\forall k)\PMT{n}{k,\ell}$ is the weakening of $(\forall k)\PMT{n}{}$ where $\ell$ colors are allowed in the solutions.

The proof of \Cref{thm:pmtt3k2-cone-avoidance} goes through the existence of a level-homo\-geneous strong subtree. Recall that a tree $T$ is level-homogeneous with respect to a coloring $f: \Subtree{n}{T}$ if strong subtrees with the same level function get assigned the same color. This notion reduces the problem of finding an infinite strong subtree monochromatic for $f$ to the problem of finding an infinite homogeneous set. Indeed, if $T$ is level-homogeneous with respect to $f$, the color depends only on the levels, hence becomes a coloring of $[\omega]^n$. The known counter-examples to cone avoidance of Milliken's tree theorem for trees of height at least 3 as all inherited from Ramsey's theorem by defining a coloring which depends only on the levels. We proved that the statement which to a finite coloring of $\Subtree{3}{T}$, associates an infinite level-homogeneous strong subtree, admits cone avoidance. This result goes towards the intuition that the strength of Milliken's tree theorem is mainly inherited from Ramsey's theorem. It is therefore natural to wonder whether the statement of the existence of a level-homogeneous infinite strong subtree admits cone avoidance, when considering colorings of finite subtrees of larger height. Since the proof from height $n$ to height $n+1$ is usually inductive, by first proving cone avoidance for height $n$, then strong cone avoidance for height $n$, and then only cone avoidance for height $n+1$, we wonder whether the statement of the existence of a level-homogeneous infinite strong subtree admits strong cone avoidance.

\begin{question}
Given two sets $C$ and $Z$ such that $C \not \leq_T Z$, and a finite sequence of $Z$-computable, $Z$-computably bounded, infinite trees with no leaves $T_0, \dots, T_{d-1}$, does every coloring $f: \Subtree{n}{T_0, \dots, T_{d-1}} \to k$ admit a level-homogeneous tuple $(S_0, \dots, S_{d-1}) \in \Subtree{\omega}{T_0, \dots, T_{d-1}}$ such that $C \not \leq_T Z \oplus S_0 \oplus \dots \oplus S_{d-1}$?
\end{question}

A positive answer to this question would enable to make it benefit from the computability-theoretic analysis for Ramsey's theorem, and in particular would imply that $(\forall k)\PMT{n}{k,C_{n-1}}$ admits cone avoidance.

\section{Comparing the statements for pairs in reverse mathematics}

Ramsey's theorem for pairs admits a special status with respect to full Ramsey's theorem in reverse mathematics, as it admits cone avoidance, while Ramsey's theorem for larger tuples is equivalent to $\ACA_0$ over~$\RCA_0$. This threshold phenomenon was also satisfied by the Chubb-Hirst-McNicholl tree theorem (see Dzhafarov and Patey~\cite{Dzhafarov2017Coloring}) whose statement for pairs admits cone avoidance, while is equivalent to $\ACA_0$ for larger tuples, or the Erd\"os-Rado theorem (see Chong, Liu, Liu and Yang~\cite{Chong2019Strengthc}).
We therefore naturally had a particular focus on the restriction of Milliken's tree theorem for trees of height 2, and on the applications of Milliken's tree theorem restricted to pairs.

As we can see in the proof of Devlin's theorem and the Rado graph theorem using Milliken's tree theorem, both Devlin's theorem for $n$-tuples and the Rado graph theorem for graphs of size $n$ involve applications of Milliken's tree theorem for strong subtrees of height $2n-1$. This is essentially due to \Cref{lem:coded-joyce-order-to-strong-subtree}. Informally, when representing rational numbers as strings, any coloring of a pairs of rationals induces a coloring of strong subtrees of height 3, by considering the tree whose first level is the length of their meet, the second level is the length of the shortest of the two strings representing the rationals, and the third level is the longest length. 

This yields two main questions, namely, (1) whether there exists another proof of these statements of size $n$ involving only applications of Milliken's tree theorem for trees of height $n$, and (2) whether these statements should be more considered as statement about pairs or about triples. The latter question is more informal, and depends on the aspects considered. 

One aspect separating Ramsey's theorem for pairs from larger tuples is the existence of cone avoiding solutions. With this respect, Devlin's theorem admits a computable instance whose solutions all compute the halting set, while the Rado graph theorem for graphs of size 2, the Erd\"os-Rado theorem and Milliken's tree theorem for pairs are all cone avoiding. This proves in particular that Milliken's tree theorem for pairs does not imply Devlin's theorem for pairs in $\RCA_0$, and answers the first question negatively for Devlin's theorem. Another aspect which could better capture the difference between statements about pairs and about larger tuples, is the position in the arithmetical hierarchy. As explained, Jockusch~\cite{Jockusch1972Ramseys} proved the existence of a computable instance of Ramsey's theorem for triples with no $\Sigma^0_3$ solution, while every computable instance of Ramsey's theorem for pairs admits a $\Pi^0_2$ solution. Here again, using this criterium, Devlin's theorem for pairs does not seem to be a statement about pairs. Indeed, by \Cref{cor:dt2-no-sigma3} in a computable instance of Devlin's theorem for pairs with no $\Sigma^0_3$ solution. The question for the Rado graph theorem for graphs of height 2 and for the Erdos-Rado theorem remains open:

\begin{question}
Is there a computable instance of the Rado graph theorem for graphs of size 2 with no $\Sigma^0_3$ solution? Same question for the Erd\"os-Rado theorem for pairs.
\end{question}

If the answer is yes, then this would answer negatively the corresponding part of the following question.

\begin{question}
  Does $\MT{2}{}$ imply $(\forall k)\RG^2_{k, 4}$ over $\RCA_0$? Same question for $\mathrm{ER}^2$.
\end{question}

Milliken's tree theorem for trees of height 2 is a natural generalization of Ramsey's theorem for pairs, and so is the Erd\"os-Rado theorem. By \Cref{thm:dt2-implies-rt2}, this is also the case of Devlin's theorem for pairs. It is however unknown whether the Rado graph theorem for graphs of height 2 also implies Ramsey's theorem for pairs. On the positive side, the Rado graph theorem for pairs implies a stable version of Ramsey's theorem for pairs (see \Cref{thm:rg284-implies-srt22}). Thus, by the decomposition of Ramsey's theorem for pairs in its stable version and the cohesiveness principle (see Cholak, Jockusch and Slaman~\cite{Cholak2001strength}, Section 7), the question can be rephrased as whether the Rado graph theorem for pairs implies the $\COH$ over~$\RCA_0$.

\begin{question}
Does $(\forall k)\RG^2_{k, 4}$ imply $\RT 22$ over $\RCA_0$?
Equivalently, does $(\forall k)\RG^2_{k, 4}$ imply $\COH$ over $\RCA_0$?
\end{question}

Devlin's theorem for pairs and the Erd\"os-Rado theorem are both statements about colorings of pairs of rationals. The former is symmetric, in that the nature of the solution does not depend on value of the color, and is thus arguably more natural than the Erd\"os-Rado theorem.
Since the statement $\DT{2}{<\infty,2}$ is somehow combinatorially optimal with respect to coloring of pairs of  dense linear orders with no endpoints, one could expect that it implies the Erd\"os-Rado theorem. This is actually the case by \Cref{th:dt242-implies-er2}: $\DT{2}{4,2}$ implies $\mathrm{ER}^2$ over $\RCA_0$. On the other hand, $\mathrm{ER}^2$ admits cone avoidance, while $\DT{2}{4,2}$ does not. This yields the following question: is there a natural statement which implies $\mathrm{ER}^2$ and does admit cone avoidance? The notion of naturality is kept informal. When increasing the number of colors allowed in the solutions of Devlin's theorem for pairs, the statement $\DT{2}{<\infty,4}$ is the first one admitting cone avoidance (\Cref{cor:jdt4-cone-avoidance}). This yields the following question:

\begin{question}
Does $\DT{2}{<\infty,4}$ imply $\mathrm{ER}^2$ over $\RCA_0$?
\end{question}

\noindent Part of the reason we might expect this to be true is because in \Cref{subsect:er-theorem}, we did actually prove cone avoidance of $\mathrm{ER}^2$ (first shown by Chong, Liu, Liu and Yang~\cite{Chong2019Strengthc}) using $\DT{2}{<\infty,4}$. However, our proof involved the existence of a generic set for a particular notion of forcing, and this may not belong to a given model of $\RCA_0$. Thus, it does not settle the question, but it makes an affirmative answer plausible.


\backmatter
\bibliographystyle{amsplain}
\bibliography{bibliography}
\printindex

\end{document}